\newtheorem{thm}{Theorem}[section]
\newtheorem{lem}[thm]{Lemma}
\newtheorem{cor}[thm]{Corollary}
\newtheorem{prop}[thm]{Proposition}
\theoremstyle{definition}
\newtheorem{defn}[thm]{Definition}
\theoremstyle{remark}
\newtheorem{rem}[thm]{\textbf{Remark}}
\newtheorem{rems}[thm]{\textbf{Remarks}}
      \def\@makefnmark{%
         \leavevmode
            \raise.9ex\hbox{\check@mathfonts
                \fontsize\sf@size\z@\normalfont%
                            \@thefnmark}%
       }
\begin{document}

\title[]{MHS equilibria in the non-resistive limit to the randomly forced resistive magnetic relaxation equations}

\author[]{Ken Abe}
\author[]{In-Jee Jeong}
\author[]{Federico Pasqualotto}
\author[]{Naoki Sato}
\date{}
\address[Ken Abe]{Department of Mathematics, Graduate School of Science, Osaka Metropolitan University, 3-3-138 Sugimoto, Sumiyoshi-ku Osaka, 558-8585, Japan}
\email{kabe@omu.ac.jp}
\address[In-Jee Jeong]{Department of Mathematical Sciences and RIM, Seoul National University, Seoul 08826, Korea}
\email{injee\_j@snu.ac.kr}
\address[Federico Pasqualotto]{Department of Mathematics,
University of California San Diego, La Jolla, CA, 92093, USA \newline \
and June E Huh Center for Mathematical Challenges, Korea Institute of Advanced Study, 
Seoul 02455, Korea}
\email{fpasqualotto@ucsd.edu}
\address[Naoki Sato]{National Institute for Fusion Science, 322-6 Oroshi-cho Toki-city, Gifu 509-5292, Japan	}
\email{sato.naoki@nifs.ac.jp}

\subjclass[2020]{35Q31, 35Q35}
\keywords{}
\date{\today}

\begin{abstract}
We consider randomly forced resistive magnetic relaxation equations (MRE) with resistivity $\kappa>0$ and a force proportional to $\sqrt{\kappa}$ on the flat $d$-torus $\mathbb{T}^{d}$ for $d\geq 2$. We show the path-wise global well-posedness of the system and the existence of the invariant measures, and construct a random magnetohydrostatic (MHS) equilibrium $B(x)$ in $H^{1}(\mathbb{T}^{d})$ with law $\mathcal{D}(B)=\mu_0$ as a non-resistive limit $\kappa\to 0$ of statistically stationary solutions $B_{\kappa}(x,t)$. For $d=2$, the measure $\mu_0$ does not concentrate on any compact sets in $H^{1}(\mathbb{T}^{2})$ with finite Hausdorff dimension. In particular, all realizations of the random MHS equilibrium $B(x)$ are almost surely not finite Fourier mode solutions. 
\end{abstract}

\maketitle


\section{Introduction}

\subsection{Magnetic relaxation equations}

Moffatt \cite{Moffatt85}, cf. \cite{Moffatt21}, introduced the concept of magnetic relaxation equations (MRE), aiming at constructing stationary solutions to the Euler equations (MHS equilibria). We consider the magnetic relaxation equations with hyper-viscosity for $\gamma\geq 0$ \cite{BFV21}, cf. \cite{Brenier14}:  
\begin{equation}
\begin{aligned}
\partial_t B+u\cdot \nabla B-B\cdot \nabla u&=0,\\
\nabla p&=B\cdot \nabla B-(-\Delta)^{\gamma}u,\\
\nabla \cdot u=\nabla \cdot B&=0,
\end{aligned}
\end{equation}
for evolutions of the magnetic field $B$, the pressure function $p$, and the velocity field $u$ of electrically conducting fluids on the flat $d$-torus $\mathbb{T}^{d}=[-\pi,\pi]^{d}$ for $d\geq 2$ with periodic boundary conditions. 

Magnetic relaxation is the idea of constructing MHS equilibria 
\begin{equation}
\begin{aligned}
\nabla p&=B\cdot \nabla B,\\
\nabla \cdot B&=0,
\end{aligned}
\end{equation}
by long-time limits of magnetic fields with vanishing velocity fields, i.e., (1.1) for $u=0$. Magnetic field lines of smooth solutions to (1.1) keep their topology and do not reconnect in finite time by the frozen-in law, e.g., \cite[I.5]{AK21}, \cite[1.4.3]{BV22}. Indeed, the magnetic energy $\mathscr{E}(B)=\frac{1}{2}\int_{\mathbb{T}^{d}}|B|^{2}dx$ of (1.1) decreases by the energy equality (with velocity dissipation): 
\begin{align*}
\mathscr{E}(B(t))+ \int_{0}^{t}||u||_{\dot{H}^{\gamma}}^{2}(s)ds=\mathscr{E}(B(0)),\quad t\geq 0.
\end{align*}
On the other hand, conservation of mean-square potential ($d=2$) and magnetic helicity ($d=3$) 
\begin{align*}
\mathscr{M}(B)=\int_{\mathbb{T}^{2}}|\textrm{curl}^{-1}B|^{2}dx,\quad 
\mathscr{H}(B)=\int_{\mathbb{T}^{3}} \textrm{curl}^{-1}B\cdot B dx,
\end{align*}
bound $\mathscr{E}(B)$ from below and prevent the magnetic field from the trivial state at $t=\infty$, where $\nabla^{\perp}={}^{t}(-\partial_2,\partial_1)$ and 
\begin{align*}
\textrm{curl}^{-1}B=
\begin{cases}
\ -\nabla^{\perp} \cdot (-\Delta)^{-1}B,\quad &d=2, \\
\ \nabla \times (-\Delta)^{-1}B,\quad &d=3.
\end{cases}
\end{align*}
For $d=2$, Casimir invariants $\mathscr{C}(B)=\int_{\mathbb{T}^{2}}f(\textrm{curl}^{-1}B)dx$ are conserved for arbitrary functions $f$. MHS equilibria obtained from initial data $B_0$ via MRE may lose topological equivalence with $B_0$ due to the appearance of tangential discontinuities of magnetic field lines at $t=\infty$. The MHS equilibria obtained are alternatively called \textit{topologically accessible} from $B_0$ \cite[8.2.1]{Moffatt21}, cf. \cite{EPS23}.

Beekie et al. \cite[Theorems 3.1 and 4.1]{BFV21} demonstrated the global well-posedness of (1.1) for arbitrary divergence-free initial data $B_0\in H^{s}(\mathbb{T}^{d})$ and $s, \gamma>d/2+1$ and the convergence of the velocity field $\lim_{t\to\infty}||\nabla u||_{L^{\infty}}=0$; see also \cite{BKS23}. The work \cite{BFV21} also shows the asymptotic stability of the specific 2D MHS equilibrium $B={}^{t}(1,0)$; see also \cite[Corollary 1.6]{JinTan}. These works, however, stop short of showing convergence (in a general setting) of the relaxation procedure to an MHS equilibrium. In addition, to the authors' knowledge, there do not exist many general algorithms to construct MHS equilibria whose convergence is rigorously proved (see, for instance, \cite{CP22} and \cite{Bhatt}).

\subsection{The statement of the main results}

\subsubsection{A convergence to a random MHS equilibrium}

This study establishes a general construction of MHS equilibria by means of a randomly forced version of MRE. The main theorem shows that, for general initial data, the system converges towards MHS equilibria in a random setting, thereby constructing infinitely many such equilibria arising from a limiting probability distribution. More specifically, we consider the randomly forced resistive MRE with resistivity $\kappa>0$ and a force proportional to $\sqrt{\kappa}$: 
\begin{equation}
\begin{aligned}
\partial_t B+u\cdot \nabla B-B\cdot \nabla u&=\kappa \Delta B +\sqrt{\kappa}\partial_t \zeta,\\
\nabla p&=B\cdot \nabla B-(-\Delta)^{\gamma}u,\\
\nabla \cdot u=\nabla \cdot B&=0,
\end{aligned}
\end{equation}
for $\gamma>d/2$ and average-zero magnetic fields in the Hilbert space 
\begin{align}
H=\left\{B\in L^{2}(\mathbb{T}^{d})\ \middle|\ \nabla \cdot B=0,\ <B>=\int_{\mathbb{T}^{d}}Bdx=0\ \right\},
\end{align}
endowed with the inner product $(\cdot,\cdot)_{H}$. The random field $\zeta$ is a Wiener process,
\begin{equation}
\begin{aligned}
\zeta&=\sum_{j=1}^{\infty}b_j e_j(x)\beta_j(t),
\end{aligned}
\end{equation}
consisting of a sequence of independent Brownian motions $\{{\beta}_{j}(t)\}_{j=1}^{\infty}$, and a complete orthonormal system $\{e_{j}\}_{j=1}^{\infty}$ on $H$ consisting of eigenfunctions of the Stokes operators with eigenvalues $\{\lambda_j\}_{j=1}^{\infty}$, $0<\lambda_1\leq \lambda_2\leq \cdots$ and $\lambda_j\to\infty$. For a sequence $\{b_{j}\}_{j=1}^{\infty}$, we set the non-negative constants 
\begin{align}
\mathsf{C}_s=\sum_{j=1}^{\infty}\lambda_j^{s}b_j^{2},\quad s\in \mathbb{R}.
\end{align}\\
For $d=3$, we choose $\{e_{j}\}_{j=1}^{\infty}$ by eigenfunctions of the rotation operator with the eigenvalues $\tau_j\in \mathbb{R}$, i.e., $\nabla \times e_j=\tau_j e_j$ and $\tau_j^{2}=\lambda_j$, and set the constant (smaller than $\mathsf{C}_{-1/2}$),
\begin{align}
\mathcal{C}_{-\frac{1}{2}}=\sum_{j=1}^{\infty}\frac{1}{\tau_j}b_j^{2}.
\end{align}
We construct such a complete orthonormal system (with explicit forms) using complex plane Beltrami waves \cite{DeS13}, \cite{BV19b}. 

The introduction of resistivity and random force in (1.3) is motivated by statistical studies on two-dimensional hydrodynamic turbulence \cite{Kuk04}, \cite{Kuk12}. The resistive term $\kappa \Delta B$, making magnetic fields simpler, and the random force $\partial_t \zeta$ stirring them up, are balanced with power $\sqrt{\kappa}$ for small $0<\kappa<<1$. In contrast to two-dimensional hydrodynamic turbulence, we will see that statistical equilibria to (1.3) approximate the MHS equilibria (1.2) for small $0<\kappa<<1$ with order $u_{\kappa}(x,t)=O(\sqrt{\kappa})$ and $B_{\kappa}(x,t)=O(1)$ (by taking means). 

More specifically, we consider limits of the system (1.3) in the order: (i) $t\to\infty$ and (ii) $\kappa\to 0$ (The order (i) $\kappa\to 0$ and (ii) $t\to\infty$ may reduce the issue to the deterministic problem). Namely,  
\begin{itemize}
\item[(i)] We first show the path-wise global well-posedness of (1.3) for random initial data and the convergence of the random solution $B(t): (\Omega, \mathcal{F},\mathbb{P})\to (H, \mathcal{B}(H))$ in law, i.e., $\mathcal{D}(B(t))\to \mu_{\kappa}$ on $H$ as $t\to\infty$ (taking the Ces\'aro mean), where $\mathcal{B}(H)$ denotes the Borel $\sigma$-algebra on $H$. The convergence in law refers to the weak convergence of measures; see \S 4.3. The limit measure $\mu_{\kappa}$ is time-independent (invariant measure) and yields global-in-time random solutions $B_{\kappa}(t)$ to (1.3) with time-independent law  $\mathcal{D}(B_{\kappa}(t))=\mu_{\kappa}$ (\textit{statistically stationary solutions}); see \S 3.6 for more detailed explanations. The random initial data $B_{\kappa}(0)$ is arbitrary with the prescribed law $\mathcal{D}(B_{\kappa}(0))=\mu_{\kappa}$. 
\item[(ii)] We then construct a probability measure and a random MHS equilibrium by taking a non-resistive limit to the invariant measures $\mu_{\kappa}$ and the statistically stationary solutions $B_{\kappa}$ as $\kappa\to0$.
\end{itemize}

The first main result of this study is the following convergence result to a random MHS equilibrium in $H^{1}(\mathbb{T}^{d})$ constructed in this order, cf. \cite[Q2]{BFV21}. We single out this result from a general convergence result to a random MHS equilibrium in $H^{\alpha}(\mathbb{T}^{d})$ for $\alpha\geq 1$ because of the physical importance of discontinuous magnetic fields; see Theorem 5.16 for a general result. The convergence in $C([0,\infty); H^{-\varepsilon}(\mathbb{T}^{d}))$ is a uniform convergence in $C([0,T]; H^{-\varepsilon}(\mathbb{T}^{d}))$ for arbitrary $T>0$.

\begin{thm}[Convergence to random MHS equilibria]
Let $d\geq 2$, $\gamma> d/2$, and $\mathsf{C}_0>0$. The system (1.3) admits an invariant measure $\mu_{\kappa}$ on $H^{1}(\mathbb{T}^{d})$. For $\varepsilon>0$, there exists a sequence of invariant measures $\{\mu_{\kappa}\}$ weakly converging to a measure $\mu_0$ on $H^{1-\varepsilon}(\mathbb{T}^{d})$ as $\kappa\to 0$ such that there exist statistically stationary solutions to (1.3), $B_{\kappa}(t): (\Omega, \mathcal{F},\mathbb{P})\to (H, \mathcal{B}(H))$ with law $\mathcal{D}(B_{\kappa})=\mu_{\kappa}$, and a random variable $B: (\Omega, \mathcal{F},\mathbb{P})\to (H, \mathcal{B}(H))$ on some probability space $(\Omega, \mathcal{F},\mathbb{P})$ such that for $\varepsilon>0$,
\begin{align}
B_{\kappa}(x,t)\to B(x)\quad \textrm{in}\ C([0,\infty); H^{-\varepsilon}(\mathbb{T}^{d}))\cap L^{2}_{\textrm{loc}}([0,\infty); H^{1-\varepsilon}(\mathbb{T}^{d})),\quad a.s.\quad \textrm{as}\ \kappa\to 0.
\end{align}
The limit $B\in H\cap H^{1}(\mathbb{T}^{d})$ is an MHS equilibrium (1.2) with some pressure function almost surely and $\mathcal{D}(B)=\mu_0$.
\end{thm}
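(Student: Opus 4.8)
The plan is to establish the theorem through a three-stage scheme: (i) construct the invariant measures $\mu_\kappa$ for fixed $\kappa>0$ via path-wise global well-posedness of (1.3) together with a Krylov--Bogoliubov-type argument, (ii) derive $\kappa$-uniform a priori bounds in $H^1$ (and localized-in-time bounds in $H^1$ for the velocity weighted by $\sqrt{\kappa}$) which give tightness of $\{\mu_\kappa\}$ on $H^{1-\varepsilon}$, and (iii) pass to the limit $\kappa\to0$ in the statistically stationary solutions, identifying the limit as a deterministic MHS equilibrium for almost every realization. Throughout, one works in the abstract Hilbert space $H$ of (1.4) and uses the Stokes eigenbasis $\{e_j\}$; the key structural inputs are the energy balance $\mathscr{E}(B(t))+\int_0^t\|u\|_{\dot H^\gamma}^2\,ds=\mathscr{E}(B(0))$ augmented by the It\^o correction $\tfrac{\kappa}{2}\mathsf{C}_0\,t$ from the noise, and — crucially — the conservation laws $\mathscr{M}$ ($d=2$) and $\mathscr{H}$ ($d=3$), whose resistive/stochastic evolution is controlled by $\mathsf{C}_{-1}$, resp.\ $\mathcal{C}_{-1/2}$.

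The first stage is routine stochastic PDE analysis: for $\gamma>d/2$ the nonlinearity $u\cdot\nabla B-B\cdot\nabla u$ with $u=(-\Delta)^{-\gamma}\mathbb{P}(B\cdot\nabla B)$ is locally Lipschitz on $H^1$, so a standard fixed-point/mild-solution argument (combined with the energy estimate to exclude blow-up) yields path-wise global well-posedness and the Feller property of the Markn semigroup on $H^1$; the energy balance, after taking expectations, gives $\mathbb{E}\mathscr{E}(B(t))\le\mathscr{E}(B(0))+\tfrac{\kappa}{2}\mathsf{C}_0 t$ and, combined with the dissipation term, a uniform-in-time moment bound $\sup_t\mathbb{E}\int_0^{t+1}\|u\|_{\dot H^\gamma}^2 ds\lesssim\kappa$, so that time-averaged laws are tight and Krylov--Bogoliubov produces $\mu_\kappa$ on $H^1$. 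The second stage is the heart of the matter: I would take expectations in the $H^1$-energy identity for (1.3) under $\mu_\kappa$-stationarity to get $\int_H\|B\|_{H^1}^2\,d\mu_\kappa(B)$ bounded \emph{independently of $\kappa$} (here the It\^o term contributes $\kappa\mathsf{C}_1$, which must be absorbed — this is why one needs $\mathsf{C}_1<\infty$, i.e.\ decaying $b_j$, although the theorem statement only lists $\mathsf{C}_0>0$; presumably the general Theorem 5.16 carries the precise hypothesis), and simultaneously use the lower bound $\mathscr{E}(B)\gtrsim\mathscr{M}(B)$ or $\gtrsim|\mathscr{H}(B)|$ together with the stationary balance for $\mathscr{M}$ (resp.\ $\mathscr{H}$) — where the noise \emph{injects} a fixed positive amount $\kappa\,\mathsf{C}_{-1}$ (resp.\ $\kappa\,\mathcal{C}_{-1/2}$) and resistivity dissipates — to conclude that $\int_H\mathscr{E}(B)\,d\mu_\kappa$ stays bounded \emph{below} away from zero, uniformly in $\kappa$. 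These two bounds give tightness of $\{\mu_\kappa\}$ in $H^{1-\varepsilon}$ (by compact embedding $H^1\hookrightarrow H^{1-\varepsilon}$) and a non-trivial, non-concentrating limit.

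For the third stage, Prokhorov gives a weakly convergent subsequence $\mu_\kappa\rightharpoonup\mu_0$ on $H^{1-\varepsilon}$; by Skorokhod's representation theorem I realize the statistically stationary solutions $B_\kappa$ and a limit $B$ on a common probability space $(\Omega,\mathcal{F},\mathbb{P})$ with $B_\kappa\to B$ almost surely in the path space $C([0,\infty);H^{-\varepsilon})\cap L^2_{\mathrm{loc}}([0,\infty);H^{1-\varepsilon})$ — the continuity-in-time and the $L^2_t H^{1-\varepsilon}$ compactness come from uniform bounds on $\partial_t B_\kappa$ in a negative-regularity space (read off from (1.3): the nonlinear and resistive terms are controlled in $H^{-\gamma}$-type norms, and the martingale part has modulus of continuity $O(\sqrt{\kappa})$ in $H^s$ for suitable $s$) via an Aubin--Lions / stochastic-compactness argument. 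It then remains to pass to the limit in the weak formulation of (1.3): the noise term is $O(\sqrt\kappa)\to0$ in probability in any reasonable negative norm, the resistive term $\kappa\Delta B_\kappa\to0$ since $\|B_\kappa\|_{H^1}$ is bounded, the linear time-derivative term vanishes because the limit is \emph{stationary hence time-independent} (one must argue that any statistically stationary limit of solutions with vanishing velocity is constant in $t$ — this follows from the $\kappa$-uniform bound $\mathbb{E}\int_0^1\|u_\kappa\|^2_{\dot H^\gamma}\,ds\lesssim\kappa\to0$, forcing $u\equiv0$ in the limit and hence $\partial_t B=0$), and the nonlinear term converges because $B\cdot\nabla B$ is continuous from $H^{1-\varepsilon}_{\text{loc-in-time-}L^2}$ into distributions along the almost-sure convergence — leaving $0=\mathbb{P}(B\cdot\nabla B)$, i.e.\ $B\cdot\nabla B=\nabla p$ for some $p$, which is exactly the MHS system (1.2). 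Finally $B\in H^1$ almost surely follows from lower semicontinuity of the $H^1$-norm under the $H^{1-\varepsilon}$ convergence together with the uniform second-moment bound from stage two, and $\mathcal{D}(B)=\mu_0$ is immediate from the Skorokhod construction.

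The main obstacle is stage two: obtaining the two-sided, $\kappa$-\emph{uniform} control on $\int_H\mathscr{E}(B)\,d\mu_\kappa$ — the upper $H^1$ bound is delicate because the It\^o correction from differentiating $\|B\|_{H^1}^2$ produces $\kappa\mathsf{C}_1$ which competes with the (degenerate, since only $u$ is dissipated, not $B$) dissipation, so one must exploit the flux/helicity conservation structure to close the estimate; and the lower bound requires that the noise keeps pumping a fixed amount of $\mathscr{M}$ or $\mathscr{H}$ into the system at rate $\propto\kappa$ against a dissipation also $\propto\kappa$, so the ratio — not the absolute size — is what survives the limit, and making this balance rigorous at the level of the stationary measure (rather than formally) is the crux. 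The secondary subtlety is the identification of the limiting velocity as zero and hence $\partial_t B=0$, which rests on the $\sqrt\kappa$-scaling of the forcing being exactly matched to the $\kappa$-resistivity so that the energy dissipation rate stays $O(\kappa)$.
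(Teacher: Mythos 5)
Your overall scheme (Krylov--Bogoliubov for fixed $\kappa$; $\kappa$-uniform bounds giving tightness on $H^{1-\varepsilon}$; Prokhorov and Skorokhod on a path space $C_tH^{-\varepsilon}\cap L^2_tH^{1-\varepsilon}$; then killing the noise, resistivity and velocity in the limit, concluding time-independence and $B\cdot\nabla B=\nabla p$ via a de Rham-type lemma) is the same as the paper's. But your ``stage two,'' which you correctly identify as the heart of the matter, misidentifies the mechanism, and as you describe it it would not close under the stated hypotheses. You propose to apply the It\^o/energy identity at the $H^1$ level, which forces the correction $\kappa\mathsf{C}_1$ and hence the extra assumption $\mathsf{C}_1<\infty$ that the theorem (and also the general Theorem 5.16) does not make; you also assert the dissipation is degenerate because ``only $u$ is dissipated, not $B$,'' which is false for (1.3): the resistive term $\kappa\Delta B$ dissipates $B$. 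The paper's uniform $H^1$ bound requires no $H^1$-It\^o formula at all: applying It\^o only to $\|B\|_H^2$ and using stationarity of $\mu_\kappa$ gives the balance $\mathbb{E}\bigl(\kappa\|\nabla B_\kappa\|_H^2+\|u_\kappa\|_{\dot H^\gamma}^2\bigr)=\kappa\mathsf{C}_0/2$ (Lemma 5.1), and since both the gradient dissipation and the noise injection scale linearly in $\kappa$, one reads off $\mathbb{E}\|\nabla B_\kappa\|_H^2\le\mathsf{C}_0/2$ and $\mathbb{E}\|u_\kappa\|_{\dot H^\gamma}^2\le\kappa\mathsf{C}_0/2$ uniformly, which is exactly the two-sided information needed (tightness in $H^{1-\varepsilon}$, and vanishing of the velocity). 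Likewise, your proposed lower bound on $\int\mathscr{E}\,d\mu_\kappa$ via the $\mathscr{M}$/$\mathscr{H}$ balances is not needed for Theorem 1.1 (the convergence statement allows $\mu_0=\delta_0$ in principle); those balances enter only in Remark 1.2 and Theorem 5.15 to show non-triviality, so building the proof of the convergence on them is a conflation rather than a necessity.

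Two smaller points. First, the state space for the Markov semigroup is $H$, not $H^1$: the noise paths lie only in $C_tH$, so one cannot run the Feller/Krylov--Bogoliubov argument on $H^1$; the paper gets tightness of the Ces\`aro means in $H$ from the $V$-ball bound $2\kappa\,\mathbb{E}\fint_0^t\|B\|_V^2\,ds\le\mathsf{C}_0$ and only afterwards shows $\mu_\kappa(V)=1$. Second, your path-space compactness step glosses over the real difficulty there: because the nonlinearity is cubic and only exponential $L^2_x$-moments are available, the time derivative of the nonlinear part is controlled only in $W^{1,p}_tH^{-1}$ with $p<2$, which is why the paper decomposes $B_\kappa$ into three parts, works in the sum space $\mathcal{W}_T^1+\mathcal{W}_T^2+\mathcal{W}_T^3$ intersected with $L^2_tV$, and proves only $p$-th moment bounds ($p<2$) before invoking Lions--Aubin--Simon; a single ``$\partial_tB_\kappa$ bounded in a negative norm'' estimate as you sketch it would not deliver the required tightness with the available moments.
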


\begin{rem}
For $d=2$ and $d=3$, the measure $\mu_0$ satisfies the equalities: 
\begin{align}
\mathbb{E}||B ||_{H}^{2}
&=\int_{H}||B ||_{H}^{2}\mu_0 (dB)=\frac{\mathsf{C}_{-1}}{2}, \\
\mathbb{E}\left(\nabla\times  B, B   \right)_H
&=\int_{H}\left( \nabla\times B, B   \right)_H\mu_0 (dB)
=\frac{\mathcal{C}_{-\frac{1}{2}}}{2}.
\end{align}
See Theorem 5.15. The left-hand sides denote the means of the random variables $||B ||_{H}^{2}$ and $\left(\nabla\times B, B   \right)_H$ for the random MHS equilibrium $B$ in Theorem 1.1. For $d=2$, $\mathsf{C}_{-1}\neq 0$ and $B\neq 0$ almost surely, i.e., $\mu_0\neq \delta_0$ for the Dirac mass $\delta_0$ concentrating at the origin. For $d=3$ and $\mathcal{C}_{-\frac{1}{2}}\neq 0$, $\mu_0\neq \delta_0$. The quantity $\left( \nabla\times B, B   \right)_H=\left((-\Delta)\textrm{curl}^{-1} B, B   \right)_H$ coincides with the modified magnetic helicity \cite{CP22}. For $d\geq 4$, similar equalities are unknown.
\end{rem}

\subsubsection{Finite Fourier mode solutions and the support of the measure}

The realizations of the random MHS equilibrium $B$ in Theorem 1.1 take states in the support of the measure $\mu_0$ (with probability one) 
\begin{align*}
\textrm{spt}\ \mu_0=\bigcap \left\{F\subset H\cap H^{1}(\mathbb{T}^{d})\ \middle|\  \mu_0(F)=1,\ F: \textrm{closed}\right\}.
\end{align*}
The measure $\mu_0$ is supported on MHS equilibria, i.e., $\textrm{spt}\ \mu_0\subset F_{MHS}$ for 
\begin{align*}
F_{\textrm{MHS}}=\left\{B\in H\cap H^{1}(\mathbb{T}^{d})\ \middle|\ B\ \textrm{is an MHS equilibrium (1.2)\ }\right\}.
\end{align*}

\clearpage

 \begin{figure}[h]
\vspace{5pt} 
 \begin{minipage}[b]{0.44\linewidth}
\hspace{-40pt}
\includegraphics[scale=0.14]{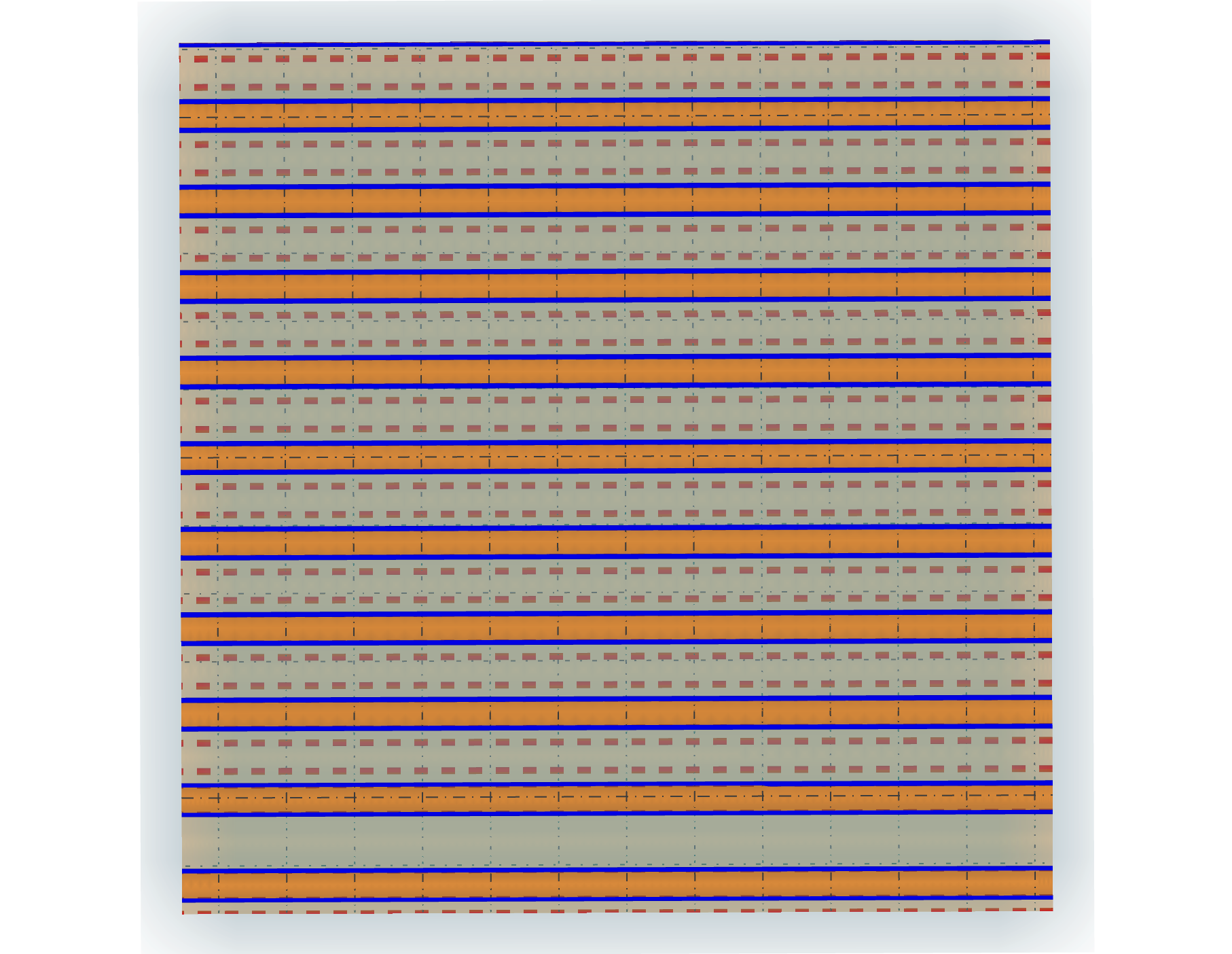}
\end{minipage}
 \begin{minipage}[b]{0.44\linewidth}
\includegraphics[scale=0.11]{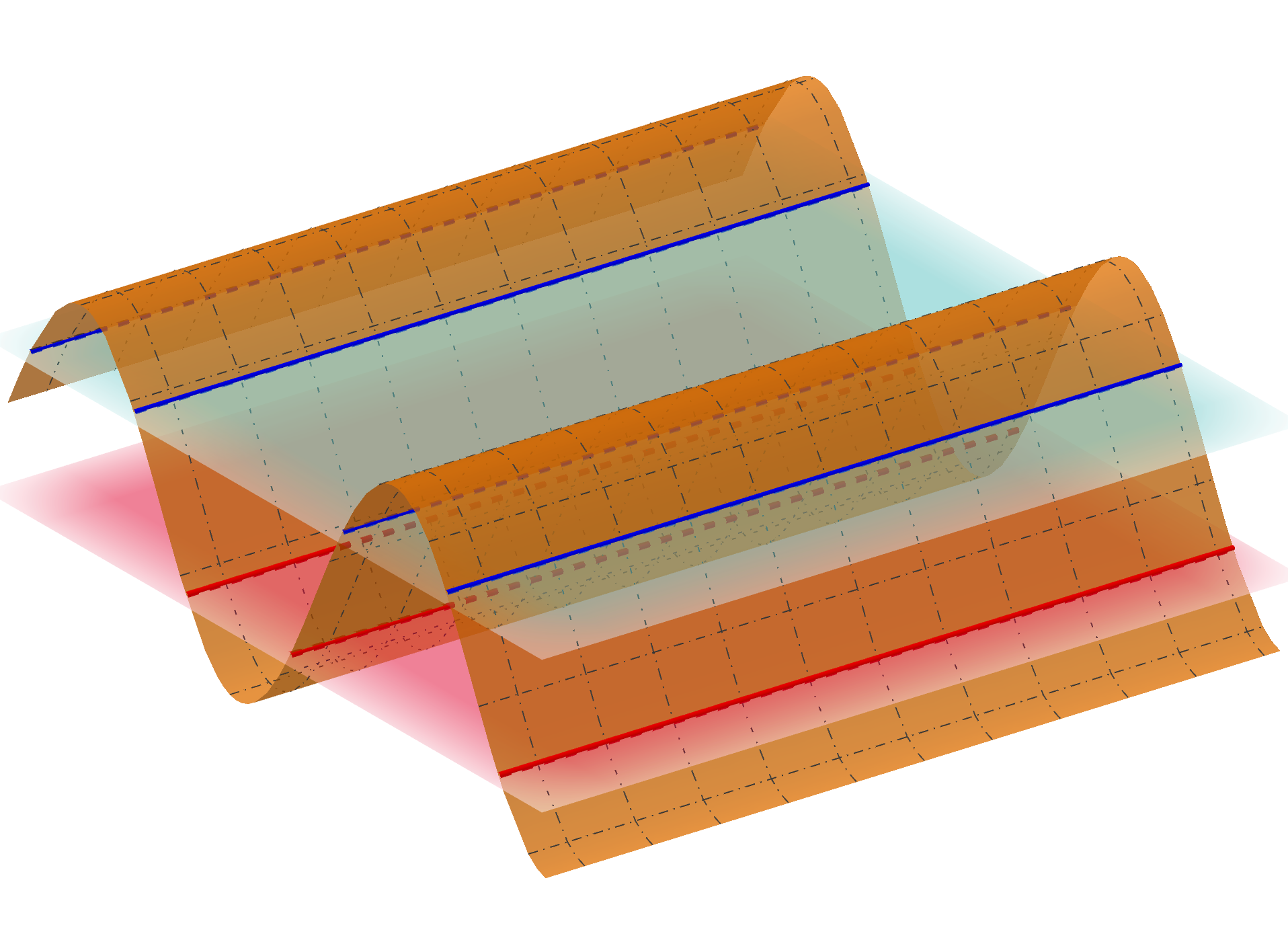}
  \end{minipage}\\
\vspace{20pt} 
\subcaption{The sheared magnetic field (the Kolmogorov flow) $B={}^{t}(\sin{y},0)$ (The eigenfunction of the least eigenvalue $|k|^{2}=1$)}
\vspace{30pt} 
  \begin{minipage}[b]{0.44\linewidth}
\hspace{-40pt}
\includegraphics[scale=0.14]{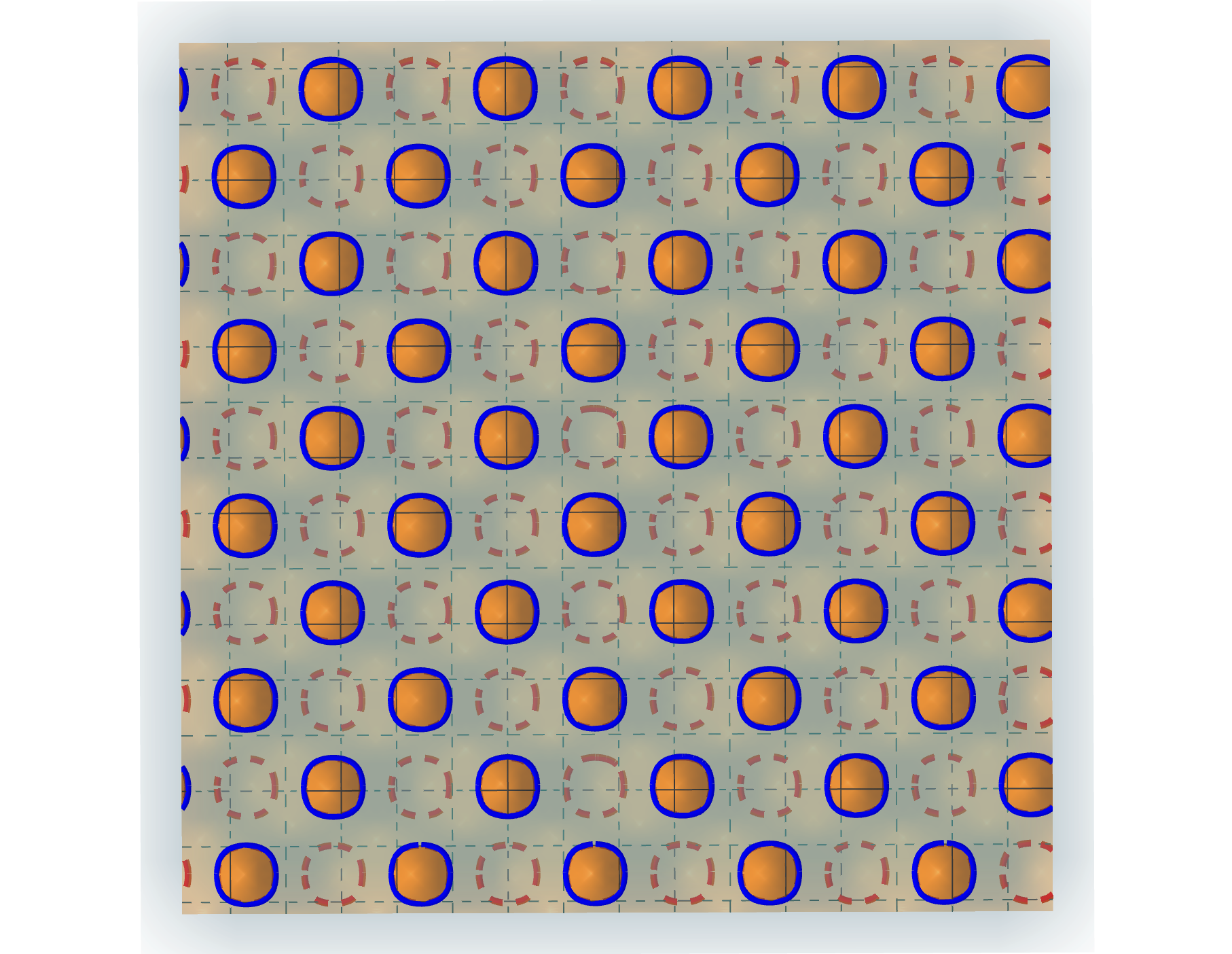}
  \end{minipage}
  \begin{minipage}[b]{0.44\linewidth}
\includegraphics[scale=0.12]{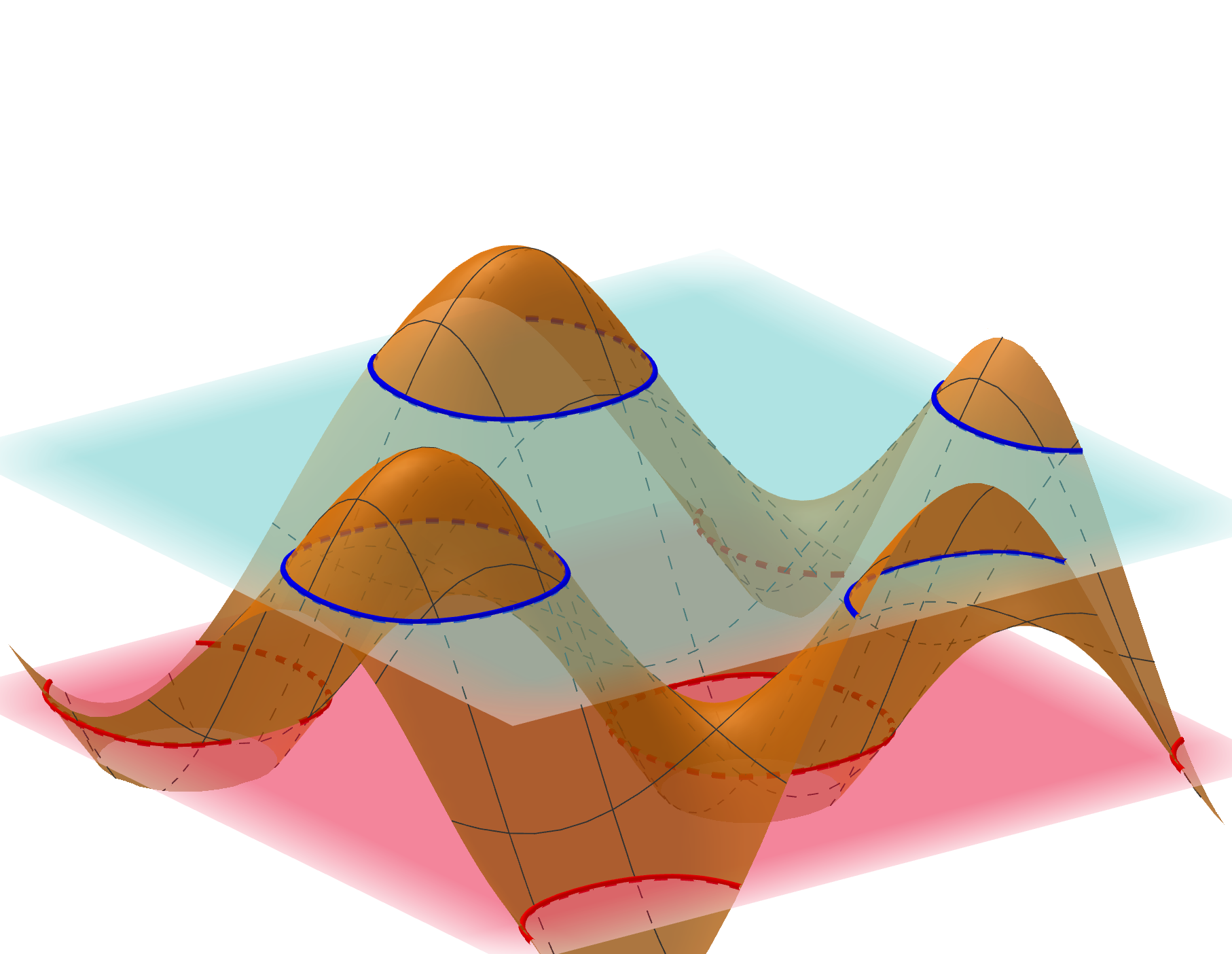}
\end{minipage}\\
\vspace{20pt} 
\subcaption{The magnetic field with islands $B={}^{t}(\sin{x}\sin{y}, \cos{x}\cos{y})$ (The eigenfunction of the second least eigenvalue $|k|^{2}=2$)}
\end{figure}

\setcounter{figure}{0}

\begin{figure}[h]
\caption{2D finite Fourier mode MHS equilibria (eigenfunctions of the Stokes operator). The blue (resp. red) lines represent magnetic field lines in the region where the current field $J=\nabla^{\perp}\cdot B$ is positive (resp. negative). The orange surfaces represent graphs of the flux functions $\phi=\textrm{curl}^{-1}B$.}
\end{figure}

\clearpage

An important subset of smooth MHS equilibria (with explicit forms) in $F_{MHS}$ is a set of finite Fourier mode solutions \cite{EHS17}:
\begin{align*}
F_{\textrm{FFM}}=\left\{\ B=\sum_{k\in K}\hat{B}_ke^{ik\cdot x}\in F_{MHS}\  \middle|\  K=-K\subset \mathbb{Z}^{d}_{0},\ |K|<\infty\  \right\}.
\end{align*}
(By the symmetry of $K$, $B=\sum_{k\in K}\hat{B}_ke^{ik\cdot x}$ is real-valued.) For $d=2$, the set $F_{\textrm{FFM}}$ includes the set of all eigenfunctions of the Stokes operator
\begin{align*}
F_{\textrm{ESO}}=\left\{\ B\in H\cap C^{\infty}(\mathbb{T}^{2})\  \middle|\ -\Delta B=|k|^{2}B,\  k\in \mathbb{Z}^{2}_{0}\ \right\}\subset F_{\textrm{FFM}}.
\end{align*}
(Eigenfunctions of the Stokes operator $B=\nabla^{\perp}\phi$ satisfy the equivalent form of (1.2): $B\cdot \nabla J=0$ for $J=\Delta \phi$ by $-\Delta \phi =|k|^{2}\phi$). Eigenfunctions for the least and second least eigenvalues $|k|^{2}=1$ and $|k|^{2}=2$ describe specific sheared magnetic fields and magnetic fields with islands; see Figure 1. The eigenvalue $|k|^{2}=1$ has multiplicity four with eigenfunctions expressed by superpositions of $\phi=\cos{x}, \sin{x}, \cos{y}, \sin{y}$. The eigenvalue $|k|^{2}=2$ has multiplicity four with eigenfunctions expressed by superpositions of $\phi=\cos{(x+y)}$, $\sin{(x+y)}$, $\cos{(x-y)}$, $\sin{(x-y)}$.

For $d=3$, the set $F_{\textrm{FFM}}$ includes the set of all eigenfunctions of the rotation operator (strong Beltrami fields)
\begin{align*}
F_{\textrm{ERO}}=\left\{\ B\in H\cap C^{\infty}(\mathbb{T}^{3})\  \middle|\ \nabla \times  B=\pm |k| B,\  k\in \mathbb{Z}^{3}_{0}\ \right\}\subset F_{\textrm{FFM}}.
\end{align*}
(Eigenfunctions of the rotation operator $B$ satisfy the equivalent form of (1.2): $\nabla P=J\times B$ for $J=\nabla \times B$ with constant $P=p-|B|^{2}/2$. Note that $F_{\textrm{ERO}}\subset F_{\textrm{ESO}}$. Moreover, $F_{\textrm{ESO}}\cap F_{\textrm{FFM}}=F_{\textrm{ERO}}$ \cite{KY22}; see below.) The least positive eigenvalue of the rotation operator is $|k|=1$ with multiplicity six, and its eigenfunctions are expressed by superpositions of 
\begin{align*}
\mathbf{a}(z)=(\sin z, \cos z,0),\quad \mathbf{b}(x)=(0,\sin x, \cos x), \quad \mathbf{c}(y)=(\cos y, 0,\sin y),
\end{align*}
and $\mathbf{a}(z-\pi/2)$, $\mathbf{b}(x-\pi/2)$, $\mathbf{c}(y-\pi/2)$; see Remark A.5. The eigenfunctions include the ABC flow \cite[II]{AK21} 
\begin{align*}
\qquad \mathsf{A}\mathbf{a}+\mathsf{B}\mathbf{b}+\mathsf{C}\mathbf{c}=(\mathsf{A}\sin{z}+\mathsf{C}\cos{y}, \mathsf{B}\sin{x}+\mathsf{A}\cos{z}, \mathsf{C}\sin{y}+\mathsf{B}\cos{x}),\quad \mathsf{A}, \mathsf{B}, \mathsf{C}\in \mathbb{R}, 
\end{align*}
which has chaotic field lines except for $\mathsf{A}=0$, $\mathsf{B}=0$, or $\mathsf{C}=0$, cf.  \cite[12 (vii)]{Moffatt21}, \cite[Q3]{BFV21}. The set $F_{\textrm{ERO}}$ also includes magnetic fields with field lines of arbitrary knotted topology \cite{EPS17}.

The sets $F_{\textrm{ESO}}$ and $F_{\textrm{ERO}}$ include magnetic energy minimizers with constant mean-square potential $\mathscr{M}(B)=m>0$ for $d=2$ and constant helicity $\mathscr{H}(B)=h\in \mathbb{R}$ for $d=3$, respectively. Those minimizers are important candidates for self-organized states at long-time limits of 2D and 3D MHD turbulence; see \cite{Hasegawa85}, \cite{Biskamp93}, \cite{FLS22} for reviews. For $d=2$, the constraint can be replaced by arbitrary Casimir invariants $\mathscr{C}(B)$. 

\begin{table}[h]
\begin{tabular}{|c|c|c|c|}
\hline
Constraints      & Dimensions & Euler-Lagrange equations                  & Spaces             \\ \hline
$\mathscr{M}(B)$ & $2$        & $-\Delta \phi=\phi$                       & $F_{\textrm{ESO}}$ \\ \hline
$\mathscr{C}(B)$ & $2$        & $-\Delta \phi=\lambda f'( \phi)$ & -                  \\ \hline
$\mathscr{H}(B)$ & $3$        & $\nabla \times B=\pm B$                   & $F_{\textrm{ERO}}$ \\ \hline
\end{tabular}
\caption{Magnetic energy minimizers with conserved constraints}
\end{table}

Elements of $F_{\textrm{FFM}}$ have a constraint on finite and symmetric sets $K\subset \mathbb{Z}^{2}_{0}$ \cite{EHS17}: the set $K$ must be a subset of a line passing through the origin in $\mathbb{Z}^{2}_{0}$ or a subset of a circle centered at the origin in $\mathbb{Z}^{2}_{0}$. For $d=3$, the set $K$ must be a subset of a line passing through the origin in $\mathbb{Z}^{3}_{0}$, a subset of a plane containing the origin, or a subset of a sphere centered at the origin in $\mathbb{Z}^{3}_{0}$. In the last case, finite Fourier mode solutions must be elements of $F_{\textrm{ERO}}$ \cite{KY22}.

\begin{figure}[h]
  \begin{minipage}[b]{0.44\linewidth}
\hspace{63pt}
\includegraphics[scale=0.07]{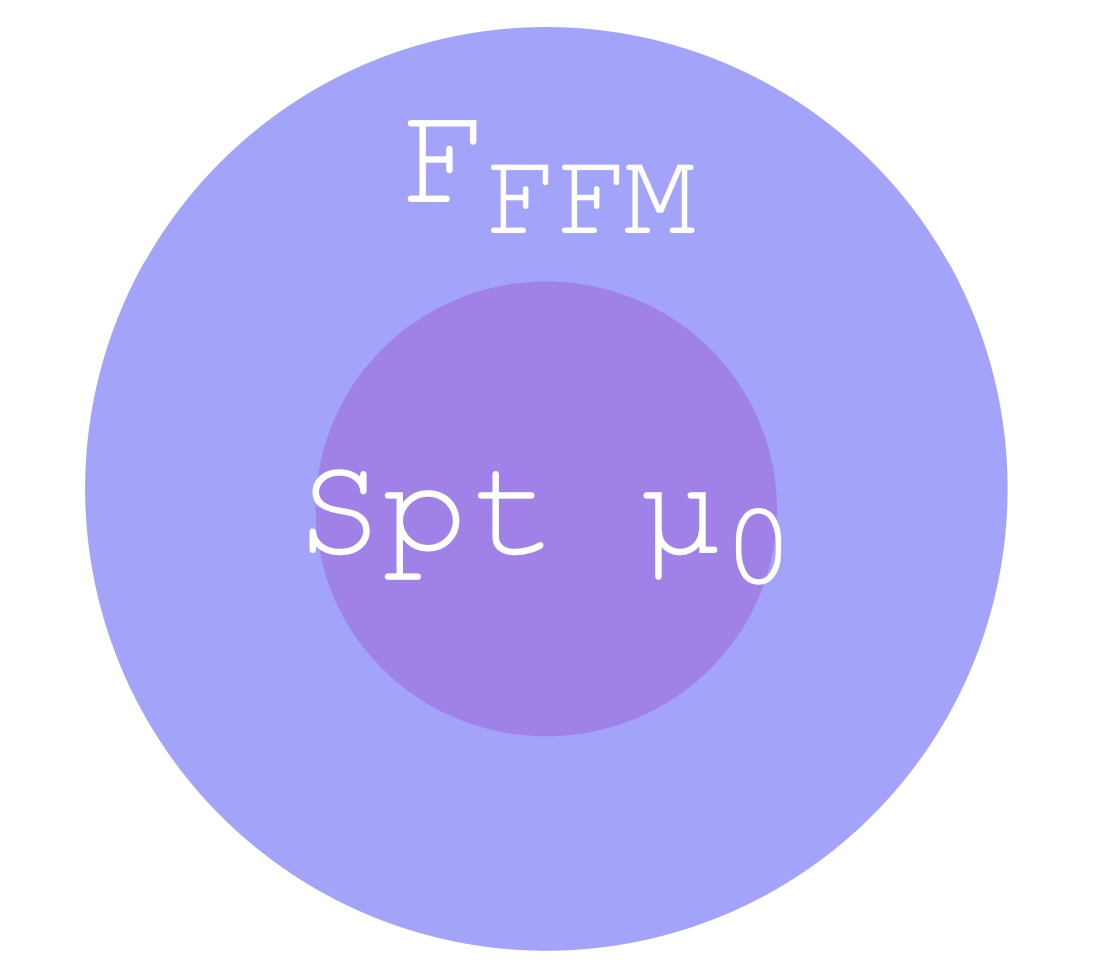}
\vspace{5pt}
\subcaption*{(C1) $\mu_0(F_{\textrm{FFM}})=1$}
  \end{minipage}
 \begin{minipage}[b]{0.44\linewidth}
\hspace{30pt}
\includegraphics[scale=0.07]{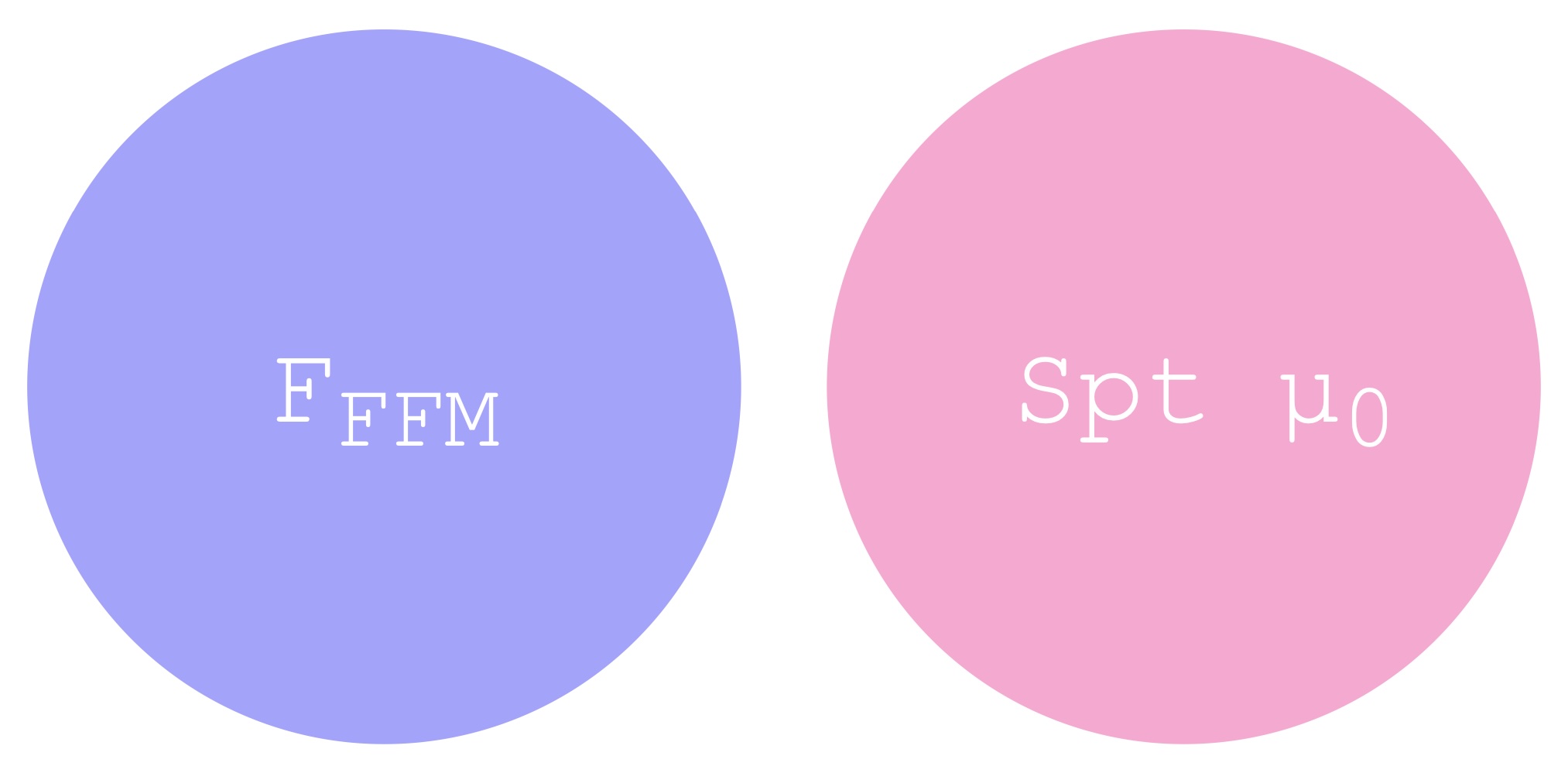}
\vspace{5pt}
\subcaption*{(C2) $\mu_0(F_{\textrm{FFM}})=0$}
\end{minipage}\\
 \begin{minipage}[b]{0.44\linewidth}
\vspace{10pt}
\hspace{130pt}
\includegraphics[scale=0.07]{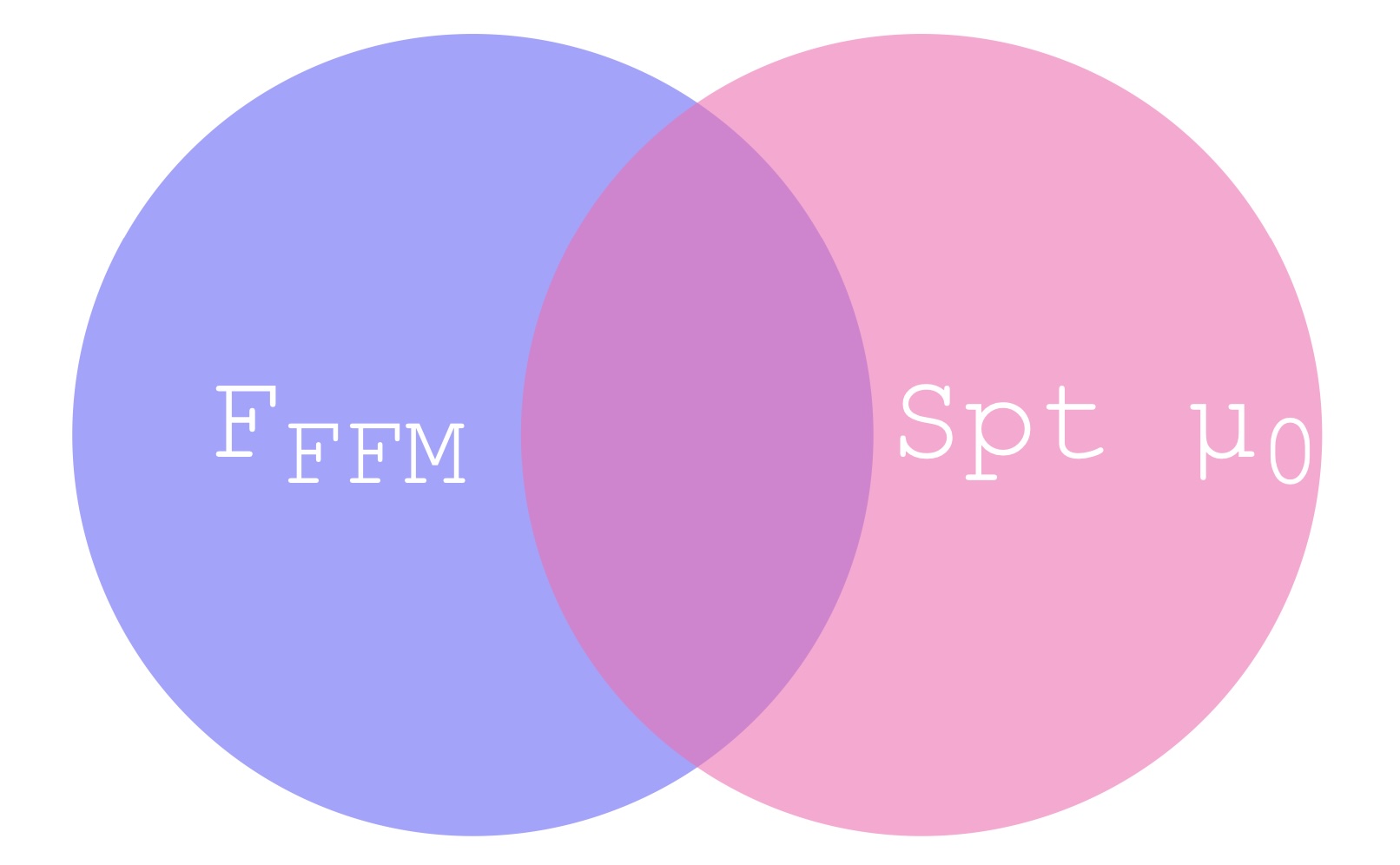}
\end{minipage}
\hspace{45pt}
\begin{minipage}[b]{0.44\linewidth}
\includegraphics[scale=0.07]{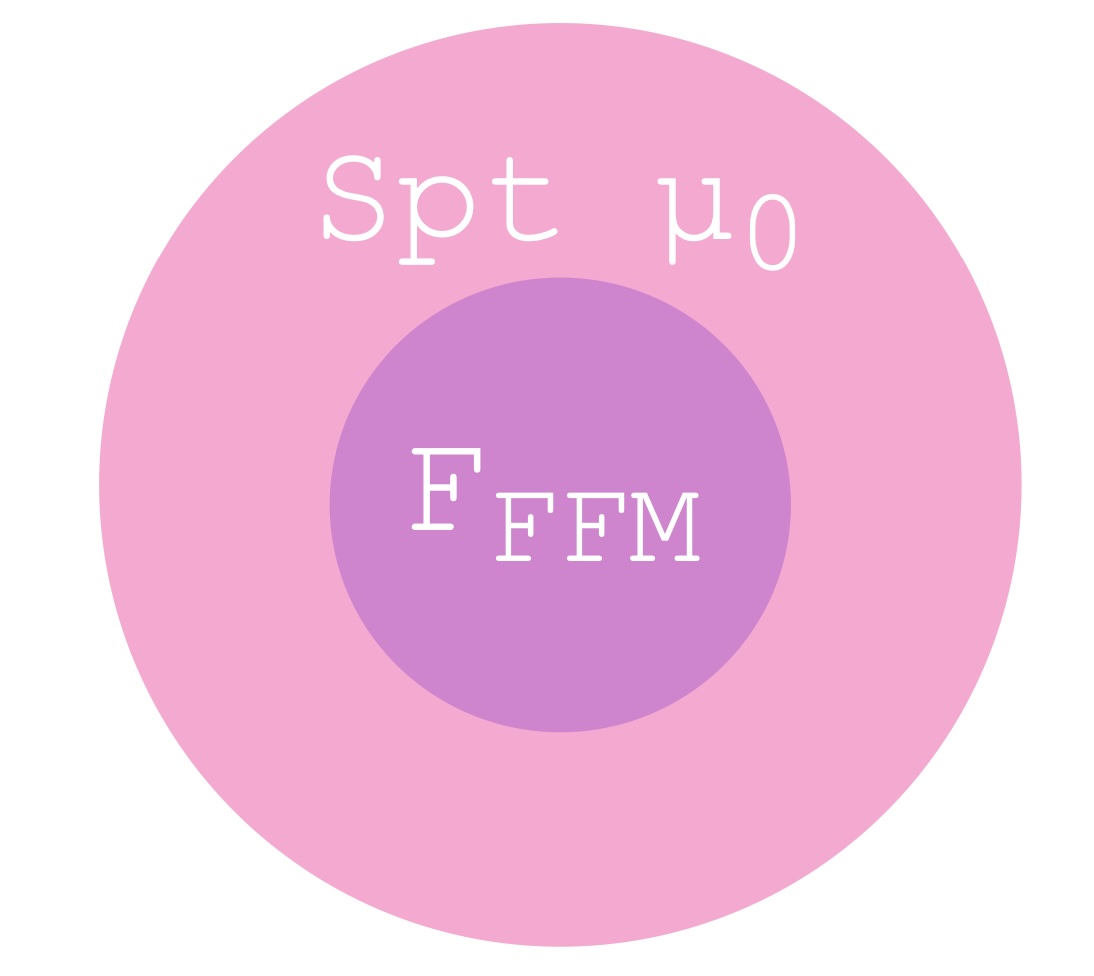}
  \end{minipage}
  \hspace{5pt}  
  \subcaption*{(C3) $0<\mu_0(F_{\textrm{FFM}})<1$}
\end{figure}

\setcounter{figure}{1}

\begin{figure}[h]
\vspace{-15pt}  
\caption{The inclusion relationship between $F_{\textrm{FFM}}$ and $\textrm{spt}\ \mu_0$}
\end{figure}

The inclusion relationship between $F_{\textrm{FFM}}$ and $\textrm{spt}\ \mu_0$ for the measure $\mu_0$ in Theorem 1.1 is one of the following; see Figure 2:\\

\noindent
\textbf{(C1)}: $\mu_0(F_{\textrm{FFM}})=1$. All random equilibrium $B$ realizations are almost surely finite Fourier mode solutions.

\noindent
\textbf{(C2)}: $\mu_0(F_{\textrm{FFM}})=0$. All random equilibrium $B$ realizations are almost surely not finite Fourier mode solutions.

\noindent
\textbf{(C3)}: $0<\mu_0(F_{\textrm{FFM}})<1$. All random equilibrium $B$ realizations are finite Fourier mode solutions and other equilibria with positive probabilities.\\

The second main result of this study shows (C2) for $d=2$.

\begin{thm}[Infinite dimensionarity of the measure]
Let $d=2$. Assume that $b_j\neq 0$ for all $j\in \mathbb{N}$. Then, the measure $\mu_0$ in Theorem 1.1 does not concentrate on compact sets in $H\cap H^{1}(\mathbb{T}^{2})$ with finite Hausdorff dimensions. In particular, $\mu_0(F_{\textrm{FFM}})=0$.
\end{thm}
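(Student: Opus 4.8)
The plan is to derive both assertions from an absolute–continuity property of the finite–dimensional marginals of $\mu_0$, obtained by pushing a $\kappa$–uniform estimate through the non‑resistive limit. Let $P_N$ be the orthogonal projection of $H$ onto $\mathrm{span}\{e_1,\dots,e_N\}\cong\mathbb R^N$; as a bounded linear map, $P_N\colon H^1(\mathbb T^2)\to\mathbb R^N$ is Lipschitz and therefore does not raise Hausdorff dimension. Hence, if $(P_N)_\ast\mu_0\ll\mathrm{Leb}_{\mathbb R^N}$ for every $N$, then for any compact $K\subset H\cap H^1(\mathbb T^2)$ with $\dim_{\mathcal H}K<N$ one has $\mathrm{Leb}_{\mathbb R^N}(P_NK)=0$, so $\mu_0(K)\le(P_N)_\ast\mu_0(P_NK)=0$; i.e. $\mu_0$ charges no compact set of finite Hausdorff dimension. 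For $d=2$ the set $F_{\mathrm{FFM}}$ is a countable union of finite–dimensional linear subspaces of $H\cap H^1(\mathbb T^2)$ — the spaces of fixed–degree polynomial shear flows along each lattice direction, and the Laplacian eigenspaces — and each such subspace is a countable union of compact balls of finite Hausdorff dimension, so $\mu_0(F_{\mathrm{FFM}})=0$ follows. Thus everything reduces to showing, for each $N$,
\begin{equation}
(P_N)_\ast\mu_0\ll\mathrm{Leb}_{\mathbb R^N}. \tag{$\dagger$}
\end{equation}

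For fixed $\kappa>0$ the measure $(P_N)_\ast\mu_\kappa$ is absolutely continuous: since $b_j\ne0$ for all $j$ the additive noise has full range, the transition semigroup of (1.3) regularizes every finite family of modes, so the $P_N$–marginal of the transition probability $P_t(B_0,\cdot)$ has an absolutely continuous, Gaussian–dominated density for $t>0$; by invariance $(P_N)_\ast\mu_\kappa=\int_H(P_N)_\ast P_t(B_0,\cdot)\,\mu_\kappa(dB_0)$ is a superposition of such measures, hence has a density $\rho_{N,\kappa}$ solving the stationary Fokker–Planck equation on $\mathbb R^N$
\begin{equation}
\frac\kappa2\sum_{j\le N}b_j^2\,\partial_{x_j}^2\rho_{N,\kappa}=\sum_{j\le N}\partial_{x_j}\!\big(\overline D_j\,\rho_{N,\kappa}\big),\qquad \overline D_j(x)=-\kappa\lambda_j x_j+\overline N_j(x),
\end{equation}
where $\overline N_j(x)=\mathbb E_{\mu_\kappa}\big[(B\cdot\nabla u-u\cdot\nabla B,e_j)_H\,\big|\,P_NB=x\big]$ is the conditional nonlinear (hydrostatic) drift. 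Were $\overline N_j$ absent, the linear part alone would force the $\kappa$–independent Gaussian $\rho_{N,\kappa}\propto\exp\!\big(-\sum_{j\le N}\lambda_j x_j^2/b_j^2\big)$; the whole problem is to control the contribution of $\overline N_j$ uniformly as $\kappa\to0$.

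The inputs are the $\kappa$–uniform stationary balance relations from Itô's formula (Theorem 5.15; cf. Remark 1.3). Applying Itô to $\tfrac12\|B\|_H^2$ and using $(B\cdot\nabla u-u\cdot\nabla B,B)_H=-\|u\|_{\dot H^\gamma}^2$ gives, in the stationary regime, $\mathbb E_{\mu_\kappa}\|u\|_{\dot H^\gamma}^2+\kappa\,\mathbb E_{\mu_\kappa}\|\nabla B\|_{L^2}^2=\tfrac\kappa2\mathsf C_0$, hence the $\kappa$–uniform bounds $\mathbb E_{\mu_\kappa}\|\nabla B\|_{L^2}^2\le\tfrac12\mathsf C_0$ and, crucially, $\mathbb E_{\mu_\kappa}\|u\|_{\dot H^\gamma}^2=O(\kappa)$; Itô applied to $\tfrac12\|\mathrm{curl}^{-1}B\|_{L^2}^2$ gives $\mathbb E_{\mu_\kappa}\|B\|_H^2=\mathsf C_{-1}/2$, and the Casimirs $\int_{\mathbb T^2}f(\mathrm{curl}^{-1}B)\,dx$ yield the $\kappa$–uniform family $\mathbb E_{\mu_\kappa}\!\int f''(\mathrm{curl}^{-1}B)\,|\nabla\,\mathrm{curl}^{-1}B|^2\,dx=\tfrac12\,\mathbb E_{\mu_\kappa}\!\int f''(\mathrm{curl}^{-1}B)\,\Theta\,dx$, with $\Theta$ the noise–intensity of $\mathrm{curl}^{-1}\zeta$. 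Since each $e_j$ is smooth, integration by parts gives $|(B\cdot\nabla u-u\cdot\nabla B,e_j)_H|\lesssim_j\|u\|_{\dot H^\gamma}\|B\|_H$, so by Cauchy–Schwarz $\int_{\mathbb R^N}|\overline N(x)|\,\rho_{N,\kappa}(x)\,dx\lesssim_N(\mathbb E_{\mu_\kappa}\|u\|_{\dot H^\gamma}^2)^{1/2}(\mathbb E_{\mu_\kappa}\|B\|_H^2)^{1/2}=O(\sqrt\kappa)$: the nonlinear drift is small, on the scale $\sqrt\kappa$, in $L^1$ against the marginal. One then feeds this smallness, together with the confining linear part, into an entropy (or $L^p$) estimate for the Fokker–Planck equation — comparing $\rho_{N,\kappa}$ with the Gaussian $\propto\exp(-\sum\lambda_j x_j^2/b_j^2)$, whose logarithmic Sobolev constant does not degenerate as $\kappa\to0$ — to obtain a $\kappa$–uniform modulus $\sup_{0<\kappa<\kappa_0}(P_N)_\ast\mu_\kappa(E)\le\omega_N(\mathrm{Leb}_{\mathbb R^N}(E))$ with $\omega_N(0{+})=0$. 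This passes to the limit: for open $E$, $(P_N)_\ast\mu_0(E)\le\liminf_\kappa(P_N)_\ast\mu_\kappa(E)\le\omega_N(\mathrm{Leb}_{\mathbb R^N}(E))$ by weak convergence on $H^{1-\varepsilon}(\mathbb T^2)$, and approximating a Lebesgue–null Borel set from outside by open sets of small measure gives $(\dagger)$.

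The main obstacle is precisely this last bootstrap. Both the resistivity $\kappa\Delta B$ and the forcing $\sqrt\kappa\,\partial_t\zeta$ are weak, so $\mu_\kappa$ only fills out its modes over the long time scale $t\sim1/\kappa$, on which this filling competes with the relaxation toward $F_{\mathrm{MHS}}$; correspondingly the stationary Fokker–Planck density carries a factor $\exp\!\big(\kappa^{-1}\times(\text{drift})\times(\text{displacement})\big)$, so a crude use of the $O(\sqrt\kappa)$ smallness of $\overline N$ loses a power $\kappa^{-1/2}$. Closing the estimate seems to require genuinely exploiting that, by the non‑resistive balance, the nonlinear drift strength measured by $\|u\|_{\dot H^\gamma}^2$ is of the \emph{same} order $\kappa$ as the resistive diffusion — so the drift‑to‑diffusion ratio is $O(1)$ in the right norm — together with the extra cancellation available in $d=2$: the cubic functional $(B\cdot\nabla u-u\cdot\nabla B,e_j)_H$ is odd, hence mean‑zero under the $B\mapsto-B$–symmetric $\mu_\kappa$, with low–mode conditional expectation of higher order in $\kappa$, and the Casimir relations above further constrain the law of $\mathrm{curl}^{-1}B$, which for a finite–Fourier–mode solution would be rigidly pinned. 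Once the uniform anti‑concentration $(\dagger)$ is established, the reduction of the first paragraph yields at once that $\mu_0$ is not concentrated on any compact set of finite Hausdorff dimension, and in particular $\mu_0(F_{\mathrm{FFM}})=0$.
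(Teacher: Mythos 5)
Your overall reduction is sound and in fact mirrors the paper's skeleton: push $\mu_0$ forward to $\mathbb{R}^n$ by a Lipschitz map (which cannot raise Hausdorff dimension), prove that the pushforward is absolutely continuous with respect to Lebesgue measure uniformly in the limit $\kappa\to 0$, and conclude that compact sets of finite Hausdorff dimension (and hence $F_{\mathrm{FFM}}$, a countable union of finite-dimensional pieces) are $\mu_0$-null. The genuine gap is that your chosen observable — the spectral projection $P_N B=((B,e_1)_H,\dots,(B,e_N)_H)$ — makes the crucial uniform estimate $(\dagger)$ unprovable by the route you sketch, and you do not close it: you explicitly concede that the Fokker--Planck/entropy bootstrap loses a factor $\kappa^{-1/2}$ and that "closing the estimate seems to require" further cancellations. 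The difficulty is structural, not technical: in the Itô equation for $(B,e_j)_H$ the cubic term $(B\cdot\nabla u-u\cdot\nabla B,e_j)_H$ contributes a drift of size $O(\sqrt{\kappa})$ in $L^2(\mu_\kappa)$ while the diffusion coefficient is $\kappa b_j^2$, so the drift-to-diffusion ratio diverges like $\kappa^{-1/2}$; the conjectured $\kappa$-uniform anti-concentration of the coordinate marginals is exactly what the fluctuation--dissipation scaling does not hand you, and neither the $B\mapsto-B$ symmetry heuristic nor the Casimir balances as you invoke them repair this. (Even the fixed-$\kappa$ claim that the $P_N$-marginal of the transition kernel has a density is asserted, not proved, but that is secondary.)

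The paper avoids this mismatch by choosing the finite-dimensional observable to be a vector of Casimir functionals $\mathscr{C}(B)=\bigl(\int_{\mathbb{T}^2}f_k(\mathrm{curl}^{-1}B)\,dx\bigr)_{k\le n}$ with $f_1',\dots,f_n'$ independent modulo constants. Because Casimirs are conserved by the ideal transport, the cubic nonlinearity drops out of their Itô drift entirely: both the drift $A_k$ and the diffusion matrix $\sum_j B_{k,j}B_{l,j}=\kappa\,\sigma^{kl}(f'(\phi))$ are $O(\kappa)$ (Proposition 6.18), so Krylov's estimate (B.29) yields an anti-concentration bound for $\mathcal{D}(\mathscr{C}(B_\kappa))$ in which the $\kappa$'s cancel. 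The remaining ingredients are the non-degeneracy $\det\sigma(f'(\phi))>0$ for $\phi\neq 0$ (Lemma 6.17, which is where $b_j\neq 0$ for all $j$ and the independence-modulo-constants hypothesis enter), the lower bound on $\|\phi_\kappa\|_{L^2}$ away from zero via the one-dimensional stationary-process identity (Lemma 6.11), and Portmanteau to pass to $\mu_0$; the Lipschitz continuity of $\mathscr{C}:H\to\mathbb{R}^n$ then gives $l_n(\mathscr{C}(K))=0$ for any compact $K$ with $\dim_H K<n$, hence $\mu_0(K)=0$. If you want to salvage your argument, replace $P_N$ by such Casimir maps; as written, the central step $(\dagger)$ is missing and the theorem is not proved.
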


For $d=3$, (C1)-(C3) are relevant to Grad's conjecture in plasma \cite{Grad67}, \cite{CDG21b}, which states that smooth non-symmetric MHS equilibria do not exist except for Beltrami fields, cf. \cite{CDP}. The work \cite{CP22} constructed smooth MHS equilibria in $H^{\alpha}(\mathbb{T}^{3})$ for any $\alpha\geq 1$ as long-time limits of the Voigt-MHD system without assuming any symmetries; see also \cite{Bhatt}. Furthermore, the recent work \cite{EPP25} constructed H\"older continuous MHS equilibria in $C^{\gamma}(\mathbb{T}^{3})$ for some $0<\gamma<1$ with arbitrary field line topology. Theorem 1.1 is a particular case of a general convergence result to a random MHS equilibrium in $H^{\alpha}(\mathbb{T}^{d})$ for $\alpha\geq 1$; see Theorem 5.16.

The infinite dimensionality of the two-dimensional measure in Theorem 1.3 is based on the fact that 2D MRE (1.1) admits infinitely many Casimir invariants in contrast to 3D MRE, for which helicity is the only available Casimir invariant, cf. \cite{Khesin21}. For $d=3$ and $\mathcal{C}_{-1/2}\neq 0$, $\textrm{spt}\ \mu_0$ is an uncountable set whose elements take infinitely different helicity by the absolute continuity of the law $\mathcal{D}_{\mu_0}(\mathscr{H})$ for the Lebesgue measure in $\mathbb{R}\backslash \{0\}$ (Theorem 6.6). For $d\geq 4$, $\textrm{spt}\ \mu_0$ is uncountable unless $\mu_0=\delta_0$ (Remarks 6.19 (i)). For $d\geq 4$, any conservation law besides the energy equality is unknown for MRE (1.1), unlike the Euler equations, which conserve the helicity of vortex for any odd-dimension, and integrals of any functions of the vorticity function for any even-dimension \cite[II.9]{AK21}.

Kuksin \cite{Kuk04} (see also Kuksin and Shirikyan \cite{Kuk12}) constructed an invariant measure of the Euler equations with several universal properties relevant to the two-dimensional turbulence by inviscid limits of invariant measures to the stochastic Navier--Stokes equations; see \cite[Remark 5.2.19]{Kuk12}, \cite{Latocca} for the hyper-viscosity case. The works \cite{GSV15} and \cite{Bed16} discuss the relationship between the support of the Kuksin measure and long-time limits of the 2D Euler and passive scalar equations; see, e.g.,  \cite{Bouchot}, \cite{KMS23}, \cite{DE} for reviews on statistical mechanics of the two-dimensional turbulence. The Kuksin measure also plays a role in the global well-posedness of the nonlinear Schr\"odinger equations \cite{KS04}, \cite{Shi11}, \cite[5.2.6]{Kuk12}, \cite{Sy21}, and the SQG equations \cite{FS21}.


\subsection{The outline of the proof}

We show Theorem 1.1 based on the fluctuation-dissipation method for the 2D Euler equations \cite{Kuk04}, \cite{Kuk12}. The main steps of the proof are as follows: 
\begin{itemize}
\item[\S 2] Global well-posedness of the resistive MRE
\item[\S 3] Path-wise global well-posedness of the randomly forced resistive MRE 
\item[\S 4] The existence of invariant measures
\item[\S 5] The non-resistive limits
\item[\S 6] Absolute continuity of laws 
\end{itemize} 
The system (1.3) is a cubic nonlinear problem for $d\geq 2$, and we work in a subcritical regime, $\gamma > d/2$ and $ B_0 \in H$, cf. \cite{Kuk04}, \cite{Kuk12}. Namely, we consider the following forced system for $\gamma>d/2$:
\begin{equation}
\begin{aligned}
\partial_t B+u\cdot \nabla B-B\cdot \nabla u&=  \kappa \Delta B +f,\\
\nabla p&=B\cdot \nabla B-(-\Delta)^{\gamma}u,\\
\nabla \cdot u=\nabla \cdot B&=0.
\end{aligned}
\end{equation}
For the deterministic force $f$, this system possesses the energy inequality  
\begin{align}
\int_{\mathbb{T}^{d}}|B|^{2}dx+\int_{0}^{t}\left( \int_{\mathbb{T}^{d}} \kappa |\nabla B|^{2}dx+2\int_{\mathbb{T}^{d}}\left|(-\Delta)^{\frac{\gamma}{2}}u\right|^{2}dx\right)ds
\leq \int_{\mathbb{T}^{d}}|B_0|^{2}dx+\frac{1}{\kappa}\int_{0}^{t}||f||_{H^{-1}}^{2}ds,
\end{align}
and the scaling law 
\begin{equation*}
\begin{aligned}
B_{\lambda}(x,t)&=\lambda^{\gamma}B(\lambda x,\lambda^{2} t), \quad
u_{\lambda}(x,t)=\lambda u(\lambda x,\lambda^{2} t),  \\
p_{\lambda}(x,t)&=\lambda^{2\gamma} p(\lambda x,\lambda^{2} t),  \quad
f_{\lambda}(x,t)=\lambda^{\gamma+2}f(\lambda x,\lambda^{2} t),\quad \lambda>0.
\end{aligned}
\end{equation*}
The $L^{p}$-norm of $B$ is invariant under this scaling for (1.11) with $\gamma=d/p$, i.e., $||B_{\lambda}||_{L^{p}}(0)=\lambda^{\gamma-d/p}||B||_{L^{p}}(0)$. The global well-posedness of the deterministic system for $\gamma\geq d/2$ and $B_0\in H$ (without force) is demonstrated in \cite{Robinson14} for $d=2$ and \cite{JiTan} for $d=3$; see also \cite{BKS23}. In \S 2-\S 4, we show the existence of invariant measures to (1.11) for the random force $f=\partial_t \zeta$ based on global well-posedness of the deterministic system and It\^o formulas. 

\subsubsection{Global well-posedness of the resistive MRE}

In \S 2, we first show existence and uniqueness of solutions to (1.11) for $\gamma\geq d/2$ in $\mathcal{X}_T=C([0,T]; H)\cap L^{2}(0,T; V)$ for $B_0\in H$ and the force $f\in L^{2}(0,T; V^{*})$, where $V^{*}$ is the dual space of $V=H\cap H^{1}(\mathbb{T}^{d})$. The system (1.11) for $B=b+Z$ with the force $f=(\partial_t-\kappa \Delta )Z$ and $Z\in \mathcal{X}_T$ provides an equivalent system:
\begin{equation}
\begin{aligned}
\partial_t b+u\cdot \nabla (b+Z)-(b+Z)\cdot \nabla u&=\kappa \Delta b, \\
\nabla p&=(b+Z)\cdot \nabla (b+Z)-(-\Delta)^{\gamma}u,\\
\nabla \cdot u=\nabla \cdot b&=0.
\end{aligned}
\end{equation}
We show global well-posedness of this system for $\gamma>d/2$, given $b_0\in H$ and $Z\in \mathcal{X}_T$ by the energy estimate using the continuous embedding $||u||_{L^{\infty}}\lesssim ||u||_{\dot{H}^{\gamma}}$. 

\subsubsection{Path-wise global well-posedness of the randomly forced resistive MRE}

In \S 3, we show path-wise global well-posedness of the system (1.11) for the random force $f=\partial_t \zeta$ and the Wiener process $\zeta$ in (1.5) by the reduction to (1.13) applying stochastic convolution. 

\subsubsection{The existence of invariant measures}

In \S4, we apply It\^o formulas (in Appendix B) for the stochastic process $B=B(t; B_0)$ constructed in \S 3 to obtain the energy, helicity, and mean-square potential balances for $t\geq 0$:
\begin{align*}
\mathbb{E}||B||_{H}^{2}(t)+2\mathbb{E}\int_{0}^{t}\left( \kappa ||\nabla B||_{H}^{2}+\left\|u\right\|^{2}_{\dot{H}^{\gamma}}\right)ds
&=\mathbb{E}||B_0||_{H}^{2}+\mathsf{C}_0t, \\
\mathbb{E}(\textrm{curl}^{-1}B,B)_{H}(t)
+2\kappa \mathbb{E}\int_{0}^{t}(\nabla \times B,B)_{H}ds
&=\mathbb{E}(\textrm{curl}^{-1}B_0,B_0)_{H}
+\mathcal{C}_{-\frac{1}{2}}t,\quad d=3,  \\
\mathbb{E}||\textrm{curl}^{-1}B ||_{H}^{2}(t)
+2\kappa  \mathbb{E}\int_{0}^{t}|| B||_{H}^{2}ds
&=\mathbb{E}||\textrm{curl}^{-1}B_0||_{H}^{2}
+\mathsf{C}_{-1}t,\quad d=2.
\end{align*}
We also show an exponential moment bound: 
\begin{align*}
\mathbb{E}\exp \left(\kappa \varrho ||B||_H^{2}  \right)(t)\leq 
\mathbb{E}\exp\left(\kappa \varrho ||B_0||_H^{2} \right) e^{-\kappa^{2}\varrho t}+\frac{1}{\kappa}(\mathsf{C}_0+\kappa)e^{\varrho(\mathsf{C}_0+\kappa) },\quad 0<\varrho \leq \frac{1}{2\sup_{j\geq 1}b_j^{2} }.
\end{align*}
The existence of an invariant measure follows from the energy balance and Krylov--Bogoliubov theorem. 

\subsubsection{The non-resistive limits}

\S5 is the main observation of this study. The invariant measure $\mu_{\kappa}$ of the stochastic system (1.3) and statistically stationary solutions $B_{\kappa}: (\Omega,\mathcal{F},\mathbb{P})\to (H,\mathcal{B}(H))$ with law $\mathcal{D}(B_{\kappa})=\mu_{\kappa}$ satisfy the following balance relations for the associated velocity field $u_{\kappa}$: 
\begin{align}
\mathbb{E}\left(\kappa ||\nabla B_{\kappa}||_{H}^{2}+||u_{\kappa}||_{\dot{H}^{\gamma }}^{2} \right)
&=\kappa \frac{\mathsf{C}_0}{2}, \\
\mathbb{E}\exp\left(   \rho ||B_{\kappa}||_{H}^{2} \right) 
&\leq (\mathsf{C}_0+1)e^{\rho(\mathsf{C}_0+1) },\quad 0<\rho\leq \frac{1}{2\sup_{j\geq 1}b_j^{2}},\\
\mathbb{E}\left(\nabla \times B_{\kappa}, B_{\kappa}   \right)_H
&=\frac{\mathcal{C}_{-\frac{1}{2}}}{2},\quad d=3 \\
\mathbb{E}\left\|B_{\kappa}   \right\|_H^{2}
&=\frac{\mathsf{C}_{-1}}{2},\quad d=2.
\end{align}
The balance relation (1.14) is a statistical version of the energy equality of MRE (1.1), which states that the velocity field $u_{\kappa}$ vanishes as $\kappa\to 0$ with the bounded magnetic field $B_{\kappa}$. The measure $\mu_{\kappa}$ weakly converges to a measure $\mu_{0}$ in $\mathcal{P}(V^{1-\varepsilon})$ for $\varepsilon>0$ and $V^{m}=H\cap H^{m}(\mathbb{T}^{d})$ by the compactness $V=V^{1}\Subset V^{1-\varepsilon}$ and the bound $\mathbb{E}||B_{\kappa}||_{V}^{2}\lesssim \mathsf{C}_0$ by Prokhorov theorem. Here, we use the symbol $X\Subset Y$ if any bounded sequence in $X$ has a convergent subsequence in $Y$. 

We demonstrate strong convergence of the magnetic field $B_{\kappa}$ toward a random MHS equilibrium (1.2) as $\kappa\to 0$ by showing convergence of the \textit{lifted} measure $\boldsymbol{\mu}_{\kappa}=\mathcal{D}(B_{\kappa})$ for $B_{\kappa}: (\Omega,\mathcal{F},\mathbb{P}) \to (\mathcal{X}_{T}, \mathcal{B}(\mathcal{X}_{T}))$. Namely, we show the weak convergence of probability measures 
\begin{align}
\boldsymbol{\mu}_{\kappa}\to \boldsymbol{\mu}_{0}\quad \textrm{in}\ \mathcal{P}(\mathcal{X}_{T}^{-\varepsilon}),\quad 
\mathcal{X}^{-\varepsilon}_T=C([0,T]; H^{-\varepsilon})\cap L^{2}(0,T; H^{1-\varepsilon}),
\end{align}
and deduce from Skorokhod theorem the convergence of statistically stationary solutions for $B$ with law $\mathcal{D}(B)=\mu_{0}$,
\begin{align}
B_{\kappa}\to B\quad \textrm{in}\ \mathcal{X}^{-\varepsilon}_{T},\quad \textrm{a.s.}
\end{align}
The convergence (1.19) is strong enough to take a limit in the constitutive law (1.3$)_2$ and conclude that the limit $B=B(x)$ is a random MHS equilibrium (1.2).

We show the tightness of the lifted measures (1.18) by choosing a compact set $\mathcal{K}_T$ such that 

\begin{align}
\mathcal{K}_{T}\Subset \mathcal{X}^{-\varepsilon}_T,\quad 
\mathbb{E}||B_{\kappa}||_{\mathcal{K}_{T}}^{p}&\leq C,\quad 0<\kappa\leq 1,\quad 1\leq p<2.
\end{align}
The space $\mathcal{K}_{T}$ is an alternative space to $\mathcal{X}_{T}$; note that the embedding $\mathcal{X}_{T}\subset \mathcal{X}_{T}^{-\varepsilon}$ is not compact. We set $\mathcal{K}_T=\mathcal{W}_T\cap L^{2}(0,T; V)$ for a sum space $\mathcal{W}_T=\mathcal{W}_T^{1}+\mathcal{W}_T^{2}+\mathcal{W}_T^{3}$ and choose each $\mathcal{W}_T^{i}$ depending on regularity of decomposed statistically stationary solutions (see \S 5.3). Due to the cubic nonlinearity, we show the $p$-th moment estimate (1.20$)_2$ for $p<2$.

We show the compactness (1.20$)_1$ by Lions--Aubin--Simon theorem and the boundedness (1.20$)_2$ by estimating statistically stationary solutions on each $\mathcal{W}_T^{i}$. We then deduce the tightness (1.18) from Prokhorov theorem. 

\subsubsection{Absolute continuity of laws}
\S 6 provides the proof of Theorem 1.3 by showing absolute continuity of laws of $\mathscr{E}$, $\mathscr{H}$, $\mathscr{M}$, and $\mathscr{C}$ under $\mu_0$ for Lebesgue measures. We apply It\^o formulas for the general functionals $f(\mathscr{E}(B))$, $f(\mathscr{H}(B))$, $f(\mathscr{M}(B))$ and $f\in C^{2}(\mathbb{R})$, an identity for one-dimensional stationary stochastic processes (B.28), and Krylov's estimate for $n$-dimensional stationary stochastic processes (B.29). The absolute continuity of $\mathcal{D}_{\mu_0}(\mathscr{C})$ for the Lebesgue measure on $\mathbb{R}^{n}$ implies the infinite dimensionality of $\mu_0$ in Theorem 1.3.


\subsection{Acknowledgements}
KA is supported by the JSPS through the Grant in Aid for Scientific Research (C) 24K06800 and MEXT Promotion of Distinctive Joint Research Center Program JPMXP0619217849. IJ is supported by the National Research Foundation of Korea grant No. 2022R1C1C1011051 and RS-2024-00406821. NS is supported by JSPS KAKENHI Grant No. 25K07267, No. 22H04936, and No. 24K00615.

\section{Global well-posedness of the resistive MRE}

We show global well-posedness of the system (1.11) for $\gamma\geq d/2$ and its perturbed system (1.13) for $\gamma> d/2$. Although the global well-posedness of the system (1.11) without force is known \cite{Robinson14}, \cite{JiTan}, \cite{BKS23}, we need to obtain the global well-posedness result of the perturbed system (1.13) to show the path-wise global well-posedness of the system (1.11) for the random force in \S 3. The local well-posedness of (1.11) and (1.13) is parallel. We express (1.11) for $f\in L^{2}(0,T; V^{*})$ as 
\begin{equation}
\begin{aligned}
\partial_t B&=\kappa \Delta B +\nabla \cdot (B\otimes u-u\otimes B)+f,\\
u&=K_{\gamma}(B,B),
\end{aligned}
\end{equation}
by the bilinear operator $K_{\gamma}(\cdot,\cdot)=(-\Delta)^{-\gamma}\Pi \nabla \cdot (\cdot\otimes \cdot)$ and the projection $\Pi: L^{2}(\mathbb{T}^{d})\to L^{2}_{\sigma}(\mathbb{T}^{d})=\{g\in L^{2}(\mathbb{T}^{2})\ |\ \nabla \cdot g=0\}$ and  construct mild solutions for $B_0\in H$ in $(0,T)$: 
\begin{equation}
B=e^{t\kappa \Delta}B_0+\int_{0}^{t}e^{(t-s)\kappa \Delta}f(s)ds +\int_{0}^{t}e^{(t-s)\kappa \Delta}\nabla \cdot (B\otimes  u- u\otimes B)ds,\quad u=K_{\gamma}(B, B).
\end{equation}

\subsection{Regularity estimates for linear operators}

We use the Fourier series 
\begin{align*}
g=\sum_{k\in \mathbb{Z}^{d}}\hat{g}_ke^{ik\cdot x},\quad \hat{g}_k=\frac{1}{(2\pi)^{d}}\int_{\mathbb{T}^{d}}g(x)e^{-ik\cdot x}dx,
\end{align*}
and define the Sobolev space $H^{m}(\mathbb{T}^{d})$ for $m\in \mathbb{R}$ by  
\begin{align*}
H^{m}(\mathbb{T}^{d})=\left\{ g\in \mathcal{D}'(\mathbb{T}^{d})\ \middle|\ ||g||_{H^{m}(\mathbb{T}^{d})}<\infty \right\},\quad ||g||_{H^{m}(\mathbb{T}^{d})}^{2} =\sum_{k\in \mathbb{Z}^{d}}(|k|^{2}+1)^{m}|\hat{g}_k|^{2},
\end{align*}
where $\mathcal{D}'(\mathbb{T}^{d})$ denotes the space of distributions. The space $C^{\infty}(\mathbb{T}^{d})$ is dense in $H^{m}(\mathbb{T}^{d})$ for $m\geq 0$. We denote by $C^{m}_{b}(\mathbb{T}^{d})$ the space of all bounded and continuous functions on $\mathbb{T}^{d}$ up to order $m\geq 0$. We use the following embeddings \cite{Kuk12}: 
\begin{align}
H^{m}(\mathbb{T}^{d})&\subset L^{q}(\mathbb{T}^{d}),\quad 0\leq m\leq \frac{d}{2},\ 2\leq q\leq \frac{2d}{d-2m}<\infty,\\  
H^{m}(\mathbb{T}^{d})&\subset C^{m-\frac{d}{2}}_{b}(\mathbb{T}^{d}),\quad m>\frac{d}{2}, \\
H^{m_1}(\mathbb{T}^{d})&\Subset H^{m_2}(\mathbb{T}^{d}),\quad m_1>m_2, \\
||g||_{H^{\theta m_1+(1-\theta)m_2}}&\leq ||g||_{H^{m_1}}^{\theta}||g||_{H^{m_2}}^{1-\theta},\quad g\in H^{m_1}\cap H^{m_2}(\mathbb{T}^{d}),\ 0\leq \theta\leq 1.
\end{align}
We define the fractional operators and the heat semigroup by 
\begin{align*}
(-\Delta)^{m}g&=\sum_{k\in \mathbb{Z}^{d}}|k|^{2m}\hat{g}_ke^{ik\cdot x},\quad m\geq 0, \\
e^{t\Delta}g&=\sum_{k\in \mathbb{Z}^{d}}e^{-t |k|^{2}}\hat{g}_{k}e^{ik\cdot x}.
\end{align*}
For $m<0$, we define the operator $(-\Delta)^{m}$ on average-zero functions by taking the summuation for $k\in \mathbb{Z}_{0}^{d}=\mathbb{Z}^{d}\backslash \{0\}$. By the Sobolev inequality on the torus \cite{BO13}, the operator 
\begin{align}
(-\Delta)^{-\frac{\alpha}{2}}: L^{r}(\mathbb{T}^{d})\to L^{q}(\mathbb{T}^{d}),\quad 0<\alpha<d,\ 1<r<q<\infty,\ \frac{1}{q}=\frac{1}{r}-\frac{\alpha}{d},
\end{align}
is a bounded operator. The heat semigroup is a bounded operator on $H^{s}(\mathbb{T}^{d})$ for $s\in \mathbb{R}$ by Parseval's identity. We set the $L^{2}$-solenoidal space $L^{2}_{\sigma}(\mathbb{T}^{d})=\{g\in L^{2}(\mathbb{T}^{d})\ |\ \nabla \cdot g=0\ \}$ and decompose $g\in L^{2}(\mathbb{T}^{d})$ as 
\begin{align*}
g=\left(\hat{g}_0+\sum_{k\in \mathbb{Z}^{d}_{0}}\left(I-\frac{k\otimes k}{|k|^{2}} \right) \hat{g}_ke^{ik\cdot x}\right)+\nabla \sum_{k\in \mathbb{Z}^{d}_{0}}\frac{-ik }{|k|^{2}}\cdot   \hat{g}_ke^{ik\cdot x}
=:\Pi g+(1-\Pi)g \in L^{2}_{\sigma}(\mathbb{T}^{d})\oplus \nabla H^{1}(\mathbb{T}^{d}).
\end{align*}
We denote the space of all average zero $L^{p}$-functions for $1\leq p\leq \infty$ by $L^{p}_{\textrm{av}}(\mathbb{T}^{d})$. We define $H$ by (1.4) and $V^{m}=H\cap H^{m}(\mathbb{T}^{d})$ for $m>0$. We define $V^{*}$ as the dual space of $V=V^{1}$. The decomposition above also holds on average-zero function spaces, i.e.,  
\begin{align}
L^{2}_{\textrm{av}}(\mathbb{T}^{d})=H\oplus \nabla H^{1}(\mathbb{T}^{d}).
\end{align}
For average-zero functions $g\in L^{2}_{\textrm{av}}(\mathbb{T}^{d})\cap H^{m}(\mathbb{T}^{d})$ and $m\in \mathbb{R}$, we define the homogeneous norm $|| g||_{\dot{H}^{m}}=||(-\Delta)^{\frac{m}{2}}g||_{L^{2}}$. Since the summation in the $H^{m}$-norm of $g\in L^{2}_{\textrm{av}}(\mathbb{T}^{d})\cap H^{m}(\mathbb{T}^{d})$ is taken for $|k|\geq 1$,  
\begin{align*}
||g||_{H^{m}}=\left(\sum_{k\in \mathbb{Z}_0^{d}}\left(|k|^{2}+1\right)^{m}|\hat{g}_{k}|^{2}\right)^{\frac{1}{2}}
\leq 2^{\frac{m}{2}}\left(\sum_{k\in \mathbb{Z}_0^{d}}|k|^{2m}|\hat{g}_{k}|^{2}\right)^{\frac{1}{2}}
=2^{\frac{m}{2}} ||(-\Delta)^{\frac{m}{2}}g||_{L^{2}}
=2^{\frac{m}{2}} || g||_{\dot{H}^{m}},\quad m\geq 0,
\end{align*}
and $||g||_{H^{m}}\geq 2^{m/2}||g||_{\dot{H}^{m}}$ for $m<0$. The following Proposition 2.1 enables one to define MHS equilibria (1.2) for $B\in H^{1}(\mathbb{T}^{d})$ without pressure. 

\begin{prop}
Assume that $f\in L^{1}_{\textrm{av}}(\mathbb{T}^{d})$ satisfies $(f,\varphi)=0$ for all $\varphi\in H\cap C^{\infty}(\mathbb{T}^{d})$. Then, $f=\nabla p$ for some function $p\in W^{1,1} (\mathbb{T}^{d})$. 
\end{prop}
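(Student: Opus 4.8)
The plan is to first convert the orthogonality hypothesis into a pointwise constraint on the Fourier coefficients of $f$, then exhibit a candidate potential $p$ as a distribution, and finally upgrade $p$ to a $W^{1,1}$ function by a mollification argument together with the sharp Sobolev inequality at the $L^1$ endpoint.

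First I would test against the explicit smooth fields coming from exponentials. For $k\in\mathbb{Z}^{d}_{0}$ and $a\in\mathbb{R}^{d}$ with $a\cdot k=0$, the vector fields $a\cos(k\cdot x)$ and $a\sin(k\cdot x)$ are smooth, average-zero and divergence-free, hence lie in $H\cap C^{\infty}(\mathbb{T}^{d})$. Applying $(f,\varphi)=0$ to both and combining gives $\int_{\mathbb{T}^{d}}(a\cdot f)\,e^{ik\cdot x}\,dx=0$, that is, $a\cdot\hat f_{-k}=0$ for every real $a$ orthogonal to $k$. Since the complexification of the real hyperplane $k^{\perp}$ is $\{a\in\mathbb{C}^{d}:a\cdot k=0\}$, this forces $\hat f_{k}\in\mathbb{C}k$ for all $k\neq 0$; moreover $\hat f_{0}=0$ because $f\in L^{1}_{\textrm{av}}(\mathbb{T}^{d})$. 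Write $\hat f_{k}=c_{k}k$ with $c_{k}=(\hat f_{k}\cdot k)/|k|^{2}$.

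Next I would define the candidate potential by $\hat p_{k}=-ic_{k}$ for $k\in\mathbb{Z}^{d}_{0}$ and $\hat p_{0}=0$, so that formally $\widehat{\partial_{j}p}_{k}=ik_{j}\hat p_{k}=c_{k}k_{j}=(\hat f_{k})_{j}$. The bound $|c_{k}|=|\hat f_{k}|/|k|\le (2\pi)^{-d}\|f\|_{L^{1}}/|k|$ shows that $p:=\sum_{k\in\mathbb{Z}^{d}_{0}}\hat p_{k}e^{ik\cdot x}$ converges in $\mathcal{D}'(\mathbb{T}^{d})$ (in general only as a distribution, not as an $L^{2}$ function, when $d\ge 2$), and matching Fourier coefficients gives $\nabla p=f$ in $\mathcal{D}'(\mathbb{T}^{d})$. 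It then remains only to show $p\in L^{1}(\mathbb{T}^{d})$. For this I would mollify: with $\rho_{\varepsilon}$ a standard periodic mollifier, set $p_{\varepsilon}=p*\rho_{\varepsilon}\in C^{\infty}(\mathbb{T}^{d})$, so that $\nabla p_{\varepsilon}=f*\rho_{\varepsilon}\to f$ in $L^{1}(\mathbb{T}^{d})$ and $\int_{\mathbb{T}^{d}}p_{\varepsilon}\,dx=0$. The endpoint Gagliardo--Nirenberg--Sobolev (Poincar\'e--Sobolev) inequality on the torus, $\|v\|_{L^{d/(d-1)}(\mathbb{T}^{d})}\le C\|\nabla v\|_{L^{1}(\mathbb{T}^{d})}$ for mean-zero $v$, then makes $(p_{\varepsilon})$ Cauchy in $L^{d/(d-1)}(\mathbb{T}^{d})\hookrightarrow L^{1}(\mathbb{T}^{d})$; its limit must coincide with the $\mathcal{D}'$-limit $p$, so $p\in L^{1}(\mathbb{T}^{d})$. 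Combined with $\nabla p=f\in L^{1}(\mathbb{T}^{d})$ this gives $p\in W^{1,1}(\mathbb{T}^{d})$ with $f=\nabla p$, as desired.

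The only genuinely non-formal step is the last one. Because $L^{1}$ is non-reflexive and the Leray--Helmholtz projector (equivalently the relevant Calder\'on--Zygmund operators) is unbounded on $L^{1}$, one cannot simply read off $p\in L^{1}$ from $\nabla p\in L^{1}$; the sharp $L^{1}\!\to\!L^{d/(d-1)}$ Sobolev inequality on $\mathbb{T}^{d}$ is exactly what bridges this gap, and the Fourier computation and the passage to the limit are routine. For $d=1$ one would instead invoke $\|v-\bar v\|_{L^{\infty}}\le C\|v'\|_{L^{1}}$, but only $d\ge 2$ is relevant here.
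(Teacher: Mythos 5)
Your proof is correct. It differs from the paper's argument in how it handles the non-Hilbertian $L^{1}$ setting. The paper smooths first: it applies $(-\Delta)^{-m}$ with $m>d/4$, which sends $f$ into $L^{2}$ thanks to the uniform bound $|\hat f_k|\leq (2\pi)^{-d}\|f\|_{L^{1}}$, observes that the orthogonality to $H\cap C^{\infty}(\mathbb{T}^{d})$ is preserved because $(-\Delta)^{-m}$ maps this class into itself, invokes the $L^{2}$ Helmholtz decomposition (2.8) to write $(-\Delta)^{-m}f=\nabla p_m$ with $p_m\in H^{1}$, and then undoes the smoothing to obtain $f=\nabla p$. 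You instead work directly on the Fourier side: testing against the explicit divergence-free fields $a\cos(k\cdot x)$ and $a\sin(k\cdot x)$ with $a\perp k$ gives $\hat f_k\in\mathbb{C}k$, and the potential is defined coefficientwise, so you never need the fractional Laplacian or the $L^{2}$ decomposition. The two routes are equivalent in spirit, but your treatment of the final integrability claim is more complete: the paper asserts $p\in W^{1,1}(\mathbb{T}^{d})$ without detailing why $p\in L^{1}$ (which does follow, e.g., from the local integrability of the kernel of $\nabla(-\Delta)^{-1}$, so the paper's claim is sound), whereas you establish it by mollification together with the endpoint Poincar\'e--Sobolev inequality $\|v\|_{L^{d/(d-1)}(\mathbb{T}^{d})}\lesssim \|\nabla v\|_{L^{1}(\mathbb{T}^{d})}$ for mean-zero $v$, which in fact yields the slightly stronger conclusion $p\in L^{d/(d-1)}(\mathbb{T}^{d})$. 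The trade-off is that your argument needs this extra (true, but nontrivial) endpoint inequality, while the paper's detour through $L^{2}$ simply reuses the already-established decomposition $L^{2}_{\textrm{av}}(\mathbb{T}^{d})=H\oplus\nabla H^{1}(\mathbb{T}^{d})$ at the cost of leaving the $L^{1}$-integrability of $p$ implicit.
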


\begin{proof}
If $f\in L^{2}_{\textrm{av}}(\mathbb{T}^{d})$, $(f,\varphi)=0$ for all $\varphi\in H$ by the density of $H\cap C^{\infty}(\mathbb{T}^{d})$ in $H$. Thus, $f=\nabla p\in \nabla H^{1}(\mathbb{T}^{d})$.

For $f\in L^{1}_{\textrm{av}}(\mathbb{T}^{d})$ and $m>d/4$, we set
\begin{align*}
f_m=(-\Delta)^{-m}f=\sum_{k\in \mathbb{Z}^{d}_{0}}|k|^{-2m}\hat{f}_ke^{ik\cdot x}.
\end{align*}
By $|\hat{f}_k|\leq (2\pi)^{-d}||f||_{L^{1}}$ and Parseval's identity, 
\begin{align*}
||f_m||_{L^{2}}^{2}=\sum_{k\in \mathbb{Z}^{d}_{0}}|k|^{-4m}|\hat{f}_{k}|^{2}\leq (2\pi)^{-d}\left(\sum_{k\in \mathbb{Z}^{d}_{0}}|k|^{-4m}\right)||f||_{L^{1}}<\infty.
\end{align*}
For $\varphi\in H\cap C^{\infty}(\mathbb{T}^{d})$,
\begin{align*}
0=\left(f,(-\Delta)^{-m}\varphi  \right)=\left((-\Delta)^{-m} f,\varphi\right)=(f_m,\varphi).
\end{align*}
Thus, $f_{m}=\nabla p_{m}\in \nabla H^{1}(\mathbb{T}^{d})$. By applying $(-\Delta)^{-m}$, $f=\nabla p$ for $p=(-\Delta)^{-m} p_{m}\in  W^{1,1}(\mathbb{T}^{d})$. 
\end{proof}

For $T>0$, we set the Banach spaces 
\begin{equation}
\begin{aligned}
{\mathcal{H}}_T&=\{B\in L^{2}(0,T; V)\ |\ \partial_t B\in L^{2}(0,T; V^{*})\ \},\quad 
||B||_{\mathcal{H}_T}=\left(\int_{0}^{T}\left(||B||_{V}^{2}+ ||\partial_t B||_{V^{*}}^{2}\right)dt\right)^{\frac{1}{2}},\\
\mathcal{X}_T&=C([0,T]; H)\cap L^{2}(0,T; V),\quad
||B||_{\mathcal{X}_T}=\max\left\{||B||_{C([0,T]; H)},\ ||B||_{L^{2}(0,T; V)}     \right\}.
\end{aligned}
\end{equation}
The function $||B||_{L^{2}}^{2}(t)$ for $B\in \mathcal{H}_T$ is almost everywhere differentiable and continuous \cite[5.9, Theorem 3]{E} and the continuous embedding holds:  
\begin{align}
\mathcal{H}_T\subset \mathcal{X}_T.
\end{align}
By the interpolation inequality (2.6), 
\begin{align}
\mathcal{X}_T\subset L^{p}(0,T; H^{s}(\mathbb{T}^{d})),\quad 2\leq p\leq \infty,\quad sp=2.
\end{align}
We say that $Z\in \mathcal{H}_T$ is a solution to the inhomogeneous Stokes equations for $f\in L^{2}(0,T; V^{*})$ if $Z$ satisfies  
\begin{equation}
\begin{aligned}
\partial_t Z-\kappa\Delta Z&=f,\\
\nabla \cdot Z&=0,
\end{aligned}
\end{equation}
in the distributional sense.

\begin{prop}
For $f\in L^{2}(0,T; V^{*})$, the function $Z(t)=\int_{0}^{t}e^{(t-s)\kappa\Delta }f(s)ds$ satisfies 
\begin{align}
\left\|Z\right\|_{\mathcal{H}_T}
\lesssim_{\kappa} ||f||_{L^{2}(0,T; V^{*})},
\end{align}
and is a unique solution to (2.12) satisfying the initial condition $Z(x,0)=0$. 
\end{prop}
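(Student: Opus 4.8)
The plan is to obtain the a priori bound (2.14) by diagonalising the Stokes operator in Fourier series, then to read off existence, the equation, and the initial condition from Duhamel's formula, and finally to prove uniqueness by a standard energy identity. Throughout, $\langle\cdot,\cdot\rangle$ denotes the $V^{*}$--$V$ pairing and $(\cdot,\cdot)$ the $L^{2}$ inner product.

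\emph{A priori estimate.} On $H$ the Stokes semigroup $e^{t\kappa\Delta}$ is diagonal in the Fourier basis, so writing $f=\sum_{k\in\mathbb{Z}^{d}_{0}}\hat f_{k}(t)e^{ik\cdot x}$ with $k\cdot\hat f_{k}=0$ and $\hat f_{k}\in L^{2}(0,T;\mathbb{C}^{d})$, the candidate $Z(t)=\int_{0}^{t}e^{(t-s)\kappa\Delta}f(s)\,ds$ has Fourier coefficients $\hat Z_{k}(t)=\int_{0}^{t}e^{-(t-s)\kappa|k|^{2}}\hat f_{k}(s)\,ds$, i.e.\ $\hat Z_{k}$ solves $\dot{\hat Z}_{k}+\kappa|k|^{2}\hat Z_{k}=\hat f_{k}$, $\hat Z_{k}(0)=0$. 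Pairing with $\overline{\hat Z_{k}}$, taking real parts, and using Young's inequality $|\hat f_{k}||\hat Z_{k}|\le\frac{\kappa|k|^{2}}{2}|\hat Z_{k}|^{2}+\frac{1}{2\kappa|k|^{2}}|\hat f_{k}|^{2}$ gives $\frac{d}{dt}|\hat Z_{k}|^{2}+\kappa|k|^{2}|\hat Z_{k}|^{2}\le\frac{1}{\kappa|k|^{2}}|\hat f_{k}|^{2}$; integrating on $(0,t)$ and using $\hat Z_{k}(0)=0$,
\[
|\hat Z_{k}(t)|^{2}+\kappa|k|^{2}\int_{0}^{t}|\hat Z_{k}|^{2}\,ds\le\frac{1}{\kappa|k|^{2}}\int_{0}^{T}|\hat f_{k}|^{2}\,ds .
\]
Since $|k|\ge1$ on $\mathbb{Z}^{d}_{0}$ we have $|k|^{2}\le|k|^{2}+1\le2|k|^{2}$, and on average-zero solenoidal fields $\|g\|_{V}^{2}\sim\sum_{k}(|k|^{2}+1)|\hat g_{k}|^{2}$ and $\|h\|_{V^{*}}^{2}\sim\sum_{k}(|k|^{2}+1)^{-1}|\hat h_{k}|^{2}$; summing the last display over $k$ therefore yields $\|Z\|_{L^{2}(0,T;V)}^{2}\lesssim\kappa^{-2}\|f\|_{L^{2}(0,T;V^{*})}^{2}$. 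For the time derivative use $\partial_{t}Z=\kappa\Delta Z+f$ together with $|k|^{4}/(|k|^{2}+1)\le|k|^{2}$, which gives $\|\Delta Z\|_{V^{*}}\lesssim\|Z\|_{V}$ and hence $\|\partial_{t}Z\|_{L^{2}(0,T;V^{*})}\lesssim_{\kappa}\|f\|_{L^{2}(0,T;V^{*})}$; adding the two estimates proves (2.14). To make this rigorous for general $f\in L^{2}(0,T;V^{*})$ one may run the same computation on the Galerkin truncations $f_{N}=\sum_{|k|\le N}\hat f_{k}e^{ik\cdot x}$, for which all manipulations are classical, and pass to the limit using the $N$-uniform bound (equivalently, the mode-wise identities above hold for a.e.\ $t$ once $\hat f_{k}\in L^{2}(0,T)$).

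\emph{Existence, equation, initial data, and uniqueness.} By the previous step $Z\in\mathcal{H}_{T}$, so by (2.11) $Z\in C([0,T];H)$ and, since $\hat Z_{k}(0)=0$ for every $k$, $Z(0)=0$; moreover $\nabla\cdot Z=0$ because the expansion runs over $\mathbb{Z}^{d}_{0}$ with $k\cdot\hat Z_{k}=0$. Differentiating Duhamel's formula (equivalently, reading off the ODE mode by mode) and testing against $\varphi\in V$ gives $\langle\partial_{t}Z,\varphi\rangle=-\kappa(\nabla Z,\nabla\varphi)+\langle f,\varphi\rangle$ for a.e.\ $t$, i.e.\ $Z$ solves (2.12) in the distributional sense. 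For uniqueness, if $Z_{1},Z_{2}\in\mathcal{H}_{T}$ both solve (2.12) with zero initial data, then $W=Z_{1}-Z_{2}\in\mathcal{H}_{T}$ satisfies $\partial_{t}W-\kappa\Delta W=0$, $W(0)=0$; using that $t\mapsto\|W\|_{L^{2}}^{2}$ is absolutely continuous with $\frac{d}{dt}\|W\|_{L^{2}}^{2}=2\langle\partial_{t}W,W\rangle$ for $W\in\mathcal{H}_{T}$ (the chain rule recalled before (2.11)) and testing the equation with $W$, we obtain $\frac{d}{dt}\|W\|_{L^{2}}^{2}+2\kappa\|\nabla W\|_{L^{2}}^{2}=0$, whence $\|W(t)\|_{L^{2}}^{2}\le\|W(0)\|_{L^{2}}^{2}=0$ for all $t$, so $Z_{1}=Z_{2}$.

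I do not expect a genuine obstacle here: the only points needing care are the equivalence of the Fourier-side norms with the $V$ and $V^{*}$ norms on average-zero solenoidal fields, the well-definedness of $Z$ for $f\in L^{2}(0,T;V^{*})$, and the justification of the energy identities for $\mathcal{H}_{T}$-functions, all standard. If one prefers to bypass the explicit Fourier bookkeeping, (2.14) is precisely maximal $L^{2}$-regularity for the analytic Stokes semigroup on $H$ and could be quoted directly.
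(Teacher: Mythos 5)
Your proof is correct and takes essentially the same route as the paper: both reduce to Fourier truncations of $f$, derive the parabolic energy estimate (you do it mode-by-mode via the ODE for $\hat Z_k$, the paper tests the truncated equation, which is the same computation), recover the $\partial_t Z$ bound from the equation $\partial_t Z=\kappa\Delta Z+f$, and prove uniqueness by the standard energy identity for $\mathcal{H}_T$-functions. No gaps.
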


\begin{proof}
The function $Z(t)$ belongs to $C([0,T]; V^{*})$ by the boundedness of the heat semigroup on $V^{*}$. The function 
\begin{align*}
f_N=\sum_{|k|\leq N}\hat{f}_ke^{ik\cdot x},
\end{align*}
is smooth for $x$ and converges to $f$ in $L^{2}(0,T; V^{*})$ as $N\to\infty$. The function $Z_N(t)=\int_{0}^{t}e^{(t-s)\kappa\Delta}f_N(s)ds$ is a solution to (2.12) for $f_N$ and satisfies the energy inequality,
\begin{align*}
\sup_{0\leq t\leq T}\int_{\mathbb{T}^{d}}|Z_N|^{2}dx+\kappa\int_{0}^{T}\int_{\mathbb{T}^{d}}|\nabla Z_N|^{2}dxds\leq \frac{1}{\kappa}\int_{0}^{T}||f_N||_{H^{-1}}^{2}ds.
\end{align*}
By the continuous embedding (2.10),
\begin{align*}
||Z_N||_{\mathcal{H}_T}\lesssim_{\kappa}  ||f_N||_{L^{2}(0,T; H^{-1})},
\end{align*}
and $Z_n$ converges to a limit $Z\in \mathcal{H}_T$ and the limit $Z$ is a solution to (2.12) for $f$. 

If $Z\in \mathcal{H}_T$ is a solution to (2.12) for $f=0$, for almost every $t\in (0,T)$,  
\begin{align*}
0=2<\partial_t Z,Z>-2 \kappa<\Delta Z,Z>=\frac{d}{dt}\int_{\mathbb{T}^{d}}|Z|^{2}dx+2\kappa\int_{\mathbb{T}^{2}}|\nabla Z|^{2}dx.
\end{align*}
By integrating this identity on $(0,T)$ and using $Z\to 0$ on $H$ as $t\to0$, we find that $Z=0$. Thus, the solution of (2.12) is unique.
\end{proof}

\subsection{The cubic estimate}

We show the boundedness of the bilinear operator $K_{\gamma}$ for $\gamma=d/2$.

\begin{prop}
The bilinear operator 
\begin{align}
K_{d/2}: H^{s}(\mathbb{T}^{d})\times H^{s}(\mathbb{T}^{d})\to H^{m}(\mathbb{T}^{d}),\quad m=\frac{d}{2}-1+2s,\ 0< s<\frac{d}{4},
\end{align}
is a bounded operator. 
\end{prop}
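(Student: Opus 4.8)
The plan is to unwind the definition $K_{d/2}(B_1,B_2)=(-\Delta)^{-d/2}\Pi\nabla\cdot(B_1\otimes B_2)$ and track regularity through each factor; the only substantive point is a product estimate that places $B_1\otimes B_2$ in a negative-order Sobolev space.

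First I would control the product. By the embedding (2.3), each $B_i\in H^{s}(\mathbb{T}^{d})$ lies in $L^{q}(\mathbb{T}^{d})$ with $q=\tfrac{2d}{d-2s}$ (finite since $0<s<d/4<d/2$), and H\"older's inequality gives $B_1\otimes B_2\in L^{r}(\mathbb{T}^{d})$ with $\tfrac1r=\tfrac2q=1-\tfrac{2s}{d}$ and $\|B_1\otimes B_2\|_{L^{r}}\lesssim\|B_1\|_{H^{s}}\|B_2\|_{H^{s}}$. Dualizing (2.3): since $H^{\sigma}(\mathbb{T}^{d})\subset L^{r'}(\mathbb{T}^{d})$ for $\sigma=\tfrac d2-2s\in(0,\tfrac d2)$ with $\tfrac1{r'}=\tfrac12-\tfrac{\sigma}{d}$, we obtain $L^{r}(\mathbb{T}^{d})\subset H^{-\sigma}(\mathbb{T}^{d})$. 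This is exactly where the hypothesis $s<d/4$ enters: it is precisely the condition $\sigma>0$ (equivalently $r<2$) that makes this dual embedding valid. Hence $B_1\otimes B_2\in H^{2s-d/2}(\mathbb{T}^{d})$ with the bilinear bound.

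Next I would handle the linear operators. The distribution $\nabla\cdot(B_1\otimes B_2)$ has zero mean (periodicity) and lies in $H^{2s-d/2-1}(\mathbb{T}^{d})$; the Leray projection $\Pi$ is a Fourier multiplier with symbol $I-k\otimes k/|k|^{2}$ of operator norm $\le1$, hence bounded on $H^{t}(\mathbb{T}^{d})$ for every $t\in\mathbb{R}$ and preserving the average-zero condition; and on average-zero distributions $(-\Delta)^{-d/2}$ is the Fourier multiplier $|k|^{-d}$, mapping $H^{t}(\mathbb{T}^{d})$ boundedly into $H^{t+d}(\mathbb{T}^{d})$. Composing the three, $K_{d/2}(B_1,B_2)\in H^{2s-d/2-1+d}(\mathbb{T}^{d})=H^{d/2-1+2s}(\mathbb{T}^{d})=H^{m}(\mathbb{T}^{d})$, with $\|K_{d/2}(B_1,B_2)\|_{H^{m}}\lesssim\|B_1\|_{H^{s}}\|B_2\|_{H^{s}}$.

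The argument is essentially routine, so I expect no serious obstacle; the only points requiring care are the endpoint bookkeeping — verifying that $s<d/4$ is exactly what is needed for the dual embedding $L^{r}\hookrightarrow H^{-\sigma}$ with $\sigma\ge0$ — and confirming that every operator in the chain is legitimately applied on average-zero distributions, so that the inversion $(-\Delta)^{-d/2}$ genuinely gains $d$ derivatives.
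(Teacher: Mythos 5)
Your proposal is correct and takes essentially the same route as the paper: H\"older gives $B_1\otimes B_2\in L^{r}(\mathbb{T}^{d})$ with $\tfrac1r=1-\tfrac{2s}{d}$, and the derivative bookkeeping through $\nabla\cdot$, $\Pi$, and $(-\Delta)^{-d/2}$ on average-zero distributions is identical; the only difference is that you pass from $L^{r}$ to $H^{2s-d/2}$ by dualizing the embedding (2.3), whereas the paper invokes the equivalent Riesz-potential bound (2.7), $(-\Delta)^{-\alpha/2}:L^{r}\to L^{2}$ with $\alpha=\tfrac d2-2s$. In both arguments the hypothesis $s<d/4$ enters in exactly the same place, so no further comment is needed.
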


\begin{proof}
By the Sobolev embedding (2.3), $H^{s}(\mathbb{T}^{d})\subset L^{2r}(\mathbb{T}^{d})$ for $1<r=d/(d-2s)<2$. For $B_1, B_2\in H^{s}(\mathbb{T}^{d})$, 
\begin{align*}
\left\|K_{d/2}(B_1,B_2)\right\|_{H^{m}}=\left\|(-\Delta)^{\frac{m}{2}}K_{d/2}(B_1,B_2)\right\|_{L^{2}}
=\left\|(-\Delta)^{\frac{m-d}{2}}\Pi \nabla \cdot (B_1\otimes  B_2)\right\|_{L^{2}}
\lesssim \left\|(-\Delta)^{\frac{m-d+1}{2}} (B_1\otimes  B_2)\right\|_{L^{2}}.
\end{align*}
For $\alpha=-m+d-1$, the operator $(-\Delta)^{-\alpha/2}: L^{r}(\mathbb{T}^{d})\to L^{2}(\mathbb{T}^{d})$ is bounded by (2.7). By $||B_1\otimes B_2||_{L^{r}}\leq ||B_1||_{L^{2r}}||B_2||_{L^{2r}}$, (2.14) follows.
\end{proof}

\begin{prop}
The bilinear operator 
\begin{align}
K_{d/2}: L^{p}(0,T; H^{s}(\mathbb{T}^{d}))\times L^{p}(0,T; H^{s}(\mathbb{T}^{d}))\to L^{\frac{p}{2}}(0,T; H^{m}(\mathbb{T}^{d})),
\end{align}
is a bounded operator for $2\leq p\leq \infty$ and $(s,m)$ in (2.14). 
\end{prop}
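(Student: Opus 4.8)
The plan is to lift the space-only estimate of Proposition 2.3 to a space-time estimate by H\"older's inequality in the time variable; essentially all of the analytic content is already contained in Proposition 2.3. First I would fix $B_1,B_2\in L^{p}(0,T;H^{s}(\mathbb{T}^{d}))$ and observe that, since $K_{d/2}$ acts only on the spatial variable, $K_{d/2}(B_1,B_2)(t)=K_{d/2}(B_1(t),B_2(t))$ for a.e.\ $t$. Before taking the $L^{p/2}(0,T;H^{m})$-norm I would check that $t\mapsto K_{d/2}(B_1(t),B_2(t))$ is a strongly measurable $H^{m}$-valued function: by Proposition 2.3 the bilinear map $K_{d/2}: H^{s}\times H^{s}\to H^{m}$ is bounded, hence continuous, so its composition with the strongly measurable map $t\mapsto(B_1(t),B_2(t))$ is strongly measurable (alternatively, approximate $B_1,B_2$ by simple functions in $L^{p}(0,T;H^{s})$ and pass to the limit using the bound below).

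Next I would apply Proposition 2.3 pointwise in time: for a.e.\ $t\in(0,T)$,
\[
\left\|K_{d/2}(B_1,B_2)(t)\right\|_{H^{m}}\;\lesssim\;\left\|B_1(t)\right\|_{H^{s}}\left\|B_2(t)\right\|_{H^{s}}.
\]
For $2\le p<\infty$, raising this to the power $p/2$, integrating over $(0,T)$, and applying H\"older's inequality with the two conjugate exponents equal to $2$ (so that $\tfrac{2}{p}=\tfrac{1}{p}+\tfrac{1}{p}$), I obtain
\[
\left\|K_{d/2}(B_1,B_2)\right\|_{L^{p/2}(0,T;H^{m})}\;\lesssim\;\left(\int_{0}^{T}\left\|B_1(t)\right\|_{H^{s}}^{p/2}\left\|B_2(t)\right\|_{H^{s}}^{p/2}\,dt\right)^{2/p}\;\le\;\left\|B_1\right\|_{L^{p}(0,T;H^{s})}\left\|B_2\right\|_{L^{p}(0,T;H^{s})},
\]
which is exactly (2.16). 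The case $p=\infty$ follows immediately from the same pointwise bound by taking the essential supremum in $t$. I would also record that the implied constant is the one from Proposition 2.3 and, in particular, is independent of $T$ and of $p$ --- a point that matters for the uniform-in-$T$ (and uniform-in-$\kappa$) estimates used later.

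Since each ingredient is either Proposition 2.3 or the elementary H\"older inequality, I do not expect any genuine obstacle here; the one point that deserves a word of care is the $H^{m}$-valued strong measurability in the first step, which is dispatched by continuity of the bilinear map as indicated above.
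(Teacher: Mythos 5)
Your proposal is correct and follows exactly the paper's route: the paper's proof is the one-line observation that (2.16) follows from the spatial estimate (2.14) applied pointwise in time together with H\"older's inequality, which is precisely what you carry out (with the added, harmless care about strong measurability and the $p=\infty$ case).
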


\begin{proof}
The boundedness follows from (2.14) and H\"older's inequality.
\end{proof}

\begin{prop}
For $B_1, B_2\in L^{p}(0,T; H^{s}(\mathbb{T}^{d}) )$, $8/d<p<\infty$, and $sp=2$, 
\begin{align}
u=K_{d/2}(B_1,B_2)\in L^{\frac{p}{2}}(0,T; H^{\frac{d}{2}-1+\frac{4}{p} } (\mathbb{T}^{d})).
\end{align}
In particular, $u\in L^{2}(0,T; H^{\frac{d}{2}} (\mathbb{T}^{d}))$ for $d\geq 3$ and $u\in L^{2+\varepsilon}(0,T; H^{\frac{2 \varepsilon}{2+\varepsilon} } (\mathbb{T}^{2}))$ for $d=2$ and $\varepsilon>0$. 
\end{prop}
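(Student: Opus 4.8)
The plan is to deduce the statement immediately from Proposition 2.6; all the genuine analysis --- the pointwise bilinear bound $K_{d/2}\colon H^{s}\times H^{s}\to H^{d/2-1+2s}$ of Proposition 2.4 and its time-integrated upgrade --- has already been carried out there, so what remains is a change of variables. Since $sp=2$ forces $s=2/p$, the hypothesis $0<s<d/4$ of (2.14) becomes $0<2/p<d/4$, i.e.\ exactly $8/d<p<\infty$, and the output index is $m=\tfrac d2-1+2s=\tfrac d2-1+\tfrac4p$. For $d\le 4$ one has $p>8/d\ge 2$, so Proposition 2.6 applies verbatim to the triple $(p,s,m)$ and gives $u=K_{d/2}(B_1,B_2)\in L^{p/2}(0,T;H^{m}(\mathbb{T}^{d}))=L^{p/2}(0,T;H^{\,d/2-1+4/p}(\mathbb{T}^{d}))$, which is the assertion. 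For $d\ge 5$ the same conclusion holds since the proof of Proposition 2.6 (apply (2.14) pointwise in $t$, then Cauchy--Schwarz in $t$) is valid for every $p$ in the interval $(8/d,\infty)$.

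For the two ``in particular'' claims I would just specialize $p$. If $d\ge 3$, take $p=4$: then $s=1/2$, which satisfies $0<1/2<d/4$ for $d\ge 3$, the output integrability exponent is $p/2=2$, and $m=d/2-1+1=d/2$, so $u\in L^{2}(0,T;H^{d/2}(\mathbb{T}^{d}))$; the inputs relevant in the applications, $B_1,B_2\in\mathcal{X}_T$, do lie in $L^{4}(0,T;H^{1/2}(\mathbb{T}^{d}))$ by the embedding (2.11). If $d=2$, the endpoint $p=8/d=4$ is excluded, so one can only take $p=4+2\varepsilon$ with $\varepsilon>0$ arbitrarily small; this produces output integrability $p/2=2+\varepsilon$ and $m=4/p=4/(4+2\varepsilon)$, i.e.\ the stated borderline version $u\in L^{2+\varepsilon}(0,T;H^{m}(\mathbb{T}^{2}))$ (one cannot reach $H^{1}$, as that would require the forbidden value $p=4$).

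I do not expect any real obstacle here: Propositions 2.4 and 2.6 contain all the work, and the rest is bookkeeping. The two points deserving a moment's care are that the admissible $p$-interval $(8/d,\infty)$ is precisely the image of the admissible $s$-interval $(0,d/4)$ under $s\mapsto 2/s$, and that in dimension $d=2$ the natural choice $p=4$ sits exactly on the excluded boundary $s=d/4$ --- which is the reason the two-dimensional conclusion must lose an $\varepsilon$ in time integrability.
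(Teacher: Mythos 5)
Your proof is correct and takes essentially the same route as the paper, which simply observes that $s=2/p$ satisfies the hypothesis $0<s<d/4$ of (2.14) precisely when $8/d<p<\infty$ and then invokes the time-integrated bound (2.15) (the paper's Propositions 2.3 and 2.4, in its numbering); your remark that the H\"older-in-time step also works for $p<2$, needed only when $d\geq 5$, is a sensible refinement of the stated range $2\leq p\leq\infty$. For $d=2$ your specialization $p=4+2\varepsilon$ actually yields $u\in L^{2+\varepsilon}(0,T;H^{2/(2+\varepsilon)}(\mathbb{T}^{2}))$, which is stronger than (and, for $\varepsilon\leq 1$, implies) the printed index $\tfrac{2\varepsilon}{2+\varepsilon}$, the latter apparently being a typo.
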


\begin{proof}
For $8/d<p<\infty$, $s=2/p$ satisfies the condition (2.14). Thus, property (2.16) follows from property (2.15).
\end{proof}

\begin{lem}[Cubic estimate]
Let $4<p<\infty$ and $0<s<1/2$ satisfy $sp=2$. Then, 
\begin{align}
||K_{d/2}(B_1, B_2) \otimes B_3||_{L^{2}(0,T; L^{2})}\lesssim ||B_1||_{L^{p}(0,T; H^{s})}||B_2||_{L^{p}(0,T; H^{s})} ||B_3||_{L^{\frac{2}{1-2s}}(0,T; H^{1-2s})},
\end{align}
holds for $B_1, B_2\in {L^{p}(0,T; H^{s})}$ and $B_3\in L^{\frac{2}{1-2s}}(0,T; H^{1-2s})$. In particular, $K_{d/2}(B,B)\otimes B\in L^{2}(0,T; L^{2})$ for $B\in L^{p}(0,T; H^{s})\cap L^{\frac{2}{1-2s}}(0,T; H^{1-2s})$.
\end{lem}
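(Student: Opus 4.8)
The plan is to interpolate $K_{d/2}(B_1,B_2)$ in a fractional Sobolev space at a regularity that is Hölder-dual to $H^{1-2s}$ for the $B_3$ factor, and then estimate the product pointwise in time by a Hölder–Sobolev product inequality. First I would apply Proposition 2.5: with $s \in (0,1/2)$ and $4 < p < \infty$ (so that $0 < s < d/4$ automatically since $d\geq 2$, and $p > 8/d$ holds because $p>4\geq 8/d$), the bilinear map $K_{d/2}$ sends $L^{p}(0,T;H^{s}) \times L^{p}(0,T;H^{s})$ into $L^{p/2}(0,T;H^{m})$ with $m = \tfrac{d}{2}-1+2s$. The key observation is the embedding/algebra inequality in space: for the product $v \otimes B_3$ with $v \in H^{m}(\mathbb{T}^{d})$ and $B_3 \in H^{1-2s}(\mathbb{T}^{d})$, one has $\|v\otimes B_3\|_{L^{2}} \lesssim \|v\|_{H^{\sigma_1}}\|B_3\|_{H^{\sigma_2}}$ provided $\sigma_1 + \sigma_2 \geq d/2$ with $\sigma_1,\sigma_2 \geq 0$ (via Sobolev embeddings $H^{\sigma_i}\subset L^{q_i}$ with $1/q_1+1/q_2 = 1/2$, then Hölder). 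Here I would take $\sigma_1 = \tfrac{d}{2}-1+2s = m$ and $\sigma_2 = 1-2s$, whose sum is exactly $d/2$; note $m \geq 0$ because $d\geq 2$ and $s>0$ force $m = \tfrac{d}{2}-1+2s > \tfrac{d}{2}-1 \geq 0$, and $1-2s>0$.

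Next I would put the two estimates together in time. Pointwise in $t$,
\begin{align*}
\|K_{d/2}(B_1,B_2)(t)\otimes B_3(t)\|_{L^{2}} \lesssim \|K_{d/2}(B_1,B_2)(t)\|_{H^{m}}\,\|B_3(t)\|_{H^{1-2s}}.
\end{align*}
Then I apply Hölder in time with exponents $\tfrac{p}{2}$ and its conjugate $\tfrac{p}{p-2}$: the first factor lies in $L^{p/2}(0,T;H^{m})$ by Proposition 2.5, and I need $\|B_3\|_{H^{1-2s}} \in L^{p/(p-2)}(0,T)$. A short computation shows $\tfrac{p}{p-2} = \tfrac{2}{1-2s}$ precisely when $sp = 2$ (indeed $sp=2 \Rightarrow p-2 = p(1 - 2/p) = p(1-s\cdot\tfrac{2}{s\cdot\,?})$ — more directly, $\tfrac{2}{1-2s} = \tfrac{2}{1-2s}$ and $\tfrac{p}{p-2}$; setting these equal gives $p(1-2s) = 2(p-2)$, i.e. $-2sp = -4$, i.e. $sp=2$, which is our hypothesis). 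Hence Hölder in time yields exactly
\begin{align*}
\|K_{d/2}(B_1,B_2)\otimes B_3\|_{L^{2}(0,T;L^{2})} \lesssim \|K_{d/2}(B_1,B_2)\|_{L^{p/2}(0,T;H^{m})}\,\|B_3\|_{L^{2/(1-2s)}(0,T;H^{1-2s})},
\end{align*}
and bounding the first factor by $\|B_1\|_{L^{p}(0,T;H^{s})}\|B_2\|_{L^{p}(0,T;H^{s})}$ via Proposition 2.5 gives (2.18).

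The final assertion follows by taking $B_1=B_2=B_3=B$: if $B \in L^{p}(0,T;H^{s}) \cap L^{2/(1-2s)}(0,T;H^{1-2s})$, then the right-hand side of (2.18) is finite, so $K_{d/2}(B,B)\otimes B \in L^{2}(0,T;L^{2})$.

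The main obstacle is bookkeeping rather than conceptual: one must check that every Sobolev embedding invoked is admissible on $\mathbb{T}^{d}$, in particular that the exponents $q_1,q_2$ produced by $H^{m}\subset L^{q_1}$ and $H^{1-2s}\subset L^{q_2}$ satisfy $q_1,q_2 \in [2,\infty)$ and $1/q_1+1/q_2 = 1/2$ — which is where the constraint $m + (1-2s) = d/2$ is used (and where the strict inequalities $s<1/2$, $s>0$, and $d\geq 2$ keep the exponents finite and $\geq 2$), and that the time exponent identity $\tfrac{p}{p-2}=\tfrac{2}{1-2s}$ is equivalent to $sp=2$. One subtlety worth flagging: the endpoint cases ($s\to 0$ or $s\to 1/2$) degenerate (one of the spatial exponents tends to $\infty$ or the temporal exponent does), which is why the lemma is stated with strict inequalities $0<s<1/2$ and $4<p<\infty$; staying in the open range avoids any borderline Sobolev embedding into $L^{\infty}$ or $BMO$.
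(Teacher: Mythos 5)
Your overall strategy is exactly the paper's: Proposition 2.5 for the velocity factor, the spatial Sobolev pairing with total regularity $m+(1-2s)=d/2$, and Hölder in time; the paper merely performs the Hölder in space and time simultaneously before applying the Sobolev embeddings, whereas you do the spatial product estimate pointwise in time first, which is the same computation in a different order.

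One step, however, is justified incorrectly and should be repaired. You apply Hölder in time "with exponents $p/2$ and its conjugate $p/(p-2)$" and claim $p/(p-2)=2/(1-2s)$; under $sp=2$ this identity is false (your algebra $p(1-2s)=2(p-2)\Rightarrow -2sp=-4$ is off: the correct consequence is $2sp=4-p$, which is impossible for $p>4$). The conjugate pair $\left(p/2,\,p/(p-2)\right)$ would be the relevant one if the target were the $L^{1}$-in-time norm of the product, but here you need its $L^{2}$-in-time norm. The correct tool is the generalized Hölder inequality with $\frac{1}{2}=\frac{2}{p}+\frac{1-2s}{2}$, i.e.
\begin{align*}
\left(\int_0^T \|u(t)\|_{H^{m}}^{2}\|B_3(t)\|_{H^{1-2s}}^{2}\,dt\right)^{\frac12}
\leq \|u\|_{L^{p/2}(0,T;H^{m})}\,\|B_3\|_{L^{\frac{2}{1-2s}}(0,T;H^{1-2s})},
\end{align*}
which holds precisely because $s=2/p$ (equivalently: square the integrand and pair the exponents $p/4$ and $p/(p-4)$, noting $2p/(p-4)=2/(1-2s)$ under $sp=2$). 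With this substitution the displayed inequality you wrote is exactly right, and the remainder of your argument — Proposition 2.5 and the spatial product bound with $\sigma_1+\sigma_2=d/2$, where $0<m<d/2$ and $0<1-2s<d/2$ keep the embeddings (2.3) admissible — goes through and reproduces (2.17).
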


\begin{proof}
We set $u=K_{d/2}(B_1, B_2)$. By H\"older's inequality for $1<q,r<\infty$ and their conjugate exponents $q'$ and $r'$,
\begin{align*}
||u\otimes B||_{L^{2}(0,T; H) }\leq ||u||_{L^{2r}(0,T; L^{2q}) }||B||_{L^{2r'}(0,T; L^{2q'}) }.
\end{align*}
By Sobolev inequality (2.3), $H^{l}(\mathbb{T}^{d})\subset L^{2q}(\mathbb{T}^{d})$ for $0<l<d/2$ satisfying $q=d/(d-2l)$ and $H^{m}(\mathbb{T}^{d})\subset L^{2q'}(\mathbb{T}^{d})$ for $0<m<d/2$ satisfying $q'=d/(d-2m)$. Thus, $l+m=d/2$ and 
\begin{align*}
||u\otimes B||_{L^{2}(0,T; H) }\lesssim ||u||_{L^{2r}(0,T; H^{l}) }||B||_{L^{2r'}(0,T; H^{m}) }.
\end{align*}
We take $r$ and $l$ by $r=p/4$ and $l=d/2-1+4/p$. By $sp=2$, $l=d/2-1+2s$. By $8/d\leq 4<p<\infty$ and $0<l<d/2$, we apply (2.15) to estimate 
\begin{align*}
||u||_{L^{2r}(0,T; H^{l}) }\lesssim ||B_1||_{L^{p}(0,T; H^{s}) }||B_2||_{L^{p}(0,T; H^{s}) }.
\end{align*}
By 
\begin{align*}
\frac{1}{r'}&=1-\frac{1}{r}=1-\frac{4}{p}=1-2s,\\
m&=\frac{d}{2}-l=1-2s,
\end{align*}
we obtain (2.17).
\end{proof}

\begin{cor}
\begin{align}
||K_{d/2}(B,B)\otimes B||_{H}\lesssim ||B||_{H}^{2}||B||_{H^{1}},\quad B\in H^{1}(\mathbb{T}^{d}).
\end{align}
\end{cor}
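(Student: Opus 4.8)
The plan is to read (2.18) as the time-independent specialization of the cubic estimate (2.17) in Lemma 2.9, then close with the interpolation inequality (2.6). Here the left-hand $\|\cdot\|_H$ is understood as the $L^2$-norm of the matrix field $K_{d/2}(B,B)\otimes B$, which need not be solenoidal, exactly as in the proof of Lemma 2.9, and on the right $\|B\|_H=\|B\|_{L^2}$ is read for $B\in V^1=H\cap H^1(\mathbb{T}^d)$.

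First I would apply Lemma 2.9 on the interval $(0,1)$ to the time-independent profiles $B_1=B_2=B_3=B$, with for instance $p=8$ and $s=1/4$, so that $4<p<\infty$, $0<s<1/2$, $sp=2$ (and $s<d/4$ for every $d\ge 2$). For a profile that is constant in time one has $\|B\|_{L^q(0,1;H^\sigma)}=\|B\|_{H^\sigma}$ for all $q,\sigma$, and since $K_{d/2}$ acts only in the space variable, $K_{d/2}(B,B)\otimes B$ is also time-independent with $\|K_{d/2}(B,B)\otimes B\|_{L^2(0,1;L^2)}=\|K_{d/2}(B,B)\otimes B\|_{L^2}$. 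Hence (2.17) yields
\[
\|K_{d/2}(B,B)\otimes B\|_{L^2}\lesssim \|B\|_{H^s}^{2}\,\|B\|_{H^{1-2s}}.
\]
Equivalently, one may re-run the computation behind Lemma 2.9 directly at fixed time: Hölder gives $\|K_{d/2}(B,B)\otimes B\|_{L^2}\le \|K_{d/2}(B,B)\|_{L^{2q}}\|B\|_{L^{2q'}}$; the Sobolev embeddings (2.3) give $H^l\subset L^{2q}$, $H^m\subset L^{2q'}$ with $l+m=d/2$; choosing $l=d/2-1+2s$, hence $m=1-2s$, and invoking Proposition 2.7 for $\|K_{d/2}(B,B)\|_{H^l}\lesssim \|B\|_{H^s}^{2}$ reproduces the same bound.

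Next, since $B\in H^1(\mathbb{T}^d)$ and $0\le 1-2s\le 1$, the interpolation inequality (2.6) gives $\|B\|_{H^s}\le \|B\|_{L^2}^{1-s}\|B\|_{H^1}^{s}$ and $\|B\|_{H^{1-2s}}\le \|B\|_{L^2}^{2s}\|B\|_{H^1}^{1-2s}$; multiplying, the exponents of $\|B\|_{L^2}$ sum to $2(1-s)+2s=2$ and those of $\|B\|_{H^1}$ to $2s+(1-2s)=1$, so $\|B\|_{H^s}^{2}\|B\|_{H^{1-2s}}\le \|B\|_{L^2}^{2}\|B\|_{H^1}$, which combined with the previous display gives (2.18). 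There is no real obstacle here; the only thing to monitor is the exponent bookkeeping — the non-endpoint Sobolev conditions $0<l,m<d/2$ and $0<s<1/2$ hold for every $d\ge 2$ with, e.g., $s=1/4$, and the powers of the time length cancel in (2.17) precisely because $sp=2$, i.e.\ $\tfrac2p=s$ and $\tfrac2p+\tfrac{1-2s}{2}=\tfrac12$.
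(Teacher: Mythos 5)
Your proposal is correct and follows essentially the same route as the paper: specialize the cubic estimate (2.17) to time-independent $B_1=B_2=B_3=B$ to get $\|K_{d/2}(B,B)\otimes B\|_{H}\lesssim \|B\|_{H^{s}}^{2}\|B\|_{H^{1-2s}}$, then conclude by the interpolation inequality (2.6). Your explicit bookkeeping (choice $s=1/4$, $p=8$, and the cancellation of the powers of the time length because $sp=2$) just spells out details the paper leaves implicit.
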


\begin{proof}
By applying (2.17) for the time-independent function $B_i=B\in H^{1}(\mathbb{T}^{d})$ and $i=1,2,3$,
\begin{align*}
||K_{d/2}(B,B)\otimes B||_{H}\lesssim ||B||_{H^{s}}^{2}||B||_{H^{1-2s}}.
\end{align*}
By the interpolation inequality (2.6), (2.18) follows.  
\end{proof}

\subsection{Local well-posedness}
For $B\in \mathcal{H}_T$ and $u=K_{d/2}(B,B)$, $B\otimes u$ and $u\otimes B$ are locally square-integrable in $\mathbb{T}^{d}\times (0,T)$ by Lemma 2.6. We say that $B\in \mathcal{H}_T$ is a solution to (2.1) for $\gamma=d/2$ and $f\in L^{2}(0,T; V^{*})$ if $B$ satisfies $(2.1)_1$ for $u=K_{d/2}(B,B)$ on $\mathbb{T}^{d}\times (0,T)$ in the distributional sense.

We take $4<p<\infty$ and set $0<s=2/p<1/2$ and 
\begin{align}
\mathcal{Y}_{T}&=L^{p}(0,T; H^{s})\cap L^{\frac{2}{1-2s}}(0,T; H^{1-2s}),\\
||B||_{\mathcal{Y}_{T}}&=\max\left\{||B||_{L^{p}(0,T; H^{s})}, ||B||_{L^{\frac{2}{1-2s}}(0,T; H^{1-2s})}\right\}.
\end{align}
By the continuous embeddings (2.10) and (2.11), 
\begin{align}
\mathcal{H}_T\subset \mathcal{X}_T\subset \mathcal{Y}_{T}.
\end{align}
The $\mathcal{Y}_{T}$-norm vanishes as $T\to0$. By the cubic estimate (2.17), 
\begin{align}
||K_{d/2}(B_1,B_2)\otimes B_3||_{L^{2}(0,T; L^{2}) }\lesssim ||B_1||_{\mathcal{Y}_T }||B_2||_{\mathcal{Y}_T }||B_3||_{\mathcal{Y}_T },\quad B_i\in \mathcal{Y}_T,\ i=1,2,3.
\end{align}
We construct unique local-in-time mild solutions (2.2) by using the regularity estimate for the Stokes equations (2.13) and the cubic estimate (2.22).

\begin{prop}
Solutions $B\in \mathcal{H}_T$ of (2.1) satisfying the initial condition $B(x,0)=B_0(x)$ are mild solutions (2.2). Conversely, mild solutions $B\in \mathcal{X}_T$ belong to $\mathcal{H}_T$ and are solutions to (2.2) satisfying the initial condition $B(x,0)=B_0(x)$.
\end{prop}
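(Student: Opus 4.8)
The plan is to prove the equivalence between mild solutions in $\mathcal{X}_T$ and weak/distributional solutions in $\mathcal{H}_T$ for the system (2.1) with $\gamma = d/2$, following the standard bootstrapping pattern for semilinear parabolic equations. First I would establish the forward direction: given a solution $B \in \mathcal{H}_T$ of (2.1) with $B(\cdot,0)=B_0$, I set $u = K_{d/2}(B,B)$ and note by Lemma 2.6 (via the chain $\mathcal{H}_T \subset \mathcal{X}_T \subset \mathcal{Y}_T$ in (2.21)) that the nonlinear term $\nabla\cdot(B\otimes u - u\otimes B)$ lies in $L^2(0,T;V^*)$, indeed in $L^2(0,T;L^2)$ after one derivative. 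Then $B$ solves the inhomogeneous Stokes equation (2.12) with right-hand side $f + \nabla\cdot(B\otimes u - u\otimes B) \in L^2(0,T;V^*)$ and initial data $B_0$; by the uniqueness part of Proposition 2.3 (extended to nonzero initial data by subtracting $e^{t\kappa\Delta}B_0$, which is justified because $B - e^{t\kappa\Delta}B_0$ vanishes at $t=0$ in $H$ and solves a homogeneous-data Stokes problem), $B$ must coincide with the Duhamel formula (2.2). This uses the energy identity argument $\frac{d}{dt}\|Z\|_{L^2}^2 + 2\kappa\|\nabla Z\|_{L^2}^2 = 0$ already invoked in Proposition 2.3.

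Next I would handle the converse: given a mild solution $B \in \mathcal{X}_T$ satisfying (2.2), I must show $B \in \mathcal{H}_T$ and that $B$ is a distributional solution attaining $B_0$. The regularity gain comes from applying Proposition 2.3 to the identity (2.2): the first term $e^{t\kappa\Delta}B_0$ lies in $\mathcal{X}_T$ by boundedness of the heat semigroup on $H^s$ and the standard parabolic smoothing estimate $\int_0^T \|e^{t\kappa\Delta}B_0\|_V^2\,dt \lesssim_\kappa \|B_0\|_H^2$; the second term $\int_0^t e^{(t-s)\kappa\Delta}f(s)\,ds$ lies in $\mathcal{H}_T$ by (2.13); and for the nonlinear term $\int_0^t e^{(t-s)\kappa\Delta}\nabla\cdot(B\otimes u - u\otimes B)\,ds$, I use that $B \in \mathcal{X}_T \subset \mathcal{Y}_T$, so by the cubic estimate (2.22) the integrand $\nabla\cdot(B\otimes u - u\otimes B)$ lies in $L^2(0,T; H^{-1})=L^2(0,T;V^*)$, and another invocation of Proposition 2.3 places this term in $\mathcal{H}_T$. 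Summing, $B \in \mathcal{H}_T$. That $B$ solves $(2.1)_1$ in the distributional sense and satisfies $B(\cdot,0)=B_0$ then follows by differentiating the Duhamel formula against test functions, or equivalently by noting that a function given by (2.2) automatically satisfies the mild-to-weak correspondence for the Stokes semigroup.

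The main obstacle I anticipate is the careful handling of the nonlinear term's regularity and the precise functional setting: one must verify that $B \otimes u - u \otimes B$ and its divergence are controlled in the right spaces to apply Proposition 2.3, and this requires threading the embeddings (2.21)--(2.22) with the specific exponents $p$, $s$ with $sp=2$, $4<p<\infty$. In particular, the cubic estimate (2.22) is stated for $\mathcal{Y}_T$-data but one needs $u = K_{d/2}(B,B)$ with $B \in \mathcal{X}_T$, and the inclusion $\mathcal{X}_T \subset \mathcal{Y}_T$ must be invoked with care since the $\mathcal{Y}_T$-norm is the relevant quantity. A secondary subtlety is showing that the mild solution, a priori only in $\mathcal{X}_T$, genuinely has a weak time-derivative in $L^2(0,T;V^*)$ — this is exactly what the regularity estimate (2.13) for the Stokes equation buys us once the nonlinearity is shown to be an admissible forcing. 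Everything else is routine Duhamel manipulation and density arguments already set up in the preceding propositions.
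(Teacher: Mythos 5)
Your proposal is correct and takes essentially the same route as the paper: the forward direction rests on the uniqueness of the inhomogeneous Stokes problem (Proposition 2.3), and the converse on the chain of embeddings (2.21), the cubic estimate (2.22) to put the nonlinearity in $L^{2}(0,T;V^{*})$, and the regularity estimate (2.13) to place the Duhamel terms in $\mathcal{H}_T$. The only cosmetic imprecision is that you place $e^{t\kappa\Delta}B_0$ in $\mathcal{X}_T$ where membership in $\mathcal{H}_T$ is what is needed, but the smoothing estimate you quote already yields $\partial_t e^{t\kappa\Delta}B_0=\kappa\Delta e^{t\kappa\Delta}B_0\in L^{2}(0,T;V^{*})$, so this is immediate.
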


\begin{proof}
By the uniqueness of the Stokes equations (2.12), solutions $B\in \mathcal{H}_T$ of (2.1) are mild solutions (2.2). Conservely, for mild solutions $B\in \mathcal{X}_T$, the nonlinear terms in (2.1$)_1$ belong to $L^{2}(0,T; V^{*})$ by the embedding (2.21) and the cubic estimate (2.22). Thus, the second and third terms on the right-hand side of (2.2$)_1$ belong to $\mathcal{H}_T$ by the regularity estimate (2.13). Since $e^{t\kappa \Delta}B_0\in \mathcal{H}_T$, $B\in \mathcal{H}_T$ and $B$ satisfies (2.1$)_1$ in the distributional sense.
\end{proof}

\begin{prop}
Solutions $B\in \mathcal{H}_T$ of (2.1) are unique.  
\end{prop}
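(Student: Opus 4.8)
The plan is the classical difference‑of‑solutions energy argument, localized in time and iterated: I would subtract two solutions with the same data, run the $L^{2}$ energy identity on a short interval, estimate the nonlinear difference in $L^{2}(0,T;L^{2})$ by the cubic estimate (2.17)/(2.22), and absorb the resulting $\|\nabla W\|_{L^{2}}$ factor into the resistive dissipation $\kappa\|\nabla W\|_{L^{2}}^{2}$.

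In detail: given $B^{(1)},B^{(2)}\in\mathcal{H}_{T}$ solving (2.1) (with $\gamma=d/2$) for the same force $f$ and the same initial datum $B_{0}$, I would set $W=B^{(1)}-B^{(2)}$, $u^{(i)}=K_{d/2}(B^{(i)},B^{(i)})$, and, using bilinearity, $\delta u:=u^{(1)}-u^{(2)}=K_{d/2}(B^{(1)},W)+K_{d/2}(W,B^{(2)})$. Then $\partial_{t}W=\kappa\Delta W+\Pi\nabla\cdot N$ in $L^{2}(0,T;V^{*})$, where
\[
N=\bigl(B^{(2)}\otimes\delta u-\delta u\otimes B^{(2)}\bigr)+\bigl(W\otimes u^{(2)}-u^{(2)}\otimes W\bigr)+\bigl(W\otimes\delta u-\delta u\otimes W\bigr),
\]
and every summand is, up to a transposition (which does not change the pointwise Euclidean matrix norm), of the form $K_{d/2}(\cdot,\cdot)\otimes(\cdot)$ carrying exactly one factor $W$; hence $N\in L^{2}(0,T;L^{2})$ by (2.22) and the embeddings $\mathcal{H}_{T}\subset\mathcal{X}_{T}\subset\mathcal{Y}_{T}$ of (2.21), so that $t\mapsto\|W(t)\|_{L^{2}}^{2}$ is absolutely continuous with $\frac{d}{dt}\|W\|_{L^{2}}^{2}=2\langle\partial_{t}W,W\rangle$ for a.e.\ $t$ by \cite[5.9, Theorem 3]{E}. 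Testing against $W$ on a subinterval $I=[a,b]\subset[0,T]$ (writing $\|\cdot\|_{\mathcal{X}_{I}}$, $\|\cdot\|_{\mathcal{Y}_{I}}$ for the norms of (2.9), (2.19)--(2.20) taken over $I$) and integrating by parts,
\[
\frac12\frac{d}{dt}\|W\|_{L^{2}}^{2}+\kappa\|\nabla W\|_{L^{2}}^{2}=-\int_{\mathbb{T}^{d}}N:\nabla W\,dx\le\frac{\kappa}{2}\|\nabla W\|_{L^{2}}^{2}+\frac{1}{2\kappa}\|N\|_{L^{2}}^{2},
\]
which after integration over $I$ (and the equivalence of $\|\cdot\|_{H^{1}}$ and $\|\cdot\|_{\dot H^{1}}$ on $H$) yields $\|W\|_{\mathcal{X}_{I}}^{2}\le C_{\kappa}\bigl(\|W(a)\|_{L^{2}}^{2}+\|N\|_{L^{2}(I;L^{2})}^{2}\bigr)$ with $C_{\kappa}$ depending only on $\kappa$. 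For the other side I would apply the cubic estimate (2.17) to each summand of $N$, bounding $\|W\|_{\mathcal{Y}_{I}}\le\|W\|_{\mathcal{X}_{I}}$ for the terms linear in $W$ and $\|W\|_{\mathcal{Y}_{I}}^{2}\le R\,\|W\|_{\mathcal{X}_{I}}$, $R:=\|W\|_{\mathcal{Y}_{T}}<\infty$, for the terms quadratic in $W$, obtaining
\[
\|N\|_{L^{2}(I;L^{2})}\le C\bigl(M(I)+\sqrt{M(I)}\,R\bigr)\|W\|_{\mathcal{X}_{I}},\qquad M(I):=\|B^{(1)}\|_{\mathcal{Y}_{I}}^{2}+\|B^{(2)}\|_{\mathcal{Y}_{I}}^{2}.
\]

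Combining the two estimates, $\|W\|_{\mathcal{X}_{I}}^{2}\le C_{\kappa}\|W(a)\|_{L^{2}}^{2}+C_{\kappa}C^{2}\bigl(M(I)+\sqrt{M(I)}R\bigr)^{2}\|W\|_{\mathcal{X}_{I}}^{2}$. Since $B^{(1)},B^{(2)}\in\mathcal{Y}_{T}$, absolute continuity of the integral makes $\sup\{M(I):I\subset[0,T],\,|I|\le\tau\}\to0$ as $\tau\to0$, so I would fix $\tau_{*}>0$, independent of the location of $I$, such that $C_{\kappa}C^{2}(M(I)+\sqrt{M(I)}R)^{2}\le\frac12$ on every subinterval of length $\le\tau_{*}$; then $W(a)=0$ forces $\|W\|_{\mathcal{X}_{I}}=0$, using $\|W\|_{\mathcal{X}_{I}}\le\|W\|_{\mathcal{X}_{T}}<\infty$. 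Finally I would iterate over $[0,\tau_{*}],[\tau_{*},2\tau_{*}],\dots$, with $W(0)=0$ by the common initial datum and $W(k\tau_{*})=0$ from the previous step (and continuity of $W:[0,T]\to H$), covering $[0,T]$ in finitely many steps and concluding $B^{(1)}\equiv B^{(2)}$. I expect the only real obstacle to be the bookkeeping of the cubic term — checking that after using bilinearity of $K_{d/2}$ every piece of $N$ lies in the scope of (2.17) with exactly one (or two) factors of $W$, and that the quadratic-in-$W$ pieces can be absorbed — together with organizing the short-interval smallness, which works precisely because the $\mathcal{Y}_{T}$‑norm is absolutely continuous in time.
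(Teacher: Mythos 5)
Your proposal is correct. It differs from the paper's proof mainly in mechanics rather than in the essential ingredients: both arguments hinge on the cubic estimate (2.17)/(2.22), the embedding chain (2.21), and the fact that the $\mathcal{Y}$-norm of the given solutions becomes small on short time intervals. The paper works with the mild formulation and estimates the difference through the Stokes regularity estimate (2.13), $\|B\|_{\mathcal{Y}}\lesssim\|B\|_{\mathcal{X}}\lesssim\|F\|_{L^{2}(0,T';L^{2})}$, using the decomposition $F=B\otimes u^{1}+B^{2}\otimes u-u^{1}\otimes B-u\otimes B^{2}$, which is genuinely linear in the difference (since $u$ is linear in $B$), and then concludes by contradiction as $T'\to0$, leaving the continuation to later times implicit. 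You instead run a direct $L^{2}$ energy estimate, absorbing $\int N:\nabla W$ into the resistive dissipation $\kappa\|\nabla W\|_{L^{2}}^{2}$ via Young's inequality; your expansion around $(B^{(2)},u^{(2)})$ produces an extra quadratic-in-$W$ term, which you handle correctly with the a priori bound $\|W\|_{\mathcal{Y}_{I}}\le\|W\|_{\mathcal{Y}_{T}}=R<\infty$, and you make the short-interval iteration explicit, using uniform absolute continuity of $M(I)$ and the continuity of $W$ in $H$ to propagate $W=0$ across $[0,T]$. What your route buys is independence from the semigroup estimate (2.13) and an explicit global-in-time conclusion; what the paper's route buys is a decomposition linear in the difference (no quadratic term to absorb) and a shorter write-up, at the cost of a terser continuation argument. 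Both proofs are valid for $\gamma=d/2$ and extend to $\gamma>d/2$ as in Remark 2.15.
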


\begin{proof}
Suppose that $B^1$ and $B^2$ are two mild solutions of (2.1) in $\mathbb{T}^{d}\times (0,T')$ with the associated velocity fields $u^1$ and $u^2$, respectively. We set $B=B^1-B^2$ and $u=u^1-u^2$. By using the bilinear operator,
\begin{align*}
u=K_{d/2}(B^1,B^1)-K_{d/2}(B^2,B^2)=K_{d/2}(B,B^{1})+K_{d/2}(B^2,B).
\end{align*}
The functions $B$ and $u$ satisfy 
\begin{align*}
\partial_t B
=\kappa \Delta B+\nabla \cdot (B\otimes u^1 +B^{2}\otimes u-u^{1}\otimes B-u\otimes B^{2}) 
=:\kappa \Delta B+\nabla \cdot F,
\end{align*}
in the sense that 
\begin{align*}
B=\int_{0}^{t}e^{(t-s)\kappa\Delta}\nabla \cdot F(s)ds.
\end{align*}
By the cubic estimates (2.22) for $\mathcal{Y}=\mathcal{Y}_{T'}$,
\begin{align*}
||F||_{L^{2}(0,T'; L^{2})}\lesssim ||B||_{\mathcal{Y}}||B^1||_{\mathcal{Y}}^{2}+||B||_{\mathcal{Y}} ||B^2||_{\mathcal{Y}}\left( ||B^{1}||_{\mathcal{Y}} +||B^2||_{\mathcal{Y}}\right).
\end{align*} 
Thus, $\nabla \cdot F\in L^{2}(0,T'; V^{*})$. By the continuous embeddings (2.21), and the regularity estimates (2.13), 
\begin{align*}
||B||_{\mathcal{Y}}
\lesssim ||B||_{\mathcal{X}}
=\left\|\int_{0}^{t}e^{(t-s)\kappa\Delta}\nabla \cdot F(s)ds\right\|_{\mathcal{X}} 
\lesssim \left\|F\right\|_{L^{2}(0,T'; L^{2})} 
\lesssim ||B||_{\mathcal{Y}}||B^1||_{\mathcal{Y}}^{2}+||B||_{\mathcal{Y}} ||B^2||_{\mathcal{Y}}\left( ||B^{1}||_{\mathcal{Y}} +||B^2||_{\mathcal{Y}}\right).
\end{align*}
If $B\neq 0$, $1\lesssim ||B^1||_{\mathcal{Y}_{T'}}^{2} +||B^1||_{\mathcal{Y}_{T'}} ||B^2||_{\mathcal{Y}_{T'}}+||B^2||_{\mathcal{Y}_{T'}}^{2}\to 0$  as $T'\to 0$. This is a contradiction. Thus, $B=0$.
\end{proof}

\begin{lem}
For $B_0\in H$ and $f\in L^{2}(0,T; V^{*})$, there exist $T_0>0$ and a unique solution $B\in \mathcal{H}_{T_0}$ of (2.1) for $\gamma=d/2$.
\end{lem}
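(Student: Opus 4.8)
The plan is to solve the mild formulation (2.2) by a contraction argument in $\mathcal{X}_{T_0}$ for $T_0>0$ small, controlling the cubic nonlinearity through the weaker norm of $\mathcal{Y}_{T_0}$. I would set
\[
B_{\mathrm{lin}}(t)=e^{t\kappa\Delta}B_0+\int_0^t e^{(t-s)\kappa\Delta}f(s)\,ds ,\qquad
\Phi(B)(t)=B_{\mathrm{lin}}(t)+\int_0^t e^{(t-s)\kappa\Delta}\nabla\cdot\big(B\otimes u-u\otimes B\big)\,ds ,
\]
with $u=K_{d/2}(B,B)$, and first check that $B_{\mathrm{lin}}\in\mathcal{X}_T$: the heat semigroup is bounded on $H$ and a direct Fourier computation gives $e^{t\kappa\Delta}B_0\in L^2(0,T;\dot{H}^1)$, hence $e^{t\kappa\Delta}B_0\in\mathcal{X}_T$, while $\int_0^t e^{(t-s)\kappa\Delta}f(s)\,ds\in\mathcal{H}_T\subset\mathcal{X}_T$ by Proposition 2.2. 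Since $B_{\mathrm{lin}}$ is then a fixed element of $\mathcal{X}_T\subset\mathcal{Y}_T$, its $\mathcal{Y}_{T_0}$-norm tends to $0$ as $T_0\to0$ by absolute continuity of the integral, and $\Phi(B)(0)=B_0$ for every $B$.

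Next I would run the fixed point in the closed ball $\mathcal{B}_\rho=\{B\in\mathcal{X}_{T_0}:\|B-B_{\mathrm{lin}}\|_{\mathcal{X}_{T_0}}\le\rho\}$, a complete metric space. The interpolation inequality (2.6) provides a continuous embedding $\mathcal{X}_{T_0}\subset\mathcal{Y}_{T_0}$ with a constant $C_0$ \emph{independent of} $T_0$, so $\|B\|_{\mathcal{Y}_{T_0}}\le\|B_{\mathrm{lin}}\|_{\mathcal{Y}_{T_0}}+C_0\rho$ on $\mathcal{B}_\rho$. Using that $\nabla\cdot$ maps $L^2$ into $V^*$, the Stokes estimate (2.13), the cubic estimate (2.22), and the bilinearity of $K_{d/2}$ (just as in the proof of Proposition 2.9), I would obtain
\[
\|\Phi(B)-B_{\mathrm{lin}}\|_{\mathcal{X}_{T_0}}\lesssim_\kappa\|B\|_{\mathcal{Y}_{T_0}}^3 ,\qquad
\|\Phi(B_1)-\Phi(B_2)\|_{\mathcal{X}_{T_0}}\lesssim_\kappa\big(\|B_1\|_{\mathcal{Y}_{T_0}}^2+\|B_2\|_{\mathcal{Y}_{T_0}}^2\big)\|B_1-B_2\|_{\mathcal{X}_{T_0}} .
\]
Then I would first fix $\rho=\rho(\kappa)>0$ so small that the implicit constants, multiplied by $\big((1+C_0)\rho\big)^2$, do not exceed $\tfrac12$, and afterwards pick $T_0=T_0(\kappa,B_0,f)\in(0,T]$ so small that $\|B_{\mathrm{lin}}\|_{\mathcal{Y}_{T_0}}\le\rho$. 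With these choices $\|B\|_{\mathcal{Y}_{T_0}}\le(1+C_0)\rho$ on $\mathcal{B}_\rho$, so $\Phi:\mathcal{B}_\rho\to\mathcal{B}_\rho$ is a $\tfrac12$-contraction, and the Banach fixed point theorem yields a mild solution $B\in\mathcal{X}_{T_0}$ of (2.2) with $B(0)=B_0$. By Proposition 2.8 this $B$ lies in $\mathcal{H}_{T_0}$ and solves (2.1), and its uniqueness in $\mathcal{H}_{T_0}$ is exactly Proposition 2.9.

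I expect the main difficulty to be not any individual estimate — the two substantive inputs, the Stokes bound (2.13) and the cubic estimate of Lemma 2.6, are already in hand — but the bookkeeping imposed by the cubicity: $\|B_{\mathrm{lin}}\|_{\mathcal{X}_{T_0}}$ does \emph{not} go to $0$ as $T_0\to0$ (its $C([0,T_0];H)$ part tends to $\|B_0\|_H$), so a naive iteration on a large $\mathcal{X}_{T_0}$-ball cannot close, and the smallness needed to absorb the cubic term must instead be harvested from $\|B_{\mathrm{lin}}\|_{\mathcal{Y}_{T_0}}$, which does vanish. The scheme therefore relies essentially on (2.22) measuring the nonlinearity purely through $\mathcal{Y}_{T_0}$-norms while Proposition 2.2 returns the output in the stronger space $\mathcal{X}_{T_0}$, together with the $T_0$-uniformity of the embedding $\mathcal{X}_{T_0}\subset\mathcal{Y}_{T_0}$.
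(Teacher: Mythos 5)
Your argument is correct and is essentially the paper's proof: both solve the mild formulation (2.2) by harvesting smallness from $\|B_{\mathrm{lin}}\|_{\mathcal{Y}_{T_0}}\to0$, controlling the cubic term via (2.22) in the $\mathcal{Y}$-norm while the Stokes estimate (2.13) returns the output in $\mathcal{X}_{T_0}$, and then invoking Propositions 2.8 and 2.9 for membership in $\mathcal{H}_{T_0}$ and uniqueness. The only difference is packaging — you run a Banach fixed point on a ball centered at $B_{\mathrm{lin}}$, whereas the paper writes out the explicit Picard iteration with uniform bounds and contraction of successive differences (Propositions 2.11--2.13) — which is cosmetic.
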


We set a sequence $\{B_j\}$ by 
\begin{equation}
\begin{aligned}
B_{j+1}&=B_1 +\int_{0}^{t}e^{(t-s)\kappa\Delta}\nabla \cdot (B_j\otimes  u_j- u_j\otimes B_j)ds,\quad u_j=K_{d/2}(B_j, B_j),\\
B_1&=e^{t\kappa \Delta}B_0+\int_{0}^{t}e^{(t-s)\kappa \Delta}f(s)ds.
\end{aligned}
\end{equation}

\begin{prop}
Let $\mathcal{Y}=\mathcal{Y}_{T_0}$ and $T_0>0$. Then,
\begin{align}
||B_{j+1}||_{\mathcal{Y}}&\leq ||B_1||_{\mathcal{Y}}+C||B_j||_{\mathcal{Y}}^{3},\\
||B_{j+1}||_{\mathcal{X}}&\leq ||B_1||_{\mathcal{X}}+C'||B_j||_{\mathcal{Y}}^{3},\quad j\in \mathbb{N},
\end{align}
with some constants $C$ and $C'$, independent of $j$ and $T_0>0$.
\end{prop}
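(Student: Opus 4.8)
The plan is to read off both (2.24) and (2.25) from the three ingredients already established — the Stokes regularity estimate (2.13), the embedding chain (2.21), and the cubic estimate (2.22) — together with the triangle inequality; no genuinely new analysis is required, and the only delicate point is the uniformity of the constants.

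First I would isolate the nonlinear increment: by the definition of the iteration (2.23),
\[
B_{j+1}-B_1=\int_0^t e^{(t-s)\kappa\Delta}\nabla\cdot F_j(s)\,ds,\qquad F_j:=B_j\otimes u_j-u_j\otimes B_j,\quad u_j=K_{d/2}(B_j,B_j).
\]
Since the divergence operator maps $L^2(\mathbb{T}^d)$ boundedly into $V^*$, $F_j\in L^2(0,T_0;L^2)$ forces $\nabla\cdot F_j\in L^2(0,T_0;V^*)$ with $\|\nabla\cdot F_j\|_{L^2(0,T_0;V^*)}\lesssim\|F_j\|_{L^2(0,T_0;L^2)}$; thus $B_{j+1}-B_1$ is the zero-initial-data solution of the inhomogeneous Stokes equations (2.12) driven by $\nabla\cdot F_j$, and (2.13) gives $\|B_{j+1}-B_1\|_{\mathcal{H}_{T_0}}\lesssim_\kappa\|F_j\|_{L^2(0,T_0;L^2)}$. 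Next I would bound the forcing: writing $B_j\otimes u_j=K_{d/2}(B_j,B_j)\otimes B_j$ and using that $u_j\otimes B_j$ has the same pointwise norm, the cubic estimate (2.22) applied with all three slots filled by $B_j$ yields $\|F_j\|_{L^2(0,T_0;L^2)}\lesssim\|B_j\|_{\mathcal{Y}_{T_0}}^3$. Combining these and downgrading the left-hand side through the continuous embeddings $\mathcal{H}_{T_0}\subset\mathcal{X}_{T_0}\subset\mathcal{Y}_{T_0}$ of (2.21) gives
\[
\|B_{j+1}-B_1\|_{\mathcal{Y}_{T_0}}\lesssim\|B_{j+1}-B_1\|_{\mathcal{X}_{T_0}}\lesssim\|B_{j+1}-B_1\|_{\mathcal{H}_{T_0}}\lesssim_\kappa\|B_j\|_{\mathcal{Y}_{T_0}}^3 .
\]
The triangle inequality $\|B_{j+1}\|_*\le\|B_1\|_*+\|B_{j+1}-B_1\|_*$ then yields (2.24) for $*=\mathcal{Y}_{T_0}$ and (2.25) for $*=\mathcal{X}_{T_0}$, with $C$ and $C'$ the relevant products of the implicit constants.

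The step I expect to be the main (if modest) obstacle is verifying that $C$ and $C'$ are \emph{independent of $T_0$} (and hence of $j$). This rests on the fact that every inequality in the chain is free of any positive power of $T_0$: the Stokes bound (2.13) carries a constant depending only on $\kappa$ — it follows from the energy inequality together with the identity $\partial_t Z=\kappa\Delta Z+f$, with no time weight; the Sobolev and interpolation embeddings (2.3)--(2.6) are purely spatial; and in the proof of the cubic estimate (2.22) the Hölder exponents in time were chosen to match exactly, so no factor $T_0^{\delta}$ with $\delta>0$ appears. Hence $C,C'$ depend only on $\kappa,d,\gamma,p,s$, and since they are the same bound for every index they are $j$-independent as well, which is exactly the assertion. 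For the estimates to be non-vacuous one also checks inductively that each $B_j$ lies in $\mathcal{Y}_{T_0}$: $B_1\in\mathcal{H}_{T_0}\subset\mathcal{Y}_{T_0}$ because $e^{t\kappa\Delta}B_0\in\mathcal{H}_{T_0}$ for $B_0\in H$ and the Stokes term is controlled by (2.13), and the computation above shows $B_j\in\mathcal{Y}_{T_0}\Rightarrow B_{j+1}\in\mathcal{H}_{T_0}\subset\mathcal{Y}_{T_0}$, so the recursion never leaves the range of validity of (2.22).
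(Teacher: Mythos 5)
Your argument is correct and is essentially the paper's own proof: isolate $B_{j+1}-B_1$ as the zero-data Stokes solution driven by $\nabla\cdot(B_j\otimes u_j-u_j\otimes B_j)$, apply the regularity estimate (2.13), the cubic estimate (2.22), and the embeddings (2.21), then conclude by the triangle inequality. Your added remarks on the $T_0$-independence of the constants and the inductive membership $B_j\in\mathcal{Y}_{T_0}$ are accurate but only make explicit what the paper leaves implicit.
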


\begin{proof}
By the continuous embeddings (2.21), the regularity estimate (2.13), and the cubic estimate (2.22), 
\begin{align*}
||B_{j+1}-B_1||_{\mathcal{Y}}
\lesssim ||B_{j+1}-B_1||_{\mathcal{X}}
&= \left\|\int_{0}^{t}e^{(t-s)\kappa\Delta}\nabla \cdot (B_j\otimes  u_j- u_j\otimes B_j)ds\right\|_{\mathcal{X}} \\
&\lesssim ||u_j \otimes B_j||_{L^{2}(0,T_0;L^{2})}+|| B_j \otimes u_j||_{L^{2}(0,T_0;L^{2})}  
\lesssim ||B_j||_{\mathcal{Y}}^{3}.
\end{align*}
Thus, (2.24) and (2.25) hold.
\end{proof}

\begin{prop}
There exists an absolute constant $\delta_1>0$ such that for $||B_1||_{\mathcal{Y}}\leq \delta_1$, 
\begin{align}
||B_{j}||_{\mathcal{Y}}&\leq 2||B_1||_{\mathcal{Y}},\\
||B_{j}||_{\mathcal{X}}&\leq ||B_1||_{\mathcal{X}}+8C'||B_1||_{\mathcal{Y}}^{3},\quad j\in \mathbb{N}.
\end{align}
\end{prop}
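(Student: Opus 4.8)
The plan is to prove Proposition 2.14 by induction on $j$, using the two estimates from Proposition 2.13 together with a bootstrap (absorption) argument. Fix $T_0>0$ and write $\mathcal{Y}=\mathcal{Y}_{T_0}$, $\mathcal{X}=\mathcal{X}_{T_0}$, and let $C, C'$ be the constants from (2.24)--(2.25). I would choose $\delta_1>0$ small enough that $8C\delta_1^2\leq 1$, e.g. $\delta_1=(8C)^{-1/2}$ (an absolute constant, since $C$ is independent of $j$ and $T_0$). Under the hypothesis $\|B_1\|_{\mathcal{Y}}\leq \delta_1$ this gives $8C\|B_1\|_{\mathcal{Y}}^2\leq 1$, which is the inequality that makes the induction close.

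For the estimate (2.26), I would argue by induction. The base case $j=1$ is $\|B_1\|_{\mathcal{Y}}\leq 2\|B_1\|_{\mathcal{Y}}$, which is trivial. For the inductive step, assume $\|B_j\|_{\mathcal{Y}}\leq 2\|B_1\|_{\mathcal{Y}}$. Then by (2.24),
\begin{align*}
\|B_{j+1}\|_{\mathcal{Y}}
\leq \|B_1\|_{\mathcal{Y}}+C\|B_j\|_{\mathcal{Y}}^3
\leq \|B_1\|_{\mathcal{Y}}+8C\|B_1\|_{\mathcal{Y}}^3
=\|B_1\|_{\mathcal{Y}}\left(1+8C\|B_1\|_{\mathcal{Y}}^2\right)
\leq 2\|B_1\|_{\mathcal{Y}},
\end{align*}
where the last step uses $8C\|B_1\|_{\mathcal{Y}}^2\leq 8C\delta_1^2\leq 1$. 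This establishes (2.26) for all $j\in\mathbb{N}$.

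For the estimate (2.27), I would feed the now-established bound (2.26) into (2.25): for every $j$,
\begin{align*}
\|B_{j+1}\|_{\mathcal{X}}
\leq \|B_1\|_{\mathcal{X}}+C'\|B_j\|_{\mathcal{Y}}^3
\leq \|B_1\|_{\mathcal{X}}+C'\left(2\|B_1\|_{\mathcal{Y}}\right)^3
= \|B_1\|_{\mathcal{X}}+8C'\|B_1\|_{\mathcal{Y}}^3,
\end{align*}
and the case $j=1$ of (2.27) is immediate since $\|B_1\|_{\mathcal{X}}\leq \|B_1\|_{\mathcal{X}}+8C'\|B_1\|_{\mathcal{Y}}^3$. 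This gives (2.27) for all $j\in\mathbb{N}$.

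There is no serious obstacle here: the entire content is the choice of the smallness threshold $\delta_1$ so that the cubic term in (2.24) is absorbed into a factor $2$, and then (2.27) follows by plugging in. The only point requiring a little care is to make sure $\delta_1$ is genuinely absolute, i.e. depends only on the absolute constant $C$ from the cubic estimate (2.22) (hence from Lemma 2.6) and not on $T_0$ or $\kappa$ — which is guaranteed because Proposition 2.13 states $C, C'$ are independent of $j$ and $T_0$, and the cubic estimate (2.22) has an absolute implied constant. I would also remark, for use in the forthcoming fixed-point/contraction argument, that (2.26)--(2.27) show the Picard iterates $\{B_j\}$ stay in a fixed ball of $\mathcal{X}_{T_0}\subset\mathcal{Y}_{T_0}$, which combined with the contraction estimate (of the type in Proposition 2.12) will yield the local solution in Lemma 2.11.
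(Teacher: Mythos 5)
Your proposal is correct and matches the paper's own argument: the paper also sets $\delta_1=1/(2\sqrt{2C})=(8C)^{-1/2}$, proves (2.26) by induction via (2.24) absorbing the cubic term, and then deduces (2.27) directly from (2.25) and (2.26). No gaps; your write-up is just a more explicit version of the same proof.
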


\begin{proof}
By (2.24), (2.26) holds for $B_1$ satisfying $||B_1||_{\mathcal{Y}}\leq 1/(2\sqrt{2C})=:\delta_1$. The estimate (2.27) follows from (2.25) and (2.26). 
\end{proof}

We estimate $H_{j+1}=B_{j+1}-B_j$ and $v_{j+1}=u_{j+1}-u_j$. By (2.23), 
\begin{equation}
\begin{aligned}
H_{j+1}&=\int_{0}^{t}e^{(t-s)\kappa\Delta}\nabla \cdot (H_j\otimes u_j -u_j\otimes H_j+B_{j-1}\otimes v_j-v_j\otimes B_{j-1} )ds,\\
v_{j}&=K_{d/2}(H_j,B_j)+K_{d/2}(B_{j-1}, H_j).
\end{aligned}
\end{equation}

\begin{prop}
There exists an absolute constant $\delta_2>0$ such that for $||B_1||_{\mathcal{Y}}\leq \delta_2$,
\begin{align}
||H_{j+1}||_{\mathcal{X}}\leq \frac{1}{2}||H_{j}||_{\mathcal{X}},\quad j\in \mathbb{N}.
\end{align}
\end{prop}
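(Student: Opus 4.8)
The plan is to run the standard Banach fixed-point / contraction argument on the Picard iterates $\{B_j\}$ defined in (2.23), now estimating the differences $H_{j+1}=B_{j+1}-B_j$ rather than the iterates themselves. We already know from Proposition 2.12 that, once $\|B_1\|_{\mathcal{Y}}\leq\delta_1$, the whole sequence stays in a ball: $\|B_j\|_{\mathcal{Y}}\leq 2\|B_1\|_{\mathcal{Y}}$ for all $j$. So I would fix $T_0$ small enough (depending on $B_0$ and $f$, via the vanishing of the $\mathcal{Y}_{T_0}$-norm of $B_1$ as $T_0\to0$ noted after (2.21)) that $\|B_1\|_{\mathcal{Y}}\leq\delta_2$ for a $\delta_2\leq\delta_1$ to be pinned down, and then derive (2.30) directly from the integral identity (2.29).

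The key computation is to apply, in order: the continuous embedding $\mathcal{X}\subset\mathcal{Y}$ and then $\mathcal{X}\hookleftarrow\mathcal{H}$ from (2.21), the Stokes regularity estimate (2.13) applied to $H_{j+1}=\int_0^t e^{(t-s)\kappa\Delta}\nabla\cdot F_j(s)\,ds$ with $F_j = H_j\otimes u_j - u_j\otimes H_j + B_{j-1}\otimes v_j - v_j\otimes B_{j-1}$, and finally the cubic estimate (2.22) to bound $\|F_j\|_{L^2(0,T_0;L^2)}$. Concretely, writing $u_j=K_{d/2}(B_j,B_j)$ and $v_j=K_{d/2}(H_j,B_j)+K_{d/2}(B_{j-1},H_j)$, each of the four products in $F_j$ is of the form $K_{d/2}(\cdot,\cdot)\otimes(\cdot)$ with exactly one factor equal to $H_j$ (or $v_j$, which is itself $K_{d/2}$ applied with one $H_j$ factor), so (2.22) gives
\begin{align*}
\|F_j\|_{L^2(0,T_0;L^2)}\lesssim \|H_j\|_{\mathcal{Y}}\Big(\|B_j\|_{\mathcal{Y}}^2+\|B_{j-1}\|_{\mathcal{Y}}\|B_j\|_{\mathcal{Y}}+\|B_{j-1}\|_{\mathcal{Y}}^2\Big).
\end{align*}
Invoking $\|B_{j-1}\|_{\mathcal{Y}},\|B_j\|_{\mathcal{Y}}\leq 2\|B_1\|_{\mathcal{Y}}$ from (2.26), the parenthesis is $\lesssim \|B_1\|_{\mathcal{Y}}^2$, and combining with the chain above and $\|H_j\|_{\mathcal{Y}}\lesssim\|H_j\|_{\mathcal{X}}$ yields $\|H_{j+1}\|_{\mathcal{X}}\leq C''\|B_1\|_{\mathcal{Y}}^2\,\|H_j\|_{\mathcal{X}}$ for an absolute constant $C''$. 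Choosing $\delta_2=\min\{\delta_1,\,1/\sqrt{2C''}\}$ makes $C''\|B_1\|_{\mathcal{Y}}^2\leq 1/2$, which is exactly (2.30).

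I do not expect a genuine obstacle here; the only points requiring care are bookkeeping ones. First, one must be sure the base case ($j=1$, i.e.\ $H_2=B_2-B_1$) is covered — but (2.29) holds for $j\geq1$ since $B_0$ is defined to be $B_1$ in the iteration, so $v_1=K_{d/2}(H_1,B_1)+K_{d/2}(B_0,H_1)$ makes sense with $H_1=B_1-B_0$; alternatively one absorbs $j=1$ into the general estimate. Second, one must track that the implicit constants coming from (2.13) depend on $\kappa$ but \emph{not} on $T_0$ or $j$ — this is already asserted in the statements of (2.24)–(2.25) and their proof via Proposition 2.11, so the constant $C''$ above is likewise independent of $T_0$ and $j$, and the smallness condition on $\|B_1\|_{\mathcal{Y}}$ can be met by shrinking $T_0$ alone. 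With (2.30) in hand, $\{B_j\}$ is Cauchy in $\mathcal{X}_{T_0}$, and combined with (2.27) the limit lies in $\mathcal{X}_{T_0}$ and, by Proposition 2.9, in $\mathcal{H}_{T_0}$, giving the local solution of Lemma 2.10.
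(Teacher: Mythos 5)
Your argument is correct and is essentially the paper's own proof: apply the cubic estimate (2.22) to $F_j=H_j\otimes u_j-u_j\otimes H_j+B_{j-1}\otimes v_j-v_j\otimes B_{j-1}$, use the uniform bound (2.26), then the Stokes regularity estimate (2.13) together with the embedding (2.21) to get $\|H_{j+1}\|_{\mathcal{X}}\leq C\|B_1\|_{\mathcal{Y}}^{2}\|H_j\|_{\mathcal{X}}$ with $C$ independent of $j$ and $T_0$, and choose $\delta_2=\min\{\delta_1,1/\sqrt{2C}\}$. The only quibble is your base-case convention: setting $B_0=B_1$ would force $H_1=0$ (and the claimed contraction at $j=1$ would then be false in general), whereas $B_0=0$ makes (2.28) reproduce $H_2$ correctly; in any case the Cauchy property needed for Lemma 2.10 already follows from the contraction for $j\geq 2$, so this is immaterial.
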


\begin{proof}
By the cubic estimate (2.22),
\begin{align*}
||H_j\otimes u_j -u_j\otimes H_j+B_{j-1}\otimes v_j-v_j\otimes B_{j-1} ||_{L^{2}(0,T_0; L^{2})}\lesssim ||H_j||_{\mathcal{Y}}\left( ||B_j||_{\mathcal{Y}}^{2}+ ||B_{j-1}||_{\mathcal{Y}}||B_{j}||_{\mathcal{Y}}+||B_{j-1}||_{\mathcal{Y}}^{2}  \right).
\end{align*}
We use the uniform bound (2.26) for $B_1$ satisfying $||B_1||_{\mathcal{Y}}\leq \delta_1$. By the regularity estimates (2.13) and the continuous embedding (2.21), 
\begin{align*}
||H_{j+1}||_{\mathcal{X}}
&=\left\|\int_{0}^{t}e^{(t-s)\kappa\Delta}\nabla \cdot (H_j\otimes u_j -u_j\otimes H_j+B_{j-1}\otimes v_j-v_j\otimes B_{j-1} )ds\right\|_{\mathcal{X}} \\
&\lesssim ||H_j\otimes u_j -u_j\otimes H_j+B_{j-1}\otimes v_j-v_j\otimes B_{j-1} ||_{L^{2}(0,T_0; L^{2})} \\
&\lesssim ||H_j||_{\mathcal{Y}}\left( ||B_j||_{\mathcal{Y}}^{2}+ ||B_{j-1}||_{\mathcal{Y}}||B_{j}||_{\mathcal{Y}}+||B_{j-1}||_{\mathcal{Y}}^{2}  \right)
\lesssim ||H_j||_{\mathcal{Y}}||B_1||_{\mathcal{Y}}^{2}
\lesssim ||H_j||_{\mathcal{X}}||B_1||_{\mathcal{Y}}^{2}.
\end{align*}
Thus, $||H_{j+1}||_{\mathcal{X}}\leq C||B_1||_{\mathcal{Y}}^{2} ||H_j||_{\mathcal{X}}$ with a constant $C$, independent of $j$. For $B_1$ satisfying $||B_1||_{\mathcal{Y}}\leq \min\{\delta_1, 1/\sqrt{2C}\}=\delta_2$, (2.29) holds.
\end{proof}

\begin{proof}[Proof of Lemma 2.10]
Since $||B_1||_{\mathcal{Y}_{T_0}}\to 0$ as $T_0\to 0$, we find that $||B_1||_{\mathcal{Y}_{T_0}}\leq \delta_2 $ for small $T_0>0$ and $\{B_j\}\subset \mathcal{X}_{T_0}$ is a Cauchy sequence in $\mathcal{X}_{T_0}$ by (2.29). Thus, there exists $B\in \mathcal{X}_{T_0}$ such that $B_j\to B$ in $\mathcal{X}_{T_0}$. By the boundedness of the bilinear operator (2.15),
\begin{align*}
u_j\to u=K_{d/2}(B,B)\quad \textrm{in}\ L^{\frac{p}{2}}(0,T; H^{m}),\quad m=\frac{d}{2}-1+2s.
\end{align*}
Letting $j\to \infty$ implies that the limit $B\in \mathcal{X}_{T_0}$ satisfies the integral equations (2.2). By Proposition 2.8, $B\in \mathcal{H}_{T_0}$ is a solution to (2.1). The uniqueness follows from Proposition 2.9.
\end{proof}

\subsection{Global well-posedness}

\begin{thm}
For $B_0\in H$ and $f\in L^{2}(0,T; V^{*})$, there exists a unique solution $B\in \mathcal{H}_T$ to (2.1) for $\gamma=d/2$ satisfying the initial condition $B(x,0)=B_0(x)$ and the energy inequality (1.12).
\end{thm}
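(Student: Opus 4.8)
The plan is to promote the local solution of Lemma 2.10 to a global one by means of the a priori energy identity, and then to extend it by the usual continuation argument; uniqueness comes directly from Proposition 2.9.

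First I record the energy identity for the local solution $B\in\mathcal H_{T_0}$ of Lemma 2.10. Since $B\in\mathcal H_{T_0}\subset\mathcal X_{T_0}\subset\mathcal Y_{T_0}$ by (2.21) and the nonlinear term lies in $L^2(0,T_0;L^2)\subset L^2(0,T_0;V^*)$ by the cubic estimate (2.22), we have $\partial_tB\in L^2(0,T_0;V^*)$, so the chain rule $\tfrac{d}{dt}\|B\|_{L^2}^2=2\langle\partial_tB,B\rangle$ holds a.e.\ by \cite[5.9, Theorem 3]{E}. Testing the equation in the form (1.11) with $B$: the transport term drops, $\int(u\cdot\nabla B)\cdot B\,dx=\tfrac12\int u\cdot\nabla|B|^2\,dx=0$; and for the stretching term, using that $u$ is solenoidal and $(-\Delta)^{\gamma}u=\Pi(B\cdot\nabla B)$ (since $u=K_{d/2}(B,B)$, $\gamma=d/2$, and $\nabla\cdot B=0$), one gets
\[
-\int(B\cdot\nabla u)\cdot B\,dx=\int u\cdot(B\cdot\nabla B)\,dx=\int u\cdot(-\Delta)^{\gamma}u\,dx=\|u\|_{\dot H^{\gamma}}^2 .
\]
Hence $\tfrac12\tfrac{d}{dt}\|B\|_{L^2}^2+\kappa\|\nabla B\|_{L^2}^2+\|u\|_{\dot H^{\gamma}}^2=\langle f,B\rangle\le\|f\|_{\dot H^{-1}}\|\nabla B\|_{L^2}\le\tfrac\kappa2\|\nabla B\|_{L^2}^2+\tfrac1{2\kappa}\|f\|_{H^{-1}}^2$, and integrating in time gives the energy inequality (1.12) on $[0,T_0]$, with right-hand side bounded by $M^2:=\|B_0\|_{L^2}^2+\tfrac1\kappa\|f\|_{L^2(0,T;H^{-1})}^2$, a bound independent of the length of the interval. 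The point that needs attention here is legitimacy of the pairings: the displayed algebraic identity is used for a.e.\ fixed $t$, at which $u(t)=K_{d/2}(B(t),B(t))\in\dot H^{\gamma}$ — for $d\ge3$ this is Proposition 2.7, and for $d=2$ it follows by elliptic regularity from $B(t)\in H^1$ (so $B\cdot\nabla B\in L^r$ for some $r<2$, hence $u(t)\in\dot H^1$); square-integrability of $t\mapsto\|u(t)\|_{\dot H^{\gamma}}$ is then an output of, not an input to, the estimate.

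Now I continue the solution. Let $T^*\in(0,T]$ be the supremum of $T'$ for which a solution $B\in\mathcal H_{T'}$ of (2.1) with $B(0)=B_0$ exists; by Proposition 2.9 (propagated by a connectedness argument on $[0,T']$) these solutions are mutually compatible and define a single $B$ on $[0,T^*)$. Applying the energy inequality on every $[0,T'']$ with $T''<T^*$ yields $\sup_{[0,T^*)}\|B\|_H\le M$ and $\int_0^{T^*}\|\nabla B\|_{L^2}^2\le\kappa^{-1}M^2$; since $\mathcal X_T\subset\mathcal Y_T$ holds with a constant independent of $T$ (via the interpolation inequality (2.6) and $\mathcal X_T\subset C([0,T];H)\cap L^2(0,T;V)$, as $sp=2$), the cubic estimate (2.22) makes $\partial_tB$ bounded in $L^2(0,T^*;V^*)$ as well, so $B\in\mathcal H_{T^*}$ and in particular $B(T^*)\in H$ by (2.10). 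If $T^*<T$, Lemma 2.10 applied with datum $B(T^*)\in H$ and force $f(\,\cdot\,+T^*)\in L^2(0,T-T^*;V^*)$ gives a solution on some $[T^*,T^*+\sigma]$ which, glued to $B$, solves (2.1) on $[0,T^*+\sigma]$, contradicting maximality. Hence $T^*\ge T$, the energy bound gives $B\in\mathcal H_T$ satisfying (1.12), and uniqueness on $[0,T]$ is again Proposition 2.9. The main obstacle in the whole argument is not an estimate but the structural fact used in the energy identity: at the endpoint $\gamma=d/2$ there is no $L^\infty$-control on $u$ or $\nabla u$, so the velocity dissipation has to be extracted exactly, from the cancellation $\int(u\cdot\nabla B)\cdot B=0$ together with the identification of $-\int(B\cdot\nabla u)\cdot B$ with the nonnegative $\|u\|_{\dot H^{\gamma}}^2$; everything else is the routine continuation of a locally well-posed, energy-dissipative evolution.
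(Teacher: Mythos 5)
Your proposal is correct and follows essentially the same route as the paper's proof: derive the energy identity from the local solution of Lemma 2.10 (the transport term cancels, and the stretching term is identified with $\|u\|_{\dot H^{\gamma}}^{2}$ via $(-\Delta)^{\gamma}u=\Pi(B\cdot\nabla B)$), absorb the forcing by Young's inequality to get the uniform bound (1.12), and extend globally by continuation, with uniqueness from Proposition 2.9. Your write-up merely spells out the continuation/maximal-time argument and the a.e.-in-$t$ legitimacy of the pairings, which the paper leaves implicit.
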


\begin{proof}
Local-in-time solutions $B\in \mathcal{H}_{T_0}$ of (2.1) in Lemma 2.10 satisfy (2.1) in the distributional sense and 
\begin{align*}
0&=2<\partial_t B,B>-2\kappa <\Delta B,B>-2<\nabla \cdot (B\otimes u-u\otimes B),B>-2<f,B> \\
&=\frac{d}{dt}\int_{\mathbb{T}^{d}}|B|^{2}dx+2\kappa \int_{\mathbb{T}^{d}}|\nabla B|^{2}dx-2\int_{\mathbb{T}^{d}}(B\cdot \nabla u)\cdot Bdx+2\int_{\mathbb{T}^{d}}(u\cdot \nabla B)\cdot Bdx-2<f,B>.
\end{align*}
The fourth term vanishes since $u$ is solenoidal. By integration by parts and $(-\Delta)^{\frac{d}{4}}u=(-\Delta)^{-\frac{d}{4}}\Pi B\cdot \nabla B$, 
\begin{align*}
-\int_{\mathbb{T}^{d}}(B\cdot \nabla u)\cdot Bdx
=\int_{\mathbb{T}^{d}}\Pi(B\cdot \nabla B)\cdot udx
=\int_{\mathbb{T}^{d}}(-\Delta)^{-\frac{d}{4}} \Pi(B\cdot \nabla B)\cdot (-\Delta)^{\frac{d}{4}} udx
=\int_{\mathbb{T}^{d}} \left|(-\Delta)^{\frac{d}{4}} u\right|^{2}dx.
\end{align*}
By $2|<f,B>|\leq \kappa^{-1} ||f||_{H^{-1}}^{2}+\kappa ||\nabla B ||_{L^{2}}^{2}$ and integrating the above identity, (1.12) holds for $0\leq t\leq T_0$. Since the $L^{2}$-norm of the local-in-time solution $B$ is uniformly bounded, $B$ is extendable to a global-in-time solution in $(0,T)$ and (1.12) holds for $0\leq t\leq T$.
\end{proof}

\begin{rem}
The global well-posedness result (Theorem 2.14) also holds for $\gamma>d/2$ because the bilinear operator estimate (2.22) is also valid for $K_{\gamma}=(-\Delta)^{-(\gamma-d/2)}K_{d/2}$ by the boundedness of the operator $(-\Delta)^{-(\gamma-d/2)}$ on Sobolev spaces. We will apply the estimate (2.18) for $K_{\gamma}$ in \S 5.
\end{rem}

\subsection{The perturbed system}

We show the global well-posedness of the perturbed system for $\gamma>d/2$ and $Z\in \mathcal{X}_T$: 
\begin{equation}
\begin{aligned}
\partial_t b&=\kappa \Delta b+\nabla \cdot \left((b+Z)\otimes u-u\otimes (b+Z) \right),\\
u&=K_{\gamma}(b+Z,b+Z).
\end{aligned}
\end{equation}
We say that $b\in \mathcal{H}_T$ is a solution to (2.30) if $b$ satisfies (2.30$)_1$ for $u=K_{\gamma}(b+Z,b+Z)$ on $\mathbb{T}^{d}\times (0,T)$ in the distributional sense. 

\begin{thm}
For $B_0\in H$ and $Z\in \mathcal{X}_{T}$, there exists a unique solution $b\in \mathcal{H}_T$ to (2.30) for $\gamma>d/2$ satisfying the initial condition $b(x,0)=B_0(x)$. 
\end{thm}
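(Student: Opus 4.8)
The plan is to run the local construction of \S2.4 with $b+Z$ in place of $B$ inside the nonlinearity, and then to globalize by an a priori $L^{2}$-estimate for $b$ that keeps track of the terms produced by $Z$.

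\emph{Local well-posedness.} Put $\Phi=b+Z$ and $u=K_{\gamma}(\Phi,\Phi)$. The mild form of (2.30) is
\[
b=e^{t\kappa\Delta}B_{0}+\int_{0}^{t}e^{(t-s)\kappa\Delta}\,\nabla\cdot\bigl(\Phi\otimes u-u\otimes\Phi\bigr)\,ds ,
\]
and one solves it by the Picard iteration $b_{j+1}=e^{t\kappa\Delta}B_{0}+\int_{0}^{t}e^{(t-s)\kappa\Delta}\nabla\cdot(\Phi_{j}\otimes u_{j}-u_{j}\otimes\Phi_{j})\,ds$, $\Phi_{j}=b_{j}+Z$, $u_{j}=K_{\gamma}(\Phi_{j},\Phi_{j})$, exactly as in (2.23). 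By Remark 2.15 the cubic estimate (2.22) is valid for $K_{\gamma}$, and since $Z\in\mathcal{X}_{T}\subset\mathcal{Y}_{T}$ by (2.21) one has $\|\Phi_{j}\|_{\mathcal{Y}_{T_{0}}}\le\|b_{j}\|_{\mathcal{Y}_{T_{0}}}+\|Z\|_{\mathcal{Y}_{T_{0}}}$. Both $\|e^{t\kappa\Delta}B_{0}\|_{\mathcal{Y}_{T_{0}}}$ and $\|Z\|_{\mathcal{Y}_{T_{0}}}$ tend to $0$ as $T_{0}\to0$, so the uniform bounds and the contraction estimate of Propositions 2.11--2.13 go through verbatim; hence $\{b_{j}\}$ converges in $\mathcal{X}_{T_{0}}$ to a mild solution, which lies in $\mathcal{H}_{T_{0}}$ and solves (2.30) by the argument of Proposition 2.8. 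For uniqueness one argues as in Proposition 2.9: if $b^{1},b^{2}$ are two solutions on $(0,T')$, then $b:=b^{1}-b^{2}$ satisfies $\partial_{t}b=\kappa\Delta b+\nabla\cdot F$ with $u^{1}-u^{2}=K_{\gamma}(b,b^{1}+Z)+K_{\gamma}(b^{2}+Z,b)$ by bilinearity, so (2.22) gives $\|F\|_{L^{2}(0,T';L^{2})}\lesssim\|b\|_{\mathcal{Y}_{T'}}\bigl(\|b^{1}+Z\|_{\mathcal{Y}_{T'}}^{2}+\|b^{2}+Z\|_{\mathcal{Y}_{T'}}^{2}\bigr)$, which forces $b=0$ for $T'$ small and then on all of $[0,T]$.

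\emph{A priori bound.} It remains to prevent finite-time blow-up, for which it suffices to bound $\|b(t)\|_{H}$ on $[0,T]$. The constructed solution satisfies $u=K_{\gamma}(b+Z,b+Z)\in L^{2}(0,T;\dot{H}^{\gamma})$: indeed $(-\Delta)^{\gamma}u=\Pi\bigl((b+Z)\cdot\nabla(b+Z)\bigr)$ and $\gamma>d/2\ge1$ give $\|u\|_{\dot{H}^{\gamma}}\lesssim\|b+Z\|_{L^{2}}\|b+Z\|_{H^{1}}$, which lies in $L^{2}(0,T)$ since $b+Z\in C([0,T];H)\cap L^{2}(0,T;V)$. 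Testing (2.30)$_1$ with $b$, writing $B=b+Z$, using $\nabla\cdot u=\nabla\cdot B=0$, and extracting the coercive term $-\int_{\mathbb{T}^{d}}(B\cdot\nabla u)\cdot B\,dx=\|(-\Delta)^{\gamma/2}u\|_{L^{2}}^{2}$ exactly as in the proof of Theorem 2.14 (now with exponent $\gamma$), one obtains
\[
\tfrac12\tfrac{d}{dt}\|b\|_{L^{2}}^{2}+\kappa\|\nabla b\|_{L^{2}}^{2}+\|(-\Delta)^{\gamma/2}u\|_{L^{2}}^{2}=\int_{\mathbb{T}^{d}}(B\cdot\nabla Z)\cdot u\,dx-\int_{\mathbb{T}^{d}}(u\cdot\nabla Z)\cdot b\,dx ,\qquad B=b+Z .
\]
Since $\gamma>d/2$, the embedding (2.4) yields $\|u\|_{L^{\infty}}\lesssim\|u\|_{\dot{H}^{\gamma}}$, so the right-hand side is bounded by $C\|u\|_{\dot{H}^{\gamma}}\|\nabla Z\|_{L^{2}}\bigl(\|b\|_{L^{2}}+\|Z\|_{L^{2}}\bigr)$; a Young inequality absorbs $\tfrac12\|(-\Delta)^{\gamma/2}u\|_{L^{2}}^{2}$ and leaves
\[
\tfrac{d}{dt}\|b\|_{L^{2}}^{2}\le C\,\|\nabla Z\|_{L^{2}}^{2}\,\|b\|_{L^{2}}^{2}+C\,\|\nabla Z\|_{L^{2}}^{2}\,\|Z\|_{L^{2}}^{2} .
\]
As $Z\in\mathcal{X}_{T}$ we have $\|\nabla Z(\cdot)\|_{L^{2}}^{2}\in L^{1}(0,T)$ and $\sup_{[0,T]}\|Z\|_{H}<\infty$, so Gr\"onwall's inequality gives $\sup_{[0,T]}\|b(t)\|_{H}<\infty$, and integrating the differential inequality then gives $b\in L^{2}(0,T;V)$.

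\emph{Globalization and the main difficulty.} The local existence time for (2.30) depends only on a lower bound determined by $\|b(t_{*})\|_{H}$ and by $\|Z\|_{\mathcal{Y}}$ on short subintervals, and the latter tends to $0$ uniformly; combined with the a priori bound on $\|b(t)\|_{H}$ this lets us extend the local solution up to $T$, and uniqueness glues the pieces into a single solution in $\mathcal{H}_{T}$. The main obstacle is precisely the energy estimate: unlike in Theorem 2.14 the terms carrying $Z$ have no definite sign, and $Z$ is merely in $\mathcal{X}_{T}$ --- not differentiable in time --- so one cannot test the equation with $B=b+Z$ and must test with $b$ instead. The saving feature is that $u=K_{\gamma}(B,B)$ is quadratic in $B$, so the coercive term $\|(-\Delta)^{\gamma/2}u\|_{L^{2}}^{2}$ it produces, together with the strict inequality $\gamma>d/2$ (hence $\dot{H}^{\gamma}\hookrightarrow L^{\infty}$ on average-zero fields), is exactly strong enough to absorb the $Z$-cross-terms after a Young inequality and to close a Gr\"onwall loop whose coefficients are controlled by the fixed function $Z$. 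A secondary, routine point used above is verifying $u\in L^{2}(0,T;\dot{H}^{\gamma})$ so that this energy identity is legitimate.
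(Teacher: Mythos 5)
Your proposal is correct and follows essentially the same route as the paper: local existence via Picard iteration using the cubic estimate (2.22) for $K_{\gamma}$ (the paper iterates on $B=b+Z$ in mild form and then sets $b=B-Z$, which is equivalent to your iteration on $b$), followed by the same $L^{2}$-energy argument in which testing the velocity equation with $u$ supplies the coercive term $\|(-\Delta)^{\gamma/2}u\|_{L^{2}}^{2}$, the $Z$-cross terms are bounded via $\|u\|_{L^{\infty}}\lesssim\|u\|_{\dot H^{\gamma}}$ from (2.4), and Gr\"onwall yields the global bound. The energy identity you derive is exactly the one in the paper's proof of Theorem 2.16.
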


\begin{proof}
We find a unique local-in-time mild solution  
\begin{equation*}
\begin{aligned}
B=e^{t\kappa \Delta}B_0+Z +\int_{0}^{t}e^{(t-s)\kappa\Delta}\nabla \cdot (B\otimes  u- u\otimes B)ds,\quad u=K_{\gamma}(B, B),
\end{aligned}
\end{equation*}
by applying a similar iteration argument to (2.2). By changing the unknown from $B$ to $b=B-Z$, we obtain a local-in-time unique mild solution $b\in \mathcal{H}_T$ to (2.30). 

We show that local-in-time solutions are global. The equations (2.30) can be expressed with the pressure function $p$ as 
\begin{equation}
\begin{aligned}
\partial_t b&=\kappa \Delta b+(b+Z)\cdot \nabla u-u \cdot \nabla (b+Z),\\
\nabla p&=(b+Z)\cdot \nabla (b+Z)-(-\Delta)^{\gamma}u, \\ 
\nabla \cdot b&=\nabla \cdot u=0.
\end{aligned}
\end{equation}
By multiplying $2b$ by the first equation and integrating by parts,
\begin{align*}
\frac{d}{dt}\int_{\mathbb{T}^{d}}|b|^{2}
+2\kappa \int_{\mathbb{T}^{d}}|\nabla b|^{2}dx
=2\int_{\mathbb{T}^{d}}\left((b+Z)\cdot\nabla  u\right)\cdot (b+Z)dx 
-2\int_{\mathbb{T}^{d}}\left((b+Z)\cdot\nabla  u\right)\cdot Zdx
-2\int_{\mathbb{T}^{d}}\left(u\cdot\nabla  Z\right)\cdot b dx. 
\end{align*}
By multiplying $2u$ by the second equation and integrating by parts,
\begin{align*}
2\int_{\mathbb{T}^{d}}\left|(-\Delta)^{\frac{\gamma}{2}}u \right|^{2}dx
=2\int_{\mathbb{T}^{d}}\left((b+Z)\cdot\nabla  (b+Z) \right)\cdot udx. 
\end{align*}
Taking the sum of the two equations,
\begin{align*}
\frac{d}{dt}\int_{\mathbb{T}^{d}}|b|^{2}
+2\kappa \int_{\mathbb{T}^{d}}|\nabla b|^{2}dx
+2\int_{\mathbb{T}^{d}}\left|(-\Delta)^{\frac{\gamma}{2}}u \right|^{2}dx
=2\int_{\mathbb{T}^{d}}\left( (b+Z)\cdot\nabla  Z\right)\cdot udx
-2\int_{\mathbb{T}^{d}}\left(u\cdot\nabla  Z\right)\cdot b dx. 
\end{align*}
The right-hand side is bounded by 
\begin{align*}
2||u||_{L^{\infty}}||\nabla Z||_{L^{2}}\left(2||b||_{L^{2}}+||Z||_{L^{2}}\right).
\end{align*}
By the continuous embedding (2.4), $H^{\gamma}\subset C_{b}^{\gamma-d/2}$ for $\gamma>d/2$. By Cauchy--Schwarz inequality, we obtain 
\begin{align*}
\frac{d}{dt}\int_{\mathbb{T}^{d}}|b|^{2}
+2\kappa \int_{\mathbb{T}^{d}}|\nabla b|^{2}dx
+\int_{\mathbb{T}^{d}}\left|(-\Delta)^{\frac{\gamma}{2}}u \right|^{2}dx
\leq C\left(|| b||_{L^{2}}^{2} + ||Z||_{C([0,T]L^{2})}^{2} \right)||\nabla Z||_{L^{2}}^{2},
\end{align*}
with a constant $C$. By Gronwall's inequality, the $L^{2}$-norm of $b$ is globally bounded, and the local-in-time solutions are global.
\end{proof}

\begin{rem}
The solution operator $b=b(B_0,Z): H\times \mathcal{X}_T\ni (B_0,Z)\mapsto b(B_0,Z)\in  \mathcal{H}_T$ of (2.30) is continuous.
\end{rem}

\section{Path-wise global well-posedness of the randomly forced resistive MRE}

We consider the filtered probability space $(\Omega,\mathcal{F},\mathcal{G}_t, \mathbb{P})$ and independent Brownian motions $\{\beta_j\}_{j=1}^{\infty}$ (see below) and show the path-wise global well-posedness of the system (1.11) for random initial data $B_0\in H$ and spatially regular white noise $f=\partial_t \zeta$ defined by the Wiener process $\zeta$ in (1.5). We also show that the constructed stochastic process is an It\^o process in $V^{*}$ with constant diffusion for applying It\^o formulas in the next section.

\subsection{The stochastic setting}

\subsubsection{A filtered probability space}
We work on a probability space $(\Omega, \mathcal{F}, \mathbb{P})$ with a right-continuous filtration $\mathcal{G}_t$, i.e., a family of $\sigma$-algebras $\mathcal{G}_t$ such that $\mathcal{G}_t\subset \mathcal{F}$ for $t\geq 0$, $\mathcal{G}_t$ is non-decreasing, and $\mathcal{G}_t=\cap_{s>t} \mathcal{G}_{s}$ for $t\geq 0$. We assume that a right continuous $(\Omega, \mathcal{F}, \mathcal{G}_t, \mathbb{P})$ satisfies usual hypothesis: $(\Omega, \mathcal{F}, \mathbb{P})$ is complete and for each $t\geq 0$, $\mathcal{G}_t$ contains all $\mathbb{P}$-null sets of $\mathcal{F}$. 

\subsubsection{Adapted and progressively measurable}
We say that an $H$-valued stochastic process $\{Z(t)\}$ is $\mathcal{G}_t$-adapted if $Z(t): (\Omega, \mathcal{G}_t) \to (H, \mathcal{B}(H))$ is measurable for any $t\geq 0$. We say that $\{Z(t)\}$ is $\mathcal{G}_t$-progressively measurable if $Z^{\omega}(s): ([0,t]\times \Omega, \mathcal{B}([0,t])\times \mathcal{G}_t)\to (H, \mathcal{B}(H))$ is mesurable for any $t\geq 0$. An $H$-valued $\mathcal{G}_t$-adapted stochastic process $\{Z(t)\}$ has a $\mathcal{G}_t$-progressively measurable modification \cite[Proposition 3.5]{DZ92}.

\subsubsection{Brownian motion}
We say that a $\mathcal{G}_t$-adapted stochastic process $\{\beta(t)\}$ on $(\Omega, \mathcal{F}, \mathcal{G}_t, \mathbb{P})$ is a Brownian motion if 
\begin{itemize}
\item $\beta(0)=0$ almost surely
\item For almost sure $\omega\in \Omega$, $\beta(t)=\beta^{\omega}(t)$ is continuous for $t\geq 0$ 
\item For arbitrary points $0=t_0<t_1<t_2<\cdots<t_n$, increments $\{\beta(t_i)-\beta(t_{i-1})\}_{1\leq i\leq n}$ are independent and each $\beta(t_i)-\beta(t_{i-1})$ is normally distributed with mean zero and variance $t_{i}-t_{i-1}$.
\end{itemize}
The moment of the Brownian motion is $\mathbb{E}\beta^{2p}(t)=(2p-1)!! t^{p}$ and $\mathbb{E}\beta^{2p-1}(t)=0$ for $p\in \mathbb{N}$. We denote by $\sigma(\beta(t)|\ t\geq 0 )$ a family of $\sigma$-algebra generated by $\{\beta(t)\}$. We say that Brownian motions $\{\beta_j(t)\}_{j=1}^{\infty}$ are independent if a family of $\sigma$-algebra $\{\sigma(\beta_j(t)|\ t\geq 0 )\}_{j=1}^{\infty}$ is independent.  

\subsubsection{Martingales}
We say that an $H$-valued continuous stochastic process $\{M_t\}$ is a martingale concerning $\mathcal{G}_t$ if 
\begin{itemize}
\item $\mathbb{E}||M_t||_{H}$ is finite for $t\geq 0$
\item $M_t$ is $\mathcal{G}_t$-adapted
\item $\mathbb{E}[M_t\ |\ \mathcal{G}_s]= M_s$ for any $t\geq s\geq 0$, a.s.
\end{itemize}
Here, $Y_t=\mathbb{E}[M_t\ |\ \mathcal{G}_s]$ is a conditional expection, i.e., $Y_t$ is $\mathcal{G}_s$-adapted and $\mathbb{E}[Y_t\ |\ A]=\mathbb{E}[M_t\ |\ A]$ for $A\in \mathcal{G}_s$. For $H=\mathbb{R}$, we define a submartingale $\{M_t\}$ by replacing the last condition with a weaker condition $\mathbb{E}[M_t\ |\ \mathcal{G}_s]\geq M_s$ for any $t\geq s\geq 0$, a.s. For a martingale $\{M_t\}$, $||M_t||_{H}$ is a submartingale. Non-negative submartingales $\{M_t\}$ satisfy Doob's moment inequality  
\begin{align*}
\mathbb{E}\left(\sup_{0\leq t\leq T}M_t^{p}\right)\leq \left(\frac{p}{p-1}\right)^{p}\mathbb{E}M_T^{p},\quad 1<p<\infty.
\end{align*}
We say that a random variable $\tau: (\Omega,\mathcal{F},\mathcal{G}_t)\to \mathbb{R} $ is $\mathcal{G}_t$-stopping time if $\{\tau\leq t\}\in \mathcal{G}_t$ for any $t\geq 0$ (The random variables $\tau=0$ and $\tau=t$ are $\mathcal{G}_t$-stopping times.) For two almost surely finite stopping times $\sigma\leq \tau$, submartingales $\{M_t\}$ satisfy Doob's optional sampling theorem   
\begin{align*}
\mathbb{E}M_\sigma=\mathbb{E}M_\tau.
\end{align*}
In particular, $\mathbb{E}M_0=\mathbb{E}M_t$ for $t\geq 0$.

\subsection{It\^o processes with constant diffusion}

We will apply It\^o formula (B.1) (resp. (B.5)) for It\^o processes in $H$ (resp. $V^{*}$) with constant diffusion for finite (resp. infinite) dimensional stochastic processes.
  
\begin{defn}[It\^o processes in $H$ with constant diffusion]
Let $\{g_j\}_{j=1}^{\infty}\subset H$ be a sequence such that 
\begin{align}
\sum_{j=1}^{\infty}||g_j||_{H}^{2}<\infty.
\end{align}
Let $\{f(t)\}$ be a $\mathcal{G}_t$-progressively meaurable $H$-valued stochastic process such that 
\begin{align}
\mathbb{P}\left(\int_{0}^{T}||f(t)||_{H}dt<\infty \right)=1,\quad \textrm{for any}\ T>0.
\end{align}
We say that a $\mathcal{G}_t$-adapted and continuous $H$-valued stochastic process $\{B(t)\}$ is an It\^o process in $H$ with constant diffusion if 
\begin{align}
B(t)=B(0)+\int_{0}^{t}f(s)ds+\sum_{j=1}^{\infty}g_j  \beta_j(t)\quad \textrm{on}\ H,\quad t\geq 0,\quad \textrm{a.s.}
\end{align}
\end{defn}

\begin{defn}[It\^o processes in $V^{*}$ with constant diffusion]
Let $\{g_j\}_{j=1}^{\infty}\subset H$ be as in Definition 3.1. Let $\{f(t)\}$ be a $\mathcal{G}_t$-progressively measurable $V^{*}$-valued stochastic process such that 
\begin{align}
\mathbb{P}\left(\int_{0}^{T}||f(t)||_{V^{*}}^{2}dt<\infty \right)=1,\quad \textrm{for any}\ T>0.
\end{align}
We say that a $\mathcal{G}_t$-adapted and continuous $H$-valued stochastic process $\{B(t)\}$ is an It\^o process in $V^{*}$ with constant diffusion if (3.3) holds on $V^{*}$.
\end{defn}

\subsection{The Wiener process}

Let $\{b_j\}_{j=1}^{\infty}$ be a sequence such that $\mathsf{C}_0$ in (1.6) is finite. Let $\{e_j\}_{j=1}^{\infty}\subset H$ be a complete orthogonal system of $H$ consisting of eigenfunctions of the Stokes operator $-\Delta e_j=\lambda_j e_{j}$ with the eigenvalues $0<\lambda_1\leq \lambda_2\leq \cdots\leq \lambda_j\to\infty$. For $d=3$, we choose $\{e_j\}_{j=1}^{\infty}\subset H$ by eigenfunctions of the rotation operator $\nabla \times e_{j}=\tau_j e_j$ for $\tau_j$ satisfying $\tau_j^{2}=\lambda_j$ (see Theorems A.1 and A.4). 

We first define the Wiener process (1.5) in $L^{2}\left(\Omega;  C([0,T]; H)\right)$. 

\begin{prop}
The random process
\begin{align}
\zeta_n=\sum_{j=1}^{n}b_j e_j \beta_j(t)
\end{align}
belongs to $C([0,T]; H_n)$ for $H_n=\textrm{span}\ (e_1,\cdots,e_n)$ almost surely and satisfies 
\begin{align}
\zeta_n\to \zeta=\sum_{j=1}^{\infty}b_j e_j \beta_j(t) \quad \textrm{in}\ L^{2}\left(\Omega; C([0,T]; H)\right).
\end{align}\\
In particular, $\zeta\in L^{2}\left(\Omega;  C([0,T]; H)\right)$.
\end{prop}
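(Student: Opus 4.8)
The plan is to realize $\zeta$ as the limit of its partial sums $\{\zeta_n\}$ in the Banach space $L^{2}(\Omega; C([0,T]; H))$, by showing that $\{\zeta_n\}$ is Cauchy there; completeness then yields the limit, which we take to be $\zeta$. The almost sure membership $\zeta_n\in C([0,T]; H_n)$ is immediate: $\zeta_n(t)=\sum_{j=1}^{n}b_j e_j\beta_j(t)$ is a finite linear combination of the fixed vectors $e_j\in H$ with scalar coefficients $b_j\beta_j(t)$, and each Brownian path $t\mapsto \beta_j(t)$ is continuous almost surely, so $t\mapsto \zeta_n(t)$ is almost surely continuous with values in $H_n=\mathrm{span}(e_1,\dots,e_n)$.

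For the Cauchy estimate, fix $m>n\geq 1$ and set $M^{m,n}(t)=\zeta_m(t)-\zeta_n(t)=\sum_{j=n+1}^{m}b_j e_j\beta_j(t)$. This is a finite sum of $H$-valued processes of the form $b_j e_j\beta_j(t)$, each of which is a continuous $\mathcal{G}_t$-martingale in $H$ (a fixed vector times a $\mathcal{G}_t$-Brownian motion, which is $L^{1}$ and satisfies $\mathbb{E}[\beta_j(t)\mid\mathcal{G}_s]=\beta_j(s)$); hence $M^{m,n}$ is a continuous $H$-valued $\mathcal{G}_t$-martingale, and $\|M^{m,n}(t)\|_{H}$ is a non-negative submartingale. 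Applying Doob's moment inequality with $p=2$,
\[
\mathbb{E}\Big(\sup_{0\leq t\leq T}\|M^{m,n}(t)\|_{H}^{2}\Big)\leq 4\,\mathbb{E}\|M^{m,n}(T)\|_{H}^{2}.
\]
By orthonormality of $\{e_j\}$ in $H$ one has $\|M^{m,n}(T)\|_{H}^{2}=\sum_{j=n+1}^{m}b_j^{2}\beta_j(T)^{2}$, and since $\mathbb{E}\beta_j(T)^{2}=T$ this gives $\mathbb{E}\|M^{m,n}(T)\|_{H}^{2}=T\sum_{j=n+1}^{m}b_j^{2}$. Therefore
\[
\|\zeta_m-\zeta_n\|_{L^{2}(\Omega;\,C([0,T];H))}^{2}\leq 4T\sum_{j=n+1}^{m}b_j^{2},
\]
which tends to $0$ as $n\to\infty$ uniformly in $m$ because $\mathsf{C}_0=\sum_{j\geq 1}b_j^{2}<\infty$ by assumption. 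Hence $\{\zeta_n\}$ is Cauchy.

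By completeness of $L^{2}(\Omega; C([0,T]; H))$ there is a limit $\zeta$ in that space with $\zeta_n\to\zeta$; passing to a subsequence along which the convergence also holds $\mathbb{P}$-a.s.\ in $C([0,T];H)$ identifies $\zeta$ with the pointwise-in-$t$ series $\sum_{j\geq 1}b_j e_j\beta_j(t)$, establishing (3.7) and, in particular, $\zeta\in L^{2}(\Omega; C([0,T];H))$. The only point requiring any care is the vector-valued Doob inequality, but this is exactly the submartingale property of $\|M^{m,n}(t)\|_{H}$ together with the scalar Doob inequality recalled in the preliminaries; the remaining ingredients are the Pythagorean identity in $H$ and the finiteness of $\mathsf{C}_0$.
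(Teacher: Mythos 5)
Your proof is correct and follows essentially the same route as the paper: both treat $\zeta_m-\zeta_n$ as a continuous $H$-valued martingale, apply Doob's moment inequality to the submartingale $\|\zeta_m-\zeta_n\|_H$, and use orthonormality of $\{e_j\}$ together with $\mathbb{E}\beta_j(T)^2=T$ and $\mathsf{C}_0<\infty$ to get the Cauchy property in $L^2(\Omega;C([0,T];H))$. The extra remarks on completeness and identifying the limit with the series are fine and only make explicit what the paper leaves implicit.
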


\begin{proof}
The stochastic process $\zeta_{m,n}=\zeta_{m}-\zeta_{n}=\sum_{j=n+1}^{m}b_j e_j \beta_j$ for $m>n$ is an $H$-valued martingale. Since $||\zeta_{m,n}||_{H}(t)$ is a submartingale, applying Doob's moment inequality implies 
\begin{align*}
\mathbb{E}\left(\sup_{0\leq t\leq T}||\zeta_{m,n}||_{H}^{2} \right)
\leq 4\mathbb{E} ||\zeta_{m,n}||_{H}^{2}(T)
=4 \sum_{j=n+1}^{m}|b_j|^{2}\mathbb{E}\beta_j^{2}(T)
=4T \sum_{j=n+1}^{m}|b_j|^{2},
\end{align*}
and the convergence (3.6) follows.   
\end{proof}

\subsection{Stochastic convolution}

We say that an $H$-valued random process $\{Z(t)\}$ is a solution to the inhomogeneous Stokes equations (2.12) for $f=\partial_t \zeta$ and $\zeta$ in (1.5) if $Z\in \mathcal{X}_T$ almost surely and 
\begin{align}
Z(t)=Z(0)+\int_{0}^{t}\kappa\Delta Z(s)ds+\zeta(t)\quad \textrm{on}\ V^{*},\ 0\leq t\leq T,\ a.s.
\end{align}
We define unique solutions to (2.12) by stochastic convolution. 

\begin{lem}[Stochastic convolution]
Let $\zeta\in L^{2}\left(\Omega;  C([0,T]; H)\right)$ be as in (1.5). Set  
\begin{align}
Z(t)=\int_{0}^{t}\kappa\Delta e^{(t-s)\kappa\Delta}\zeta(s)ds+\zeta(t)=:\int_{0}^{t}e^{(t-s)\kappa \Delta}d\zeta(s).
\end{align}
Then, $Z\in L^{2}(\Omega; \mathcal{X}_T )$ is a unique solution to (2.12) for $Z(0)=0$. The $H$-valued random process $\{Z(t)\}$ is an It\^o process in $V^{*}$ with constant difusion.
\end{lem}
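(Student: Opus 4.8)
The plan is to establish three things in turn: that the formula \eqref{} defining $Z$ indeed produces a process in $L^2(\Omega;\mathcal{X}_T)$, that this $Z$ solves \eqref{} with $Z(0)=0$, that any solution of \eqref{} in $\mathcal{X}_T$ is unique, and finally that $\{Z(t)\}$ is an It\^o process in $V^*$ with constant diffusion in the sense of Definition 3.2. First I would address integrability. Writing $Z(t) = \zeta(t) + \int_0^t \kappa\Delta e^{(t-s)\kappa\Delta}\zeta(s)\,ds$, the first summand lies in $C([0,T];H)$ with $\mathbb{E}\|\zeta\|_{C([0,T];H)}^2 < \infty$ by Proposition 3.4. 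For the convolution term, I would truncate: set $\zeta_n = \sum_{j=1}^n b_j e_j \beta_j$, compute $Z_n(t) = \int_0^t e^{(t-s)\kappa\Delta}\,d\zeta_n(s)$ explicitly as $\sum_{j=1}^n b_j \big(\beta_j(t) + \int_0^t \kappa\Delta e^{(t-s)\kappa\Delta}e_j\,\beta_j(s)\,ds\big)$, and observe that on the eigenspace of $e_j$ the heat semigroup acts by $e^{-(t-s)\kappa\lambda_j}$, so after integrating by parts (the deterministic/stochastic Fubini for a one-dimensional Brownian motion) $Z_n$ is the Ornstein--Uhlenbeck-type process solving $dZ_n = \kappa\Delta Z_n\,dt + d\zeta_n$, $Z_n(0)=0$. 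This is the standard stochastic convolution for the linear Stokes flow, and the key energy/maximal estimates $\mathbb{E}\|Z_n\|_{\mathcal{X}_T}^2 \lesssim_\kappa \mathbb{E}\|\zeta_n\|_{C([0,T];H)}^2$ can be derived either from the It\^o energy identity for $\|Z_n\|_H^2$ plus the Burkholder--Davis--Gundy inequality for the stochastic integral, or by the factorization method; the orthogonality of the $e_j$ makes the bound a clean sum $\sum_j b_j^2(\cdots)$ controlled by $\mathsf{C}_0$ and $\mathsf{C}_1$-type quantities (note $\int_0^t\kappa\lambda_j e^{-2\kappa\lambda_j(t-s)}\,ds \le 1/2$).

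Next I would pass $n\to\infty$. The differences $Z_m - Z_n$ are again stochastic convolutions of $\zeta_m - \zeta_n$, so the same estimate gives $\mathbb{E}\|Z_m - Z_n\|_{\mathcal{X}_T}^2 \lesssim_\kappa T\sum_{j=n+1}^m b_j^2 \to 0$; hence $\{Z_n\}$ is Cauchy in $L^2(\Omega;\mathcal{X}_T)$ and converges to some $Z\in L^2(\Omega;\mathcal{X}_T)$, which one checks coincides a.s.\ with the pointwise formula \eqref{}. Passing to the limit in the identity $Z_n(t)=\int_0^t\kappa\Delta Z_n(s)\,ds + \zeta_n(t)$ on $V^*$ (using $Z_n\to Z$ in $L^2(0,T;V)$, hence $\kappa\Delta Z_n\to\kappa\Delta Z$ in $L^2(0,T;V^*)$, together with $\zeta_n\to\zeta$ uniformly) shows $Z$ solves \eqref{} with $Z(0)=0$. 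Uniqueness follows exactly as in the proof of Proposition 2.3: if $Z$ solves \eqref{} with $f=\partial_t\zeta$, then $\tilde Z := Z - \zeta$ solves the inhomogeneous Stokes equation with a nicer forcing, or more directly one subtracts two solutions to get a pathwise solution of the homogeneous Stokes equation in $\mathcal{H}_T$ with zero initial data, and the deterministic energy identity $\frac{d}{dt}\|\tilde Z\|_{L^2}^2 + 2\kappa\|\nabla\tilde Z\|_{L^2}^2 = 0$ forces it to vanish.

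For the final assertion, I would identify the drift and diffusion in the sense of Definition 3.2: rewrite \eqref{} as $Z(t) = 0 + \int_0^t f(s)\,ds + \sum_{j=1}^\infty g_j\beta_j(t)$ with $g_j = b_j e_j \in H$ and $f(t) = \kappa\Delta Z(t)$. The series $\sum_j \|g_j\|_H^2 = \sum_j b_j^2 \le \lambda_1^{-1}\mathsf{C}_1$ is finite (indeed $\mathsf{C}_0 \ge \sum b_j^2$ up to the $\lambda_j^0$ weighting, so this is immediate from $\mathsf{C}_0<\infty$), verifying \eqref{}; and $f(t) = \kappa\Delta Z(t) \in L^2(0,T;V^*)$ a.s.\ because $Z\in L^2(0,T;V)$ a.s., so the progressive-measurability-plus-integrability condition \eqref{} holds (progressive measurability of $Z$, hence of $\kappa\Delta Z$, follows since $Z$ is adapted and continuous in $H$). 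Then the representation \eqref{} of $Z$ on $V^*$ is precisely the defining identity \eqref{} for an It\^o process in $V^*$ with constant diffusion. The main obstacle is not conceptual but the maximal estimate $\mathbb{E}\sup_{0\le t\le T}\|Z_n(t)\|_H^2 \lesssim_\kappa \sum_j b_j^2 T$ together with the $L^2(0,T;V)$ bound; one must be careful that the stochastic convolution — unlike a stochastic integral — is not itself a martingale, so Doob's inequality does not apply directly to $Z_n$, and one instead uses either the factorization method (writing $Z_n$ via a fractional power $(\,\cdot\,)^{-\alpha}$ split) or applies It\^o's formula to $\|Z_n\|_H^2$ and bounds the resulting martingale term by BDG; this is the one place where care with the stochastic analysis is genuinely needed, everything else being a routine adaptation of the deterministic Stokes estimates already in \S 2.
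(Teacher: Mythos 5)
Your proposal is correct and follows essentially the same route as the paper: truncation to finitely many Fourier modes, an It\^o energy identity for $\|Z_n\|_H^2$ whose martingale term is controlled by a Doob/BDG maximal inequality together with the It\^o isometry (yielding the Cauchy estimate $\lesssim_\kappa \bigl(T+\sqrt{T/\kappa}\bigr)\sum_{j>n}b_j^2$ in $L^2(\Omega;\mathcal{X}_T)$), passage to the limit in the integral identity on $V^*$, uniqueness by reduction to the deterministic Stokes uniqueness, and identification of $f=\kappa\Delta Z$, $g_j=b_je_j$ for the It\^o-process-in-$V^*$ structure. The only deviations are cosmetic: $\sum_j b_j^2=\mathsf{C}_0$ exactly (no detour through $\mathsf{C}_1$ is needed), and the deterministic uniqueness you invoke is Proposition 2.2 in the paper's numbering.
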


\begin{prop}
Let $\zeta_n$ be as in (3.5). Set 
\begin{align}
Z_n(t)=\int_{0}^{t}\kappa\Delta e^{(t-s)\kappa\Delta}\zeta_n(s)ds+\zeta_n(t).
\end{align}
Then, $Z_n$ is a solution to (2.12) for $Z_n(0)=0$ and $f=\partial_t \zeta_n$. The process $\{Z_n(t)\}$ is an It\^o process in $H$ with constant diffusion.
\end{prop}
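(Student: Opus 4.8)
The plan is to diagonalize in the Stokes eigenbasis, reducing $Z_n$ to finitely many scalar Ornstein--Uhlenbeck processes, and then read off both assertions directly from the resulting explicit formula.

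First I would use $-\Delta e_j=\lambda_j e_j$, hence $e^{(t-s)\kappa\Delta}e_j=e^{-(t-s)\kappa\lambda_j}e_j$ and $\kappa\Delta e^{(t-s)\kappa\Delta}e_j=-\kappa\lambda_j e^{-(t-s)\kappa\lambda_j}e_j$, to rewrite (3.9) as
\begin{align*}
Z_n(t)=\sum_{j=1}^{n}b_j\,y_j(t)\,e_j,\qquad
y_j(t):=\beta_j(t)-\kappa\lambda_j\int_0^t e^{-\kappa\lambda_j(t-s)}\beta_j(s)\,ds .
\end{align*}
Each $y_j$ has continuous sample paths (it is $\beta_j$ minus a continuous Riemann integral) and $y_j(t)$ is a deterministic functional of $\{\beta_j(s):0\le s\le t\}$, hence $\mathcal{G}_t$-measurable; consequently $Z_n$ is a continuous, $\mathcal{G}_t$-adapted, $H_n$-valued process, in particular $Z_n\in\mathcal{X}_T$ almost surely.

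The key point is the pathwise identity $y_j(t)=-\kappa\lambda_j\int_0^t y_j(s)\,ds+\beta_j(t)$, valid for every continuous path $\beta_j$, which I would verify by Fubini's theorem: for $0\le t\le T$,
\begin{align*}
\int_0^t\!\!\int_0^s e^{-\kappa\lambda_j(s-r)}\beta_j(r)\,dr\,ds
=\int_0^t\beta_j(r)\!\int_r^t e^{-\kappa\lambda_j(s-r)}\,ds\,dr
=\frac{1}{\kappa\lambda_j}\int_0^t\!\big(1-e^{-\kappa\lambda_j(t-r)}\big)\beta_j(r)\,dr,
\end{align*}
so that $-\kappa\lambda_j\int_0^t y_j(s)\,ds=-\kappa\lambda_j\int_0^t e^{-\kappa\lambda_j(t-r)}\beta_j(r)\,dr$, and the identity follows from the definition of $y_j$; no stochastic calculus is needed here. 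Multiplying this identity by $b_j e_j$, summing over $1\le j\le n$, and using $-\kappa\lambda_j e_j=\kappa\Delta e_j$ together with the finiteness of the sum (which lets it commute with the time integral and with $\Delta$) yields
\begin{align*}
Z_n(t)=\int_0^t\kappa\Delta\Big(\sum_{j=1}^{n}b_j y_j(s)e_j\Big)ds+\sum_{j=1}^{n}b_j e_j\beta_j(t)
=\int_0^t\kappa\Delta Z_n(s)\,ds+\zeta_n(t),
\end{align*}
which is exactly (3.7) with $Z_n(0)=0$ and $f=\partial_t\zeta_n$, on $H_n\subset H$ and a fortiori on $V^{*}$. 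This establishes that $Z_n$ solves (2.12).

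For the It\^o process statement, the same display exhibits $Z_n$ in the form (3.3) with drift $f(s)=\kappa\Delta Z_n(s)$ and diffusion coefficients $g_j=b_j e_j$ for $j\le n$ and $g_j=0$ for $j>n$, so it only remains to check the hypotheses of Definition 3.1: $\sum_j\|g_j\|_H^2=\sum_{j=1}^{n}b_j^2<\infty$ (a finite sum); $f=\kappa\Delta Z_n$ is $\mathcal{G}_t$-progressively measurable, being the image of the continuous adapted process $Z_n$ under the bounded operator $\kappa\Delta|_{H_n}$; and $\int_0^T\|f(t)\|_H\,dt\le\kappa\lambda_n\int_0^T\|Z_n(t)\|_H\,dt<\infty$ almost surely by continuity of $t\mapsto\|Z_n(t)\|_H$ on $[0,T]$. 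Hence $Z_n$ is an It\^o process in $H$ with constant diffusion. I expect no serious obstacle: the only step requiring care is the Fubini computation above, i.e.\ recognizing each scalar mode as an Ornstein--Uhlenbeck process, after which everything reduces to finite-dimensional bookkeeping.
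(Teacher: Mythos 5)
Your proof is correct, and it reaches the same two conclusions by a slightly different mechanism than the paper. The paper stays at the semigroup level: it observes that the right-hand side of (3.9) equals $\frac{d}{dt}\int_0^t e^{(t-s)\kappa\Delta}\zeta_n(s)\,ds$, integrates this in time to get $\int_0^t Z_n(s)\,ds=\int_0^t e^{(t-s)\kappa\Delta}\zeta_n(s)\,ds$, applies $\kappa\Delta$, and substitutes back into (3.9) to obtain $Z_n(t)=\int_0^t\kappa\Delta Z_n(s)\,ds+\zeta_n(t)$ in two lines; measurability and integrability of the drift are then read off from $Z_n\in C([0,T];H_n)$ exactly as you do. You instead diagonalize in the eigenbasis, identify each mode as a scalar Ornstein--Uhlenbeck process, and verify the integral identity $y_j(t)=-\kappa\lambda_j\int_0^t y_j(s)\,ds+\beta_j(t)$ pathwise by Fubini; your computation checks out, and the finite sum justifies exchanging $\sum_{j\le n}$ with $\Delta$ and the time integral. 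Both arguments are purely deterministic and exploit finite dimensionality; the paper's is shorter and avoids per-mode bookkeeping, while yours is more explicit and makes the adaptedness and the drift bound $\|\kappa\Delta Z_n\|_H\le\kappa\lambda_n\|Z_n\|_H$ completely transparent. No gap to report.
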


\begin{proof}
Since the heat semigroup acts as a multiplier operator for eigenfunctions, i.e., $e^{t\Delta}e_{j}=e^{-\lambda_j t}e_j$, $Z_n\in C([0,T]; H_n)$ for $H_n=\textrm{span}(e_1,\cdots,e_n)$ almost surely. In particular, $Z_n\in \mathcal{X}_T$. By integrating 
\begin{align*}
Z_n(t)
=\int_{0}^{t}\kappa\Delta e^{(t-s)\kappa\Delta}\zeta_n(s)ds+\zeta_n(t)
=\frac{d}{dt}\int_{0}^{t} e^{(t-s)\kappa\Delta}\zeta_n(s)ds, 
\end{align*}
we find that 
\begin{align*}
\int_{0}^{t}\kappa \Delta Z_n(s)ds=\int_{0}^{t}\kappa \Delta e^{(t-s)\kappa\Delta}\zeta_n(s)ds.
\end{align*}
Thus, 
\begin{align}
Z_n(t)=\int_{0}^{t}\kappa \Delta Z_n(s)ds+\zeta_n(t),\quad t\geq 0,\quad \textrm{a.s.},
\end{align}
and $Z_n$ is a solution to (2.12). By (3.9), $\Delta Z_n$ is $\mathcal{G}_t$-adapted and hence $\mathcal{G}_t$-progressively measurable. Since $Z_n\in C([0,T]; H_n)$ almost surely,   
\begin{align*}
\mathbb{P}\left(\int_{0}^{T}||\Delta Z_n||_{H}dt<\infty \right)=1.
\end{align*}
Thus, $\{Z_n\}$ is an It\^o process in $H$ with constant diffusion. 
\end{proof}

\begin{prop}
The sequence $\{Z_n\}$ converges to $Z$ in (3.8) in $C\left([0,T]; L^{2}(\Omega; V^{*})\right)$.
\end{prop}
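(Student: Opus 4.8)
The plan is to use the linearity of the stochastic convolution to reduce the claim to the convergence (3.6) of $\zeta_n$ to $\zeta$ in $L^2(\Omega;C([0,T];H))$ already established. Subtracting (3.9) from (3.8), for a.e.\ $\omega$ and every $t\in[0,T]$ one has the identity in $V^*$
\[
Z_n(t)-Z(t)=\int_0^t \kappa\Delta e^{(t-s)\kappa\Delta}\big(\zeta_n(s)-\zeta(s)\big)\,ds+\big(\zeta_n(t)-\zeta(t)\big),
\]
so it suffices to bound the $V^*$-norm of the right-hand side, uniformly in $t\in[0,T]$ and in $\omega$, by a constant times $\|\zeta_n-\zeta\|_{C([0,T];H)}$, and then take expectations.

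The one nontrivial ingredient — and the main obstacle — is the smoothing/integrability estimate for the heat-regularised Laplacian: for $r>0$,
\[
\big\|\kappa\Delta e^{r\kappa\Delta}\big\|_{\mathcal L(H,V^*)}\ \lesssim_\kappa\ r^{-1/2}.
\]
This follows by expanding in the orthonormal Stokes eigenbasis $\{e_j\}$: since $e^{r\kappa\Delta}e_j=e^{-r\kappa\lambda_j}e_j$ and $\|h\|_{V^*}^2\sim\sum_j(1+\lambda_j)^{-1}|\hat h_j|^2\lesssim\sum_j\lambda_j^{-1}|\hat h_j|^2$, one gets $\|\kappa\Delta e^{r\kappa\Delta}g\|_{V^*}^2\lesssim\big(\sup_{\lambda>0}\kappa^2\lambda e^{-2r\kappa\lambda}\big)\|g\|_H^2=\tfrac{\kappa}{2er}\|g\|_H^2$. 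The exponent $-1/2$ is integrable on $(0,t)$, whereas the same operator has $\mathcal L(H,H)$-norm only $O(r^{-1})$, which is \emph{not} integrable near $r=0$; this is precisely why the statement is formulated in the weaker space $V^*\supset H$ (one genuinely loses a derivative in the convolution), and it is also what makes the $V^*$-valued Bochner integral defining $Z$ in (3.8) convergent in the first place.

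Granting this, the rest is routine. By Minkowski's integral inequality, the kernel bound, $\int_0^t(t-s)^{-1/2}\,ds=2\sqrt t\le 2\sqrt T$, and $\|\cdot\|_{V^*}\lesssim\|\cdot\|_H$, one obtains
\[
\|Z_n(t)-Z(t)\|_{V^*}\ \le\ C(\kappa,T)\,\|\zeta_n-\zeta\|_{C([0,T];H)}\qquad\text{uniformly in }t\in[0,T]\text{ and }\omega,
\]
so, squaring and taking expectations, $\sup_{0\le t\le T}\mathbb E\|Z_n(t)-Z(t)\|_{V^*}^2\le C(\kappa,T)^2\,\mathbb E\|\zeta_n-\zeta\|_{C([0,T];H)}^2\to0$ by (3.6). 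Finally, each $Z_n$ is a continuous $L^2(\Omega;V^*)$-valued curve: by the previous proposition $Z_n\in C([0,T];H_n)$ a.s.\ with $\mathbb E\sup_{[0,T]}\|Z_n\|_H^2<\infty$ (on the finite-dimensional space $H_n$ the operator $\kappa\Delta e^{r\kappa\Delta}$ is bounded and $\zeta_n\in L^2(\Omega;C([0,T];H))$), so dominated convergence promotes pathwise continuity to continuity in $L^2(\Omega;V^*)$; since the convergence just shown is uniform in $t$, the limit $Z$ is continuous as well, and hence $Z_n\to Z$ in $C([0,T];L^2(\Omega;V^*))$, as claimed.
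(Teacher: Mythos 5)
Your proof is correct and follows essentially the same route as the paper: both rest on the smoothing bound $\|\kappa\Delta e^{r\kappa\Delta}\|_{\mathcal L(H,V^{*})}\lesssim_{\kappa} r^{-1/2}$, the integrability of $(t-s)^{-1/2}$, and the convergence (3.6) of $\zeta_n$ in $L^{2}(\Omega;C([0,T];H))$. The only (harmless) difference is that you subtract (3.8) and (3.9) to compare $Z_n$ with $Z$ directly, while the paper estimates the differences $Z_m-Z_n$ and identifies the limit afterwards; your added verification of the kernel estimate and of the $C([0,T];L^{2}(\Omega;V^{*}))$ membership of each $Z_n$ is a fine supplement to what the paper leaves implicit.
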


\begin{proof}
By the boundedness of the operator $t^{1/2}\Delta e^{t\kappa\Delta}:H \to V^{*}$ and the continuous embedding $H \subset V^{*}$,
\begin{align*}
||Z_m-Z_n||_{V^{*}}(t)=\left\|\int_{0}^{t}\kappa\Delta e^{(t-s)\kappa\Delta}(\zeta_m-\zeta_n)(s)ds+(\zeta_m-\zeta_n)(t)  \right\|_{V^{*}}
&\lesssim_\kappa \int_{0}^{t}\frac{1}{(t-s)^{\frac{1}{2}}}||\zeta_m-\zeta_n||_{H}ds+||\zeta_m-\zeta_n||_{V^{*}}(t) \\
&\lesssim_\kappa (T^{\frac{1}{2}}+1) \sup_{0\leq t\leq T}||\zeta_m-\zeta_n||_{H}.
\end{align*}
By taking the expectation and supremum for $t\in [0,T]$,
\begin{align*}
\sup_{0\leq t\leq T}\mathbb{E}||Z_m-Z_n||_{V^{*}}^{2}(t)
\lesssim_\kappa (T+1)\mathbb{E} \sup_{0\leq t\leq T} ||\zeta_m-\zeta_n||_{H}^{2}(t).
\end{align*}
Since $\zeta_n$ converges in $L^{2}(\Omega; C([0,T]; H))$ by (3.6), $Z_n$ converges in $C([0,T]; L^{2}(\Omega; V^{*}) )$. By (3.9) and (3.8), $Z_n$ converges to $Z$.
\end{proof}

\begin{prop}
The sequence $\{Z_n\}$ converges to $Z$ in (3.8) in $L^{2}(\Omega;\mathcal{X}_T)$ and the $H$-valued process $\{Z(t)\}$ is a solution to (2.12) for $Z(0)=0$.
\end{prop}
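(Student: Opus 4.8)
The plan is to upgrade the convergence of Proposition 3.6 — which holds only in $C([0,T];L^{2}(\Omega;V^{*}))$ — to convergence in the complete space $L^{2}(\Omega;\mathcal{X}_{T})$, by showing that $\{Z_n\}$ is Cauchy there, and then to pass to the limit in the identity (3.10). Note that the deterministic estimate of Proposition 2.2 is of no direct help, since $\partial_t\zeta_n$ is not path-wise in $L^{2}(0,T;V^{*})$; the bound must instead be produced from It\^o's formula together with a martingale maximal inequality.

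By Proposition 3.5, for $m>n$ the difference $Z_{m,n}:=Z_m-Z_n$ is an It\^o process in $H$ with constant diffusion, with drift $\kappa\Delta Z_{m,n}$ and diffusion coefficients $b_je_j$ ($n<j\le m$), and $Z_{m,n}\in C([0,T];\textrm{span}(e_{n+1},\dots,e_m))$ almost surely. Applying the It\^o formula (B.1) to the functional $B\mapsto\|B\|_{H}^{2}$ and using $(\Delta Z_{m,n},Z_{m,n})_{H}=-\|\nabla Z_{m,n}\|_{L^{2}}^{2}$ and $\|e_j\|_{H}=1$, one obtains the energy identity
\[ \|Z_{m,n}(t)\|_{H}^{2}+2\kappa\int_{0}^{t}\|\nabla Z_{m,n}\|_{L^{2}}^{2}\,ds=\Big(\sum_{j=n+1}^{m}b_j^{2}\Big)t+2\int_{0}^{t}\sum_{j=n+1}^{m}b_j\,(e_j,Z_{m,n})_{H}\,d\beta_j. \]

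I would then take $\sup_{t\in[0,T]}$ and the expectation. Since $Z_{m,n}$ takes values in $\textrm{span}(e_{n+1},\dots,e_m)$, one has $\sum_{j=n+1}^{m}(e_j,Z_{m,n})_{H}^{2}=\|Z_{m,n}\|_{H}^{2}$, hence the quadratic variation of the martingale term $M_t$ satisfies $\langle M\rangle_{T}\le\big(\sup_{j>n}b_j^{2}\big)T\sup_{[0,T]}\|Z_{m,n}\|_{H}^{2}$. By Doob's $L^{2}$-inequality and the It\^o isometry, $\mathbb{E}\sup_{[0,T]}|M_t|\le 2\big(\sup_{j>n}b_j^{2}\big)^{1/2}T^{1/2}\big(\mathbb{E}\sup_{[0,T]}\|Z_{m,n}\|_{H}^{2}\big)^{1/2}$, and a Young inequality absorbs $\mathbb{E}\sup_{[0,T]}\|Z_{m,n}\|_{H}^{2}$ into the left side of the identity. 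Because $b_j\to0$ (as $\mathsf{C}_0=\sum_{j}b_j^{2}<\infty$), both $\sum_{j>n}b_j^{2}$ and $\sup_{j>n}b_j^{2}$ tend to $0$, so, using $\|Z_{m,n}\|_{V}\lesssim\|\nabla Z_{m,n}\|_{L^{2}}$,
\[ \mathbb{E}\|Z_m-Z_n\|_{\mathcal{X}_{T}}^{2}\le\mathbb{E}\sup_{[0,T]}\|Z_{m,n}\|_{H}^{2}+\mathbb{E}\int_{0}^{T}\|Z_{m,n}\|_{V}^{2}\,ds\le C_{T,\kappa}\Big(\sum_{j>n}b_j^{2}+\sup_{j>n}b_j^{2}\Big)\longrightarrow 0 \]
as $n\to\infty$, uniformly in $m>n$. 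Hence $\{Z_n\}$ converges in $L^{2}(\Omega;\mathcal{X}_{T})$, and since convergence in $L^{2}(\Omega;\mathcal{X}_{T})$ implies convergence in $C([0,T];L^{2}(\Omega;V^{*}))$ (using $\|\cdot\|_{V^{*}}\lesssim\|\cdot\|_{H}$), Proposition 3.6 identifies the limit with $Z$; thus $Z\in L^{2}(\Omega;\mathcal{X}_{T})$ and $Z_n\to Z$ in $L^{2}(\Omega;\mathcal{X}_{T})$.

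Finally I would pass to the limit in (3.10). From $Z_n\to Z$ in $L^{2}(\Omega;L^{2}(0,T;V))$ we get $\kappa\Delta Z_n\to\kappa\Delta Z$ in $L^{2}(\Omega;L^{2}(0,T;V^{*}))$, hence $\int_{0}^{\cdot}\kappa\Delta Z_n(s)\,ds\to\int_{0}^{\cdot}\kappa\Delta Z(s)\,ds$ in $L^{2}(\Omega;C([0,T];V^{*}))$; combining this with $\zeta_n\to\zeta$ (from (3.6)) and $Z_n\to Z$ in $L^{2}(\Omega;C([0,T];H))\hookrightarrow L^{2}(\Omega;C([0,T];V^{*}))$ and letting $n\to\infty$ in (3.10) yields $Z(t)=\int_{0}^{t}\kappa\Delta Z(s)\,ds+\zeta(t)$ on $V^{*}$ for all $t\in[0,T]$, almost surely, with $Z(0)=0$; passing to an almost surely convergent subsequence in $\mathcal{X}_{T}$ shows that $Z$ has an $H$-continuous, $\mathcal{G}_t$-adapted modification, so it is a solution of (2.12) in the sense of (3.7). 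The main obstacle is exactly the maximal-in-time estimate for the stochastic convolution $Z_{m,n}$: the smallness forcing $\mathbb{E}\sup_{[0,T]}\|Z_{m,n}\|_{H}^{2}\to0$ must be squeezed out of the martingale term of the It\^o identity through the factor $\sup_{j>n}b_j^{2}\to0$, after which Doob's inequality and the absorption step close the argument.
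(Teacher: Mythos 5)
Your proof is correct and follows essentially the same route as the paper: apply the It\^o formula (B.1) to $\|Z_{m,n}\|_{H}^{2}$, control the martingale term with Doob's maximal inequality to obtain a Cauchy estimate in $L^{2}(\Omega;\mathcal{X}_{T})$, identify the limit with $Z$ through the $C([0,T];L^{2}(\Omega;V^{*}))$ convergence of Proposition 3.6, and pass to the limit in (3.10). The only deviation is minor: the paper bounds $\mathbb{E}M_{T}^{2}$ via the It\^o isometry combined with the averaged energy identity (3.12), which yields a factor $\bigl(\sum_{j>n}b_j^{2}\bigr)^{2}$ and needs no absorption, whereas your bound through $\sup_{j>n}b_j^{2}$ plus a Young-inequality absorption is equally valid provided you first record that $\mathbb{E}\sup_{[0,T]}\|Z_{m,n}\|_{H}^{2}<\infty$ (immediate since $\mathbb{E}\langle M\rangle_{T}<\infty$ by the averaged identity, so $M$ is a genuine $L^{2}$ martingale and Doob applies), so that the absorption step is not circular.
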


\begin{proof}
We set $Z_{m,n}=Z_m-Z_n$ and $\zeta_{m,n}=\zeta_m-\zeta_n$ for $m>n$ by $Z_n$ and $\zeta_n$ in (3.9) and (3.5). Then, $Z_{m,n}$ is an It\^o process in $H$ with constant diffusion by Proposition 3.5. By applying the It\^o formula (B.1) for $F(z)=||z||_{H}^{2}$,
\begin{align}
||Z_{m,n}||_{H}^{2}(t)+2\kappa \int_{0}^{t}||\nabla Z_{m,n}||_{H}^{2}ds=t \sum_{j=n+1}^{m}|b_j|^{2}+2\sum_{j=n+1}^{m}\int_{0}^{t}\left(Z_{m,n}, b_je_j\right)_{H}d \beta_j(s),\quad 0\leq t\leq T.
\end{align}
The last term denoted by $2M_t$ is a martingale. By Doob's optional sampling theorem, $\mathbb{E}M_t=\mathbb{E}M_0=0$. By taking the mean,
\begin{align}
\mathbb{E}||Z_{m,n}||_{H}^{2}(t)+2\kappa \mathbb{E}\int_{0}^{t}||\nabla Z_{m,n}||_{H}^{2}ds=t \sum_{j=n+1}^{m}|b_j|^{2},\quad 0\leq t\leq T.
\end{align}
By applying Doob's moment inequality,
\begin{align*}
\left(\mathbb{E}\sup_{0\leq t\leq T}\left|M_t \right|\right)^{2}
\leq \mathbb{E}\sup_{0\leq t\leq T}M_t^{2} 
\leq 4\mathbb{E}M_T^{2}.
\end{align*}
By the independence of $\beta_j$ and It\^o isometry \cite[Corollary 3.1.7]{Okse},
\begin{align*}
\mathbb{E}\left(\int_{0}^{T}(Z_{m,n}, b_je_j)_Hd \beta_j(s)\int_{0}^{T}(Z_{m,n}, b_le_l)_Hd \beta_l(s) \right)
=\delta_{j,l}\mathbb{E}\left(\int_{0}^{T}|(Z_{m,n},b_je_j)_{H}|^{2}ds \right).
\end{align*}\\
By the continuous embedding (2.5) and (3.12),  
\begin{align*}
\mathbb{E}M_T^{2}=
\mathbb{E}\left|\sum_{j=n+1}^{m}\int_{0}^{T}(Z_{m,n}, b_je_j)_Hd \beta_j(s)\right|^{2}
&=\sum_{j,l=n+1}^{m}\mathbb{E}\left(\int_{0}^{T}(Z_{m,n}, b_je_j)_Hd \beta_j(s)\int_{0}^{T}(Z_{m,n}, b_le_l)_Hd \beta_l(s) \right) \\
&=\mathbb{E}\sum_{j=n+1}^{m}\int_{0}^{T}|(Z_{m,n}, b_je_j)_H|^{2}ds \\
&\leq \left(\sum_{j=n+1}^{m}b_j^{2} \right)\mathbb{E}\int_{0}^{T}||Z_{m,n}||_{H}^{2}ds \\
&\leq \left(\sum_{j=n+1}^{m}b_j^{2} \right)\mathbb{E}\int_{0}^{T}||\nabla Z_{m,n}||_{H}^{2}ds \lesssim \frac{T}{\kappa}\left(\sum_{j=n+1}^{m}b_j^{2} \right)^{2}.
\end{align*}
By (3.11), we obtain 
\begin{align*}
\mathbb{E}\left(\sup_{0\leq t\leq T} ||Z_{m,n}||_{H}^{2}(t)+2\kappa \int_{0}^{T}||\nabla Z_{m,n}||_{H}^{2}ds\right)\lesssim \left(T+\sqrt{\frac{T}{\kappa}} \right)\left(\sum_{j=n+1}^{m}b_j^{2} \right).
\end{align*}\\
Thus, $\{Z_{n}\}$ converges in $L^{2}(\Omega;\mathcal{X}_T)$. By the continuous embedding $L^{2}\left(\Omega; C([0,T]; H)\right)\subset C\left([0,T]; L^{2}(\Omega; V^{*})  \right)$ and $Z\in C\left([0,T]; L^{2}(\Omega; V^{*})  \right)$, we find that $Z\in L^{2}(\Omega;\mathcal{X}_T)$. Letting $n\to\infty$ in (3.10) implies that $Z$ satisfies (3.7) and is a solution to (2.12) for $Z(0)=0$.
\end{proof}

\begin{proof}[Proof of Lemma 3.4]
The convergence $Z_n\to Z$ in $L^{2}(\Omega; \mathcal{X}_T)$ implies $Z_n(t)\to Z(t)$ in $H$ for $t\geq 0$ almost surely. Since the $H$-valued process $Z_n$ is $\mathcal{G}_t$-adapted, so is $Z$, cf. \cite[4.2.2 Theorem]{Dud}. By $\Delta Z_n\to \Delta Z$ in $L^{2}(\Omega; L^{2}(0,T; V^{*}) )$, 
\begin{align*}
\mathbb{P}\left(\int_{0}^{T}||\Delta Z||_{V^{*}}^{2}dt<\infty \right)=1.
\end{align*}
In particular, $\Delta Z_n\to \Delta Z$ in $V^{*}$ for a.e. $(s,\omega)\in [0,t]\times \Omega$. Since $\Delta Z_n$ is $\mathcal{G}_t$-progressively measurable, so is $\Delta Z$. We showed that $\{Z(t)\}$ is an It\^o process in $V^{*}$ with constant diffusion.

It remains to show the uniqueness. For two random solutions $Z_1$ and $Z_2$ of (2.12), $Z=Z_1-Z_2$ satisfies 
\begin{align*}
Z(t)=\int_{0}^{t}\kappa\Delta Z(s)ds\quad \textrm{on}\ V^{*},\ 0\leq t\leq T,\ a.s.
\end{align*}
By differentiating, $\partial_t Z=\kappa\Delta Z\in L^{2}(0,T; V^{*})$ and $Z\in \mathcal{H}_T$ almost surely. Thus, $Z=0$ follows from the uniqueness of the deterministic case (Proposition 2.2).
\end{proof}

\subsection{Path-wise global well-posedness}
We say that an $H$-valued random process $\{B(t)\}$ is a solution to the system (1.11) for $\gamma>d/2$, $f=\partial_t \zeta$ and $\zeta$ in (1.5) if $B\in \mathcal{X}_T$ almost surely and 
\begin{align}
B(t)=B(0)
+\int_{0}^{t}\left(\kappa \Delta B+\nabla \cdot (B\otimes u-u\otimes B)  \right)ds
+\zeta(t)\quad \textrm{on}\ V^{*},\ 0\leq t\leq T,\ a.s.,
\end{align}
for the velocity field $u=K_{\gamma}(B,B)$. 

\begin{thm}[Path-wise global well-posedness]
For $\mathcal{G}_0$-measurable random initial data $B_0\in H$, there exists a unique solution $B(t)$ to (1.11) for $\gamma>d/2$, $f=\partial_t \zeta$ and $\zeta$ in (1.5) satisfying the initial condition $B(x,0)=B_0(x)$ almost surely. Moreover, 
\begin{align}
B(t)=e^{t\kappa \Delta}B_0+\int_{0}^{t}e^{(t-s)\kappa \Delta}d \zeta(s)
+\int_{0}^{t}e^{(t-s)\kappa \Delta} \nabla \cdot (B\otimes u-u\otimes B)ds,\quad a.s.
\end{align}
The random process $\{B(t)\}$ belong to $\mathcal{X}_T$ almost surely and is an It\^o process in $V^{*}$ with constant diffusion.    
\end{thm}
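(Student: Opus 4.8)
The plan is to reduce the stochastic problem to the deterministic perturbed system (2.30) by peeling off the stochastic convolution. Let $Z$ be the process from Lemma 3.4, i.e. $Z(t)=\int_0^t e^{(t-s)\kappa\Delta}\,d\zeta(s)$; by that lemma $Z\in L^2(\Omega;\mathcal{X}_T)$, so $Z\in\mathcal{X}_T$ almost surely, $Z$ is $\mathcal{G}_t$-adapted, and $\{Z(t)\}$ is an It\^o process in $V^*$ with constant diffusion solving $Z(t)=\int_0^t\kappa\Delta Z\,ds+\zeta(t)$ on $V^*$ with $Z(0)=0$. Setting $B=b+Z$ and $u=K_\gamma(B,B)=K_\gamma(b+Z,b+Z)$, the system (1.11) becomes, after subtracting the Stokes equation satisfied by $Z$, precisely the deterministic perturbed system (2.30) driven by the ($\omega$-fixed) field $Z(\omega)\in\mathcal{X}_T$ with initial datum $B_0(\omega)\in H$.

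First I would fix $\omega$ outside the null set on which $Z(\omega)\notin\mathcal{X}_T$, and apply Theorem 2.16 to obtain the unique global solution $b=b(B_0(\omega),Z(\omega))\in\mathcal{H}_T$ of (2.30). Then $B:=b+Z\in\mathcal{X}_T$ almost surely, since $\mathcal{H}_T\subset\mathcal{X}_T$ by (2.10). The equation (3.14) follows by adding the distributional identity $b(t)=b(0)+\int_0^t\bigl(\kappa\Delta b+\nabla\cdot((b+Z)\otimes u-u\otimes(b+Z))\bigr)\,ds$ in $V^*$ (valid because $B\in\mathcal{X}_T\subset\mathcal{Y}_T$ makes the nonlinear term lie in $L^2(0,T;V^*)$ by the cubic estimate (2.22)) to $Z(t)=\int_0^t\kappa\Delta Z\,ds+\zeta(t)$, using $b+Z=B$, $b(0)=B_0$, $Z(0)=0$. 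For the mild form (3.15), $b$ is a mild solution of the perturbed system, so $B=b+Z=e^{t\kappa\Delta}B_0+Z+\int_0^t e^{(t-s)\kappa\Delta}\nabla\cdot(B\otimes u-u\otimes B)\,ds$, and $Z$ is exactly the stochastic convolution $\int_0^t e^{(t-s)\kappa\Delta}d\zeta(s)$, which gives (3.15). Uniqueness is inherited: if $B^1,B^2$ solve (1.11) in the sense of (3.14), then $b^i:=B^i-Z$ lie in $\mathcal{H}_T$ (since $\partial_t b^i=\kappa\Delta b^i+\nabla\cdot(\cdots)\in L^2(0,T;V^*)$) and solve (2.30) pathwise with the same data, so $b^1=b^2$ by the uniqueness part of Theorem 2.16, whence $B^1=B^2$.

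The step I expect to be the genuine obstacle is measurability: one must check that the pathwise construction $\omega\mapsto b(B_0(\omega),Z(\omega))$ yields a $\mathcal{G}_t$-adapted, continuous $H$-valued process. For this I would invoke the continuity of the deterministic solution operator $(B_0,Z)\mapsto b(B_0,Z)$ from $H\times\mathcal{X}_T$ to $\mathcal{H}_T$ (Remark 2.17): composing with the evaluation $b\mapsto b(t)$ and restricting the driving data to $[0,t]$, one sees $B(t)$ is a continuous image of $(B_0,Z|_{[0,t]})$, a $\mathcal{G}_t$-measurable pair because $B_0$ is $\mathcal{G}_0$-measurable and $Z$ is $\mathcal{G}_t$-adapted and continuous (so its restriction to $[0,t]$ is $\mathcal{G}_t$-measurable into $C([0,t];H)$, and into $\mathcal{X}_t$). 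Continuity of $t\mapsto B(t)$ in $H$ comes from $B\in\mathcal{X}_T$.

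Finally, to conclude that $\{B(t)\}$ is an It\^o process in $V^*$ with constant diffusion, I would write $B(t)=B_0+\int_0^t f(s)\,ds+\sum_{j}b_j e_j\beta_j(t)$ on $V^*$ with $f=\kappa\Delta B+\nabla\cdot(B\otimes u-u\otimes B)$ and $g_j=b_j e_j$, and verify the hypotheses of Definition 3.2: $\sum_j\|g_j\|_H^2=\sum_j b_j^2=\mathsf{C}_0<\infty$; $f$ is $\mathcal{G}_t$-progressively measurable because $B$ (hence $u=K_\gamma(B,B)$) is $\mathcal{G}_t$-adapted and continuous in $V^*$; and $\int_0^T\|f\|_{V^*}^2\,dt<\infty$ almost surely since $\kappa\Delta B\in L^2(0,T;V^*)$ for $B\in\mathcal{X}_T$ and the cubic estimate (2.22) controls the nonlinear term in $L^2(0,T;V^*)$.
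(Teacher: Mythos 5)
Your proposal is correct and follows essentially the same route as the paper: subtract the stochastic convolution $Z$ of Lemma 3.4, solve the perturbed system (2.30) pathwise via Theorem 2.16 on a full-measure set, and use the continuity of the solution operator (Remark 2.17), the adaptedness of $Z$, and the cubic estimate to verify that $B=b+Z$ satisfies (3.14)--(3.15) and is an It\^o process in $V^{*}$ with constant diffusion. The only cosmetic difference is uniqueness: the paper runs the contraction argument of Proposition 2.9 directly on the difference of two solutions, whereas you subtract $Z$ and invoke the uniqueness part of Theorem 2.16 -- both rest on the same estimates.
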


\begin{proof}
We first show the uniqueness. We set $B=B^{1}-B^{2}$ and $u=u^{1}-u^{2}$ for two solutions $B^{1}$ and $B^{2}$ and the associated velocity fields $u^{1}=K_{\gamma}(B^{1},B^{1})$ and $u^{2}=K_{\gamma}(B^{2},B^{2})$. Then, for 
\begin{align*}
F=B\otimes u^{1}-u^{1}\otimes B+B^{2}\otimes u-u\otimes B^{2},
\end{align*}
the function $B$ satisfies 
\begin{align*}
B(t)=\int_{0}^{t}\left(\kappa \Delta B(s)+\nabla \cdot F(s)\right) ds\quad \textrm{on}\ V^{*},\ 0\leq t\leq T,\ a.s.
\end{align*}
By the same uniqueness argument as that of the deterministic case (for $\gamma=d/2$ in Proposition 2.9), we conclude $B=0$ almost surely. 

We set the stochastic convolution $Z\in L^{2}(\Omega; \mathcal{X}_T)$ by (3.8) and take a set of full measure $\Omega_0\subset \Omega$ such that $B_0^{\omega}\in H$ and $Z^{\omega}\in \mathcal{X}_{T}$ for $\omega\in \Omega_0$. We set $b^{\omega}$ by a global-in-time unique solution to the perturbed system (2.30) for $\omega\in \Omega_0$ by Theorem 2.16 and $b^{\omega}=0$ for $\omega\in \Omega_0^{c}$. Then, $B=b+Z$ is a solution to (1.11) for $f=\partial_t \zeta$ and satisfies (3.14) almost surely.

It remains to show that $\{B(t)\}$ is an It\^o process in $V^{*}$ with constant diffusion. We show that $B(t)$ is a $\mathcal{G}_t$-adapted $H$-valued process and $f=\kappa \Delta B+\nabla \cdot (B\otimes u-u\otimes B)$ is a $\mathcal{G}_t$-progressively measurable $V^{*}$-valued process. By construction, $B=b+Z$ and $Z$ is a $\mathcal{G}_t$-adapted $H$-valued process (Lemma 3.4). The continuity of the solution operator $b=b(B_0,Z): H\times \mathcal{X}_T\to \mathcal{H}_T\subset \mathcal{X}_T$ (Remark 2.17) implies the continuity for fixed $t\geq 0$, i.e.,  $b=b(B_0,Z): H\times H\to H$. Thus $b=b(B_0, Z(t))$ is a $\mathcal{G}_t$-adapted $H$-valued process.

The function $f=f(B)=\kappa \Delta B+\nabla \cdot (B\otimes u-u\otimes B): V\to V^{*}$ is also continuous for fixed $t\geq 0$. Since $B\in \mathcal{X}_T$ almost surely, $B: ([0,t]\times \Omega, \mathcal{B}[0,t]\times \mathcal{G}_t)\to (V,\mathcal{B}(V))$ is measurable. Namely, $B$ is a $\mathcal{G}_t$-progressively measurable $V$-valued process. Thus $f=f(B(t))$ is a $\mathcal{G}_t$-progressively measurable $V^{*}$-valued process and 
\begin{align*}
\mathbb{P}\left(\int_{0}^{T}||f||_{V^{*}}^{2}dt<\infty \right)=1.
\end{align*}
We showed that $\{B(t)\}$ is an It\^o process in $V^{*}$ with constant diffusion. 
\end{proof}

\subsection{Invariant measures and statistically stationary solutions}

According to \cite[Chapter 7]{DZ92}, we define an invariant measure and a statistically stationary solution for the system (1.11). We denote by $B(t; \xi)$ a solution to (1.11) for $f=\partial_t \zeta$ satisfying the initial condition $B(0; \xi)=\xi\in H$. We define the Markov semigroup on the space of all bounded Borel functions $B_b(H)$ by 
\begin{align*}
P_{t}\varphi(\xi)=\mathbb{E}\varphi(B(t; \xi) ),\quad  \xi\in H,\ t\geq 0.
\end{align*}
The integral form (3.14) implies the continuity $B(t;\xi_n)\to B(t;\xi)$ in $H$ as $\xi_n\to \xi$ in $H$ and that the Markov semigroup is a Feller semigroup, i.e., $P_{t}\in \mathcal{L}(C_b(H))$, where $\mathcal{L}(C_b(H))$ denotes the space of all bounded lienar operators on $C_b(H)$. We define the dual semigroup on the space of probability measures $\mathcal{P}(H)$. For $\varphi\in B_b(H)$ and $\mu\in \mathcal{P}(H)$, we set 
\begin{align*}
(\varphi,\mu)=\int_{H}\varphi(\xi)\mu(d\xi),
\end{align*}
and define the dual semigroup $P_{t}^{*}\in \mathcal{L}(\mathcal{P}(H))$ by 
\begin{align*}
(\varphi,P^{*}_{t}\mu)=(P_{t} \varphi,\mu).
\end{align*}
The dual semigroup provides the evolution of the law $P^{*}_{t}\mu=\mathcal{D}(B(t; B_0))$ for random $B_0\in H$ with law $\mu=\mathcal{D}(B_0)$. We say that $\mu\in \mathcal{P}(H)$ is an invariant measure of (1.11) if $P_{t}^{*}\mu=\mu$ for all $t\geq 0$. We say that $B(t)$ is a statistically stationary solution (or a stationary process) to (1.11) if the law $\mathcal{D}(B(t))$ is time-independent. For an invariant measure $\mu$ and random initial data $B_0$ with law $\mathcal{D}(B_0)=\mu$, the global-in-time solution $B(t)$ in Theorem 3.8 is a statistically stationary solution with law $\mathcal{D}(B(t))=P^{*}_{t}\mu=\mu$.

\section{The existence of invariant measures}

We apply It\^o formulas for It\^o processes in $V^{*}$ with constant diffusion associated with the system (1.11) for $\gamma>d/2$, the force $f=\partial_t \zeta$ and $\zeta$ in (1.5) constructed in Theorem 3.8. We obtain an energy balance, an exponential moment decay, a helicity balance, and a mean-square potential balance. We then deduce the existence of an invariant measure for the system (1.11) from the energy balance and Krylov--Bogoliubov theorem. 

\subsection{The energy, helicity, and mean-square-potential balances}

\begin{lem}[Energy balance]
The following holds for solutions $B(t)$ to (1.11) in Theorem 3.8 for $B_0\in L^{2}(\Omega; H)$:
\begin{align}
\mathbb{E}||B||_{H}^{2}(t)+2\mathbb{E}\int_{0}^{t}\left( \kappa ||\nabla B||_{H}^{2}+\left\|u\right\|^{2}_{\dot{H}^{\gamma}}\right)ds
=\mathbb{E}||B_0||_{H}^{2}+\mathsf{C}_0t,\quad t\geq 0.
\end{align}
\end{lem}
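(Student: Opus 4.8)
The plan is to obtain (4.1) by applying the It\^o formula (B.5) for It\^o processes in $V^{*}$ with constant diffusion to the functional $F(z)=\|z\|_H^2$ along the process $B(t)$ furnished by Theorem 3.8. By that theorem $B$ is such a process: its drift is $f(B)=\kappa\Delta B+\nabla\cdot(B\otimes u-u\otimes B)$, which belongs to $L^{2}(0,T;V^{*})$ almost surely so that (3.4) holds; its diffusion coefficients are $g_j=b_je_j\in H$ with $\sum_j\|g_j\|_H^2=\sum_j b_j^2=\mathsf{C}_0<\infty$, so that (3.1) holds; and $B\in\mathcal X_T\subset L^{2}(0,T;V)$ almost surely, which makes the duality pairing $\langle f(B),B\rangle$ meaningful for a.e.\ $t$.

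Since $F'(z)=2z$ and $F''(z)\equiv 2\,\mathrm{Id}_H$, and $\{e_j\}$ is orthonormal in $H$, the second-order (trace) correction in It\^o's formula contributes $\sum_j\|b_je_j\|_H^2=\mathsf{C}_0$ per unit time, so (B.5) gives, almost surely for all $t\ge 0$,
\begin{align*}
\|B\|_H^2(t)=\|B_0\|_H^2+2\int_0^t\langle f(B),B\rangle\,ds+\mathsf{C}_0\,t+2M_t,\qquad M_t:=\sum_j\int_0^t b_j(B,e_j)_H\,d\beta_j(s).
\end{align*}
I would then compute the drift pairing exactly as in the proof of Theorem 2.14: $2\kappa\langle\Delta B,B\rangle=-2\kappa\|\nabla B\|_H^2$; the transport contribution to $2\langle\nabla\cdot(B\otimes u-u\otimes B),B\rangle$ vanishes after integration by parts because $\nabla\cdot u=0$; and, since the momentum equation in (1.11) gives $(-\Delta)^{\gamma}u=\Pi(B\cdot\nabla B)$, the remaining stretching contribution equals $-2\int_{\mathbb{T}^d}\big((-\Delta)^{\gamma}u\big)\cdot u\,dx=-2\|u\|_{\dot H^{\gamma}}^2$. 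Thus $2\langle f(B),B\rangle=-2\kappa\|\nabla B\|_H^2-2\|u\|_{\dot H^{\gamma}}^2$, and the identity above becomes the pathwise energy equality
\begin{align*}
\|B\|_H^2(t)+2\int_0^t\big(\kappa\|\nabla B\|_H^2+\|u\|_{\dot H^{\gamma}}^2\big)ds=\|B_0\|_H^2+\mathsf{C}_0\,t+2M_t .
\end{align*}

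Finally I would take expectations, the one genuine point being that $\mathbb{E}M_t=0$. I would route this through a stopping-time localization: with $\tau_N=\inf\{t\ge 0:\|B\|_H^2(t)>N\}$ — which is a.s.\ finite-valued and increases to $+\infty$ since $t\mapsto\|B\|_H(t)$ is continuous for $B\in\mathcal X_T$ — the quadratic variation $\int_0^{\cdot\wedge\tau_N}\sum_j b_j^2|(B,e_j)_H|^2\,ds\le(\sup_j b_j^2)\int_0^{\cdot\wedge\tau_N}\|B\|_H^2\,ds$ is bounded, so $M_{\cdot\wedge\tau_N}$ is a true martingale and $\mathbb{E}M_{t\wedge\tau_N}=0$ by optional sampling. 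Taking expectations of the pathwise equality at $t\wedge\tau_N$ and sending $N\to\infty$ — monotone convergence for the two nonnegative time integrals, dominated convergence for $\mathsf{C}_0\mathbb{E}(t\wedge\tau_N)$, and Fatou for $\mathbb{E}\|B\|_H^2(t\wedge\tau_N)$ together with $\mathbb{E}\|B_0\|_H^2<\infty$ — first yields $\mathbb{E}\int_0^T\kappa\|\nabla B\|_H^2\,ds<\infty$, hence (by the Poincar\'e inequality on $H$) $\mathbb{E}\int_0^T\|B\|_H^2\,ds<\infty$, so that $M$ is in fact an $L^2$-martingale on $[0,T]$ by It\^o's isometry and $\mathbb{E}M_t=0$; taking expectations of the pathwise equality directly then gives (4.1). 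The main obstacle is precisely this integrability bookkeeping — Theorem 3.8 supplies only pathwise regularity, so $\mathbb{E}M_t=0$ cannot be asserted outright and must be obtained by localization; once $\mathbb{E}\int_0^T\|B\|_H^2\,ds<\infty$ is secured, the remainder is the deterministic energy computation plus the elementary trace term $\mathsf{C}_0$.
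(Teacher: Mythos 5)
Your proposal is correct and follows essentially the same route as the paper: apply the It\^o formula for $\|\cdot\|_H^2$ to the It\^o process in $V^{*}$ with constant diffusion from Theorem 3.8, reduce the drift pairing to $-\kappa\|\nabla B\|_H^2-\|u\|_{\dot H^{\gamma}}^2$ by the same integration by parts as in Theorem 2.14, kill the stochastic integral by localizing with the stopping times $\tau_n=\inf\{t:\|B(t)\|_H>n\}$ and optional sampling, and pass $n\to\infty$. Your only deviation is the final bookkeeping — you first extract $\mathbb{E}\int_0^T\|B\|_H^2\,ds<\infty$ and upgrade $M$ to an $L^2$-martingale so as to take expectations in the unstopped identity (the paper simply lets $n\to\infty$ in the stopped identity) — which is a slightly more explicit justification of the same limit.
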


\begin{proof}
The stochastic process $\{B_t\}$ is an It\^o process in $V^{*}$ with constant difusion and belongs to $\mathcal{X}_T$ almost surely for
\begin{equation}
\begin{aligned}
f&=\kappa \Delta B+\nabla \cdot (B\otimes u-u\otimes B),\quad u=K_{\gamma}(B,B),\\
g_j&=b_je_j.
\end{aligned}
\end{equation}
By the application of the It\^o formula (B.16),
\begin{align*}
||B||_{H}^{2}(t\wedge \tau_n)
=||B_0||_{H}^{2}+2\mathbb{E}\int_{0}^{t\wedge \tau_n}(B_s,f_s)ds+t\wedge \tau_n \sum_{j=1}^{\infty}||g_j||_{H}^{2}+2\sum_{j=1}^{\infty}\int_{0}^{t\wedge \tau_n}(B_s,g_j)_H d\beta_j(s),\quad t\geq 0,
\end{align*}
with the stopping time $\tau_n=\inf\{t\geq 0\ |\ ||B(t)||_{H}>n\ \}$ for $n\in \mathbb{N}$. By multiplying $(-\Delta)^{\gamma}u$ by $u=K_{\gamma}(B,B)$,
\begin{align*}
\left\|u\right\|^{2}_{\dot{H}^{\gamma  }}
=\left\| (-\Delta)^{\frac{\gamma}{2}}u\right\|_{H}^{2}
=\left(u,(-\Delta)^{\gamma } u \right)_{H}
=(u,B\cdot \nabla B)_{H}=-(B\cdot \nabla u,B )_{H}.
\end{align*}
By integration by parts, $(B,f)_H=-\kappa ||\nabla B||_{H}^{2}-\left\|u\right\|^{2}_{\dot{H}^{\gamma }}$ and we obtain 
\begin{align*}
||B||_{H}^{2}(t\wedge \tau_n)
+2\int_{0}^{t\wedge \tau_n}\left( \kappa ||\nabla B||_{H}^{2}+\left\|u\right\|^{2}_{\dot{H}^{\gamma  }}\right)ds
=||B_0||_{H}^{2}+t\wedge \tau_n \mathsf{C}_0+2\sum_{j=1}^{\infty}\int_{0}^{t\wedge \tau_n}(B_s,g_j)_H d\beta_j(s).
\end{align*}
By taking the mean and applying Doob's optional sampling theorem,  
\begin{align*}
\mathbb{E}||B||_{H}^{2}(t\wedge \tau_n)
+2\mathbb{E}\int_{0}^{t\wedge \tau_n}\left( \kappa ||\nabla B||_{H}^{2}+\left\|u\right\|^{2}_{\dot{H}^{\gamma }}\right)ds
=\mathbb{E}||B_0||_{H}^{2}+\mathbb{E}(t\wedge \tau_n) \mathsf{C}_0.
\end{align*}
Since $B\in C([0,\infty); H)$ almost surely, $\tau_n(\omega)$ monotonously diverges as $n\to\infty$ almost surely. Letting $n\to\infty$ implies (4.1).

\end{proof}

\begin{lem}[Helicity and mean-square-potential balances]
The following holds for solutions $B(t)$ to (1.11) in Theorem 3.8 for $B_0\in L^{2}(\Omega; H)$:
\begin{align}
\mathbb{E}(A,B)_{H}(t)
+2\kappa \mathbb{E}\int_{0}^{t}(\nabla \times B,B)_{H}ds
&=\mathbb{E}(A_0,B_0)_{H}
+\mathcal{C}_{-\frac{1}{2}}t,\quad A=\textrm{curl}^{-1}B,\quad t\geq 0,\quad d=3,\\
\mathbb{E}||\phi||_{L^{2}}^{2}(t)
+2\kappa  \mathbb{E}\int_{0}^{t}||\nabla \phi||_{L^{2}}^{2}ds
&=\mathbb{E}||\phi_0||_{L^{2}}^{2}
+\mathsf{C}_{-1}t,\quad \phi=\textrm{curl}^{-1} B,\quad t\geq 0,\quad  d=2.
\end{align}
\end{lem}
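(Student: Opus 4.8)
The plan is to run, for the quadratic conserved quantities of the ideal part of (1.11), the same scheme used in the proof of the energy balance (Lemma 4.1). Both left-hand quantities are of the form $F(B)=(QB,B)_{H}$ with $Q$ a bounded self-adjoint operator on $H$: for $d=2$ take $Q=(-\Delta)^{-1}$, so that $F(B)=\|(-\Delta)^{-1/2}B\|_{H}^{2}=\|\phi\|_{L^{2}}^{2}$ with $\phi=\textrm{curl}^{-1}B$ (using $|\nabla^{\perp}\phi|=|\nabla\phi|$); for $d=3$ take $Q=\textrm{curl}^{-1}=(-\Delta)^{-1}\nabla\times$, which is bounded on $H$ because $\|\textrm{curl}^{-1}e_{j}\|_{H}=|\tau_{j}|^{-1}\le\lambda_{1}^{-1/2}$. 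Thus $F$ is a continuous quadratic form on $H$. Applying to the $V^{*}$-valued It\^o process with constant diffusion $\{B(t)\}$ of Theorem 3.8 the It\^o formula for $F$ — reducible to (B.16) by applying it to $\widetilde{B}:=(-\Delta)^{-1/2}B$ when $d=2$ (an It\^o process in $H$ with constant diffusion, drift $(-\Delta)^{-1/2}f$ and coefficients $\widetilde{g}_{j}=b_{j}\lambda_{j}^{-1/2}e_{j}$, $\sum_{j}\|\widetilde{g}_{j}\|_{H}^{2}=\mathsf{C}_{-1}<\infty$), and to the spectral components $Q_{\pm}^{1/2}B$ of $B$ on the positive and negative eigenspaces of $\nabla\times$ when $d=3$ — I would obtain, with the stopping times $\tau_{n}=\inf\{t\ge0\mid\|B(t)\|_{H}>n\}$,
\begin{align*}
F(B)(t\wedge\tau_{n})=F(B_{0})+2\int_{0}^{t\wedge\tau_{n}}\langle QB_{s},f_{s}\rangle\,ds+(t\wedge\tau_{n})\sum_{j=1}^{\infty}(Qg_{j},g_{j})_{H}+2\sum_{j=1}^{\infty}\int_{0}^{t\wedge\tau_{n}}(QB_{s},g_{j})_{H}\,d\beta_{j}(s),
\end{align*}
where $f=\kappa\Delta B+\nabla\cdot(B\otimes u-u\otimes B)$, $g_{j}=b_{j}e_{j}$ and $u=K_{\gamma}(B,B)$.

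I would then identify the three drift contributions. The It\^o correction is $\sum_{j}(Qg_{j},g_{j})_{H}=\sum_{j}b_{j}^{2}(Qe_{j},e_{j})_{H}$, equal to $\sum_{j}\lambda_{j}^{-1}b_{j}^{2}=\mathsf{C}_{-1}$ for $d=2$ and to $\sum_{j}\tau_{j}^{-1}b_{j}^{2}=\mathcal{C}_{-\frac{1}{2}}$ for $d=3$, both series converging absolutely since $\lambda_{j}\ge\lambda_{1}$ and $\mathsf{C}_{0}<\infty$. The resistive part of the drift, $2\kappa\langle QB,\Delta B\rangle$, equals $-2\kappa\|B\|_{H}^{2}=-2\kappa\|\nabla\phi\|_{L^{2}}^{2}$ when $d=2$ and $-2\kappa(\nabla\times B,B)_{H}$ when $d=3$ (by spectral calculus, $(\textrm{curl}^{-1}B,\Delta B)_{H}=-(\nabla\times B,B)_{H}$); moved to the left-hand side these produce the dissipation terms in (4.3) and (4.2). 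The nonlinear part, $2\langle QB,\nabla\cdot(B\otimes u-u\otimes B)\rangle$, vanishes — this is the conservation of $\mathscr{M}$ (resp. $\mathscr{H}$) by the ideal magnetic relaxation equations, and it is the crux of the proof.

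The principal obstacle is to justify this last vanishing under the sole regularity $B\in\mathcal{X}_{T}$. For almost every $t$ one has $B(t)\in V$, and by the cubic estimate (Lemma 2.6) $B\otimes u-u\otimes B\in L^{2}(\mathbb{T}^{d})$, so $\nabla\cdot(B\otimes u-u\otimes B)\in V^{*}$ pairs against $QB\in V$. For $d=3$, using $\nabla\cdot(B\otimes u-u\otimes B)=u\cdot\nabla B-B\cdot\nabla u=\nabla\times(B\times u)$ (valid since $\nabla\cdot u=\nabla\cdot B=0$), the symmetry of $\nabla\times$ and $\nabla\times\textrm{curl}^{-1}=\mathrm{Id}$ on $H$ give
\begin{align*}
(\textrm{curl}^{-1}B,\nabla\times(B\times u))_{H}=(\nabla\times\textrm{curl}^{-1}B,\,B\times u)_{L^{2}}=(B,B\times u)_{L^{2}}=0.
\end{align*}
For $d=2$, writing $B=\nabla^{\perp}\phi$ one has $\nabla\cdot(B\otimes u-u\otimes B)=\nabla^{\perp}(u\cdot\nabla\phi)$ and $(-\Delta)^{-1}B=\nabla^{\perp}(-\Delta)^{-1}\phi$, so after an integration by parts the term becomes $(\phi,u\cdot\nabla\phi)_{L^{2}}=\tfrac12\int_{\mathbb{T}^{2}}u\cdot\nabla|\phi|^{2}\,dx=0$. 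Each of these identities would be verified first for smooth fields and then extended by the density of $C^{\infty}(\mathbb{T}^{d})$ together with the cubic bounds of \S\,2.2, which control every product that occurs.

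Finally, exactly as in Lemma 4.1, the stochastic integral stopped at $\tau_{n}$ is a genuine martingale (on $\{t\le\tau_{n}\}$ we have $\|B\|_{H}\le n$, $Q$ is bounded, and $\sum_{j}b_{j}^{2}<\infty$), so taking expectations and invoking Doob's optional sampling theorem removes it. One then lets $n\to\infty$: since $B\in C([0,\infty);H)$ almost surely, $\tau_{n}\uparrow\infty$ a.s., and the moment bounds of \S\,4 — in particular $\mathbb{E}\int_{0}^{t}\|\nabla B\|_{H}^{2}\,ds<\infty$ from (4.1) and, via a maximal inequality for the martingale appearing in its proof, $\mathbb{E}\sup_{s\le t}\|B_{s}\|_{H}^{2}<\infty$ — provide the integrability needed to pass to the limit by dominated convergence (the $d=3$ resistive integral being dominated by $\int_{0}^{t}\|\nabla B\|_{H}\|B\|_{H}\,ds$, which has finite expectation). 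This yields (4.2) and (4.3).
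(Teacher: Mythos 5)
Your proposal is correct and follows essentially the same route as the paper: apply an It\^o formula to the quadratic functionals $\mathscr{H}(B)=(\textrm{curl}^{-1}B,B)_H$ and $\mathscr{M}(B)=\|\textrm{curl}^{-1}B\|_{L^2}^2$ for the It\^o process of Theorem 3.8, identify the It\^o correction as $\mathcal{C}_{-\frac{1}{2}}$ (resp. $\mathsf{C}_{-1}$), extract the dissipation from the resistive drift, observe that the nonlinear drift contribution vanishes via $(B,B\times u)=0$ (resp. $(\phi,u\cdot\nabla\phi)_{L^2}=0$), and finish with expectations, Doob's optional sampling at $\tau_n$, and $n\to\infty$. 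The only deviations are cosmetic: you justify the It\^o formula by transforming to $(-\Delta)^{-1/2}B$ (and a spectral splitting of $\textrm{curl}^{-1}$ for $d=3$) and invoking the energy formula (B.16), whereas the paper uses the ready-made formulas (B.18)--(B.19) from Appendix B, and your intermediate sign in $\nabla\cdot(B\otimes u-u\otimes B)=\pm\nabla\times(B\times u)$ is convention-dependent but harmless since the pairing vanishes either way.
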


\begin{proof}
By the application of the It\^o formula (B.18), 
\begin{align*}
(A,B)_H(t\wedge \tau_n)
=&(A_0,B_0)_H
+2\int_{0}^{t\wedge \tau_n}(B,\textrm{curl}^{-1} f_s)_{H}ds+t\wedge \tau_n \sum_{j=1}^{\infty}(\textrm{curl}^{-1}g_j,g_j )_{H}\\
&+2\sum_{j=1}^{\infty}\int_{0}^{t\wedge \tau_n}(B, \textrm{curl}^{-1} g_j)_{H}d\beta_j(s),\quad t\geq 0.
\end{align*}
By $f=-\nabla \times ( \kappa \nabla \times B+B\times u)$ and $\nabla \times e_j=\tau_j e_j$,
\begin{align*}
(B,\textrm{curl}^{-1} f)_H&=-(B,\kappa \nabla \times B+B\times u)_{H}
=-\kappa (B, \nabla \times B)_{H},\\
\left(\textrm{curl}^{-1} g_j,g_j \right)_H&=b_j^{2}\left(\textrm{curl}^{-1}e_j,e_j\right)_{H}=\frac{1}{\tau_j}b_j^{2}.
\end{align*}
We thus obtain
\begin{align*}
(A, B)_H(t\wedge \tau_n)
+2\kappa \int_{0}^{t\wedge \tau_n}(B,\nabla \times B)_{H}ds
=(A_0, B_0)_H+t\wedge \tau_n \mathcal{C}_{-\frac{1}{2}}
+2\sum_{j=1}^{\infty}\int_{0}^{t\wedge \tau_n}(B, \textrm{curl}^{-1} g_j)_{H}d\beta_j(s).
\end{align*}
By taking the mean and applying Doob's optional sampling theorem,
\begin{align*}
\mathbb{E}(A, B)_H(t\wedge \tau_n)
+2\kappa \mathbb{E}\int_{0}^{t\wedge \tau_n}(B,\nabla \times B)_{H}ds
=\mathbb{E}(A_0, B_0)_H+\mathbb{E}(t\wedge \tau_n) \mathcal{C}_{-\frac{1}{2}}.
\end{align*}
Letting $n\to\infty$ implies (4.3).

By the application of the It\^o formula (B.19), 
\begin{align*}
||\phi||^{2}_{L^{2}}(t\wedge \tau_n)
=&||\phi_0 ||^{2}_{L^{2}}
+2\int_{0}^{t\wedge \tau_n}\left(\phi_s,\textrm{curl}^{-1} f_s\right)_{L^{2}}ds+t\wedge \tau_n \sum_{j=1}^{\infty}\left\|(-\Delta)^{-\frac{1}{2}}g_j \right\|_{L^{2}}^{2}\\
&+2\sum_{j=1}^{\infty}\int_{0}^{t\wedge \tau_n}\left(\phi_s,\textrm{curl}^{-1} g_j\right)_{L^{2}}d\beta_j(s),\quad t\geq 0.
\end{align*}
By using $B=\nabla^{\perp}\phi$ and $u\cdot \nabla B-B\cdot \nabla u=-\nabla^{\perp}(u\cdot \nabla \phi)$, $f=\nabla^{\perp}(\kappa\Delta \phi-u\cdot \nabla \phi)$ and 
\begin{align*}
\left(\phi, \textrm{curl}^{-1} f \right)_{L^{2}}
=\left(\phi, \kappa\Delta \phi -u\cdot \nabla \phi\right)_{L^{2}}
=-\kappa ||\nabla \phi||_{L^{2}}^{2}. 
\end{align*}
By $-\Delta e_j=\lambda_j e_j$, $\|(-\Delta)^{-\frac{1}{2}}g_j \|_{L^{2}}^{2}=((-\Delta)^{-1}g_j,g_j )_{L^{2}}=b_j^{2}/\lambda_j$. We obtain (4.4) by taking the mean, applying Doob's optional sampling theorem, and letting $ n\to \infty$.
\end{proof}

\subsection{The exponential moment}

\begin{lem}
Let $0<\varrho \leq 1/(2\sup_{j\geq 1}b_j^{2})$. The following holds for solutions $B(t)$ to (1.11) in Theorem 3.8 for $B_0\in L^{2}(\Omega; H)$ such that $\mathbb{E}\exp\left(\kappa \varrho ||B_0||_H^{2} \right)$ is bounded:
\begin{align}
\mathbb{E}\exp \left(\kappa \varrho ||B||_H^{2}  \right)(t)\leq 
\mathbb{E}\exp\left(\kappa \varrho ||B_0||_H^{2} \right) e^{-\kappa^{2}\varrho t}+\frac{1}{\kappa}(\mathsf{C}_0+\kappa)e^{\varrho(\mathsf{C}_0+\kappa) },\quad t\geq 0.
\end{align}
\end{lem}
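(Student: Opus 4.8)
\emph{Approach.} The plan is to run an It\^o computation on the exponential of the magnetic energy, in direct analogy with the proof of the energy balance (Lemma~4.1) but with the convex function $F(x)=e^{\kappa\varrho x}$ in place of $F(x)=x$. Recall from that proof that, with the stopping times $\tau_n=\inf\{t\ge 0:\|B(t)\|_H>n\}$, the process $\|B(t\wedge\tau_n)\|_H^{2}$ is an It\^o process whose drift is $-2\kappa\|\nabla B\|_H^{2}-2\|u\|_{\dot{H}^{\gamma}}^{2}+\mathsf{C}_0$ and whose martingale part has quadratic variation $4\sum_{j}(B,b_je_j)_H^{2}\,dt=4\sum_{j}b_j^{2}(B,e_j)_H^{2}\,dt$. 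Applying the It\^o formula of Appendix B for $F(\|B\|_H^{2})$ with $F\in C^{2}(\mathbb{R})$, $F(x)=e^{\kappa\varrho x}$ (localized by $\tau_n$), the drift of $F(\|B\|_H^{2})$ equals
\[
\kappa\varrho\,e^{\kappa\varrho\|B\|_H^{2}}\Bigl(-2\kappa\|\nabla B\|_H^{2}-2\|u\|_{\dot{H}^{\gamma}}^{2}+\mathsf{C}_0+2\kappa\varrho\sum_{j}b_j^{2}(B,e_j)_H^{2}\Bigr),
\]
the last term being the It\^o correction $\tfrac12 F''(\|B\|_H^{2})\cdot 4\sum_{j}b_j^{2}(B,e_j)_H^{2}$.

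\emph{Pointwise drift estimate.} This is the crux. First, $\sum_{j}b_j^{2}(B,e_j)_H^{2}\le(\sup_{j}b_j^{2})\,\|B\|_H^{2}$, so the hypothesis $\varrho\le 1/(2\sup_j b_j^{2})$ gives $2\kappa\varrho\sum_{j}b_j^{2}(B,e_j)_H^{2}\le\kappa\|B\|_H^{2}$. Next, the Poincar\'e inequality on $\mathbb{T}^{d}$ (the smallest Stokes eigenvalue is $\lambda_1=1$) yields $\|\nabla B\|_H^{2}\ge\|B\|_H^{2}$, and we discard $-2\|u\|_{\dot{H}^{\gamma}}^{2}\le 0$. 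Hence the drift is bounded above by $\kappa\varrho\,e^{\kappa\varrho\|B\|_H^{2}}\bigl(-\kappa\|B\|_H^{2}+\mathsf{C}_0\bigr)$. Writing $-\kappa x+\mathsf{C}_0=(\mathsf{C}_0+\kappa-\kappa x)-\kappa$ and invoking the elementary inequality $\kappa\varrho\,e^{\kappa\varrho x}\bigl(\mathsf{C}_0+\kappa-\kappa x\bigr)\le\kappa\varrho(\mathsf{C}_0+\kappa)e^{\varrho(\mathsf{C}_0+\kappa)}$ for all $x\ge 0$ (split at $x=(\mathsf{C}_0+\kappa)/\kappa$: beyond this point the left side is $\le 0$, below it each factor is controlled), we conclude that the drift of $F(\|B\|_H^{2})$ is at most $-\kappa^{2}\varrho\,F(\|B\|_H^{2})+C'$ with $C'=\kappa\varrho(\mathsf{C}_0+\kappa)e^{\varrho(\mathsf{C}_0+\kappa)}$.

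\emph{Conclusion via Gr\"onwall.} With the drift inequality in hand, taking expectations and writing $\phi(t)=\mathbb{E}\exp(\kappa\varrho\|B\|_H^{2})(t)$ gives the differential inequality $\phi'(t)\le-\kappa^{2}\varrho\,\phi(t)+C'$; hence $\phi(t)\le e^{-\kappa^{2}\varrho t}\phi(0)+\tfrac{C'}{\kappa^{2}\varrho}(1-e^{-\kappa^{2}\varrho t})$. Since $\tfrac{C'}{\kappa^{2}\varrho}=\tfrac1\kappa(\mathsf{C}_0+\kappa)e^{\varrho(\mathsf{C}_0+\kappa)}$ and $1-e^{-\kappa^{2}\varrho t}\le 1$, this is exactly (4.5).

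\emph{Main obstacle.} The delicate point, and the one needing the most care, is making this rigorous in spite of $F$ being unbounded: one must justify taking expectations and passing $n\to\infty$ in the localized identity. The clean route is to keep the localized It\^o \emph{identity} (not inequality), take expectations so that the stopped stochastic integral has mean zero, and rearrange into
\[
\mathbb{E}F(\|B\|_H^{2})(t\wedge\tau_n)+\kappa^{2}\varrho\,\mathbb{E}\!\int_{0}^{t\wedge\tau_n}\!F(\|B\|_H^{2})\,ds+\mathbb{E}\!\int_{0}^{t\wedge\tau_n}\!R\,ds=\mathbb{E}F(\|B_0\|_H^{2})+C'\,\mathbb{E}(t\wedge\tau_n),
\]
where $R\ge 0$ gathers all the nonnegative quantities discarded in the pointwise bound (the Poincar\'e slack $2\kappa(\|\nabla B\|_H^{2}-\|B\|_H^{2})$, the dissipation $2\|u\|_{\dot{H}^{\gamma}}^{2}$, and the gap in the noise-variance bound), each multiplied by $\kappa\varrho\,e^{\kappa\varrho\|B\|_H^{2}}$. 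All three left-hand terms being nonnegative, one gets uniform-in-$n$ bounds on $\mathbb{E}\int_0^{t}F(\|B\|_H^{2})\,ds$ and $\mathbb{E}\int_0^{t}R\,ds$ (monotone convergence); in particular the true drift $D$ of $F(\|B\|_H^{2})$ lies in $L^{1}(\Omega\times[0,t])$, and the stochastic integral is a genuine square-integrable martingale since its quadratic variation is controlled by $\sup_j b_j^{2}\,\mathbb{E}\int_0^{t}\|B\|_H^{2}\,ds\lesssim\mathbb{E}\int_0^{t}F(\|B\|_H^{2})\,ds<\infty$ (using $x\le e^{\kappa\varrho x}/(\kappa\varrho)$). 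Dominated convergence then gives the limiting identity $\mathbb{E}F(\|B\|_H^{2})(t)=\mathbb{E}F(\|B_0\|_H^{2})+\int_0^{t}\mathbb{E}D(s)\,ds$ with $\mathbb{E}D(s)\le-\kappa^{2}\varrho\,\mathbb{E}F(\|B\|_H^{2})(s)+C'$, after which the Gr\"onwall step above applies. The hypothesis that $\mathbb{E}\exp(\kappa\varrho\|B_0\|_H^{2})$ is finite is precisely what makes every quantity finite at $t=0$.
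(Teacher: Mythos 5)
Your proposal follows essentially the same route as the paper's proof of Lemma 4.3: the It\^o formula (B.17) for $\exp(\kappa\varrho\|B\|_H^{2})$ localized by $\tau_n$, the bound $2\kappa\varrho\sum_j b_j^{2}|(B,e_j)_H|^{2}\le\kappa\|B\|_H^{2}$ from the hypothesis on $\varrho$, Poincar\'e plus discarding the dissipation to reach the drift bound $\kappa\varrho\,e^{\kappa\varrho\|B\|_H^{2}}(\mathsf{C}_0-\kappa\|B\|_H^{2})$, the same elementary pointwise inequality after splitting off $-\kappa e^{\kappa\varrho\|B\|_H^{2}}$, Doob's optional sampling for the stopped stochastic integral, the limit $n\to\infty$, and Gr\"onwall in integral form (Proposition C.1). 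All of this matches the paper and is correct.

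The one flaw is in your justification of the passage $n\to\infty$ via the unstopped identity. You claim the stochastic integral $M_t$ is a square-integrable martingale because its quadratic variation is controlled by $\sup_j b_j^{2}\,\mathbb{E}\int_0^{t}\|B\|_H^{2}\,ds$; but the martingale part of $F(\|B\|_H^{2})$ has integrand $2\kappa\varrho\,e^{\kappa\varrho\|B\|_H^{2}}(B,b_je_j)_H$ (it carries the factor $F'$), so its quadratic variation involves $e^{2\kappa\varrho\|B\|_H^{2}}\|B\|_H^{2}$, which is not controlled by $\mathbb{E}\int_0^{t}F(\|B\|_H^{2})\,ds$; in particular, when $\varrho=1/(2\sup_j b_j^{2})$ the parameter $2\varrho$ lies outside the admissible range, so no bootstrap is available. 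Fortunately this step is unnecessary: from your displayed stopped identity with the three nonnegative left-hand terms, simply drop $\mathbb{E}\int_0^{t\wedge\tau_n}R\,ds\ge 0$, let $n\to\infty$ using Fatou for $\mathbb{E}F(\|B\|_H^{2})(t\wedge\tau_n)$ and monotone convergence for $\kappa^{2}\varrho\,\mathbb{E}\int_0^{t\wedge\tau_n}F\,ds$ and for $\mathbb{E}(t\wedge\tau_n)\uparrow t$, and then apply Proposition C.1 to the resulting integral inequality, exactly as the paper does. With that replacement (only an inequality, never the unstopped identity, is needed) your argument is complete.
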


\begin{proof}
By the application of the It\^o formula (B.17) with $\delta=\kappa \varrho$,
\begin{equation*}
\begin{aligned}
\exp\left(\kappa \varrho  ||B||_{H}^{2} \right)(t\wedge \tau_n)
=&\exp\left(\kappa \varrho ||B_0||_{H}^{2} \right)+\kappa \varrho \int_{0}^{t\wedge \tau_n}\exp\left(\kappa \varrho ||B||_{H}^{2} \right)\left(2(B_s,f_s)_H +\sum_{j=1}^{\infty}\left(||g_j||_{H}^{2}+2\kappa \varrho |(B_s,g_j)_H|^{2} \right)   \right)ds\\
&+2\kappa \varrho \sum_{j=1}^{\infty}\int_{0}^{t\wedge \tau_n} \exp\left(\kappa \varrho ||B||_{H}^{2} \right) (B_s,g_j)_{H}d\beta_j(s),
\end{aligned}
\end{equation*}
for $f$ and $g_j$ in (4.2). By taking the mean and applying Doob's optional sampling theorem,
\begin{equation*}
\begin{aligned}
\mathbb{E}\exp\left(\kappa \varrho ||B||_{H}^{2} \right)(t\wedge \tau_n)
=\mathbb{E}\exp\left(\kappa \varrho ||B_0||_{H}^{2} \right)+\kappa \varrho \mathbb{E}\int_{0}^{t\wedge \tau_n}\exp\left(\kappa \varrho ||B||_{H}^{2} \right)\left(2(B_s,f_s)_H +\sum_{j=1}^{\infty}\left(||g_j||_{H}^{2}+2\kappa \varrho |(B_s,g_j)_H|^{2} \right)   \right)ds.
\end{aligned}
\end{equation*}
By $(B,f)_H=-\kappa ||\nabla B||_{H}^{2}-||u||_{\dot{H}^{\gamma }}^{2}\leq -\kappa  || B||_{H}^{2}$ and the condition for $\varrho$ in (4.5),
\begin{align*}
\sum_{j=1}^{\infty} \left(||g_j||_{H}^{2}+2\kappa \varrho |(B,g_j)_H|^{2} \right) 
=\mathsf{C}_0+2\kappa \varrho \sum_{j=1}^{\infty}b_j^{2}|(B,e_j)_H|^{2}
\leq \mathsf{C}_0+\kappa  ||B||_{H}^{2}.
\end{align*}
By combining these two estimates and using the pointwise estimate,
\begin{align*}
e^{\varrho r}\left(\mathsf{C}_0 +\kappa- r  \right)\leq (\mathsf{C}_0 +\kappa)e^{\varrho(\mathsf{C}_0 +\kappa)},\quad r\geq 0,
\end{align*}
we estimate 
\begin{align*}
\exp\left(\kappa \varrho ||B||_{H}^{2} \right)\left(2(B_s,f_s)_H +\sum_{j=1}^{\infty}\left(||g_j||_{H}^{2}+2\kappa \varrho |(B_s,g_j)_H|^{2} \right)   \right)
&\leq \exp\left(\kappa \varrho ||B||_{H}^{2} \right) \left(-\kappa  ||B||_{H}^{2}+\mathsf{C}_0 \right) \\
&=-\kappa \exp\left(\kappa \varrho ||B||_{H}^{2} \right)+\exp\left(\kappa \varrho ||B||_{H}^{2} \right) \left( \mathsf{C}_0+\kappa-\kappa ||B||_{H}^{2} \right) \\
&\leq -\kappa\exp\left(\kappa \varrho ||B||_{H}^{2} \right)+(\mathsf{C}_0+\kappa)e^{\varrho(\mathsf{C}_0 +\kappa)} \\
&=\kappa\left( -\exp\left(\kappa \varrho ||B||_{H}^{2} \right)+\frac{1}{\kappa}(\mathsf{C}_0+\kappa) e^{\varrho(\mathsf{C}_0 +\kappa)}\right).
\end{align*}
We thus obtain 
\begin{equation*}
\begin{aligned}
\mathbb{E}\exp\left(\kappa\varrho  ||B||_{H}^{2} \right)(t\wedge \tau_n)
+\kappa^{2}\varrho \mathbb{E}\int_{0}^{t\wedge \tau_n}\exp\left(\kappa\varrho ||B||_{H}^{2} \right)ds
\leq \mathbb{E}\exp\left(\kappa\varrho ||B_0||_{H}^{2} \right)+\kappa^{2}\varrho 
\left(\frac{1}{\kappa}(\mathsf{C}_0+\kappa) e^{\varrho(\mathsf{C}_0 +\kappa)}\right) \mathbb{E}(t\wedge \tau_n).
\end{aligned}
\end{equation*}
By letting $n\to\infty$,
\begin{equation*}
\begin{aligned}
\mathbb{E}\exp\left(\kappa\varrho  ||B||_{H}^{2} \right)(t)
+\kappa^{2}\varrho \mathbb{E}\int_{0}^{t}\exp\left(\kappa\varrho ||B||_{H}^{2} \right)ds
\leq \mathbb{E}\exp\left(\kappa\varrho ||B_0||_{H}^{2} \right)+\kappa^{2}\varrho 
\left(\frac{1}{\kappa}(\mathsf{C}_0+\kappa) e^{\varrho(\mathsf{C}_0 +\kappa)}\right)t.
\end{aligned}
\end{equation*}
By applying Gr\"{o}nwall's inequality (Proposition C.1), we obtain (4.5).
\end{proof}

\subsection{Krylov--Bogoliubov theorem}

We apply Krylov--Bogoliubov theorem for a stochastic system on a Fr\'{e}chet space $S$ and deduce the existence of invariant measures. Let $C_b(S)$ denote a space of bounded and continuous functions on $S$. We say that a sequence of probability measures $\{\nu_k\}_{k=1}^{\infty}\subset \mathcal{P}(S)$ weakly converges to $\nu$ if for arbitrary $\varphi\in C_b(S)$,
\begin{align*}
(\varphi,\nu_k)\to (\varphi,\nu)\quad \textrm{as}\ k\to\infty.
\end{align*}
We denote the weak convergence of the measures by $\nu_k\to \nu$ in $\mathcal{P}(S)$. We apply Portmanteau, Prokhorov, and Skorokhod theorems for convergence of measures; see, e.g.,  \cite[11.1.1, 11.7.2]{Dud}, \cite[Theorems 2.3, 2.4]{DZ92}, \cite[p.15 and Theorem 1.2.14]{Kuk12}.

\begin{prop}[Portmanteau theorem]
The weak convergence of the measures $\nu_k\to \nu$ in $\mathcal{P}(S)$ is equivalent to the condition $\nu(U)\leq \liminf_{n\to\infty}\nu_n(U)$ for all open sets $U\subset S$ or $\limsup_{n\to\infty}\nu_n(F)\leq \nu(F)$ for all closed sets $F\subset S$.
\end{prop}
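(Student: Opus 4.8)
The plan is to prove the three‑way equivalence among weak convergence, the open–set lower bound, and the closed–set upper bound, using only that the Fréchet space $S$ is metrizable by a translation‑invariant metric $d$; no other feature of the locally convex structure enters. I would dispose first of the equivalence between the two inequality conditions, which is pure complementation: if $F\subset S$ is closed then $U:=S\setminus F$ is open, and since all the $\nu_n$ and $\nu$ are probability measures, $\nu_n(U)=1-\nu_n(F)$ and $\nu(U)=1-\nu(F)$; hence $\nu(U)\le\liminf_{n}\nu_n(U)$ for every open $U$ holds if and only if $\limsup_{n}\nu_n(F)\le\nu(F)$ for every closed $F$.

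Next I would show that weak convergence implies the closed–set bound. Fix a closed set $F$ and, for $\varepsilon>0$, set $F^{\varepsilon}:=\{x\in S:\ d(x,F)<\varepsilon\}$ and $\varphi_{\varepsilon}(x):=\big(1-\varepsilon^{-1}d(x,F)\big)_{+}$. Then $\varphi_{\varepsilon}\in C_b(S)$ with $\mathbf{1}_F\le\varphi_{\varepsilon}\le\mathbf{1}_{F^{\varepsilon}}$, so
\[
\limsup_{n\to\infty}\nu_n(F)\ \le\ \lim_{n\to\infty}\int_S\varphi_{\varepsilon}\,d\nu_n\ =\ \int_S\varphi_{\varepsilon}\,d\nu\ \le\ \nu(F^{\varepsilon}).
\]
Since $F$ is closed, $\bigcap_{\varepsilon>0}F^{\varepsilon}=F$, and continuity from above of the finite measure $\nu$ gives $\nu(F^{\varepsilon})\downarrow\nu(F)$; letting $\varepsilon\downarrow 0$ closes this implication.

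For the converse I would assume the closed–set bound (hence also the open–set bound) and deduce $\int_S\varphi\,d\nu_n\to\int_S\varphi\,d\nu$ for arbitrary $\varphi\in C_b(S)$. After an affine normalization $\varphi=a+(b-a)\psi$ with $0\le\psi\le 1$ continuous (the case $a=b$ being trivial), I would invoke the layer–cake identity $\int_S\psi\,d\mu=\int_0^1\mu(\{\psi\ge t\})\,dt$, valid for any probability measure $\mu$ because $\{\psi>t\}$ and $\{\psi\ge t\}$ differ for only countably many $t$. Each $\{\psi\ge t\}$ is closed, so the reverse Fatou lemma (integrands bounded by $1$ on $[0,1]$) together with the closed–set bound yields
\[
\limsup_{n\to\infty}\int_S\psi\,d\nu_n\ \le\ \int_0^1\limsup_{n\to\infty}\nu_n(\{\psi\ge t\})\,dt\ \le\ \int_0^1\nu(\{\psi\ge t\})\,dt\ =\ \int_S\psi\,d\nu.
\]
Applying this to $1-\psi$ gives the matching bound $\liminf_n\int_S\psi\,d\nu_n\ge\int_S\psi\,d\nu$, whence $\int_S\psi\,d\nu_n\to\int_S\psi\,d\nu$ and therefore $\int_S\varphi\,d\nu_n\to\int_S\varphi\,d\nu$, i.e.\ $\nu_n\to\nu$ weakly.

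I do not expect a serious obstacle here, but the step to watch is the passage $\nu(F^{\varepsilon})\downarrow\nu(F)$: it uses that $F$ is closed, so that the metric $\varepsilon$–neighborhoods shrink exactly onto $F$, and that $\nu$ is finite — and this is the only point where metrizability of the Fréchet topology is genuinely invoked. The affine reduction, the layer–cake formula, and the reverse Fatou estimate are all routine, and the measurability of $t\mapsto\nu_n(\{\psi\ge t\})$ and of its $\limsup$ is automatic since these are monotone in $t$.
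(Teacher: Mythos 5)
Your proof is correct. Note, however, that the paper does not prove this statement at all: Proposition 4.4 is stated as a standard fact and is attributed to the references (Dudley, Da Prato--Zabczyk, Kuksin--Shirikyan), so there is no in-paper argument to compare against. What you give is precisely the classical textbook proof (as in Billingsley or Dudley): complementation for the equivalence of the open- and closed-set conditions; the Lipschitz approximants $\varphi_{\varepsilon}(x)=\bigl(1-\varepsilon^{-1}d(x,F)\bigr)_{+}$ squeezed between $\mathbf{1}_F$ and $\mathbf{1}_{F^{\varepsilon}}$, plus continuity from above, for the forward implication; and the affine normalization with the layer-cake identity, closedness of the level sets $\{\psi\ge t\}$, and reverse Fatou on $[0,1]$ for the converse. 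All steps are sound; the only feature of the Fréchet space used is metrizability by a metric compatible with the topology, which is exactly what is needed, and the two trivial omissions (the case $F=\varnothing$, and the case $\sup\varphi=\inf\varphi$, which you do flag) do not affect the argument.
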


\begin{prop}[Prokhorov theorem]
A family of measures $\{\nu_\lambda\}_{\lambda\in \Lambda}\subset \mathcal{P}(S)$ is relatively compact if and only if it is tight in the sense that for arbitrary $\varepsilon>0$, there exists a compact set $K_{\varepsilon}\Subset S$ such that 
\begin{align*}
\nu_\lambda(K_{\varepsilon})\geq 1-\varepsilon,\quad \lambda\in \Lambda.
\end{align*}
\end{prop}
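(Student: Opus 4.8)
The plan is to prove both implications under the standing assumption (satisfied in all of the paper's applications) that the Fr\'echet space $S$ is separable, hence a Polish space carrying a complete translation-invariant metric $\rho$. The direction ``tightness $\Rightarrow$ relative compactness'' is soft and holds on any metric space; the converse uses completeness of $\rho$ in an essential way, together with the Portmanteau characterization from the preceding proposition.

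For the forward direction, let $\{\nu_k\}$ be a sequence drawn from $\{\nu_\lambda\}$. Tightness supplies compact sets $K_m\Subset S$ with $\nu_\lambda(K_m)\ge 1-1/m$ for every $\lambda$ and $m$, so all of the measures are essentially concentrated on the $\sigma$-compact Borel set $S_0=\bigcup_m K_m\subset S$. I would embed the $\sigma$-compact metric space $S_0$ homeomorphically into a compact metrizable space $\widehat S$ (for instance the closure of a homeomorphic copy of $S_0$ inside $[0,1]^{\mathbb N}$), push the $\nu_k$ forward, and invoke Banach--Alaoglu together with metrizability of the unit ball of $C(\widehat S)^*$ (valid since $C(\widehat S)$ is separable) to extract a subsequence $\nu_{k_j}$ converging weak-$*$ to a finite Borel measure $\nu$ on $\widehat S$ with $\nu(\widehat S)\le 1$. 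Portmanteau, applied to each closed set $K_m$, gives $\nu(K_m)\ge\limsup_j\nu_{k_j}(K_m)\ge 1-1/m$, so letting $m\to\infty$ yields $\nu(S_0)=1$ and $\nu$ is a probability measure concentrated on $S_0\subset S$. A routine approximation argument (extending continuous functions off the compacts $K_m$ by Tietze and using the tightness just obtained for $\nu$) upgrades weak-$*$ convergence on $\widehat S$ to weak convergence $\nu_{k_j}\to\nu$ in $\mathcal P(S)$, so $\{\nu_\lambda\}$ is relatively compact.

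For the converse, suppose $\{\nu_\lambda\}$ is relatively compact and fix $\varepsilon>0$. The key claim is that for each $n\in\mathbb N$ there are finitely many open balls $B_1^n,\dots,B_{m_n}^n$ of radius $1/n$ with $\nu_\lambda\!\left(\bigcup_{i\le m_n}B_i^n\right)>1-\varepsilon 2^{-n}$ for all $\lambda$. If this failed for some $n$, then fixing a countable cover $\{B_i\}_{i\ge1}$ of $S$ by $1/n$-balls (separability) one could pick $\nu_k\in\{\nu_\lambda\}$ with $\nu_k\!\left(\bigcup_{i\le k}B_i\right)\le 1-\varepsilon 2^{-n}$; a weakly convergent subsequence $\nu_{k_j}\to\nu$ and Portmanteau on the open sets $\bigcup_{i\le\ell}B_i$ would give $\nu\!\left(\bigcup_{i\le\ell}B_i\right)\le 1-\varepsilon 2^{-n}$ for every $\ell$, hence $\nu(S)\le 1-\varepsilon 2^{-n}<1$, contradicting $\nu\in\mathcal P(S)$. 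Granting the claim, set $U_n=\bigcup_{i\le m_n}B_i^n$ and $K=\bigcap_{n\ge1}\overline{U_n}$. Then $K$ is closed and totally bounded, hence compact because $\rho$ is complete, and $\nu_\lambda(S\setminus K)\le\sum_n\nu_\lambda(S\setminus\overline{U_n})\le\sum_n\varepsilon 2^{-n}=\varepsilon$, so $\nu_\lambda(K)\ge 1-\varepsilon$ for all $\lambda$, which is tightness.

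The main obstacle is this converse implication: one must distil a single compact set out of countably many uniform ``$1/n$-nets'' for the whole family, and this is precisely where completeness of the Fr\'echet metric enters (a closed totally bounded subset of a complete metric space is compact) and where the Portmanteau characterization is used, via the subsequence argument, to produce the needed uniform lower bounds on ball-unions. By contrast the forward implication is comparatively mechanical, resting only on Banach--Alaoglu on a metrizable compactification of the $\sigma$-compact carrier, with Portmanteau invoked once more to rule out escape of mass in the limit.
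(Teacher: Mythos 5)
The paper never proves this proposition: Prokhorov's theorem is quoted as a classical result with citations to Dudley, Da Prato--Zabczyk and Kuksin--Shirikyan, so there is no in-paper argument to compare against. Your proof is the standard Billingsley/Dudley argument and is essentially correct in both directions: the compactification-plus-Banach--Alaoglu extraction with a Portmanteau bound on the closed sets $K_m$ and a Tietze cut-off to upgrade weak-$*$ convergence on $\widehat S$ to weak convergence in $\mathcal{P}(S)$ is the usual route for ``tight $\Rightarrow$ relatively compact'', and the countable-ball-cover contradiction followed by $K=\bigcap_n \overline{U_n}$ (closed, totally bounded, hence compact by completeness of the Fr\'echet metric) is the usual route for the converse. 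Three small points are worth tightening. First, you prove the statement under a standing separability hypothesis, whereas the proposition is stated for a general Fr\'echet space; this is harmless in context, since every space the paper feeds into Prokhorov ($H$, $H^{1-\varepsilon}$, $\mathcal{X}_T^{-\varepsilon}$, $L^{2}_{\textrm{loc}}([0,\infty);V)$) is separable, and in fact the paper only ever uses the implication ``tight $\Rightarrow$ relatively compact'', which, as your own argument shows, needs separability only of the $\sigma$-compact carrier $S_0=\bigcup_m K_m$ (automatic), not of $S$. Second, in the forward direction the weak-$*$ limit on the compact space $\widehat S$ is automatically a probability measure, because the constant function $1$ lies in $C(\widehat S)$ and so the total masses converge; noting this first is cleaner than working with a possibly sub-probability limit, especially since the Portmanteau statement you invoke (Proposition 4.4) is formulated for probability measures. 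Third, you implicitly identify relative compactness in $\mathcal{P}(S)$ with relative sequential compactness; for separable metric $S$ the weak topology on $\mathcal{P}(S)$ is metrizable, so this is legitimate, but it deserves a sentence since both directions of your argument extract subsequences. With these touches the proof is complete and matches the cited classical treatment.
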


\begin{prop}[Skorokhod theorem]
If a family of measures $\{\nu_k\}\subset \mathcal{P}(S)$ weakly converges to $\nu$ as $k\to\infty$, there exists a probability space $(\Omega,\mathcal{F},\mathbb{P})$ and random variables $X_k, X: (\Omega,\mathcal{F},\mathbb{P})\to (S,\mathcal{B}(S))$ such that $\mathcal{D}(X_k)=\nu_k$, $\mathcal{D}(X)=\nu$, and $X_k\to X$ in $S$ almost surely.   
\end{prop}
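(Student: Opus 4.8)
The statement is the Skorokhod representation theorem, and the plan is to prove it by the classical partition/quantile coupling carried out on the unit interval. A Fr\'echet space is metrizable by a complete, translation-invariant metric $\rho$, and each of the Borel probability measures $\nu_k,\nu$ is carried by a separable closed subset of $S$; hence we may assume $S$ is a separable complete metric space, and we fix the probability space $(\Omega,\mathcal{F},\mathbb{P})=([0,1],\mathcal{B}([0,1]),\mathrm{Leb})$ once and for all.

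First I would build, for each $m\in\mathbb{N}$, a countable Borel partition $\{A_{m,i}\}_{i\ge 1}$ of $S$ with $\mathrm{diam}\,A_{m,i}<1/m$ and $\nu(\partial A_{m,i})=0$: cover $S$ by countably many open balls of radius $<1/(2m)$, note that for a fixed center all but countably many radii give a ball whose boundary sphere is $\nu$-null, then disjointify; refining, one may take $\{A_{m+1,i}\}$ to refine $\{A_{m,i}\}$. Since $\nu(\partial A_{m,i})=0$, the Portmanteau theorem (Proposition 4.4) gives $\nu_k(A_{m,i})\to\nu(A_{m,i})$ as $k\to\infty$ for every fixed $m,i$.

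Next I mirror these partitions on $[0,1]$. Split $[0,1]$ into consecutive half-open intervals $J_{1,i}$ of lengths $\nu(A_{1,i})$, subdivide each $J_{1,i}$ into intervals of lengths $\nu(A_{2,j})$ running over the sub-cells $A_{2,j}\subset A_{1,i}$, and iterate, obtaining nested partitions $\{J_{m,i}\}$ with $\mathrm{Leb}\,J_{m,i}=\nu(A_{m,i})$. For $\omega$ lying in a chain $J_{1,i_1}\supset J_{2,i_2}\supset\cdots$, the closures $\overline{A_{1,i_1}}\supset\overline{A_{2,i_2}}\supset\cdots$ are nested closed sets of diameter tending to $0$, so by completeness they meet in a single point $X(\omega)$, and one checks $\mathcal{D}(X)=\nu$. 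For each $k$ I repeat the construction with $\nu_k$ in place of $\nu$, but with the crucial modification that, for $k$ in a suitably chosen window $[N_m,N_{m+1})$, the level-$(\le m)$ intervals used for $X_k$ are laid out so as to coincide with those used for $X$ outside a set of Lebesgue measure at most $\sum_i\lvert\nu_k(A_{m,i})-\nu(A_{m,i})\rvert$ (with the $N_m$ chosen large enough that these quantities are summable in $m$), while beyond level $m$ one subdivides freely according to the $\nu_k$-masses. By construction $\mathcal{D}(X_k)=\nu_k$, and wherever the chains of $X_k$ and $X$ agree through level $m$ one has $\rho(X_k(\omega),X(\omega))<1/m$.

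A Borel--Cantelli argument then closes the proof: the ``bad'' sets, where the chains of $X_k$ and $X$ first disagree at some level $\le m$ for a given $k\in[N_m,N_{m+1})$, have summable total measure, so for $\mathbb{P}$-a.e.\ $\omega$ the chains agree through level $m$ for all sufficiently large $k$, whence $\rho(X_k(\omega),X(\omega))\to 0$. The main obstacle is precisely the compatibility requirement in the previous step — arranging all the $X_k$-layouts on $[0,1]$ to be simultaneously compatible with the single fixed $X$-layout across every level while still producing the exact marginals $\nu_k$ — and it is the windowed/diagonal bookkeeping, made possible by the Portmanteau convergence $\nu_k(A_{m,i})\to\nu(A_{m,i})$, that handles it. The initial reduction from a Fr\'echet space to a Polish one, and the harmless passage to closures when extracting the limit points, are routine.
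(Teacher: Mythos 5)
The paper does not prove this proposition; it is quoted from the literature (Dudley, Theorem 11.7.2; Da Prato--Zabczyk; Kuksin--Shirikyan), so the only question is whether your argument is sound. The construction of the partitions $\{A_{m,i}\}$ of $\nu$-continuity sets, the nested interval layout for $X$, and the identification $\mathcal{D}(X)=\nu$ are all fine (and your reduction to a separable complete subspace is acceptable for the spaces used in this paper, though "each $\nu_k$ is carried by a separable closed set'' deserves a word in a general Fr\'echet space). The genuine gap is in the last step. For $k\in[N_m,N_{m+1})$ your bad set $E_k$ is only known to have measure at most $\epsilon_{k,m}=\sum_i|\nu_k(A_{m,i})-\nu(A_{m,i})|\le\delta_m$; choosing the $N_m$ so that $\sum_m\delta_m<\infty$ does \emph{not} make $\sum_k\mathrm{Leb}(E_k)$ finite, because each window contains an uncontrolled number $N_{m+1}-N_m$ of indices and weak convergence gives no rate in $k$. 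Worse, since the partitions are nested one has $\epsilon_{k,m}\ge\epsilon_{k,1}$, so $\sum_k\mathrm{Leb}(E_k)\gtrsim\sum_k\epsilon_{k,1}$, which diverges whenever even the coarsest discrepancies fail to be summable --- the generic situation. And because you distribute the leftover mass of the maximal coupling "arbitrarily'', the sets $E_k$ within a window have no structure: they can sweep across $[0,1]$ so that almost every $\omega$ lies in infinitely many $E_k$, and on $E_k$ you have no control on $\rho(X_k(\omega),X(\omega))$ at all. As written, your construction only yields convergence in probability, not the almost sure convergence claimed.

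The fix is exactly the alignment you identify as "the main obstacle'' but do not actually supply. Use, at each level $m$, finitely many cells $A_{m,1},\dots,A_{m,r_m}$ plus a remainder cell of small $\nu$-mass, and lay out the $\nu_k$-intervals in the same order and left-aligned inside the fixed $\nu$-layout, so that the level-$m$ mismatch of \emph{every} $X_k$ with $k\ge N_m$ is confined to small neighbourhoods of the finitely many level-$m$ endpoints, of total width controlled uniformly in $k\ge N_m$. Then no Borel--Cantelli argument is needed: fix $\omega$ outside the countable set of endpoints and fix $m$; since $\omega$ is at positive distance from the level-$m$ endpoints and the boundary shifts tend to $0$ as $k\to\infty$ (Portmanteau plus finiteness of the level), the level-$m$ cells of $X_k(\omega)$ and $X(\omega)$ agree for all large $k$, whence $\rho(X_k(\omega),X(\omega))\le 1/m$ eventually; letting $m\to\infty$ gives almost sure convergence. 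Alternatively, one can keep a Borel--Cantelli formulation by showing that, for each $m$, the union over all $k\ge N_m$ of the level-$m$ mismatch sets is contained in a single set $G_m$ with $\sum_m\mathrm{Leb}(G_m)<\infty$; either way the key point is a mismatch region fixed per level, not per index $k$, which your "arbitrary'' redistribution does not provide.
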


We apply the following existence theorem of invariant measures for a Feller semigroup $P_t\in \mathcal{L}(C_b(S))$ associated with a continuous stochastic process $B(t;\cdot)$ on $S$ \cite[Theorem 3.1.1]{DaPrato96}. Let $0\leq P(t,\xi, \Gamma)=\mathbb{E}1_{\Gamma}(B(t;\xi))\leq 1$ denote a transition function for $t\geq 0$, $ \xi\in S$, and $\Gamma\in \mathcal{B}(S)$. For the dual semigroup $P_t^{*}\in \mathcal{L}(\mathcal{P}(S))$ and $\nu_t=P^{*}_t \nu_0=\int_{S}P(t,\xi,\cdot)\nu_0(d\xi)$ for $\nu_0\in \mathcal{P}(S)$, the Ces\`{a}ro mean 
\begin{align*}
\overline{\nu}_t(\Gamma)=\fint_{0}^{t}\nu_s(\Gamma) ds=\int_{S}\left(\fint_{0}^{t}P(s,\xi,\Gamma)ds  \right)\nu_0(d\xi),
\end{align*}\\
is a probability measure $\overline{\nu}_t\in \mathcal{P}(S)$ for $t>0$. 

\begin{prop}[Krylov--Bogoliubov theorem]
Assume that $\overline{\nu}_{t_n}$ weakly converges to $\nu$ in $\mathcal{P}(S)$ for some $\nu_0\in \mathcal{P}(S)$ and a sequence $\{t_n\}$ such that $t_n\to\infty$. Then, $\nu$ is an invariant measure for $P_t$.
\end{prop}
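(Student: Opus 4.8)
The plan is to verify directly that $P_t^{*}\nu=\nu$ for every $t\geq 0$, by testing both sides against an arbitrary $\varphi\in C_b(S)$ and exploiting the fact that the Ces\`{a}ro average $\overline{\nu}_{t_n}$ is \emph{asymptotically invariant}, with an error of order $t/t_n$. The case $t=0$ is trivial since $P_0^{*}=\mathrm{Id}$, so fix $t>0$ and $\varphi\in C_b(S)$.

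First I would record the elementary identity: for any bounded Borel function $\psi$ on $S$ and any $r>0$,
\[
(\psi,\overline{\nu}_r)=\fint_0^r(\psi,\nu_s)\,ds ,
\]
which follows from $\overline{\nu}_r(\Gamma)=\fint_0^r\nu_s(\Gamma)\,ds$ by the monotone class theorem (equivalently, Fubini's theorem for the kernel $s\mapsto\nu_s$). Taking $\psi=P_t\varphi$ and $\psi=\varphi$, and using $\nu_s=P_s^{*}\nu_0$, the duality $(\psi,P_s^{*}\mu)=(P_s\psi,\mu)$ and the semigroup law $P_sP_t=P_{s+t}$, one gets, after the substitution $r=s+t$,
\[
(P_t\varphi,\overline{\nu}_{t_n})=\fint_0^{t_n}(P_{s+t}\varphi,\nu_0)\,ds=\frac{1}{t_n}\int_t^{\,t_n+t}(P_r\varphi,\nu_0)\,dr,\qquad
(\varphi,\overline{\nu}_{t_n})=\frac{1}{t_n}\int_0^{t_n}(P_r\varphi,\nu_0)\,dr .
\]
For $n$ large enough that $t_n>t$, the two integrals differ only over the intervals $[0,t]$ and $[t_n,t_n+t]$; since $P_r$ is a Markov operator and $\nu_0$ a probability measure, $|(P_r\varphi,\nu_0)|\leq\|\varphi\|_{C_b(S)}$, and therefore
\[
\bigl|(P_t\varphi,\overline{\nu}_{t_n})-(\varphi,\overline{\nu}_{t_n})\bigr|\leq\frac{2t}{t_n}\,\|\varphi\|_{C_b(S)} ,
\]
which tends to $0$ as $n\to\infty$.

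It remains to pass to the limit. By hypothesis $\overline{\nu}_{t_n}\to\nu$ weakly in $\mathcal{P}(S)$, so $(\varphi,\overline{\nu}_{t_n})\to(\varphi,\nu)$; moreover the Feller property $P_t\in\mathcal{L}(C_b(S))$ guarantees $P_t\varphi\in C_b(S)$, whence $(P_t\varphi,\overline{\nu}_{t_n})\to(P_t\varphi,\nu)$ as well. Combined with the displayed estimate this yields $(P_t\varphi,\nu)=(\varphi,\nu)$, i.e. $(\varphi,P_t^{*}\nu)=(\varphi,\nu)$, for every $\varphi\in C_b(S)$; since $S$ is metrizable, bounded continuous functions separate Borel probability measures, and hence $P_t^{*}\nu=\nu$. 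As $t\geq 0$ was arbitrary, $\nu$ is an invariant measure for $P_t$. The argument is purely soft; the only points deserving a word of care are the exchange of the time average with integration against $\psi$ (Fubini/monotone class) and the invocation of the Feller property — it is precisely this that makes $P_t\varphi$ a legitimate test function against the weak limit $\nu$, and without it the final limit could not be taken. The uniform estimate $\tfrac{2t}{t_n}\|\varphi\|_{C_b(S)}$ is the engine of the Krylov--Bogoliubov mechanism.
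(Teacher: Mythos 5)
Your argument is correct: it is the standard Krylov--Bogoliubov proof, using the duality $(P_t\varphi,\overline{\nu}_{t_n})=\frac{1}{t_n}\int_t^{t_n+t}(P_r\varphi,\nu_0)\,dr$, the $O(t/t_n)$ asymptotic-invariance bound, and the Feller property to pass $P_t\varphi\in C_b(S)$ through the weak limit. The paper does not prove this proposition itself but cites it (Da Prato--Zabczyk, Theorem 3.1.1), and your proof coincides with that standard argument, so there is nothing to add.
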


\subsection{The existence of invariant measures}

In Lemma 4.8 below, we express integrals of functions $\varphi: H\to \mathbb{R}$ by an invariant mesure $\mu$ by using a statistically stationary solution $B^{S}$ and the associated velocity $u^{S}=K_{\gamma}(B^{S},B^{S})$ with law $\mathcal{D}(B^{S})=\mu$. Namely, 
\begin{align*}
\int_{H}\varphi(B) \mu(dB)
=\mathbb{E} \varphi(B^{S}).
\end{align*}
We suppress the superscript $S$ on the symbols of statistically stationary solutions, $B=B^{S}$ and $u=u^{S}$.

\begin{lem}
There exists an invariant measure $\mu$ to the stochastic system (1.11). Moreover, any invariant measure satisfies the following: 
\begin{align}
\mathbb{E}\left(\kappa ||\nabla B||_{H}^{2}+||u||_{\dot{H}^{\gamma }}^{2} \right)
&=\frac{\mathsf{C}_0}{2}, \\
\mathbb{E}\exp\left(  \kappa \varrho ||B||_{H}^{2} \right) 
&\leq \frac{1}{\kappa}(\mathsf{C}_0+\kappa)e^{\varrho(\mathsf{C}_0+\kappa) },\quad 0<\varrho \leq \frac{1}{2\sup_{j\geq 1}b_j^{2} },\\
\kappa\mathbb{E}\left(\nabla \times B, B   \right)_H
&=\frac{\mathcal{C}_{-\frac{1}{2}}}{2},\quad d=3, \\
\kappa\mathbb{E}\left\|B   \right\|_H^{2}
&=\frac{\mathsf{C}_{-1}}{2},\quad d=2.
\end{align}
Invariant measures $\mu\in \mathcal{P}(H)$ satisfy $\mu(V)=1$. 
\end{lem}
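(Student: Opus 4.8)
\emph{Overview and existence.} I would prove the three assertions in the order: existence of an invariant measure; the exponential moment bound (4.8) for an arbitrary invariant measure; and then the balances (4.7), (4.9), (4.10) together with $\mu(V)=1$, the point being that (4.8) supplies the finite moments needed to run the It\^o identities of Lemma 4.1 and Lemma 4.2 along a stationary solution. For existence I would start the dynamics from the deterministic datum $B_0=0$ and form the Ces\'aro means $\overline{\nu}_t(\Gamma)=\frac1t\int_0^t\mathbb P(B(s;0)\in\Gamma)\,ds\in\mathcal P(H)$. The energy balance (4.1) gives $\mathbb E\|B(t;0)\|_H^2+2\mathbb E\int_0^t(\kappa\|\nabla B\|_H^2+\|u\|_{\dot{H}^{\gamma}}^2)\,ds=\mathsf C_0 t$, hence $\int_0^t\mathbb E\|\nabla B(s;0)\|_H^2\,ds\le \mathsf C_0 t/(2\kappa)$; since $\|B\|_V^2=\|B\|_{H^1}^2\le 2\|\nabla B\|_H^2$ for average--zero $B$, this forces $\int_H\|B\|_V^2\,\overline{\nu}_t(dB)\le \mathsf C_0/\kappa$ uniformly in $t>0$. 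As $V=V^1\Subset H$ by (2.5), the closed balls $\{\|B\|_V\le R\}$ are compact in $H$, so Chebyshev's inequality makes $\{\overline{\nu}_t\}_{t>0}$ tight in $\mathcal P(H)$; Prokhorov's theorem (Proposition 4.5) extracts a weakly convergent subsequence $\overline{\nu}_{t_n}\to\mu$ with $t_n\to\infty$, and Krylov--Bogoliubov (Proposition 4.7, via the Feller property of the Markov semigroup that follows from the integral form (3.14)) gives that $\mu$ is invariant.

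\emph{The exponential moment for an arbitrary invariant measure.} Let $\mu$ be any invariant measure, let $0<\varrho\le 1/(2\sup_j b_j^2)$, and set $W(B)=\exp(\kappa\varrho\|B\|_H^2)$, $c=\kappa^2\varrho$, $D=\frac1\kappa(\mathsf C_0+\kappa)e^{\varrho(\mathsf C_0+\kappa)}$. Applying Lemma 4.3 (the estimate (4.5)) with a \emph{deterministic} initial point $\xi\in H$ (for which $\mathbb E\exp(\kappa\varrho\|\xi\|_H^2)=W(\xi)<\infty$) yields the Lyapunov bound $P_tW(\xi)\le e^{-ct}W(\xi)+D$ for all $\xi\in H$ and $t\ge0$. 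For $N>0$ put $a_N=\int_H(W\wedge N)\,\mu(dB)\in[1,\infty)$. Using invariance of $\mu$ for the bounded function $W\wedge N$, then $\mathbb E[Y\wedge N]\le\min(\mathbb E Y,N)$, the inequality $\min(a+D,N)\le\min(a,N)+D$, and the identity $\min(e^{-ct}w,N)=e^{-ct}\min(w,Ne^{ct})$, I obtain $a_N=\int_H P_t(W\wedge N)\,\mu(dB)\le e^{-ct}a_{Ne^{ct}}+D$ for every $t>0$. Since $W$ is finite--valued, dominated convergence gives $a_M/M\to0$ as $M\to\infty$, so $e^{-ct}a_{Ne^{ct}}=N\,a_{Ne^{ct}}/(Ne^{ct})\to0$ as $t\to\infty$; hence $a_N\le D$, and letting $N\to\infty$ gives $\int_H W\,\mu(dB)\le D$, which is precisely (4.8). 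In particular $\int_H\|B\|_H^{2k}\,\mu(dB)<\infty$ for all $k$, so every invariant measure has finite second moment.

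\emph{The balances and $\mu(V)=1$.} For an invariant $\mu$ with $\int_H\|B\|_H^2\,\mu(dB)<\infty$, I pick $B^S(0)$ with law $\mu$ (enlarging the probability space if necessary) and let $B^S$ be the solution of Theorem 3.8 started there; by invariance $\mathcal D(B^S(t))=\mu$ for all $t$ and $B^S(0)\in L^2(\Omega;H)$, so Lemma 4.1 and Lemma 4.2 apply to $B^S$. In each of (4.1), (4.3), (4.4) the boundary terms at times $0$ and $t$ coincide by stationarity and are finite (note $\|\textrm{curl}^{-1}B\|\le\|B\|_H$), hence cancel; writing the remaining time integral as $t$ times the $\mu$--average of its integrand (Fubini/Tonelli, integrability coming from (4.7) and the finite second moment) and using $\|\nabla\textrm{curl}^{-1}B\|_{L^2}=\|B\|_H$ when $d=2$, I read off $\int_H(\kappa\|\nabla B\|_H^2+\|u\|_{\dot{H}^{\gamma}}^2)\,\mu(dB)=\mathsf C_0/2$, then $\kappa\int_H(\nabla\times B,B)_H\,\mu(dB)=\mathcal{C}_{-1/2}/2$ for $d=3$ and $\kappa\int_H\|B\|_H^2\,\mu(dB)=\mathsf C_{-1}/2$ for $d=2$; these are (4.7), (4.9), (4.10). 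Finally (4.7) forces $\int_H\|\nabla B\|_H^2\,\mu(dB)<\infty$, so $B\in V^1=V$ for $\mu$--a.e.\ $B$, i.e.\ $\mu(V)=1$.

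\emph{The main obstacle.} The delicate point is the second step: the moment bound must hold for \emph{every} invariant measure with no a priori finiteness, and the natural attempt of running the energy or exponential--moment identity along a stationary solution is circular, since the boundary terms are then controlled only under the very bound one is after, while a stopping--time truncation does not close (stationarity yields nothing at a stopping time, and $\mathbb E\,W(B^S(t\wedge\tau_n))$ need not converge to $\mathbb E\,W(B^S(t))$ without an extra moment). The Lyapunov--plus--truncated--invariance argument above circumvents this by invoking Lemma 4.3 only at deterministic initial data, where finiteness is automatic, together with the soft fact $a_M=o(M)$; everything after that is a routine application of the It\^o identities of Lemma 4.1 and Lemma 4.2 to stationary solutions.
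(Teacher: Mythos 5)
Your proposal is correct, and at the level of the key ingredients it follows the paper's route: existence via the energy balance started from $B_0=0$, Chebyshev plus the compactness $V\Subset H$ for tightness of the Ces\'aro means, Prokhorov and Krylov--Bogoliubov for invariance; the balance identities via a statistically stationary solution and Lemmas 4.1--4.2; and the exponential bound via the Lyapunov estimate of Lemma 4.3 at deterministic initial data combined with invariance applied to truncations. The genuine differences are in the execution of the exponential-moment step and in the ordering. The paper truncates the test function in the state variable (setting $\varphi_R=\varphi$ on $\{\|B\|_H<R\}$ and constant outside), splits the invariance identity over $\{\|\xi\|_H<r\}$ and its complement, and sends $t\to\infty$, then $r\to\infty$, then $R\to\infty$ with Fatou; you instead truncate in the value, $W\wedge N$, and close the estimate with the identity $\min(e^{-ct}w,N)=e^{-ct}\min(w,Ne^{ct})$ together with the soft fact $a_M=o(M)$, which yields $a_N\le D$ in one pass. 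Both arguments are valid and of comparable length; yours avoids the double limit in $r$ and $R$. Your ordering also has a small advantage in rigor: by establishing the exponential bound for an arbitrary invariant measure first, you guarantee $\int_H\|B\|_H^2\,\mu(dB)<\infty$ before invoking Lemma 4.1 (which is stated for $B_0\in L^2(\Omega;H)$) along a stationary solution, so the cancellation of the boundary terms in (4.1), (4.3), (4.4) is justified; the paper derives the energy balance for invariant measures first and leaves this integrability point implicit. Your treatment of the sign-indefinite helicity integrand (Fubini justified by $\mathbb{E}\|B\|_H\|B\|_{H^1}<\infty$ from the already-proved energy balance) and the conclusion $\mu(V)=1$ from finiteness of $\int\|\nabla B\|_H^2\,d\mu$ are likewise sound.
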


\begin{proof}
For $B_0=0$ and a solution $B(t; 0)$ to (1.11), $\delta_0=\mathcal{D}(0)$ and $\mu_t=P^{*}_t\delta_0=\mathcal{D}(B(t; 0))$. By Chebyshev's inequality, 
\begin{align*}
\mu_t(H\backslash B_V(R))=\mathcal{D}\left( B(t;0) \right)\left(H\backslash B_V(R)\right)=\mathbb{P}\left(||B(t;0)||_{V}\geq R\right)\leq \frac{1}{R^{2}}\mathbb{E}||B||_{V}^{2},
\end{align*}
where $B_V(R)$ denotes an open ball in $V$ with radius $R>0$. By the energy balance (4.1),
\begin{align*}
2\kappa\mathbb{E}\fint_{0}^{t}||B||_{V}^{2}ds\leq \mathsf{C}_0.
\end{align*}
Thus, the Ces\`{a}ro mean $\overline{\mu}_t=\fint_{0}^{t}\mu_s ds$ satisfies 
\begin{align*}
\sup_{t>0}\overline{\mu}_t\left(H\backslash B_V(R) \right)\leq \frac{\mathsf{C}_0}{2\kappa R^{2}}.
\end{align*}
Since $B_V(R)\Subset H$, $\{\overline{\mu}_t\}_{t>0}$ is tight. We take a sequence $\{t_n\}$ such that $t_n\to\infty$ and apply Prokhorov theorem (Proposition 4.5) to obtain a subsequence $\{\overline{\mu}_{t_n}\}$ weakly converging to a measure $\mu$ in $\mathcal{P}(H)$. Then, the limit $\mu$ is an invariant measure to (1.11) by Krylov--Bogoliubov theorem (Proposition 4.7).

For an invariant measure $\mu$, we take a random initial data $B_0$ with law $\mathcal{D}(B_0)=\mu$. Then, the global-in-time solution $B^{S}(t; B_0)$ of (1.11) is a statistically stationary solution with law $\mathcal{D}(B^{S}(t; B_0))=P^{*}_{t}\mu=\mu$. Since the law $\mathcal{D}(B^{S}(t; B_0))=\mu$ is time-independent, the constants 
\begin{align*}
\mathbb{E}||B^{S}||_{H}^{2}&=\int_{H}||B||_{H}^{2}\mu(dB)=\mathbb{E}||B_0||_{H}^{2},\\
\mathbb{E}\left(\kappa ||B^{S}||_{V}^{2}+||u^{S}||_{\dot{H}^{\gamma }} \right)&=\int_{H}\left(\kappa ||B||_{V}^{2} +||u||_{\dot{H}^{\gamma }}^{2} \right) \mu(dB),
\end{align*}
are time-independent. Thus, (4.6) follows from (4.1). Similarly, (4.8) and (4.9) follow from (4.3) and (4.4). By (4.6), 
\begin{align*}
\mu(B_V(R)^{c})=\int_{\{||B||_V\geq R\}}\mu(dB)\leq \frac{1}{R^{2}}\int_{H}||B||_{V}^{2}\mu(d B)\lesssim \frac{\mathsf{C}_0}{\kappa R^{2}}.
\end{align*}
Thus,
\begin{align*}
1=1-\lim_{R\to \infty}\mu(B_V(R)^{c})=\lim_{R\to \infty}\mu(B_V(R))
=\mu\left(\bigcup_{R\geq 1}B_V(R)\right)=\mu(V).
\end{align*}

We set $\varphi(B)=\exp\left( \kappa \varrho ||B||_{H}^{2} \right) $ and 
\begin{align*}
\varphi_R(B)=
\begin{cases}
\ \varphi(B)&\quad ||B||_H< R,\\
\ \exp\left( \kappa \varrho R^{2} \right) &\quad ||B||_H\geq R.
\end{cases}
\end{align*}
Then, $\varphi_R(B)\leq \varphi(B)$ and $\varphi_R(B)\leq \exp\left( \kappa \varrho R^{2} \right)$ for $B\in H$. We take an arbitrary $r>0$. For an invariant measure $\mu\in \mathcal{P}(H)$ and the Markov semigroup $P^{*}_t$,
\begin{align*}
\int_{H}\varphi_R(B)\mu (dB)=(\varphi_R, \mu)=(\varphi_R, P^{*}_t\mu)
=(P_t \varphi_R, \mu)
=\int_{||\xi||_H< r}(P_t \varphi_R)(\xi)\mu(d\xi)+\int_{||\xi||_H\geq r}(P_t \varphi_R)(\xi)\mu(d\xi).
\end{align*}
For $||\xi ||_{H}< r$, we apply the exponential moment estimate (4.5) to estimate 
\begin{align*}
(P_t \varphi_R)(\xi)=\mathbb{E}\varphi_R(B(t;\xi))
\leq \mathbb{E}\varphi (B(t;\xi))
\leq \varphi (\xi) e^{-\kappa^{2} \varrho t}+\frac{1}{\kappa}(\mathsf{C}_0+\kappa)e^{\varrho(\mathsf{C}_0+\kappa) }
\leq \exp\left(\kappa\varrho r^{2} \right)e^{-\kappa^{2} \varrho t} +\frac{1}{\kappa}(\mathsf{C}_0+\kappa)e^{\varrho(\mathsf{C}_0+\kappa) }.
\end{align*}
For $||\xi ||_{H}\geq r$, we estimate $(P_t\varphi_R)(\xi)\leq \exp(\kappa\varrho R^{2} )$. Thus we have
\begin{align*}
\int_{H}\varphi_R(B)\mu (dB)
\leq \exp\left(\kappa\varrho r^{2} \right)e^{-\kappa^{2} \varrho t} +\frac{1}{\kappa}(\mathsf{C}_0+\kappa)e^{\varrho(\mathsf{C}_0+\kappa) }+\exp\left( \kappa \varrho R^{2} \right) \mu\left(H\backslash B_H(r)\right).
\end{align*}
By letting $t\to\infty$ and $r\to\infty$, 
\begin{align*}
\int_{H}\varphi_R(B)\mu (dB)
\leq \frac{1}{\kappa}(\mathsf{C}_0+\kappa)e^{\varrho(\mathsf{C}_0+\kappa) }.
\end{align*}
Letting $R\to\infty$ and Fatou's lemma imply (4.7).  
\end{proof}

\section{The non-resistive limits}

We complete the proof of Theorem 1.1. By Lemma 4.8, invariant measures $\mu_{\kappa}$ of (1.3) exist. We denote by $B_{\kappa}$ the statistically stationary solution with law $\mathcal{D}(B_{\kappa})=\mu_{\kappa}$ and by $u_{\kappa}=K_{\gamma}(B_{\kappa},B_{\kappa})$ the associated velocity field.

\subsection{Balance relations}

\begin{lem}
Any invariant measure $\mu_{\kappa}$ to the system (1.3) for $\gamma>d/2$ satisfies the following:  
\begin{align}
\mathbb{E}\left(\kappa ||\nabla B_{\kappa}||_{H}^{2}+\left\|u_{\kappa}\right\|_{\dot{H}^{\gamma  }}^{2} \right)
&=\kappa\frac{\mathsf{C}_0}{2},\\
\mathbb{E}\exp\left(\rho ||B_{\kappa}||_{H}^{2}\right)
&\leq (\mathsf{C}_0+1)e^{\rho(\mathsf{C}_0+1)},
\quad 0<\rho\leq \frac{1}{2\sup_{j\geq 1}b_j^{2}},\\
\mathbb{E}\left(\nabla \times B_{\kappa}, B_{\kappa}   \right)_H
&=\frac{\mathcal{C}_{-\frac{1}{2}}}{2},\quad d=3, \\
\mathbb{E}\left\|B_{\kappa}   \right\|_H^{2}
&=\frac{\mathsf{C}_{-1}}{2},\quad d=2.
\end{align}
The measure $\mu_{\kappa}\in \mathcal{P}(H)$ satisfies $\mu_k(V)=1$. 
\end{lem}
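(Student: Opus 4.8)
The plan is to recognize that the system (1.3) is exactly the system (1.11) studied in \S2--\S4 with the Wiener process $\zeta$ of (1.5) replaced by $\sqrt{\kappa}\,\zeta=\sum_{j=1}^\infty(\sqrt{\kappa}\,b_j)\,e_j\,\beta_j(t)$; that is, with the coefficient sequence $\{b_j\}_{j=1}^\infty$ replaced by $\{\sqrt{\kappa}\,b_j\}_{j=1}^\infty$. Since $\kappa>0$ is fixed and $\mathsf{C}_0<\infty$, the rescaled sequence also produces an admissible spatially regular white noise, so Theorem 3.8 applies and yields path-wise global well-posedness, and Lemma 4.8 applies and yields an invariant measure $\mu_\kappa\in\mathcal{P}(H)$ with $\mu_\kappa(V)=1$; the statistically stationary solution $B_\kappa$ with $\mathcal{D}(B_\kappa)=\mu_\kappa$ and the associated velocity $u_\kappa=K_\gamma(B_\kappa,B_\kappa)$ are then well defined. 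It remains only to transcribe the balance relations of Lemma 4.8 under this substitution.

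First I would record how the structural constants transform: replacing $b_j$ by $\sqrt{\kappa}\,b_j$ sends $\mathsf{C}_s\mapsto\kappa\,\mathsf{C}_s$ for every $s\in\mathbb{R}$ (so $\mathsf{C}_0\mapsto\kappa\,\mathsf{C}_0$ and $\mathsf{C}_{-1}\mapsto\kappa\,\mathsf{C}_{-1}$), sends $\mathcal{C}_{-1/2}\mapsto\kappa\,\mathcal{C}_{-1/2}$, and sends $\sup_{j\geq1}b_j^2\mapsto\kappa\sup_{j\geq1}b_j^2$. Substituting into (4.6) gives $\mathbb{E}\bigl(\kappa\|\nabla B_\kappa\|_H^2+\|u_\kappa\|_{\dot{H}^\gamma}^2\bigr)=\kappa\,\mathsf{C}_0/2$, which is (5.1) (the factor $\kappa$ multiplying $\|\nabla B_\kappa\|_H^2$ here is the resistivity, unaffected by the substitution). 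Substituting into (4.8) gives $\kappa\,\mathbb{E}(\nabla\times B_\kappa,B_\kappa)_H=\kappa\,\mathcal{C}_{-1/2}/2$ for $d=3$, hence (5.3) after dividing by $\kappa>0$; and substituting into (4.9) gives $\kappa\,\mathbb{E}\|B_\kappa\|_H^2=\kappa\,\mathsf{C}_{-1}/2$ for $d=2$, which is (5.4). The claim $\mu_\kappa(V)=1$ is the corresponding statement in Lemma 4.8.

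The only step needing a genuine manipulation is the exponential moment (5.2): applying (4.7) to the rescaled system gives, for $0<\varrho\le 1/(2\kappa\sup_{j\geq1}b_j^2)$,
\begin{align*}
\mathbb{E}\exp\!\left(\kappa\varrho\,\|B_\kappa\|_H^2\right)\le \frac{1}{\kappa}\bigl(\kappa\mathsf{C}_0+\kappa\bigr)e^{\varrho(\kappa\mathsf{C}_0+\kappa)}=(\mathsf{C}_0+1)\,e^{\kappa\varrho(\mathsf{C}_0+1)},
\end{align*}
and then setting $\rho=\kappa\varrho$ converts the admissible range into the $\kappa$-independent interval $0<\rho\le 1/(2\sup_{j\geq1}b_j^2)$ and the bound into the $\kappa$-independent quantity $(\mathsf{C}_0+1)e^{\rho(\mathsf{C}_0+1)}$, which is (5.2). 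This uniformity of (5.2) in $\kappa$ --- together with the fact that the right-hand side of (5.1) vanishes as $\kappa\to0$ --- is precisely what the $\sqrt{\kappa}$-scaling of the force was engineered to produce, and it is the key input for the non-resistive limit in the remaining subsections. There is no real obstacle here: the argument is pure bookkeeping, and the only caveats are to carry every factor of $\kappa$ coming from the rescaled constants through (4.6)--(4.9) without loss, and to perform the change of variables $\rho=\kappa\varrho$ consistently in both the hypothesis and the conclusion of (4.7).
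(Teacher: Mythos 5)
Your proposal is correct and is essentially the paper's own argument: the paper proves this lemma in one line by invoking Lemma 4.8 for the system (1.11) with the $\sqrt{\kappa}$-scaled noise, and your explicit bookkeeping ($\mathsf{C}_s\mapsto\kappa\,\mathsf{C}_s$, $\mathcal{C}_{-\frac{1}{2}}\mapsto\kappa\,\mathcal{C}_{-\frac{1}{2}}$, $\sup_{j\geq 1}b_j^{2}\mapsto\kappa\sup_{j\geq 1}b_j^{2}$, together with the change of variables $\rho=\kappa\varrho$ in both the hypothesis and conclusion of (4.7)) is exactly the substitution the paper leaves implicit. The transcription of (4.6)--(4.9) into (5.1)--(5.4) and the claim $\mu_{\kappa}(V)=1$ are carried out correctly.
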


\begin{proof}
The claimed properties follow from the properties of invariant measures for (1.11) with force $f=\partial_t \zeta$ as stated in Lemma 4.8.  
\end{proof}

\begin{prop}
For $\varepsilon>0$, there exists a subsequence such that $\mu_{\kappa}\to \mu_0$ in $\mathcal{P}(H^{1-\varepsilon})$ for some measure $\mu_0$ as $\kappa\to 0$. The measure $\mu_0$ satifies $\mu_0(V)=1$.
\end{prop}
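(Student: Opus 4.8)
The plan is a routine Prokhorov tightness argument, whose only substantive input is the uniform-in-$\kappa$ second moment bound furnished by Lemma~5.1. First I would note that, although the dissipation identity (5.1) carries the degenerate factor $\kappa$ in front of $\|\nabla B_{\kappa}\|_{H}^{2}$, its right-hand side equals $\kappa\mathsf{C}_0/2$, so dividing through by $\kappa$ gives $\mathbb{E}\|\nabla B_{\kappa}\|_{H}^{2}\le\mathsf{C}_0/2$ for all $0<\kappa\le1$; and the exponential moment bound (5.2) gives, in particular, $\sup_{0<\kappa\le1}\mathbb{E}\|B_{\kappa}\|_{H}^{2}<\infty$. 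Since $\|B\|_{V}^{2}=\|B\|_{H}^{2}+\|\nabla B\|_{H}^{2}$ and $\mu_{\kappa}(V)=1$ by Lemma~5.1, this yields
\[
M:=\sup_{0<\kappa\le1}\int_{H}\|B\|_{V}^{2}\,\mu_{\kappa}(dB)=\sup_{0<\kappa\le1}\mathbb{E}\|B_{\kappa}\|_{V}^{2}<\infty .
\]

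Next I would regard each $\mu_{\kappa}$ as a Borel probability measure on $H^{1-\varepsilon}(\mathbb{T}^{d})$. This is legitimate because $\mu_{\kappa}(V)=1$ and $V=H\cap H^{1}(\mathbb{T}^{d})$ is a Borel subset of $H^{1-\varepsilon}(\mathbb{T}^{d})$: the functional $B\mapsto\|B\|_{H^{1}}^{2}\in[0,\infty]$ is lower semicontinuous on $H^{1-\varepsilon}$, being a supremum of the continuous functions $\psi_{N}(B)=\sum_{|k|\le N}(|k|^{2}+1)|\hat{B}_{k}|^{2}$, and the divergence-free and zero-average constraints cut out a closed subspace. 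For tightness, fix $\delta>0$; Chebyshev's inequality together with the bound above gives $\mu_{\kappa}(\{\|B\|_{V}\ge R\})\le M/R^{2}$ uniformly in $\kappa$, so choosing $R_{\delta}$ with $M/R_{\delta}^{2}<\delta$ and setting $\mathcal{K}_{\delta}$ to be the closure in $H^{1-\varepsilon}$ of $\{B\in V:\|B\|_{V}\le R_{\delta}\}$, the compact embedding $V=V^{1}\Subset H^{1-\varepsilon}$ from (2.5) makes $\mathcal{K}_{\delta}$ compact in $H^{1-\varepsilon}$ while $\mu_{\kappa}(\mathcal{K}_{\delta})\ge1-\delta$ for every $\kappa$. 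Hence $\{\mu_{\kappa}\}_{0<\kappa\le1}$ is tight in $\mathcal{P}(H^{1-\varepsilon})$, so by Prokhorov's theorem (Proposition~4.5) it is relatively compact in $\mathcal{P}(H^{1-\varepsilon})$, and extracting a sequence $\kappa_{n}\to0$ produces a weak limit $\mu_{\kappa_{n}}\to\mu_{0}$ in $\mathcal{P}(H^{1-\varepsilon})$.

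Finally I would verify $\mu_{0}(V)=1$ by lower semicontinuity and the Portmanteau theorem (Proposition~4.4). For each fixed $R$, $\{\|B\|_{V}\le R\}$ is closed in $H^{1-\varepsilon}$, so $\mu_{0}(\{\|B\|_{V}\le R\})\ge\limsup_{n}\mu_{\kappa_{n}}(\{\|B\|_{V}\le R\})\ge1-M/R^{2}$; letting $R\to\infty$ gives $\mu_{0}(\{\|B\|_{V}<\infty\})=1$, i.e.\ $\mu_{0}(H^{1}\cap H^{1-\varepsilon})=1$. Since $H\cap H^{1-\varepsilon}$ is closed in $H^{1-\varepsilon}$ and carries full $\mu_{\kappa_{n}}$-mass (because $\mu_{\kappa_{n}}(V)=1$), also $\mu_{0}(H\cap H^{1-\varepsilon})=1$, whence $\mu_{0}(V)=\mu_{0}\big((H\cap H^{1-\varepsilon})\cap(H^{1}\cap H^{1-\varepsilon})\big)=1$. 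I do not expect a genuine obstacle here; the only places needing care are the measurability and lower-semicontinuity bookkeeping just indicated and — the structural point behind the entire section — the observation that the $\kappa$-degeneracy on the left of (5.1) is exactly compensated on the right, which is precisely why the moment constant $M$ can be taken independent of $\kappa$ and the limit $\kappa\to0$ makes sense.
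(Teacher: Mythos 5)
Your proof is correct and follows essentially the same route as the paper: the uniform-in-$\kappa$ bound $\mathbb{E}\|B_{\kappa}\|_{V}^{2}\lesssim \mathsf{C}_0$ from (5.1), Chebyshev plus the compact embedding $V\Subset H^{1-\varepsilon}$ for tightness, Prokhorov for the subsequence, and Portmanteau with closed $V$-balls (closed in $H^{1-\varepsilon}$ by lower semicontinuity of the $V$-norm) to get $\mu_0(V)=1$. The only cosmetic difference is that you control $\mathbb{E}\|B_{\kappa}\|_{H}^{2}$ via the exponential bound (5.2), whereas the paper absorbs it through the Poincar\'e inequality for mean-zero fields; both are fine.
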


\begin{proof}
By Chebyshev's inequality and (5.1),
\begin{align*}
\mu_{\kappa}\left(\overline{B_{V }(R)}^{c}\right)=\int_{\{||B||_V>R\}}\mu_{\kappa}(dB)\leq \frac{1}{R^{2}}\int_{H}|| B||_{V}^{2}\mu_k(dB)\lesssim \frac{\mathsf{C}_0}{R^{2}}.
\end{align*}
Thus, $\mu_k(\overline{B_{V }(R)})\geq 1- C/R^{2}$ for some $C>0$. Since $\overline{B_{V }(R)}\Subset H^{1-\varepsilon}$, $\{\mu_k\}\subset \mathcal{P}(H^{1-\varepsilon})$ is tight. By Prokhorov theorem (Proposition 4.5), there exists a subsequence such that $\mu_{\kappa}\to \mu_0$ on $\mathcal{P}(H^{1-\varepsilon})$. By Portmanteau theorem (Proposition 4.4), 
\begin{align*}
\mu_0(\overline{B_{V }(R)})\geq \limsup_{\kappa\to 0}\mu_{\kappa}(\overline{B_{V }(R)})
\geq 1- \frac{C}{R^{2}}.
\end{align*}
By $\mu_{0}(V)\leq \mu_{0}(H^{1-\varepsilon})=1$, $\mu_0(V)=\lim_{R\to\infty}\mu_0(\overline{B_{V }(R)})=1$. 
\end{proof}

\subsection{Lifted invariant measures}

For an invariant measure $\mu_{\kappa}$ in Proposition 5.2, we take a random initial data $B_{0}$ with law $\mathcal{D}(B_0)=\mu_{\kappa}$. Then, the global-in-time solution $B_{\kappa}$ of (1.3) is a statistically stationary solution with law $\mathcal{D}(B_{\kappa})=\mu_{\kappa}$. We define a lifted invariant measure $\boldsymbol{\mu}_{\kappa}$ of $\mu_{\kappa}\in \mathcal{P}(H)$ by the law of a statistically stationary solution $B_{\kappa}: (\Omega,\mathcal{F},\mathbb{P})\to L^{2}_{\textrm{loc}}([0,\infty); V)$ as  
\begin{align}
\boldsymbol{\mu}_{\kappa}(A)=\mathcal{D}(B_{\kappa})(A),\quad A\in \mathcal{B}\left(L^{2}_{\textrm{loc}}\left([0,\infty); V\right)\right).
\end{align}
Since $B_{\kappa}\in \mathcal{X}_{T}$ for every $T>0$ by Theorem 3.8, $\boldsymbol{\mu}_{\kappa}$ is a probability measure on $C([0,\infty); H)\cap L^{2}_{\textrm{loc}}([0,\infty); V)$.

\begin{prop}
\begin{align}
\mathbb{E}\left(\kappa ||\nabla B_{\kappa}||_{L^{2}(0,T; H)}^{2}+||u_{\kappa}||_{L^{2}(0,T; \dot{H}^{\gamma })}^{2}\right)&= \kappa\frac{\mathsf{C}_0T}{2}, \\
\mathbb{E} ||B_{\kappa}||_{L^{r}(0,T; H)}^{r}&\lesssim_{r} \frac{T}{\rho^{r}}(\mathsf{C}_0+1)e^{\rho(\mathsf{C}_0+1)},\quad r>0,\ T>0.
\end{align}
\end{prop}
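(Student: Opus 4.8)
The plan is to derive both relations directly from the single-time balance identities of Lemma 5.1, using that the statistically stationary solution $B_{\kappa}$ has the time-independent law $\mathcal{D}(B_{\kappa}(s))=\mu_{\kappa}$ together with Tonelli's theorem to interchange expectation and time integration.

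First I would record the measurability facts needed for the interchange. By Theorem 3.8, $B_{\kappa}$ is a $\mathcal{G}_t$-progressively measurable $H$-valued process that lies in $\mathcal{X}_T$ almost surely for every $T>0$, and $u_{\kappa}=K_{\gamma}(B_{\kappa},B_{\kappa})$ is progressively measurable as well, since $K_{\gamma}$ is continuous. Hence the nonnegative maps $(s,\omega)\mapsto \kappa\|\nabla B_{\kappa}(s,\omega)\|_{H}^{2}+\|u_{\kappa}(s,\omega)\|_{\dot{H}^{\gamma}}^{2}$ and $(s,\omega)\mapsto\|B_{\kappa}(s,\omega)\|_{H}^{r}$ are jointly measurable on $[0,T]\times\Omega$ with values in $[0,\infty]$, so Tonelli's theorem gives
\[
\mathbb{E}\int_{0}^{T}\bigl(\kappa\|\nabla B_{\kappa}\|_{H}^{2}+\|u_{\kappa}\|_{\dot{H}^{\gamma}}^{2}\bigr)\,ds=\int_{0}^{T}\mathbb{E}\bigl(\kappa\|\nabla B_{\kappa}(s)\|_{H}^{2}+\|u_{\kappa}(s)\|_{\dot{H}^{\gamma}}^{2}\bigr)\,ds,
\]
and similarly for $\int_{0}^{T}\|B_{\kappa}(s)\|_{H}^{r}\,ds$.

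Next, since $\mathcal{D}(B_{\kappa}(s))=\mu_{\kappa}$ for every $s\ge 0$, each single-time expectation on the right is independent of $s$ and equals the corresponding integral against $\mu_{\kappa}$; by (5.1) this is $\kappa\mathsf{C}_0/2$, so integrating the constant over $[0,T]$ yields (5.6). In the same way, $\mathbb{E}\|B_{\kappa}\|_{L^{r}(0,T;H)}^{r}=T\,\mathbb{E}\|B_{\kappa}\|_{H}^{r}$, where on the right $B_{\kappa}$ now denotes a random variable with law $\mu_{\kappa}$. To bound $\mathbb{E}\|B_{\kappa}\|_{H}^{r}$, I would use the elementary pointwise inequality $x^{r/2}\le C_{r}\,\rho^{-r}e^{\rho x}$ for $x\ge 0$, valid for $0<\rho\le 1/(2\sup_{j}b_j^{2})$ (it follows from $\sup_{x\ge 0}x^{r/2}e^{-\rho x}=(r/(2e\rho))^{r/2}$ together with the boundedness of $\rho$ in this range), apply it with $x=\|B_{\kappa}\|_{H}^{2}$, take expectations, and invoke the exponential-moment bound (5.2). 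This gives $\mathbb{E}\|B_{\kappa}\|_{H}^{r}\lesssim_{r}\rho^{-r}(\mathsf{C}_0+1)e^{\rho(\mathsf{C}_0+1)}$, hence (5.7).

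There is no substantive obstacle here; the statement is essentially a bookkeeping consequence of Lemma 5.1. The one place where Lemma 5.1 is genuinely used is that, a priori, $B_{\kappa}\in\mathcal{X}_T$ only guarantees that the pathwise integrals $\int_0^T\|\nabla B_{\kappa}(s)\|_{H}^{2}\,ds$ and $\int_0^T\|B_{\kappa}(s)\|_{H}^{r}\,ds$ are finite almost surely, not in expectation; the single-time bounds of Lemma 5.1, transported by stationarity, are exactly what upgrades this to the finite expectations asserted in (5.6)--(5.7). The only care required is in justifying the Tonelli interchange, which is why the progressive measurability of $B_{\kappa}$ and $u_{\kappa}$ is recorded first.
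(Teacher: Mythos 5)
Your proposal is correct and follows essentially the same route as the paper: interchange expectation and time integration (Tonelli plus stationarity of $\mathcal{D}(B_{\kappa}(s))=\mu_{\kappa}$) and invoke the single-time balances (5.1)--(5.2) of Lemma 5.1. The only cosmetic difference is in deducing the $r$-th moment from the exponential moment: you optimize $x^{r/2}e^{-\rho x}$ pointwise, while the paper bounds $\frac{\rho^{n}}{n!}\mathbb{E}\|B_{\kappa}\|_{H}^{2n}$ by the exponential moment and interpolates to non-integer $r$ with H\"older's inequality; both yield the same bound (up to the same harmless $\rho^{-r/2}$ versus $\rho^{-r}$ bookkeeping present in the paper itself).
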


\begin{proof}
By integrating (5.1) in time, 
\begin{align*}
\mathbb{E}\left( \kappa ||\nabla B_{\kappa}||_{L^{2}(0,T; H)}^{2}+||u_{\kappa}||_{L^{2}(0,T; \dot{H}^{\gamma })}^{2} \right) 
=\int_{0}^{T}\mathbb{E}\left(\kappa ||B_{\kappa}||_{H^{1}}^{2}+||u_{\kappa}||_{\dot{H}^{\gamma  }}^{2} \right)dt
= \kappa \frac{\mathsf{C}_0T}{2}.
\end{align*}
Thus, (5.6) holds. By the moment bound (5.2),
\begin{align*}
\frac{\rho^{n}}{n!}\mathbb{E}||B_{\kappa}||_{H}^{2n} \leq \mathbb{E}\exp\left(\rho ||B_{\kappa}||_{H}^{2}\right)\leq (\mathsf{C}_0+1)e^{\rho(\mathsf{C}_0+1) },\quad n\in \mathbb{N}_0.
\end{align*}
For $r>0$, we choose $n\in \mathbb{N}$ such that $n\geq r/2>n-1$ and apply H\"older's inequality to estimate 
\begin{align*}
\mathbb{E}||B_{\kappa}||_{H}^{r}
\leq \left(\mathbb{E}||B_{\kappa}||_{H}^{2n}\right)^{\frac{r}{2n}}
\leq \left(\frac{n!}{\rho^{n}}  (\mathsf{C}_0+1)e^{\rho(\mathsf{C}_0+1)} \right)^{\frac{r}{2n}}
\leq \frac{1}{\rho^{\frac{r}{2}}}\left(\frac{r}{2}+1\right)^{\frac{r}{2}}(\mathsf{C}_0+1)e^{\rho(\mathsf{C}_0+1)}.
\end{align*}
Integrating both sides in time implies (5.7).  
\end{proof}

\subsection{A compact embedding of a Bochner space}

We set the fractional Bochner space for $1\leq p\leq \infty$ and $0<s<1$ by 
\begin{align*}
W^{s,p}(0,T; H)&=
 \left\{f\in L^{p}(0,T; H)\ \middle| \ ||f||_{W^{s,p}(0,T; H) }=||f||_{L^{p}(0,T; H) }+|f|_{W^{s,p}(0,T; H) }<\infty\ \right\}, \\
|f|_{W^{s,p}(0,T; H) }&=
\begin{cases}
&\ \displaystyle\left(\int_0^{T}\int_0^{T}\frac{||f(t)-f(\tau)||_{H}^{p} }{|t-\tau|^{1+s p}}d\tau dt \right)^{\frac{1}{p}},\quad 1\leq p<\infty, \\
&\ \displaystyle\textrm{ess sup}\left\{\frac{||f(t)-f(\tau)||_{H}}{|t-\tau|^{s}}\ \middle|\ t,\tau\in [0,T],\ t\neq \tau\ \right\},\quad p=\infty.
\end{cases}
 \end{align*}
For $s\geq 1$ satisfying $s\notin \mathbb{N}$, we set
\begin{align*}
W^{s,p}(0,T; H)=\left\{f\in W^{[s],p}(0,T; H)\ \middle|\  ||f||_{W^{s,p}(0,T; H)}=||f||_{W^{[s],p}(0,T; H)}+\left|\partial_t^{[s]}f  \right|_{W^{s-[s],p}(0,T; H)}<\infty \right\}.
\end{align*}
We set a space larger than $\mathcal{X}_{T}=C([0,T]; H)\cap L^{2}(0,T; V)$: 
\begin{align*}
\mathcal{X}^{-\varepsilon}_T=C([0,T]; H^{-\varepsilon} )\cap L^{2}(0,T; H^{1-\varepsilon} ),\quad \varepsilon>0.
\end{align*}
We fix $1\leq p<2$ and $\max\left\{1/4, 1/p-1/2\right\}<s<1/2$ and set $\mathcal{K}_T=\mathcal{W}_{T}\cap L^{2}(0,T; V)$ by the sum space $\mathcal{W}_T=\mathcal{W}_T^{1}+\mathcal{W}_T^{2}+\mathcal{W}_T^{3}$ for 
\begin{equation}
\begin{aligned}
\mathcal{W}_T^{1}&=C([0,T];H^{-1})\cap H^{1}(0,T; H^{-1}),\\
\mathcal{W}_T^{2}&=C([0,T]; H^{-1})\cap W^{1,p}(0,T; H^{-1}), \\
\mathcal{W}_T^{3}&=C([0,T]; H^{-1})\cap W^{s,4}(0,T; H^{-1}),
\end{aligned}
\end{equation}
normed with 
\begin{align*}
||B ||_{\mathcal{W}_T}=\inf\left\{||B^{1}||_{\mathcal{W}_T^{1}}+||B^{2}||_{\mathcal{W}_T^{2}}+||B^{3}||_{\mathcal{W}_T^{3}}\ \middle|\ B=B^{1}+B^{2}+B^{3},\ B^{i}\in \mathcal{W}_T^{i},\ i=1,2,3\   \right\}.
\end{align*}
We will deduce the tightness of lifted invariant measures $\{\boldsymbol{\mu}_{\kappa}\} \subset \mathcal{P}(\mathcal{X}_{T}^{-\varepsilon})$ from Prokhorov theorem by showing the compactness and the boundedness (1.20). 

We first show the compactness (1.20$)_1$ by Lions--Aubin--Simon theorem \cite[Corollary 9]{Simon}; see also \cite[Theorem 4.4]{Latocca}. 

\begin{prop}[Lions--Aubin--Simon theorem]
Let $X_0$ and $X_1$ be Banach spaces such that $X_1\Subset X_0$. Let $X$ be an intermediate space such that $X_1\subset X\subset X_0$ and  
\begin{align*}
||f||_{X}\leq C||f||_{X_1}^{1-\theta}||f||_{X_0}^{\theta},\quad f\in X_1,
\end{align*}
with some constants $\theta\in (0,1)$ and $C>0$. For $s_0,s_1\geq 0$ and $1\leq p_0,p_1\leq \infty$, set 
\begin{align*}
s_{\theta}=(1-\theta)s_1+\theta s_0,\quad \frac{1}{p_\theta}=\frac{1-\theta}{p_1}+\frac{\theta}{p_0},\quad s_{*}=s_{\theta}-\frac{1}{p_{\theta}}.
\end{align*}
Then, the following holds:

\noindent
(i) If $s_*\leq 0$, $W^{s_0,p_0}(0,T; X_0)\cap W^{s_1,p_1}(0,T; X_1)\Subset L^{r}(0,T; X)$ for $r<r_*=-1/s_*$. \\
(ii) If $s_*>0$, $W^{s_0,p_0}(0,T; X_0) \cap W^{s_1,p_1}(0,T; X_1)\Subset C([0,T]; X)$.
\end{prop}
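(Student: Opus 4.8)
The plan is to follow Simon's argument \cite[Corollary 9]{Simon} (see also \cite[Theorem 4.4]{Latocca}); I sketch the structure rather than reproduce the bookkeeping. Write $Y:=W^{s_0,p_0}(0,T;X_0)\cap W^{s_1,p_1}(0,T;X_1)$ and let $\mathcal{F}\subset Y$ be bounded; the goal is to show that $\mathcal{F}$ is relatively compact in $L^{r}(0,T;X)$ for $r<r_*$ in case (i), and in $C([0,T];X)$ in case (ii). I would start from the Fr\'echet--Kolmogorov-type characterization of relatively compact subsets of $L^{r}(0,T;X)$ and the Arzel\`a--Ascoli-type characterization of relatively compact subsets of $C([0,T];X)$, which reduce the problem to checking: (a) that the local time-averages $\delta^{-1}\int_{t}^{t+\delta}f(s)\,ds$ lie in a fixed relatively compact subset of $X$ as $f$ ranges over $\mathcal{F}$ (for case (ii), that the pointwise values $f(t)$ do); and (b) that $\sup_{f\in\mathcal{F}}\|f(\cdot+h)-f\|_{L^{r}((0,T-h);X)}\to0$ as $h\to0$ (for case (ii), that $\mathcal{F}$ is uniformly equicontinuous in $C([0,T];X)$).

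Two ingredients would carry the argument. First, the interpolation inequality promotes $X_1\Subset X_0$ to $X_1\Subset X$: a bounded sequence in $X_1$ is precompact in $X_0$, and since $\|v-w\|_{X}\le C\|v-w\|_{X_1}^{1-\theta}\|v-w\|_{X_0}^{\theta}$, any $X_0$-convergent subsequence is $X$-Cauchy, hence $X$-convergent. This settles (a): the averages are bounded in $X_1$ because $\mathcal{F}$ is bounded in $L^{1}(0,T;X_1)$; in case (ii) one first notes that $s_*>0$ forces at least one of $s_0p_0, s_1p_1$ to exceed $1$, from which (with the $W^{s_j,p_j}(0,T;X_j)$ bound for the other index) the pointwise values are seen to be bounded in $X_1$, hence relatively compact in $X$. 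Second, applying the interpolation inequality pointwise to $v=f(t)$ and to $v=f(t)-f(\tau)$ and then H\"older's inequality in $(t,\tau)$ — the exponents being chosen precisely so that $|t-\tau|^{-(1+s_\theta p_\theta)}$ factors as a product of powers of $|t-\tau|^{-(1+s_0p_0)}$ and $|t-\tau|^{-(1+s_1p_1)}$ with exponents summing to one — I would derive the time-interpolation estimate $\|f\|_{W^{s_\theta,p_\theta}(0,T;X)}\lesssim\|f\|_{W^{s_1,p_1}(0,T;X_1)}^{1-\theta}\|f\|_{W^{s_0,p_0}(0,T;X_0)}^{\theta}$, so that $\mathcal{F}$ is bounded in $W^{s_\theta,p_\theta}(0,T;X)$. (If some $s_i\ge1$ one must first recast $W^{s_i,p_i}$ via translation estimates, so that the pointwise interpolation is meaningful; this is done in \cite{Simon}.)

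Finally I would read off (b) from the boundedness in $W^{s_\theta,p_\theta}(0,T;X)$. When $s_*\le0$, the one-dimensional embedding $W^{s_\theta,p_\theta}(0,T;X)\hookrightarrow L^{r_*}(0,T;X)$ with $1/r_*=1/p_\theta-s_\theta=-s_*$ gives a uniform $L^{r}$ bound for $r\le r_*$, while the $W^{s_\theta,p_\theta}$-seminorm controls $\|f(\cdot+h)-f\|_{L^{r}((0,T-h);X)}$ by a positive power of $h$, uniformly over $\mathcal{F}$, for every subcritical $r<r_*$; combined with (a), the $L^{r}$-compactness criterion gives relative compactness in $L^{r}(0,T;X)$ for $r<r_*$. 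When $s_*>0$, the embedding $W^{s_\theta,p_\theta}(0,T;X)\hookrightarrow C^{0,s_*}([0,T];X)$ supplies a uniform modulus of continuity, and with (a) the Arzel\`a--Ascoli criterion gives relative compactness in $C([0,T];X)$. \textbf{The main obstacle}, and the reason we quote \cite[Corollary 9]{Simon} rather than reprove it, is the sharp endpoint bookkeeping: the uniform translation estimate is only available strictly below the critical exponent $r_*$, and the reduction of integer-order time-Sobolev norms to translation moduli (needed to make the pointwise interpolation legitimate when $s_0$ or $s_1$ is at least $1$) has to be coordinated with the case split according to whether $s_ip_i$ exceeds or falls below $1$, which is exactly what governs the $C([0,T];X)$ conclusion.
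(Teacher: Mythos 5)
The paper offers no proof of this proposition at all---it is quoted verbatim from \cite[Corollary 9]{Simon} (cf.\ \cite[Theorem 4.4]{Latocca})---and your sketch is precisely Simon's argument (interpolation promotes $X_1\Subset X_0$ to $X_1\Subset X$, a pointwise-plus-H\"older interpolation gives a uniform bound in a fractional time-Sobolev space valued in $X$, and the one-dimensional embeddings into $L^{r}$ resp.\ $C^{0,s_*}$ feed the $L^{r}$-compactness and Arzel\`a--Ascoli criteria), so you are taking the same route as the paper. The one imprecision is your claim in case (ii) that the pointwise values are bounded in $X_1$: when only $s_0p_0>1$ this need not hold (the $X_1$ information is merely an $L^{p_1}$-in-time bound), and Simon instead deduces relative compactness of $\{f(t)\}$ in $X$ from the $X_1$-bounded time-averages together with the interpolated equicontinuity in $X$---but this is exactly the bookkeeping you explicitly defer to \cite{Simon}, so it does not affect the conclusion.
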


\begin{prop}
For $\varepsilon>0$, 
\begin{align}
W^{s,p}(0,T; H^{-1})\cap L^{2}(0,T; H^{1})&\Subset L^{2}(0,T; H^{1-\varepsilon}), \\
C^{\alpha}([0,T]; H^{-1})\cap L^{2}(0,T; H^{1})&\Subset C([0,T]; H^{-\varepsilon}), \quad 0<\alpha<\frac{1}{2}.
\end{align}
In particular, 
\begin{equation}
\begin{aligned}
C^{\alpha}([0,T]; H^{-1})\cap W^{s,p}(0,T; H^{-1})\cap L^{2}(0,T; H^{1})\Subset \mathcal{X}^{-\varepsilon}_{T}.
\end{aligned}
\end{equation}
\end{prop}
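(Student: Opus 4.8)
The plan is to deduce Proposition 5.6 by applying the Lions--Aubin--Simon theorem (Proposition 5.5) three times: twice to prove the two displayed compact embeddings (5.10) and (5.11), and once to prove (5.12), where in fact (5.12) follows almost immediately from (5.10) and (5.11) once we observe that the intersection of two subsets that are compactly embedded in $L^{2}(0,T; H^{1-\varepsilon})$ and $C([0,T]; H^{-\varepsilon})$ respectively is compactly embedded in the intersection space $\mathcal{X}^{-\varepsilon}_{T}$. So the real content is the two embeddings, and the strategy for each is to choose the interpolation triple $(X_1, X, X_0)$ and the Sobolev--Bochner parameters $(s_0, p_0, s_1, p_1)$ so that the quantity $s_{*} = s_\theta - 1/p_\theta$ lands in the right regime (case (i) for (5.10), case (ii) for (5.11)).

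First I would prove (5.10). Take $X_1 = H^{1}$, $X_0 = H^{-1}$, and the intermediate space $X = H^{1-\varepsilon}$; the interpolation inequality $\|f\|_{H^{1-\varepsilon}} \le \|f\|_{H^{1}}^{1-\theta}\|f\|_{H^{-1}}^{\theta}$ holds with $\theta = \varepsilon/2$ by (2.6), and $H^{1} \Subset H^{-1}$ by (2.5). On the time side, the relevant norms are the $L^{2}$-in-time norm valued in $X_1$ (so $s_1 = 0$, $p_1 = 2$) and the $W^{s,p}$-in-time norm valued in $X_0$ (so $s_0 = s$, $p_0 = p$). Then $s_\theta = \theta s = \varepsilon s /2$ and $1/p_\theta = (1-\theta)/2 + \theta/p$, so $s_{*} = \varepsilon s/2 - (1-\theta)/2 - \theta/p$. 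Since $\theta = \varepsilon/2$ can be taken as small as we like (for any fixed $\varepsilon > 0$ we only need $\varepsilon$ small, and the statement is for all $\varepsilon > 0$, but we may shrink $\varepsilon$ WLOG since a compact embedding into $H^{1-\varepsilon}$ implies one into $H^{1-\varepsilon'}$ for $\varepsilon' > \varepsilon$), $s_{*} < 0$, so we are in case (i) and get a compact embedding into $L^{r}(0,T; H^{1-\varepsilon})$ for $r < r_{*} = -1/s_{*}$. As $\theta \to 0$ we have $s_{*} \to -1/2$, hence $r_{*} \to 2$, so for $\varepsilon$ small enough $r_{*} > 2$ and in particular the embedding into $L^{2}(0,T; H^{1-\varepsilon})$ holds. (One should check $s_{*} \le 0$ strictly; with $\max\{1/4, 1/p-1/2\} < s < 1/2$ and $1\le p<2$ fixed this is clear for small $\theta$.)

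Next I would prove (5.11). Here take again $X_1 = H^{1}$, $X_0 = H^{-1}$, $X = H^{-\varepsilon}$, with $\|f\|_{H^{-\varepsilon}} \le \|f\|_{H^{1}}^{1-\theta}\|f\|_{H^{-1}}^{\theta}$ for $\theta = (1+\varepsilon)/2 \in (0,1)$ by (2.6). On the time side use $s_1 = 0$, $p_1 = 2$ (the $L^{2}(0,T; H^{1})$ factor) and $s_0 = \alpha$, $p_0 = \infty$ (the $C^{\alpha}([0,T]; H^{-1})$ factor, which is $W^{\alpha,\infty}$ in time). Then $s_\theta = \theta \alpha$ and $1/p_\theta = (1-\theta)/2 + \theta/\infty = (1-\theta)/2 = (1-\varepsilon)/4 \cdot$... more precisely $1/p_\theta = (1-\theta)/2$; with $\theta = (1+\varepsilon)/2$ this is $(1-\varepsilon)/4$, and $s_\theta = \alpha(1+\varepsilon)/2$. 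We need $s_{*} = s_\theta - 1/p_\theta > 0$, i.e. $\alpha(1+\varepsilon)/2 > (1-\varepsilon)/4$, i.e. $\alpha > (1-\varepsilon)/(2(1+\varepsilon))$; since $\alpha < 1/2$ this forces $\varepsilon$ close enough to $0$ to be... wait, rather: since we may take $\varepsilon$ as small as we wish and $\alpha$ is given in $(0,1/2)$, we actually need to be careful — the cleanest route is to choose $\theta$ closer to $1$, i.e. absorb more regularity: take instead $X = H^{-\varepsilon}$ with $\theta$ arbitrarily close to $1$ is impossible since $\theta$ is pinned by the interpolation identity $-\varepsilon = (1-\theta)\cdot 1 + \theta\cdot(-1) = 1 - 2\theta$, giving $\theta = (1+\varepsilon)/2$, which is close to $1/2$, not $1$. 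So instead, to push $s_{*}>0$ I would exploit that we may lower the target regularity: it suffices to prove the embedding into $C([0,T]; H^{-\varepsilon})$ for \emph{some} small $\varepsilon$, hence I fix $\varepsilon$ small and note $\alpha(1+\varepsilon)/2 > (1-\varepsilon)/4$ is equivalent to $2\alpha(1+\varepsilon) > 1-\varepsilon$; for $\varepsilon$ small this reads $2\alpha > 1$, which \emph{fails} for $\alpha<1/2$. This means the naive parameter choice is too lossy, and the correct fix is to use a sharper interpolation: replace the $L^{2}(0,T;H^{1})$ factor's regularity by noting $C^{\alpha}([0,T]; H^{-1}) \cap L^{2}(0,T; H^{1})$ also controls, by interpolation, $C^{\alpha'}([0,T]; H^{\sigma})$ for suitable $\alpha', \sigma$; equivalently apply the theorem with $X_1 = H^{\sigma_1}$ for some $0 < \sigma_1 < 1$ and a genuinely interior intermediate exponent. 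The \textbf{main obstacle} is exactly this: getting $s_{*}>0$ requires a careful, slightly non-obvious choice of the interpolation triple and time exponents (one cannot simply plug in $X_1 = H^1$, $X_0 = H^{-1}$; one must interpolate first so that the "spatial loss" $\theta$ and the Hölder exponent $\alpha$ together overcome the $1/p_\theta$ deficit). Concretely I would pick $\eta \in (0,1)$ small, set $X_1 = H^{1}$, $X_0 = H^{-1}$, $X = H^{-\varepsilon}$ but choose the time parameters $s_1 = 0, p_1 = 2$ and $s_0 = \alpha, p_0 = \infty$ \emph{after} first upgrading via the embedding $C^{\alpha}([0,T];H^{-1}) \cap L^{2}(0,T;H^1) \hookrightarrow W^{\beta, q}(0,T; H^{1-\delta})$ for appropriate $\beta > 1/q$, and then apply case (ii) directly. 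Once (5.10) and (5.11) are established, (5.12) is immediate: if $\{B_n\}$ is bounded in $C^{\alpha}([0,T]; H^{-1}) \cap W^{s,p}(0,T; H^{-1}) \cap L^{2}(0,T; H^{1})$, extract by (5.10) a subsequence converging in $L^{2}(0,T; H^{1-\varepsilon})$ and by (5.11) a further subsequence converging in $C([0,T]; H^{-\varepsilon})$; the common limit lies in $\mathcal{X}^{-\varepsilon}_T$ and the convergence is in the norm $\|\cdot\|_{\mathcal{X}^{-\varepsilon}_T} = \max\{\|\cdot\|_{C([0,T];H^{-\varepsilon})}, \|\cdot\|_{L^2(0,T;H^{1-\varepsilon})}\}$, which is precisely the definition of $\Subset$.
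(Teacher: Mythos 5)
Your treatment of the first embedding (into $L^{2}(0,T;H^{1-\varepsilon})$) is correct and is essentially the paper's own argument: the same triple $X_0=H^{-1}$, $X_1=H^{1}$ with intermediate space $H^{1-2\theta}$, the same time exponents $(s_0,p_0)=(s,p)$, $(s_1,p_1)=(0,2)$, the observation that $s_{*}=\theta(s+\tfrac12-\tfrac1p)-\tfrac12\in(-\tfrac12,0)$ for small $\theta>0$, and case (i) of Lions--Aubin--Simon with $r_{*}=-1/s_{*}>2$; the final assembly of the ``in particular'' statement by successive extraction of subsequences is also fine. The genuine gap is in the second embedding (into $C([0,T];H^{-\varepsilon})$). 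You correctly compute that pinning the intermediate space to $H^{-\varepsilon}$ (so $\theta=(1+\varepsilon)/2$) makes $s_{*}>0$ equivalent to $\varepsilon>(1-2\alpha)/(1+2\alpha)$, which fails for small $\varepsilon$ since $\alpha<1/2$; but the repair you then sketch, an ``upgrade'' embedding $C^{\alpha}([0,T];H^{-1})\cap L^{2}(0,T;H^{1})\hookrightarrow W^{\beta,q}(0,T;H^{1-\delta})$ with $\beta>1/q$, is neither proved nor true at any useful spatial index: since $\beta q>1$ it would force a uniform-in-time $H^{1-\delta}$ bound, whereas the time-spike functions $f_m(t,x)=m\,w_m^{\alpha}\,\phi\bigl((t-t_0)/w_m\bigr)e_{k(m)}(x)$ with $|k(m)|=m$, $w_m=m^{-4/(2\alpha+1)}$ and $\phi$ a fixed smooth bump are bounded in $C^{\alpha}([0,T];H^{-1})\cap L^{2}(0,T;H^{1})$ while $\sup_t\|f_m(t)\|_{H^{\sigma}}\asymp m^{\sigma+(1-2\alpha)/(1+2\alpha)}\to\infty$ for every $\sigma>-(1-2\alpha)/(1+2\alpha)$. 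So your proof of the second embedding, and hence of the combined statement, is incomplete.

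The paper's route at this point is different and is the idea you are missing: it does not require the intermediate space to match the target exponent. It first reduces, at the start of the proof, to proving both embeddings for \emph{some} $\varepsilon>0$ (by interpolating with the $L^{2}(0,T;H^{1})$ bound for the first embedding, and ``similarly'' for the second), and then applies Proposition 5.4 (ii) with $X=H^{1-2\theta}$ for any $\theta\in\bigl(1/(2\alpha+1),1\bigr)$, so that $s_{*}=(\alpha+\tfrac12)\theta-\tfrac12>0$; this yields compactness into $C([0,T];H^{-\varepsilon_0})$ with $\varepsilon_0=2\theta-1>0$, which is not small but is some $\varepsilon_0$, and all smaller $\varepsilon$ are delegated to the preliminary reduction. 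Your instinct that the loss is intrinsic is in fact correct and sharp: the spike example above shows that no re-parametrization of Proposition 5.4, nor any argument using only the two norms appearing in the second embedding, can reach $\varepsilon<(1-2\alpha)/(1+2\alpha)$; the small-$\varepsilon$ case can only be obtained from a reduction step of the paper's type (and the paper's $C$-in-time reduction, which invokes only boundedness in $C([0,T];H^{-1})$, must itself be read with care against the same example -- for the later tightness argument, the embedding for some fixed $\varepsilon_0>0$ is what is actually used). In any case, what your write-up lacks is precisely this ``prove it for some $\varepsilon_0$ by taking $\theta$ close to $1$, then reduce'' maneuver or a valid substitute for it.
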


\begin{proof}
By the interpolation inequality (2.6), 
\begin{align*}
||f||_{L^{2}(0,T; H^{1-\sigma\varepsilon})}
\leq ||f||_{L^{2}(0,T; H^{1-\varepsilon})}^{\sigma}||f||_{L^{2}(0,T; H^{1})}^{1-\sigma},\quad 0\leq \sigma\leq 1.
\end{align*}
By the continuous embedding $L^{2}(0,T; H^{1-\varepsilon_1})\subset L^{2}(0,T; H^{1-\varepsilon_2})$ for $\varepsilon_1\leq \varepsilon_2$, any sequence bouned in $L^{2}(0,T; H^{1})$ and compact in $L^{2}(0,T; H^{1-\varepsilon_0})$ for some $\varepsilon_0>0$ is compact in $L^{2}(0,T; H^{1-\varepsilon})$ for any $\varepsilon>0$. Similarly, any sequence bouned in $C(0,T; H^{-1})$ and compact in $C(0,T; H^{-\varepsilon_0})$ for some $\varepsilon_0>0$ is compact in $C([0,T]; H^{-\varepsilon})$ for any $\varepsilon>0$. Thus, it suffices to show that (5.9) and (5.10) hold for some $\varepsilon>0$.

We set 
\begin{align*}
X_0=H^{-1},\ X_1=H^{1},\ \textrm{and}\ X=H^{1-2\theta },\quad  0<\theta<1.
\end{align*}
By the compact embedding (2.5) and the interpolation inequality (2.6), $X_1\subset X\subset X_0$ satisfies the condition of Proposition 5.4. 

We take $s_0=s$, $p_0=p$, $s_1=0$, $p_1=2$, and observe that 
\begin{align*}
s_{*}=\theta\left(s+\frac{1}{2}-\frac{1}{p}\right)-\frac{1}{2}.
\end{align*}
By the condition $s+1/2-1/p>0$, we choose $\theta$ such that 
\begin{align*}
0<\theta<\frac{1}{2\left(s+\frac{1}{2}-\frac{1}{p} \right)}
\end{align*}
for which 
\begin{align*}
-\frac{1}{2}<s_{*}=\theta\left(s+\frac{1}{2}-\frac{1}{p} \right)-\frac{1}{2}<0.
\end{align*}
Then, Proposition 5.4 (i) implies (5.9) for $\varepsilon=2\theta$ by $2<-1/s_{*}$.

We take $s_0=\alpha$, $p_0=\infty$, $s_1=0$, $p_1=2$, and observe that 
\begin{align*}
s_{*}=\left(\alpha+\frac{1}{2}\right)\theta-\frac{1}{2}.
\end{align*}
We choose 
\begin{align*}
\frac{1}{2}<\frac{1}{2\alpha+1}<\theta<1,
\end{align*}
so that $s_{*}>0$ and apply Proposition 5.4 (ii) to obtain (5.10) for $\varepsilon=-1+2\theta>0$.
\end{proof}

\begin{lem}[Compact embedding of a Bochner space]
For $0<\alpha<\min\left\{s-1/4, 1-1/p\right\}$, 
\begin{align}
\mathcal{W}_T^{i}\subset C^{\alpha}([0,T]; H^{-1})\cap W^{s,p}(0,T; H^{-1}),\quad i=1,2,3.
\end{align}
Moreover, 
\begin{align}
\mathcal{K}_T = \mathcal{W}_T\cap L^{2}(0,T; H^{1})
\subset C^{\alpha}([0,T]; H^{-1})\cap W^{s,p}(0,T; H^{-1})\cap L^{2}(0,T; H^{1})
\Subset \mathcal{X}_T^{-\varepsilon},\quad \varepsilon>0.
\end{align}
\end{lem}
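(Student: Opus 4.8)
The plan is to prove the two displayed inclusions in turn. For the first, the point is that each summand $\mathcal{W}_T^{i}$ carries enough regularity in the time variable -- $H^{1}$, $W^{1,p}$, or $W^{s,4}$ -- to embed into both $C^{\alpha}([0,T];H^{-1})$ (a Morrey-type estimate) and $W^{s,p}(0,T;H^{-1})$ (a comparison of fractional Bochner spaces), so the sum does as well. For the second, one intersects with $L^{2}(0,T;V)$ and appeals to Proposition 5.5.

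\emph{The H\"older inclusions.} For each $i\in\{1,2,3\}$ I would verify $\mathcal{W}_T^{i}\subset C^{\alpha}([0,T];H^{-1})$. For $i=1$ this is the Morrey embedding $H^{1}(0,T;H^{-1})=W^{1,2}(0,T;H^{-1})\hookrightarrow C^{1/2}([0,T];H^{-1})$ for $H^{-1}$-valued maps, together with $\alpha<s-1/4<1/4<1/2$ (the last two inequalities since $s<1/2$). For $i=2$ -- here $0<\alpha<1-1/p$ forces $p>1$, otherwise the hypothesis is vacuous -- it is the Morrey embedding $W^{1,p}(0,T;H^{-1})\hookrightarrow C^{1-1/p}([0,T];H^{-1})$ with $\alpha<1-1/p$. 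For $i=3$ it is the fractional Morrey embedding $W^{s,4}(0,T;H^{-1})\hookrightarrow C^{s-1/4}([0,T];H^{-1})$, valid precisely because $4s>1$, i.e.\ $s>1/4$ (a standing assumption), together with $\alpha<s-1/4$. Thus the admissible range $0<\alpha<\min\{s-1/4,\,1-1/p\}$ is exactly what is needed for all three inclusions to hold at once.

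\emph{The fractional Sobolev inclusions.} For each $i$ I would verify $\mathcal{W}_T^{i}\subset W^{s,p}(0,T;H^{-1})$. For $i=1$: $W^{1,2}\hookrightarrow W^{1,p}$ by H\"older on the bounded interval $(0,T)$ (since $p\le 2$), then $W^{1,p}\hookrightarrow W^{s,p}$ since $0<s<1$. For $i=2$: directly $W^{1,p}\hookrightarrow W^{s,p}$ for $0<s<1$. For $i=3$: $W^{s,4}(0,T;H^{-1})\hookrightarrow W^{s,p}(0,T;H^{-1})$, the embedding that lowers the integrability exponent while keeping the fractional smoothness $s$ fixed; this is valid on the bounded interval $(0,T)$ and is the one step that is not a one-line computation, although it is a standard fractional Sobolev (Besov) embedding on bounded intervals. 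Combining with the previous paragraph, each $\mathcal{W}_T^{i}$ embeds continuously into $Y:=C^{\alpha}([0,T];H^{-1})\cap W^{s,p}(0,T;H^{-1})$. Since $Y$ is a linear space, the infimum norm on $\mathcal{W}_T=\mathcal{W}_T^{1}+\mathcal{W}_T^{2}+\mathcal{W}_T^{3}$ gives, for every decomposition $B=B^{1}+B^{2}+B^{3}$ with $B^{i}\in\mathcal{W}_T^{i}$, the bound $||B||_{Y}\le\sum_{i}||B^{i}||_{Y}\lesssim\sum_{i}||B^{i}||_{\mathcal{W}_T^{i}}$; minimizing over decompositions yields $\mathcal{W}_T\hookrightarrow Y$ continuously, which is the first displayed inclusion.

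\emph{Conclusion.} Since $V\hookrightarrow H^{1}(\mathbb{T}^{d})$ we have $L^{2}(0,T;V)\hookrightarrow L^{2}(0,T;H^{1})$, so $\mathcal{K}_T=\mathcal{W}_T\cap L^{2}(0,T;V)$ embeds continuously into $Y\cap L^{2}(0,T;H^{1})=C^{\alpha}([0,T];H^{-1})\cap W^{s,p}(0,T;H^{-1})\cap L^{2}(0,T;H^{1})$, which is the first inclusion of the second display. Proposition 5.5 asserts that this last space is compactly embedded in $\mathcal{X}_T^{-\varepsilon}$ for every $\varepsilon>0$; since a bounded subset of $\mathcal{K}_T$ is sent to a bounded subset there, it is relatively compact in $\mathcal{X}_T^{-\varepsilon}$, i.e.\ $\mathcal{K}_T\Subset\mathcal{X}_T^{-\varepsilon}$, and the proof is complete. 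The one point I expect to require care is the fractional-in-time bookkeeping -- above all the embedding $W^{s,4}(0,T;H^{-1})\hookrightarrow W^{s,p}(0,T;H^{-1})$ on the bounded interval, and the fractional Morrey embedding $W^{s,4}\hookrightarrow C^{s-1/4}$ whose threshold matches the standing assumption $s>1/4$; everything else is routine.
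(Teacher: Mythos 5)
Your overall architecture coincides with the paper's proof: Morrey-type embeddings $H^{1}(0,T;H^{-1})\subset C^{1/2}$, $W^{1,p}(0,T;H^{-1})\subset C^{1-1/p}$, $W^{s,4}(0,T;H^{-1})\subset C^{s-1/4}$ for the H\"older part, elementary comparisons $H^{1}\subset W^{1,p}\subset W^{s,p}$ for $i=1,2$, the triangle-inequality bookkeeping on the sum space $\mathcal{W}_T$, and then intersection with $L^{2}(0,T;V)$ followed by Proposition 5.5 (its part (5.11)) for the compactness. All of that is correct and is exactly what the paper does; your observation that the statement forces $p>1$ (else no admissible $\alpha$ exists) is also fine.

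The genuine gap is the step you yourself flag for $i=3$: the claimed embedding $W^{s,4}(0,T;H^{-1})\hookrightarrow W^{s,p}(0,T;H^{-1})$ with the \emph{same} smoothness $s$ and lower integrability $p<4$ is not a standard fact — it is false in general, even on a bounded interval and even after intersecting with $C([0,T];H^{-1})$. Plain H\"older applied to the Gagliardo seminorm produces the weight $\int_0^T\int_0^T|t-\tau|^{-1}\,dt\,d\tau$, which diverges (one only gets $W^{\sigma,4}\subset W^{s,p}$ for $\sigma\geq s+1/p-1/4$), and identifying $W^{\sigma,q}=B^{\sigma}_{q,q}$ shows the obstruction is real: a lacunary/wavelet example whose level-$j$ coefficients all equal $2^{-j(s+1/2)}j^{-1/2}$ is continuous and lies in $B^{s}_{4,4}(0,T)$ but not in $B^{s}_{p,p}(0,T)$ for $p\leq 2$, reflecting $\ell^{4}\not\subset\ell^{p}$. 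To be fair, the paper's own proof of (5.12) for $i=3$ only records the H\"older embedding $W^{s,4}\subset C^{s-1/4}$ and is silent on the $W^{s,p}$ inclusion, so you are making explicit a point the paper glosses over rather than deviating from it. The repair is cheap and preserves everything downstream: since $s>\max\{1/4,1/p-1/2\}$ is strict, fix any $s'\in(1/p-1/2,s)$; then $W^{s,4}(0,T;H^{-1})\subset W^{s',p}(0,T;H^{-1})$ does hold on the bounded interval (e.g. $B^{s}_{4,4}\subset B^{s}_{p,4}\subset B^{s'}_{p,p}$), and Proposition 5.5 applies verbatim with $s'$ in place of $s$ because its proof uses only $s'+1/2-1/p>0$. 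With that substitution your argument, and the compactness $\mathcal{K}_T\Subset\mathcal{X}_T^{-\varepsilon}$ that the tightness proof actually needs, go through; as literally stated, however, the step you invoke would fail.
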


\begin{proof}
By $\mathcal{W}^{1}_{T}=C([0,T]; H^{-1})\cap H^{1}(0,T; H^{-1})\subset C^{1/2}([0,T]; H^{-1})$ and $H^{1}(0,T; H^{-1})\subset W^{s,p}(0,T; H^{-1})$ for $s<1/2$ and $1\leq p<2$, (5.12) holds for $i=1$. 

By $\mathcal{W}^{2}_{T}=C([0,T]; H^{-1})\cap W^{1,p}(0,T; H^{-1})\subset C^{1-1/p}([0,T]; H^{-1})\subset C^{\alpha}([0,T]; H^{-1})$ for $\alpha\leq 1-1/p$ and $W^{1,p}(0,T; H^{-1})\subset W^{s,p}(0,T; H^{-1})$ for $s<1/2$, (5.12) holds for $i=2$. 

By $\mathcal{W}^{3}_{T}=C([0,T]; H^{-1})\cap W^{s,4}(0,T; H^{-1})\subset C^{s-1/4}([0,T]; H^{-1})\subset C^{\alpha}([0,T]; H^{-1})$ for $\alpha\leq s-1/4$, (5.12) holds for $i=3$. By (5.12), $\mathcal{W}_{T}\subset C^{\alpha}([0,T]; H^{-1})\cap W^{s,p}(0,T; H^{-1})$. The compact embedding in (5.13) follows from (5.11).
\end{proof}

\subsection{Boundedness of statistically stationary solutions}

We show the boundedness (1.20$)_2$ by estimating the statistically stationary solution 
\begin{align}
B_{\kappa}(t)=\left(B_{\kappa}(0)+\kappa\int_{0}^{t}\Delta B_{\kappa}(s)ds\right)+\int_{0}^{t}\nabla \cdot (B_{\kappa}\otimes u_{\kappa}-u_{\kappa}\otimes B_{\kappa})ds+\sqrt{\kappa}\zeta:=B^{1}_{\kappa}+B^{2}_{\kappa}+B^{3}_{\kappa}
\end{align}
on each $\mathcal{W}^{i}$ for $i=1,2,3$. We apply the following estimate for $B^{3}_{\kappa}$ in $\mathcal{W}^{3}_{T}=C([0,T]; H^{-1})\cap W^{s,4}(0,T; H^{-1})$ \cite[p.222]{Kuk12}.

\begin{prop}
\begin{align}
\mathbb{E}\left\| \zeta\right\|_{\mathcal{W}_T^{3}}^{2}\lesssim (1+T^{\frac{1}{2}})T \mathsf{C}_0.
\end{align}
\end{prop}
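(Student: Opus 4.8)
The plan is to estimate separately the two constituents of the norm $\|\zeta\|_{\mathcal{W}_T^3}=\|\zeta\|_{C([0,T];H^{-1})}+|\zeta|_{W^{s,4}(0,T;H^{-1})}$, exploiting two structural features of the noise: the $\{e_j\}$ are pairwise orthogonal in $H^{-1}$ with $\|e_j\|_{H^{-1}}^2=(1+\lambda_j)^{-1}$ (indeed $-\Delta e_j=\lambda_j e_j$ forces $(I-\Delta)^{-1}e_j=(1+\lambda_j)^{-1}e_j$), and the Brownian increments $\beta_j(t)-\beta_j(\tau)\sim N(0,|t-\tau|)$ are independent in $j$. Abbreviate $a_j:=b_j^2(1+\lambda_j)^{-1}$, so $\sum_j a_j\le\sum_j b_j^2=\mathsf{C}_0$.

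For the $C([0,T];H^{-1})$-part I argue as in the proof of Proposition 3.3: the truncated noise $\zeta_N:=\sum_{j\le N}b_je_j\beta_j$ is an $H^{-1}$-valued martingale, hence $\|\zeta_N(t)\|_{H^{-1}}$ is a nonnegative submartingale with $\|\zeta_N(t)\|_{H^{-1}}^2=\sum_{j\le N}a_j\beta_j^2(t)$, and Doob's moment inequality gives $\mathbb{E}\sup_{0\le t\le T}\|\zeta_N(t)\|_{H^{-1}}^2\le 4\,\mathbb{E}\|\zeta_N(T)\|_{H^{-1}}^2=4T\sum_{j\le N}a_j$. Letting $N\to\infty$ by monotone convergence, $\mathbb{E}\|\zeta\|_{C([0,T];H^{-1})}^2\le 4T\mathsf{C}_0$.

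For the Gagliardo seminorm I work at the level of the fourth power, where the quantity is an integral of a nonnegative integrand, so Tonelli applies: $\mathbb{E}|\zeta|_{W^{s,4}(0,T;H^{-1})}^4=\int_0^T\!\int_0^T|t-\tau|^{-1-4s}\,\mathbb{E}\|\zeta(t)-\zeta(\tau)\|_{H^{-1}}^4\,d\tau\,dt$. Expanding $\|\zeta(t)-\zeta(\tau)\|_{H^{-1}}^2=\sum_j a_j\big(\beta_j(t)-\beta_j(\tau)\big)^2$, squaring, and using $\mathbb{E}[g^4]=3(\mathbb{E}g^2)^2$ for centered Gaussians together with the independence in $j$, one gets $\mathbb{E}\|\zeta(t)-\zeta(\tau)\|_{H^{-1}}^4=\big((\sum_j a_j)^2+2\sum_j a_j^2\big)|t-\tau|^2\le 3\mathsf{C}_0^2|t-\tau|^2$. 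Hence $\mathbb{E}|\zeta|_{W^{s,4}(0,T;H^{-1})}^4\le 3\mathsf{C}_0^2\int_0^T\!\int_0^T|t-\tau|^{1-4s}\,d\tau\,dt$, and the last integral converges precisely because $s<1/2$ (so $1-4s>-1$), with explicit value $\frac{2}{(2-4s)(3-4s)}T^{3-4s}$. Taking square roots and combining with the first bound through $\|\zeta\|_{\mathcal W_T^3}^2\le 2\|\zeta\|_{C([0,T];H^{-1})}^2+2|\zeta|_{W^{s,4}(0,T;H^{-1})}^2$ yields the polynomial-in-$T$ bound (5.16); only $s<1/2$, which makes $3-4s>0$, is needed to control the resulting power of $T$.

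The argument is essentially a computation, so there is no serious obstacle; the two points requiring a little care are (i) the supremum in the $C([0,T];H^{-1})$-norm sits inside the expectation, which is why one passes through the submartingale $\|\zeta_N(t)\|_{H^{-1}}$ and Doob's inequality rather than a pointwise-in-$t$ estimate, and (ii) checking convergence of the Gagliardo double integral, which is exactly the condition $s<1/2$ already imposed on $s$ (the lower bound $s>1/4$ plays no role here; it is needed only for the compact embedding of Lemma 5.6). All the underlying measurability and integrability are furnished by $\zeta\in L^2(\Omega;C([0,T];H))$ from Proposition 3.3.
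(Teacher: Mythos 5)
Your proof is correct and follows essentially the same route as the paper: the sup-in-time part via Doob's inequality for the martingale noise (the paper gets it from Proposition 3.3 in $H$ and the embedding $H\subset H^{-1}$, you redo Doob directly in $H^{-1}$), and the Gagliardo seminorm via the Gaussian fourth-moment bound $\mathbb{E}\|\zeta(t)-\zeta(\tau)\|^{4}\leq 3\mathsf{C}_0^{2}|t-\tau|^{2}$ together with Tonelli and the integrability of $|t-\tau|^{1-4s}$ for $s<1/2$ (you compute the moment exactly using independence, the paper uses Cauchy--Schwarz; same bound). The only point left implicit is the $L^{4}(0,T;H^{-1})$ component of the $W^{s,4}$ norm, which the paper bounds explicitly but which is anyway dominated by $T^{1/4}$ times the $C([0,T];H^{-1})$ norm you already control.
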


\begin{proof}
By Proposition 3.3, $\zeta\in L^{2}(\Omega; C([0,T]; H))$ and $\mathbb{E}||\zeta||_{C([0,T]; H)}^{2}\lesssim \mathsf{C}_0T$. By $H \subset H^{-1}$, $\mathbb{E}||\zeta||_{C([0,T]; H^{-1})}^{2} \lesssim \mathsf{C}_0T$. For $0\leq \tau\leq t$,
\begin{align*}
||\zeta(t)-\zeta(\tau)||_{H}^{4}=\left(\sum_{j=1}^{\infty}b_j^{2}|\beta_j(t)-\beta_j(\tau)|^{2} \right)^{2}
=\sum_{i,j=1}^{\infty}b_i^{2}b_j^{2}|\beta_i(t)-\beta_i(\tau)|^{2}|\beta_j(t)-\beta_j(\tau)|^{2}. 
\end{align*}
Since $\beta_j(t)-\beta_j(\tau)$ is normally distributed with mean zero and variance $t-\tau$, the fourth-moment is $\mathbb{E}|\beta_j(t)-\beta_j(\tau)|^{4}=3(t-\tau)^{2}$. Thus,  
\begin{align*}
\mathbb{E}||\zeta(t)-\zeta(\tau)||_{H}^{4}
\leq \sum_{i,j=1}^{\infty}b_i^{2}b_j^{2}\left(\mathbb{E} |\beta_i(t)-\beta_i(\tau)|^{4}\right)^{\frac{1}{2}} \left(\mathbb{E} |\beta_i(t)-\beta_i(\tau)|^{4}\right)^{\frac{1}{2}}
=3(t-\tau)^{2} \mathsf{C}_0^{2}.
\end{align*}
By integrating in time using the condition $1/4<s<1/2$,
\begin{align*}
\mathbb{E}||\zeta||_{L^{4}(0,T; H)}^{4}&\leq T^{3} \mathsf{C}_0^{2},\\
\mathbb{E} \int_{0}^{T}\int_{0}^{T} \frac{||\zeta(t)-\zeta(\tau)||_{H}^{4} }{|t-\tau|^{1+4s}}d\tau dt 
&\lesssim T^{3} \mathsf{C}_0^{2}. 
\end{align*}
Thus, $\mathbb{E}||\zeta||_{W^{s,4}(0,T; H)}^{4}\lesssim T^{3} \mathsf{C}_0^{2}$ and (5.15) follows.
\end{proof}

We estimate $B^{1}_{\kappa}$ in $\mathcal{W}^{1}_{T}=C([0,T]; H^{-1})\cap H^{1}(0,T; H^{-1})$.

\begin{prop}
\begin{align}
\mathbb{E}\left\|B^{1}_{\kappa}\right\|_{\mathcal{W}^{1}_{T}}^{2}\lesssim \left(1+T\right)\mathsf{C}_0+\kappa^{2}(1+T+T^{3})T\mathsf{C}_0.
\end{align}
\end{prop}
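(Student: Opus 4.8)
The plan is to estimate $B^{1}_{\kappa}$ separately in each of the norms making up $\|\cdot\|_{\mathcal{W}^{1}_{T}}$ — the $C([0,T];H^{-1})$-norm and the $L^{2}(0,T;H^{-1})$-norms of $B^{1}_{\kappa}$ and of $\partial_t B^{1}_{\kappa}$ — and in each case to reduce to the statistical energy equality of Lemma 5.1 via the Poincar\'e inequality and Cauchy--Schwarz in time. First I would record, for average-zero solenoidal $g$, the two elementary Fourier-side bounds $\|\Delta g\|_{H^{-1}}\le\|\nabla g\|_{H}$ (since $|k|^{4}/(|k|^{2}+1)\le|k|^{2}$) and $\|g\|_{H^{-1}}\le\|g\|_{H}\le\lambda_{1}^{-1/2}\|\nabla g\|_{H}$. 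Since $B_{\kappa}\in\mathcal{X}_{T}\subset L^{2}(0,T;V)$ almost surely, $B^{1}_{\kappa}$ is an $H^{-1}$-valued absolutely continuous path with $\partial_t B^{1}_{\kappa}=\kappa\Delta B_{\kappa}$, so the first bound gives $\|\partial_t B^{1}_{\kappa}\|_{L^{2}(0,T;H^{-1})}^{2}\le\kappa^{2}\|\nabla B_{\kappa}\|_{L^{2}(0,T;H)}^{2}$; the triangle inequality followed by Cauchy--Schwarz in the time variable gives $\sup_{0\le t\le T}\|B^{1}_{\kappa}(t)\|_{H^{-1}}\le\|B_{\kappa}(0)\|_{H^{-1}}+\kappa\,T^{1/2}\|\nabla B_{\kappa}\|_{L^{2}(0,T;H)}$, and the $L^{2}(0,T;H^{-1})$-norm of $B^{1}_{\kappa}$ is at most $T^{1/2}$ times this supremum.

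Next I would take expectations. Integrating the energy balance in time, i.e.\ (5.6), gives $\mathbb{E}\|\nabla B_{\kappa}\|_{L^{2}(0,T;H)}^{2}\le\mathsf{C}_0 T/2$, while (5.1) (which holds at every time, $B_{\kappa}$ being statistically stationary) gives the $\kappa$-uniform bound $\mathbb{E}\|\nabla B_{\kappa}\|_{H}^{2}\le\mathsf{C}_0/2$ and hence, by the Poincar\'e inequality above, $\mathbb{E}\|B_{\kappa}(0)\|_{H^{-1}}^{2}\le\lambda_{1}^{-1}\mathsf{C}_0/2$. Substituting these three bounds into the pathwise estimates of the previous paragraph and summing the $C([0,T];H^{-1})$-, $L^{2}(0,T;H^{-1})$-, and $\partial_t$-contributions, one gets $\mathbb{E}\|B^{1}_{\kappa}\|_{\mathcal{W}^{1}_{T}}^{2}\lesssim(1+T)\mathsf{C}_0+\kappa^{2}(T+T^{2}+T^{3})\mathsf{C}_0$; since $T^{3}\le T+T^{4}$ (treat $T\le1$ and $T\ge1$ separately), one has $T+T^{2}+T^{3}\lesssim(1+T+T^{3})T$, which is the asserted estimate.

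I do not expect any real obstacle: once Lemma 5.1 is in hand the rest is bookkeeping of powers of $T$ and $\kappa$. The one point worth stressing is that the bound on $\mathbb{E}\|\nabla B_{\kappa}\|_{H}^{2}$ — and, crucially, the fact that it comes with no factor of $\kappa^{-1}$ — is exactly the effect of the $\sqrt{\kappa}$-scaling of the stochastic force as recorded in (5.1) and (5.6); it is this $\kappa$-uniformity that is inherited by the constant in the present estimate, and ultimately by the moment bound needed for the tightness of $\{\boldsymbol{\mu}_{\kappa}\}$.
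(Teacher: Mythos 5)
Your argument is correct and is essentially the paper's own proof: the same splitting into the $C([0,T];H^{-1})$-, $L^{2}(0,T;H^{-1})$-, and $\partial_t$-contributions, the same Cauchy--Schwarz in time, and the same reduction to the $\kappa$-uniform bounds from (5.1) and (5.6). The only cosmetic differences are that you bound $\mathbb{E}\|B_{\kappa}(0)\|_{H^{-1}}^{2}$ directly via Poincar\'e rather than through $\|B_{\kappa}(0)\|_{H^{1}}$, and your power count gives $\kappa^{2}(T+T^{2}+T^{3})\mathsf{C}_0$, which indeed implies the stated $\kappa^{2}(1+T+T^{3})T\,\mathsf{C}_0$.
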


\begin{proof}
By $\partial_t B^{1}_{\kappa}=\kappa \Delta B_{\kappa}$, 
\begin{align*}
||\partial_t B^{1}_{\kappa}||_{L^{2}(0,T; H^{-1})}\leq \kappa ||\nabla B_{\kappa}||_{L^{2}(0,T; H)}. 
\end{align*}
By (5.1), 
\begin{align*}
\mathbb{E}\left(\left\|\partial_t B^{1}_{\kappa}\right\|_{L^{2}(0,T; H^{-1})}^{2}\right)
\lesssim \kappa^{2}T\mathsf{C}_0.
\end{align*}
By H\"older's inequality,
\begin{align*}
\left\|B^1_{\kappa}\right\|_{H^{-1}}\leq ||B_{\kappa}(0)||_{H^{-1}}+\kappa  \int_{0}^{t}||\nabla B_{\kappa}||_{H}ds
\leq ||B_{\kappa}(0)||_{H^{1}}+\kappa t^{\frac{1}{2}} ||\nabla B_{\kappa}||_{L^{2}(0,T; H)}.
\end{align*}
By taking the sup-norm and the $L^{2}$-norm, 
\begin{align*}
\left\|B^1_{\kappa}\right\|_{C([0,T]; H^{-1})\cap L^{2}(0,T; H^{-1})}
\lesssim (1+T^{\frac{1}{2}})||B_{\kappa}(0)||_{H^{1}}+\kappa T^{\frac{1}{2}}(1+T) ||\nabla B_{\kappa}||_{L^{2}(0,T; H)}.
\end{align*}
By taking the mean and using (5.1) and (5.6), 
\begin{align*}
\mathbb{E}\left\|B^1_{\kappa}\right\|_{C([0,T]; H^{-1})\cap L^{2}(0,T; H^{-1})}^{2}
\lesssim \left(1+T\right)\mathsf{C}_0+\kappa^{2}T^{2}(1+T^{2})\mathsf{C}_0.
\end{align*}
By combinning this with $\mathbb{E}\left(\left\|\partial_t B^{1}_{\kappa}\right\|_{L^{2}(0,T; H^{-1})}^{2}\right)\lesssim \kappa^{2}T\mathsf{C}_0$, we obtain (5.16).
\end{proof}

We estimate $B^{2}_{\kappa}$ in $\mathcal{W}^{2}_{T}=C([0,T]; H^{-1})\cap W^{1,p}(0,T; H^{-1})$.

\begin{prop}
\begin{align}
\mathbb{E}\left\| B^{2}_{\kappa}\right\|_{\mathcal{W}_T^{2}}^{p}\lesssim \frac{1}{\rho^{p}}(1+T^{p-1}+T^{p})T(\mathsf{C}_0+1)e^{(1-\frac{p}{2})\rho(\mathsf{C}_0+1) },
\end{align}
for the constant $\rho$ satisfying the condition (5.2).
\end{prop}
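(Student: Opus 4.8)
The plan is to reduce the estimate to an $L^{p}$-in-time bound on the nonlinear flux $F_{\kappa}:=B_{\kappa}\otimes u_{\kappa}-u_{\kappa}\otimes B_{\kappa}$, and then to close it using the cubic estimate together with the two $\kappa$-uniform moment bounds already available: the dissipation bound $\mathbb{E}\|\nabla B_{\kappa}\|_{H}^{2}\leq\mathsf{C}_0/2$ extracted from (5.1) (here the factor $\kappa$ cancels), and the exponential moment bound (5.2). Since $B^{2}_{\kappa}(t)=\int_{0}^{t}\nabla\cdot F_{\kappa}(s)\,ds$, one has $\partial_{t}B^{2}_{\kappa}=\nabla\cdot F_{\kappa}$ as an $H^{-1}$-valued function, and $\|\nabla\cdot G\|_{H^{-1}}\lesssim\|G\|_{L^{2}}$ gives $\|\partial_{t}B^{2}_{\kappa}\|_{L^{p}(0,T;H^{-1})}\lesssim\|F_{\kappa}\|_{L^{p}(0,T;L^{2})}$; likewise, H\"older in time (using $p<2$) yields $\|B^{2}_{\kappa}(t)\|_{H^{-1}}\leq\int_{0}^{t}\|F_{\kappa}\|_{L^{2}}\,ds\leq T^{1-1/p}\|F_{\kappa}\|_{L^{p}(0,T;L^{2})}$, so that $\|B^{2}_{\kappa}\|_{C([0,T];H^{-1})}\lesssim T^{1-1/p}\|F_{\kappa}\|_{L^{p}(0,T;L^{2})}$ and $\|B^{2}_{\kappa}\|_{L^{p}(0,T;H^{-1})}\lesssim T\|F_{\kappa}\|_{L^{p}(0,T;L^{2})}$. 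Summing the $C([0,T];H^{-1})$, $L^{p}(0,T;H^{-1})$ and $\partial_{t}$ contributions to the $\mathcal{W}^{2}_{T}$-norm,
\begin{align*}
\|B^{2}_{\kappa}\|_{\mathcal{W}^{2}_{T}}^{p}\lesssim_{p}(1+T^{p-1}+T^{p})\,\|F_{\kappa}\|_{L^{p}(0,T;L^{2})}^{p}.
\end{align*}

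It then remains to estimate $\mathbb{E}\|F_{\kappa}\|_{L^{p}(0,T;L^{2})}^{p}=\int_{0}^{T}\mathbb{E}\|F_{\kappa}(t)\|_{L^{2}}^{p}\,dt$. By the cubic estimate (2.18), which holds for $K_{\gamma}$ with $\gamma>d/2$ by Remark 2.15, and by $\|B_{\kappa}\otimes u_{\kappa}\|_{L^{2}}=\|u_{\kappa}\otimes B_{\kappa}\|_{L^{2}}$ with $u_{\kappa}=K_{\gamma}(B_{\kappa},B_{\kappa})$, for a.e.\ $t$
\begin{align*}
\|F_{\kappa}(t)\|_{L^{2}}\lesssim\|K_{\gamma}(B_{\kappa},B_{\kappa})\otimes B_{\kappa}\|_{H}(t)\lesssim\|B_{\kappa}(t)\|_{H}^{2}\,\|B_{\kappa}(t)\|_{H^{1}}
\end{align*}
(the a.s.\ integrability $F_{\kappa}\in L^{p}(0,T;L^{2})$, needed to identify $\partial_{t}B^{2}_{\kappa}$, follows from $B_{\kappa}\in\mathcal{X}_{T}\subset\mathcal{Y}_{T}$ a.s.\ and (2.22)). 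Applying H\"older in $\omega$ with the conjugate exponents $\tfrac{2}{2-p}$ and $\tfrac{2}{p}$ (this is the step that forces $p<2$),
\begin{align*}
\mathbb{E}\|F_{\kappa}(t)\|_{L^{2}}^{p}\lesssim\bigl(\mathbb{E}\|B_{\kappa}(t)\|_{H}^{4p/(2-p)}\bigr)^{(2-p)/2}\bigl(\mathbb{E}\|B_{\kappa}(t)\|_{H^{1}}^{2}\bigr)^{p/2}.
\end{align*}
For the second factor, $\|B_{\kappa}\|_{H^{1}}^{2}\lesssim\|\nabla B_{\kappa}\|_{H}^{2}$ (average-zero) and (5.1) give $\mathbb{E}\|B_{\kappa}(t)\|_{H^{1}}^{2}\lesssim\mathsf{C}_0$, uniformly in $\kappa$ and $t$. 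For the first factor, (5.2) gives $\mathbb{E}\|B_{\kappa}\|_{H}^{2n}\leq\rho^{-n}n!\,(\mathsf{C}_0+1)e^{\rho(\mathsf{C}_0+1)}$ for $n\in\mathbb{N}$, so, choosing $n$ minimal with $2n\geq 4p/(2-p)$ and interpolating, $\mathbb{E}\|B_{\kappa}(t)\|_{H}^{4p/(2-p)}\lesssim_{p}\rho^{-2p/(2-p)}(\mathsf{C}_0+1)e^{\rho(\mathsf{C}_0+1)}$; raising this to the power $(2-p)/2$ turns $\rho^{-2p/(2-p)}$ into $\rho^{-p}$ and $e^{\rho(\mathsf{C}_0+1)}$ into $e^{(1-p/2)\rho(\mathsf{C}_0+1)}$. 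Multiplying the two factors and using $\mathsf{C}_0^{p/2}(\mathsf{C}_0+1)^{(2-p)/2}\leq\mathsf{C}_0+1$ gives $\mathbb{E}\|F_{\kappa}(t)\|_{L^{2}}^{p}\lesssim_{p}\rho^{-p}(\mathsf{C}_0+1)e^{(1-p/2)\rho(\mathsf{C}_0+1)}$ uniformly in $t\in[0,T]$; integrating in $t$ and combining with the previous display yields (5.17).

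The work here is less a genuine obstacle than a matter of exponent bookkeeping: the cubic nonlinearity forces one to spend a full power $\|B_{\kappa}\|_{H^{1}}$, which is controllable only in $L^{2}_{\omega}$ through the dissipation identity (5.1) (crucially $\kappa$-uniform), and this pins down the H\"older pair $\tfrac{2}{2-p},\tfrac{2}{p}$ and is exactly why the estimate is confined to $p<2$; one then has to verify that the residual moment $\mathbb{E}\|B_{\kappa}\|_{H}^{4p/(2-p)}$, handled via the exponential bound (5.2), regenerates precisely the factor $\rho^{-p}e^{(1-p/2)\rho(\mathsf{C}_0+1)}$, and that the time integrations of the $C$, $L^{p}$ and $\partial_{t}$ parts of the $\mathcal{W}^{2}_{T}$-norm scale as $T^{1-1/p}$, $T$ and $1$, which is the origin of the prefactor $(1+T^{p-1}+T^{p})T$ in (5.17).
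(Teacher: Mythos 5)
Your proof is correct and follows essentially the same route as the paper: the same reduction of the $\mathcal{W}^{2}_{T}$-norm of $B^{2}_{\kappa}$ to $\|F_{\kappa}\|_{L^{p}(0,T;L^{2})}$ with the same time factors $(1+T^{p-1}+T^{p})$, the same cubic estimate (2.18) via Remark 2.15, and the same H\"older pairing $\tfrac{2}{2-p},\tfrac{2}{p}$ against the dissipation and exponential moment bounds. The only (harmless) difference is that you apply H\"older in $\omega$ pointwise in $t$, invoking stationarity of the law together with (5.1) and (5.2), whereas the paper first applies H\"older in time inside the cubic estimate and then uses the time-integrated bounds (5.6)--(5.7); the resulting constants agree.
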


\begin{proof}
By $\partial_t B_{\kappa}^{2}=\nabla \cdot F$ for $F=B_{\kappa}\otimes u_{\kappa}-u_{\kappa}\otimes B_{\kappa}$, 
\begin{align*}
\left\|\partial_t B_{\kappa}^{2}\right\|_{L^{p}(0,T; H^{-1}) }\leq ||F||_{L^{p}(0,T; H)}.
\end{align*}
By $B^{2}_{\kappa}(0)=0$, for the conjugate exponent $p'$ of $p$, 
\begin{align*}
\left\|B^{2}_{\kappa}\right\|_{H^{-1}}
\leq \int_{0}^{t}\left\|\partial_s B^{2}_{\kappa}\right\|_{H^{-1}}ds
\leq t^{\frac{1}{p'}}||F||_{L^{p}(0,T; H) }.
\end{align*}
By taking the supremum and the $L^{p}$-norm in time,   
\begin{align*}
\left\|B^{2}_{\kappa}\right\|_{C([0,T]; H^{-1})\cap L^{p}(0,T; H^{-1})}\lesssim (T^{\frac{1}{p'}}+T)||F||_{L^{p}(0,T; H)}.
\end{align*}
By combinning this estimate with $\left\|\partial_t B_{\kappa}^{2}\right\|_{L^{p}(0,T; H^{-1}) }\leq ||F||_{L^{p}(0,T; H)}$, we obtain 
\begin{align*}
\left\|B^{2}_{\kappa}\right\|_{W_T^{2}}^{p}\lesssim (1+T^{p-1}+T^{p})\left\|F\right\|_{L^{p}(0,T; H) }^{p}.
\end{align*}
By applying the cubic estimate (2.18) for $K_{\gamma}$ as noted in Remark 2.15, 
\begin{align*}
||F||_{L^{p}(0,T; H) }\lesssim ||B||_{L^{\frac{4p}{2-p}}(0,T; H) }^{2}||B||_{L^{2}(0,T; H^{1}) }.
\end{align*}
By H\"older's inequality,
\begin{align*}
\mathbb{E}||F||_{L^{p}(0,T; H)}^{p}
\lesssim 
\left( \mathbb{E}||B||_{L^{\frac{4p}{2-p}}(0,T; H) }^{\frac{4p}{2-p}} \right)^{1-\frac{p}{2}}
\left( \mathbb{E}||B||_{L^{2}(0,T; H^{1})}^{2} \right)^{\frac{p}{2}}.
\end{align*}
We apply (5.6) and (5.7) for $r=4p/(2-p)$ and obtain (5.17).
\end{proof}

\begin{lem}[Boundedness of statistically stationary solutions]
\begin{align}
\mathbb{E}||B_{\kappa}||_{\mathcal{K}_T}^{p} \leq C,\quad 0<\kappa\leq 1,
\end{align}
for some constant $C=C(\mathsf{C}_0,\rho, T, s,p)>0$.  
\end{lem}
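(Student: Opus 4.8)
The plan is to assemble the three component estimates of Propositions 5.8, 5.9, and 5.10 with the energy balance (5.6) and the exponential moment bound (5.7), using nothing more than the definition of the sum-space norm and Jensen's inequality. Since the $\mathcal{K}_T$-norm is equivalent to $\|\cdot\|_{\mathcal{W}_T}+\|\cdot\|_{L^2(0,T;H^1)}$, we have the pointwise inequality $\|B_\kappa\|_{\mathcal{K}_T}^p\lesssim_p\|B_\kappa\|_{\mathcal{W}_T}^p+\|B_\kappa\|_{L^2(0,T;H^1)}^p$, so it suffices to bound $\mathbb{E}\|B_\kappa\|_{\mathcal{W}_T}^p$ and $\mathbb{E}\|B_\kappa\|_{L^2(0,T;H^1)}^p$ uniformly in $0<\kappa\le 1$.

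For the $\mathcal{W}_T$-part: the norm $\|\cdot\|_{\mathcal{W}_T}$ is an infimum over admissible splittings, so the explicit decomposition (5.14), $B_\kappa=B_\kappa^1+B_\kappa^2+B_\kappa^3$ with $B_\kappa^i\in\mathcal{W}_T^i$, already furnishes the bound $\|B_\kappa\|_{\mathcal{W}_T}\le\sum_{i=1}^3\|B_\kappa^i\|_{\mathcal{W}_T^i}$. Raising to the power $p\ge 1$ and using $(a+b+c)^p\le 3^{p-1}(a^p+b^p+c^p)$ reduces matters to bounding $\mathbb{E}\|B_\kappa^i\|_{\mathcal{W}_T^i}^p$ for $i=1,2,3$. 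For $i=2$ this is exactly (5.17). For $i=1$ and $i=3$ only second moments are available: Proposition 5.9 gives $\mathbb{E}\|B_\kappa^1\|_{\mathcal{W}_T^1}^2\lesssim(1+T)\mathsf{C}_0+\kappa^2(1+T+T^3)T\mathsf{C}_0$, while $B_\kappa^3=\sqrt{\kappa}\,\zeta$ together with Proposition 5.8 gives $\mathbb{E}\|B_\kappa^3\|_{\mathcal{W}_T^3}^2=\kappa\,\mathbb{E}\|\zeta\|_{\mathcal{W}_T^3}^2\lesssim\kappa(1+T^{1/2})T\mathsf{C}_0$. All the $\kappa$-prefactors are powers of $\kappa$ with nonnegative exponent, hence bounded on $(0,1]$, so both right-hand sides are dominated by a constant depending only on $\mathsf{C}_0$ and $T$; since $p<2$, Jensen's inequality $\mathbb{E}X^p\le(\mathbb{E}X^2)^{p/2}$ converts these into the required $p$-th moment bounds.

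For the $L^2(0,T;H^1)$-part: dividing the energy balance (5.6) by $\kappa$ gives $\mathbb{E}\|\nabla B_\kappa\|_{L^2(0,T;H)}^2\le\mathsf{C}_0T/2$ uniformly in $\kappa$, and (5.7) with $r=2$ gives $\mathbb{E}\|B_\kappa\|_{L^2(0,T;H)}^2\lesssim\rho^{-2}T(\mathsf{C}_0+1)e^{\rho(\mathsf{C}_0+1)}$. Adding these and applying Jensen once more (using $p<2$) yields $\mathbb{E}\|B_\kappa\|_{L^2(0,T;H^1)}^p\le C$ uniformly in $0<\kappa\le1$. Combining with the $\mathcal{W}_T$ estimate gives (5.18).

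There is no genuinely hard step; the lemma is a bookkeeping assembly of estimates already in hand. The only points meriting care are (a) exploiting that the sum-space norm is an infimum, so the explicit decomposition (5.14) legitimately provides an upper bound, and (b) the exponent accounting: the nonlinear piece $B_\kappa^2$ is controlled only at the $p$-th moment level with $p<2$ (owing to the cubic nonlinearity, via the cubic estimate (2.18) used in Proposition 5.10 — this is precisely why the statement is restricted to $1\le p<2$), whereas the linear and stochastic pieces come with full second moments, so Jensen's inequality is invoked to put everything on the common footing of $p$-th moments. One should also record explicitly that the final constant $C$ absorbs only powers of $\kappa$ bounded on $(0,1]$, hence is indeed $\kappa$-independent.
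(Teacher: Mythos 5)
Your proposal is correct and is essentially the paper's argument: the paper's proof of this lemma is a one-line assembly of (5.15)--(5.17) and (5.6), and you have simply filled in the bookkeeping (the sum-space norm bounded via the explicit decomposition (5.14), Jensen's inequality to pass from second to $p$-th moments for the linear and stochastic pieces, and uniformity in $0<\kappa\le 1$ since all $\kappa$-prefactors carry nonnegative powers). No gaps.
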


\begin{proof}
The estimate (5.18) follows from (5.15)-(5.17) and (5.6). 
\end{proof}

\begin{lem}[Tightness of lifted invariant measures]
For $\varepsilon>0$, there exists a subsequence such that $\boldsymbol{\mu}_{\kappa}\to\boldsymbol{\mu}_0$ in $\mathcal{P}(\mathcal{X}_T^{-\varepsilon})$ as $\kappa\to0$. The measure $\boldsymbol{\mu}_0$ satisfies $\boldsymbol{\mu}_0(L^{2}(0,T; V))=1$.
\end{lem}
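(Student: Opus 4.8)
The plan is to obtain tightness of $\{\boldsymbol{\mu}_\kappa\}$ in $\mathcal{P}(\mathcal{X}_T^{-\varepsilon})$ from the uniform moment bound (5.18) and the compact embedding (5.13), to extract the subsequential weak limit via Prokhorov's theorem, and finally to identify the support property via Portmanteau's theorem.

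First I would record that $\mathcal{X}_T^{-\varepsilon}=C([0,T];H^{-\varepsilon})\cap L^2(0,T;H^{1-\varepsilon})$, equipped with the maximum of the two norms, is a separable Banach space, so Prokhorov's theorem (Proposition 4.5) applies on it. For $R>0$ set $\mathcal{K}_{T,R}=\{B\in\mathcal{K}_T:\|B\|_{\mathcal{K}_T}\le R\}$; by (5.13) the closure $\overline{\mathcal{K}_{T,R}}$ of $\mathcal{K}_{T,R}$ in $\mathcal{X}_T^{-\varepsilon}$ is compact. Chebyshev's inequality and (5.18) give
\begin{align*}
\boldsymbol{\mu}_\kappa\bigl(\mathcal{X}_T^{-\varepsilon}\setminus\overline{\mathcal{K}_{T,R}}\bigr)
\le \mathbb{P}\bigl(\|B_\kappa\|_{\mathcal{K}_T}>R\bigr)\le \frac{C}{R^p},\quad 0<\kappa\le 1.
\end{align*}
Given $\delta>0$, taking $R=(C/\delta)^{1/p}$ exhibits a compact set $\overline{\mathcal{K}_{T,R}}\Subset\mathcal{X}_T^{-\varepsilon}$ with $\boldsymbol{\mu}_\kappa(\overline{\mathcal{K}_{T,R}})\ge 1-\delta$ for every $\kappa\in(0,1]$, so $\{\boldsymbol{\mu}_\kappa\}_{0<\kappa\le 1}$ is tight. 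Prokhorov's theorem then produces a sequence $\kappa_n\to 0$ and a probability measure $\boldsymbol{\mu}_0\in\mathcal{P}(\mathcal{X}_T^{-\varepsilon})$ with $\boldsymbol{\mu}_{\kappa_n}\to\boldsymbol{\mu}_0$ weakly in $\mathcal{P}(\mathcal{X}_T^{-\varepsilon})$.

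It remains to verify $\boldsymbol{\mu}_0(L^2(0,T;V))=1$. Here I would first check that, for each $R>0$, the set $F_R=\{B\in\mathcal{X}_T^{-\varepsilon}:\|B\|_{L^2(0,T;V)}\le R\}$ is closed in $\mathcal{X}_T^{-\varepsilon}$: if $B_n\to B$ in $\mathcal{X}_T^{-\varepsilon}$ with $\sup_n\|B_n\|_{L^2(0,T;V)}\le R$, then $\{B_n\}$ is bounded in the Hilbert space $L^2(0,T;V)$, hence a subsequence converges weakly there, and the weak limit must coincide with $B$ since both limits agree when tested against a common dense family (e.g.\ smooth solenoidal functions compactly supported in time); thus $B\in L^2(0,T;V)$ and $\|B\|_{L^2(0,T;V)}\le R$ by weak lower semicontinuity of the norm. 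Next, (5.6) (which after dividing by $\kappa$ gives $\mathbb{E}\|\nabla B_\kappa\|_{L^2(0,T;H)}^2\le\mathsf{C}_0T/2$) together with (5.7) for $r=2$ produces a constant $C'$ independent of $\kappa$ with $\mathbb{E}\|B_\kappa\|_{L^2(0,T;V)}^2\le C'$, so Chebyshev's inequality gives $\boldsymbol{\mu}_\kappa(\mathcal{X}_T^{-\varepsilon}\setminus F_R)\le C'/R^2$. Applying the closed-set form of Portmanteau's theorem (Proposition 4.4) along $\kappa_n$,
\begin{align*}
\boldsymbol{\mu}_0(F_R)\ge\limsup_{n\to\infty}\boldsymbol{\mu}_{\kappa_n}(F_R)\ge 1-\frac{C'}{R^2},
\end{align*}
and letting $R\to\infty$, using $F_R\uparrow L^2(0,T;V)$, gives $\boldsymbol{\mu}_0(L^2(0,T;V))=1$.

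The substantive estimates are already in place — the compact embedding (5.13) and the uniform moment bound (5.18) — so the argument is essentially a Chebyshev–Prokhorov–Portmanteau assembly. The one point needing a little care, and where I would expect the only nontrivial effort, is the closedness of the sublevel sets $F_R$ in the weaker topology of $\mathcal{X}_T^{-\varepsilon}$, which rests on weak lower semicontinuity of the $L^2(0,T;V)$-norm and on uniqueness of distributional limits.
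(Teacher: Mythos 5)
Your proof is correct, and the first half (Chebyshev with the $p$-th moment bound (5.18), compactness of closed $\mathcal{K}_T$-balls via (5.13), then Prokhorov) is exactly the paper's argument. Where you diverge is in the proof that $\boldsymbol{\mu}_0(L^2(0,T;V))=1$: you show that the sublevel sets $F_R=\{\|B\|_{L^2(0,T;V)}\le R\}$ are closed in $\mathcal{X}_T^{-\varepsilon}$ (by weak compactness of bounded sets in the Hilbert space $L^2(0,T;V)$, identification of the weak limit with the $\mathcal{X}_T^{-\varepsilon}$-limit through distributional testing, and weak lower semicontinuity of the norm), and then apply the closed-set form of Portmanteau along the subsequence together with the uniform bound $\mathbb{E}\|B_\kappa\|_{L^2(0,T;V)}^2\le C'$ coming from (5.6) and (5.7). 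The paper instead avoids any closedness/lower-semicontinuity argument by working with the bounded continuous functionals $\varphi_R\circ\pi_N$ (truncations of $\|\nabla\pi_N B\|_{L^2(0,T;H)}^2$ built from finite Fourier projections), passing to the limit $\kappa\to0$ in $(\varphi_R\circ\pi_N,\boldsymbol{\mu}_\kappa)\le \mathsf{C}_0T/2$, and then letting $R\to\infty$ and $N\to\infty$ via Fatou; this yields the quantitative estimate $\int\|\nabla B\|_{L^2(0,T;H)}^2\,\boldsymbol{\mu}_0(dB)\le \mathsf{C}_0T/2$, from which full measure of $L^2(0,T;V)$ follows as in Proposition 5.2. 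Both routes are sound; yours is somewhat more direct and avoids the double Fatou argument, but note that the paper's version buys more than the qualitative statement: the explicit moment bound on $\boldsymbol{\mu}_0$ obtained at the end of its proof is reused verbatim in the proof of Theorem 5.15 (inequality (5.25)), so if you replaced the paper's argument by yours you would need to recover that bound separately (which your $F_R$ construction plus monotone convergence can in fact also deliver, but you should say so).
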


\begin{proof}
Since $B_{\kappa}\in \mathcal{K}_T$ almost surely by (5.18), $\boldsymbol{\mu}_{\kappa}(\mathcal{K}_T)$=1. We denote by $B_{\mathcal{K}_T}(R)$ an open ball with radius $R>0$ in $\mathcal{K}_T$. By the compactness (5.13), $\overline{B_{\mathcal{K}_T}(R)}\Subset \mathcal{X}_T^{-\varepsilon}$. By (5.18),
\begin{align*}
\boldsymbol{\mu}_{\kappa}(\overline{B_{\mathcal{K}_T}(R)}^{c})
=\int_{\{||B||_{\mathcal{K}_T}>R\}}\boldsymbol{\mu}_{\kappa}(dB)
\leq \frac{1}{R^{p}}\int_{L^{2}_{\textrm{loc}}([0,\infty); V)} ||B||_{\mathcal{K}_T}^{p}\boldsymbol{\mu}_{\kappa}(dB)
=\frac{1}{R^{p}} \mathbb{E}||B_{\kappa}||_{\mathcal{K}_T}^{p}\leq \frac{C}{R^{p}}.
\end{align*}
Thus, $\boldsymbol{\mu}_{\kappa}(\overline{B_{\mathcal{K}_T}(R)})\geq 1-C/R^{p}$ and the measures $\{\boldsymbol{\mu}_{\kappa}\}$ are tight. By Prokhorov theorem (Proposition 4.5), the convergence of the measures follows.

By (5.6),
\begin{align*}
\int_{L^{2}_{\textrm{loc}}([0,\infty); V)  }||\nabla B||_{L^{2}(0,T; H)}^{2}\boldsymbol{\mu}_k(d B)\leq \frac{\mathsf{C}_0}{2}T.
\end{align*}
We set the projection $\pi_N: H\to H\cap C^{\infty}$ by 
\begin{align*}
\pi_NB=\sum_{|k|\leq N}\hat{B}_{k}e^{ik\cdot x}.
\end{align*}
The projection $\pi_N: \mathcal{X}_T^{-\varepsilon}\to L^{2}(0,T; H^{1})$ is bounded and 
\begin{align*}
||\nabla \pi_N  B||_{L^{2}(0,T; H)}^{2}=\int_{0}^{T}\sum_{|k|\leq N}|k|^{2} |\hat{B}_k|^{2}dt
\leq \int_{0}^{T}\sum_{k\in \mathbb{Z}_0^{d}}|k|^{2} |\hat{B}_k|^{2}dt=|| \nabla B||_{L^{2}(0,T; H)}^{2}.
\end{align*}
We set a bounded and continuous function $\varphi_R: L^{2}(0,T; H^{1})\to [0,\infty)$ by
\begin{align*}
\varphi_R(B)=\begin{cases}
\quad ||\nabla B||_{L^{2}(0,T; H)}^{2},& \quad ||\nabla B||_{L^{2}(0,T; H)}\leq R,\\
\quad R^{2},&\quad ||\nabla B||_{L^{2}(0,T; H)}> R.
\end{cases}
\end{align*}
By $\varphi_R(B)\leq ||\nabla B||_{L^{2}(0,T; H)}^{2}$,
\begin{align*}
(\varphi_R\circ \pi_N,\boldsymbol{\mu}_k)=\int_{\mathcal{X}_T^{-\varepsilon}}\varphi_R(\pi_N B)\boldsymbol{\mu}_k(d B)\leq \frac{\mathsf{C}_0}{2}T.
\end{align*}\\
Since $\varphi_R\circ \pi_N\in C_b(\mathcal{X}_T^{-\varepsilon})$ and $\boldsymbol{\mu}_k\to \boldsymbol{\mu}_0$ on $\mathcal{P}(\mathcal{X}_T^{-\varepsilon})$, $(\varphi_R\circ \pi_N,\boldsymbol{\mu}_0)\leq \mathsf{C}_0T/2$. By letting $\kappa\to 0$, $R\to\infty$, and Fatou's lemma,
\begin{align*}
\int_{\mathcal{X}_T^{-\varepsilon}}\left(\int_{0}^{T}\sum_{|k|\leq N }|k|^{2} |\hat{B}_k|^{2}dt\right)\boldsymbol{\mu}_0(d B)
=
\int_{\mathcal{X}_T^{-\varepsilon}}||\nabla \pi_N B||_{L^{2}(0,T; H)}^{2}\boldsymbol{\mu}_0(d B)\leq \frac{\mathsf{C}_0}{2}T.
\end{align*}
By letting $N\to\infty$ and Fatou's lemma,
\begin{align*}
\int_{\mathcal{X}_T^{-\varepsilon}}|| \nabla B||_{L^{2}(0,T; H)}^{2}\boldsymbol{\mu}_0(d B)\leq \frac{\mathsf{C}_0}{2}T.
\end{align*}
In a similar manner to the proof of Proposition 5.2, we obtain $\boldsymbol{\mu}_0(L^2 (0,T; V)) = 1$. 
\end{proof}

\subsection{Convergence of statistically stationary solutions}

We show that statistically stationary solutions of (1.3) converge to a random MHS equilibrium. 
\begin{lem}
There exist statistically stationary solutions ${B}_\kappa$ of (1.3) with $\mathcal{D}({B}_\kappa)=\boldsymbol{\mu}_{\kappa}$ such that ${B}_\kappa$ converges to a random variable $B$ with $\mathcal{D}(B)=\boldsymbol{\mu}_0$ in the sense that  
\begin{align}
B_\kappa\to B\quad \textrm{in}\ \mathcal{X}_{T}^{-\varepsilon}\quad \textrm{a.s.},
\end{align}
for some probability space $(\tilde{\Omega},\tilde{\mathcal{F}},\tilde{\mathbb{P}})$.
\end{lem}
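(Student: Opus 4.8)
The plan is to read off $B_{\kappa}$ and $B$ from the Skorokhod representation theorem (Proposition 4.6) applied to the weak convergence of the lifted invariant measures, after first upgrading that convergence from the spaces $\mathcal{X}_T^{-\varepsilon}$, $T>0$ fixed, to the Fr\'{e}chet space $\mathcal{X}^{-\varepsilon}_{\infty}:=C([0,\infty); H^{-\varepsilon})\cap L^{2}_{\textrm{loc}}([0,\infty); H^{1-\varepsilon})$, on which the measures $\boldsymbol{\mu}_{\kappa}$ actually live. (If one only wants the statement for the single $T$ appearing in the lemma, this first step may be skipped and Skorokhod applied directly on $\mathcal{X}_T^{-\varepsilon}$.)

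First I would prove tightness of $\{\boldsymbol{\mu}_{\kappa}\}_{0<\kappa\leq 1}$ in $\mathcal{P}(\mathcal{X}^{-\varepsilon}_{\infty})$. For each $N\in\mathbb{N}$, Chebyshev's inequality together with the uniform bound (5.18) provides a radius $R_N>0$ with $\boldsymbol{\mu}_{\kappa}(\{\,\|B\|_{\mathcal{K}_N}>R_N\,\})\leq 2^{-N}\eta$ for all $\kappa\in(0,1]$ (here $\mathcal{K}_N$ is evaluated on the restriction to $[0,N]$); by the compact embedding (5.13) and a diagonal argument the set $\bigcap_{N\geq 1}\{B:\ \|B\|_{\mathcal{K}_N}\leq R_N\}$ is compact in $\mathcal{X}^{-\varepsilon}_{\infty}$ and carries $\boldsymbol{\mu}_{\kappa}$-mass at least $1-\eta$, so the family is tight. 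Prokhorov's theorem (Proposition 4.5) then gives relative compactness, and since the restriction maps $\mathcal{X}^{-\varepsilon}_{\infty}\to\mathcal{X}^{-\varepsilon}_{N}$ are continuous, Lemma 5.11 forces every weak limit point to coincide with $\boldsymbol{\mu}_0$ on each $\mathcal{X}^{-\varepsilon}_{N}$, hence to equal $\boldsymbol{\mu}_0$. Thus, along a subsequence, $\boldsymbol{\mu}_{\kappa}\to\boldsymbol{\mu}_0$ in $\mathcal{P}(\mathcal{X}^{-\varepsilon}_{\infty})$; recall from Lemma 5.11 that $\boldsymbol{\mu}_0$ is concentrated on $L^{2}_{\textrm{loc}}([0,\infty); V)$.

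Skorokhod's theorem now supplies a probability space $(\tilde{\Omega},\tilde{\mathcal{F}},\tilde{\mathbb{P}})$ and random variables $B_{\kappa},B:(\tilde{\Omega},\tilde{\mathcal{F}},\tilde{\mathbb{P}})\to\mathcal{X}^{-\varepsilon}_{\infty}$ with $\mathcal{D}(B_{\kappa})=\boldsymbol{\mu}_{\kappa}$, $\mathcal{D}(B)=\boldsymbol{\mu}_0$ and $B_{\kappa}\to B$ $\tilde{\mathbb{P}}$-almost surely in $\mathcal{X}^{-\varepsilon}_{\infty}$; restricting to $[0,T]$ gives (5.19) for every $T>0$. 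Since $\boldsymbol{\mu}_{\kappa}$ is, by the construction in \S5.2, the law on path space of a statistically stationary solution of (1.3), the time marginals $\mathcal{D}(B_{\kappa}(t))=\mu_{\kappa}$ of the new copy are independent of $t$, so $B_{\kappa}$ is again a stationary process carrying the invariant law $\mu_{\kappa}$. To retain the property that $B_{\kappa}$ genuinely solves (1.3), one runs the same Skorokhod construction on the \emph{joint} law of the pair (statistically stationary solution, driving Wiener process $\zeta$) on $\mathcal{X}^{-\varepsilon}_{\infty}\times C([0,\infty); H)$: on the new space the transferred second coordinate is again a Wiener process of the form (1.5) with respect to its own right-continuous filtration, and the transferred first coordinate solves the mild equation (3.14) driven by it, with $u_{\kappa}=K_{\gamma}(B_{\kappa},B_{\kappa})$.

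The delicate point is exactly this last identification: transferring the stochastic equation through Skorokhod requires recognizing ``being a solution of (1.3)'' as a property of the joint law of $(B_{\kappa},\zeta)$, i.e.\ verifying that the drift and stochastic-integral terms of (3.14) are preserved on $(\tilde{\Omega},\tilde{\mathcal{F}},\tilde{\mathbb{P}})$. This is routine but not purely formal (a Gy\"{o}ngy--Krylov type / martingale-problem argument, using the continuity of $B\mapsto K_{\gamma}(B,B)$ on the relevant Bochner spaces). By contrast, the passage from the fixed-$T$ tightness of Lemma 5.11 to tightness in $\mathcal{P}(\mathcal{X}^{-\varepsilon}_{\infty})$ is soft, relying only on the uniform-in-$\kappa$ moment bound (5.18) and the compact embedding (5.13).
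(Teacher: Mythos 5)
Your proposal is correct and follows the same skeleton as the paper (tightness of the lifted laws, Prokhorov, Skorokhod representation, then transfer of the ``statistically stationary solution'' property to the Skorokhod copies), but the two steps where you deviate are worth comparing. First, you upgrade the convergence to the Fr\'echet space $C([0,\infty);H^{-\varepsilon})\cap L^{2}_{\textrm{loc}}([0,\infty);H^{1-\varepsilon})$ by a diagonal argument from (5.18) and (5.13); the paper's Lemma 5.12 is stated and proved for a fixed $T$ only (the extension to all $T$ being what Theorem 1.1 ultimately needs), so your version is slightly stronger but, as you note, optional. Second, and more substantively, for the identification that the Skorokhod copy $\tilde{B}_{\kappa}$ is still a solution of (1.3), the paper uses a compact one-line device: it introduces the path functional $X(B)=\int_{0}^{T}\|B(t)-B(0)-\int_{0}^{t}(\kappa\Delta B+\nabla\cdot(B\otimes u-u\otimes B))ds-\zeta(t)\|_{V^{*}}dt$, observes $X(B_{\kappa})=0$ a.s.\ on the original space, and transfers this through the identity $\mathbb{E}\,X/(1+X)=\tilde{\mathbb{E}}\,X(\tilde B_{\kappa})/(1+X(\tilde B_{\kappa}))$, since the expectation of a bounded measurable functional depends only on the law; whereas you propose applying Skorokhod to the joint law of $(B_{\kappa},\zeta)$ and re-identifying the equation on the new space by a Gy\"ongy--Krylov / martingale-problem argument. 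Your route is heavier but it squarely addresses the point you flag as delicate: as literally written, the paper's $X$ depends on $\zeta$ as well as on $B$, so the displayed change-of-variables over $\boldsymbol{\mu}_{\kappa}(dB)$ really presupposes working with the joint law of the pair (or recovering $\zeta$ as a functional of $B$ and checking it remains a Wiener process of the form (1.5) on $(\tilde\Omega,\tilde{\mathcal{F}},\tilde{\mathbb{P}})$). So the paper's trick buys brevity, your joint-law formulation buys the rigor on exactly the issue the paper glosses over; either way the stationarity of the marginals, and hence $\mathcal{D}(\tilde B_{\kappa}(t))=\mu_{\kappa}$, is automatic from equality of path laws, as you say. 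No gap in your argument.
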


\begin{proof}
We apply Skorokhod theorem (Proposition 4.6) for measures $\{\boldsymbol{\mu}_{\kappa}\}\subset \mathcal{P}(\mathcal{X}^{-\varepsilon}_T)$ to take random variables $\tilde{B}_{\kappa}$ and $B$ on a probability space $(\tilde{\Omega},\tilde{\mathcal{F}},\tilde{\mathbb{P}})$ such that $\mathcal{D}(\tilde{B}_{\kappa})=\boldsymbol{\mu}_{\kappa}$, $\mathcal{D}(\tilde{B})=\boldsymbol{\mu}_{0}$ and $\tilde{B}_{\kappa}\to {B}$ in $\mathcal{X}_{T}^{-\varepsilon}$ almost surely.

We set 
\begin{align*}
X(B)=\int_{0}^{T}
\left\|B(t)-B(0)
-\int_{0}^{t}\left(\kappa \Delta B+\nabla \cdot (B\otimes u-u\otimes B)  \right)ds
-\zeta(t)\right\|_{V^{*}}dt,\quad u=K_{\gamma}(B,B),
\end{align*}
and observe that $X(B_\kappa)=0$, $\mathbb{P}$-a.s. for statistically stationary solutions $B_k$ on $(\Omega,\mathcal{F},\mathbb{P})$. By 
\begin{align*}
0=\mathbb{E}\frac{X(B_\kappa)}{1+X(B_\kappa)}=\int_{\Omega}\frac{X(B_\kappa)}{1+X(B_\kappa)}\mathbb{P}(d\omega)
=\int_{\mathcal{X}_T}\frac{X(B)}{1+X(B)}\boldsymbol{\mu}_{\kappa}(d B) 
=\tilde{\mathbb{E}}\frac{X(\tilde{B}_\kappa)}{1+X(\tilde{B}_\kappa)},
\end{align*}
we find that $X(\tilde{B}_\kappa)=0$, $\tilde{\mathbb{P}}$-a.s. and $\tilde{B}_\kappa$ is a statistically stationary solution on $(\tilde{\Omega},\tilde{\mathcal{F}},\tilde{\mathbb{P}})$. 
\end{proof}

In the sequel, we denote $(\tilde{\Omega},\tilde{\mathcal{F}},\tilde{\mathbb{P}})$ by $({\Omega},{\mathcal{F}},{\mathbb{P}})$.

\begin{prop}
The limit $B$ belongs to $L^{2}(0,T; V)$ almost surely and $\mathcal{D}(B(\cdot, t))=\mu_0$ for $t\geq 0$.
\end{prop}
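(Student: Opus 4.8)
The plan is to extract both assertions from the properties of the lifted limit measure $\boldsymbol{\mu}_0=\mathcal{D}(B)$ recorded in Lemmas 5.18 and 5.19, together with the weak convergence $\mu_{\kappa}\to\mu_0$ in $\mathcal{P}(H^{1-\varepsilon})$ of Proposition 5.2. No new estimates are needed; the work is entirely a matter of tracking pushforwards and time marginals.

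For the first assertion, recall from Lemma 5.18 that $\boldsymbol{\mu}_0\big(L^{2}(0,T;V)\big)=1$. The set $L^{2}(0,T;V)=\{B\in\mathcal{X}_T^{-\varepsilon}:\int_0^T\|\nabla B\|_H^2\,dt<\infty\}$ is a Borel subset of $\mathcal{X}_T^{-\varepsilon}$, since the functional $B\mapsto\int_0^T\|\nabla B\|_H^2\,dt$ is lower semicontinuous on $\mathcal{X}_T^{-\varepsilon}$, being the supremum of the continuous functionals obtained by truncating the Fourier series (as in the proof of Lemma 5.18). Since $\mathcal{D}(B)=\boldsymbol{\mu}_0$ by Lemma 5.19, this gives $\mathbb{P}\big(B\in L^{2}(0,T;V)\big)=\boldsymbol{\mu}_0\big(L^{2}(0,T;V)\big)=1$.

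For the second assertion, fix $t\in[0,T]$ and consider the continuous evaluation map $\mathrm{ev}_t:\mathcal{X}_T^{-\varepsilon}\to H^{-\varepsilon}$, $B\mapsto B(t)$. Since $B_{\kappa}$ is a statistically stationary solution with $\mathcal{D}(B_{\kappa})=\boldsymbol{\mu}_{\kappa}$, its time-$t$ marginal is the invariant measure, i.e. $\mathcal{D}(B_{\kappa}(t))=(\mathrm{ev}_t)_*\boldsymbol{\mu}_{\kappa}=\mu_{\kappa}$, regarded as a measure on $H^{-\varepsilon}$. Because $B_{\kappa}\to B$ almost surely in $\mathcal{X}_T^{-\varepsilon}$ (Lemma 5.19), we have $B_{\kappa}(t)\to B(t)$ almost surely in $H^{-\varepsilon}$, hence $\mu_{\kappa}=\mathcal{D}(B_{\kappa}(t))\to\mathcal{D}(B(t))$ weakly in $\mathcal{P}(H^{-\varepsilon})$. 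On the other hand $\mu_{\kappa}\to\mu_0$ weakly in $\mathcal{P}(H^{1-\varepsilon})$ by Proposition 5.2, hence also weakly in $\mathcal{P}(H^{-\varepsilon})$ by continuity of the embedding $H^{1-\varepsilon}\subset H^{-\varepsilon}$. By uniqueness of weak limits in $\mathcal{P}(H^{-\varepsilon})$, $\mathcal{D}(B(t))=\mu_0$ there; and since $\mu_0(V)=1$ (Proposition 5.2) and $V$ is a Borel subset of $H^{-\varepsilon}$, this forces $B(t)\in V$ almost surely and $\mathcal{D}(B(\cdot,t))=\mu_0$ (as measures on $V$). As $T>0$ is arbitrary, the conclusion holds for all $t\ge0$.

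The argument has no genuine obstacle; the one point requiring a little care is to carry out all three weak convergences ($\mathcal{D}(B_\kappa(t))\to\mathcal{D}(B(t))$, $\mu_\kappa\to\mu_0$, and the identification of the limit) inside a single ambient space, namely $\mathcal{P}(H^{-\varepsilon})$, so that uniqueness of weak limits applies, along with the routine Borel measurability of $L^{2}(0,T;V)$ inside $\mathcal{X}_T^{-\varepsilon}$ and of $V$ inside $H^{-\varepsilon}$.
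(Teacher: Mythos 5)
Your argument is correct, and the first assertion is handled exactly as in the paper: the paper likewise reads off $\mathbb{P}(B\in L^{2}(0,T;V))=\boldsymbol{\mu}_0(L^{2}(0,T;V))=1$ from the tightness lemma (Lemma 5.11 in the paper; your citations ``Lemmas 5.18 and 5.19'' should be Lemma 5.11 and Lemma 5.12, resp.\ the convergence (5.19)). For the identification $\mathcal{D}(B(\cdot,t))=\mu_0$ you take a genuinely different, and arguably more direct, route. The paper tests the lifted measures against time-averaged functionals $\tilde{\varphi}(B)=\int_0^t\varphi(B(s))\,ds$ with $\varphi\in C_b(H^{1-\varepsilon})$, passes to the limit in both $\boldsymbol{\mu}_\kappa\to\boldsymbol{\mu}_0$ and $\mu_\kappa\to\mu_0$, and then differentiates in $t$ to extract the fixed-time marginal; this keeps all test functions on $H^{1-\varepsilon}$ (exploiting the $L^{2}(0,T;H^{1-\varepsilon})$ component of the path-space convergence) but needs the final differentiation step. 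You instead push forward under the continuous evaluation map $\mathrm{ev}_t:\mathcal{X}_T^{-\varepsilon}\to H^{-\varepsilon}$, use that statistical stationarity gives $(\mathrm{ev}_t)_*\boldsymbol{\mu}_\kappa=\mu_\kappa$ (which indeed survives the Skorokhod change of probability space, since the path law determines all marginals), and identify the limit by uniqueness of weak limits in $\mathcal{P}(H^{-\varepsilon})$; the price is that the identification is first obtained only in the weaker space, and you must climb back using $\mu_0(V)=1$ together with the Borel compatibility of $V\subset H^{1-\varepsilon}\subset H^{-\varepsilon}$ (a Kuratowski-type fact you correctly flag as routine). Both routes rest on the same two ingredients — the almost sure convergence $B_\kappa\to B$ in $\mathcal{X}_T^{-\varepsilon}$ and $\mu_\kappa\to\mu_0$ in $\mathcal{P}(H^{1-\varepsilon})$ along the same subsequence — so your proof is sound; the fixed-$t$ evaluation argument avoids the differentiation (and the attendant continuity in $t$ issue) in the paper's version, while the paper's version never leaves $H^{1-\varepsilon}$.
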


\begin{proof}
By Lemma 5.12, $1=\boldsymbol{\mu}_0(L^{2}(0,T; V))=\mathbb{P}(\omega\in \Omega\ |\ B^{\omega}\in L^{2}(0,T; V))$. For arbitrary $\varphi\in C_b(H^{1-\varepsilon})$, the function 
\begin{align*}
\tilde{\varphi}(B)=\int_{0}^{t}\varphi(B)ds
\end{align*}
belongs to $C_b(\mathcal{X}_T^{-\varepsilon})$ and 
\begin{align*}
\int_{\mathcal{X}_T^{-\varepsilon}}\tilde{\varphi}(B)\boldsymbol{\mu}_{\kappa}(dB)
=\mathbb{E}\tilde{\varphi}(B_{\kappa})
=\int_{0}^{t}\mathbb{E}{\varphi}(B_{\kappa})ds
=\int_{0}^{t}\int_{H^{1-\varepsilon}}{\varphi}(B){\mu}_{\kappa}(dB)ds.
\end{align*}
Since $\boldsymbol{\mu}_{\kappa}\to \boldsymbol{\mu}_{0}$ in $\mathcal{P}(\mathcal{X}_T^{-\varepsilon})$ and $\mu_k\to \mu_0$ in $\mathcal{P}(H^{1-\varepsilon})$, letting $\kappa\to0$ implies that 
\begin{align*}
\int_{\mathcal{X}_T^{-\varepsilon}}\tilde{\varphi}(B)\boldsymbol{\mu}_{0}(dB)
=\int_{0}^{t}\int_{H^{1-\varepsilon}}{\varphi}(B){\mu}_{0}(dB)ds.
\end{align*}
Since the limit $B$ in (5.19) satifies $\mathcal{D}(B)=\boldsymbol{\mu}_{0}$,
\begin{align*}
\int_{\mathcal{X}_T^{-\varepsilon}}\tilde{\varphi}(B)\boldsymbol{\mu}_{0}(dB)
=\mathbb{E}\tilde{\varphi}(B)
=\int_{0}^{t}\mathbb{E}{\varphi}(B)ds.
\end{align*}
By differentiating for $t>0$, 
\begin{align*}
\mathbb{E}{\varphi}(B)(t)
=\int_{H^{1-\varepsilon}}{\varphi}(B){\mu}_{0}(dB),\quad t\geq 0.
\end{align*}
Thus, $\mu_0$ is the law of $B(t)$.
\end{proof}

\begin{prop}
The following convergence holds as $\kappa\to 0$:
\begin{align}
\sqrt{\kappa}\zeta&\to 0\quad \textrm{in}\ L^{2}\left(\Omega; C([0,T]; H)\right),\\
\kappa\Delta B_\kappa&\to 0\quad \textrm{in}\ L^{2}\left(\Omega; L^{2}\left(0,T; H^{-1}\right)\right),\\
u_\kappa&\to 0\quad \textrm{in}\ L^{2}\left(\Omega; L^{2}\left(0,T; H^{\gamma }\right)\right),\\
u_\kappa\otimes B_\kappa,\ B_\kappa \otimes u_\kappa&\to 0\quad \textrm{in}\ L^{1}(\Omega\times \mathbb{T}^{d}\times (0,T)),\\
B_\kappa\otimes B_\kappa&\to B\otimes B\quad \textrm{in}\ L^{1}(\mathbb{T}^{d}\times (0,T)),\quad  \textrm{a.s.}
\end{align}
\end{prop}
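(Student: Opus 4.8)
The plan is to read off all five convergences from estimates already in hand: the noise bound of Proposition~3.3, the statistical balance relations (5.6)--(5.7), and the almost sure convergence $B_\kappa\to B$ in $\mathcal{X}_T^{-\varepsilon}=C([0,T];H^{-\varepsilon})\cap L^2(0,T;H^{1-\varepsilon})$ of Lemma~5.12. By Lemma~5.12 the $B_\kappa$ (on the relabelled probability space) solve (1.3) with $u_\kappa=K_\gamma(B_\kappa,B_\kappa)$ and have law $\mathcal{D}(B_\kappa)=\boldsymbol{\mu}_\kappa$, so (5.6)--(5.7) apply verbatim. I also use throughout that, for average‑zero divergence‑free fields, $\|v\|_{L^2}\le\|v\|_{\dot H^\gamma}$, $\|v\|_{H^\gamma}\lesssim\|v\|_{\dot H^\gamma}$, and $\|\Delta v\|_{H^{-1}}\le\|\nabla v\|_{L^2}$.

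The first three are immediate. For (5.20), $\mathbb{E}\|\sqrt{\kappa}\,\zeta\|_{C([0,T];H)}^2=\kappa\,\mathbb{E}\|\zeta\|_{C([0,T];H)}^2\lesssim\kappa\,\mathsf{C}_0T\to0$ by Proposition~3.3. For (5.21) and (5.22), note that (5.6) yields the two bounds $\mathbb{E}\|\nabla B_\kappa\|_{L^2(0,T;H)}^2\le\tfrac12\mathsf{C}_0T$ (uniform in $\kappa$) and $\mathbb{E}\|u_\kappa\|_{L^2(0,T;\dot H^\gamma)}^2\le\tfrac12\kappa\,\mathsf{C}_0T$, whence
\[
\mathbb{E}\|\kappa\Delta B_\kappa\|_{L^2(0,T;H^{-1})}^2\le\kappa^2\,\mathbb{E}\|\nabla B_\kappa\|_{L^2(0,T;H)}^2\le\tfrac12\kappa^2\mathsf{C}_0T\longrightarrow0,
\]
which is (5.21), and $\mathbb{E}\|u_\kappa\|_{L^2(0,T;H^\gamma)}^2\lesssim\mathbb{E}\|u_\kappa\|_{L^2(0,T;\dot H^\gamma)}^2\le\tfrac12\kappa\,\mathsf{C}_0T\to0$, which is (5.22).

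For (5.23) I would estimate the $L^1$‑norm by Cauchy--Schwarz first in space and then in $(t,\omega)$:
\[
\mathbb{E}\int_0^T\!\!\int_{\mathbb{T}^d}|u_\kappa\otimes B_\kappa|\,dx\,dt\le\big(\mathbb{E}\|u_\kappa\|_{L^2(0,T;L^2)}^2\big)^{1/2}\big(\mathbb{E}\|B_\kappa\|_{L^2(0,T;L^2)}^2\big)^{1/2}.
\]
The first factor tends to $0$ by (5.22) (via $\|u_\kappa\|_{L^2}\le\|u_\kappa\|_{\dot H^\gamma}$), while the second is bounded uniformly in $\kappa$ by (5.7) with $r=2$; the term $B_\kappa\otimes u_\kappa$ is handled identically, so (5.23) follows.

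Finally, (5.24) is the one statement that really uses the strong almost sure convergence, and it is the step requiring the most care — though the substantive work was already done in Lemmas~5.10--5.12. Since $H^{1-\varepsilon}\subset L^2$ continuously, Lemma~5.12 gives $B_\kappa\to B$ in $L^2(0,T;L^2)=L^2(\mathbb{T}^d\times(0,T))$ almost surely; fixing such an $\omega$, the sequence $\|B_\kappa\|_{L^2(\mathbb{T}^d\times(0,T))}$ is in particular bounded. Writing $B_\kappa\otimes B_\kappa-B\otimes B=(B_\kappa-B)\otimes B_\kappa+B\otimes(B_\kappa-B)$ and applying Cauchy--Schwarz on $\mathbb{T}^d\times(0,T)$,
\[
\big\|B_\kappa\otimes B_\kappa-B\otimes B\big\|_{L^1(\mathbb{T}^d\times(0,T))}\le\|B_\kappa-B\|_{L^2}\big(\|B_\kappa\|_{L^2}+\|B\|_{L^2}\big)\longrightarrow0
\]
for a.e.\ $\omega$, which is (5.24). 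The only subtlety, and the reason it is harmless, is the choice of $L^1$ rather than $L^2$ as the target for the quadratic term: $L^2$‑convergence of $B_\kappa$ then suffices, and no pathwise control of $\nabla B_\kappa$ along the sequence is needed — which is essential, since the uniform‑in‑$\kappa$ bound we have for $\nabla B_\kappa$ is only in expectation.
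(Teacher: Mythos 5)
Your proof is correct and follows essentially the same route as the paper: (5.20) from Proposition~3.3, (5.21)--(5.22) directly from the balance relation (5.6), (5.23) by Cauchy--Schwarz combining the vanishing bound on $u_\kappa$ with a uniform-in-$\kappa$ second moment of $B_\kappa$, and (5.24) from the almost sure convergence $B_\kappa\to B$ in $L^2(0,T;L^2)$ supplied by (5.19). The only (immaterial) deviation is that you bound $\mathbb{E}\|B_\kappa\|_{L^2(0,T;H)}^2$ via the exponential-moment estimate (5.7) with $r=2$, while the paper gets the same uniform bound from (5.6) through the Poincar\'e-type inequality $\|B_\kappa\|_{H}\le\|B_\kappa\|_{H^1}$.
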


\begin{proof}
The convergence (5.20) follows from Proposition 3.3. The convergences (5.21) and (5.22) follow from (5.6). By H\"older's inequality and (5.6),
\begin{align*}
\mathbb{E}||u_\kappa\otimes B_\kappa||_{L^{1}(\mathbb{T}^{d}\times (0,T)) }
\leq \left(\mathbb{E}||u_\kappa||_{L^{2}(0,T; H)}^{2}\right)^{1/2}\left(\mathbb{E}||B_\kappa||_{L^{2}(0,T; H)}^{2}\right)^{1/2}
\leq \left(\mathbb{E}||u_{\kappa}||_{L^{2}(0,T; \dot{H}^{\gamma  })}^{2}\right)^{1/2}\left(\mathbb{E}||B_\kappa||_{L^{2}(0,T; H^{1})}^{2}\right)^{1/2}\to 0.
\end{align*}
Thus, (5.23) holds. By (5.19), $B_\kappa\to B$ in $L^{2}(0,T; H)$ almost surely and  (5.24) holds.
\end{proof}

\begin{proof}[Proof of Theorem 1.1]
Since $B_k$ satisfies the equation (3.13) on $V^{*}$ for $t\geq 0$ almost surely, for arbitrary $\varphi\in H\cap C^{\infty}(\mathbb{T}^{d})$,  
\begin{align*}
(B_\kappa(t),\varphi)=(B_\kappa(0),\varphi)+\int_0^{t}(\kappa\Delta B_\kappa,\varphi)ds 
+\int_0^{t}(\nabla\cdot(B_\kappa\otimes u_\kappa-u_\kappa\otimes B_\kappa),\varphi)ds
+(\sqrt{\kappa}\zeta,\varphi).
\end{align*}
By the convergences (5.20)-(5.24) and $B_{\kappa}\to B$ in $C([0,T]; H^{-\varepsilon})$ a.s., letting $\kappa\to 0$ implies that 
\begin{align*}
(B(t),\varphi)=(B(0),\varphi),\quad t\geq 0,\ \textrm{a.s.}
\end{align*}
Thus, $B=B(x)$ is time independent. Since $B\in L^{2}(0,T; V)$ by Proposition 5.13, $B$ belongs to $V$ almost surely.

By multiplying $\varphi\in H\cap C^{\infty}(\mathbb{T}^{d})$ by the constitutive law $u_\kappa=K_{\gamma}(B_\kappa,B_\kappa)=(-\Delta)^{-\gamma}\Pi\nabla \cdot (B_\kappa\otimes B_\kappa)$ and integration by parts,
\begin{align*}
\int_{0}^{T}(u_\kappa,\varphi)dt=-\int_{0}^{T}\left(B_\kappa\otimes B_\kappa, \nabla (-\Delta)^{-\gamma}\varphi \right)dt.
\end{align*}
The left-hand side vanishes by (5.22) and the right-hand side converges by (5.24) almost surely. Since $B$ is time-independent and $B\in H^{1}$ almost surely, 
\begin{align*}
\left(B \cdot \nabla  B,  (-\Delta)^{-\gamma}\varphi \right)=0.
\end{align*}
By substituting $\varphi=(-\Delta)^{\gamma}\tilde{\varphi}$ for $\tilde{\varphi}\in H\cap C^{\infty}(\mathbb{T}^{d})$ into the above, $(B \cdot \nabla  B,\tilde{\varphi} )=0$. By Proposition 2.1, $B$ satisfies (1.2) for some $p\in W^{1,1}(\mathbb{T}^{d})$ almost surely.
\end{proof}

\begin{thm}
The measure $\mu_0$ satisfies the following:
\begin{align}
\int_{H} || \nabla B||_{H}^{2} \mu_{0}(dB)
&\leq \frac{\mathsf{C}_0}{2},\\
\int_{H}\exp\left(\rho ||B||_{H}^{2}\right)\mu_{0}(dB)
&\leq (\mathsf{C}_0+1)e^{\rho(\mathcal{C}_0+1)},
\quad 0<\rho\leq \frac{1}{2\sup_{j\geq 1}b_j^{2}}, \\
\int_{H}||B ||_{H}^{2}\mu_0 (dB)
&=\frac{\mathsf{C}_{-1}}{2},\quad d=2, \\
\int_{H}\left( \nabla \times  B, B   \right)_H\mu_0 (dB)
&=\frac{\mathcal{C}_{-\frac{1}{2}}}{2},\quad d=3.
\end{align}
\end{thm}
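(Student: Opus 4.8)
The plan is to read off all four relations from the uniform‑in‑$\kappa$ bounds of Lemma 5.1 by passing to the limit along the subsequence of Proposition 5.2, for which $\mu_\kappa\to\mu_0$ in $\mathcal{P}(H^{1-\varepsilon})$. Throughout I fix $0<\varepsilon\le 1/2$; then both $B\mapsto\|B\|_H^2$ and $B\mapsto(\nabla\times B,B)_H$ are continuous on $H^{1-\varepsilon}$, the latter because $|(\nabla\times B,B)_H|\le\|B\|_{\dot H^{1/2}}^2\le\|B\|_{H^{1-\varepsilon}}^2$ for mean‑zero fields, while $B\mapsto\|\nabla B\|_H^2$ is only lower semicontinuous there.

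For the two inequalities: to prove $(5.26)$ I approximate $\|\nabla B\|_H^2$ from below by the bounded continuous functionals $\psi_{N,R}(B)=\min\{\|\nabla\pi_N B\|_H^2,R^2\}$ on $H^{1-\varepsilon}$ (each Fourier mode depends continuously on $B\in H^{1-\varepsilon}$); since $\int_H\psi_{N,R}\,d\mu_\kappa\le\mathbb{E}\|\nabla B_\kappa\|_H^2\le\mathsf{C}_0/2$ by $(5.1)$, weak convergence gives $\int_H\psi_{N,R}\,d\mu_0\le\mathsf{C}_0/2$, and letting $R\to\infty$ then $N\to\infty$ by monotone convergence yields $(5.26)$. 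For $(5.27)$ the functional $B\mapsto\exp(\rho\|B\|_H^2)$ is already continuous on $H^{1-\varepsilon}$, so I truncate it to $\exp(\rho\min\{\|B\|_H^2,R^2\})$, pass to the limit using $(5.2)$, and let $R\to\infty$.

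For the two equalities I additionally invoke the a.s.\ convergence $B_\kappa\to B$ in $\mathcal{X}_T^{-\varepsilon}$ from Skorokhod's theorem (Lemma 5.12), the identity $\mathcal{D}(B(t))=\mu_0$ (Proposition 5.13), and stationarity of $B_\kappa$, so that $(5.3)$--$(5.4)$ give $\int_H\|B\|_H^2\,\mu_\kappa(dB)=\mathsf{C}_{-1}/2$ for $d=2$ and $\int_H(\nabla\times B,B)_H\,\mu_\kappa(dB)=\mathcal{C}_{-1/2}/2$ for $d=3$. Working with time integrals avoids any pointwise‑in‑$t$ difficulty: from $B_\kappa\to B$ in $L^2(0,T;H^{1-\varepsilon})$, which embeds into $L^2(0,T;H)$ and, for $\varepsilon\le1/2$, into $L^2(0,T;\dot H^{1/2})$, one gets $\int_0^T\|B_\kappa(t)\|_H^2\,dt\to\int_0^T\|B(t)\|_H^2\,dt$ and $\int_0^T(\nabla\times B_\kappa,B_\kappa)_H\,dt\to\int_0^T(\nabla\times B,B)_H\,dt$ a.s.\ (the $d=3$ form is locally Lipschitz on $\dot H^{1/2}$ via $|(\nabla\times u,v)_H|\le\|u\|_{\dot H^{1/2}}\|v\|_{\dot H^{1/2}}$). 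Combining with uniform integrability and Vitali's theorem, then dividing by $T$ and using $\mathcal{D}(B(t))=\mu_0$, I obtain
\[
\int_H\|B\|_H^2\,\mu_0(dB)=\frac{1}{T}\,\mathbb{E}\int_0^T\|B(t)\|_H^2\,dt=\frac{1}{T}\lim_{\kappa\to 0}\mathbb{E}\int_0^T\|B_\kappa(t)\|_H^2\,dt=\frac{\mathsf{C}_{-1}}{2},
\]
and likewise $(5.29)$.

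The main obstacle is the uniform integrability of $\bigl\{\int_0^T(\nabla\times B_\kappa(t),B_\kappa(t))_H\,dt\bigr\}_\kappa$ in the case $d=3$: the available pointwise bound $|(\nabla\times B,B)_H|\le\|B\|_{\dot H^{1/2}}^2$ sits half a derivative above $\|B\|_H^2$, and $(5.1)$ controls $\|\nabla B_\kappa\|_H^2$ only in first moment. I would close this by interpolation, $\|B_\kappa\|_{\dot H^{1/2}}^2\le\|B_\kappa\|_H\|\nabla B_\kappa\|_H$, followed by Hölder in $(\omega,t)$: choosing $\delta>0$ small, the $\|\nabla B_\kappa\|_H$‑factor appears only to a power $\le2$, absorbed by $\mathbb{E}\|\nabla B_\kappa(t)\|_H^2\le\mathsf{C}_0/2$, while the $\|B_\kappa\|_H$‑factor appears to a finite power whose moments are uniformly bounded because $(5.2)$ yields $\mathbb{E}\|B_\kappa(t)\|_H^{2m}\le m!\,\rho^{-m}(\mathsf{C}_0+1)e^{\rho(\mathsf{C}_0+1)}$ for every $m\in\mathbb{N}$; this gives $\sup_{\kappa}\mathbb{E}\bigl(\int_0^T\|B_\kappa(t)\|_{\dot H^{1/2}}^2\,dt\bigr)^{1+\delta}<\infty$, hence the required uniform integrability. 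For $(5.28)$ the analogous step is immediate, since $(5.2)$ already makes $\|B_\kappa(t)\|_H^2$ uniformly integrable.
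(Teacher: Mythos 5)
Your proposal is correct, and it reaches all four relations from the same ingredients the paper uses (the uniform-in-$\kappa$ relations (5.1)--(5.4), the weak convergence $\mu_\kappa\to\mu_0$ in $\mathcal{P}(H^{1-\varepsilon})$, the Skorokhod a.s.\ convergence of Lemma 5.12, $\mathcal{D}(B(t))=\mu_0$, and the interpolation $\|B\|_{\dot H^{1/2}}^2\le\|B\|_H\|\nabla B\|_H$ together with the exponential moment), but the execution differs in two places. For the two inequalities you work directly with the static measures, truncating with $\min\{\|\nabla \pi_N B\|_H^2,R^2\}$ and $\exp(\rho\min\{\|B\|_H^2,R^2\})$ and using monotone convergence; the paper instead imports the gradient bound from the lifted measure $\boldsymbol{\mu}_0$ established at the end of Lemma 5.11 (the same $\varphi_R\circ\pi_N$ device, applied on $\mathcal{X}_T^{-\varepsilon}$) and then uses time-independence of the limit $B$ — the two are equivalent, and your static version is arguably the more economical. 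For the two equalities the paper truncates in space: it splits the balance relation over the ball $B_{V^{1/2}}(R)$ and its complement, bounds the tail by $\frac1R\mathbb{E}\|B_\kappa\|_{H^{1/2}}^3$ via the same interpolation/H\"older step you use, and then passes $\kappa\to0$ inside the ball by dominated convergence (integrand bounded by $R^2$) before letting $R\to\infty$; you instead avoid any cutoff and pass the full time-integrated quantities to the limit by Vitali, which requires the extra $(1+\delta)$-moment bound on $\int_0^T\|B_\kappa\|_{\dot H^{1/2}}^2dt$ that you sketch — your H\"older splitting (gradient factor to total power $2$, absorbed by $\mathbb{E}\|\nabla B_\kappa\|_H^2\le\mathsf{C}_0/2$ from (5.1); $L^2$ factor controlled by all moments from (5.2)) does close this. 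What each route buys: the paper only needs the third-moment bound it has already computed and a bounded integrand, at the cost of handling the indicator $1_{B_{V^{1/2}}(R)}(B_\kappa)$ in the limit; your route sidesteps the indicator entirely but must verify uniform integrability explicitly. One cosmetic remark: your equation labels are shifted by one relative to the paper ((5.25)--(5.28) are cited as (5.26)--(5.29)).
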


\begin{proof}
At the end of the proof of Lemma 5.11, we obtained 
\begin{align*}
\int_{\mathcal{X}^{-\varepsilon}_{T}}||\nabla B||_{L^{2}(0,T; H) }^{2}\boldsymbol{\mu}_{0}(d B)\leq \frac{\mathsf{C}_0}{2}T.
\end{align*}
Since $\boldsymbol{\mu}_{0}=\mathcal{D}(B)$ and $\mu_0=\mathcal{D}(B)$ for the limit $B$,  
\begin{align*}
\int_{\mathcal{X}^{-\varepsilon}_{T}}||\nabla B||_{L^{2}(0,T; H) }^{2}\boldsymbol{\mu}_{0}(d B)
=\mathbb{E}||\nabla B||_{L^{2}(0,T; H) }^{2}
=T\mathbb{E}||\nabla B||_{H}^{2}
=T\int_{H}||\nabla B||_{H}^{2}\mu_0(dB).
\end{align*}
Thus, (5.25) holds. Similarly, we obtain (5.26) from (5.2). 

We show the equality (5.28). The equality (5.27) follows a similar argument. By Parseval's identity for $B=\sum_{j=1}^{\infty}(B,e_j)_{H}e_j$ and $\nabla \times B=\sum_{j=1}^{\infty}\tau_j (B,e_j)_{H}e_j$ with $\tau_j^{2}=\lambda_j$,
\begin{align*}
(\nabla \times B,B)_{H}&=\sum_{j=1}^{\infty}\tau_j |(B,e_j)_{H}|^{2}
\leq \sum_{j=1}^{\infty}\sqrt{\lambda_j} |(B,e_j)_{H}|^{2}
=((-\Delta)^{\frac{1}{2}}B,B )_{H}
=||B||_{H^{\frac{1}{2}}}^{2}.
\end{align*}
Thus, $|(\nabla \times B,B)_H|\leq ||B||_{H^{1/2}}^{2}$ for $B\in V^{1/2}=H\cap H^{1/2}(\mathbb{T}^{3})$. By $B_{V^{\frac{1}{2}}}(R)$, we denote an open ball with radius $R>0$ in $V^{1/2}$. By the balance relation (5.3),
\begin{align*}
\int_{B_{V^{\frac{1}{2}}}(R)}(\nabla \times B,B)_{H}\mu_{\kappa}(dB)+\int_{B_{V^{\frac{1}{2}}}(R)^{c} }(\nabla \times B,B)_{H}\mu_{\kappa}(dB)=\frac{\mathcal{C}_{-\frac{1}{2}}}{2}.
\end{align*}
We estimate the second term on the left-hand side by 
\begin{align*}
\left|\int_{B_{V^{\frac{1}{2}}}(R)^{c} }(\nabla \times B,B)_{H}\mu_{\kappa}(dB)\right|
\leq \int_{B_{V^{\frac{1}{2}}}(R)^{c} }||B||_{H^{\frac{1}{2}}}^{2}\mu_{\kappa}(dB)
\leq \frac{1}{R}\int_{H }||B||_{H^{\frac{1}{2}}}^{3}\mu_{\kappa}(dB)=\frac{1}{R}\mathbb{E}||B_{\kappa}||_{H^{\frac{1}{2}}}^{3}.
\end{align*}
By the interpolation inequality (2.6) and H\"older's inequality,
\begin{align*}
\mathbb{E}||B_{\kappa}||_{H^{\frac{1}{2}}}^{3}\leq (\mathbb{E}||B_{\kappa}||_{H}^{6})^{\frac{1}{4}}(\mathbb{E}||B_{\kappa}||_{H^{1}}^{2})^{\frac{3}{4}}.
\end{align*}
By (5.1) and (5.2), the right-hand side is bounded by a constant independent of $\kappa$. Thus,
\begin{align*}
\frac{\mathcal{C}_{-\frac{1}{2}}}{2}-\frac{C}{R}\leq \int_{B_{V^{\frac{1}{2}}}(R)}(\nabla \times B,B)_{H}\mu_{\kappa}(d B)\leq \frac{\mathcal{C}_{-\frac{1}{2}}}{2}+\frac{C}{R},
\end{align*}
for some constant $C>0$. By integrating in time,
\begin{align*}
\int_{0}^{T}\int_{B_{V^{\frac{1}{2}}}(R)}(\nabla \times B,B)_{H}\mu_{\kappa}(dB)dt
=\int_{\Omega}\int_{0}^{T}1_{B_{V^{\frac{1}{2}}}(R)}(B_{\kappa})(\nabla \times B_{\kappa},B_{\kappa})_{H}\mathbb{P}(d\omega)dt.
\end{align*}
The integrand is bounded by $R^{2}$. Since $B_{\kappa}\to B$ in $L^{2}(0,T; H^{1-\varepsilon})$ almost surely, the dominated convergence theorem implies that 
\begin{align*}
\lim_{\kappa\to 0}\int_{\Omega}\int_{0}^{T}1_{B_{V^{\frac{1}{2}}}(R)}(B_{\kappa})(\nabla \times B_{\kappa},B_{\kappa})_{H}\mathbb{P}(d\omega)dt
=T\int_{B_{V^{\frac{1}{2}}}(R)}(\nabla \times B,B)_{H}\mu_0(d B).
\end{align*}
Thus, the limit measure $\mu_0$ satisfies 
\begin{align*}
\frac{\mathcal{C}_{-\frac{1}{2}}}{2}-\frac{C}{R}\leq \int_{B_{V^{\frac{1}{2}}}(R)}(\nabla \times B,B)_{H}\mu_0(d B)\leq \frac{\mathcal{C}_{-\frac{1}{2}}}{2}+\frac{C}{R}.
\end{align*}
The equality (5.28) follows by letting $R\to\infty$.
\end{proof}

\subsection{Other settings}

We remark on a general result with hyper-resistivity and the bounded domain case.

\subsubsection{Hyper-resistivity}

We can apply the fluctuation-dissipation method in \S 2-\S 5 also for a general system with hyper-resistivity:
\begin{equation}
\begin{aligned}
\partial_t B+u\cdot \nabla B-B\cdot \nabla u&=-\kappa (-\Delta)^{\alpha} B +\sqrt{\kappa}\partial_t \zeta,\\
\nabla p&=B\cdot \nabla B-(-\Delta)^{\gamma}u,\\
\nabla \cdot u=\nabla \cdot B&=0,
\end{aligned}
\end{equation}
for $\alpha\geq 1$ and $\gamma>d/2$. The path-wise global well-posedness result in \S 3 and the existence of invariant measures in \S 4 are extendable to a system with hyper-resistivity for $\alpha\geq 1$. By the It\^o formula, statistically stationary solutions to (5.29) satisfy the exponential estimate (5.2) and the following balance relations: 

\begin{align*}
\mathbb{E}\left(\kappa || B_{\kappa}||_{\dot{H}^{\alpha}}^{2}+\left\|u_{\kappa}\right\|_{\dot{H}^{\gamma  }}^{2} \right)
&=\kappa\frac{\mathsf{C}_0}{2},\\
\mathbb{E}\left((-\Delta)^{\alpha}\textrm{curl}^{-1}  B_{\kappa}, B_{\kappa}   \right)_H
&=\frac{\mathcal{C}_{-\frac{1}{2}}}{2},\quad d=3, \\
\mathbb{E}\left\|\textrm{curl}^{-1} B_{\kappa}   \right\|_{\dot{H}^{\alpha}}^{2}
&=\frac{\mathsf{C}_{-1}}{2},\quad d=2.
\end{align*}
By modifying the argument in this section, we can show the convergence of lifted invariant measures on $C([0,T]; H^{-\varepsilon})\cap L^{2}(0,T; H^{\alpha-\varepsilon})$ for $T>0$ and construct a random MHS equiblium on $H\cap H^{\alpha}(\mathbb{T}^{d})$ for $\alpha\geq 1$. 

\begin{thm}
Let $d\geq 2$, $\alpha\geq 1$, $\gamma> d/2$, and $\mathsf{C}_0>0$. The system (5.29) admits an invariant measure $\mu_{\kappa}$ on $H^{\alpha}(\mathbb{T}^{d})$. For $\varepsilon>0$, there exists a sequence of invariant measures $\{\mu_{\kappa}\}$ weakly converging to a measure $\mu_0$ on $H^{\alpha-\varepsilon}(\mathbb{T}^{d})$ as $\kappa\to 0$ such that there exist statistically stationary solutions to (5.29), $B_{\kappa}(t): (\Omega, \mathcal{F},\mathbb{P})\to (H, \mathcal{B}(H))$ with law $\mathcal{D}(B_{\kappa})=\mu_{\kappa}$, and a random variable $B: (\Omega, \mathcal{F},\mathbb{P})\to (H, \mathcal{B}(H))$ on some probability space $(\Omega, \mathcal{F},\mathbb{P})$ such that for $\varepsilon>0$,
\begin{align*}
B_{\kappa}(x,t)\to B(x)\quad \textrm{in}\ C([0,\infty); H^{-\varepsilon}(\mathbb{T}^{d}))\cap L^{2}_{\textrm{loc}}([0,\infty); H^{\alpha-\varepsilon}(\mathbb{T}^{d})),\quad a.s.\quad \textrm{as}\ \kappa\to 0.
\end{align*}
The limit $B\in H\cap H^{\alpha}(\mathbb{T}^{d})$ is an MHS equilibrium (1.2) with some pressure function almost surely and $\mathcal{D}(B)=\mu_0$. For $d=2$ and $d=3$, the measure $\mu_0$ satisfies the equalities: 
\begin{align*}
\mathbb{E}||\textrm{curl}^{-1}B ||_{\dot{H}^{\alpha}}^{2}
&=\int_{H}||\textrm{curl}^{-1}B ||_{\dot{H}^{\alpha}}^{2}\mu_0 (dB)=\frac{\mathsf{C}_{-1}}{2}, \\
\mathbb{E}\left((-\Delta)^{\alpha}\textrm{curl}^{-1}  B, B   \right)_H
&=\int_{H}\left( (-\Delta)^{\alpha}\textrm{curl}^{-1}  B, B   \right)_H\mu_0 (dB)
=\frac{\mathcal{C}_{-\frac{1}{2}}}{2}.
\end{align*}
\end{thm}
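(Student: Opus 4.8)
The plan is to run the fluctuation--dissipation scheme of \S2--\S5 essentially verbatim, replacing the Stokes dissipation $\kappa\Delta$ everywhere by the hyper--resistive operator $-\kappa(-\Delta)^{\alpha}$. Since $\alpha\geq 1$, the dissipation is only stronger than in the case $\alpha=1$ treated above, so every a priori bound and well--posedness statement of \S2--\S4 is reproduced with no change of argument; the only genuinely new points are a mild adjustment of the negative--regularity spaces used in the tightness argument of \S5.3 and the choice of a H\"older exponent in the tail estimate that yields the equalities for $\mu_{0}$.

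First I would redo the linear theory of \S2.1 with the semigroup $e^{-t\kappa(-\Delta)^{\alpha}}$: it is still a bounded Fourier multiplier on every $H^{s}(\mathbb{T}^{d})$, it satisfies the smoothing bound $\|t^{\sigma}(-\Delta)^{\alpha\sigma}e^{-t\kappa(-\Delta)^{\alpha}}\|_{\mathcal{L}(H^{s})}\lesssim_{\kappa,\sigma}1$ for $\sigma\geq 0$, and hence the inhomogeneous hyper--Stokes problem $\partial_{t}Z+\kappa(-\Delta)^{\alpha}Z=f$ has a unique solution in $C([0,T];H)\cap L^{2}(0,T;V^{\alpha})$ of norm $\lesssim_{\kappa}\|f\|_{L^{2}(0,T;(V^{\alpha})^{*})}$, which is the analogue of Proposition~2.2. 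The cubic estimates of \S2.2 (Lemma~2.6, Remark~2.15) concern only the bilinear operator $K_{\gamma}$ and are untouched, and the global well--posedness of the resistive and perturbed systems (Theorems~2.14 and 2.16) follows as before, since the dissipation enters the energy identity only through the favourable term $2\kappa\|b\|_{\dot{H}^{\alpha}}^{2}$. Sections~3 and 4 then proceed line by line: the stochastic convolution with $e^{-t\kappa(-\Delta)^{\alpha}}$ is defined as in Lemma~3.4, Theorem~3.8 produces a unique path--wise global solution that is an It\^o process in $(V^{\alpha})^{*}$ with constant diffusion, and applying the It\^o formulas of Appendix~B to $\|B\|_{H}^{2}$, to $\mathscr{H}(B)=(\textrm{curl}^{-1}B,B)_{H}$ for $d=3$, to $\mathscr{M}(B)=\|\textrm{curl}^{-1}B\|_{L^{2}}^{2}$ for $d=2$, and to $\exp(\delta\|B\|_{H}^{2})$ yields --- after using $\nabla\times e_{j}=\tau_{j}e_{j}$, $-\Delta e_{j}=\lambda_{j}e_{j}$, $\tau_{j}^{2}=\lambda_{j}$, $\|(-\Delta)^{-1/2}g_{j}\|_{L^{2}}^{2}=b_{j}^{2}/\lambda_{j}$, the identity $\textrm{curl}^{-1}(-\Delta)^{\alpha}B=(-\Delta)^{\alpha}\textrm{curl}^{-1}B$, and the vanishing of the nonlinear contributions --- exactly the three balance relations displayed before the theorem together with the uniform exponential moment bound (5.2) in the stationary regime. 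The Krylov--Bogoliubov theorem (Proposition~4.7) then yields the invariant measures $\mu_{\kappa}$ on $H^{\alpha}(\mathbb{T}^{d})$, as in Lemma~4.8.

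For the non--resistive limit I would argue as in \S5. The energy balance gives $\mathbb{E}\|B_{\kappa}\|_{V^{\alpha}}^{2}\lesssim\mathsf{C}_{0}$, so $\{\mu_{\kappa}\}$ is tight on $H^{\alpha-\varepsilon}(\mathbb{T}^{d})$ by $V^{\alpha}\Subset V^{\alpha-\varepsilon}$ and Prokhorov's theorem, producing a subsequential weak limit $\mu_{0}$ with $\mu_{0}(V^{\alpha})=1$. Lifting $\mu_{\kappa}$ to a path--space measure $\boldsymbol{\mu}_{\kappa}$ on $\mathcal{X}_{T}^{\alpha,-\varepsilon}:=C([0,T];H^{-\varepsilon})\cap L^{2}(0,T;H^{\alpha-\varepsilon})$, I would reproduce the compactness argument of \S5.3 with the single modification that the base negative--regularity space $H^{-1}$ used there is replaced by $H^{-\alpha}$: writing $B_{\kappa}=B_{\kappa}^{1}+B_{\kappa}^{2}+B_{\kappa}^{3}$ with $B_{\kappa}^{1}=B_{\kappa}(0)-\kappa\int_{0}^{t}(-\Delta)^{\alpha}B_{\kappa}\,ds$, $B_{\kappa}^{2}=\int_{0}^{t}\nabla\cdot(B_{\kappa}\otimes u_{\kappa}-u_{\kappa}\otimes B_{\kappa})\,ds$ and $B_{\kappa}^{3}=\sqrt{\kappa}\zeta$, one has $\|\partial_{t}B_{\kappa}^{1}\|_{H^{-\alpha}}=\kappa\|(-\Delta)^{\alpha}B_{\kappa}\|_{H^{-\alpha}}\leq\kappa\|B_{\kappa}\|_{H^{\alpha}}$, which is exactly matched by $\mathbb{E}\|B_{\kappa}\|_{H^{\alpha}}^{2}\lesssim\mathsf{C}_{0}$, while $B_{\kappa}^{2}$ and $B_{\kappa}^{3}$ are estimated in $H^{-1}\subset H^{-\alpha}$ exactly as in \S5.4 (Propositions~5.7--5.10). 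Forming $\mathcal{W}_{T}=\mathcal{W}_{T}^{1}+\mathcal{W}_{T}^{2}+\mathcal{W}_{T}^{3}$ with $H^{-\alpha}$ in place of $H^{-1}$ and $\mathcal{K}_{T}=\mathcal{W}_{T}\cap L^{2}(0,T;H^{\alpha})$, the Lions--Aubin--Simon theorem (Proposition~5.4) gives $\mathcal{K}_{T}\Subset\mathcal{X}_{T}^{\alpha,-\varepsilon}$, the interpolation exponents being checked as in Proposition~5.5 (now $H^{\alpha-\varepsilon}=[H^{\alpha},H^{-\alpha}]_{\theta}$ with $\theta=1-\varepsilon/(2\alpha)$, and similarly for $H^{-\varepsilon}$). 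The $p$--th moment bound (1.20) and hence tightness of $\{\boldsymbol{\mu}_{\kappa}\}$ follow as in Lemmas~5.10 and 5.11; Skorokhod's theorem (as in Lemma~5.12) gives statistically stationary solutions $B_{\kappa}$ converging almost surely in $\mathcal{X}_{T}^{\alpha,-\varepsilon}$ to a limit $B$ with $\mathcal{D}(B)=\mu_{0}$; and, using $\mathbb{E}\|u_{\kappa}\|_{L^{2}(0,T;\dot{H}^{\gamma})}^{2}=\frac{1}{2}\kappa\mathsf{C}_{0}\to 0$, $\kappa(-\Delta)^{\alpha}B_{\kappa}\to 0$ in $L^{2}(0,T;H^{-\alpha})$, $\sqrt{\kappa}\zeta\to 0$, and $B_{\kappa}\otimes B_{\kappa}\to B\otimes B$ in $L^{1}$ a.s., one passes to the limit in the constitutive law $u_{\kappa}=K_{\gamma}(B_{\kappa},B_{\kappa})$ exactly as in the proof of Theorem~1.1 and concludes, via Proposition~2.1, that $B$ is time--independent and an MHS equilibrium (1.2) in $H\cap H^{\alpha}(\mathbb{T}^{d})$ with some pressure, almost surely.

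Finally, the two equalities for $\mu_{0}$ are obtained by imitating the proof of Theorem~5.15. From $\textrm{curl}^{-1}e_{j}=e_{j}/\tau_{j}$ one gets $((-\Delta)^{\alpha}\textrm{curl}^{-1}B,B)_{H}=\sum_{j}\tau_{j}^{2\alpha-1}|(B,e_{j})_{H}|^{2}$, hence $|((-\Delta)^{\alpha}\textrm{curl}^{-1}B,B)_{H}|\leq\|B\|_{H^{\alpha-1/2}}^{2}$ for $d=3$, and $\|\textrm{curl}^{-1}B\|_{\dot{H}^{\alpha}}^{2}=\|B\|_{\dot{H}^{\alpha-1}}^{2}\leq\|B\|_{H^{\alpha-1/2}}^{2}$ for $d=2$ (using $\alpha\geq 1$). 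Truncating the relevant stationary balance relation to a ball $B_{V^{\alpha-1/2}}(R)$ in $V^{\alpha-1/2}$, the tail is bounded by $R^{-(q-2)}\mathbb{E}\|B_{\kappa}\|_{H^{\alpha-1/2}}^{q}$ for any $q>2$, and this is bounded uniformly in $\kappa\in(0,1]$ by choosing $q\in(2,\,4\alpha/(2\alpha-1)]$ and combining the interpolation inequality (2.6), $\|B\|_{H^{\alpha-1/2}}\leq\|B\|_{H^{\alpha}}^{1-1/(2\alpha)}\|B\|_{H}^{1/(2\alpha)}$, with H\"older's inequality, $\mathbb{E}\|B_{\kappa}\|_{H^{\alpha}}^{2}\lesssim\mathsf{C}_{0}$ and the exponential moment (5.2) (for $\alpha=1$ this recovers $q=3$). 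Integrating the truncated relation in time and applying dominated convergence on $\Omega\times(0,T)$ --- the integrand being bounded by $R^{2}$ after truncation, and $B_{\kappa}\to B$ in $L^{2}(0,T;H^{\alpha-\varepsilon})$ a.s.\ for $\varepsilon<1/2$, so also in $L^{2}(0,T;H^{\alpha-1/2})$ --- transfers the truncated identity to $\mu_{0}$, and letting $R\to\infty$ yields the stated equalities, exactly as for $\alpha=1$. The main obstacle is thus purely bookkeeping: one must consistently replace $H^{-1}$ by $H^{-\alpha}$ in the sum--space decomposition of \S5.3 and check the admissible range of $q$; since $\alpha\geq 1$ makes every PDE estimate strictly more favourable, no new analytic difficulty arises.
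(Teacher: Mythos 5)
Your proposal takes the same route the paper intends for Theorem 5.16 --- rerunning the fluctuation--dissipation scheme of \S 2--\S 5 with $\kappa\Delta$ replaced by $-\kappa(-\Delta)^{\alpha}$, substituting $H^{-\alpha}$ for $H^{-1}$ in the sum-space/tightness argument of \S 5.3, and adapting the truncation-and-tail argument of Theorem 5.15 to the balance relations listed before the theorem --- and the details you supply (hyper-Stokes linear estimates, unchanged cubic estimates, the bound $\left|\left((-\Delta)^{\alpha}\textrm{curl}^{-1}B,B\right)_H\right|\leq \|B\|_{H^{\alpha-1/2}}^{2}$, uniform moment bounds from $\mathbb{E}\|B_{\kappa}\|_{\dot{H}^{\alpha}}^{2}\lesssim \mathsf{C}_0$ and (5.2)) are correct. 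The only trivial slip is the endpoint of your exponent range: take $q$ strictly below $4\alpha/(2\alpha-1)$ (any interior value, e.g. $q=3$ when $\alpha=1$), since at the endpoint the H\"older conjugate needed to absorb the exponential-moment factor degenerates.
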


\subsubsection{The bounded domain case}

For general (possibly multiply-connected) bounded domains $\Omega\subset \mathbb{R}^{3}$, the work \cite{CP22} constructed MHS equilibria in $H^{\alpha}(\Omega)$ for $\alpha\geq 1$ which are not Beltrami fields by long time limits of the Voigt-MHD system. Standard boundary conditions for viscous and resistive MHD are the perfect conductivity condition and the Dirichlet boundary condition, 
\begin{align*}
(\nabla \times B)\times n=0,\quad B\cdot n=0,\quad  u=0\quad \textrm{on}\ \partial\Omega,  
\end{align*}
for the unit outward normal vector field $n$, e.g., \cite[2.2]{GLL}. 

The fluctuation-dissipation method is also available for bounded domains; see \cite[5.2.6]{Kuk12} for the complex Ginzburg--Landau equation. For simply-connected domains, we can construct a complete orthonormal basis $\{e_{j}\}$ (without explicit forms) on a Hilbert space by eigenfunctions of the rotation operator \cite{YG90}, and apply the same fluctuation-dissipation method of this paper for $\alpha=1$. On the other hand, for multiply-connected domains, the rotation operator is not a self-adjoint operator. Its spectra are point spectra and agree with the complex plane \cite{YG90}. 


\section{Absolute continuity of laws}

We show that the laws of energy, helicity, mean-square potential, and Casimir invariants under $\mu_0$ are absolutely continuous for Lebesgue measures. 

\subsection{The law of energy}

\begin{thm}
Let $d\geq 2$. Let $\mu_0$ be a measure as in Theorem 1.1. Assume that $b_j\neq 0$ for all $j\in \mathbb{N}$. Then, the law of energy $\mathcal{D}_{\mu_0}\mathscr{E}$ is absolutely continuous for the Lebesgue measure on $(0,\infty)$.
\end{thm}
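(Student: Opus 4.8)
The plan is to view the energy $\mathscr{E}$ evaluated along the statistically stationary solutions $B_{\kappa}$ of (1.3) as a one‑dimensional It\^o process, to read off from it a modulus of absolute continuity for the laws $\mathcal{D}_{\mu_{\kappa}}\mathscr{E}$ that is \emph{uniform in} $\kappa$, and then to pass to the non‑resistive limit $\kappa\to 0$. First I would record the It\^o dynamics of the energy. Since $\{B_{\kappa}(t)\}$ is an It\^o process in $V^{*}$ with constant diffusion $g_{j}=\sqrt{\kappa}\,b_{j}e_{j}$ (Theorem~3.8), applying It\^o's formula to $\mathscr{E}=\tfrac12\|\cdot\|_{H}^{2}$ together with the identity $(B_{\kappa},\kappa\Delta B_{\kappa}+\nabla\cdot(B_{\kappa}\otimes u_{\kappa}-u_{\kappa}\otimes B_{\kappa}))_{H}=-\kappa\|\nabla B_{\kappa}\|_{H}^{2}-\|u_{\kappa}\|_{\dot{H}^{\gamma}}^{2}$ from the proof of Lemma~4.1 shows that $X_{\kappa}(t):=\mathscr{E}(B_{\kappa}(t))$ is a continuous semimartingale
\begin{align*}
dX_{\kappa}=a_{\kappa}\,dt+\sqrt{\kappa}\sum_{j=1}^{\infty}b_{j}(B_{\kappa},e_{j})_{H}\,d\beta_{j},\qquad a_{\kappa}=\tfrac{\kappa}{2}\mathsf{C}_0-\kappa\|\nabla B_{\kappa}\|_{H}^{2}-\|u_{\kappa}\|_{\dot{H}^{\gamma}}^{2},
\end{align*}
with quadratic variation density $\sigma_{\kappa}^{2}=\kappa\sum_{j}b_{j}^{2}(B_{\kappa},e_{j})_{H}^{2}$. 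By stationarity the marginal law of $X_{\kappa}(t)$ equals $\mathcal{D}_{\mu_{\kappa}}\mathscr{E}$ for every $t$, and the balance relation (5.1) gives the $\kappa$‑uniform bounds $\mathbb{E}|a_{\kappa}|\le\tfrac{\kappa}{2}\mathsf{C}_0+\mathbb{E}(\kappa\|\nabla B_{\kappa}\|_{H}^{2}+\|u_{\kappa}\|_{\dot{H}^{\gamma}}^{2})=\kappa\mathsf{C}_0$ and $\mathbb{E}\|\nabla B_{\kappa}\|_{H}^{2}\le\mathsf{C}_0/2$.

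Next I would apply the identity (B.28) for one‑dimensional stationary It\^o processes --- equivalently, combine the occupation‑times formula $\int_{A}L_{T}^{x}\,dx=\int_{0}^{T}\mathbf{1}_{A}(X_{\kappa})\sigma_{\kappa}^{2}\,dt$ with $\mathbb{E}L_{T}^{x}=-\mathbb{E}\int_{0}^{T}\mathrm{sgn}(X_{\kappa}-x)a_{\kappa}\,dt\le T\,\mathbb{E}|a_{\kappa}|$ (Tanaka's formula and marginal stationarity) --- and divide by $\kappa$, to obtain for every Borel $A\subset(0,\infty)$
\begin{align*}
\mathbb{E}\Big[\mathbf{1}_{A}(\mathscr{E}(B_{\kappa}))\sum_{j=1}^{\infty}b_{j}^{2}(B_{\kappa},e_{j})_{H}^{2}\Big]\le\mathsf{C}_0\,|A|.
\end{align*}
To turn this into control of $\mathcal{D}_{\mu_{\kappa}}\mathscr{E}$ away from the origin, write $c_{j}=(B_{\kappa},e_{j})_{H}$ and split $\|B_{\kappa}\|_{H}^{2}=\sum_{j\le N}c_{j}^{2}+\sum_{j>N}c_{j}^{2}$; since $\sum_{j>N}c_{j}^{2}\le\lambda_{N+1}^{-1}\sum_{j}\lambda_{j}c_{j}^{2}=\lambda_{N+1}^{-1}\|\nabla B_{\kappa}\|_{H}^{2}$, one gets $\sum_{j}b_{j}^{2}c_{j}^{2}\ge(\min_{j\le N}b_{j}^{2})\big(\|B_{\kappa}\|_{H}^{2}-\lambda_{N+1}^{-1}\|\nabla B_{\kappa}\|_{H}^{2}\big)$, where $\min_{j\le N}b_{j}^{2}>0$ is exactly where the hypothesis $b_{j}\ne0$ enters. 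Fix a compact $[\alpha,\beta]\subset(0,\infty)$, let $A\subset[\alpha,\beta]$, and for $R>0$ choose $N=N(R,\alpha)$ with $\lambda_{N+1}\ge R/\alpha$; on $\{\mathscr{E}(B_{\kappa})\in A\}\cap\{\|\nabla B_{\kappa}\|_{H}^{2}\le R\}$ the weight is $\ge\alpha\min_{j\le N}b_{j}^{2}=:m_{N,\alpha}>0$, so by the last display, Chebyshev, and $\mathbb{E}\|\nabla B_{\kappa}\|_{H}^{2}\le\mathsf{C}_0/2$,
\begin{align*}
\mathcal{D}_{\mu_{\kappa}}\mathscr{E}(A)\le\frac{\mathsf{C}_0}{m_{N,\alpha}}\,|A|+\mathbb{P}\big(\|\nabla B_{\kappa}\|_{H}^{2}>R\big)\le\frac{\mathsf{C}_0}{m_{N,\alpha}}\,|A|+\frac{\mathsf{C}_0}{2R}\qquad\text{for all }\kappa\in(0,1],
\end{align*}
a $\kappa$‑uniform modulus of absolute continuity on every compact subinterval of $(0,\infty)$.

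Finally I would pass to the limit. From $B_{\kappa}\to B$ in $L^{2}(0,T;H)$ a.s.\ and the time‑independence of the MHS limit $B$ (both established in §5.5, see Proposition~5.14 and the proof of Theorem~1.1) one gets $\mathscr{E}(B_{\kappa}(\cdot))\to\mathscr{E}(B)$ in $L^{1}(0,T)$ a.s.; averaging in $t$, using marginal stationarity of $B_{\kappa}$ and bounded convergence, yields $\int\varphi\,d\mathcal{D}_{\mu_{\kappa}}\mathscr{E}\to\int\varphi\,d\mathcal{D}_{\mu_{0}}\mathscr{E}$ for all $\varphi\in C_{b}(\mathbb{R})$, i.e.\ $\mathcal{D}_{\mu_{\kappa}}\mathscr{E}\to\mathcal{D}_{\mu_{0}}\mathscr{E}$ weakly. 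Given Lebesgue‑null $A\subset[\alpha,\beta]$, enclose it in open $U\subset(0,\infty)$ of arbitrarily small measure lying in a fixed slightly larger compact interval; the Portmanteau theorem (Proposition~4.4) and the previous display give $\mathcal{D}_{\mu_{0}}\mathscr{E}(A)\le\mathcal{D}_{\mu_{0}}\mathscr{E}(U)\le\liminf_{\kappa}\mathcal{D}_{\mu_{\kappa}}\mathscr{E}(U)\le\frac{\mathsf{C}_0}{m_{N}}|U|+\frac{\mathsf{C}_0}{2R}$, and letting $|U|\downarrow0$ by outer regularity of Lebesgue measure and then $R\to\infty$ forces $\mathcal{D}_{\mu_{0}}\mathscr{E}(A)=0$. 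Since $[\alpha,\beta]\subset(0,\infty)$ is arbitrary, $\mathcal{D}_{\mu_{0}}\mathscr{E}$ is absolutely continuous on $(0,\infty)$ (nothing is claimed at $0$, where $\mu_{0}$ may carry an atom).

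The main obstacle is precisely this passage to the limit: absolute continuity is not preserved under weak convergence (for instance $\mathcal{D}_{\mu_{\kappa}}\mathscr{E}$ could collapse to an atom), so one genuinely needs the $\kappa$‑\emph{uniform} estimate above. Producing it rests on the coercivity bound $\sum_{j}b_{j}^{2}(B_{\kappa},e_{j})_{H}^{2}\ge(\min_{j\le N}b_{j}^{2})(\|B_{\kappa}\|_{H}^{2}-\lambda_{N+1}^{-1}\|\nabla B_{\kappa}\|_{H}^{2})$ --- where the hypothesis $b_{j}\ne0$ for \emph{all} $j$ is used to handle arbitrarily high Fourier modes --- together with the $\kappa$‑independent energy estimates from (5.1); and, because the drift $a_{\kappa}$ is unbounded, the first step must be carried out through the local‑time / occupation‑times identity (B.28), which only requires $a_{\kappa}\in L^{1}$ in $(\omega,t)$ rather than boundedness.
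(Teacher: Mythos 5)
Your argument is correct, and its overall architecture is the one the paper uses: a $\kappa$-uniform ``occupation'' estimate of the form $\mathbb{E}\bigl[1_{\Gamma}(\|B_{\kappa}\|_{H}^{2})\sum_{j}b_{j}^{2}(B_{\kappa},e_{j})_{H}^{2}\bigr]\lesssim \mathsf{C}_0\, l_1(\Gamma)$, then the spectral coercivity $\sum_{j}b_{j}^{2}(B_{\kappa},e_{j})_{H}^{2}\geq(\min_{j\leq N}b_{j}^{2})(\|B_{\kappa}\|_{H}^{2}-\lambda_{N}^{-1}\|\nabla B_{\kappa}\|_{H}^{2})$ (the paper's Proposition 6.5) combined with Chebyshev for $\|\nabla B_{\kappa}\|_{H}^{2}$ via (5.1), and finally Portmanteau to transfer the bound to $\mu_0$ and kill null sets away from the origin. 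Where you genuinely diverge is in how the key estimate is produced: the paper (Propositions 6.2--6.3, Lemma 6.4) applies the It\^o formula to $f(\|B\|_{H}^{2})$ for general $f\in C^{2}$, uses stationarity to get $\mathbb{E}A(0)=0$, and then runs the resolvent regularization $f_{\lambda}=(\lambda-\partial_x^{2})^{-1}f$ with $\|f_{\lambda}'\|_{L^\infty}\leq\|f\|_{L^1}$ and $f_{\lambda}''\to -f$, whereas you treat $\mathscr{E}(B_{\kappa}(t))$ directly as a one-dimensional stationary semimartingale and invoke the local-time identity (B.28) -- a tool the paper reserves for the mean-square potential and the $\mu_{\kappa}(B=0)=0$ argument in \S 6.3 -- exploiting the same cancellation of $\kappa$ between $\sigma_{\kappa}^{2}$ and $\mathbb{E}|a_{\kappa}|\leq\kappa\mathsf{C}_0$. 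What your route buys is that the regularization step is packaged into a standard stochastic-calculus fact; what it costs is that you must verify the hypotheses for the unstopped It\^o formula (B.7)/(B.6) and (B.25) (which the moment bounds (5.1)--(5.2) do give), and you should note that (B.26)/(B.28) are stated for intervals, so the passage to general Borel $A$ requires the routine remark that the weighted occupation law is a finite Borel measure dominated by a multiple of Lebesgue measure on intervals. Two further minor differences, both harmless: you deduce weak convergence of the pushforward laws $\mathcal{D}_{\mu_{\kappa}}\mathscr{E}\to\mathcal{D}_{\mu_0}\mathscr{E}$ from the a.s.\ convergence of \S 5.5 and apply Portmanteau on $\mathbb{R}$, while the paper applies Portmanteau directly to $\mu_{\kappa}\to\mu_0$ in $\mathcal{P}(H^{1-\varepsilon})$ using that $\tilde{\mathscr{E}}^{-1}(\Gamma)\cap\{\|B\|_{H}^{2}>\delta\}$ is open (equivalent, since $\mathscr{E}$ is continuous on $H^{1-\varepsilon}$ for $\varepsilon<1$); and your exhaustion by compact intervals $[\alpha,\beta]$ replaces the paper's exhaustion $\tilde{\mathscr{E}}^{-1}(\Gamma)\setminus\{0\}=\bigcup_{n}\tilde{\mathscr{E}}^{-1}(\Gamma)\cap\{\|B\|_{H}^{2}>1/n\}$, with the same effect, and both correctly claim nothing at the origin.
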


We show absolute continuity of the law of $\tilde{\mathscr{E}} (B)=2{\mathscr{E}} (B)=||B||_{H}^{2}$.

\begin{prop}
The following holds for statistically stationary solutions $B_{\kappa}$ to (1.3) for $\gamma>d/2$ and $f\in C^{2}(\mathbb{R})$:
\begin{align}
\mathbb{E}\left(f'\left(||B_{\kappa}(0)||_{H}^{2}\right)\left(  \frac{\kappa}{2}\mathsf{C}_0-\kappa ||\nabla B_\kappa(0)||_{H}^{2}-||u_\kappa(0)||_{\dot{H}^{\gamma }}^{2}\right)+\kappa f''\left(||B_{\kappa}(0)||_{H}^{2}\right)\sum_{j=1}^{\infty}b_j^{2}|(B_\kappa(0),e_j)_{H}|^{2}   \right)=0.
\end{align}
\end{prop}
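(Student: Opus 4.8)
The plan is to apply the Itô formula to the functional $F(B) = f(\|B\|_H^2)$ along the statistically stationary solution $B_\kappa$ and then exploit stationarity of the law $\mathcal D(B_\kappa(t)) = \mu_\kappa$. Concretely, $B_\kappa$ is an Itô process in $V^*$ with constant diffusion, with drift $\Phi = \kappa\Delta B_\kappa + \nabla\cdot(B_\kappa\otimes u_\kappa - u_\kappa\otimes B_\kappa)$ and diffusion coefficients $g_j = \sqrt{\kappa}\,b_j e_j$ (the random force in (1.3) is $\sqrt\kappa\,\partial_t\zeta$). Applying the chain rule for $f$ composed with $z\mapsto\|z\|_H^2$ (the Itô formula (B.17)-type identity established earlier), one gets, for $t\ge 0$ and with the usual localizing stopping times $\tau_n$,
\begin{align*}
f\!\left(\|B_\kappa\|_H^2\right)(t\wedge\tau_n)
&= f\!\left(\|B_\kappa(0)\|_H^2\right)
+\int_0^{t\wedge\tau_n} f'\!\left(\|B_\kappa\|_H^2\right)\Big(2(B_\kappa,\Phi)_H+\kappa\sum_j b_j^2\|e_j\|_H^2\Big)\,ds\\
&\quad+\int_0^{t\wedge\tau_n} 2\kappa f''\!\left(\|B_\kappa\|_H^2\right)\sum_j b_j^2|(B_\kappa,e_j)_H|^2\,ds
+(\text{martingale}).
\end{align*}
Here I use the identity from the proof of Lemma 4.1, namely $(B_\kappa,\Phi)_H = -\kappa\|\nabla B_\kappa\|_H^2 - \|u_\kappa\|_{\dot H^\gamma}^2$, obtained by integration by parts together with $(-\Delta)^{\gamma/2}u_\kappa = (-\Delta)^{-\gamma/2}\Pi(B_\kappa\cdot\nabla B_\kappa)$, and I recall $\sum_j b_j^2\|e_j\|_H^2 = \mathsf C_0$.

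Next I would take expectations. The stochastic integral is a genuine martingale after localization (its quadratic variation is controlled using the exponential moment bound (5.2) and the energy bound (5.1), which make $f'(\|B_\kappa\|^2)(B_\kappa,e_j)_H$ square-integrable in $(s,\omega)$ for $f\in C^2$ with, say, polynomially bounded derivatives — one reduces the general $C^2$ case to this by a cutoff if needed, or simply notes the statement is to be read in the sense that the displayed expectation is well-defined), so Doob's optional sampling gives mean zero. Letting $n\to\infty$ and using dominated/monotone convergence yields
\begin{align*}
\mathbb E f\!\left(\|B_\kappa\|_H^2\right)(t)
&= \mathbb E f\!\left(\|B_\kappa(0)\|_H^2\right)\\
&\quad+\mathbb E\int_0^{t} \Big(f'\!\left(\|B_\kappa\|_H^2\right)\big(\kappa\mathsf C_0-2\kappa\|\nabla B_\kappa\|_H^2-2\|u_\kappa\|_{\dot H^\gamma}^2\big)+2\kappa f''\!\left(\|B_\kappa\|_H^2\right)\textstyle\sum_j b_j^2|(B_\kappa,e_j)_H|^2\Big)\,ds.
\end{align*}

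Now I invoke stationarity: since $\mathcal D(B_\kappa(s))$ does not depend on $s$, the left-hand side equals $\mathbb E f(\|B_\kappa(0)\|_H^2)$ for every $t$, so the two terms cancel; and the integrand on the right has $s$-independent expectation, equal to its value at $s=0$. Hence the time integral equals $t$ times the $s=0$ expectation, and dividing by $2t$ (the factor $2$ absorbs the $2$'s appearing in the three terms of the integrand, leaving exactly the combination in (6.2)) gives precisely the claimed identity (6.2). The main obstacle — and the only genuinely delicate point — is the justification that the local martingale is a true martingale (equivalently, that optional sampling applies and the limit $n\to\infty$ is legitimate) for a general $f\in C^2$; this is where one must lean on the exponential moment estimate (5.2), Hölder's inequality, and the $L^2$-in-time bound on $\|\nabla B_\kappa\|_H^2 + \|u_\kappa\|_{\dot H^\gamma}^2$ from (5.1) to bound all the terms, exactly as in the proofs of Lemmas 4.1 and 4.3. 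The algebraic bookkeeping (pulling the $\kappa$'s and $2$'s through, and identifying $\sum_j b_j^2\|e_j\|^2 = \mathsf C_0$) is routine.
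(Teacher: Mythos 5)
Your proposal is correct and follows essentially the same route as the paper: apply the It\^o formula for the functional $f(\|B\|_H^{2})$ (the paper's Remark B.5 together with Theorem B.2, rather than (B.17), which is the exponential case), use $(B_\kappa,\Phi)_H=-\kappa\|\nabla B_\kappa\|_H^{2}-\|u_\kappa\|_{\dot H^{\gamma}}^{2}$ and $\sum_j b_j^{2}=\mathsf C_0$, take expectations, and invoke stationarity of the law to force the drift term to have zero mean at time $0$. Your extra care about the localization and square-integrability of the stochastic integral is a legitimate elaboration of what the paper leaves implicit, not a different argument.
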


\begin{proof}
We apply the It\^o formula (B.5) for the functional $F[B]=f(\tilde{\mathscr{E}}(B))=f(||B||_{H}^{2})$ in Remark B.5. Since the law of $F[B_{\kappa}]$ is time independent, taking the mean to (B.5) implies $\mathbb{E}A(0)=0$ for  
\begin{align*}
\frac{1}{2}A(0)=\left(f'\left(||B_{\kappa}(0)||_{H}^{2}\right)\left(  \frac{\kappa}{2}\mathsf{C}_0-\kappa ||\nabla B_\kappa(0)||_{H}^{2}-||u_\kappa(0)||_{\dot{H}^{\gamma }}^{2}\right)+\kappa f''\left(||B_{\kappa}(0)||_{H}^{2}\right)\sum_{j=1}^{\infty}b_j^{2}|(B_\kappa(0),e_j)_{H}|^{2}   \right).
\end{align*}
\end{proof}

\begin{prop}
Set 
\begin{align}
(\lambda-\partial_x^{2})^{-1}f=\frac{1}{2\sqrt{\lambda}}e^{-\sqrt{\lambda}|x|}*f,\quad f\in C_{c}^{\infty}(\mathbb{R}),\ \lambda>0.
\end{align}
Then, $f_{\lambda}=(\lambda-\partial_x^{2})^{-1}f\in C^{\infty}(\mathbb{R})$ satisfies 
\begin{align}
\lambda ||f_{\lambda}||_{L^{\infty}}&\leq ||f||_{L^{\infty}}, \\
||f_{\lambda}'||_{L^{\infty}}&\leq ||f||_{L^{1}}, \\
\lim_{\lambda \to 0}||f_{\lambda}''+f||_{L^{\infty}}&=0.
\end{align}
\end{prop}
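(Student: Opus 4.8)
The plan is to work throughout with the explicit Green's function $G_\lambda(x)=\frac{1}{2\sqrt{\lambda}}e^{-\sqrt{\lambda}|x|}$, so that $f_\lambda=G_\lambda*f$, and to reduce all three estimates to Young's convolution inequality applied to $G_\lambda$ and to its derivative. Two facts would be recorded at the outset: (a) $\|G_\lambda\|_{L^1(\mathbb{R})}=1/\lambda$, an elementary integral; and (b) $(\lambda-\partial_x^2)G_\lambda=\delta_0$ in the distributional sense — indeed $G_\lambda$ solves the homogeneous ODE $\lambda g-g''=0$ away from the origin, and $G_\lambda$ is continuous while $G_\lambda'(x)=-\tfrac12\,\mathrm{sgn}(x)e^{-\sqrt{\lambda}|x|}$ jumps by $-1$ across $x=0$. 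Fact (b), together with $f\in C_c^\infty(\mathbb{R})$, yields at once that $f_\lambda\in C^\infty(\mathbb{R})$ with all derivatives bounded (derivatives pass onto the smooth, compactly supported factor) and the pointwise identity $(\lambda-\partial_x^2)f_\lambda=f$, equivalently $f_\lambda''=\lambda f_\lambda-f$.

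The bound (6.3) is then immediate from (a): $\|f_\lambda\|_{L^\infty}\le\|G_\lambda\|_{L^1}\|f\|_{L^\infty}=\lambda^{-1}\|f\|_{L^\infty}$. For (6.4) I would move the derivative onto the kernel, $f_\lambda'=G_\lambda'*f$, and use $\|G_\lambda'\|_{L^\infty}=\tfrac12$ from the formula above, so that Young's inequality gives $\|f_\lambda'\|_{L^\infty}\le\tfrac12\|f\|_{L^1}\le\|f\|_{L^1}$.

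For (6.5) the key is the identity $f_\lambda''+f=\lambda f_\lambda=(\lambda G_\lambda)*f$ coming from $(\lambda-\partial_x^2)f_\lambda=f$. Since $\lambda G_\lambda(x)=\tfrac{\sqrt{\lambda}}{2}e^{-\sqrt{\lambda}|x|}$ has $\|\lambda G_\lambda\|_{L^\infty}=\tfrac{\sqrt{\lambda}}{2}$, Young's inequality gives $\|f_\lambda''+f\|_{L^\infty}\le\tfrac{\sqrt{\lambda}}{2}\|f\|_{L^1}\to0$ as $\lambda\to0$. This last step is the only one requiring a moment's thought: the naive bound $\|f_\lambda''+f\|_{L^\infty}=\lambda\|f_\lambda\|_{L^\infty}\le\|f\|_{L^\infty}$ coming from (6.3) does not decay, so one must instead pair the rescaled kernel $\lambda G_\lambda$ — which is a probability density but of height only $O(\sqrt{\lambda})$ — against $\|f\|_{L^1}$, the finiteness of the latter being precisely where the compact support of $f$ enters. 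I do not anticipate any genuine obstacle here; the only point that merits care in the write-up is the distributional computation of the jump of $G_\lambda'$ at the origin in fact (b).
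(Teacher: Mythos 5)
Your proposal is correct and follows essentially the same route as the paper's proof: record $\|G_\lambda\|_{L^1}=1/\lambda$, $\|G_\lambda'\|_{L^\infty}\leq 1/2$, $\|G_\lambda\|_{L^\infty}\leq (2\sqrt{\lambda})^{-1}$, and $(\lambda-\partial_x^2)G_\lambda=\delta_0$, then obtain (6.3) and (6.4) by Young's inequality and (6.5) from $f_\lambda''+f=\lambda f_\lambda$ paired against $\|f\|_{L^1}$, which is exactly the paper's argument.
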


\begin{proof}
By an elementary computation, $G_{\lambda}=(2\sqrt{\lambda})^{-1}e^{-\sqrt{\lambda}|x|}$ satisfies $(\lambda -\partial_{x}^{2})G_{\lambda}=\delta_0$, $||G_{\lambda}||_{L^{\infty}}\leq (2\sqrt{\lambda})^{-1}$, $||G_{\lambda}' ||_{L^{\infty}}\leq 1/2$, and $||G_{\lambda} ||_{L^{1}}=1/\lambda$. Thus, $f_{\lambda}=G_{\lambda}*f$ satisfies $(\lambda -\partial_{x}^{2})f_{\lambda}=f$, (6.3) and, (6.4). By $||f_{\lambda}||_{L^{\infty}}\leq ||G_{\lambda}||_{L^{\infty}}||f||_{L^{1}}\leq (2\sqrt{\lambda})^{-1} ||f||_{L^{1}}$, (6.5) follows.
\end{proof}

\begin{lem}
The inequality 
\begin{align}
\mathbb{E}\sum_{j=1}^{\infty}1_{\Gamma}\left( ||B_{\kappa}(0)||_{H}^{2}  \right)b_j^{2}|(B_{\kappa}(0),e_j)|^{2} 
\leq \mathsf{C}_0 l_1(\Gamma),\quad \Gamma\in \mathcal{B}(0,\infty),
\end{align}
holds for statistically stationary solutions $B_{\kappa}$ to (1.3) for $\gamma>d/2$.
\end{lem}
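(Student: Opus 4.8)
The plan is to test the stationarity identity \eqref{} --- i.e.\ the identity (6.1) of the preceding proposition --- against a $C^2$ function obtained by smoothing an indicator, and then convert the resulting estimate on test functions into the claimed estimate on Borel sets. Fix a nonnegative $\varphi\in C_c^\infty((0,\infty))$ and, for $\lambda>0$, set $f_\lambda=(\lambda-\partial_x^2)^{-1}\varphi$ as in (6.2); then $f_\lambda\in C^\infty(\mathbb{R})\subset C^2(\mathbb{R})$, so (6.1) applies with $f=f_\lambda$. Abbreviating $\mathscr{E}_\kappa=\|B_\kappa(0)\|_H^2$ and $\Sigma_\kappa=\sum_{j=1}^\infty b_j^2|(B_\kappa(0),e_j)_H|^2$, identity (6.1) rearranges to
\[
\kappa\,\mathbb{E}\bigl(f_\lambda''(\mathscr{E}_\kappa)\,\Sigma_\kappa\bigr)
=\mathbb{E}\Bigl(f_\lambda'(\mathscr{E}_\kappa)\,\bigl(\kappa\|\nabla B_\kappa(0)\|_H^2+\|u_\kappa(0)\|_{\dot{H}^{\gamma}}^2-\tfrac{\kappa}{2}\mathsf{C}_0\bigr)\Bigr).
\]
The point is that $X_\kappa:=\kappa\|\nabla B_\kappa(0)\|_H^2+\|u_\kappa(0)\|_{\dot{H}^{\gamma}}^2$ is nonnegative and, by the balance relation (5.1), has mean $\tfrac{\kappa}{2}\mathsf{C}_0$; hence $\mathbb{E}|X_\kappa-\tfrac{\kappa}{2}\mathsf{C}_0|\le\mathbb{E}X_\kappa+\tfrac{\kappa}{2}\mathsf{C}_0=\kappa\mathsf{C}_0$. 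Together with the bound $\|f_\lambda'\|_{L^\infty}\le\|\varphi\|_{L^1}$ from (6.4), this shows the right-hand side above is bounded in absolute value by $\kappa\mathsf{C}_0\|\varphi\|_{L^1}$, uniformly in $\lambda>0$.

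Next I would let $\lambda\to0$ on the left. By (6.5) one has $\|f_\lambda''+\varphi\|_{L^\infty}\to0$, and by the exponential moment (5.2) one has $\mathbb{E}\,\Sigma_\kappa\le(\sup_j b_j^2)\,\mathbb{E}\|B_\kappa(0)\|_H^2<\infty$; therefore
\[
\bigl|\mathbb{E}\bigl(f_\lambda''(\mathscr{E}_\kappa)\Sigma_\kappa\bigr)+\mathbb{E}\bigl(\varphi(\mathscr{E}_\kappa)\Sigma_\kappa\bigr)\bigr|\le\|f_\lambda''+\varphi\|_{L^\infty}\,\mathbb{E}\,\Sigma_\kappa\longrightarrow0.
\]
Passing to the limit in the rearranged identity and retaining only the uniform bound on its right-hand side yields $\kappa\,\mathbb{E}(\varphi(\mathscr{E}_\kappa)\Sigma_\kappa)\le\kappa\mathsf{C}_0\|\varphi\|_{L^1}$, i.e.
\[
\mathbb{E}\bigl(\varphi(\mathscr{E}_\kappa)\,\Sigma_\kappa\bigr)\le\mathsf{C}_0\int_{\mathbb{R}}\varphi\,dl_1
\qquad\text{for every nonnegative }\varphi\in C_c^\infty((0,\infty)),
\]
the left-hand side being nonnegative so no absolute value is needed.

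Finally I would upgrade this to (6.6). The set function $\Gamma\mapsto\nu(\Gamma):=\mathbb{E}\bigl(1_{\Gamma}(\mathscr{E}_\kappa)\,\Sigma_\kappa\bigr)$ is a finite Borel measure on $(0,\infty)$ (finite again by (5.2)), and the displayed inequality reads $\int\varphi\,d\nu\le\mathsf{C}_0\int\varphi\,dl_1$ for all nonnegative $\varphi\in C_c^\infty((0,\infty))$. Approximating $1_{U}$ from below by such $\varphi$ for open $U\subseteq(0,\infty)$, and then invoking outer regularity of the finite measures $\nu$ and $\mathsf{C}_0\,l_1$, gives $\nu(\Gamma)\le\mathsf{C}_0\,l_1(\Gamma)$ for all $\Gamma\in\mathcal{B}((0,\infty))$, which is (6.6).

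The main obstacle is the limit $\lambda\to0$: one cannot pass to the limit termwise on the right of the rearranged identity, since $f_\lambda'$ need not converge pointwise (its limit involves $\mathrm{sgn}\ast\varphi$), so it is essential to keep only the uniform-in-$\lambda$ estimate $\kappa\mathsf{C}_0\|\varphi\|_{L^1}$ there and to carry out the genuine limit solely on the left, where the resolvent estimates (6.3)--(6.5) combined with the moment bounds (5.1)--(5.2) make the passage harmless. Everything else is standard measure theory.
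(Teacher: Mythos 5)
Your proposal is correct and follows essentially the same route as the paper: apply the stationarity identity (6.1) to the resolvent-smoothed functions $f_\lambda=(\lambda-\partial_x^2)^{-1}\varphi$, bound the right-hand side uniformly in $\lambda$ via (6.4) and the balance relation (5.1), pass to the limit $\lambda\to0$ on the left using (6.5) and the finite moment from (5.2), and then approximate indicators of Borel sets. Your treatment of the final measure-theoretic approximation (via the finite measure $\nu$ and outer regularity) is just a more detailed version of the paper's closing step.
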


\begin{proof}
We apply (6.1) for $f_{\lambda}=(\lambda-\partial_x^{2})^{-1}f$ and $f\in C^{2}_{c}(\mathbb{R})$. By (6.4) and (5.1), 
\begin{align*}
\kappa \left|\mathbb{E}\sum_{j=1}^{\infty}f_{\lambda}''\left(||B_{\kappa}(0)||_{H}^{2}\right)b_j^{2}|(B_{\kappa}(0),e_j)_{H}|^{2}\right|
\leq ||f||_{L^{1}}
\mathbb{E} \left|\frac{\kappa}{2}\mathsf{C}_0-\kappa ||\nabla B_{\kappa}(0)||_{H}^{2}-||u_{\kappa}(0)||_{\dot{H}^{\gamma }}^{2}\right| 
\leq ||f||_{L^{1}} \kappa \mathsf{C}_0.
\end{align*}
By (6.5) and the dominated convergence theorem, letting $\lambda\to0$ implies that 
\begin{align*}
\left|\mathbb{E}\sum_{j=1}^{\infty}f\left(||B_{\kappa}(0) ||_{H}^{2}\right)b_j^{2}|(B_{\kappa}(0),e_j)_{H}|^{2}\right| 
\leq  ||f||_{L^{1}} \mathsf{C}_0.
\end{align*}
We approximate the indicator function $1_{\Gamma}$ for a Borel set $\Gamma\in \mathcal{B}(0,\infty)$ by elements of $C^{2}_{c}(\mathbb{R})$ and obtain (6.6).
\end{proof}

\begin{prop}
\begin{align}
\sum_{j=N}^{\infty}|(B,e_j)_H|^{2}&\leq \frac{1}{\lambda_N}||\nabla B||_{H}^{2},\\
\sum_{j=1}^{N-1}|(B,e_j)_H|^{2}&\geq ||B||_{H}^{2}-\frac{1}{\lambda_N}||\nabla B||_{H}^{2},\quad B\in V,\ N\geq 1.
\end{align}
\end{prop}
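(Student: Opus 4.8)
The plan is to diagonalize the Dirichlet energy in the eigenbasis $\{e_j\}$ of the Stokes operator and exploit the monotonicity $0<\lambda_1\le\lambda_2\le\cdots$. First I would record the two spectral identities that I will use: since $\{e_j\}_{j\ge1}$ is a complete orthonormal system of $H$ and $B\in V=H\cap H^1(\mathbb{T}^d)$, Parseval's identity gives $\|B\|_H^2=\sum_{j=1}^\infty|(B,e_j)_H|^2$; and since $-\Delta e_j=\lambda_j e_j$ with the $e_j$ smooth, the quadratic form $\|\nabla B\|_H^2=((-\Delta)B,B)_H$ diagonalizes as $\|\nabla B\|_H^2=\sum_{j=1}^\infty\lambda_j|(B,e_j)_H|^2$, the series converging precisely because $B\in H^1(\mathbb{T}^d)$.

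For the first inequality I would estimate the spectral tail: for every index $j\ge N$ one has $\lambda_j\ge\lambda_N$, hence
\[
\lambda_N\sum_{j=N}^\infty|(B,e_j)_H|^2\le\sum_{j=N}^\infty\lambda_j|(B,e_j)_H|^2\le\sum_{j=1}^\infty\lambda_j|(B,e_j)_H|^2=\|\nabla B\|_H^2,
\]
and dividing by $\lambda_N>0$ yields the claim. For the second inequality I would split the Parseval sum at index $N$ and insert the bound just obtained:
\[
\sum_{j=1}^{N-1}|(B,e_j)_H|^2=\|B\|_H^2-\sum_{j=N}^\infty|(B,e_j)_H|^2\ge\|B\|_H^2-\frac{1}{\lambda_N}\|\nabla B\|_H^2.
\]
When $N=1$ the left side is the empty sum, and the statement reduces to the Poincar\'e inequality $\lambda_1\|B\|_H^2\le\|\nabla B\|_H^2$, so the convention is consistent.

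I do not expect any genuine obstacle: the proposition is an immediate consequence of the spectral theorem for the positive self-adjoint Stokes operator together with the ordering of its eigenvalues. The only place meriting a word of justification is the validity of the series representation of $\|\nabla B\|_H^2$ for $B$ merely in $H^1$ rather than $C^\infty$; this follows from the spectral theorem on $H$, or alternatively by approximating $B$ in the $V$-norm by elements of $H\cap C^\infty(\mathbb{T}^d)$, which is dense in $V$, and passing to the limit, since both sides of the two inequalities depend continuously on $B$ in $V$.
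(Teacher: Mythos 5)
Your argument is correct and coincides with the paper's proof: both diagonalize $\|\nabla B\|_H^2=(B,-\Delta B)_H=\sum_j\lambda_j|(B,e_j)_H|^2$ in the Stokes eigenbasis, use $\lambda_j\geq\lambda_N$ for $j\geq N$ to get (6.7), and obtain (6.8) from (6.7) together with Parseval's identity. The extra remarks on the $N=1$ case and on density in $V$ are harmless additions but not needed.
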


\begin{proof}
By Parseval's identity for $B=\sum_{j=1}^{\infty}(B,e_j)_{H}e_j$ and $-\Delta B=\sum_{j=1}^{\infty}\lambda_{j} (B,e_j)_{H}e_j$,
\begin{align*}
||\nabla B||_{H}^{2}=(B,-\Delta B)_{H}=\sum_{j=1}^{\infty}\lambda_j |(B,e_j)_{H}|^{2}\geq\lambda_N \sum_{j=N}^{\infty} |(B,e_j)_{H}|^{2}.
\end{align*}
The second inequality (6.8) follows from (6.7).
\end{proof}

\begin{proof}[Proof of Theorem 6.1]
By the assumption $b_j\neq 0$ for all $j\in \mathbb{N}$, $\min_{1\leq j\leq N-1}b_j^{2}>0$ for any $N\geq 2$. For $\delta >0$, we take $N=N_{\delta}$ such that $\delta -1/(\lambda_{N}\delta)>0$. We set $I_{\delta}=\{B\in V\ |\ ||B||_{H}^{2}> \delta,\ ||\nabla B||_{H}^{2}< 1/\delta\ \}$ and apply (6.8) for $B\in I_{\delta}$ to estimate 
\begin{align*}
\sum_{j=1}^{\infty}b_j^{2}|(B,e_j)_H|^{2}
\geq \left(\min_{1\leq j\leq N-1}b_j^{2}\right)\sum_{j=1}^{N-1}|(B,e_j)_H|^{2} 
\geq \left(\min_{1\leq j\leq N-1}b_j^{2}\right)\left(||B||_{H}^{2}-\frac{1}{\lambda_{N}}|| \nabla B||_{H}^{2}  \right)
\geq \left(\min_{1\leq j\leq N-1}b_j^{2}\right)\left( \delta-\frac{1}{\lambda_{N}\delta}  \right).
\end{align*}
We apply (6.6) to estimate 
\begin{align*}
\mathsf{C}_0 l_1(\Gamma) \geq \mathbb{E}\sum_{j=1}^{\infty}1_{\Gamma}\left(||B_{\kappa}(0)||_{H}^{2}\right)b_j^{2}|(B_{\kappa}(0),e_j)_{H}|^{2} 
&\geq \int_{I_{\delta}}\sum_{j=1}^{\infty}1_{\Gamma}\left(||B||_{H}^{2}\right) b_j^{2}|(B,e_j)_{H}|^{2}   \mu_{\kappa}(dB) \\
&\geq \left(\min_{1\leq j\leq N-1}b_j^{2}\right)\left( \delta-\frac{1}{\lambda_{N}\delta}  \right)\mu_{\kappa}\left(\tilde{\mathscr{E}}^{-1}(\Gamma) \cap I_{\delta} \right).
\end{align*}
By (5.1) and Chebyshev's inequality, $\mu_{\kappa}\left(\ \left\{\ ||\nabla B||_{H}^{2}\geq 1/\delta\ \right\} \right)\leq \delta \mathsf{C}_0/2$. We thus obtain 
\begin{align*}
&\tilde{\mathscr{E}}^{-1}(\Gamma)\cap \left\{||B||_{H}^{2}> \delta\ \right\}\subset \left(\tilde{\mathscr{E}}^{-1}(\Gamma)\cap I_{\delta}\right)\cup \left( \left\{\ ||\nabla B||_{H}^{2}\geq 1/\delta\ \right\}\right),\\
&\mu_{\kappa}\left(\tilde{\mathscr{E}}^{-1}(\Gamma)\cap \left\{||B||_{H}^{2}> \delta\ \right\} \right)
\leq  \left(\min_{1\leq j\leq N-1}b_j^{2}\right)^{-1}\left( \delta-\frac{1}{\lambda_{N}\delta}  \right)^{-1}\mathsf{C}_0 l_1(\Gamma)+\frac{\delta}{2}\mathsf{C}_0.
\end{align*}
For an open set $\Gamma\subset (0,\infty)$, the set $\tilde{\mathscr{E}}^{-1}(\Gamma)\cap \left\{||B||_{H}^{2}> \delta\ \right\}$ is open in $H$. By $\mu_{\kappa}\to \mu_0$ in $\mathcal{P}(H^{1-\varepsilon})$ and Portmanteau theorem (Proposition 4.4), 
\begin{align*}
\mu_{0}\left(\tilde{\mathscr{E}}^{-1}(\Gamma)\cap \left\{||B||_{H}^{2}> \delta\ \right\} \right)
\leq \left(\min_{1\leq j\leq N-1}b_j^{2}\right)^{-1}\left( \delta-\frac{1}{\lambda_{N}\delta}  \right)^{-1}\mathsf{C}_0 l_1(\Gamma)+\frac{\delta}{2}\mathsf{C}_0.
\end{align*}
This inequality also holds for all Borel sets $\Gamma\in \mathcal{B} (0,\infty)$ by the approximation $l_1(\Gamma)=\inf\{\ l_1(G)\ |\ \Gamma\subset G,\ G:\textrm{open}\ \}$. If $l_1(\Gamma)=0$, $\lim_{\delta\to 0}\mu_{0}(\tilde{\mathscr{E}}^{-1}(\Gamma)\cap \{||B||_{H}^{2}> \delta\ \} )=0$. By $\tilde{\mathscr{E}}^{-1}(\Gamma)\backslash \{0\}=\cup_{n\geq 1}\tilde{\mathscr{E}}^{-1}(\Gamma)\cap \{||B||_{H}^{2}> 1/n\ \}$,
\begin{align*}
\mu_0(\tilde{\mathscr{E}}^{-1}(\Gamma)\backslash \{0\})=\mu_0\left(\bigcup_{n=1}^{\infty}\tilde{\mathscr{E}}^{-1}(\Gamma)\cap \left\{||B||_{H}^{2}> 1/n\ \right\}\right)=\lim_{n\to\infty}\mu_0(\tilde{\mathscr{E}}^{-1}\left(\Gamma)\cap \left\{||B||_{H}^{2}> 1/n\ \right\}\right)=0.
\end{align*}
By $0\notin \Gamma$, $\tilde{\mathscr{E}}^{-1}(\Gamma)\backslash \{0\}=\tilde{\mathscr{E}}^{-1}(\Gamma)$ and $\mu_0(\tilde{\mathscr{E}}^{-1}(\Gamma))=0$. Thus, $\mathcal{D}_{\mu_0}(\tilde{\mathscr{E}})$ is absolutely continuous with respect to the Lebesgue measure on $(0,\infty)$.
\end{proof}

\subsection{The law of helicity}

We show absolute continuity of the law of $\mathscr{H}$ by a similar argument to that for $\mathscr{E}$.

\begin{thm}
Let $d=3$. Let $\mu_0$ be a measure as in Theorem 1.1. Assume that $b_j\neq 0$ for all $j\in \mathbb{N}$. Then, the law of helicity $\mathcal{D}_{\mu_0}\mathscr{H}$ is absolutely continuous for the Lebesgue measure on $\mathbb{R}\backslash \{0\}$.
\end{thm}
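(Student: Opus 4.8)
The plan is to follow the proof of Theorem 6.1 verbatim, replacing the energy $\tilde{\mathscr{E}}(B)=\|B\|_H^2$ by the helicity $\mathscr{H}(B)=(\textrm{curl}^{-1}B,B)_H$, which is again a bounded quadratic form on $H$, continuous on $H^{1-\varepsilon}$. First I apply the It\^o formula (B.5) to the functional $F[B]=f(\mathscr{H}(B))$ with $f\in C^{2}(\mathbb{R})$ along the $V^{*}$-valued It\^o process $B_{\kappa}$ of (1.3). Since $\textrm{curl}^{-1}$ is self-adjoint on $H$ with $\textrm{curl}^{-1}e_{j}=\tau_{j}^{-1}e_{j}$ and $\tau_{j}^{2}=\lambda_{j}$, the drift of $F[B_{\kappa}(t)]$ is computed exactly as in the derivation of the helicity balance (4.3): the transport terms contribute $-2\kappa f'(\mathscr{H})(\nabla\times B_{\kappa},B_{\kappa})_H$ and the It\^o correction contributes $\kappa f'(\mathscr{H})\mathcal{C}_{-\frac{1}{2}}+2\kappa f''(\mathscr{H})\sum_{j}b_{j}^{2}|(\textrm{curl}^{-1}B_{\kappa},e_{j})_H|^{2}$. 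Taking the mean, using time-independence of the law of $F[B_{\kappa}]$, and dividing by $2\kappa$ gives the helicity analogue of (6.1):
\begin{align*}
\mathbb{E}\Big(f'\big(\mathscr{H}(B_{\kappa}(0))\big)\big(\tfrac{\mathcal{C}_{-\frac{1}{2}}}{2}-(\nabla\times B_{\kappa}(0),B_{\kappa}(0))_H\big)+f''\big(\mathscr{H}(B_{\kappa}(0))\big)\sum_{j=1}^{\infty}b_{j}^{2}\big|(\textrm{curl}^{-1}B_{\kappa}(0),e_{j})_H\big|^{2}\Big)=0.
\end{align*}

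Next, following the mollification step of Lemma 6.4, I insert $f_{\lambda}=(\lambda-\partial_{x}^{2})^{-1}f$ for $f\in C_{c}^{2}(\mathbb{R})$ into this identity. The bound (6.4), $\|f_{\lambda}'\|_{L^{\infty}}\leq\|f\|_{L^{1}}$, together with the $\kappa$-uniform estimate $\mathbb{E}|(\nabla\times B_{\kappa},B_{\kappa})_H|\leq\mathbb{E}\|B_{\kappa}\|_{H^{1/2}}^{2}\lesssim(\mathbb{E}\|B_{\kappa}\|_{H}^{2})^{1/2}(\mathbb{E}\|B_{\kappa}\|_{H^{1}}^{2})^{1/2}$ furnished by (5.1)--(5.2) (as in the proof of Theorem 5.15), bounds the $f_{\lambda}'$-term uniformly in $\kappa$; letting $\lambda\to0$ and using (6.5) together with dominated convergence — legitimate since $\sum_{j}b_{j}^{2}|(\textrm{curl}^{-1}B_{\kappa},e_{j})_H|^{2}=\sum_{j}b_{j}^{2}\lambda_{j}^{-1}|(B_{\kappa},e_{j})_H|^{2}\leq\lambda_{1}^{-1}(\sup_{j}b_{j}^{2})\|B_{\kappa}\|_{H}^{2}$ has finite mean — and finally approximating indicator functions yields the helicity analogue of (6.6):
\begin{align*}
\mathbb{E}\sum_{j=1}^{\infty}1_{\Gamma}\big(\mathscr{H}(B_{\kappa}(0))\big)\,b_{j}^{2}\big|(\textrm{curl}^{-1}B_{\kappa}(0),e_{j})_H\big|^{2}\leq C\,l_{1}(\Gamma),\qquad\Gamma\in\mathcal{B}(\mathbb{R}\backslash\{0\}).
\end{align*}

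The genuinely new ingredient — and the step I expect to be the main obstacle — is a lower bound for the diffusion coefficient $\sum_{j}b_{j}^{2}|(\textrm{curl}^{-1}B,e_{j})_H|^{2}$ on suitable sublevel sets. Unlike $\|B\|_{H}^{2}$, the helicity $\mathscr{H}(B)=\sum_{j}\tau_{j}|(\textrm{curl}^{-1}B,e_{j})_H|^{2}$ is an \emph{indefinite} quadratic form, so the low-frequency mass cannot be bounded below directly by $|\mathscr{H}(B)|$ as in the proof of Theorem 6.1. Instead, I split $\mathscr{H}(B)$ at frequency $N$, control the tail by $\sum_{j\geq N}|\tau_{j}|\,|(\textrm{curl}^{-1}B,e_{j})_H|^{2}=\sum_{j\geq N}\lambda_{j}^{-1/2}|(B,e_{j})_H|^{2}\leq\lambda_{N}^{-3/2}\|\nabla B\|_{H}^{2}$, and bound the head above by $\sqrt{\lambda_{N-1}}\sum_{j<N}|(\textrm{curl}^{-1}B,e_{j})_H|^{2}$. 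Thus on $I_{\delta}=\{B\in V:|\mathscr{H}(B)|>\delta,\ \|\nabla B\|_{H}^{2}<1/\delta\}$, choosing $N=N_{\delta}$ with $\delta-\lambda_{N}^{-3/2}/\delta>0$, one obtains $\sum_{j<N}|(\textrm{curl}^{-1}B,e_{j})_H|^{2}\geq\lambda_{N-1}^{-1/2}\big(\delta-\lambda_{N}^{-3/2}/\delta\big)>0$, and hence, using $b_{j}\neq0$, $\sum_{j}b_{j}^{2}|(\textrm{curl}^{-1}B,e_{j})_H|^{2}\geq c_{\delta}:=\big(\min_{1\leq j<N}b_{j}^{2}\big)\lambda_{N-1}^{-1/2}\big(\delta-\lambda_{N}^{-3/2}/\delta\big)>0$ on $I_{\delta}$.

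Finally I close the argument as in the proof of Theorem 6.1: combining the last two displays gives $\mu_{\kappa}\big(\mathscr{H}^{-1}(\Gamma)\cap\{|\mathscr{H}(B)|>\delta\}\big)\leq c_{\delta}^{-1}C\,l_{1}(\Gamma)+\tfrac{\delta}{2}\mathsf{C}_{0}$, where the second term comes from (5.1) and Chebyshev's inequality applied to $\{\|\nabla B\|_{H}^{2}\geq1/\delta\}$; for $\Gamma$ open in $\mathbb{R}\backslash\{0\}$ the set $\mathscr{H}^{-1}(\Gamma)\cap\{|\mathscr{H}(B)|>\delta\}$ is open in $H^{1-\varepsilon}$ (as $\mathscr{H}$ is continuous there), so by $\mu_{\kappa}\to\mu_{0}$ in $\mathcal{P}(H^{1-\varepsilon})$ and Portmanteau theorem the bound passes to $\mu_{0}$; outer regularity of $l_{1}$ extends it to all Borel $\Gamma$; and if $l_{1}(\Gamma)=0$ then $\mu_{0}(\mathscr{H}^{-1}(\Gamma)\cap\{|\mathscr{H}(B)|>\delta\})\leq\tfrac{\delta}{2}\mathsf{C}_{0}\to0$, whence $\mu_{0}(\mathscr{H}^{-1}(\Gamma))=0$ via $\mathscr{H}^{-1}(\Gamma)=\bigcup_{n}\mathscr{H}^{-1}(\Gamma)\cap\{|\mathscr{H}(B)|>1/n\}$ (valid since $0\notin\Gamma$). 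This yields the absolute continuity of $\mathcal{D}_{\mu_{0}}\mathscr{H}$ with respect to the Lebesgue measure on $\mathbb{R}\backslash\{0\}$.
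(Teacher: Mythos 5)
Your argument is correct, and its skeleton — the It\^o identity for $f(\mathscr{H}(B_\kappa))$, the resolvent mollification $f_\lambda=(\lambda-\partial_x^2)^{-1}f$, the resulting bound $\mathbb{E}\,1_\Gamma(\mathscr{H}(B_\kappa))\sum_j b_j^2\lambda_j^{-1}|(B_\kappa,e_j)_H|^2\lesssim l_1(\Gamma)$, and the Chebyshev/Portmanteau/exhaustion endgame — is exactly the paper's (Proposition 6.7, Lemma 6.8, and the proof of Theorem 6.6); note your diffusion coefficient $\sum_j b_j^2|(\textrm{curl}^{-1}B,e_j)_H|^2$ coincides with the paper's $\sum_j \lambda_j^{-1}b_j^2|(B,e_j)_H|^2$. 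The one place you genuinely deviate is the low-frequency lower bound for the diffusion coefficient: the paper's Proposition 6.9 bounds $\sum_{j<N}\lambda_j^{-1}|(B,e_j)_H|^2$ from below by $\lambda_N^{-1/2}\bigl(\|B\|_{H^{-1/2}}^2-\lambda_N^{-1/2}\|B\|_H^2\bigr)$ and therefore cuts by the set $\{\|B\|_{H^{-1/2}}^2>\delta,\ \|B\|_H^2<1/\delta\}$, exhausting $\mathscr{H}^{-1}(\Gamma)\setminus\{0\}$ by $\{\|B\|_{H^{-1/2}}^2>1/n\}$ and using $\mathscr{H}(0)=0$, $0\notin\Gamma$ only at the very end; you instead cut by $\{|\mathscr{H}(B)|>\delta\}$ itself, handling the indefiniteness of the helicity by splitting the sum at frequency $N$ and absorbing the tail via $\sum_{j\ge N}\lambda_j^{-1/2}|(B,e_j)_H|^2\le\lambda_N^{-3/2}\|\nabla B\|_H^2$ on $\{\|\nabla B\|_H^2<1/\delta\}$. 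Both variants are valid: yours is more directly adapted to the indefinite quadratic form and makes the role of $0\notin\Gamma$ transparent through the exhaustion by $\{|\mathscr{H}(B)|>1/n\}$, at the cost of invoking the uniform $H^1$ moment from (5.1) where the paper's tail estimate needs only the $L^2$ moment — and since (5.1) gives $\mathbb{E}\|\nabla B_\kappa\|_H^2\le \mathsf{C}_0/2$ uniformly in $\kappa$, nothing is lost. The remaining steps (openness of $\mathscr{H}^{-1}(\Gamma)\cap\{|\mathscr{H}(B)|>\delta\}$ in $H^{1-\varepsilon}$, outer regularity of $l_1$, continuity from below) go through as you state.
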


\begin{prop}
The following holds for statistically stationary solutions $B_{\kappa}$ to (1.3) for $\gamma>d/2$ and $f\in C^{2}(\mathbb{R})$:
\begin{equation}
\begin{aligned}
&\mathbb{E}\Bigg(f'\left((\textrm{curl}^{-1}B_{\kappa}(0),B_{\kappa}(0) )_{H}\right)\left(\frac{1}{2}\mathcal{C}_{-\frac{1}{2}}-(B_{\kappa}(0),\nabla \times B_{\kappa}(0))_H\right) \\
&+f''\left((\textrm{curl}^{-1}B_{\kappa}(0),B_{\kappa}(0) )_{H}\right)\sum_{j=1}^{\infty}\frac{1}{\lambda_j}b_j^{2}|(B_{\kappa}(0),e_j)_{H}|^{2}\Bigg)=0.
\end{aligned}
\end{equation}
\end{prop}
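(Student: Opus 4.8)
The plan is to mimic the proof of Proposition 6.3 verbatim, replacing the energy functional $\tilde{\mathscr{E}}(B)=\|B\|_H^{2}$ by the helicity functional $\mathscr{H}(B)=(\textrm{curl}^{-1}B,B)_H$. Recall from Theorem 3.8 that the statistically stationary solution $B_{\kappa}$ of (1.3) is an It\^o process in $V^{*}$ with constant diffusion, with drift $f=\kappa\Delta B_{\kappa}+\nabla\cdot(B_{\kappa}\otimes u_{\kappa}-u_{\kappa}\otimes B_{\kappa})$ and diffusion coefficients $g_j=\sqrt{\kappa}\,b_je_j$, the $e_j$ now being eigenfunctions of the rotation operator with $\nabla\times e_j=\tau_je_j$ and $\tau_j^{2}=\lambda_j$; in particular $\textrm{curl}^{-1}e_j=\tau_j^{-1}e_j$ and $\textrm{curl}^{-1}$ is self-adjoint on $H$. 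Since $B\mapsto\mathscr{H}(B)$ is a bounded quadratic form on $H$ with $|\mathscr{H}(B)|\le\lambda_1^{-1/2}\|B\|_H^{2}$, and since stationary solutions lie in $V$ for a.e.\ time, I would apply the It\^o formula (B.5) to the functional $F[B]=f(\mathscr{H}(B))$ exactly as in Remark B.5.

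The computational heart is the same chain-rule bookkeeping as in the proofs of Lemma 4.2 and Proposition 6.3. Using the identities recorded in the proof of Lemma 4.2, $(B_{\kappa},\textrm{curl}^{-1}f)_H=-\kappa(B_{\kappa},\nabla\times B_{\kappa})_H$ — the Lorentz contribution $(B_{\kappa},B_{\kappa}\times u_{\kappa})_H$ vanishing by the scalar triple product — while $\sum_j(\textrm{curl}^{-1}g_j,g_j)_H=\kappa\sum_jb_j^{2}\tau_j^{-1}=\kappa\,\mathcal{C}_{-\frac{1}{2}}$ and $(\textrm{curl}^{-1}B_{\kappa},g_j)_H=\sqrt{\kappa}\,b_j\tau_j^{-1}(B_{\kappa},e_j)_H$. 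Thus $\mathscr{H}(B_{\kappa})$ evolves with drift $\kappa\mathcal{C}_{-\frac{1}{2}}-2\kappa(B_{\kappa},\nabla\times B_{\kappa})_H$ and martingale part $2\sqrt{\kappa}\sum_jb_j\tau_j^{-1}(B_{\kappa},e_j)_H\,d\beta_j$, whose quadratic variation is $4\kappa\sum_jb_j^{2}\lambda_j^{-1}|(B_{\kappa},e_j)_H|^{2}\,dt$. The It\^o formula then gives that the drift of $F[B_{\kappa}]$ equals
\begin{align*}
f'(\mathscr{H}(B_{\kappa}))\bigl(\kappa\mathcal{C}_{-\tfrac{1}{2}}-2\kappa(B_{\kappa},\nabla\times B_{\kappa})_H\bigr)+2\kappa\,f''(\mathscr{H}(B_{\kappa}))\sum_{j=1}^{\infty}\frac{b_j^{2}}{\lambda_j}|(B_{\kappa},e_j)_H|^{2}.
\end{align*}

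To conclude I would localize the It\^o identity at the stopping times $\tau_n=\inf\{t\ge0:\|B_{\kappa}(t)\|_H>n\}$, take expectations — the stochastic integrals being martingales of zero mean by Doob's optional sampling (equivalently by It\^o isometry and the independence of the $\beta_j$) — and let $n\to\infty$ by dominated convergence, using the finite exponential moments (5.2) together with the balance (5.1) (note that $|\mathscr{H}(B_{\kappa})|$ and $\sum_jb_j^{2}\lambda_j^{-1}|(B_{\kappa},e_j)_H|^{2}$ are both dominated by a constant multiple of $\|B_{\kappa}\|_H^{2}$) to control the terms carrying $f'$ and $f''$. Since the law of $\mathscr{H}(B_{\kappa}(t))$ is independent of $t$, the mean drift of $F[B_{\kappa}]$ vanishes identically; evaluating at $t=0$ and dividing by $2\kappa>0$ yields (6.9). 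The one genuinely delicate point is precisely this passage to the limit $n\to\infty$ and the integrability of the terms involving $f'$ and $f''$ — this is the same issue encountered in Proposition 6.3 and is resolved by the a priori bounds (5.1)--(5.2), after which the application to Borel sets (via the mollification $f_{\lambda}=(\lambda-\partial_x^{2})^{-1}f$) proceeds as in Lemma 6.7.
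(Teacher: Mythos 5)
Your proposal is correct and follows essentially the same route as the paper: apply the It\^o formula (B.5) to $F[B]=f(\mathscr{H}(B))$ as in Remark B.5 with drift $\kappa\Delta B_{\kappa}+\nabla\cdot(B_{\kappa}\otimes u_{\kappa}-u_{\kappa}\otimes B_{\kappa})$ and diffusion $g_j=\sqrt{\kappa}\,b_je_j$, use the identities $(B_{\kappa},\textrm{curl}^{-1}f)_H=-\kappa(B_{\kappa},\nabla\times B_{\kappa})_H$, $(\textrm{curl}^{-1}g_j,g_j)_H=\kappa b_j^{2}/\tau_j$, and $|(B_{\kappa},\textrm{curl}^{-1}g_j)_H|^{2}=\kappa b_j^{2}\lambda_j^{-1}|(B_{\kappa},e_j)_H|^{2}$, and conclude from stationarity that the expected drift at $t=0$ vanishes, then divide by $2\kappa$. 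Your extra care with the stopping-time localization, optional sampling, and the integrability bounds via (5.1)--(5.2) only makes explicit what the paper's one-line proof leaves implicit.
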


\begin{proof}
We apply the It\^o formula (B.5) for the functional $F[B]=f(\mathscr{H}(B))=f\left((\textrm{curl}^{-1}B,B)_H \right)$ in Remark B.5. By taking the mean of (B.5), $\mathbb{E}A(0)=0$ for 
\begin{align*}
\frac{1}{2\kappa}A(0)
&= f'(\textrm{curl}^{-1}B_{\kappa}(0),B_{\kappa}(0))_H\left(- (B_{\kappa}(0),\nabla \times B_{\kappa}(0))_H+\frac{\mathcal{C}_{-\frac{1}{2}}}{2}\right) \\
&+f''(\textrm{curl}^{-1}B_{\kappa}(0),B_{\kappa}(0))_H \sum_{j=1}^{\infty}\frac{1}{\lambda_j}b_j^{2}|(B_{\kappa}(0),e_j)_H|^{2}.  
\end{align*}
\end{proof}

\begin{lem}
The inequality
\begin{align}
\mathbb{E}\sum_{j=1}^{\infty}1_{\Gamma}\left((\textrm{curl}^{-1}B_{\kappa}(0),B_{\kappa}(0) )_H\right)\frac{1}{\lambda_j}b_j^{2}|(B_{\kappa}(0),e_j)|^{2} 
\leq \frac{1}{2}l_1(\Gamma) \left( \mathsf{C}_0+  \left| \mathcal{C}_{-\frac{1}{2}}  \right|\right),\quad \Gamma\in \mathcal{B}(\mathbb{R}),
\end{align}
holds for statistically stationary solutions $B_{\kappa}$ to (1.3) for $\gamma>d/2$.
\end{lem}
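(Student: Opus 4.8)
The plan is to repeat the argument proving Lemma 6.6, with the helicity identity (6.9) of Proposition 6.11 in place of the energy identity (6.1). Fix $g\in C_c^2(\mathbb{R})$ and set $f_\lambda=(\lambda-\partial_x^2)^{-1}g$ for $\lambda>0$. Substituting $f=f_\lambda$ into (6.9) and isolating the second-derivative term gives
\begin{equation*}
\mathbb{E}\Big(f_\lambda''(\mathscr{H}(B_{\kappa}(0)))\sum_{j=1}^{\infty}\tfrac{b_j^2}{\lambda_j}|(B_{\kappa}(0),e_j)_H|^2\Big)=-\mathbb{E}\Big(f_\lambda'(\mathscr{H}(B_{\kappa}(0)))\big(\tfrac12\mathcal{C}_{-\frac12}-(B_{\kappa}(0),\nabla\times B_{\kappa}(0))_H\big)\Big),
\end{equation*}
and, since $\|f_\lambda'\|_{L^\infty}\le\|g\|_{L^1}$ by (6.4), the left side is bounded by $\|g\|_{L^1}\,\mathbb{E}\big|\tfrac12\mathcal{C}_{-\frac12}-(B_{\kappa}(0),\nabla\times B_{\kappa}(0))_H\big|$.

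The one genuinely new ingredient, relative to the energy case, is a $\kappa$- and $\lambda$-uniform bound on $\mathbb{E}\big|\tfrac12\mathcal{C}_{-\frac12}-(B_{\kappa}(0),\nabla\times B_{\kappa}(0))_H\big|$. By the triangle inequality it suffices to bound $\mathbb{E}|(B_{\kappa}(0),\nabla\times B_{\kappa}(0))_H|$. Writing $B_{\kappa}(0)=\sum_j(B_{\kappa}(0),e_j)_He_j$ and using $\nabla\times e_j=\tau_je_j$ with $\tau_j^2=\lambda_j$ gives $(B_{\kappa}(0),\nabla\times B_{\kappa}(0))_H=\sum_j\tau_j|(B_{\kappa}(0),e_j)_H|^2$, so, using $\lambda_j\ge\lambda_1=1$ on $\mathbb{T}^3=[-\pi,\pi]^3$,
\begin{equation*}
\big|(B_{\kappa}(0),\nabla\times B_{\kappa}(0))_H\big|\le\sum_j\sqrt{\lambda_j}\,|(B_{\kappa}(0),e_j)_H|^2\le\sum_j\lambda_j|(B_{\kappa}(0),e_j)_H|^2=\|\nabla B_{\kappa}(0)\|_H^2.
\end{equation*}
Combined with the energy balance (5.1), which yields $\mathbb{E}\|\nabla B_{\kappa}(0)\|_H^2\le\mathsf{C}_0/2$, this gives $\mathbb{E}\big|\tfrac12\mathcal{C}_{-\frac12}-(B_{\kappa}(0),\nabla\times B_{\kappa}(0))_H\big|\le\tfrac12(\mathsf{C}_0+|\mathcal{C}_{-\frac12}|)$, hence the left side of the previous display is at most $\tfrac12\|g\|_{L^1}(\mathsf{C}_0+|\mathcal{C}_{-\frac12}|)$.

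Finally I would pass to the limit $\lambda\to0$. Since $f_\lambda''=\lambda f_\lambda-g$, (6.3) gives $\|f_\lambda''\|_{L^\infty}\le2\|g\|_{L^\infty}$, and (6.5) gives $f_\lambda''\to-g$ uniformly; as $\sum_j\tfrac{b_j^2}{\lambda_j}|(B_{\kappa}(0),e_j)_H|^2\le\mathsf{C}_{-1}\|B_{\kappa}(0)\|_H^2$ is integrable by the exponential moment bound (5.2), dominated convergence yields $\big|\mathbb{E}\big(g(\mathscr{H}(B_{\kappa}(0)))\sum_j\tfrac{b_j^2}{\lambda_j}|(B_{\kappa}(0),e_j)_H|^2\big)\big|\le\tfrac12\|g\|_{L^1}(\mathsf{C}_0+|\mathcal{C}_{-\frac12}|)$. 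For $\Gamma\in\mathcal{B}(\mathbb{R})$ with $l_1(\Gamma)<\infty$ one then takes an open $G\supseteq\Gamma$ with $l_1(G)$ close to $l_1(\Gamma)$ and $g_n\in C_c^2(\mathbb{R})$ with $0\le g_n\uparrow1_G$; since the $j$-sum is nonnegative, monotone convergence delivers the inequality (6.10), the case $l_1(\Gamma)=\infty$ being trivial. The main obstacle is exactly the uniform bound on $\mathbb{E}|(B_{\kappa}(0),\nabla\times B_{\kappa}(0))_H|$: there is no energy-type a priori bound available in $d=3$ (unlike (5.4) in $d=2$), so one must exploit the spectral gap $\tau_j^2=\lambda_j\ge1$ of the rotation operator on the torus to dominate the $\dot H^{1/2}$-type helicity integrand by the $\dot H^1$-quantity controlled by (5.1).
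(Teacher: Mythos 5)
Your proposal is correct and follows essentially the same route as the paper: apply the Itô-derived identity with $f_\lambda=(\lambda-\partial_x^2)^{-1}f$, bound the first-derivative term using $\mathbb{E}\left|(B_\kappa,\nabla\times B_\kappa)_H\right|\leq \mathsf{C}_0/2$ from (5.1), then let $\lambda\to 0$ and approximate $1_\Gamma$. The only (immaterial) difference is that you obtain the intermediate bound via the spectral expansion with $|\tau_j|=\sqrt{\lambda_j}\leq\lambda_j$, whereas the paper uses Cauchy--Schwarz together with the Poincar\'e inequality $\|B\|_H\leq\|\nabla B\|_H$; both rest on $\lambda_1\geq 1$ and give the same constant.
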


\begin{proof}
We apply (6.9) for $f_{\lambda}=(\lambda-\partial_x^{2})^{-1}f$ and $f\in C^{2}_{c}(\mathbb{R})$. By using Poincar\'e inequality $||B||_H\leq ||\nabla B||_{H}$ and (5.1), $\mathbb{E}|(B,\nabla \times B)_H |\leq \mathsf{C}_0/2$. Thus, by (6.4), 
\begin{align*}
\left|\mathbb{E}\sum_{j=1}^{\infty}f_{\lambda}''\left((\textrm{curl}^{-1}B_{\kappa}(0),B_{\kappa}(0))_{H} \right)\frac{1}{\lambda_j}b_j^{2}|(B_{\kappa}(0),e_j)_{H}|^{2} \right|
\leq \frac{1}{2}||f||_{L^{1}} \left( \mathsf{C}_0+  \left| \mathcal{C}_{-\frac{1}{2}}  \right|\right).
\end{align*}
In a similar way to the proof of Lemma 6.4, by letting $\lambda\to0$ and approximating $1_{\Gamma}$ by $f\in C^{2}_{c}(\mathbb{R})$, we obtain (6.10).
\end{proof}

\begin{prop}
\begin{align}
||B||_{\dot{H}^{-\frac{1}{2}}}^{2}&=\sum_{j=1}^{\infty}\frac{1}{\sqrt{\lambda_j}}|(B,e_j)_H|^{2},\\
\sum_{j=1}^{N-1}\frac{1}{\lambda_j} |(B,e_j)_H|^{2}&
\geq \frac{1}{\sqrt{\lambda_N}}\left( ||B||_{H^{-\frac{1}{2}}}^{2}-\frac{1}{\sqrt{\lambda_N}}||B||_{H}^{2}\right),\quad B\in H,\ N\geq 1.
\end{align}
\end{prop}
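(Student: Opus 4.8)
The plan is to obtain both assertions directly from Parseval's identity in the orthonormal eigenbasis $\{e_j\}_{j=1}^\infty$ of $H$ together with the ordering $\lambda_1\le\lambda_2\le\cdots$, mirroring the energy-level identities of \S 6.1 (whose proof used the spectral decomposition of $-\Delta$).

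For the first line, I would expand $B=\sum_{j=1}^\infty(B,e_j)_H e_j$ and use $-\Delta e_j=\lambda_j e_j$, so that on average-zero fields $(-\Delta)^{-1/4}$ acts diagonally, $(-\Delta)^{-1/4}B=\sum_j\lambda_j^{-1/4}(B,e_j)_H e_j$. Applying Parseval to $(-\Delta)^{-1/4}B$ then gives $\|B\|_{\dot H^{-1/2}}^2=\|(-\Delta)^{-1/4}B\|_{L^2}^2=\sum_{j=1}^\infty\lambda_j^{-1/2}|(B,e_j)_H|^2$, which is (6.11).

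For the second line, I would split that series at $j=N$. For the tail, $\lambda_j\ge\lambda_N$ when $j\ge N$, hence $\sum_{j\ge N}\lambda_j^{-1/2}|(B,e_j)_H|^2\le\lambda_N^{-1/2}\sum_{j\ge N}|(B,e_j)_H|^2\le\lambda_N^{-1/2}\|B\|_H^2$ by Parseval, so $\sum_{j=1}^{N-1}\lambda_j^{-1/2}|(B,e_j)_H|^2\ge\|B\|_{\dot H^{-1/2}}^2-\lambda_N^{-1/2}\|B\|_H^2$. For the head, $\lambda_j\le\lambda_N$ for $j\le N-1$ gives $\lambda_j^{-1}=\lambda_j^{-1/2}\lambda_j^{-1/2}\ge\lambda_N^{-1/2}\lambda_j^{-1/2}$; summing over $j\le N-1$ and inserting the previous bound,
\[
\sum_{j=1}^{N-1}\frac{1}{\lambda_j}|(B,e_j)_H|^2\ \ge\ \frac{1}{\sqrt{\lambda_N}}\sum_{j=1}^{N-1}\frac{1}{\sqrt{\lambda_j}}|(B,e_j)_H|^2\ \ge\ \frac{1}{\sqrt{\lambda_N}}\Big(\|B\|_{\dot H^{-1/2}}^2-\frac{1}{\sqrt{\lambda_N}}\|B\|_H^2\Big).
\]
Since $\|B\|_{H^{-1/2}}\le\|B\|_{\dot H^{-1/2}}$ for average-zero $B$ (the multiplier $(|k|^2+1)^{-1/2}$ is dominated by $|k|^{-1}$ for $k\in\mathbb Z_0^d$), replacing $\dot H^{-1/2}$ by $H^{-1/2}$ only weakens the right-hand side and yields (6.12).

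I do not expect a genuine obstacle: the argument is the $\mathscr H$-level analogue of the corresponding proposition for $\mathscr E$ in \S 6.1, and reduces to bookkeeping with Parseval, one use of $\lambda_j\ge\lambda_N$ ($j\ge N$) for the tail and one use of $\lambda_j\le\lambda_N$ ($j\le N-1$) for the head; the only point to keep straight is the direction in which the monotonicity of $(\lambda_j)$ is applied. As in \S 6.1, this estimate is the input that localizes the lower bound for $\sum_j\lambda_j^{-1}b_j^2|(B,e_j)_H|^2$ onto sets where $\|B\|_{H^{-1/2}}$ is bounded below and $\|B\|_H$ is controlled, which then feeds into the proof of Theorem 6.6.
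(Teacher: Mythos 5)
Your proof is correct and follows essentially the same route as the paper: Parseval in the eigenbasis for (6.11), then the monotonicity of $\lambda_j$ applied once to the head ($\lambda_j^{-1}\ge\lambda_N^{-1/2}\lambda_j^{-1/2}$ for $j\le N-1$) and once to the tail ($\lambda_j^{-1/2}\le\lambda_N^{-1/2}$ for $j\ge N$), finishing with $\|B\|_{H^{-1/2}}\le\|B\|_{\dot H^{-1/2}}$ for average-zero fields. The only difference is the order in which the head and tail bounds are assembled, which is immaterial.
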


\vspace{5pt}

\begin{proof}
By $(-\Delta)^{1/2}e_j=\sqrt{\lambda_j}e_j$ and Parseval's identity for $B=\sum_{j=1}^{\infty}(B,e_j)_{H}e_j$ and $(-\Delta)^{-1/2} B=\sum_{j=1}^{\infty}\lambda_j^{-1/2} (B,e_j)_{H}e_j$,
\begin{align*}
||B||_{\dot{H}^{-\frac{1}{2}}}^{2}=\left\|(-\Delta)^{-\frac{1}{4}}B\right\|_{H}^{2}=\left((-\Delta)^{-\frac{1}{2}}B,B\right)_H=\sum_{j=1}^{\infty}\frac{1}{\sqrt{\lambda_j}}|(B,e_j)_{H}|^{2}.
\end{align*}
By (6.11),
\begin{align*}
\sum_{j=1}^{N-1}\frac{1}{\lambda_j} |(B,e_j)_H|^{2}\geq \frac{1}{\sqrt{\lambda_N}}\sum_{j=1}^{N-1}\frac{1}{\sqrt{\lambda_j}} |(B,e_j)_H|^{2}
&=\frac{1}{\sqrt{\lambda_N}}\left(||B||_{\dot{H}^{-\frac{1}{2}}}^{2}-\sum_{j=N}^{\infty}\frac{1}{\sqrt{\lambda_j}} |(B,e_j)_H|^{2}\right)\\
&\geq \frac{1}{\sqrt{\lambda_N}}\left(||B||_{\dot{H}^{-\frac{1}{2}}}^{2}-\frac{1}{\sqrt{\lambda_N}} ||B||_{H}^{2}\right) 
\geq \frac{1}{\sqrt{\lambda_N}}\left(||B||_{H^{-\frac{1}{2}}}^{2}-\frac{1}{\sqrt{\lambda_N}} ||B||_{H}^{2}\right).
\end{align*}
\end{proof}

\begin{proof}[Proof of Theorem 6.6]
For $\delta >0$, we take $N=N_{\delta}$ such that $\delta -1/(\sqrt{\lambda_{N}}\delta)>0$. We apply (6.12) for $B\in I_{\delta}=\{B\in H\ |\ ||B||_{H^{-1/2}}^{2}>\delta,\  ||B||_{H}^{2}<1/\delta\  \}$ to esimate 
\begin{align*}
\sum_{j=1}^{\infty}\frac{1}{\lambda_j}b_j^{2}|(B,e_j)_{H}|^{2}
\geq \frac{1}{\sqrt{\lambda_N}}\left(\min_{1\leq j\leq N-1}{b_j^{2}}\right)\left(\delta-\frac{1}{\sqrt{\lambda_{N}}\delta} \right).
\end{align*}
For $\mathscr{H}(B)=(\textrm{curl}^{-1}B,B)_{H}$, we apply (6.10) to estimate
\begin{align*}
\frac{1}{2}l_1(\Gamma) \left( \mathsf{C}_0+  \left| \mathcal{C}_{-\frac{1}{2}}  \right|\right)
&\geq \mathbb{E}\sum_{j=1}^{\infty}1_{\Gamma}\left( \mathscr{H}(B_{\kappa}(0)) \right)\frac{1}{\lambda_j}b_j^{2}|(B_{\kappa}(0),e_j)_{H}|^{2} \\
&\geq\int_{I_{\delta}}1_{\Gamma}\left( \mathscr{H}(B) \right)\sum_{j=1}^{\infty} \frac{1}{\lambda_j} b_j^{2}|(B,e_j)_{H}|^{2}\mu_{\kappa}(dB)
\geq \frac{1}{\sqrt{\lambda_N}}\left(\min_{1\leq j\leq N-1}{b_j^{2}}\right)\left(\delta-\frac{1}{\sqrt{\lambda_{N}}\delta} \right)
\mu_{\kappa}\left(  \mathscr{H}^{-1}(\Gamma)\cap I_{\delta}   \right).
\end{align*}\\
By (5.1) and Chebyshev's inequality, $\mu_{\kappa}\left(\left\{ ||B||_{H}^{2}\geq 1/\delta\right\}\right)\leq \delta \mathsf{C}_{0}/2$. Thus, 
\begin{align*}
\mathscr{H}^{-1}(\Gamma)\cap \left\{\ ||B||_{H^{-\frac{1}{2}}}^{2}>\delta \right\} &\subset \left(\mathscr{H}^{-1}(\Gamma)\cap I_{\delta}\right) \cup \left\{ ||B||_{H}^{2}\geq 1/\delta\right\} \\
\mu_{\kappa}\left(\mathscr{H}^{-1}(\Gamma)\cap \left\{\ ||B||_{H^{-\frac{1}{2}}}^{2}>\delta \right\}\right)
&\leq \frac{\sqrt{\lambda_{N}}}{2}\left(\min_{1\leq j\leq N-1}{b_j^{2}}\right)^{-1}\left(\delta-\frac{1}{\sqrt{\lambda_{N}}\delta} \right)^{-1}
 \left( \mathsf{C}_0+  \left| \mathcal{C}_{-\frac{1}{2}}  \right|\right)l_1(\Gamma)+\frac{\delta}{2}\mathsf{C}_0.
\end{align*}
For an open set $\Gamma\subset \mathbb{R}$, the set $\mathscr{H}^{-1}(\Gamma)\cap \left\{\ ||B||_{H^{-\frac{1}{2}}}^{2}>\delta \right\} $ is open in $H^{-1/2}(\mathbb{T}^{3})\cap H$. By letting $\kappa\to0$ and Portmanteau theorem (Proposition 4.4), 
\begin{align*}
\mu_{0}\left(\mathscr{H}^{-1}(\Gamma)\cap \left\{\ ||B||_{H^{-\frac{1}{2}}}^{2}>\delta \right\}\right)
\leq \frac{\sqrt{\lambda_{N}}}{2}\left(\min_{1\leq j\leq N-1}{b_j^{2}}\right)^{-1}\left(\delta-\frac{1}{\lambda_{N}\delta} \right)^{-1}
 \left( \mathsf{C}_0+  \left| \mathcal{C}_{-\frac{1}{2}}  \right|\right)l_1(\Gamma)+\frac{\delta}{2}\mathsf{C}_0.
\end{align*}
This inequality holds for Borel sets $\Gamma\in \mathcal{B}(\mathbb{R})$ by the approximation $l_1(\Gamma)=\inf\{\ l_1(G)\ |\ \Gamma\subset G,\ G:\textrm{open}\ \}$. If $l_1(\Gamma)=0$, $\lim_{\delta\to 0}\mu_0(\mathscr{H}^{-1}(\Gamma)\cap \{\ ||B||_{H^{-\frac{1}{2}}}^{2}>\delta \})=0$. By $\mathscr{H}^{-1}(\Gamma)\backslash \{0\}=\cup_{n\geq 1}\mathscr{H}^{-1}(\Gamma)\cap \{\ ||B||_{H^{-\frac{1}{2}}}^{2}>1/n \}$, 
\begin{align*}
\mu_0\left(\mathscr{H}^{-1}(\Gamma)\backslash \{0\}\right)=\mu_0\left( \bigcup_{n=1}^{\infty}\mathscr{H}^{-1}(\Gamma)\cap \left\{\ ||B||_{H^{-\frac{1}{2}}}^{2}>\frac{1}{n} \right\}\right)
=\lim_{n\to\infty}\mu_0\left(\mathscr{H}^{-1}(\Gamma)\cap \left\{\ ||B||_{H^{-\frac{1}{2}}}^{2}>\frac{1}{n} \right\}\right)=0.
\end{align*}
If $0\notin \Gamma$, $\mathscr{H}^{-1}(\Gamma)\backslash \{0\}=\mathscr{H}^{-1}(\Gamma)$ and $\mu_0(\mathscr{H}^{-1}(\Gamma))=0$. Thus, $\mathcal{D}_{\mu_0}\mathscr{H}$ is absolutely continuous for the Lebesgue measure on $\mathbb{R}\backslash \{0\}$.
\end{proof}

\subsection{The law of mean-square potential}

For $d=2$, we show the absolute continuity of the law of $\mathscr{M}$ on $[0,\infty)$ (including zero) by using the identity for one-dimensional stationary processes (B.28). 

\begin{thm}
Let $d=2$. Let $\mu_0$ be a measure as in Theorem 1.1. Assume that $b_j\neq 0$ for all $j\in \mathbb{N}$. Then, the law of mean-square potential $\mathcal{D}_{\mu_0}\mathscr{M}$ is absolutely continuous for the Lebesgue measure on $[0,\infty)$.
\end{thm}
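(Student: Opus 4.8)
The plan is to use that, for each fixed $\kappa>0$, the scalar $Y_t:=\mathscr{M}(B_\kappa(t))=\big\|\textrm{curl}^{-1}B_\kappa(t)\big\|_{L^2}^2=\big((-\Delta)^{-1}B_\kappa(t),B_\kappa(t)\big)_H$ is a one-dimensional \emph{stationary} It\^o process, and to feed its drift and quadratic-variation density into the identity (B.28). First I would apply the It\^o formula (B.5) to the functional $F[B]=\big((-\Delta)^{-1}B,B\big)_H$ along the $V^{*}$-valued It\^o process $B_\kappa$ of Theorem~3.8 associated with the $\sqrt{\kappa}$-forced system (1.3). Exactly as in the proof of Lemma~4.2 (the contribution of the magnetic-relaxation nonlinearity cancels because $(\phi,u\cdot\nabla\phi)_{L^2}=0$, where $\phi=\textrm{curl}^{-1}B_\kappa$), one finds that $Y_t$ has drift $A_t=\kappa\mathsf{C}_{-1}-2\kappa\|B_\kappa(t)\|_H^2$ and quadratic-variation density $\Sigma_t^2=4\kappa\sum_{j\ge1}\lambda_j^{-2}b_j^2\,|(B_\kappa(t),e_j)_H|^2$. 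The integrability hypotheses of (B.28) hold: $\mathbb{E}|A_0|\le\kappa\mathsf{C}_{-1}+2\kappa\,\mathbb{E}\|B_\kappa(0)\|_H^2=2\kappa\mathsf{C}_{-1}$ by the balance relation (5.4), and $\mathbb{E}\Sigma_0^2<\infty$ since $\sum_j\lambda_j^{-2}b_j^2\le\mathsf{C}_0<\infty$ and $\mathbb{E}\|B_\kappa(0)\|_H^2<\infty$.

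Applying (B.28) to $Y_t$ gives, for every $\Gamma\in\mathcal{B}(\mathbb{R})$,
\[
4\kappa\,\mathbb{E}\!\left[1_{\Gamma}\big(\mathscr{M}(B_\kappa(0))\big)\sum_{j\ge1}\frac{b_j^2}{\lambda_j^2}\,|(B_\kappa(0),e_j)_H|^2\right]\ \lesssim\ \mathbb{E}|A_0|\,l_1(\Gamma)\ \le\ 2\kappa\,\mathsf{C}_{-1}\,l_1(\Gamma),
\]
and the factor $\kappa$ cancels: setting $\Phi(B):=\sum_{j\ge1}\lambda_j^{-2}b_j^2\,|(B,e_j)_H|^2$, one obtains a bound \emph{uniform in $0<\kappa\le1$},
\[
\mathbb{E}\big[1_{\Gamma}(\mathscr{M}(B_\kappa(0)))\,\Phi(B_\kappa(0))\big]\ \lesssim\ \mathsf{C}_{-1}\,l_1(\Gamma),\qquad \Gamma\in\mathcal{B}(\mathbb{R}).
\]
This is the analogue of Lemmas~6.4 and 6.8, with the decisive simplification that $\Phi$ is nonnegative and continuous on $H^{1-\varepsilon}(\mathbb{T}^2)$ with $\Phi(B)\lesssim\|B\|_{L^2}^2$ (the defining series converges uniformly on bounded sets since $\sum_j\lambda_j^{-2}b_j^2<\infty$), so that \emph{no} auxiliary truncation $\{\|\nabla B\|_H^2<1/\delta\}$ is needed. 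Since $\mathscr{M}$ is continuous on $H^{1-\varepsilon}$, the map $B\mapsto 1_{\Gamma}(\mathscr{M}(B))\,\Phi(B)$ is nonnegative and lower semicontinuous there whenever $\Gamma$ is open; hence, by $\mu_\kappa\to\mu_0$ in $\mathcal{P}(H^{1-\varepsilon})$ and the Portmanteau theorem, $\int 1_{\Gamma}(\mathscr{M}(B))\Phi(B)\,\mu_0(dB)\lesssim\mathsf{C}_{-1}\,l_1(\Gamma)$ for open $\Gamma$, and then for every Borel $\Gamma$ by outer regularity of $l_1$. Taking $l_1(\Gamma)=0$ forces $1_{\Gamma}(\mathscr{M}(B))\Phi(B)=0$ for $\mu_0$-a.e.\ $B$; as $b_j\ne0$ for all $j$ we have $\Phi(B)>0$ whenever $B\ne0$, so $\mu_0\big(\mathscr{M}^{-1}(\Gamma)\setminus\{0\}\big)=0$, which already yields absolute continuity of $\mathcal{D}_{\mu_0}\mathscr{M}$ on $(0,\infty)$.

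It remains to rule out an atom at the origin, i.e.\ to prove $\mu_0(\{0\})=\mathcal{D}_{\mu_0}\mathscr{M}(\{0\})=0$; this is the point where the statement goes beyond Theorem~6.1, which asserts absolute continuity only on $(0,\infty)$. Here I would apply Krylov's $n$-dimensional estimate (B.29) to the stationary $\mathbb{R}^n$-valued process $X_t=\big((B_\kappa(t),e_1)_H,\dots,(B_\kappa(t),e_n)_H\big)$, whose diffusion matrix is the \emph{constant and strictly positive} matrix $\kappa\,\mathrm{diag}(b_1^2,\dots,b_n^2)$ and whose drift has finite first moment by (5.1)--(5.2); this gives absolute continuity of the law of $X_0$ under $\mu_\kappa$ on $\mathbb{R}^n$, in particular $\mu_\kappa(\{B:(B,e_j)_H=0,\ j\le n\})=0$, and a suitable small-ball bound transfers to $\mu_0$ by Portmanteau, after which letting $n\to\infty$ and using $\{0\}=\bigcap_n\{(B,e_j)_H=0,\ j\le n\}$ gives $\mu_0(\{0\})=0$. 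Together with the previous paragraph this proves $\mathcal{D}_{\mu_0}\mathscr{M}(\Gamma)=0$ for every $\Gamma\in\mathcal{B}([0,\infty))$ with $l_1(\Gamma)=0$. I expect the main obstacle to be twofold: (i) obtaining the bound \emph{uniformly} in $\kappa$, which hinges on the exact cancellation of the nonlinearity in the balance for $\mathscr{M}$ so that $A_t$ and $\Sigma_t^2$ carry the same power of $\kappa$ — precisely what (5.4) supplies; and (ii) the passage to the weak limit $\mu_0$ together with the separate, more delicate treatment at $\{0\}$, which requires controlling the finite-dimensional marginals of $B_\kappa$ near the origin at a rate not degenerating as $\kappa\to0$.
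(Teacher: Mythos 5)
Your handling of absolute continuity on $(0,\infty)$ is essentially correct, and it actually streamlines the paper's argument: instead of applying the It\^o formula to $f(\mathscr{M}(B))$ with the resolvent regularization $(\lambda-\partial_x^2)^{-1}$ and the truncation set $I_\delta$ (the route of Theorem 6.1 that the paper invokes), you feed $Y_t=\mathscr{M}(B_\kappa(t))$ directly into the stationary identity (B.28). Your drift and quadratic-variation computations are right ($A_t=\kappa\mathsf{C}_{-1}-2\kappa\|B_\kappa\|_H^2$ by the cancellation $(\phi,u\cdot\nabla\phi)_{L^2}=0$, and $\Sigma_t^2=4\kappa\sum_j\lambda_j^{-2}b_j^2|(B_\kappa,e_j)_H|^2$), the factors of $\kappa$ cancel against $\mathbb{E}|A_0|\le 2\kappa\mathsf{C}_{-1}$ from (5.4), and since both $\mathscr{M}$ and $\Phi$ are continuous on $H^{1-\varepsilon}$ the passage to $\mu_0$ via lower semicontinuity and Portmanteau is legitimate (modulo the routine extension of (B.28) from intervals to Borel sets). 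This buys you a cleaner proof of the $(0,\infty)$ part than the paper's quantitative $I_\delta$ argument.

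The gap is at the origin, which is precisely what Theorem 6.10 adds over Theorem 6.1, and your proposed fix does not work as stated. Applying Krylov's estimate (B.29) to $X_t=((B_\kappa,e_1)_H,\dots,(B_\kappa,e_n)_H)$ and dividing by $(\det\sigma)^{1/n}=\kappa(b_1^2\cdots b_n^2)^{1/n}$ leaves a bound proportional to $\mathbb{E}|x_0|/\kappa$; but the drift of the Fourier modes contains the nonlinear term $(\nabla\cdot(B_\kappa\otimes u_\kappa-u_\kappa\otimes B_\kappa),e_j)_H$, whose first moment is only $O(\sqrt{\kappa})$ by (5.1) and (5.4) (since $u_\kappa=O(\sqrt{\kappa})$ while $B_\kappa=O(1)$), so the resulting small-ball bound degenerates like $\kappa^{-1/2}$ and cannot be transferred to the limit; and knowing merely $\mu_\kappa(\{0\})=0$ for each $\kappa$ gives no control on $\mu_0(\{0\})$, because Portmanteau only transfers upper bounds on open sets and mass can concentrate at $0$ as $\kappa\to0$. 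You flag this as obstacle (ii) but do not resolve it, so the proof of absolute continuity on $[0,\infty)$ (as opposed to $(0,\infty)$) is incomplete. The paper's resolution (Lemma 6.11 with Propositions 6.12--6.14) is to apply the one-dimensional identity (B.28) not to the Fourier modes but to $\sqrt{\mathscr{M}(B_\kappa)}=\|\phi_\kappa\|_{L^2}$ itself, via $f(r)\approx\sqrt{r}$ cut off near $0$: there the nonlinearity cancels exactly, so drift and quadratic variation both carry the same factor $\kappa$; the drift contains the repulsive term $\kappa\bigl(\mathsf{C}_{-1}\|\phi_\kappa\|_{L^2}^2-\sum_j b_j^2|(\phi_\kappa,\textrm{curl}^{-1}e_j)_{L^2}|^2\bigr)/(2\|\phi_\kappa\|_{L^2}^3)\ge \kappa\varsigma/(2\|\phi_\kappa\|_{L^2})$ with $\varsigma=\mathsf{C}_{-1}-\sup_j b_j^2/\lambda_j>0$, while the remaining term is bounded $\kappa$-uniformly by $\mathbb{E}\|\nabla\phi_\kappa\|_{L^2}^2/\|\phi_\kappa\|_{L^2}\le\sqrt{\mathsf{C}_0}$ via interpolation and (5.1). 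This yields the $\kappa$-uniform linear small-ball estimate $\mu_\kappa(0\le\|B\|_{\dot H^{-1}}<\delta)\le 2\varsigma^{-1}\sqrt{\mathsf{C}_0}\,\delta$ (together with a separate fixed-$\kappa$ argument that $\mu_\kappa(B=0)=0$), which does pass to $\mu_0$ by Portmanteau and removes the atom at $0$; some such $\kappa$-uniform quantitative control near $B=0$ is what your argument is missing.
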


\begin{lem}
Assume that $b_{j}\neq 0$ for two integers $j$. Then, $\varsigma=\mathsf{C}_{-1}-\sup_{j\in \mathbb{N}}b_j^{2}/\lambda_j>0$ and 
\begin{align}
\mu_{0}( 0 \leq ||B||_{\dot{H}^{-1}}< \delta  )\leq \frac{2}{\varsigma}\sqrt{\mathsf{C}_{0}}\delta,\quad \delta>0.
\end{align}
\end{lem}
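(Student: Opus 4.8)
The plan is to reduce the estimate to a one‑dimensional stationary process, namely the mean‑square potential $\mathscr{M}_{\kappa}(t):=\mathscr{M}(B_{\kappa}(t))=\|B_{\kappa}(t)\|_{\dot H^{-1}}^{2}$ of a statistically stationary solution $B_{\kappa}$ of (1.3), and to exploit that its drift is fully explicit while its quadratic variation is dominated by $\mathscr{M}_{\kappa}$ itself. First, $\varsigma>0$: since $\mathsf{C}_{0}=\sum_{j}b_{j}^{2}<\infty$ and $\lambda_{j}\to\infty$ we have $b_{j}^{2}/\lambda_{j}\to0$, so $\sigma_{*}:=\sup_{j}b_{j}^{2}/\lambda_{j}$ is attained; as $b_{j}\neq0$ for at least two indices, the series $\mathsf{C}_{-1}=\sum_{j}b_{j}^{2}/\lambda_{j}$ contains, besides the maximal term, at least one further strictly positive term, whence $\mathsf{C}_{-1}>\sigma_{*}$. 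Applying the It\^o formula for the functional $B\mapsto\|\mathrm{curl}^{-1}B\|_{L^{2}}^{2}$ to the It\^o process $B_{\kappa}$ (as in the derivation of (4.4) and (5.4)), and using that the nonlinearity satisfies $(\phi_{\kappa},\mathrm{curl}^{-1}(u_{\kappa}\cdot\nabla B_{\kappa}-B_{\kappa}\cdot\nabla u_{\kappa}))_{L^{2}}=-(\phi_{\kappa},u_{\kappa}\cdot\nabla\phi_{\kappa})_{L^{2}}=0$ for $\phi_{\kappa}=\mathrm{curl}^{-1}B_{\kappa}$, one finds that $\mathscr{M}_{\kappa}$ is a one‑dimensional It\^o process with drift $\kappa(\mathsf{C}_{-1}-2\|B_{\kappa}\|_{H}^{2})$ and quadratic variation rate $4\kappa D_{\kappa}$, where
\[
D_{\kappa}:=\sum_{j}\frac{b_{j}^{2}}{\lambda_{j}}\,\frac{(B_{\kappa},e_{j})_{H}^{2}}{\lambda_{j}}\ \le\ \sigma_{*}\,\mathscr{M}_{\kappa}.
\]

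Next, applying the one‑dimensional stationary‑process identity (B.28) to $\mathscr{M}_{\kappa}$ with $\Gamma=[0,a)$ and inserting this drift and quadratic variation — equivalently, using the It\^o formula for $f(\mathscr{M}_{\kappa})$ with $f(x)=\min(x,a)$, taking means, and using stationarity together with $\mathbb{E}\|B_{\kappa}\|_{H}^{2}=\mathsf{C}_{-1}/2$ from (5.4) — gives, for a.e.\ $a>0$,
\[
\mathsf{C}_{-1}\,F_{\kappa}(a)=2\,\mathbb{E}\!\big[\|B_{\kappa}\|_{H}^{2}\,\mathbf{1}_{\{\mathscr{M}_{\kappa}<a\}}\big]+2\,\frac{d}{da}\,\mathbb{E}\!\big[D_{\kappa}\,\mathbf{1}_{\{\mathscr{M}_{\kappa}<a\}}\big],\qquad F_{\kappa}(a):=\mu_{\kappa}\big(\mathscr{M}_{\kappa}<a\big).
\]
Here the derivative is the local‑time/occupation‑density term of $\mathscr{M}_{\kappa}$ at level $a$.

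Now I estimate the two terms on the right. By the interpolation $\|B_{\kappa}\|_{H}^{2}\le\|B_{\kappa}\|_{\dot H^{-1}}\|\nabla B_{\kappa}\|_{H}=\mathscr{M}_{\kappa}^{1/2}\|\nabla B_{\kappa}\|_{H}$ and the energy balance (5.1),
\[
\mathbb{E}\!\big[\|B_{\kappa}\|_{H}^{2}\,\mathbf{1}_{\{\mathscr{M}_{\kappa}<a\}}\big]\le\sqrt{a}\,\mathbb{E}\|\nabla B_{\kappa}\|_{H}\le\sqrt{a\,\mathsf{C}_{0}/2};
\]
and, since $D_{\kappa}\le\sigma_{*}\mathscr{M}_{\kappa}$ pointwise, the measure $\mathbb{E}[D_{\kappa}\mathbf{1}_{\{\mathscr{M}_{\kappa}\in da\}}]$ is dominated by $\sigma_{*}a\,\mathbb{E}[\mathbf{1}_{\{\mathscr{M}_{\kappa}\in da\}}]$, so $\tfrac{d}{da}\mathbb{E}[D_{\kappa}\mathbf{1}_{\{\mathscr{M}_{\kappa}<a\}}]\le\sigma_{*}a\,F_{\kappa}'(a)$. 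Hence $\mathsf{C}_{-1}F_{\kappa}(a)\le\sqrt{2\mathsf{C}_{0}a}+2\sigma_{*}a\,F_{\kappa}'(a)$. Solving this differential inequality with the integrating factor $a^{-\mathsf{C}_{-1}/(2\sigma_{*})}$ — using that $\mathsf{C}_{-1}/(2\sigma_{*})>1/2$, which is exactly where $\varsigma>0$ enters since it makes $\int_{a}^{\infty}s^{-\mathsf{C}_{-1}/(2\sigma_{*})-1/2}\,ds$ finite, and that $a^{-\mathsf{C}_{-1}/(2\sigma_{*})}F_{\kappa}(a)\to0$ as $a\to\infty$ because $F_{\kappa}\le1$ — one obtains
\[
F_{\kappa}(a)\le\frac{\sqrt{2\mathsf{C}_{0}}}{\mathsf{C}_{-1}-\sigma_{*}}\,\sqrt{a}=\frac{\sqrt{2}}{\varsigma}\sqrt{\mathsf{C}_{0}}\,\sqrt{a},\qquad a>0,
\]
uniformly in $0<\kappa\le1$. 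Taking $a=\delta^{2}$ gives $\mu_{\kappa}(\|B_{\kappa}\|_{\dot H^{-1}}<\delta)\le\tfrac{\sqrt{2}}{\varsigma}\sqrt{\mathsf{C}_{0}}\,\delta\le\tfrac{2}{\varsigma}\sqrt{\mathsf{C}_{0}}\,\delta$. Since $\{B:\|B\|_{\dot H^{-1}}<\delta\}$ is open in $H^{1-\varepsilon}(\mathbb{T}^{2})\cap H$ and $\mu_{\kappa}\to\mu_{0}$ in $\mathcal{P}(H^{1-\varepsilon}(\mathbb{T}^{2}))$ by Proposition 5.2, Portmanteau's theorem (Proposition 4.4) yields $\mu_{0}(0\le\|B\|_{\dot H^{-1}}<\delta)\le\liminf_{\kappa\to0}\mu_{\kappa}(\|B_{\kappa}\|_{\dot H^{-1}}<\delta)\le\tfrac{2}{\varsigma}\sqrt{\mathsf{C}_{0}}\,\delta$.

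The main obstacle is the choice of the one‑dimensional functional: $\mathscr{M}_{\kappa}$ is (up to a constant) the only quadratic functional of $B_{\kappa}$ whose drift is free of the cubic nonlinearity, and the domination $D_{\kappa}\le\sigma_{*}\mathscr{M}_{\kappa}$ of its quadratic variation is precisely what turns the stationarity identity (B.28) into a closed differential inequality for $F_{\kappa}$, solvable only because $\mathsf{C}_{-1}>\sigma_{*}$; producing the $\sqrt{\mathsf{C}_{0}}$ and the linear‑in‑$\delta$ scaling then relies on the interpolation bound for $\mathbb{E}[\|B_{\kappa}\|_{H}^{2}\mathbf{1}_{\{\mathscr{M}_{\kappa}<a\}}]$ and the $\kappa$‑uniformity of (5.1). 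The remaining issues — absolute continuity of the law of $\mathscr{M}_{\kappa}$ under $\mu_{\kappa}$ (so that $F_{\kappa}'$ is meaningful, or else phrasing everything with Stieltjes measures), the local‑time interpretation of the $\tfrac{d}{da}$‑term, and the approximation of $\min(\cdot,a)$ by $C^{2}$ functions in (B.28) — are routine.
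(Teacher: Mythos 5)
Your proposal is correct in substance, but it follows a genuinely different route from the paper's. The paper applies the stationary identity (B.28) to the process $\sqrt{\mathscr{M}_\kappa}=\|\phi_\kappa\|_{L^2}$ (via a smooth $f$ with $f(r)=\sqrt r$ for $r\ge\alpha$), so that the drift contains the favorable term $\varsigma/\|\phi_\kappa\|_{L^2}$ against the term $\|\nabla\phi_\kappa\|_{L^2}^2/\|\phi_\kappa\|_{L^2}$, whose mean is $\le\sqrt{\mathsf{C}_0}$ by the same interpolation you use; a level-set averaging ($\alpha\to0$, divide by $\beta$, $\beta\to0$) then yields $\mu_\kappa(0<\|B\|_{\dot H^{-1}}<\delta)\le\frac{2}{\varsigma}\sqrt{\mathsf{C}_0}\,\delta$, after which a \emph{separate} application of (B.28) to the scalar mode $(B_\kappa,e_{j_0})_H$ is needed to rule out an atom at $B=0$ before Portmanteau. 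You instead keep the process $\mathscr{M}_\kappa$ itself, exploit the same two ingredients ($\mathbb{E}\|B_\kappa\|_H^2=\mathsf{C}_{-1}/2$, $D_\kappa\le\sigma_*\mathscr{M}_\kappa$ from the orthonormality of $\{\sqrt{\lambda_j}\,\mathrm{curl}^{-1}e_j\}$, and $\|B\|_H^2\le\mathscr{M}^{1/2}\|\nabla B\|_H$ with (5.1)), and close a Gronwall-type inequality for the distribution function $F_\kappa$ with integrating factor $a^{-\mathsf{C}_{-1}/(2\sigma_*)}$, the condition $\varsigma>0$ entering exactly as $\mathsf{C}_{-1}/(2\sigma_*)>1/2$. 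This buys you something the paper has to work for separately: since $F_\kappa(\delta^2)=\mu_\kappa(0\le\|B\|_{\dot H^{-1}}<\delta)$ already includes the possible atom at zero, you get (6.20) in one stroke (with the slightly better constant $\sqrt2/\varsigma$), and the Portmanteau step is identical. The one point to be careful about is the one you flag: you may not write $F_\kappa'(a)$ literally, since absolute continuity of the law of $\mathscr{M}_\kappa$ is not known a priori (indeed it is part of what \S 6.3 is proving). The argument does close in Stieltjes form: (B.28) on intervals $[\alpha,\beta]\subset(0,\infty)$ shows that the occupation measure $m_\kappa(da)=\mathbb{E}\bigl[D_\kappa\,\mathbf{1}_{\{\mathscr{M}_\kappa\in da\}}\bigr]$ is absolutely continuous on $(0,\infty)$ with density $\tfrac12\bigl(\mathsf{C}_{-1}F_\kappa(a)-2\,\mathbb{E}[\|B_\kappa\|_H^2\mathbf{1}_{\{\mathscr{M}_\kappa<a\}}]\bigr)$, the domination $m_\kappa(da)\le\sigma_* a\,\nu_\kappa(da)$ holds as measures, and the weighted integration by parts against $a^{-\gamma-1}$ produces the exact cancellation of the $F_\kappa$-integrals because $2\sigma_*\gamma=\mathsf{C}_{-1}$ (atoms of $\nu_\kappa$ are harmless since the two distribution-function versions agree Lebesgue-a.e.). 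With that formulation spelled out, your proof is complete and correct.
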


\begin{prop}
\begin{align}
||B||_{\dot{H}^{-1}}^{2}=\sum_{j=1}^{\infty}\frac{1}{\lambda_j}|(B,e_j)_{H}|^{2}=||\textrm{curl}^{-1}B||_{L^{2}}^{2},\quad B\in H.
\end{align}
\end{prop}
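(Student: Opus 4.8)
The plan is to verify the two equalities in turn, reducing every term to $((-\Delta)^{-1}B,B)_H$. For the first equality I would expand $B\in H$ in the orthonormal eigenbasis $\{e_j\}_{j=1}^{\infty}$ of the Stokes operator, $B=\sum_{j=1}^{\infty}(B,e_j)_H e_j$ by Parseval's identity in $H$, and use $(-\Delta)^{-1}e_j=\lambda_j^{-1}e_j$ together with $(e_j,e_l)_H=\delta_{jl}$ to obtain
\begin{align*}
||B||_{\dot{H}^{-1}}^{2}=||(-\Delta)^{-1/2}B||_{L^{2}}^{2}=((-\Delta)^{-1}B,B)_H=\sum_{j=1}^{\infty}\frac{1}{\lambda_j}|(B,e_j)_H|^{2}.
\end{align*}
This is the exact analogue, with $\dot{H}^{-1}$ and $\lambda_j^{-1}$ replacing $\dot{H}^{-1/2}$ and $\lambda_j^{-1/2}$, of the spectral identity already used for (6.11) (and for (6.7)--(6.8)), so it introduces nothing new.

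For the second equality I would first invoke the stream-function representation on $\mathbb{T}^{2}$: since $B\in H$ is divergence-free with zero average, $B=\nabla^{\perp}\phi$ for a unique average-zero $\phi\in H^{1}(\mathbb{T}^{2})$, and this $\phi$ coincides with $\textrm{curl}^{-1}B$ since, by the definition $\textrm{curl}^{-1}B=-\nabla^{\perp}\cdot(-\Delta)^{-1}B$ and the pointwise identity $\nabla^{\perp}\cdot\nabla^{\perp}=\Delta$, one has $\nabla^{\perp}\cdot(-\Delta)^{-1}B=\nabla^{\perp}\cdot\nabla^{\perp}(-\Delta)^{-1}\phi=-\phi$. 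I would then pass to Fourier series $B=\sum_{k\in\mathbb{Z}^{2}_{0}}\hat{B}_k e^{ik\cdot x}$ and $\phi=\sum_{k\in\mathbb{Z}^{2}_{0}}\hat{\phi}_k e^{ik\cdot x}$, note that $\hat{B}_k=ik^{\perp}\hat{\phi}_k$ with $k^{\perp}=(-k_2,k_1)$ and $|k^{\perp}|=|k|$, so that $|\hat{B}_k|^{2}=|k|^{2}|\hat{\phi}_k|^{2}$, and conclude by Parseval's identity
\begin{align*}
||B||_{\dot{H}^{-1}}^{2}=\sum_{k\in\mathbb{Z}^{2}_{0}}\frac{|\hat{B}_k|^{2}}{|k|^{2}}=\sum_{k\in\mathbb{Z}^{2}_{0}}|\hat{\phi}_k|^{2}=||\phi||_{L^{2}}^{2}=||\textrm{curl}^{-1}B||_{L^{2}}^{2}.
\end{align*}
Equivalently, one may use $|\nabla^{\perp}\phi|=|\nabla\phi|$ pointwise to get $||B||_{L^{2}}^{2}=||\nabla\phi||_{L^{2}}^{2}=||(-\Delta)^{1/2}\phi||_{L^{2}}^{2}$ and then apply $(-\Delta)^{-1/2}$ to both sides.

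I do not anticipate any real obstacle: the statement is a bookkeeping identity linking the $\dot{H}^{-1}$ seminorm, its spectral expansion, and the $L^{2}$ norm of the flux function. The only points deserving a line of care are (i) the torus stream-function decomposition of average-zero divergence-free fields, which is precisely the Helmholtz splitting (2.8)--(2.9) recorded earlier, and (ii) matching the sign convention in the definition of $\textrm{curl}^{-1}$, which is immaterial once both quantities are squared. In the write-up I would keep things short by observing that all three expressions equal $((-\Delta)^{-1}B,B)_H$, the identification $||\phi||_{L^{2}}^{2}=||(-\Delta)^{-1/2}B||_{L^{2}}^{2}$ being immediate from $B=\nabla^{\perp}\phi$.
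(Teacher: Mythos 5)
Your proposal is correct, and the first equality is proved exactly as in the paper: expand $B$ in the Stokes eigenbasis and identify all three quantities with $((-\Delta)^{-1}B,B)_H$. For the second equality you take a genuinely different route. The paper expands $\textrm{curl}^{-1}B=\sum_{j}(B,e_j)_H d_j$ with $d_j=\textrm{curl}^{-1}e_j$ and invokes Theorem A.2, which says that $\{\sqrt{\lambda_j}\,d_j\}$ is an orthonormal basis of $L^{2}_{\textrm{av}}(\mathbb{T}^{2})$, so Parseval in that basis gives $\|\textrm{curl}^{-1}B\|_{L^{2}}^{2}=\sum_j\lambda_j^{-1}|(B,e_j)_H|^{2}$ directly; this stays entirely within the eigenbasis formalism that \S 6 reuses (e.g.\ in Lemma 6.17 and the proof of Lemma 6.11). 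You instead bypass Theorem A.2 by identifying $\phi=\textrm{curl}^{-1}B$ with the stream function, using $\hat{B}_k=ik^{\perp}\hat{\phi}_k$, $|k^{\perp}|=|k|$, and Parseval in the exponential basis, which reduces the claim to the elementary observation that $(-\Delta)^{-1/2}$ intertwines $B=\nabla^{\perp}\phi$ with $\phi$. Both arguments are of comparable length; yours is more self-contained at this spot (no appeal to the appendix), while the paper's keeps the spectral bookkeeping uniform across the section. Two cosmetic points: your Parseval displays drop the $(2\pi)^{2}$ normalization of the paper's Fourier convention, which cancels across the chain and so is harmless, and the closing remark about ``applying $(-\Delta)^{-1/2}$ to both sides'' of a norm identity should be read as repeating the computation with $B$ replaced by $(-\Delta)^{-1/2}B$, since the operator commutes with $\nabla^{\perp}$ on average-zero functions.
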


\begin{proof}
By Parseval's identity for $B=\sum_{j=1}^{\infty}(B,e_j)_{H}e_j$ and $(-\Delta)^{-1} B=\sum_{j=1}^{\infty}\lambda_{j}^{-1} (B,e_j)_{H}e_j$,
\begin{align*}
||B||_{\dot{H}^{-1}}^{2}=\left\|(-\Delta)^{-\frac{1}{2}} B\right\|_{H}^{2} =\left((-\Delta)^{-1} B,B\right)_H =\sum_{j=1}^{\infty}\frac{1}{\lambda_j}|(B,e_j)_H|^{2}.
\end{align*}
Since $\{\sqrt{\lambda_j}d_j\}=\{\sqrt{\lambda_j}\textrm{curl}^{-1}e_j\}$ is an orthonormal basis on $L^{2}_{\textrm{av}}$ (Theorem A.2), applying Parseval's identity for $\textrm{curl}^{-1}B=\sum_{j=1}^{\infty}(B,e_j)_{H}d_j$, 
\begin{align*}
|| \textrm{curl}^{-1}B ||_{L^{2}}^{2}=\sum_{j=1}^{\infty}\frac{1}{\lambda_j}|(B,e_j)_H|^{2}.
\end{align*}
\end{proof}

\begin{prop}
Let $f\in C^{2}(\mathbb{R})$ satisfy $f'\in W^{1,\infty}(\mathbb{R})$. Let $B_{\kappa}$ be a statistically stationary solution to (1.3) for $\gamma>d/2$. Let $\phi_{\kappa}=\textrm{curl}^{-1}\ B_{\kappa}$. Then, $f(||\phi_{\kappa}||_{L^{2}}^{2})$ is a one-dimensional stationary process 
\begin{align}
f\left(||\phi_{\kappa}(t)||_{L^{2}}^{2}\right)=f\left(||\phi_{\kappa}(0)||_{L^{2}}^{2}\right)+\int_{0}^{t}A(s)ds+\sum_{j=1}^{\infty}\int_{0}^{t}B_{j}(s)d\beta_j(s),\quad t \geq 0,\ \textrm{a.s.}
\end{align}
for the stationary processes 
\begin{equation}
\begin{aligned}
A(t)&=2\kappa \left(f'\left(||\phi_{\kappa}||_{L^{2}}^{2} \right)\left( -||\nabla \phi_{\kappa}||_{L^{2}}^{2}+\frac{\mathsf{C}_{-1}}{2} \right)+f''\left(||\phi_{\kappa}||_{L^{2}}^{2} \right)\sum_{j=1}^{\infty}b_j^{2}|(\phi_{\kappa},\textrm{curl}^{-1} e_j)_{L^{2}}|^{2}    \right),\\
B_j(t)&= 2\sqrt{\kappa}f'\left(||\phi_{\kappa}||_{L^{2}}^{2}\right) b_j(\phi_{\kappa},\textrm{curl}^{-1} e_j)_{L^{2}},
\end{aligned}
\end{equation}
satisfying 
\begin{align}
\mathbb{E}\int_{0}^{t}\left( |A(s)|+\sum_{j=1}^{\infty}|B_j(s)|^{2}\right)ds
=t \mathbb{E}\left(|A(0)|+\sum_{j=1}^{\infty}|B_j(0)|^{2} \right)
\leq \kappa \mathsf{C}_{-1}(1+\mathsf{C}_{-1}),\quad  t\geq 0.
\end{align}
\end{prop}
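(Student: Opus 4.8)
The plan is to apply the It\^o formula (B.5) to the functional $F[B]=f(\|\textrm{curl}^{-1}B\|_{L^2}^2)=f(\mathscr M(B))$ along the statistically stationary solution $B_\kappa$ (cf.\ Remark B.5 and the corresponding arguments for $\mathscr E$ and $\mathscr H$ in Propositions 6.2 and 6.7), which exhibits $f(\|\phi_\kappa(t)\|_{L^2}^2)$ as a real one-dimensional It\^o process, and then to read off the drift $A$ and the diffusions $B_j$ and estimate their first moments using the stationary balance (5.4).

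First I would carry out the It\^o expansion. Writing $B_\kappa=\nabla^\perp\phi_\kappa$ and using $u_\kappa\cdot\nabla B_\kappa-B_\kappa\cdot\nabla u_\kappa=-\nabla^\perp(u_\kappa\cdot\nabla\phi_\kappa)$, the first equation of (1.3) becomes, after applying $\textrm{curl}^{-1}$, $\partial_t\phi_\kappa=\kappa\Delta\phi_\kappa-u_\kappa\cdot\nabla\phi_\kappa+\sqrt\kappa\,\textrm{curl}^{-1}\partial_t\zeta$. Computing $(\phi_\kappa,u_\kappa\cdot\nabla\phi_\kappa)_{L^2}=0$ (as $\nabla\cdot u_\kappa=0$), $(\phi_\kappa,\kappa\Delta\phi_\kappa)_{L^2}=-\kappa\|\nabla\phi_\kappa\|_{L^2}^2$, and $\sum_j b_j^2\|\textrm{curl}^{-1}e_j\|_{L^2}^2=\sum_j b_j^2/\lambda_j=\mathsf{C}_{-1}$, the It\^o formula (legitimate by Theorem 3.8) gives that $X_t:=\|\phi_\kappa(t)\|_{L^2}^2$ satisfies $X_t=X_0+\int_0^t 2\kappa\bigl(-\|\nabla\phi_\kappa\|_{L^2}^2+\mathsf{C}_{-1}/2\bigr)ds+2\sqrt\kappa\sum_j\int_0^t b_j(\phi_\kappa,\textrm{curl}^{-1}e_j)_{L^2}\,d\beta_j$, and applying the scalar It\^o formula to $f(X_t)$ yields (6.15) with exactly the coefficients (6.16). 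The hypothesis $f'\in W^{1,\infty}(\mathbb R)$ is used here to ensure that $f$ grows at most linearly and that $A$ and $\sum_j|B_j|^2$ are locally integrable in $(s,\omega)$, so that the stochastic-integral term in (6.15) is a genuine (not merely local) martingale.

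Second, for the stationarity and the first equality in (6.17): using the Parseval identities $\|\nabla\phi_\kappa(t)\|_{L^2}=\|B_\kappa(t)\|_H$, $\|\phi_\kappa(t)\|_{L^2}^2=\|B_\kappa(t)\|_{\dot H^{-1}}^2=\sum_j\lambda_j^{-1}|(B_\kappa(t),e_j)_H|^2$ (Proposition 6.12), and $(\phi_\kappa(t),\textrm{curl}^{-1}e_j)_{L^2}=\lambda_j^{-1}(B_\kappa(t),e_j)_H$ (Theorem A.2 and the proof of Proposition 6.12), both $A(t)$ and each $B_j(t)$ are fixed Borel functionals of $B_\kappa(t)$. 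Since $\mathcal D(B_\kappa(t))=\mu_\kappa$ is time-independent, so are the laws of $A(t)$ and of each $B_j(t)$; this is the ``stationary process'' assertion. By stationarity and Tonelli's theorem (cf.\ the identity (B.28)), $\mathbb E\int_0^t\bigl(|A(s)|+\sum_j|B_j(s)|^2\bigr)ds=\int_0^t\bigl(\mathbb E|A(s)|+\sum_j\mathbb E|B_j(s)|^2\bigr)ds=t\,\mathbb E\bigl(|A(0)|+\sum_j|B_j(0)|^2\bigr)$.

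Finally, for the bound: the stationary mean-square-potential balance (equivalently (5.4) for $d=2$) gives $\mathbb E\|\nabla\phi_\kappa\|_{L^2}^2=\mathbb E\|B_\kappa\|_H^2=\mathsf{C}_{-1}/2$, and the Poincar\'e inequality on $\mathbb T^2$ gives $\mathbb E\|\phi_\kappa\|_{L^2}^2\le\mathbb E\|\nabla\phi_\kappa\|_{L^2}^2=\mathsf{C}_{-1}/2$; moreover, by the identities above, $\sum_j b_j^2|(\phi_\kappa,\textrm{curl}^{-1}e_j)_{L^2}|^2=\sum_j\lambda_j^{-2}b_j^2|(B_\kappa,e_j)_H|^2\le\bigl(\sup_j\lambda_j^{-1}b_j^2\bigr)\|\phi_\kappa\|_{L^2}^2\le\mathsf{C}_{-1}\|\phi_\kappa\|_{L^2}^2$. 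Substituting these into (6.16), together with $\|f'\|_{L^\infty}\le 1$ and $\|f''\|_{L^\infty}\le 1$ (the normalization for which (6.17) is stated), bounds $\mathbb E\bigl(|A(0)|+\sum_j|B_j(0)|^2\bigr)$ by a constant multiple of $\kappa\mathsf{C}_{-1}(1+\mathsf{C}_{-1})$, which is (6.17). I expect the only delicate point to be the first step --- passing cleanly from the infinite-dimensional It\^o formula to the scalar SDE for $X_t$ and verifying, via $f'\in W^{1,\infty}$, that the martingale term in (6.15) is a true martingale; thereafter stationarity, Parseval's identity and (5.4) make everything routine.
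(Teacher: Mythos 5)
Your proposal is correct and follows essentially the paper's route: the paper applies its infinite-dimensional It\^o formula (B.5) directly to the composite functional $F[B]=f(\mathscr{M}(B))$ from Remark B.5, whereas you first write the SDE for $\mathscr{M}(B_\kappa)=\|\phi_\kappa\|_{L^2}^2$ (which is exactly (B.19)) and then apply the scalar It\^o formula — an equivalent computation yielding the same $A$ and $B_j$. The moment bound (6.17) is obtained exactly as in the paper (stationarity, $\|\nabla\phi_\kappa\|_{L^2}=\|B_\kappa\|_H$, (5.4), Poincar\'e, and $\sup_j b_j^2/\lambda_j\le\mathsf{C}_{-1}$), and it is precisely this bound, i.e. condition (B.6), that upgrades the local martingale to a true one via (B.7), so your ordering (It\^o first, bound after) closes the argument just as the paper's does.
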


\begin{proof}
By $f'\in W^{1,\infty}(\mathbb{R})$, 
\begin{align*}
\frac{1}{\kappa}\left(|A(0)|+\sum_{j=1}^{\infty}|B_j(0)|^{2}\right)\lesssim \left\|\nabla \phi_{\kappa}(0) \right\|_{L^{2}}^{2}+\mathsf{C}_{-1}+\mathsf{C}_{-1} ||\phi_{\kappa}(0) ||_{L^{2}}^{2}.
\end{align*}
We apply Poincar\'e inequality and take the mean. By using (5.4), (6.17) follows. We apply the It\^o formula (B.5) for the functional $F[B]=f(\mathscr{M}(B))=f(||\textrm{curl}^{-1}B||_{L^{2}}^{2})$ in Remark B.5. By using the boundedness (6.17), we apply the It\^o formula (B.7) and obtain (6.15).   
\end{proof}

\begin{prop}
The following inequality holds:  
\begin{align}
\mathbb{E}\int_{\alpha}^{\beta}1_{[a,\infty)}(||\phi_{\kappa}(0)||_{L^{2}} )A(0)da\leq 0,
\end{align}
for $0<\alpha<\beta$ and 
\begin{align*}
\frac{2}{\kappa}A(0)=\frac{1}{||\phi_{\kappa}(0)||_{L^{2}}^{3} }\left(\mathsf{C}_{-1}||\phi_{\kappa}(0) ||_{L^{2}}^{2}-\sum_{j=1}^{\infty}b_j^{2} \left|(\phi_{\kappa}(0) ,\textrm{curl}^{-1}e_j)_{L^{2}} \right|^{2}   \right)-2\frac{||\nabla \phi_{\kappa}(0)||_{L^{2}}^{2}}{||\phi_{\kappa}(0)||_{L^{2}}}.
\end{align*}
\end{prop}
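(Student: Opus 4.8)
The plan is to recognise $A(0)$ as (a constant multiple of) the It\^o drift of the real-valued process $t\mapsto ||\phi_{\kappa}(t)||_{L^{2}}=\mathscr{M}(B_{\kappa}(t))^{1/2}$, and then to exploit the non-negativity of the associated diffusion coefficient through the identity (B.28) for one-dimensional stationary processes. Substituting $f(x)=\sqrt{x}$ into the drift $A$ and the diffusion $B_{j}$ of Proposition 6.15 and simplifying reproduces exactly the displayed formula for $\frac{2}{\kappa}A(0)$, while the corresponding diffusion coefficients are $B_{j}(0)=\sqrt{\kappa}\,b_{j}(\phi_{\kappa}(0),\textrm{curl}^{-1}e_{j})_{L^{2}}/||\phi_{\kappa}(0)||_{L^{2}}$, so that $\sum_{j}B_{j}(0)^{2}\geq 0$. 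Since $f(x)=\sqrt{x}$ fails to have $f'\in W^{1,\infty}(\mathbb{R})$ near the origin, Proposition 6.15 cannot be applied to it directly, so I would first truncate: fix $c\in(0,\alpha^{2})$ and choose $\tilde f\in C^{2}(\mathbb{R})$, non-decreasing, with $\tilde f'\in W^{1,\infty}(\mathbb{R})$, $\tilde f(x)=\sqrt{x}$ for $x\geq c$, and $0\leq \tilde f(x)\leq \sqrt{c}$ for $x\leq c$ (obtained by flattening $\sqrt{\cdot}$ near $0$); set $Y(t)=\tilde f(\mathscr{M}(B_{\kappa}(t)))$.

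By Proposition 6.15 applied to $\tilde f$, the process $Y$ is a one-dimensional stationary It\^o process with drift $\tilde A$ and diffusion coefficients $\tilde B_{j}$ obeying the integrability bound (6.17); in particular $\mathbb{E}|\tilde A(0)|<\infty$, and $\mathbb{E}|Y(0)|<\infty$ by (5.4) since $0\leq Y(0)\leq ||\phi_{\kappa}(0)||_{L^{2}}+\sqrt{c}$ and $||\phi_{\kappa}(0)||_{L^{2}}\leq ||B_{\kappa}(0)||_{H}$. The truncation is designed so that, pointwise in $\omega$: on $\{\mathscr{M}(B_{\kappa}(0))\geq c\}$ one has $Y(0)=||\phi_{\kappa}(0)||_{L^{2}}$ and $\tilde A(0)=A(0)$ (the expression in the statement, since the coefficients of Proposition 6.15 only involve $f',f''$ at the argument $||\phi_{\kappa}(0)||_{L^{2}}^{2}\geq c$, where $\tilde f=\sqrt{\cdot}$), whereas on $\{\mathscr{M}(B_{\kappa}(0))<c\}$ both $Y(0)\leq\sqrt{c}<\alpha$ and $||\phi_{\kappa}(0)||_{L^{2}}<\sqrt{c}<\alpha$, so that $1_{[a,\infty)}(Y(0))=1_{[a,\infty)}(||\phi_{\kappa}(0)||_{L^{2}})=0$ for every $a\in[\alpha,\beta]$. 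Hence $1_{[a,\infty)}(||\phi_{\kappa}(0)||_{L^{2}})\,A(0)=1_{[a,\infty)}(Y(0))\,\tilde A(0)$ identically in $\omega$ and in $a\in[\alpha,\beta]$, and it suffices to show $\mathbb{E}\int_{\alpha}^{\beta}1_{[a,\infty)}(Y(0))\,\tilde A(0)\,da\leq 0$.

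For fixed $a\in[\alpha,\beta]$ I would apply (B.28) to convex $C^{2}$ approximants $\psi_{n}$ of $y\mapsto (y-a)^{+}$, chosen with $0\leq\psi_{n}'\leq 1$ and $\psi_{n}'$ eventually $\equiv 1$ (so $\psi_{n}$ grows at most linearly and $\psi_{n}(Y(0))$ is integrable), with $\psi_{n}''\geq 0$ bounded, and with $\psi_{n}'\to 1_{[a,\infty)}$ pointwise off $\{a\}$. Since $\psi_{n}''\geq 0$ and $\sum_{j}\tilde B_{j}(0)^{2}\geq 0$, (B.28) gives
\begin{align*}
\mathbb{E}\left[\psi_{n}'(Y(0))\,\tilde A(0)\right]=-\frac{1}{2}\,\mathbb{E}\left[\psi_{n}''(Y(0))\sum_{j}\tilde B_{j}(0)^{2}\right]\leq 0 .
\end{align*}
Letting $n\to\infty$, with dominated convergence based on $|\psi_{n}'(Y(0))\tilde A(0)|\leq|\tilde A(0)|\in L^{1}$, yields $\mathbb{E}[1_{[a,\infty)}(Y(0))\tilde A(0)]\leq 0$ for a.e.\ $a\in[\alpha,\beta]$; integrating in $a$ over $[\alpha,\beta]$ and exchanging $\int_{\alpha}^{\beta}$ with $\mathbb{E}$ (Fubini, legitimate since $\int_{\alpha}^{\beta}\mathbb{E}|\tilde A(0)|\,da<\infty$) gives the claimed inequality.

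The main obstacle is precisely the singularity of $\sqrt{\cdot}$ at $0$, i.e.\ the possibility that $\mathscr{M}(B_{\kappa}(0))$ is arbitrarily small, which a priori obstructs both the It\^o formula of Proposition 6.15 and the moment bound (6.17) for the genuine choice $f=\sqrt{\cdot}$. The point of the specific truncation $\tilde f$ is that the truncated drift $\tilde A(0)$ coincides with $A(0)$ on exactly the set $\{||\phi_{\kappa}(0)||_{L^{2}}\geq\alpha\}$ — the only set the indicators $1_{[a,\infty)}$ with $a\geq\alpha$ ever see — while $\tilde f$ remains of Lipschitz type near $0$; this sidesteps the singularity without having to invoke the quantitative non-degeneracy estimate of Lemma 6.11, which enters the argument separately (in the limit $\kappa\to 0$ leading to Theorem 6.8).
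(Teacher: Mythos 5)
Your proposal is correct and follows essentially the same route as the paper: truncate $\sqrt{\cdot}$ away from the origin so that Proposition 6.15 applies, recognize $A(0)$ as the drift of $f(\mathscr{M}(B_{\kappa}))$ with $f=\sqrt{\cdot}$ on the relevant range, and conclude from the stationary-process identity (B.28) together with the non-negativity of the diffusion term. The only differences are cosmetic: your intermediate step with convex $C^{2}$ approximants $\psi_{n}$ of $(y-a)^{+}$ merely re-derives what (B.28) already yields directly in indicator form, while your cutoff choice $c<\alpha^{2}$ is in fact a slightly more careful version of the paper's truncation ($f(r)=\sqrt{r}$ for $r\geq\alpha$), since it guarantees the identification $1_{[a,\infty)}(\|\phi_{\kappa}(0)\|_{L^{2}})A(0)=1_{[a,\infty)}(Y(0))\tilde{A}(0)$ for every $a\in[\alpha,\beta]$.
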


\begin{proof}
We choose a function $f\in C^{2}(\mathbb{R})$ such that $f(r)=\sqrt{r}$ for $r\geq \alpha$ and $f(r)=0$ for $r\leq 0$ and apply the identity (B.28) for the one-dimensional stationary process $y(t)=f(||\phi_{\kappa}||_{L^{2}}^{2})$ with $x(t)=A(t)$ and $\theta^{j}(t)=B_j(t)$. Then, (6.18) follows.
\end{proof}

\begin{prop}
\begin{align}
\mu_{\kappa}(0<||B||_{\dot{H}^{-1}}<\delta)=\mathbb{P}( 0<||\phi_{\kappa}(0)||_{L^{2}}< \delta  )\leq \frac{2}{\varsigma}\sqrt{\mathsf{C}_{0}}\delta,\quad \delta>0.
\end{align}
\end{prop}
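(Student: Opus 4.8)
The plan is to turn (6.19) into an integrability estimate for $1/\|\phi_\kappa(0)\|_{L^2}$ under the stationary measure and then invoke Chebyshev's inequality. Since $\mathcal D(B_\kappa(0))=\mu_\kappa$ and $\|B\|_{\dot H^{-1}}=\|\textrm{curl}^{-1}B\|_{L^2}=\|\phi_\kappa(0)\|_{L^2}$ by (6.14), the left‑hand side of (6.19) is $\mathbb P(0<\|\phi_\kappa(0)\|_{L^2}<\delta)$, so it suffices to prove $\mathbb E\big[1_{\{\|\phi_\kappa(0)\|_{L^2}>0\}}/\|\phi_\kappa(0)\|_{L^2}\big]\le \tfrac{2}{\varsigma}\sqrt{\mathsf C_0}$; Chebyshev's inequality applied to the random variable $1_{\{\|\phi_\kappa(0)\|_{L^2}>0\}}/\|\phi_\kappa(0)\|_{L^2}$ at level $1/\delta$ then gives (6.19).

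To get that bound I would start from Proposition 6.13. By Fubini's theorem and Lebesgue differentiation in the variable $a$, (6.18) yields $\mathbb E\big[1_{\{\|\phi_\kappa(0)\|_{L^2}\ge a\}}A(0)\big]\le 0$ for a.e. $a>0$, where $A(0)$ is the drift displayed in Proposition 6.13. Multiplying by $2/\kappa$ and moving the dissipative term to the right, this reads, for a.e. $a>0$,
\[\mathbb E\!\left[1_{\{\|\phi_\kappa(0)\|_{L^2}\ge a\}}\frac{\mathsf C_{-1}\|\phi_\kappa(0)\|_{L^2}^{2}-\sum_{j}b_j^{2}|(\phi_\kappa(0),\textrm{curl}^{-1}e_j)_{L^2}|^{2}}{\|\phi_\kappa(0)\|_{L^2}^{3}}\right]\le 2\,\mathbb E\!\left[1_{\{\|\phi_\kappa(0)\|_{L^2}\ge a\}}\frac{\|\nabla\phi_\kappa(0)\|_{L^2}^{2}}{\|\phi_\kappa(0)\|_{L^2}}\right].\]
For the left‑hand side I would expand in the eigenbasis $\{e_j\}$, using $(\phi_\kappa(0),\textrm{curl}^{-1}e_j)_{L^2}=(B_\kappa(0),e_j)_H/\lambda_j$ (Theorem A.2, $\{\sqrt{\lambda_j}\,\textrm{curl}^{-1}e_j\}$ is orthonormal in $L^2_{\textrm{av}}$) and $\|\phi_\kappa(0)\|_{L^2}^{2}=\sum_j|(B_\kappa(0),e_j)_H|^{2}/\lambda_j$, so the numerator equals $\sum_j(\mathsf C_{-1}-b_j^{2}/\lambda_j)|(B_\kappa(0),e_j)_H|^{2}/\lambda_j\ge \varsigma\,\|\phi_\kappa(0)\|_{L^2}^{2}$; hence the left‑hand side is $\ge \varsigma\,\mathbb E\big[1_{\{\|\phi_\kappa(0)\|_{L^2}\ge a\}}/\|\phi_\kappa(0)\|_{L^2}\big]$.

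The decisive step, and the one I expect to be the main obstacle, is estimating the right‑hand side: the crude bound $1/\|\phi_\kappa(0)\|_{L^2}\le 1/a$ there only produces an $O(1)$ estimate, useless for letting $a\downarrow 0$. Instead I would use the interpolation inequality (2.6) between $\dot H^{-1}$ and $\dot H^{1}$, $\|B_\kappa(0)\|_{H}^{2}\le\|B_\kappa(0)\|_{\dot H^{-1}}\|B_\kappa(0)\|_{\dot H^{1}}$, which, since $\|\nabla\phi_\kappa(0)\|_{L^2}=\|B_\kappa(0)\|_H$ and $\|\phi_\kappa(0)\|_{L^2}=\|B_\kappa(0)\|_{\dot H^{-1}}$, says exactly $\|\nabla\phi_\kappa(0)\|_{L^2}^{2}/\|\phi_\kappa(0)\|_{L^2}\le\|\nabla B_\kappa(0)\|_{H}$. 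Then the right‑hand side is $\le 2\,\mathbb E\|\nabla B_\kappa(0)\|_H\le 2\big(\mathbb E\|\nabla B_\kappa(0)\|_H^{2}\big)^{1/2}\le 2(\mathsf C_0/2)^{1/2}=\sqrt{2\mathsf C_0}$ by Jensen's inequality and the balance relation (5.1) for the $\sqrt\kappa$‑forced system (1.3), which bounds $\mathbb E\|\nabla B_\kappa\|_H^{2}$ by $\mathsf C_0/2$ uniformly in $\kappa$ — this is precisely the $u_\kappa=O(\sqrt\kappa)$, $B_\kappa=O(1)$ scaling that underlies the whole non‑resistive limit. Combining the two sides, $\varsigma\,\mathbb E\big[1_{\{\|\phi_\kappa(0)\|_{L^2}\ge a\}}/\|\phi_\kappa(0)\|_{L^2}\big]\le\sqrt{2\mathsf C_0}$ for a.e. $a>0$; letting $a\downarrow0$ along such values and applying monotone convergence gives $\mathbb E\big[1_{\{\|\phi_\kappa(0)\|_{L^2}>0\}}/\|\phi_\kappa(0)\|_{L^2}\big]\le\sqrt{2\mathsf C_0}/\varsigma\le 2\sqrt{\mathsf C_0}/\varsigma$, and Chebyshev's inequality closes the argument. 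The only technical point needing care is the Fubini/Lebesgue‑differentiation justification of the a.e.-$a$ inequality, which requires integrability of $A(0)$ on $\{\|\phi_\kappa(0)\|_{L^2}\ge\alpha\}$; this is secured by the same interpolation bound, since there $|A(0)|\lesssim_{\alpha,\kappa}1+\|\nabla B_\kappa(0)\|_H\in L^{1}(\mathbb P)$.
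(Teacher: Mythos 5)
Your argument is correct and rests on exactly the same ingredients as the paper's proof of (6.19): the stationary-process inequality (6.18) from Proposition 6.13, the spectral lower bound $\mathsf{C}_{-1}\|\phi\|_{L^{2}}^{2}-\sum_{j}b_j^{2}|(\phi,\mathrm{curl}^{-1}e_j)_{L^{2}}|^{2}\ge \varsigma\|\phi\|_{L^{2}}^{2}$, the interpolation $\|\nabla\phi\|_{L^{2}}^{2}/\|\phi\|_{L^{2}}\le\|\nabla^{2}\phi\|_{L^{2}}=\|\nabla B\|_{H}$, and the balance relation (5.1). The only difference is in the bookkeeping at the end: you localize (6.18) in $a$ via Fubini and Lebesgue differentiation (with the integrability of $A(0)$ on $\{\|\phi_\kappa(0)\|_{L^{2}}\ge\alpha\}$ justified as you indicate), deduce the inverse-moment bound $\mathbb{E}\bigl[1_{\{\|\phi_\kappa(0)\|_{L^{2}}>0\}}\,\|\phi_\kappa(0)\|_{L^{2}}^{-1}\bigr]\le \sqrt{2\mathsf{C}_0}/\varsigma$ by monotone convergence, and finish with Markov's inequality, whereas the paper keeps the $a$-average, inserts $1_{(a,\delta]}\le 1_{[a,\infty)}$ and $\|\phi\|_{L^{2}}^{-1}\ge\delta^{-1}$ on that set, and lets $\alpha\to0$, $\beta\to0$; both are valid, and yours even yields the marginally sharper constant $\sqrt{2\mathsf{C}_0}/\varsigma\le 2\sqrt{\mathsf{C}_0}/\varsigma$.
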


\begin{proof}
The first equality follows from the norm identity (6.14). By (6.18), 
\begin{align*}
\mathbb{E}\int_{\alpha}^{\beta}1_{[a,\infty)}(||\phi_{\kappa}(0)||_{L^{2}} )\frac{1}{||\phi_{\kappa}(0)||_{L^{2}}^{3} } \left(\mathsf{C}_{-1}||\phi_{\kappa}(0) ||_{L^{2}}^{2}-\sum_{j=1}^{\infty}b_j^{2}|(\phi_{\kappa}(0),\textrm{curl}^{-1}\ e_j)_H|^{2} \right)  da\leq 2(\beta-\alpha)\mathbb{E}\left(\frac{||\nabla \phi_{\kappa}(0)||_{L^{2}}^{2} }{||\phi_{\kappa}(0)||_{L^{2}}}\right).
\end{align*}
By the interpolation inequality (2.6) and (5.1),
\begin{align*}
\mathbb{E}\left(\frac{||\nabla \phi_{\kappa}(0)||_{L^{2}}^{2} }{||\phi_{\kappa}(0)||_{L^{2}}}\right)\leq \mathbb{E}||\nabla^{2}\phi_{\kappa}(0)||_{L^{2}}\leq \sqrt{\mathsf{C}_{0}}.
\end{align*}
We obtain 
\begin{align*}
\mathbb{E}\int_{\alpha}^{\beta}1_{[a,\infty)}\left(||\phi_{\kappa}(0)||_{L^{2}}  \right)
\frac{1}{||\phi_{\kappa}(0)||_{L^{2}}^{3}}
 \left(\mathsf{C}_{-1}|| \phi_{\kappa}(0)||_{L^{2}}^{2}-\sum_{j=1}^{\infty}b_j^{2}|(\phi_{\kappa}(0),\textrm{curl}^{-1}\ e_j)_H|^{2}      \right)     
da\leq 2(\beta-\alpha)\sqrt{\mathsf{C}_{0}}.
\end{align*}
For $d_j=\textrm{curl}^{-1} e_j$, $\{\sqrt{\lambda_j}d_j\}$ is an orthonormal basis on $L^{2}_{\textrm{av}}(\mathbb{T}^{2})$ (Theorem A.2). Thus, 
\begin{align*}
\mathsf{C}_{-1}||\phi_{\kappa}(0)||_{L^{2}}^{2}-\sum_{j=1}^{\infty}b_j^{2} |(\phi_{\kappa}(0),\textrm{curl}^{-1}\ e_j)_H|^{2}
&=\mathsf{C}_{-1}||\phi||_{L^{2}}^{2}-\sum_{j=1}^{\infty}\frac{1}{\lambda_j}b_j^{2} |(\phi_{\kappa}(0), \sqrt{\lambda_j}d_j)_H|^{2} \\
&=\mathsf{C}_{-1}||\phi_{\kappa}(0)||_{L^{2}}^{2}-\left(\sup_{j\geq 1}\frac{1}{\lambda_j}b_j^{2}\right) ||\phi_{\kappa}(0)||_{L^{2}}^{2}\geq \varsigma ||\phi_{\kappa}(0)||_{L^{2}}^{2},
\end{align*}\\
and for $\delta>0$,  
\begin{align*}
2(\beta-\alpha) \sqrt{\mathsf{C}_{0}}
\geq 
\varsigma \mathbb{E}\int_{\alpha}^{\beta}1_{[a,\infty)}\left(||\phi_{\kappa}(0)||_{L^{2}}  \right)
\frac{1}{||\phi_{\kappa}(0)||_{L^{2}}} 
da
&\geq 
\varsigma \mathbb{E}\int_{\alpha}^{\beta}1_{(a,\delta]}\left(||\phi_{\kappa}(0)||_{L^{2}}   \right)
\frac{1}{||\phi_{\kappa}(0)||_{L^{2}} } 
da \\
&\geq 
\frac{\varsigma}{\delta}\int_{\alpha}^{\beta}
\mathbb{P}(a< ||\phi_{\kappa}(0)||_{L^{2}}  \leq \delta)da.
\end{align*}
By letting $\alpha\to 0$, and then dividing both sides by $\beta$ and letting $\beta\to 0$, we obtain (6.19).
\end{proof}

\begin{proof}[Proof of Lemma 6.11]
We show that $\mu_{\kappa}(B=0)=0$. We take $j_0\in \mathbb{N}$ such that $b_{j_0}\neq 0$. Then, $y(t)=(B_{\kappa},e_{j_0})_{H}$ is a one-dimensional stationary stochastic process (B.24) for $x(t)=(\kappa \Delta B_{\kappa}-u_{\kappa}\cdot \nabla B_{\kappa}+B_{\kappa}\cdot \nabla u_{\kappa},e_{j_0})_{H}$ and $\theta^{j}(t)=\delta_{j_0,j}\sqrt{\kappa}b_j$. By the identity for the stationary process (B.28),
\begin{align*}
\mathbb{E}\left(1_{[-\varepsilon,\varepsilon]}(y(0))\sum_{j=1}^{\infty}|\theta^{j}(0)|^{2} \right)
=-2\mathbb{E}\int_{-\varepsilon}^{\varepsilon}1_{[a,\infty)}(y(0))x(0)da.
\end{align*}
By $-\Delta d_j=\lambda_jd_j$ and $e_j=\nabla^{\perp}d_j$,
\begin{align*}
x(t)=(\nabla^{\perp}(\kappa \Delta \phi_{\kappa}-u_{\kappa}\cdot \nabla \phi_{\kappa}),e_{j_0} )_H
=-(\kappa \Delta \phi_{\kappa}-u_{\kappa}\cdot \nabla \phi_{\kappa},\nabla^{\perp}\cdot e_{j_0})_{L^{2}}
&=\lambda_{j_0} (\kappa \Delta \phi_{\kappa}-u_{\kappa}\cdot \nabla \phi_{\kappa}, d_{j_0})_{L^{2}} \\
&=-\lambda_{j_0} (\kappa \nabla \phi_{\kappa}-u_{\kappa} \phi_{\kappa}, \nabla d_{j_0})_{H}.
\end{align*}
By H\"older's inequality, $||u_{\kappa}\phi_{\kappa}||_{L^{2}}\leq ||u_{\kappa}||_{L^{4}}||\phi_{\kappa}||_{L^{4}}\lesssim ||u_{\kappa}||_{H^{1}}||\phi_{\kappa}||_{H^{1}}$. By (5.1), $\mathbb{E}||u_{\kappa}||_{H^{1}}||\phi_{\kappa}||_{H^{1}}\lesssim  \sqrt{\kappa}\sqrt{\mathsf{C}_{0}\mathsf{C}_{-1}}$ and 
\begin{align*}
\mathbb{E}|x(0)|\lesssim  \lambda_{j_0}\left(\kappa \sqrt{ \mathsf{C}_{-1}}+\sqrt{ \kappa \mathsf{C}_{0}\mathsf{C}_{-1}} \right).
\end{align*}
We obtain 
\begin{align*}
b_{j_0}^{2}\mathbb{P}(|y(0)|\leq \varepsilon)\lesssim \varepsilon \lambda_{j_0}\left(\sqrt{\mathsf{C}_{-1}}+\sqrt{\frac{\mathsf{C}_{0}\mathsf{C}_{-1}}{\kappa}} \right).
\end{align*}
Thus, $\mathbb{P}((B_{\kappa},e_{j_0})=0)=0$. By the norm identity (6.14), $\lambda_{j_0}^{-1} |(B_{\kappa},e_{j_0})_{H}|^{2}\leq ||\phi_{\kappa}||_{L^{2}}^{2}$ and $\mu_{\kappa}(B=0)=\mathbb{P}(\phi_{\kappa}=0)\leq \mathbb{P}((B_{\kappa},e_{j_0})_{H}=0)=0$. 

By (6.19), 
\begin{align}
\mu_{\kappa}(0\leq ||B||_{\dot{H}^{-1}}<\delta)=\mathbb{P}( 0\leq ||\phi_{\kappa}(0)||_{L^{2}}< \delta  )\leq \frac{2}{\varsigma}\sqrt{\mathsf{C}_{0}}\delta,\quad \delta>0.
\end{align}
Since $\mu_{\kappa}\to \mu_{0}$ in $\mathcal{P}(H^{-1})$, (6.13) follows from Portmanteau theorem (Proposition 4.4).
\end{proof}

\begin{proof}[Proof of Theorem 6.10]
The absolute continuity of the law $\mathcal{D}_{\mu_0}\mathscr{M}$ for the Lebesgue measure on $(0,\infty)$ follows from a similar argument as the proof of Theorem 6.1 and the It\^o formula for the functional $f(\mathscr{M}(B))$  for $f\in C^{2}(\mathbb{R})$. The inequality (6.13) implies the absolute continuity of the law $\mathcal{D}_{\mu_0}\mathscr{M}$ on $[0,\infty)$. 
\end{proof}

\subsection{The law of Casimir invariants}

We consider $f=(f_1,\cdots,f_n)\in C^{2}(\mathbb{R})$ such that $f_1',\cdots,f_n'$ are independent modulo constant. Namely, if there exist constants $a=(a_1,\cdots,a_n)\in \mathbb{R}^{n}$, $C\in \mathbb{R}$, $z_0\in \mathbb{R}$, and $\delta_0>0$ such that if 
\begin{align*}
h_a(z):=a\cdot f'(z)=a_1f'_1(z)+\cdots+a_nf'_n(z)=C,\quad z\in (z_0-\delta,z_0+\delta),
\end{align*}
then $a=0$ and $C=0$. The level sets $h^{-1}_a(C)$ are discreate for any $a\in \mathbb{R}^{n}$ if $f_1',\cdots,f_n'$ are independent modulo constant. 

\begin{thm}
Let $d=2$. Let $\mu_0$ be a measure as in Theorem 1.1. Let $f=(f_1,\cdots,f_n)\in C^{2}(\mathbb{R})$ be a function such that $f_1',\cdots,f_n'$ are independent modulo constant and $f'\in W^{1,\infty}(\mathbb{R})$. Set $\mathscr{C}(B)=(\mathscr{C}_k(B))_{1\leq k\leq n}$ by $\mathscr{C}_k(B)=\int_{\mathbb{T}^{2}}f_k(\textrm{curl}^{-1}B)dx$. Assume that $b_j\neq 0$ for all $j\in \mathbb{N}$. Then, the law $\mathcal{D}_{\mu_0}(\mathscr{C})$ is absolutely continuous for the $n$-dimensional Lebesgue measure in $\mathbb{R}^{n}$. 
\end{thm}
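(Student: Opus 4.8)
The plan is to run the scheme already used for the laws of $\mathscr{E}$, $\mathscr{H}$ and $\mathscr{M}$ (Theorems 6.1, 6.6, 6.10), with the one–dimensional stationary identity (B.28) replaced by the $n$-dimensional Krylov estimate (B.29). Write $\phi_\kappa=\textrm{curl}^{-1}B_\kappa$, $d_j=\textrm{curl}^{-1}e_j$, so $\{\sqrt{\lambda_j}d_j\}$ is an orthonormal basis of $L^2_{\textrm{av}}(\mathbb{T}^2)$ by Theorem A.2. Applying the It\^o formula (B.5) to the $\mathbb{R}^n$-valued functional $F[B]=\mathscr{C}(B)=\big(\int_{\mathbb{T}^2}f_k(\textrm{curl}^{-1}B)\,dx\big)_{1\le k\le n}$ along a statistically stationary solution $B_\kappa$, using that the transport term integrates against $f_k'(\phi_\kappa)$ to zero and $\kappa\int f_k'(\phi_\kappa)\Delta\phi_\kappa=-\kappa\int f_k''(\phi_\kappa)|\nabla\phi_\kappa|^2$, one finds that $\mathscr{C}(B_\kappa)$ is a stationary $\mathbb{R}^n$-valued It\^o process with drift $\kappa A^0(t)$ and noise coefficients $\sqrt{\kappa}\,\Theta_j(t)$, where
\begin{equation*}
A^0_k=-\int_{\mathbb{T}^2}f_k''(\phi_\kappa)|\nabla\phi_\kappa|^2\,dx+\tfrac12\sum_{j}b_j^2\int_{\mathbb{T}^2}f_k''(\phi_\kappa)d_j^2\,dx,\qquad \Theta_j^k=b_j\big(f_k'(\phi_\kappa),d_j\big)_{L^2},
\end{equation*}
and diffusion matrix $\kappa\,\tilde a(B_\kappa)$ with the $\kappa$-independent $\tilde a_{kl}(B)=\sum_j b_j^2\big(f_k'(\textrm{curl}^{-1}B),d_j\big)_{L^2}\big(f_l'(\textrm{curl}^{-1}B),d_j\big)_{L^2}$. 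From $f'\in W^{1,\infty}(\mathbb{R})$, $\sum_j b_j^2\|d_j\|_{L^2}^2=\mathsf{C}_{-1}$, and the balance relations (5.1), (5.4), exactly as in Propositions 6.13 and 6.17, one gets $\mathbb{E}\big(|A^0(0)|+\textrm{tr}\,\tilde a(0)\big)\lesssim \mathsf{C}_{-1}(1+\mathsf{C}_{-1})$, so (B.29) applies; the powers of $\kappa$ cancel between drift and diffusion, yielding a $\kappa$-independent bound of the form $\mathbb{E}\big[1_\Gamma(\mathscr{C}(B_\kappa(0)))(\det\tilde a(B_\kappa(0)))^{1/n}\big]\le C(n,\mathsf{C}_{-1})\,l_n(\Gamma)^{1/n}$, $\Gamma\in\mathcal{B}(\mathbb{R}^n)$.

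Next I would produce a lower bound for $\det\tilde a$ on a good set. For a unit vector $\xi\in\mathbb{R}^n$ set $h_\xi=\sum_k\xi_k f_k'$ and let $g_\xi$ denote the mean-zero part of $h_\xi(\phi)$; then $\xi^{\mathrm t}\tilde a(B)\xi=\sum_j b_j^2|(g_\xi,d_j)_{L^2}|^2$. Expanding in $\{\sqrt{\lambda_j}d_j\}$, truncating at $N$, and using $\|(I-P_{N-1})g_\xi\|_{L^2}^2\le\lambda_N^{-1}\|\nabla g_\xi\|_{L^2}^2$ together with $|\nabla g_\xi|\le\|h_\xi'\|_\infty|\nabla\phi|\le\|f'\|_{W^{1,\infty}}|B|$, one gets, for $\|B\|_H^2<1/\delta$,
\begin{equation*}
\xi^{\mathrm t}\tilde a(B)\xi\ge\Big(\min_{1\le j\le N-1}\tfrac{b_j^2}{\lambda_j}\Big)\Big(\|g_\xi\|_{L^2}^2-\tfrac{\|f'\|_{W^{1,\infty}}^2}{\lambda_N\delta}\Big).
\end{equation*}
Hence on $G_{\delta,\eta}=\{B\in V:\ \|B\|_H^2<1/\delta,\ \inf_{|\xi|=1}\|g_\xi\|_{L^2}^2\ge 2\eta\}$, with $N=N_{\delta,\eta}$ chosen so that $\|f'\|_{W^{1,\infty}}^2/(\lambda_N\delta)\le\eta$, we get $\tilde a(B)\succeq c(\delta,\eta)I$ with $c(\delta,\eta)>0$, so $\det\tilde a\ge c(\delta,\eta)^n$ there. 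Combining with the Krylov bound,
\begin{equation*}
\mu_\kappa\big(\mathscr{C}^{-1}(\Gamma)\cap G_{\delta,\eta}\big)\le c(\delta,\eta)^{-1}\,C(n,\mathsf{C}_{-1})\,l_n(\Gamma)^{1/n},\qquad 0<\kappa\le1.
\end{equation*}
Chebyshev and (5.4) give $\mu_\kappa(\|B\|_H^2\ge1/\delta)\le\tfrac{\delta}{2}\mathsf{C}_{-1}$, uniformly in $\kappa$. Since $\mathscr{C}$ and $B\mapsto\inf_{|\xi|=1}\|g_\xi\|_{L^2}^2$ are continuous and $\|\cdot\|_H$ is lower semicontinuous on $H^{1-\varepsilon}$, $\mathscr{C}^{-1}(\Gamma)\cap G_{\delta,\eta}$ is open (for $\Gamma$ open) and Portmanteau (Proposition 4.4) transfers the bound to $\mu_0$; extending to Borel $\Gamma$ by outer regularity of $l_n$ and then sending $l_n(\Gamma)\to0$ and $\delta,\eta\to0$ yields $\mu_0(\mathscr{C}^{-1}(\Gamma))=0$, i.e. absolute continuity of $\mathcal{D}_{\mu_0}(\mathscr{C})$, provided one controls the last leftover term $\sup_{0<\kappa\le1}\mu_\kappa\big(\inf_{|\xi|=1}\|g_\xi\|_{L^2}^2<2\eta\big)\to0$ as $\eta\to0$.

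This last point is where the hypothesis that $f_1',\dots,f_n'$ are independent modulo constant must be used, and it is the main obstacle. Qualitatively it is clear: since $B_\kappa\in V$ forces $\phi_\kappa\in H^2(\mathbb{T}^2)\subset C^1$, the range of $\phi_\kappa$ is a compact interval, on which $h_\xi$ is non-constant for every $\xi\ne0$ by the independence hypothesis, so $\|g_\xi\|_{L^2}^2=0$ forces $\phi_\kappa$ constant; thus $\{\det\tilde a=0\}\subset\{\phi_\kappa=0\}=\{\|B\|_{\dot H^{-1}}=0\}$, which is $\mu_\kappa$- and $\mu_0$-null by (6.13)–(6.20), and $\det\tilde a>0$ almost surely. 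The difficulty is the \emph{quantitative, $\kappa$-uniform} version: upgrading the scalar non-concentration bound $\mu_\kappa(\|B\|_{\dot H^{-1}}<\delta)\le\frac{2}{\varsigma}\sqrt{\mathsf{C}_0}\,\delta$ of Lemma 6.11 to a uniform bound on the measure of the degenerate layer $\{\inf_{|\xi|=1}\|g_\xi\|_{L^2}^2<2\eta\}$, i.e. ruling out (uniformly in $\kappa$ and in $\xi$) that the pushforward of Lebesgue measure on $\mathbb{T}^2$ under $\phi_\kappa$ concentrates where $h_\xi$ is nearly flat. I expect this to be handled by an $n$-mode version of the stationary-process argument in Lemma 6.11 — applying (B.28)/(B.29) to the $\mathbb{R}^n$-valued stationary process $((B_\kappa,e_1)_H,\dots,(B_\kappa,e_n)_H)$ to obtain $\kappa$-uniform non-degeneracy of finitely many Fourier modes of $\phi_\kappa$ — and then converting this into non-degeneracy of $\tilde a$ via the independence-modulo-constant hypothesis and the interpolation/truncation estimates above; carrying this out carefully is the substantive part of the proof.
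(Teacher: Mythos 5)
The first half of your proposal coincides with the paper's argument: applying the It\^o formula (B.5) to $\mathscr{C}(B_\kappa)$ along a statistically stationary solution (the paper's Proposition 6.18), identifying the drift of size $O(\kappa)$ and the diffusion matrix $\kappa\,\sigma(f'(\phi_\kappa))$ with $\sigma^{kl}(g)=\sum_j b_j^2(g_k,d_j)_{L^2}(g_l,d_j)_{L^2}$, and invoking Krylov's estimate (B.29) so that the factors of $\kappa$ cancel between drift and diffusion, giving the $\kappa$-uniform bound $\mathbb{E}\int_0^1(\det\sigma(f'(\phi_\kappa)))^{1/n}1_\Gamma(\mathscr{C}(B_\kappa))\,ds\lesssim \mathsf{C}_{-1}\,l_n(\Gamma)^{1/n}$. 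Up to this point your computation is correct and matches Proposition 6.18 and the first display in the paper's proof of Theorem 6.16.

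The genuine gap is exactly the point you flag at the end: you never establish the lower bound on $\det\sigma(f'(\phi_\kappa))$ outside a set of small $\mu_\kappa$-measure, and the route you suggest (a $\kappa$-uniform non-concentration estimate for the ``degenerate layer'' $\{\inf_{|\xi|=1}\|g_\xi\|_{L^2}^2<2\eta\}$, obtained from an $n$-mode analogue of Lemma 6.11) is left entirely unproved, and is in fact not needed. The paper closes this step deterministically, with no new probabilistic input. Lemma 6.17 shows that $\phi\mapsto\det\sigma(f'(\phi))$ is continuous on $L^2$ and strictly positive for every $\phi\in H^r\cap L^2_{\mathrm{av}}(\mathbb{T}^2)\setminus\{0\}$, $1/2<r\le 1$: if $\sigma(f'(\phi))$ were degenerate, then $a\cdot f'(\phi(x))=C$ a.e.\ for some $a\neq 0$ (completeness of $\{\sqrt{\lambda_j}d_j\}$ and $b_j\neq0$), the level set $h_a^{-1}(C)$ is discrete by the independence-modulo-constant hypothesis, so the essential range of $\phi$ is discrete, and the Sobolev property of $H^r$ functions ($r>1/2$, \cite[Lemma A.15.1]{Kuk12} in the paper's citation) forces $\phi$ constant, hence $\phi\equiv0$. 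Then on the \emph{norm-defined} set $I_\varepsilon=\{\|\phi\|_{L^2}\ge\varepsilon,\ \|\nabla\phi\|_{L^2}\le 1/\varepsilon\}$, which is compact in $H^r\setminus\{0\}$ for $r<1$, one gets $\inf_{I_\varepsilon}\det\sigma(f'(\phi))=c_\varepsilon>0$ automatically; no threshold parameter $\eta$ and no estimate of the measure of a degenerate layer is required. The complement $I_\varepsilon^c$ is handled by the two scalar bounds you already have, $\mu_\kappa(\|\nabla\phi\|_{L^2}>1/\varepsilon)\le\tfrac{\varepsilon^2}{2}\mathsf{C}_{-1}$ from (5.4) and $\mu_\kappa(\|\phi\|_{L^2}<\varepsilon)\le\tfrac{2}{\varsigma}\sqrt{\mathsf{C}_0}\,\varepsilon$ from Lemma 6.11/(6.19), both uniform in $\kappa$; Portmanteau then transfers everything to $\mu_0$ as you describe. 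So the fix is to replace your set $G_{\delta,\eta}$ and the unproved uniform estimate on its complement by the compactness argument of Lemma 6.17 together with the already-established bounds (5.4) and (6.19); note also that your qualitative reasoning via $\phi_\kappa\in H^2\subset C^1$ would not suffice for this compactness step, since elements of $I_\varepsilon$ are only bounded in $H^1$ and the positivity must hold on an $H^r$-compact set, which is why the paper works with the essential range of $H^r$ functions rather than with continuous representatives.
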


\begin{proof}[Proof of Theorem 1.3]
For a compact set $K\Subset V$ with a finite Hausdorff dimension $\textrm{dim}_H K<\infty$, we take an integer $n\geq 1$ such that $n>\textrm{dim}_H K$ and a function $f$ satisfying the assumptions in Theorem 6.16. The function $\mathscr{C}: H\to \mathbb{R}^{n}$ is Lipschitz continuous by the Lipschitz continuity of $f$ and 
\begin{align*}
\left|\mathscr{C}_k(B^{1})-\mathscr{C}_k(B^{2})\right|=\left|\int_{\mathbb{T}^{2}}\left(f_k\left(\textrm{curl}^{-1}B^{1}\right)-f_k\left(\textrm{curl}^{-1}B^{2}\right)\right)dx\right|\leq L\left\|\textrm{curl}^{-1}B^{1}-\textrm{curl}^{-1}B^{2}\right\|_{L^{1}}\lesssim \left\|B^{1}-B^{2}\right\|_{H}.
\end{align*}
Since the Hausdorff dimension is non-increasing by a Lipschitz map $\mathscr{C}: V\to \mathbb{R}^{n}$, e.g., \cite[Proposition 2.3]{Fal}, $l_n(\mathscr{C}(K))=0$. By the absolute continuity of the law $\mathcal{D}_{\mu_0}\mathscr{C}$ in Theorem 6.16, $\mu_0(K)=\mathcal{D}_{\mu_0}\mathscr{C}( \mathscr{C}(K) )=0$.
\end{proof}

In the rest of the paper, we show Theorem 6.16. We consider the essential range of a function $\phi: \mathbb{T}^{2}\to \mathbb{R}$,
\begin{align*}
\textrm{ess. im}\ \phi=\{y\in \mathbb{R}\ |\ l_2(\phi^{-1}(U) )>0,\ y\in U,\ U: \textrm{open}\ \}.
\end{align*}
For continuous functions $\phi\in C(\mathbb{T}^{2})$, the essential range agrees with the range of $\phi$, i.e., $\textrm{ess. im}\ \phi=\phi(\mathbb{T}^{2})$. In particular, $\phi\in C(\mathbb{T}^{2})$ must be a constant if $\phi(\mathbb{T}^{2})$ is a discrete set. We use a stronger property for Sobolev functions $\phi\in H^{r}(\mathbb{T}^{2})$ and $1/2<r\leq 1$ \cite[Lemma A.15.1]{Kuk12}: $\phi\in H^{r}(\mathbb{T}^{2})$ musy be a constant if $\textrm{ess. im}\ \phi$ is a discrete set. 

\begin{lem}
Set $\sigma=(\sigma^{kl})_{1\leq k,l\leq n}:L^{2}(\mathbb{T}^{2}) \to \mathbb{R}$ by 
\begin{align*}
\sigma^{kl}(g)=\sum_{j=1}^{\infty}b_j^{2}(g_k,d_j)_{L^{2}}(g_l,d_j)_{L^{2}},\quad g=(g_1,\cdots,g_n)\in L^{2}(\mathbb{T}^{2}).
\end{align*}
Then, $\textrm{det}\ \sigma(f'(\phi))$ is positive and continuous for $\phi\in H^{r}(\mathbb{T}^{2})\cap L^{2}_{\textrm{av}}(\mathbb{T}^{2})\backslash \{0\}$ and $1/2<r\leq 1$. 
\end{lem}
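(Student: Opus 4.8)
The plan is to recognize $\sigma(g)$ as a (weighted) Gram matrix, so that $\det\sigma(f'(\phi))>0$ becomes a linear‑independence statement for $f_1'(\phi),\dots,f_n'(\phi)$ modulo constants, and then to exclude the degenerate case using the discreteness of the level sets of the $h_a$ together with the Kuksin regularity lemma for essential ranges.

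First I would record the quadratic‑form identity
\[
a^{\mathsf{T}}\sigma(g)\,a
=\sum_{j=1}^{\infty}b_j^{2}\,\Big|\Big(\textstyle\sum_{k=1}^{n}a_k\,g_k,\ d_j\Big)_{L^{2}}\Big|^{2}\ \ge\ 0,
\qquad a=(a_1,\dots,a_n)\in\mathbb{R}^{n},
\]
so $\sigma(g)$ is positive semi‑definite; in particular $\det\sigma(g)\ge 0$ always, and $\det\sigma(g)>0$ holds exactly when $\sigma(g)$ is positive definite. Using $b_j\neq 0$ for all $j$ (the standing assumption of Theorem 6.16), the right‑hand side vanishes for some $a\neq 0$ iff $(a\cdot g,d_j)_{L^{2}}=0$ for every $j$; since $\{\sqrt{\lambda_j}\,d_j\}$ is a complete orthonormal basis of $L^{2}_{\mathrm{av}}(\mathbb{T}^{2})$ (Theorem A.2) and each $d_j$ has zero mean, this is equivalent to $a\cdot g$ being constant a.e.\ on $\mathbb{T}^{2}$. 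Hence $\det\sigma(g)>0$ iff no nontrivial linear combination of $g_1,\dots,g_n$ is constant.

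Next I would apply this with $g=f'(\phi)$, so that $a\cdot g=h_a(\phi)$ where $h_a(z)=a\cdot f'(z)$. Suppose for contradiction $\det\sigma(f'(\phi))=0$; then $h_a(\phi)\equiv C$ a.e.\ for some $a\neq 0$ and $C\in\mathbb{R}$, whence $\phi(x)\in h_a^{-1}(C)$ for a.e.\ $x\in\mathbb{T}^{2}$ and therefore $\mathrm{ess.\,im}\,\phi\subset \overline{h_a^{-1}(C)}=h_a^{-1}(C)$, the last equality because $h_a$ is continuous. Since $f_1',\dots,f_n'$ are independent modulo constants, the level set $h_a^{-1}(C)$ is discrete, so $\mathrm{ess.\,im}\,\phi$ is discrete too; by the Sobolev regularity lemma \cite[Lemma A.15.1]{Kuk12} (here the threshold $1/2<r\le 1$ is used), $\phi$ must then be constant, and $\phi\in L^{2}_{\mathrm{av}}(\mathbb{T}^{2})$ forces $\phi=0$, contradicting $\phi\neq 0$. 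This gives the positivity claim.

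For continuity I would argue by composition. The map $\phi\mapsto f_k'(\phi)$ is Lipschitz from $L^{2}(\mathbb{T}^{2})$ into itself with constant $\|f_k''\|_{L^{\infty}}$ (using $f'\in W^{1,\infty}$), hence continuous from $H^{r}(\mathbb{T}^{2})$ into $L^{2}(\mathbb{T}^{2})$; and each entry $\sigma^{kl}$ is locally Lipschitz on $L^{2}(\mathbb{T}^{2})$, since $\sum_{j}b_j^{2}|(g,d_j)_{L^{2}}|^{2}=\sum_{j}(b_j^{2}/\lambda_j)|(g,\sqrt{\lambda_j}\,d_j)_{L^{2}}|^{2}\le \mathsf{C}_{-1}\|g\|_{L^{2}}^{2}$, so Cauchy--Schwarz in $j$ yields $|\sigma^{kl}(g)-\sigma^{kl}(g')|\lesssim \mathsf{C}_{-1}\big(\|g\|_{L^{2}}+\|g'\|_{L^{2}}\big)\,\|g-g'\|_{L^{2}}$. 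Composing, $\phi\mapsto\det\sigma(f'(\phi))$ is continuous on $H^{r}(\mathbb{T}^{2})\cap L^{2}_{\mathrm{av}}(\mathbb{T}^{2})$. The main obstacle is the third paragraph: passing from the purely algebraic degeneracy of $\sigma$ to the geometric statement that $\phi$ essentially takes values in a discrete set, and then invoking \cite[Lemma A.15.1]{Kuk12} — this is exactly where the hypothesis that $f_1',\dots,f_n'$ are independent modulo constants (hence have discrete level sets) and the Sobolev exponent $r>1/2$ both enter.
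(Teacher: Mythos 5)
Your proof is correct and follows essentially the same route as the paper: positive semi-definiteness of $\sigma(g)$, reduction of degeneracy (using $b_j\neq 0$ and the completeness of $\{\sqrt{\lambda_j}d_j\}$ in $L^{2}_{\textrm{av}}$) to $a\cdot f'(\phi)$ being constant a.e., then discreteness of the level sets $h_a^{-1}(C)$ plus the essential-range lemma \cite[Lemma A.15.1]{Kuk12} to force $\phi\equiv 0$, and continuity by composition. The only cosmetic difference is that the paper encodes the quadratic form through the factorization $\sigma(g)=T(g)T(g)^{*}$, whereas you compute $a^{\mathsf{T}}\sigma(g)a$ directly and give slightly more explicit Lipschitz bounds for the continuity step.
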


\begin{proof}
For $g\in L^{2}(\mathbb{T}^{2})$, we set the operator $T(g): L^{2}(\mathbb{T}^{2})\to \mathbb{R}^{n}$ by 
\begin{align*}
T(g)\xi=\left(\left(\sum_{j=1}^{\infty}\sqrt{\lambda_j}b_j (g_k,d_j)_{L^{2}}d_j ,\xi\right)_{L^{2}}\right)_{1\leq k\leq n}
=\left( \sum_{j=1}^{\infty}\sqrt{\lambda_j}b_j (g_k,d_j)_{L^{2}}\left(d_j ,\xi\right)_{L^{2}}\right)_{1\leq k\leq n},\quad \xi\in L^{2}(\mathbb{T}^{2}).
\end{align*}\\
By Parseval's identity and $(d_i,d_j)_{L^{2}}=\lambda_j^{-1}\delta_{i,j}$,
\begin{align*}
\left\|\sum_{j=1}^{\infty}\sqrt{\lambda_j}b_j(g_k,d_j)_{L^{2}}d_j\right\|_{L^{2}}^{2}
=\sum_{j=1}^{\infty}b_j^{2}|(g_k,d_j)_{L^{2}}|^{2}\leq \mathsf{C}_{-1}||g_k||_{L^{2}}^{2}.
\end{align*}
Thus, $T(g)$ is a bounded operator. We set the adjoint operator $T(g)^{*}: \mathbb{R}^{n}\to L^{2}(\mathbb{T}^{2})$ by 
\begin{align*}
T(g)\xi\cdot a&=\sum_{k=1}^{n}\sum_{j=1}^{\infty}\sqrt{\lambda_j}b_j(g_k,d_j)_{L^{2}}(d_j,\xi)_{L^{2}}a_{k} \\
&=\left(\xi,\sum_{k=1}^{n}a_k\sum_{j=1}^{\infty}\sqrt{\lambda_j}b_j(g_k,d_j)_{L^{2}}d_j \right)_{L^{2}}
=(\xi, T(g)^{*}a)_{L^{2}},\quad \xi\in L^{2}(\mathbb{T}^{2}),\ a\in \mathbb{R}^{n}.
\end{align*}
By an elementary computation, $T(g)T(g)^{*}=\sigma(g)$. The matrix $\sigma(g)$ is symmetric, and its quadratic form is non-negative by 
\begin{align*}
a\cdot \sigma(g)a=a\cdot T(g)T(g)^{*}a=|T(g)^{*}a|^{2}\geq 0,\quad a\in \mathbb{R}^{n}.
\end{align*}
The functions $\textrm{det}\ \sigma(\cdot)$: $L^{2}(\mathbb{T}^{2})\to \mathbb{R}$ and $f'(\cdot):L^{2}(\mathbb{T}^{2})\to L^{2}(\mathbb{T}^{2})$ are continuous. Thus, the composition $\textrm{det}\ \sigma( f'(\cdot) ):L^{2}(\mathbb{T}^{2})\to L^{2}(\mathbb{T}^{2})$ is also continuous. For $\phi\in H^{r}(\mathbb{T}^{2})\cap L^{2}_{\textrm{av}}(\mathbb{T}^{2})$ such that $T(f'(\phi))^{*}a=0$ for some $a\neq 0$,  
\begin{align*}
\sum_{j=1}^{\infty}\sqrt{\lambda_j}b_j(a\cdot f'(\phi), d_j)_{L^{2}}d_j=0.
\end{align*}
Since $b_j\neq 0$ for all $j\in \mathbb{N}$ and $\{\sqrt{\lambda_j}d_j\}_{j=1}^{\infty}$ is an orthonormal basis on $L^{2}_{\textrm{av}}(\mathbb{T}^{2})$ (see Thereom A.2), there exists a constant $C$ such that 
\begin{align*}
a\cdot f'(\phi(x))=C,\quad \textrm{a.e.}\ x\in \mathbb{T}^{2}.
\end{align*}
Thus, $\phi(x)\in h^{-1}_{a}(C)$ for almost every $x\in \mathbb{T}^{2}$ for the function $h_a(z)=a\cdot f'(z)$. Since the level set $h^{-1}_{a}(C)$ is discrete by the assumption of $f_1',\cdots, f_n'$, the essential range of $\phi\in H^{r}(\mathbb{T}^{2})$ is discrete. Thus, $\phi$ must be a constant. By the mean zero condition $\phi\in L^{2}_{\textrm{av}}(\mathbb{T}^{2})$, $\phi\equiv 0$. We conclude that $\textrm{det}\ \sigma(f'(\phi))$ is positive for $\phi\in H^{r}(\mathbb{T}^{2})\cap L^{2}_{\textrm{av}}(\mathbb{T}^{2})\backslash \{0\}$.
\end{proof}

\begin{prop}
The stochastic process $B(t)$ of (1.3) for $\gamma>d/2$ satisfies 
\begin{align}
\mathscr{C}_k(B(t))=\mathscr{C}_k(B(0))+\int_{0}^{t}A_k(s)ds+\sum_{j=1}^{\infty}\int_{0}^{t}B_{k,j}(s)d\beta_j(s),\quad t\geq 0,\ \textrm{a.s.},
\end{align}
for the processes
\begin{align*}
\frac{1}{\kappa}A_k(t)&= -(f_k''(\phi)\nabla \phi, \nabla \phi)_H+\frac{1}{2}\sum_{j=1}^{\infty}b_j^{2}\left( (\nabla^{\perp}(-\Delta)^{-1}f_{k}''(\phi))d_j,e_j\right)_{H},\\
B_{k,j}(t)&=\sqrt{\kappa}b_j (f_k'(\phi),d_j )_{L^{2}},
\end{align*}
where $\phi=\textrm{curl}^{-1}B$. Moreover, 
\begin{align}
\mathbb{E}\int_0^{t}\left(|A_k(s)|+\sum_{j=1}^{\infty}|B_{k,j}(s)|^{2} \right)ds\lesssim \mathbb{E}||\phi_0||_{L^{2}}^{2}+\kappa \mathsf{C}_{-1}t,\quad \textrm{for any}\ t\geq 0.
\end{align}
\end{prop}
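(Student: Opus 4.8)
The plan is to derive both (6.21) and (6.22) from the functional It\^o formula (B.5) (see Remark B.5) applied to $F[B]=\mathscr C_k(B)=\int_{\mathbb T^2}f_k(\textrm{curl}^{-1}B)\,dx$, after reducing to the scalar flux function $\phi=\textrm{curl}^{-1}B$ (so that $B=\nabla^\perp\phi$ for $d=2$). By Theorem 3.8 the process $B(t)$ solving (1.3) is an It\^o process in $V^*$ with constant diffusion, with drift $\Psi=\kappa\Delta B+\nabla\cdot(B\otimes u-u\otimes B)$ and diffusion coefficients $g_j=\sqrt\kappa\,b_je_j$. Since $f'\in W^{1,\infty}(\mathbb R)$ (so $f_k',f_k''$ are bounded and $f_k$ has at most linear growth) the functional $F$ is of class $C^2$ on $V^*$ through the bounded map $\textrm{curl}^{-1}\colon H^{-1}\to L^2$, with
\[
DF[B]\cdot h=\bigl(f_k'(\phi),\textrm{curl}^{-1}h\bigr)_{L^2},\qquad D^2F[B](h,h)=\bigl(f_k''(\phi)\,\textrm{curl}^{-1}h,\,\textrm{curl}^{-1}h\bigr)_{L^2}.
\]

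I would then read off the three terms produced by (B.5). For the drift $\langle DF[B],\Psi\rangle$, one uses that modulo additive constants $\textrm{curl}^{-1}\Psi=\kappa\Delta\phi-u\cdot\nabla\phi$ — immediate from $\kappa\Delta B=\nabla^\perp(\kappa\Delta\phi)$ and from the identity $u\cdot\nabla B-B\cdot\nabla u=-\nabla^\perp(u\cdot\nabla\phi)$ already used in Lemma 4.2. The advective part then contributes $-(f_k'(\phi),u\cdot\nabla\phi)_{L^2}=-\int_{\mathbb T^2}u\cdot\nabla\bigl(f_k(\phi)\bigr)\,dx=0$ since $\nabla\cdot u=0$ (this transport/Casimir cancellation is precisely why $A_k$ carries no advective term), while integration by parts in the diffusive part gives $\kappa\bigl(f_k'(\phi),\Delta\phi\bigr)_{L^2}=-\kappa\bigl(f_k''(\phi)\nabla\phi,\nabla\phi\bigr)_{L^2}$. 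The It\^o correction is $\tfrac12\sum_j D^2F[B](g_j,g_j)=\tfrac\kappa2\sum_j b_j^2\bigl(f_k''(\phi)d_j,d_j\bigr)_{L^2}$ with $d_j=\textrm{curl}^{-1}e_j$; using $e_j=\nabla^\perp d_j$ and the eigenrelation $(-\Delta)^{-1}d_j=\lambda_j^{-1}d_j$, an integration by parts recasts this as the form $\tfrac12\sum_j b_j^2\bigl(\nabla^\perp(-\Delta)^{-1}(f_k''(\phi)d_j),e_j\bigr)_H$ appearing in $A_k$. Finally $\langle DF[B],g_j\rangle=\sqrt\kappa\,b_j(f_k'(\phi),d_j)_{L^2}=B_{k,j}$. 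This gives (6.21), first for the stopped process $B(\cdot\wedge\tau_n)$ with $\tau_n=\inf\{t\ge0:\|B(t)\|_H>n\}$ and then, once (6.22) is known, in the limit $n\to\infty$.

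For the moment bound (6.22) I estimate the integrands pointwise. The bound $\|f_k''\|_{L^\infty}\le\|f'\|_{W^{1,\infty}}$ yields $|A_k|\lesssim\kappa\bigl(\|\nabla\phi\|_{L^2}^2+\mathsf C_{-1}\bigr)$, using $\sum_j b_j^2\|d_j\|_{L^2}^2=\sum_j b_j^2\lambda_j^{-1}=\mathsf C_{-1}$ (Theorem A.2); and since $f_k'$ is Lipschitz and each $d_j$ has mean zero, $(f_k'(\phi),d_j)_{L^2}=\bigl(f_k'(\phi)-\overline{f_k'(\phi)},d_j\bigr)_{L^2}$, so Parseval for the orthonormal system $\{\sqrt{\lambda_j}d_j\}$ of $L^2_{\textrm{av}}(\mathbb T^2)$ gives $\sum_j|B_{k,j}|^2=\kappa\sum_j b_j^2|(f_k'(\phi),d_j)_{L^2}|^2\lesssim\kappa\,\mathsf C_{-1}\|\phi\|_{L^2}^2$. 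Taking expectations, integrating in time, and invoking the mean-square-potential balance for (1.3) (the analogue of (4.4): $\mathbb E\|\phi\|_{L^2}^2(t)+2\kappa\,\mathbb E\int_0^t\|\nabla\phi\|_{L^2}^2\,ds=\mathbb E\|\phi_0\|_{L^2}^2+\kappa\mathsf C_{-1}t$, with $\|\nabla\phi\|_{L^2}=\|B\|_H$) then bounds $\kappa\,\mathbb E\int_0^t\|\nabla\phi\|_{L^2}^2\,ds$ and $\mathbb E\|\phi\|_{L^2}^2(s)$ by $\mathbb E\|\phi_0\|_{L^2}^2+\kappa\mathsf C_{-1}t$, which gives (6.22). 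The step I expect to be most delicate is the drift bookkeeping — taking $\textrm{curl}^{-1}$ of the nonlinearity in the correct mean-zero sense, confirming the advective cancellation, and matching the It\^o correction with the exact $(-\Delta)^{-1}$-form in the statement after the integrations by parts; the truncation and passage to the limit ($\tau_n\uparrow\infty$, uniform integrability from (6.22), Doob's optional sampling for the martingale part) are routine and parallel Lemmas 4.1 and 4.2.
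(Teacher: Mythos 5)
Your route is the same as the paper's: apply the It\^o formula (B.5) to the Casimir functional of Remark B.6 (equivalently, compute the drift through $\phi=\textrm{curl}^{-1}B$, where the advective part cancels by $\nabla\cdot u=0$, the resistive part gives $-\kappa(f_k''(\phi)\nabla\phi,\nabla\phi)$ after integration by parts, and the It\^o correction is rewritten via $e_j=\nabla^{\perp}d_j$), then verify the integrability condition so that (B.7) upgrades the stopped identity to (6.21); your drift bookkeeping, the identity $2\left((\nabla^{\perp}(-\Delta)^{-1}f_k''(\phi))d_j,e_j\right)_{H}=(f_k''(\phi)d_j,d_j)_{L^{2}}$, and the use of the mean-square-potential balance with the $\sqrt{\kappa}$-scaled force (so the source term is $\kappa\mathsf{C}_{-1}t$) all match the paper.

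The one place your argument does not deliver the statement as written is the estimate (6.22). Your bound on the martingale coefficients, obtained from the Lipschitz/mean-zero trick and Parseval, reads $\sum_{j}|B_{k,j}|^{2}\lesssim \kappa\,\mathsf{C}_{-1}\|\phi\|_{L^{2}}^{2}$; after taking expectations and integrating in time, the balance only gives $\mathbb{E}\|\phi(s)\|_{L^{2}}^{2}\leq \mathbb{E}\|\phi_0\|_{L^{2}}^{2}+\kappa\mathsf{C}_{-1}s$, so you end up with a bound of the form $\kappa\mathsf{C}_{-1}t\,\mathbb{E}\|\phi_0\|_{L^{2}}^{2}+(\kappa\mathsf{C}_{-1}t)^{2}$, which is not $\lesssim \mathbb{E}\|\phi_0\|_{L^{2}}^{2}+\kappa\mathsf{C}_{-1}t$ with a constant independent of $t$, $\kappa$, $\mathsf{C}_{-1}$. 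The paper avoids this by using only the boundedness of $f_k'$ (from $f'\in W^{1,\infty}$) together with $\|d_j\|_{L^{2}}^{2}=\lambda_j^{-1}$, giving the pointwise bound $|A_k(s)|+\sum_{j}|B_{k,j}(s)|^{2}\lesssim \kappa\left(\|\nabla\phi\|_{L^{2}}^{2}+\mathsf{C}_{-1}\right)$, so that only the $\|\nabla\phi\|_{L^{2}}^{2}$ term requires the balance (4.4) and (6.22) follows exactly as stated. Your weaker bound still yields the finiteness needed for (B.6)/(B.25) and hence for (6.21) and the later Krylov argument, so the fix is a one-line replacement, but as written (6.22) is not established.
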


\begin{proof}
By $e_{j}=\nabla^{\perp}d_j$ and integration by parts, $2\left( (\nabla^{\perp}(-\Delta)^{-1}f_{k}''(\phi))d_j,e_j\right)_{H}=(f_{k}''(\phi)d_j,d_j)_{L^{2}}$. By $f_k'\in W^{1,\infty}(\mathbb{R})$,
\begin{align}
|A_k(s)|+\sum_{j=1}^{\infty}|B_{k,j}(s)|^{2}\lesssim \kappa \left(||\nabla \phi||_{L^{2}}^{2}+\mathsf{C}_{-1} \right).
\end{align}
By the mean-square potential balance (4.4), (6.22) follows. We apply the It\^o formula (B.5) for the functional $F[B]=||f(\textrm{curl}^{-1}B)||_{L^{2}}^{2}$ in Remark B.6. By the condition (6.22), we apply the It\^o formula (B.7) and obtain (6.21). 
\end{proof}

\begin{proof}[Proof of Theorem 6.16]
By (6.21), the $n$-dimensional It\^o process $\mathscr{C}(B)=(\mathscr{C}_k(B))_{1\leq k\leq n}$ satisfies  
\begin{align*}
\mathscr{C}(B(t))=\mathscr{C}(B(0))+\int_{0}^{t}A(s)ds+\sum_{j=1}^{\infty}\int_{0}^{t}B_{j}(s)d\beta_j(s),\quad t\geq 0,\ \textrm{a.s.}
\end{align*}
The processes $A=(A_{k})_{1\leq k\leq n}$ and $B_j=(B_{k,j})_{1\leq k\leq n}$ satisfy the condition (B.25) by (6.22). For a statistically stationary solution $B_{\kappa}(t)$, $\mathscr{C}(B_{\kappa}(t))$ and  $A(t)$ are stationary processes. We set the matrix 
\begin{align*}
\sum_{j=1}^{\infty}B_{k,j}B_{l,j}
=\kappa \sum_{j=1}^{\infty}b_j^{2}(f_k'(\phi),d_j)_{L^{2}}(f_l'(\phi),d_j)_{L^{2}}
=\kappa \sigma^{kl}(f'(\phi)),
\end{align*}
and apply Krylov's estimate (B.29) to obtain 
\begin{align*}
\kappa \mathbb{E}\int_{0}^{1}\left(\textrm{det}\ \sigma (f'(\phi_{\kappa}(s) ))\right)^{\frac{1}{n}}1_{\Gamma}(\mathscr{C}(B_{\kappa}(s)))ds \leq C_n l_n(\Gamma)^{\frac{1}{n}}\mathbb{E}|A(0)|,\quad \Gamma\in \mathcal{B}(\mathbb{R}^{n}).
\end{align*}
By (6.23) and (5.4), $\mathbb{E}|A(0)|\lesssim \kappa \mathsf{C}_{-1}$ and 
\begin{align}
\mathbb{E}\int_{0}^{1}\left(\textrm{det}\ \sigma\left(f'(\phi_{\kappa}(s) )\right)^{\frac{1}{n}}1_{\Gamma}\left(\mathscr{C}\left(B_{\kappa}(s) \right) \right) \right)ds
\lesssim  \mathsf{C}_{-1} l_n(\Gamma)^{\frac{1}{n}}.
\end{align}
The function $\textrm{det}\ \sigma(f'(\phi))$ is positive and continuous for $\phi \in H^{r}\cap L^{2}_{\textrm{av}}(\mathbb{T}^{2})\backslash \{0\}$ and $1/2<r<1$ by Lemma 6.17. Thus, for $\varepsilon>0$ and a compact set $I_{\varepsilon}=\{\phi\in H^{1}\cap L^{2}_{\textrm{av}}(\mathbb{T}^{2})\ |\ ||\phi||_{L^{2}}\geq \varepsilon,\ ||\nabla \phi||_{L^{2}}\leq 1/\varepsilon\  \}\Subset H^{r}\cap L^{2}_{\textrm{av}}(\mathbb{T}^{2})\backslash \{0\}$,  
\begin{align*}
\inf_{\phi\in I_{\varepsilon}}\textrm{det}\ \sigma(f'(\phi))=:c_{\varepsilon}>0.
\end{align*}
It follows from (6.24) that 
\begin{align*}
\mu_{\kappa}(\mathscr{C}(B)\in \Gamma,\ \phi\in I_{\varepsilon})
&=\int_{\{\mathscr{C}(B)\in \Gamma,\ \phi\in I_{\varepsilon}\}}\mu_{\kappa}(dB) \\
&=\int_{0}^{1}\int_{\{\mathscr{C}(B_{\kappa}(s) )\in \Gamma,\ \phi_{\kappa}(s)\in I_{\varepsilon}\}}\mathbb{P}(d\omega)ds \\
&\leq c_{\varepsilon}^{-\frac{1}{n}}\int_{0}^{1}\int_{\{\mathscr{C}(B_{\kappa}(s) )\in \Gamma,\ \phi_{\kappa}(s)\in I_{\varepsilon}\}} \left(\textrm{det}\ \sigma\left(f'(\phi^{\omega}_{\kappa}(s)) \right) \right)^{\frac{1}{n}}  \mathbb{P}(d\omega)ds\\
& \leq c_{\varepsilon}^{-\frac{1}{n}}\int_{0}^{1}\int_{\Omega} (\textrm{det}\ \sigma(f'(\phi_{\kappa}(s))))^{\frac{1}{n}} 1_{\Gamma}\left(\mathscr{C}\left(B_{\kappa}(s) \right) \right) \mathbb{P}(d\omega)ds  \\
&=c_{\varepsilon}^{-\frac{1}{n}}\mathbb{E}\int_{0}^{1}\left(\textrm{det}\ \sigma\left(f'(\phi_{\kappa}(s) )\right)^{\frac{1}{n}}1_{\Gamma}\left(\mathscr{C}\left(B_{\kappa}(s) \right) \right) \right)ds
\lesssim c_{\varepsilon}^{-\frac{1}{n}}\mathsf{C}_{-1}l_n(\Gamma)^{\frac{1}{n}}.
\end{align*}
By (5.4) and (6.19),
\begin{align*}
\mu_{\kappa}(\mathscr{C}(B)\in \Gamma,\ \phi\in I_{\varepsilon}^{c})
\leq \mu_{\kappa}(||\nabla \phi||_{L^{2}}>1/\varepsilon )
+\mu_{\kappa}( ||\phi||_{L^{2}}< \varepsilon) 
\leq \frac{\varepsilon^{2}}{2}\mathsf{C}_{-1}+\frac{2}{\varsigma}\sqrt{\mathsf{C}_{0}}\varepsilon.
\end{align*}
We obtain
\begin{align*}
\mu_{\kappa}(\mathscr{C}(B)\in \Gamma)
=\mu_{\kappa}(\mathscr{C}(B)\in \Gamma,\ \phi \in I_{\varepsilon})
+\mu_{\kappa}(\mathscr{C}(B)\in \Gamma,\ \phi \in I_{\varepsilon}^{c}) 
\lesssim c_{\varepsilon}^{-\frac{1}{n}}\mathsf{C}_{-1}l_n(\Gamma)^{\frac{1}{n}}
+\frac{\varepsilon^{2}}{2}\mathsf{C}_{-1}+\frac{2}{\varsigma}\sqrt{\mathsf{C}_{0}}\varepsilon.
\end{align*}\\
For an open set $G\subset \mathbb{R}^{n}$, $\mathscr{C}^{-1}(G)\subset H$ is open by Lipschitz continuity of $\mathscr{C}: H\to \mathbb{R}^{n}$. By $\mu_{\kappa}\to \mu_0$ in $\mathcal{P}(H)$ and Portmanteau theorem (Proposition 4.4), the above inequality also holds for the measure $\mu_0$ and open sets $G\subset \mathbb{R}^{n}$. Since $l_n(\Gamma)=\inf\{\ l_n(G)\ |\ \Gamma\subset G,\ G: \textrm{open}\ \}$ for $\Gamma\in \mathcal{B}(\mathbb{R}^{n})$, 
\begin{align*}
\mu_{0}(\mathscr{C}(B)\in \Gamma)
\lesssim c_{\varepsilon}^{-\frac{1}{n}}\mathsf{C}_{-1}l_n(\Gamma)^{\frac{1}{n}}
+\frac{\varepsilon^{2}}{2}\mathsf{C}_{-1}+\frac{2}{\varsigma}\sqrt{\mathsf{C}_{0}}\varepsilon.
\end{align*}
Thus, the law $\mathcal{D}_{\mu_0}(\mathscr{C})$ is absolutely continuous for the $n$-dimensional Lebesgue measure in $\mathbb{R}^{n}$. 
\end{proof}

\begin{rems}
(i) For $d\geq 4$, $\textrm{spt}\ \mu_0$ is an uncountable set unless $\mu_0$ is a Dirac mass $\delta_0$. In fact, 
\begin{align*}
\mathcal{D}_{\mu_0}(\mathscr{E})(\Gamma)=\mu_0(\mathscr{E}^{-1}(\Gamma))=\int_{H}1_{\Gamma}(\mathscr{E}(B))\mu_0(d B),\quad \Gamma\in \mathcal{B}(0,\infty).
\end{align*}
If $\textrm{spt}\ \mu_0=\{B_j\}_{j=1}^{\infty}\subset  V$, $\mu_0=\sum_{j=1}^{\infty}c_j\delta_{B_j}$ for some $\{c_j\}_{j=1}^{\infty}\subset (0,\infty)$ satisfying $\sum_{j=1}^{\infty}c_j=1$. For $\Gamma=\{\mathscr{E}(B_j)\}_{j=1}^{\infty}\subset (0,\infty)$, the left-hand side is zero by the absolute continuity of the law $\mathcal{D}_{\mu_0} (\mathscr{E})$ and $l_1(\Gamma)=0$. On the other hand, the right-hand side is one. 

\noindent
(ii) For $d=3$ and $\mathcal{C}_{-1/2}\neq 0$, $\textrm{spt}\ \mu_0$ is an uncountable set. Moreover, the set $\mathscr{H}(\textrm{spt}\ \mu_0)$ is uncountable. In fact, 
\begin{align*}
\mathcal{D}_{\mu_0}(\mathscr{H})(\Gamma)=
\mu_0(\mathscr{H}^{-1}(\Gamma) )
=\int_{H}1_{\Gamma}(\mathscr{H}(B))\mu_0(d B),\quad \Gamma\in \mathcal{B}(\mathbb{R}).
\end{align*}
If $\Gamma=\mathscr{H}(\textrm{spt}\ \mu_0)\backslash \{0\}$ is countable, $l_1(\Gamma)=0$ and the left-hand side is zero by the absolute continuity of the law $\mathcal{D}_{\mu_0} (\mathscr{H})$. On the other hand, the right-hand side is one. 
\end{rems}

%


\appendix

\section{Orthonormal systems}

We construct a complete orthonormal system on real-valued $H$ by eigenfunctions of the Stokes operator for $d\geq 2$, cf. \cite[2.1.5]{Kuk12}, \cite[4.6]{BV22}. For $d=3$, we build such eigenfunctions from eigenfunctions of the rotation operator.

\subsection{Eigenfunctions of the Stokes operator}

\begin{thm}
Let $d\geq 2$. There exists a complete orthonormal system $\{e_j\}_{j=1}^{\infty}$ on $H$ consisting of the eigenfunctions of the Stokes operator with eigenvalues $0<\lambda_1\leq \lambda_2\leq \cdots\leq \lambda_j\to \infty$.
\end{thm}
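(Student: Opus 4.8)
The plan is to diagonalize the Stokes operator explicitly by Fourier series and then read off the basis, with an abstract spectral-theoretic argument as a fallback. First I would recall that on $\mathbb{T}^{d}$ the Leray projection $\Pi$ and the Laplacian are both Fourier multipliers, and that on $H$ one has $\Pi \Delta = \Delta$ (since $\nabla \cdot \Delta B = \Delta(\nabla\cdot B)=0$), so the Stokes operator $A = -\Pi\Delta = -\Delta$ acts mode by mode: $A\bigl(v\, e^{ik\cdot x}\bigr) = |k|^{2}\, v\, e^{ik\cdot x}$ for $k \in \mathbb{Z}^{d}_{0}$ and $v$ in the $(d-1)$-dimensional subspace $\{v : v\cdot k = 0\}$. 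Hence the candidate eigenvalues are the numbers $|k|^{2}$, $k \in \mathbb{Z}^{d}_{0}$, which form a discrete subset of $(0,\infty)$ with no finite accumulation point, and for each such value $\mu$ the eigenspace $E_{\mu} = \textrm{span}\{ v\, e^{ik\cdot x} : |k|^{2}=\mu,\ v\cdot k = 0\}$ is finite-dimensional.

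Next I would construct a real-valued orthonormal basis of each $E_{\mu}$. I would partition $\mathbb{Z}^{d}_{0}$ into pairs $\{k,-k\}$, and for each pair pick an orthonormal basis $v^{(1)}_{k},\dots,v^{(d-1)}_{k}$ of the real subspace $\{w\in\mathbb{R}^{d} : w\cdot k = 0\}$; then the real vector fields $c\, v^{(\ell)}_{k}\cos(k\cdot x)$ and $c\, v^{(\ell)}_{k}\sin(k\cdot x)$, with a suitable normalizing constant $c$, are divergence-free, mean-zero, mutually orthogonal in $H$, and each is an eigenfunction of $A$ with eigenvalue $|k|^{2}$ (here one uses that $A$ is real, so it preserves real and imaginary parts of complex eigenfunctions). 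Collecting these over all pairs with $|k|^{2}=\mu$ gives an orthonormal basis of $E_{\mu}$. Enumerating the distinct eigenvalues as $\mu_{1}<\mu_{2}<\cdots \to \infty$ and listing a basis of $E_{\mu_{1}}$, then of $E_{\mu_{2}}$, and so on, yields $\{e_{j}\}_{j=1}^{\infty}$ with $0<\lambda_{1}\le \lambda_{2}\le\cdots$ and $\lambda_{j}\to\infty$.

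Finally I would verify completeness in $H$: the real trigonometric system is complete in $L^{2}(\mathbb{T}^{d};\mathbb{R}^{d})$, and combining the decomposition $L^{2}_{\textrm{av}}(\mathbb{T}^{d}) = H \oplus \nabla H^{1}(\mathbb{T}^{d})$ from (2.8) with the mode-by-mode splitting $\mathbb{R}^{d} = k^{\perp}\oplus \mathbb{R}k$ shows that the closed span of the $e_{j}$'s equals $H$. Alternatively, and more in the spirit of the bounded-domain discussion in \S5.7.2, one notes that $A$ is densely defined, symmetric and positive on $H$ with domain $H^{2}\cap H$, and that $A^{-1}$ is compact by the compact embedding $V = H^{1}\cap H \Subset H$ from (2.5); the spectral theorem for self-adjoint operators with compact resolvent then produces the desired orthonormal eigenbasis with eigenvalues tending to infinity. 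There is no real obstacle in this proof; the only point needing a little attention is keeping the basis genuinely real-valued, which the $\cos/\sin$ pairing handles, and I would present the explicit Fourier construction as the main argument with the abstract route noted as a remark.
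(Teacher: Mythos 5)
Your proposal is correct and is essentially the paper's own argument in Appendix A.1: diagonalize the Stokes operator mode by mode on the divergence-free Fourier modes $a_k^{l}e^{ik\cdot x}$ with $a_k^{l}\perp k$, then pass to a real basis by splitting $\mathbb{Z}^{d}_{0}$ into $\pm k$ pairs and taking normalized $\cos(k\cdot x)$ and $\sin(k\cdot x)$ modes, with completeness read off from the Fourier expansion of $B\in H$. Your phrasing of the completeness step via trigonometric completeness plus the decomposition (2.8), and the spectral-theorem fallback, are harmless variants of the same computation.
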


\begin{proof}
We first consider a complex-valued $H$. Let $a_k^{1},\cdots, a_{k}^{d-1}\in \mathbb{R}^{d}$ 
be an orthonormal basis on the orthogonal complement of the one-dimensional subspace spanned by $k\in \mathbb{Z}^{d}_{0}$ in $\mathbb{R}^{d}$. By the Fourier expansion of $B\in H$,
\begin{align*}
B=\sum_{k\in \mathbb{Z}^{d}_0}\hat{B}_ke^{ik\cdot x},\quad \hat{B}_k\cdot k=0.
\end{align*}
By $\hat{B}_k=\sum_{l=1}^{d-1}(\hat{B}_k\cdot a_k^{l})a_k^{l}$,
\begin{align*}
B=\sum_{k\in \mathbb{Z}^{d}_0}\sum_{l=1}^{d-1}(\hat{B}_k\cdot a_k^{l})a_k^{l} e^{ik\cdot x}=\sum_{k\in \mathbb{Z}^{d}_0}\sum_{l=1}^{d-1} \left(B, \frac{1}{(2\pi)^{\frac{d}{2}}}a^{l}_{k}e^{ik\cdot x} \right)_{H}\frac{1}{(2\pi)^{\frac{d}{2}}} a^{l}_{k}e^{ik\cdot x}.
\end{align*}
Thus, the vector fields 
\begin{align*}
\left\{\frac{1}{(2\pi)^{\frac{d}{2}}} a^{l}_{k}e^{ik\cdot x}\ \middle|\ k\in \mathbb{Z}_0^{d},\ 1\leq l\leq d-1\  \right\}
\end{align*}
are eigenfunctions of the Stokes operator with the eigenvalue $|k|^{2}$ and make a complete orthonormal system on the complex-valued $H$.

For a real-valued $H$, we take a symmetric subset $K\subset \mathbb{Z}^{d}_{0}$ such that $K\cup -K=\mathbb{Z}^{d}_{0}$. By an elemenrary caluculation, $\overline{\hat{B}_k}=\hat{B}_{-k}$ for $B\in H$ and 
\begin{align*}
B
&=2\sum_{k\in K}\textrm{Re} \left(\hat{B}_ke^{ik\cdot x}\right) \\
&=\sum_{k\in K}\sum_{l=1}^{d-1}\left(B, \frac{1}{2^{\frac{d-1}{2}}\pi^{\frac{d}{2}}}a^{l}_{k}\cos(k\cdot x) \right)_{H}\frac{1}{2^{\frac{d-1}{2}}\pi^{\frac{d}{2}}}a^{l}_{k}\cos(k\cdot x)
+\sum_{k\in -K}\sum_{l=1}^{d-1}\left(B, \frac{1}{2^{\frac{d-1}{2}}\pi^{\frac{d}{2}}}a^{l}_{k}\sin(k\cdot x) \right)_{H}\frac{1}{2^{\frac{d-1}{2}}\pi^{\frac{d}{2}}}a^{l}_{k}\sin(k\cdot x).
\end{align*}
Thus, the vector fields 
\begin{align*}
\left\{\frac{1}{2^{\frac{d-1}{2}}\pi^{\frac{d}{2}}}a^{l}_{k}\cos(k\cdot x)\ \middle|\ k\in K,\ 1\leq l\leq d-1\ \right\}\cup
\left\{\frac{1}{2^{\frac{d-1}{2}}\pi^{\frac{d}{2}}}a^{l}_{k}\sin(k\cdot x)\ \middle|\ k\in -K,\ 1\leq l\leq d-1\ \right\}
\end{align*}
make a complete orthonormal system on the real-valued $H$.
\end{proof}

\begin{thm}
For $d=2$, $\{\sqrt{\lambda_j} \textrm{curl}^{-1}e_j \}$ is a complete orthonormal system on $L^{2}_{\textrm{av}}(\mathbb{T}^{2})$.
\end{thm}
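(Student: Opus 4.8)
The plan is to produce a unitary operator $T\colon H\to L^{2}_{\textrm{av}}(\mathbb{T}^{2})$ with $Te_j=\sqrt{\lambda_j}\,\textrm{curl}^{-1}e_j$; since $\{e_j\}$ is a complete orthonormal system of $H$ by Theorem A.1, its image under $T$ is a complete orthonormal system of $L^{2}_{\textrm{av}}(\mathbb{T}^{2})$, which is exactly the claim. Write $d_j=\textrm{curl}^{-1}e_j\in L^{2}_{\textrm{av}}(\mathbb{T}^{2})$. Two elementary planar identities on average-zero functions will be used repeatedly: first, $\textrm{curl}^{-1}(\nabla^{\perp}\phi)=\phi$ for every $\phi\in L^{2}_{\textrm{av}}(\mathbb{T}^{2})$ (a one-line Fourier computation using $|k^{\perp}|=|k|$), so that $\nabla^{\perp}\colon L^{2}_{\textrm{av}}(\mathbb{T}^{2})\to H$ inverts $\textrm{curl}^{-1}$ on $H$; second, $\nabla^{\perp}f\cdot\nabla^{\perp}g=\nabla f\cdot\nabla g$ pointwise, hence $\|\nabla^{\perp}f\|_{L^{2}}=\|\nabla f\|_{L^{2}}$ for average-zero $f$.

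First I would record that each $d_j$ is a scalar eigenfunction of $-\Delta$ with eigenvalue $\lambda_j$. Indeed $e_j=\nabla^{\perp}d_j$, so applying $\nabla^{\perp}$ to $-\Delta e_j=\lambda_j e_j$ gives $\nabla^{\perp}(-\Delta d_j-\lambda_j d_j)=0$; since the kernel of $\nabla^{\perp}$ consists of constants and meets $L^{2}_{\textrm{av}}(\mathbb{T}^{2})$ only in $0$, we get $-\Delta d_j=\lambda_j d_j$, and therefore $(-\Delta)^{1/2}d_j=\sqrt{\lambda_j}\,d_j$. (It is here that the mean-zero constraint is essential.)

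Next I would set $T=(-\Delta)^{1/2}\,\textrm{curl}^{-1}\colon H\to L^{2}_{\textrm{av}}(\mathbb{T}^{2})$, noting that $\textrm{curl}^{-1}$ gains one derivative so $T$ is well-defined and bounded. For $B\in H$, put $\phi=\textrm{curl}^{-1}B$, so $B=\nabla^{\perp}\phi$; then $\|TB\|_{L^{2}}^{2}=\|(-\Delta)^{1/2}\phi\|_{L^{2}}^{2}=\|\nabla\phi\|_{L^{2}}^{2}=\|\nabla^{\perp}\phi\|_{L^{2}}^{2}=\|B\|_{H}^{2}$, so $T$ is an isometry. It is onto: given $\psi\in L^{2}_{\textrm{av}}(\mathbb{T}^{2})$, set $\phi=(-\Delta)^{-1/2}\psi\in H^{1}(\mathbb{T}^{2})$ and $B=\nabla^{\perp}\phi\in H$; then $TB=(-\Delta)^{1/2}\,\textrm{curl}^{-1}(\nabla^{\perp}\phi)=(-\Delta)^{1/2}\phi=\psi$. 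Hence $T$ is unitary, and $Te_j=(-\Delta)^{1/2}d_j=\sqrt{\lambda_j}\,d_j=\sqrt{\lambda_j}\,\textrm{curl}^{-1}e_j$; applying $T$ to the complete orthonormal system $\{e_j\}$ of $H$ finishes the proof.

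There is no real analytic obstacle here; the only points needing care are the two planar differential identities and the injectivity of $\nabla^{\perp}$ on average-zero scalars. If one prefers to avoid the operator $T$, the same two ingredients give the statement directly: orthonormality from $\delta_{ij}=(e_i,e_j)_{H}=(\nabla^{\perp}d_i,\nabla^{\perp}d_j)_{L^{2}}=(\nabla d_i,\nabla d_j)_{L^{2}}=\lambda_i(d_i,d_j)_{L^{2}}$, and completeness by testing against $\psi\in L^{2}_{\textrm{av}}(\mathbb{T}^{2})$ with $(\psi,d_j)_{L^{2}}=0$ for all $j$: integration by parts gives $\lambda_j^{-1}\langle\nabla^{\perp}\psi,e_j\rangle=0$, and completeness of $\{e_j\}$ in $H$ (equivalently of $\{\lambda_j e_j\}$ in $H^{-1}$) forces $\nabla^{\perp}\psi=0$, hence $\psi=0$.
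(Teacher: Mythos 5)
Your proof is correct, but it packages the argument differently from the paper. The paper argues bare-hands: orthonormality comes from the one-line computation $(d_i,d_j)_{L^{2}}=\lambda_j^{-1}(-\Delta d_i,d_j)_{L^{2}}=\lambda_j^{-1}(e_i,e_j)_{H}=\lambda_j^{-1}\delta_{ij}$, and completeness is obtained by expanding $\nabla^{\perp}\phi$ in the basis $\{e_j\}$ of $H$ and then stripping off $\nabla^{\perp}$ using the zero-mean conditions on $\phi$ and $d_j$. You instead exhibit the explicit intertwiner $T=(-\Delta)^{1/2}\,\textrm{curl}^{-1}\colon H\to L^{2}_{\textrm{av}}(\mathbb{T}^{2})$, check it is unitary, and transport the orthonormal basis, with the eigenfunction identity $-\Delta d_j=\lambda_j d_j$ playing the role of the paper's inner-product computation. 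The operator route buys a slightly cleaner completeness step: your surjectivity argument works entirely at the $L^{2}$ level via $\phi=(-\Delta)^{-1/2}\psi$, whereas the paper's expansion of $\nabla^{\perp}\phi$ for a general $\phi\in L^{2}_{\textrm{av}}$ concerns an object that a priori only lies in $H^{-1}$, so it tacitly needs a density or $H^{-1}$-interpretation; the price you pay is the (easy but necessary) verification that $d_j$ has zero mean and is an eigenfunction of $-\Delta$, and the planar identities $\textrm{curl}^{-1}\nabla^{\perp}\phi=\phi$, $\nabla^{\perp}\textrm{curl}^{-1}B=B$, $\|\nabla^{\perp}f\|_{L^{2}}=\|\nabla f\|_{L^{2}}$. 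Your closing alternative (testing a $\psi$ orthogonal to all $d_j$ against $e_j$ after integration by parts) is essentially a dual form of the paper's completeness argument; the only point there deserving a word is that "completeness of $\{e_j\}$ in $H$" must be upgraded to the statement that $\langle\nabla^{\perp}\psi,e_j\rangle=0$ for all $j$ kills the divergence-free distribution $\nabla^{\perp}\psi$, which follows since smooth mean-zero divergence-free fields are approximated by their eigenfunction expansions in $H^{1}$. Both routes ultimately rest on the same structural fact that $\nabla^{\perp}$ and $\textrm{curl}^{-1}$ are mutually inverse and shift one derivative between $H$ and $L^{2}_{\textrm{av}}(\mathbb{T}^{2})$.
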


\begin{proof}
By $-\Delta e_j=\lambda_j e_j$ and $e_j=\nabla^{\perp}d_j$ for $d_j=\textrm{curl}^{-1}e_j$,  
\begin{align*}
(d_i,d_j)_{L^{2}}=\frac{1}{\lambda_j}(-\Delta d_i,d_j)_{L^{2}}=\frac{1}{\lambda_j}(e_i,e_j)_{H}
=\frac{1}{\lambda_j} \delta_{i,j}.
\end{align*}
For an arbitrary $\phi\in L^{2}_{\textrm{av}}(\mathbb{T}^{2})$, 
\begin{align*}
\nabla^{\perp}\phi=\sum_{j=1}^{\infty}(\nabla^{\perp}\phi, e_j)_{H}e_j=\sum_{j=1}^{\infty}(\phi,-\nabla^{\perp}\cdot e_j)_{L^{2}}e_j
=\nabla^{\perp}\sum_{j=1}^{\infty}(\phi,\sqrt{\lambda_j}d_j)_{L^{2}}\sqrt{\lambda_j} d_j.
\end{align*}
By the average-zero conditions for $\phi$ and $d_j$,
\begin{align*}
\phi=\sum_{j=1}^{\infty}(\phi,\sqrt{\lambda_j}d_j)_{L^{2}}\sqrt{\lambda_j} d_j.
\end{align*}
Thus, $\{\sqrt{\lambda_j} d_j \}$ is a complete orthonormal system on $L^{2}_{\textrm{av}}(\mathbb{T}^{2})$.
\end{proof}

\subsection{Eigenfunctions of the rotation operator: Beltrami waves}

We construct a complete orthonormal system on real-valued $H$ consisting of eigenfunctions of the rotation operator by using complex plane Beltrami waves \cite[Proposition 3.1]{DeS13}, \cite[Proposition 5.5]{BV19b}.

\begin{lem}[Complex plane Beltrami waves]
Let $a_k^{1}$, $a_{k}^{2}\in \mathbb{R}^{3}$ be an orthonormal basis on the orthogonal complement of the one-dimensional subspace spanned by $k\in \mathbb{Z}^{3}_{0}$ in $\mathbb{R}^{3}$ such that $a_{k}^{l}=a_{-k}^{l}$ for $l=1,2$. For $\xi=k/|k|$, the vectors 
\begin{align}
b^{l}_{k}=\frac{1}{\sqrt{2}}\left( a^{l}_{k}+i\xi\times  a^{l}_k\right), \quad l=1,2,
\end{align}
satisfy 
\begin{align}
|b^{l}_{k}|=1,\quad b_k^{l}\cdot \xi=0, \quad i\xi \times b^{l}_{k}=b^{l}_{k},\quad \overline{b^{l}_{k}}=b^{l}_{-k}.
\end{align}
The vector fields $b^{l}_{k}e^{ik\cdot x}$ and $b^{l}_{k}e^{-ik\cdot x}$ are eigenfunctions of the rotation operator with the eigenvalues $|k|$ and $-|k|$, respectively.
\end{lem}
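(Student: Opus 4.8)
The plan is to verify the four vector identities in the statement by direct computation and then deduce the eigenfunction property from the elementary rule $\nabla\times(\mathbf{c}\,e^{i\ell\cdot x})=(i\ell)\times\mathbf{c}\,e^{i\ell\cdot x}$, valid for any constant vector $\mathbf{c}\in\mathbb{C}^3$ and $\ell\in\mathbb{R}^3$.

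First I would record the standard facts about the real frame: since $\{a^1_k,a^2_k,\xi\}$ is an orthonormal basis of $\mathbb{R}^3$ (up to orientation), we have $a^l_k\cdot\xi=0$, $|a^l_k|=1$, and $\xi\times a^l_k$ is a unit vector orthogonal to both $a^l_k$ and $\xi$. Writing $b^l_k=\tfrac{1}{\sqrt{2}}(u+iv)$ with $u=a^l_k$, $v=\xi\times a^l_k$ real and using $|u+iv|^2=|u|^2+|v|^2$ gives $|b^l_k|^2=\tfrac{1}{2}(1+1)=1$; the orthogonality relations give $b^l_k\cdot\xi=\tfrac{1}{\sqrt{2}}\big(a^l_k\cdot\xi+i(\xi\times a^l_k)\cdot\xi\big)=0$, the second term vanishing by the scalar triple product. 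For the conjugation identity I would combine the hypothesis $a^l_{-k}=a^l_k$ with $\xi_{-k}=-\xi$, so that $b^l_{-k}=\tfrac{1}{\sqrt{2}}(a^l_k-i\,\xi\times a^l_k)=\overline{b^l_k}$.

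The one identity carrying the real content is $i\xi\times b^l_k=b^l_k$. For this I would expand $\xi\times b^l_k=\tfrac{1}{\sqrt{2}}\big(\xi\times a^l_k+i\,\xi\times(\xi\times a^l_k)\big)$ and apply the vector triple product identity $\xi\times(\xi\times a^l_k)=\xi(\xi\cdot a^l_k)-a^l_k|\xi|^2=-a^l_k$, using $\xi\cdot a^l_k=0$ and $|\xi|=1$; this gives $\xi\times b^l_k=\tfrac{1}{\sqrt{2}}(\xi\times a^l_k-i\,a^l_k)=-i\,b^l_k$, hence $i\xi\times b^l_k=b^l_k$. Finally, since $\nabla\times(b^l_k e^{\pm ik\cdot x})=(\pm ik)\times b^l_k\,e^{\pm ik\cdot x}=\pm i|k|\,(\xi\times b^l_k)\,e^{\pm ik\cdot x}$, substituting the last identity yields $\nabla\times(b^l_k e^{ik\cdot x})=|k|\,b^l_k e^{ik\cdot x}$ and $\nabla\times(b^l_k e^{-ik\cdot x})=-|k|\,b^l_k e^{-ik\cdot x}$, as claimed. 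No step presents a genuine difficulty; the only point requiring care is arranging the frame so that $a^l_k=a^l_{-k}$, which one does by fixing $\{a^1_k,a^2_k\}$ on a symmetric half of $\mathbb{Z}^3_0$ and extending by this reflection rule — precisely the property used afterwards to ensure that real linear combinations of the Beltrami waves span the real-valued space $H$.
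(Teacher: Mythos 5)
Your computation is correct and complete: the four identities follow exactly as you verify them (the triple-product step giving $\xi\times b^l_k=-ib^l_k$ is the only substantive one), and the eigenfunction property then follows from $\nabla\times(\mathbf{c}\,e^{\pm ik\cdot x})=(\pm ik)\times\mathbf{c}\,e^{\pm ik\cdot x}$. The paper itself does not spell out a proof of this lemma (it cites Proposition 3.1 of \cite{DeS13} and Proposition 5.5 of \cite{BV19b}), and your direct verification is precisely the standard argument those references use, so there is nothing to add.
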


\begin{thm}
Let $d=3$. There exists a complete orthonormal system $\{e_j\}_{j=1}^{\infty}$ on $H$ consisting of the eigenfunctions of the rotation operator with eigenvalues $\tau_j\in \mathbb{R}$ such that $\tau_j^{2}=\lambda_j$.
\end{thm}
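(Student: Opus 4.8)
The plan is to mirror the proof of Theorem~A.1, using the complex plane Beltrami waves of Lemma~A.3 in place of the generic transverse vectors $a_k^l$. \textbf{Complex-valued case.} Fix $k\in\mathbb{Z}^3_0$, set $\xi=k/|k|$, and recall $b_k^1,b_k^2$ from (A.1). Since $\{a_k^1,a_k^2\}$ is an orthonormal basis of the real plane $\xi^\perp$, one has $\xi\times a_k^1=\pm a_k^2$, so $b_k^2$ is a scalar multiple of $b_k^1$; both $b_k^l e^{ik\cdot x}$ are curl-eigenfunctions with eigenvalue $|k|$ by $i\xi\times b_k^l=b_k^l$, and they span a single complex line in the Fourier slice $\{v\,e^{ik\cdot x}:v\cdot k=0\}$. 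The conjugate waves $\overline{b_k^l}\,e^{-ik\cdot x}=b_{-k}^l e^{-ik\cdot x}$ (by $\overline{b_k^l}=b_{-k}^l$ in (A.2)) are curl-eigenfunctions with eigenvalue $-|k|$; using $|b_k^1|=1$ and $b_k^1\cdot b_k^1=0$ (both consequences of (A.2)) one checks that the $+$ and $-$ lines are $L^2$-orthogonal and together exhaust the slice. Consequently
\[
\Big\{(2\pi)^{-3/2}b_k^1 e^{ik\cdot x}\ \Big|\ k\in\mathbb{Z}^3_0\Big\}\cup\Big\{(2\pi)^{-3/2}b_k^1 e^{-ik\cdot x}\ \Big|\ k\in\mathbb{Z}^3_0\Big\}
\]
is a complete orthonormal system of curl-eigenfunctions on the complex-valued $H$.

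\textbf{Real-valued case.} Take a symmetric set $K\subset\mathbb{Z}^3_0$ with $K\cup(-K)=\mathbb{Z}^3_0$. Writing $B\in H$ in the complex basis above and using $\overline{\hat B_k}=\hat B_{-k}$, I would pair the $k$- and $(-k)$-terms and rewrite $B$ as a real series in
\[
\sqrt{2}(2\pi)^{-3/2}\textrm{Re}\big(b_k^1 e^{ik\cdot x}\big),\quad \sqrt{2}(2\pi)^{-3/2}\textrm{Im}\big(b_k^1 e^{ik\cdot x}\big),\quad \sqrt{2}(2\pi)^{-3/2}\textrm{Re}\big(b_k^1 e^{-ik\cdot x}\big),\quad \sqrt{2}(2\pi)^{-3/2}\textrm{Im}\big(b_k^1 e^{-ik\cdot x}\big),
\]
indexed by $k\in K$. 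The one genuine difference from Theorem~A.1 is the spectral bookkeeping: since the curl is a real operator, $b_k^1 e^{ik\cdot x}$ and its conjugate $b_{-k}^1 e^{-ik\cdot x}$ share the eigenvalue $+|k|$, so $\textrm{Re}$ and $\textrm{Im}$ of the $+$-wave are genuine real eigenfunctions with eigenvalue $+|k|$, and $\textrm{Re},\textrm{Im}$ of the $-$-wave are real eigenfunctions with eigenvalue $-|k|$; one must keep both families for completeness, whereas in Theorem~A.1 a single family sufficed. Relabelling this countable set as $\{e_j\}_{j=1}^\infty$ with eigenvalues $\{\tau_j\}$, the identity $\tau_j^2=\lambda_j$ is automatic: on divergence-free fields $-\Delta=\nabla\times\nabla\times$, hence $-\Delta e_j=\nabla\times(\tau_j e_j)=\tau_j^2 e_j$, i.e.\ $e_j$ is simultaneously a Stokes eigenfunction with eigenvalue $\tau_j^2$.

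\textbf{Main obstacle.} The spectral content is entirely contained in Lemma~A.3, so the only work left is routine: verifying that the displayed family is orthonormal in $L^2(\mathbb{T}^3)$ — which reduces to $|b_k^1|=1$, $b_k^1\cdot b_k^1=0$, $\overline{b_k^1}=b_{-k}^1$ and the orthogonality of distinct Fourier modes, with a small case check at coincident wavenumbers for the $\textrm{Re}/\textrm{Im}$ and $\pm$ pairings — together with confirming that $K\cup(-K)=\mathbb{Z}^3_0$ leaves no eigenfunction out. There is no analytic difficulty here.
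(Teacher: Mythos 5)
Your proposal is correct and follows essentially the same route as the paper's proof: fix one Beltrami wave per wavenumber (the paper likewise fixes $l=1$ or $2$, consistent with your observation that $b_k^2=\pm i\,b_k^1$), obtain a complete orthonormal system of curl-eigenfunctions on the complex-valued $H$, and then pass to the real-valued $H$ over a symmetric half-lattice $K$ by taking real and imaginary parts — your four families are exactly the paper's $p_k,q_k,r_k,s_k$ up to normalization, with the same eigenvalue assignment $\pm|k|$ and the same remark that $\tau_j^2=\lambda_j$. The only difference is that the paper writes out the orthonormality and completeness computations you label as routine, and these indeed go through as you indicate.
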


\begin{proof}
We first consider a complex-valued $H$. We fix $l=1$ or $2$. By (A.2$)_4$ and (A.2$)_3$, $b_{k}^{l}\cdot b_{-k}^{l}=1$ and $b_{k}^{l}\cdot b_{k}^{l}=0$. Thus, 
\begin{align*}
(b^{l}_{k}e^{ik\cdot x}, b^{l}_{k'}e^{ik'\cdot x})_{H}
&=b^{l}_{k}\cdot \overline{b^{l}_{k'}} (2\pi)^{3}\delta_{k,k'}
=b^{l}_{k}\cdot b^{l}_{-k'}(2\pi)^{3}\delta_{k,k'}
=(2\pi)^{3}\delta_{k,k'},\\
(b^{l}_{k}e^{ik\cdot x}, b^{l}_{k'}e^{-ik'\cdot x})_{H} &=b^{l}_{k}\cdot \overline{b^{l}_{k'}} (2\pi)^{3}\delta_{k,-k'}=b^{l}_{k}\cdot b^{l}_{-k'}(2\pi)^{3}\delta_{k,-k'}=0.
\end{align*}
For $B\in H$,
\begin{align*}
\sum_{k\in \mathbb{Z}^{3}_{0}}(B,b_k^{l}e^{ik\cdot x} )_{H}b_k^{l}e^{ik\cdot x}
+\sum_{k\in \mathbb{Z}^{3}_{0}}(B,b_k^{l}e^{-ik\cdot x} )_{H}b_k^{l}e^{-ik\cdot x}
=\sum_{k\in \mathbb{Z}^{3}_{0}}\left((B,b_k^{l}e^{ik\cdot x} )_{H}b_k^{l}+(B,b_{-k}^{l}e^{ik\cdot x} )_{H}b_{-k}^{l}  \right)e^{ik\cdot x}.
\end{align*}
By using (A.1) and (A.2),
\begin{align*}
(B,b_k^{l}e^{ik\cdot x} )_{H}b_k^{l}+(B,b_{-k}^{l}e^{ik\cdot x} )_{H}b_{-k}^{l}
&=\frac{1}{\sqrt{2}}\left( (B, b^{1}_{k}e^{ik\cdot x})_{H}(a_k^{l}+i\xi\times a^{l}_{k})
+(B, b^{1}_{-k}e^{ik\cdot x})_{H}(a_k^{l}-i\xi\times a^{l}_{k})    \right) \\
&=(B, a_k^{l}e^{ik\cdot x})_{H}a_k^{l}+(B, i\xi\times a^{l}_{k}e^{ik\cdot x})_{H}i\xi\times a^{l}_{k}.
\end{align*}
The last term is $(B, a^{2}_{k})_{H}a^{2}_{k}$ for $l=1$ and $(B, a^{1}_{k})_{H}a^{1}_{k}$ for $l=2$. Thus, 
\begin{align*}
\sum_{k\in \mathbb{Z}^{3}_{0}}(B,b_k^{l}e^{ik\cdot x} )_{H}b_k^{l}e^{ik\cdot x}
+\sum_{k\in \mathbb{Z}^{3}_{0}}(B,b_k^{l}e^{-ik\cdot x} )_{H}b_k^{l}e^{-ik\cdot x}
&=\sum_{k\in \mathbb{Z}^{3}_{0}}(B, a_k^{1}e^{ik\cdot x})_{H}a_k^{1}e^{ik\cdot x}
+\sum_{k\in \mathbb{Z}^{3}_{0}}(B, a_k^{2}e^{ik\cdot x})_{H}a_k^{2}e^{ik\cdot x} \\
&=(2\pi)^{3}B(x),
\end{align*}
and the vector fields  
\begin{align*}
\left\{\frac{1}{(2\pi)^{\frac{3}{2}}}b_k^{l}e^{\pm ik\cdot x} \ \middle|\ k\in \mathbb{Z}_0^{3}\ \right\},
\end{align*}
are a complete orthonormal system on the complex-valued $H$.

We consider a real-valued $H$. We set the real-valued eigenfunctions of the rotation operator with the eigenvalue $|k|$ by 
\begin{align*}
\sqrt{2}b_{k}e^{ik\cdot x}
&=(a_k+i\xi\times a_k)(\cos{k\cdot x}+i\sin{k\cdot x}) \\
&=a_k\cos{k\cdot x}-\xi\times a_k\sin{k\cdot x}+i(a_k\sin{k\cdot x}+\xi\times a_k\cos{k\cdot x}) \\
&=p_k+iq_k,
\end{align*}
where the integer $l$ is suppressed. For $B\in H$, 
\begin{align*}
2(B,b_{k}e^{ik\cdot x})_Hb_{k}e^{ik\cdot x}
&=(B,p_k+iq_k)_{H}(p_k+iq_k) \\
&=(B,p_k)_Hp_k+(B,q_k)_Hq_k+i\left((B,p_k)_Hq_k-(B,q_k)_Hp_k \right).
\end{align*}
We take a symmetric subset $K\subset \mathbb{Z}^{3}_{0}$ such that $\mathbb{Z}^{3}_{0}=K\cup -K$. By $p_k=p_{-k}$ and $q_k=-q_{-k}$,
\begin{align*}
\textrm{Re}\sum_{k\in \mathbb{Z}^{3}_{0}}(B,b_ke^{ik\cdot x} )_{H}b_ke^{ik\cdot x}
=\frac{1}{2}\sum_{k\in \mathbb{Z}^{3}_{0}}(B,p_k)_Hp_k+(B,q_k)_Hq_k
=\sum_{k\in K}(B,p_k)_Hp_k+(B,q_k)_Hq_k.
\end{align*}
Similarly,  
\begin{align*}
\textrm{Re}\sum_{k\in \mathbb{Z}^{3}_{0}}(B,b_ke^{-ik\cdot x} )_{H}b_ke^{-ik\cdot x}
=\sum_{k\in K}(B,r_k)_Hr_k+(B,s_k)_Hs_k,\quad \sqrt{2}b_{k}e^{-ik\cdot x}=r_k-is_k.
\end{align*}
We thus obtain 
\begin{align*}
(2\pi)^{3}B(x)=\sum_{k\in K}(B,p_k)_Hp_k+(B,q_k)_Hq_k+(B,r_k)_Hr_k+(B,s_k)_Hs_k,
\end{align*}
for 
\begin{align*}
p_k&=a_k\cos{k\cdot x}-\xi\times a_k\sin{k\cdot x},\\
q_k&=a_k\sin{k\cdot x}+\xi\times a_k\cos{k\cdot x}, \\
r_k&=a_k\cos{k\cdot x}+\xi\times a_k\sin{k\cdot x}, \\
s_k&=a_k\sin{k\cdot x}-\xi\times a_k\cos{k\cdot x}.
\end{align*}
By $p_k\cdot p_k=1$, $p_k\cdot q_k=0$, $p_k\cdot r_k=\cos(2k\cdot x)$, and $p_k\cdot s_k=-\sin(2k\cdot x)$, $(p_k,p_k)_H=(2\pi)^{3}$ and $(p_k,q_k)_H=(p_k,r_k)_H=(p_k,s_k)_H=0$. The inner product of $p_k$ on $H$ with functions $p_{k'}$, $q_{k'}$, $r_{k'}$, $s_{k'}$ are zero for $k\neq k'$. Thus, 
\begin{align*}
(p_k,p_{k'})_H=(2\pi)^{3}\delta_{k,k'},\quad (p_k,q_{k'})_H=(p_k,r_{k'})_H=(p_k,s_{k'})_H=0,\quad k,k'\in \mathbb{Z}^{3}_{0}.
\end{align*}
Similarly, $q_k$, $r_k$, and $s_k$ are orthogonal to other vector fields on $H$, respectively. Thus, the normalized eigenfunctions 
\begin{align*}
\frac{1}{(2\pi)^{\frac{3}{2}}} \left\{ p_k, q_k, r_k, s_k\ \middle|\ k\in K  \right\},
\end{align*}
are complete orthonormal system on the real-valued $H$ (The functions $p_k$ and $q_k$ (resp. $r_k$ and $s_k$) are eigenfunctions of the rotation operator with the eigenvalue $|k|$ (resp. $-|k|$)).  
\end{proof}

\begin{rem}
The least positive (resp. largest negative) eigenvalue of the rotation operator is one (resp. minus one), and their multiplicities are six. Indeed, we choose a symmetric subset $K$ by  
\begin{align*}
K=\{(k_1,k_2,k_3)\in \mathbb{Z}^{3}_{0}\ |\ k_3>0\}\cup \{(k_1,k_2,0)\in \mathbb{Z}^{3}_{0}\ |\ k_2>0\}\cup \{(k_1,0,0)\in \mathbb{Z}^{3}_{0}\ |\ k_1>0\}.
\end{align*}
The vectors $k\in K$ satisfying $|k|=1$ are $k=e_1$, $e_2$, $e_3$. Its eigenfunctions are superpositions of the following:
\begin{align*}
p_{e_1}&=(0,\cos x_1,-\sin x_1), \quad
q_{e_1}=(0,\sin x_1,\cos x_1),\\
p_{e_2}&=(-\sin x_2, 0,\cos x_2),\quad
q_{e_2}=(\cos x_2,0,\sin x_2),\\
p_{e_3}&=(\cos x_3, -\sin x_3, 0),\quad
q_{e_3}=(\sin x_3,\cos x_3,0).
\end{align*}
The functions $-q_{e_i}$ are $\pi/2$ translation of $p_{e_i}$, i.e., $p_{e_i}(x_i+\pi/2)=-q_{e_i}(x_i)$ for $i=1,2,3$. The ABC flow is spanned by $\{q_{e_i}\}_{i=1}^{3}$, i.e., $\mathsf{A}q_{e_3}+\mathsf{B}q_{e_1}+\mathsf{C}q_{e_2}$ for $\mathsf{A},\mathsf{B},\mathsf{C}\in \mathbb{R}$. Eigenfunctions for the largest negative eigenvalue are superpositions of $r_{e_i}, s_{e_i}$ for $i=1,2,3$.   
\end{rem}

\section{It\^o formulas}

We provide It\^o formulas for $\mathscr{E}$, $\mathscr{H}$, $\mathscr{M}$, and $\mathscr{C}$, and state an identity for one-dimensional stationary stochastic processes and Krylov's estimate for $n$-dimensional stationary stochastic processes \cite[A.7, A.9]{Kuk12}. 

\subsection{It\^o formulas for It\^o processes in $H$}

We denote the space of all bounded linear operators from $H$ to a Banach space $X$ by $\mathcal{L}(H; X)$. We denote the Fr\'{e}chet derivative of the functional $F: H\to \mathbb{R}$ by $F': H\to \mathcal{L}(H; \mathbb{R})=H$ and the second derivative of the functional by $F'': H\to \mathcal{L}(H; H)=\mathcal{L}(H)$. By $C^{2}(H; \mathbb{R})$, we denote the space of all twice continuously differentiable functionals on $H$. 
\begin{thm}[It\^o formulas for It\^o processes in $H$]
Let $F\in C^{2}(H; \mathbb{R})$. Let $\{u_t\}$ be an It\^o process in $H$ with constant diffusion. Then, 
\begin{align}
F[u_{t}]=F[u_0]+\int_{0}^{t}A(s)ds+\sum_{j=1}^{\infty}\int_{0}^{t}B_j(s)d\beta_j(s),\quad t\geq 0,
\end{align}
with the constants 
\begin{equation}
\begin{aligned}
A(t)&=\left(F'[u_t],f_t\right)_H+\frac{1}{2}\sum_{j=1}^{\infty} \left(F''[u_t]g_j,g_j\right)_H,\\
B_j(t)&=(F'[u_t],g_j)_{H}.
\end{aligned}
\end{equation}
\end{thm}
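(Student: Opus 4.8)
The plan is to deduce the formula from the classical finite-dimensional It\^o formula by projecting $u_t$ onto finite-dimensional subspaces and then passing to the limit, after a preliminary localization to compensate for the fact that $C^2(H;\mathbb{R})$ provides only \emph{local} bounds on $F,F',F''$. Fix $T>0$ and set $\tau_N=\inf\{t\ge 0 : \|u_t\|_H>N\}$; since $u\in C([0,\infty);H)$ by Definition 3.1, $\tau_N\uparrow\infty$ a.s., and on $\{\tau_N>0\}$ the path $\{u_s : 0\le s\le t\wedge\tau_N\}$ lies in the closed ball $\overline{B_H(N)}$, on which $F,F',F''$ are bounded (say by $C_N$) and uniformly continuous. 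It therefore suffices to prove (B.1)--(B.2) with $t$ replaced by $t\wedge\tau_N$ and then let $N\to\infty$ and $T\to\infty$; this localization also makes $A(s),B_j(s)$ manifestly well defined on $[0,t\wedge\tau_N]$, with $\sum_j\int_0^{t\wedge\tau_N}|B_j(s)|^2\,ds\le C_N^2\big(\sum_j\|g_j\|_H^2\big)T<\infty$ so that $\sum_j\int B_j\,d\beta_j$ is a genuine $L^2$-martingale.

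Next I would fix a complete orthonormal system $\{h_k\}_{k\ge1}$ of $H$ with orthogonal projections $\pi_n$ onto $H_n=\mathrm{span}(h_1,\dots,h_n)$, and set $u^n_t=\pi_n u_t=\pi_n u_0+\int_0^t\pi_n f_s\,ds+\sum_{j=1}^\infty(\pi_n g_j)\beta_j(t)$; the last series converges in $L^2(\Omega;C([0,T];H_n))$ exactly as in the proof of Proposition 3.3, since $\sum_j\|\pi_n g_j\|_H^2\le\sum_j\|g_j\|_H^2<\infty$. Identifying $H_n\cong\mathbb{R}^n$, $u^n$ is a continuous $\mathbb{R}^n$-valued semimartingale whose martingale part has the deterministic, time-homogeneous quadratic covariation $\langle(u^n)^k,(u^n)^l\rangle_t=t\sum_j(g_j,h_k)_H(g_j,h_l)_H$ (these sums converge as $\sum_j(g_j,h_k)_H^2\le\sum_j\|g_j\|_H^2$). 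Applying the classical multidimensional It\^o formula to $s\mapsto F(u^n_s)$ and reassembling the gradient and Hessian terms through the fact that $(F'[v],h_k)_H$ and $(F''[v]h_l,h_k)_H$ are the first- and second-order partial derivatives of $F$ along $H_n$, I obtain
\begin{align*}
F[u^n_{t\wedge\tau_N}] &= F[u^n_0]+\int_0^{t\wedge\tau_N}\Big((F'[u^n_s],\pi_n f_s)_H+\tfrac12\sum_{j=1}^\infty(F''[u^n_s]\pi_n g_j,\pi_n g_j)_H\Big)\,ds \\
&\quad +\sum_{j=1}^\infty\int_0^{t\wedge\tau_N}(F'[u^n_s],\pi_n g_j)_H\,d\beta_j(s),
\end{align*}
where the infinite sum of stochastic integrals is an $L^2$-martingale on $[0,T\wedge\tau_N]$ because $\sum_j\mathbb{E}\int|(F'[u^n_s],\pi_n g_j)_H|^2\,ds\le C_N^2\big(\sum_j\|g_j\|_H^2\big)T$.

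Finally I would let $n\to\infty$. Since $u\in C([0,\infty);H)$ a.s., the range $\{u_s:0\le s\le T\wedge\tau_N\}$ is compact in $H$, so $\sup_{0\le s\le T\wedge\tau_N}\|u_s-u^n_s\|_H=\sup_s\|(I-\pi_n)u_s\|_H\to 0$ a.s.; with $\|\pi_n\|=1$ and the uniform continuity of $F,F',F''$ on $\overline{B_H(N)}$ this gives $F[u^n_{\cdot\wedge\tau_N}]\to F[u_{\cdot\wedge\tau_N}]$, $F[u^n_0]\to F[u_0]$, the a.e.\ convergence $(F'[u^n_s],\pi_n f_s)_H\to(F'[u_s],f_s)_H$ dominated by $C_N\|f_s\|_H\in L^1(0,t)$, and, after dominated convergence first in $j$ (each summand $\le C_N\|g_j\|_H^2$) and then in $s$ (the sum $\le C_N\sum_j\|g_j\|_H^2$), the convergence of the Hessian term to $\tfrac12\int_0^{t\wedge\tau_N}\sum_j(F''[u_s]g_j,g_j)_H\,ds$. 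For the stochastic term the It\^o isometry gives
\begin{align*}
&\mathbb{E}\Big|\sum_j\int_0^{t\wedge\tau_N}\big((F'[u^n_s],\pi_n g_j)_H-(F'[u_s],g_j)_H\big)\,d\beta_j(s)\Big|^2 \\
&\qquad =\mathbb{E}\int_0^{t\wedge\tau_N}\sum_j\big|(F'[u^n_s],\pi_n g_j)_H-(F'[u_s],g_j)_H\big|^2\,ds\longrightarrow 0 \quad (n\to\infty)
\end{align*}
by the same dominated-convergence bookkeeping (integrand pointwise $\to0$, bounded by $4C_N^2\sum_j\|g_j\|_H^2$), so along a subsequence the stochastic integrals converge uniformly on $[0,T\wedge\tau_N]$ a.s. This proves (B.1)--(B.2) up to $t\wedge\tau_N$; letting $N\to\infty$ and then $T\to\infty$ finishes the argument. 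The only step needing genuine care, rather than being mechanical, is exactly this passage to the limit — coordinating $n\to\infty$ with the $j$-summation, the $ds$-integral and the stochastic integral — and this is precisely why the localization to $\overline{B_H(N)}$ is arranged first: it supplies the uniform bounds $\|F'[u^n_s]\|_H,\|F''[u^n_s]\|\le C_N$ that justify every dominated-convergence step.
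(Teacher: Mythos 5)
You should first note that the paper itself does not prove Theorem B.1: Appendix B states it and refers to \cite[A.7]{Kuk12}, and your overall strategy (localize, project onto finite-dimensional subspaces, apply the classical It\^o formula, pass to the limit) is exactly the standard route taken in that reference and in Da Prato--Zabczyk. The problem is a genuine gap at the load-bearing step of your argument: you claim that, because the stopped path stays in $\overline{B_H(N)}$, the maps $F,F',F''$ are bounded (by some $C_N$) and uniformly continuous there. In an infinite-dimensional Hilbert space closed balls are not compact, and a $C^2$ functional need not be bounded on a bounded set: in $\ell^2$ one can sum disjointly supported smooth bumps of height $k$ centred at $(1-1/k)e_k$ and obtain a smooth functional unbounded on the unit ball, with unbounded derivatives. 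So the deterministic constant $C_N$ you invoke does not exist for general $F\in C^{2}(H;\mathbb{R})$, and every step that uses it is unjustified as written: the bound $\sum_j\int_0^{t\wedge\tau_N}|B_j(s)|^2\,ds\le C_N^2\bigl(\sum_j\|g_j\|_H^2\bigr)T$ making the stochastic term an $L^2$-martingale, the dominated-convergence arguments for the $ds$-integrals, and above all the It\^o-isometry passage to the limit, where you dominate the integrand by the deterministic function $4C_N^2\sum_j\|g_j\|_H^2$.

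The gap is repairable, but with a different localization. Since $t\mapsto\|F'[u_t]\|_H$ and $t\mapsto\|F''[u_t]\|_{\mathcal{L}(H)}$ are continuous adapted real processes, you may stop at $\sigma_M=\inf\{t\ge 0:\ \|u_t\|_H+\|F'[u_t]\|_H+\|F''[u_t]\|_{\mathcal{L}(H)}>M\}$, which yields deterministic bounds along the true path; for the projected path $u^n=\pi_n u$ you only control $F',F''$ through the random compact set $\overline{K\cup\bigcup_n\pi_n(K)}$ ($K$ the path range), so the limit in the stochastic term should be taken in probability (convergence in probability of $\int_0^{T}\sum_j|\phi^n_j-\phi_j|^2\,ds$ to zero implies uniform convergence in probability of the corresponding stochastic integrals), rather than in $L^2$ via a deterministic dominating function. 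Alternatively, add the hypothesis — implicit in the cited reference and satisfied by every functional to which the paper applies the formula, namely $\|u\|_H^2$, $\exp(\delta\|u\|_H^2)$, $\mathscr{H}$, $\mathscr{M}$, $\mathscr{C}$ — that $F,F',F''$ are bounded and uniformly continuous on bounded subsets of $H$; under that assumption your argument goes through essentially verbatim. A minor further point: the classical It\^o formula you invoke should be quoted in a version allowing countably many driving Brownian motions, or you should truncate the noise to $\zeta_n$ as in Proposition 3.3 and pass to the limit there as well; this is routine but should be said.
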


\subsection{It\^o formulas for It\^o processes in $V^{*}$}

We consider functionals $F\in C^{2}(H; \mathbb{R})$ satisfying the following conditions:

\noindent
(i) There exists a positive continuous function $K\in C[0,\infty)$ such that 
\begin{align}
|(F'[u],v)_{H}|\leq K(||u||_{H})||u||_{V}||v||_{V^{*}},\quad u\in V,\ v\in V^{*}.
\end{align}
(ii) The operator $F': H\to H$ maps a strongly convergent sequence in $V$ to a weakly convergent sequence in $V$. Namely, for any sequence $\{u_k\}\subset V$ such that $u_k\to u$ in $V$ and $v\in V^{*}$,
\begin{align}
(F'[u_k],v)_H\to (F'[u],v)_H.
\end{align}

\begin{thm}[It\^o formulas for It\^o processes in $V^{*}$]
Let $F\in C^{2}(H; \mathbb{R})$ satisfy the conditions (B.3) and (B.4). Let $\{u_t\}$ be an It\^o process in $V^{*}$ with constant diffusion which belongs to $C([0,\infty); H)\cap L^{2}_{\textrm{loc}}([0,\infty); V) $ almost surely. Let $A(s)$ and $B_j(s)$ be as in (B.2). Then, 
\begin{align}
F[u_{t\wedge\tau_n}]=F[u_0]+\int_{0}^{t\wedge\tau_n}A(s)ds+M_{t\wedge\tau_n},\quad t\geq 0,
\end{align}
for $M_{t\wedge\tau_n}=\sum_{j=1}^{\infty}\int_{0}^{t\wedge\tau_n}B_j(s)d\beta_j(s)$ and the stopping time $\tau_n=\inf\{t\geq 0\ |\ ||u_t||_{H}>n\ \}$, $n\in \mathbb{N}$. Assume additionally that  
\begin{align}
\sum_{j=1}^{\infty}\mathbb{E}\int_{0}^{t}|B_j(s)|^{2}ds<\infty,\quad \textrm{for any}\ t\geq 0.
\end{align}
Then, $M_t$ is a square-integrable martingale with almost surely continuous trajectory and  
\begin{align}
F[u_t]=F[u_0]+\int_{0}^{t}A(s)ds+M_t,\quad t\geq 0,\ \textrm{a.s.}
\end{align}
\end{thm}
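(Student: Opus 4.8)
The plan is to reduce the statement to the finite-dimensional It\^o formula of Theorem B.1 by Galerkin truncation and then to pass to the limit using the structural hypotheses (B.3)--(B.4). Let $\pi_N$ denote the spectral truncation onto $H_N=\mathrm{span}(e_1,\dots,e_N)$ as in \S5.5, which is bounded on $H$, on $V$, and on $V^{*}$, and set $u_t^{N}=\pi_N u_t$. Since $u\in C([0,\infty);H)$ a.s., $t\mapsto u_t^{N}$ is continuous in $H_N$; moreover $\pi_N f$ is $\mathcal G_t$-progressively measurable in $H_N$ with $\int_0^{T}\|\pi_N f_s\|_H\,ds\lesssim_{N}\int_0^{T}\|f_s\|_{V^{*}}\,ds<\infty$ a.s.\ (by Cauchy--Schwarz and (3.5)), and $\sum_j\|\pi_N g_j\|_H^{2}\le\sum_j\|g_j\|_H^{2}<\infty$, so applying the bounded operator $\pi_N$ to (3.3) shows that $\{u_t^{N}\}$ is an It\^o process in $H$ with constant diffusion in the sense of Definition 3.1. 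Applying Theorem B.1 to $F$ along $u^{N}$ and stopping the resulting identity at $\tau_n=\inf\{t\ge0:\|u_t\|_H>n\}$ yields
\begin{align*}
F[u^{N}_{t\wedge\tau_n}]=F[u^{N}_0]+\int_0^{t\wedge\tau_n}A_N(s)\,ds+\sum_{j=1}^{\infty}\int_0^{t\wedge\tau_n}B^{N}_j(s)\,d\beta_j(s),\qquad t\ge0,
\end{align*}
where $A_N,B^{N}_j$ are the expressions (B.2) with $(u,f,g_j)$ replaced by $(u^{N},\pi_N f,\pi_N g_j)$. On $[0,\tau_n]$ one has $\|u^{N}_s\|_H\le\|u_s\|_H\le n$, so $\|F'[u^{N}_s]\|_H$ and $\|F''[u^{N}_s]\|_{\mathcal L(H)}$ are bounded by the corresponding suprema over $\{\|w\|_H\le n\}$; in particular $\sum_j\mathbb{E}\int_0^{t\wedge\tau_n}|B^{N}_j|^{2}ds\le C_n t\sum_j\|g_j\|_H^{2}<\infty$, so the stopped stochastic integral is a genuine martingale.

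Next I would let $N\to\infty$. The left side tends to $F[u_{t\wedge\tau_n}]$ a.s.\ by continuity of $F$ and $\pi_N u_{t\wedge\tau_n}\to u_{t\wedge\tau_n}$ in $H$. Since $\pi_N u_s\to u_s$ in $H$, continuity of $F''$ gives $F''[u^{N}_s]\to F''[u_s]$ in $\mathcal L(H)$; together with $\pi_N g_j\to g_j$ in $H$ and dominated convergence in $j$ (dominant $\sup_{\|w\|_H\le n}\|F''[w]\|_{\mathcal L(H)}\sum_j\|g_j\|_H^{2}$) this gives $\sum_j(F''[u^{N}_s]\pi_N g_j,\pi_N g_j)_H\to\sum_j(F''[u_s]g_j,g_j)_H$ for a.e.\ $s$. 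For the stochastic term, continuity of $F'$ on $H$ gives $F'[u^{N}_s]\to F'[u_s]$ in $H$, hence $(F'[u^{N}_s],\pi_N g_j)_H=(\pi_N F'[u^{N}_s],g_j)_H\to(F'[u_s],g_j)_H$, and with the uniform bound $|(F'[u^{N}_s],\pi_N g_j)_H|\le C_n\|g_j\|_H$ on $[0,\tau_n]$, It\^o's isometry and dominated convergence yield $\sum_j\int_0^{t\wedge\tau_n}B^{N}_j\,d\beta_j\to\sum_j\int_0^{t\wedge\tau_n}B_j\,d\beta_j$ in $L^{2}(\Omega)$.

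The delicate point, which I expect to be the main obstacle, is the drift term $\int_0^{t\wedge\tau_n}(F'[u^{N}_s],\pi_N f_s)_H\,ds$: because $f_s$ a priori lies only in $V^{*}$, the pairing $(F'[u],f)_H$ must be read as the duality product $\langle f,F'[u]\rangle_{V^{*},V}$, which requires $F'[u]\in V$ together with the bound $\|F'[u]\|_V\le K(\|u\|_H)\|u\|_V$ --- this is precisely (B.3). For a.e.\ $s$ (those with $u_s\in V$ and $f_s\in V^{*}$) one has $\pi_N u_s\to u_s$ in $V$ and $\pi_N f_s\to f_s$ in $V^{*}$; condition (B.4) then gives $F'[u^{N}_s]\rightharpoonup F'[u_s]$ weakly in $V$, and writing $\langle\pi_N f_s,F'[u^{N}_s]\rangle=\langle\pi_N f_s-f_s,F'[u^{N}_s]\rangle+\langle f_s,F'[u^{N}_s]\rangle$ and using that $\|F'[u^{N}_s]\|_V$ stays bounded (by (B.3) and $\|u^{N}_s\|_V\le\|u_s\|_V$) while $\|\pi_N f_s-f_s\|_{V^{*}}\to0$, one obtains pointwise-in-$s$ convergence of the integrand to $\langle f_s,F'[u_s]\rangle=(F'[u_s],f_s)_H$. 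Since $|(F'[u^{N}_s],\pi_N f_s)_H|\le K(n)\|u_s\|_V\|f_s\|_{V^{*}}$ on $[0,\tau_n]$ and $\|u\|_V\|f\|_{V^{*}}\in L^{1}_{\mathrm{loc}}$ a.s.\ by Cauchy--Schwarz, dominated convergence in $s$ gives $\int_0^{t\wedge\tau_n}A_N\,ds\to\int_0^{t\wedge\tau_n}A\,ds$ a.s. Passing to a subsequence so that the stochastic integrals converge a.s.\ as well, one obtains (B.6) for each fixed $t$; all three terms being a.s.\ continuous in $t$, the identity extends to all $t\ge0$ almost surely.

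Finally, under the additional hypothesis (B.7) the series $M_t=\sum_j\int_0^{t}B_j(s)\,d\beta_j(s)$ converges in $L^{2}(\Omega)$ for each $t$ and defines a square-integrable continuous martingale by the standard $L^{2}$-theory of It\^o integrals against countably many independent Brownian motions. Since $\sup_{[0,T]}\|u_t\|_H<\infty$ a.s.\ for every $T$ (as $u\in C([0,\infty);H)$ a.s.), the stopping times satisfy $\tau_n\uparrow\infty$ a.s.; letting $n\to\infty$ in (B.6) and using $F[u_{t\wedge\tau_n}]\to F[u_t]$, $\int_0^{t\wedge\tau_n}A\,ds\to\int_0^{t}A\,ds$ (here $A\in L^{1}_{\mathrm{loc}}$ a.s.\ by (B.3) and $\sum_j\|g_j\|_H^{2}<\infty$), and $M_{t\wedge\tau_n}\to M_t$ by continuity of $M$, yields (B.8).
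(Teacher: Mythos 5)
The paper does not actually prove Theorem B.2: it is stated in Appendix B and taken from Kuksin--Shirikyan \cite[A.7]{Kuk12}, so there is no in-paper argument to compare against. Your Galerkin-truncation strategy (project onto $H_N$, apply the finite-dimensional formula of Theorem B.1, stop at $\tau_n$, pass $N\to\infty$ using (B.3)--(B.4), then remove the stopping time under (B.7)) is essentially the standard proof of this result, and your treatment of the drift term is the right one: reading $(F'[u],f)_H$ as the $V$--$V^{*}$ duality, using (B.3) to get $\|F'[u]\|_{V}\leq K(\|u\|_H)\|u\|_V$, (B.4) for the weak convergence $F'[\pi_N u_s]\rightharpoonup F'[u_s]$ in $V$, and the dominant $K_n\|u_s\|_V\|f_s\|_{V^{*}}\in L^1(0,T)$ a.s.

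There are, however, two gaps in the justification as written. First, you repeatedly invoke the quantities $\sup_{\|w\|_H\leq n}\|F'[w]\|_H$ and $\sup_{\|w\|_H\leq n}\|F''[w]\|_{\mathcal L(H)}$ as deterministic bounds; for a general $F\in C^{2}(H;\mathbb{R})$ on an infinite-dimensional $H$, continuity does not give boundedness on $H$-balls, and (B.3)--(B.4) do not supply it either (the bound (B.3) degenerates as $\|u\|_V\to\infty$ inside a fixed $H$-ball). For fixed $N$ this is harmless because $u^N_s$ lives in the compact ball of $H_N$, but the bounds you use uniformly in $N$ should instead come either from (B.3) (giving $|B^N_j(s)|\leq K_n\|u_s\|_V\|g_j\|_{V^{*}}$) or, pathwise, from compactness of $\{u_s(\omega):s\leq t\wedge\tau_n\}\cup\{\pi_N u_s(\omega)\}$ in $H$; with the latter the drift-term dominated convergence goes through with an $\omega$-dependent constant, which suffices since only a.s.\ convergence is needed. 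Second, and more substantively, the claimed $L^{2}(\Omega)$ convergence of $\sum_j\int_0^{t\wedge\tau_n}B^N_j\,d\beta_j$ via It\^o isometry and dominated convergence is not justified: the only available dominant is (up to constants) $K_n^{2}\|u_s\|_V^{2}\sum_j\|g_j\|_H^{2}$, and $\int_0^{t\wedge\tau_n}\|u_s\|_V^{2}\,ds$ is finite a.s.\ but is not assumed to have finite expectation, so no integrable majorant over $\Omega\times[0,t]$ exists under the stated hypotheses. The standard repair is either to localize additionally with $\sigma_m=\inf\{t:\int_0^{t}\|u_s\|_V^{2}ds>m\}$ before applying the isometry, or to deduce from the a.s.\ convergence $\int_0^{t\wedge\tau_n}\sum_j|B^N_j-B_j|^{2}ds\to0$ that the stochastic integrals converge in probability (uniformly on compacts), and then extract the a.s.\ convergent subsequence exactly as you already do. With these two repairs your argument is complete; the final steps (square-integrable continuous martingale under (B.7), $\tau_n\uparrow\infty$, and passage to (B.8)) are fine.
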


\subsection{Applications of the It\^o formula}

We apply the It\^o formula (B.5) for the following functionals: 
\begin{align}
F_1[u]&=\tilde{\mathscr{E}}(u)=||u||_{H}^{2},\\
F_2[u]&=\exp \left(\delta \tilde{\mathscr{E}}(u) \right)=\exp \left(\delta ||u||_{H}^{2} \right),\quad \delta>0,\\
F_3[u]&=\mathscr{H}(u)=(\textrm{curl}^{-1}u,u)_{H},\quad d=3,\\
F_4[u]&=\mathscr{M}(u)=||\textrm{curl}^{-1} u||_{L^{2}}^{2},\quad d=2.
\end{align}

\begin{prop}
The functionals $F_i$ ($1\leq i\leq 4$) belong to $C^{2}(H; \mathbb{R})$ and satisfy the conditions (B.3) and (B.4) with their derivatives 
\begin{align}
F'_1[u]&=2u,\quad F''_1[u]=2,\\
F_2'[u]&=2\delta F_1[u]u,\quad F_2''[u]v=2\delta F_2[u](v+2\delta (u,v)_H u ),\quad v\in H,\\
F'_3[u]&=2\textrm{curl}^{-1} u,\quad F''_3[u]=2\textrm{curl}^{-1},\\
F_4'[u]&=2(-\Delta)^{-1}u,\quad F_4''[u]=2(-\Delta)^{-1}.
\end{align}
\end{prop}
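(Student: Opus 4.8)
The plan is to handle the four functionals in two families. The functionals $F_1$, $F_3$ (for $d=3$) and $F_4$ (for $d=2$) are quadratic forms associated with bounded symmetric operators on $H$: the identity, $\textrm{curl}^{-1}$, and $(-\Delta)^{-1}$ respectively. Symmetry of $\textrm{curl}^{-1}$ on $H$ follows since $\nabla\times$ is self-adjoint on divergence-free fields and commutes with $(-\Delta)^{-1}$; symmetry of $(-\Delta)^{-1}$ on average-zero fields is Parseval, which also identifies $F_4[u]=\|\textrm{curl}^{-1}u\|_{L^2}^2=((-\Delta)^{-1}u,u)_{L^2}$. Hence for these three I would simply expand $F_i[u+h]$, read the Fréchet derivatives (B.12), (B.14), (B.15) off the linear and quadratic terms, and note that $F_i$, $F_i'$ and the constant operators $F_i''$ are continuous because they are compositions of bounded linear operators with the $H$-inner product. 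For $F_2=\exp(\delta F_1)$, the chain rule (with $F_1\in C^\infty$ and $\exp\in C^\infty(\mathbb{R})$) yields the derivatives in (B.13); continuity of $F_2$, $F_2'$, $F_2''$ from $H$ into $\mathbb{R}$, $H$, $\mathcal{L}(H)$ then follows from continuity of the exponential, the inner product and scalar multiplication.

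The core of the argument is condition (B.3). The point is that $F_i'[u]$ lies not merely in $H$ but in $V$, so that $(F_i'[u],v)_H$ can legitimately be read as the $V$–$V^*$ duality pairing and estimated by $\|F_i'[u]\|_V\|v\|_{V^*}$. For $F_1$ and $F_2$ this uses the hypothesis $u\in V$ directly, giving $K\equiv 2$ for $F_1$ and $K(\|u\|_H)=2\delta\exp(\delta\|u\|_H^2)$ for $F_2$ — this is exactly the place where a non-constant continuous $K$ is needed. For $F_3$ and $F_4$ I would instead invoke the smoothing of $\textrm{curl}^{-1}$ and $(-\Delta)^{-1}$ on average-zero fields over $\mathbb{T}^{d}$: a one-line Fourier computation using $|k|\geq 1$ gives $\|\textrm{curl}^{-1}u\|_{H^1}\lesssim\|u\|_H$ and $\|(-\Delta)^{-1}u\|_{H^1}\lesssim\|u\|_H$, so $F_3'[u],F_4'[u]\in H^1\subset V$ with $V$-norm controlled by $\|u\|_H\leq\|u\|_V$, again with a constant $K$.

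For condition (B.4) I would prove the stronger statement that $F_i'$ maps $V$-convergent sequences to $V$-convergent sequences, which trivially gives the weak convergence required. If $u_k\to u$ in $V$ then in particular $u_k\to u$ in $H$; for $F_1$ one has $F_1'[u_k]=2u_k\to 2u$ in $V$ directly; for $F_3$ and $F_4$ the operators $\textrm{curl}^{-1},(-\Delta)^{-1}$ are bounded $H\to H^1\subset V$, so $F_3'[u_k]\to F_3'[u]$ and $F_4'[u_k]\to F_4'[u]$ in $V$; for $F_2$ the scalar $F_2[u_k]\to F_2[u]$ and $u_k\to u$ in $V$, whence the product $F_2[u_k]u_k\to F_2[u]u$ in $V$.

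The verifications are essentially routine. The only step needing a little care is (B.3): one must confirm the membership $F_i'[u]\in V$ and, for $F_2$, package the exponential factor into the continuous function $K$. The mild conceptual point — that the $H$-inner product appearing in (B.2) and (B.3) is being extended to the $V$–$V^*$ duality — is what makes the torus smoothing estimates for $\textrm{curl}^{-1}$ and $(-\Delta)^{-1}$ the genuine content of the argument, everything else being bookkeeping.
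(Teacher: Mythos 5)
Your proof is correct and follows essentially the same route as the paper: compute the explicit Fr\'echet derivatives (B.12)--(B.15) (the paper does this via G\^ateaux difference quotients, you via the quadratic-form expansion and the chain rule), and then verify (B.3)--(B.4) from these formulas --- a check the paper dismisses as routine but which you carry out correctly, reading $(F_i'[u],v)_H$ as the $V$--$V^{*}$ duality and using $\|\textrm{curl}^{-1}u\|_{H^{1}}\lesssim \|u\|_{H}$ and $\|(-\Delta)^{-1}u\|_{H^{1}}\lesssim \|u\|_{H}$ on mean-zero fields, together with $K(r)=2\delta e^{\delta r^{2}}$ for $F_2$. Note only that your chain-rule formula $F_2'[u]=2\delta F_2[u]u$ (which your choice of $K$ presupposes, and which is consistent with the stated $F_2''$) is the correct one; the factor $F_1[u]$ appearing in (B.13) is evidently a typo.
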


\begin{proof}
We show that $F_i\in C^{2}(H; \mathbb{R})$ ($1\leq i\leq 4$) and (B.12)-(B.15). It is not difficult to check the conditions (B.3) and (B.4) by using (B.12)-(B.15). 

The functional $F_1[u]=||u||_{H}^{2}$ is continuous and locally bounded on $H$. For $u,v\in H$ and $\varepsilon>0$,
\begin{align*}
\frac{F_1[u+\varepsilon v]-F_1[u] }{\varepsilon}=2(u,v)_{H}+\varepsilon ||v||_{H}^{2}.
\end{align*}
Thus G\^{a}teaux derivative $D_GF_1: H\to H^{*}$ exists and $<D_GF_1(u),v>=2(u,v)_{H}$ with the pairing $<\cdot,\cdot>$ for $H^{*}$ and $H$. By identifying $H^{*}$ and $H$, $D_GF_1[u]=2u$. Since $D_GF_1$ is continuous and locally bounded in $H$, Fr\'{e}chet derivative $D_GF_1=F_1'$ exists and $F_1\in C^{1}(H; \mathbb{R})$. Similarly, for $w\in H$ we have
\begin{align*}
\left<\frac{F_1'[u+\varepsilon v]-F_1'[u] }{\varepsilon},w   \right>=2(v,w)_{H}.
\end{align*}
By identifying $H^{*}$ and $H$,
\begin{align*}
\frac{F_1'[u+\varepsilon v]-F_1'[u] }{\varepsilon}=2v.
\end{align*}
By letting $\varepsilon\to 0$, $D_GF_1'[u]v=2v$ and $D_GF_1': H\to \mathcal{L}(H)$ is constant. Thus, the Fr\'{e}chet derivative $D_GF_1'=F_1''$ exists and $F_1\in C^{2}(H; \mathbb{R})$. We showed (B.12). Similarly, $F_2 \in C^{2}(H; \mathbb{R})$ and (B.13) holds.

The functional $F_3$ is continuous and locally bounded on $H$ by the Poincar\'{e} inequality $||\Psi||_{H}\lesssim ||\nabla \Psi||_{H}\lesssim ||u||_{H}$ for $\Psi=\textrm{curl}^{-1}u$ and $u\in H$. For $\Phi=\textrm{curl}^{-1}v$, $v\in H$, and $\varepsilon>0$, 
\begin{align*}
\frac{F_3[u+\varepsilon v]-F_3[u] }{\varepsilon}=2(\Psi,v)_{H}+\varepsilon(\Phi,v)_{H}.
\end{align*}
By letting $\varepsilon\to 0$, $<D_G F_3[u], v>=2(\Psi,v)_{H}$. By identifying $H^{*}$ and $H$, $D_GF_3[u]=2\Psi=2\textrm{curl}^{-1} u$. Since $D_GF_3$ is continuous and locally bounded in $H$, Fr\'{e}chet derivative $D_GF_3=F_3'$ exists and $F_3\in C^{1}(H; \mathbb{R})$. Similarly, for $w\in H$ we have
\begin{align*}
\left<\frac{F_3'[u+\varepsilon v]-F_3'[u] }{\varepsilon},w   \right>=2(\Phi,w)_{H}.
\end{align*}
By identifying $H^{*}$ and $H$,
\begin{align*}
\frac{F_3'[u+\varepsilon v]-F_3'[u] }{\varepsilon}=2\Phi.
\end{align*}
By letting $\varepsilon\to 0$, $D_GF_3'[u]v=2\Phi=2\textrm{curl}^{-1} v$. Thus $D_GF_3': H\to \mathcal{L}(H)$ is bounded and continuous and the Fr\'{e}chet derivative $D_GF_3'=F_3''$ exists and $F_3\in C^{2}(H; \mathbb{R})$. We showed (B.14). Similarly, $F_4\in C^{2}(H; \mathbb{R})$ and (B.15) holds. 
\end{proof}

\begin{lem}
The following holds for It\^o processes in $V^{*}$ with constant diffusion $\{u_t\}$ which belong to $C([0,\infty); H)\cap L^{2}_{\textrm{loc}}([0,\infty); V) $ almost surely:
\begin{align}
\tilde{\mathscr{E}}(u)(t\wedge \tau_n)
=\tilde{\mathscr{E}}(u)(0)+2\int_{0}^{t\wedge \tau_n}(u_s,f_s)_Hds
+t\wedge \tau_n\sum_{j=1}^{\infty}||g_j||_{H}^{2}
+2\sum_{j=1}^{\infty}\int_{0}^{t\wedge \tau_n}(u_s,g_j)_Hd\beta_j(s),
\end{align}
\begin{equation}
\begin{aligned}
\exp\left(\delta \tilde{\mathscr{E}}(u) \right)(t\wedge \tau_n)
=&\exp\left(\delta \tilde{\mathscr{E}}(u) \right)(0)+2\delta \int_{0}^{t\wedge \tau_n}\exp\left(\delta \tilde{\mathscr{E}}(u) \right)(u_s,f_s)ds\\
&+\delta \int_{0}^{t\wedge \tau_n} \exp\left(\delta \tilde{\mathscr{E}}(u) \right) \sum_{j=1}^{\infty}\left(||g_j||_{H}^{2}+2\delta |(u_s,g_j)_H|^{2} \right)ds  \\
&+2\delta \sum_{j=1}^{\infty}\int_{0}^{t\wedge \tau_n} \exp\left(\delta \tilde{\mathscr{E}}(u) \right) (u_s,g_j)_{H}d\beta_j(s),
\end{aligned}
\end{equation}
\begin{equation}
\begin{aligned}
\mathscr{H}(u)(t\wedge \tau_n)
&=\mathscr{H}(u)(0)+2\int_{0}^{t\wedge \tau_n}(u_s,\textrm{curl}^{-1}f_s)_Hds 
+t\wedge \tau_n\sum_{j=1}^{\infty}\left(\textrm{curl}^{-1} g_j,g_j\right)_{H} \\
&+2\sum_{j=1}^{\infty}\int_{0}^{t\wedge \tau_n}\left(u_s,\textrm{curl}^{-1}g_j\right)_Hd\beta_j(s),
\end{aligned}
\end{equation}
\begin{equation}
\begin{aligned}
\mathscr{M}(u)(t\wedge \tau_n)
&=\mathscr{M}(u)(0)+2\int_{0}^{t\wedge \tau_n}\left(\textrm{curl}^{-1} u_s,\textrm{curl}^{-1} f_s\right)_Hds 
+t\wedge \tau_n\sum_{j=1}^{\infty}\left\|(-\Delta)^{-\frac{1}{2}}g_j\right\|_{H}^{2}\\
&+2\sum_{j=1}^{\infty}\int_{0}^{t\wedge \tau_n}\left(\textrm{curl}^{-1} u_s,\textrm{curl}^{-1}\  g_j\right)_Hd\beta_j(s).
\end{aligned}
\end{equation}
\end{lem}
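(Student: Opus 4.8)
The plan is to read off each of the identities (B.16)--(B.19) as a single instance of the It\^o formula for It\^o processes in $V^{*}$ (Theorem B.2), applied in turn to the four functionals $F_1=\tilde{\mathscr{E}}$, $F_2=\exp(\delta\tilde{\mathscr{E}})$, $F_3=\mathscr{H}$ (in $d=3$) and $F_4=\mathscr{M}$ (in $d=2$) of (B.9)--(B.11). First I would record that, by Proposition B.3, each $F_i$ lies in $C^{2}(H;\mathbb{R})$, satisfies the structural hypotheses (B.3)--(B.4) needed to invoke Theorem B.2, and has the first and second Fr\'{e}chet derivatives listed in (B.12)--(B.15). The hypothesis of Lemma B.4 is exactly that $\{u_t\}$ is an It\^o process in $V^{*}$ with constant diffusion lying in $C([0,\infty);H)\cap L^{2}_{\textrm{loc}}([0,\infty);V)$ almost surely, so the stopped It\^o formula (B.5) applies directly with $F=F_i$ and the stopping times $\tau_n=\inf\{t\geq 0:\|u_t\|_H>n\}$; note that only (B.5) is needed here, not the extra integrability hypothesis (B.6) (which upgrades the stopped identity to the unstopped form (B.7)), since Lemma B.4 asserts the identities only at $t\wedge\tau_n$.

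It then remains to compute the drift $A(s)$ and the diffusion coefficients $B_j(s)$ of (B.2) for each functional, using (B.12)--(B.15). For $F_1$ one obtains $A(s)=2(u_s,f_s)_H+\sum_j\|g_j\|_H^{2}$ and $B_j(s)=2(u_s,g_j)_H$, which is (B.16). For $F_2$ the first-derivative term gives $2\delta\exp(\delta\tilde{\mathscr{E}}(u_s))(u_s,f_s)_H$, while the second-derivative term, via the explicit form of $F_2''$ in (B.13), contributes the quadratic correction $\delta\exp(\delta\tilde{\mathscr{E}}(u_s))\sum_j\bigl(\|g_j\|_H^{2}+2\delta|(u_s,g_j)_H|^{2}\bigr)$; substituting into (B.5) yields (B.17). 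For $F_3$ and $F_4$ the symmetry of $\textrm{curl}^{-1}$ on $H$ (for $d=3$) and of $(-\Delta)^{-1}$, together with the identity $(\textrm{curl}^{-1})^{*}\textrm{curl}^{-1}=(-\Delta)^{-1}$ on average-zero fields (for $d=2$), turns (B.14)--(B.15) into $A(s)=2(u_s,\textrm{curl}^{-1}f_s)_H+\sum_j(\textrm{curl}^{-1}g_j,g_j)_H$, $B_j(s)=2(u_s,\textrm{curl}^{-1}g_j)_H$, respectively $A(s)=2(\textrm{curl}^{-1}u_s,\textrm{curl}^{-1}f_s)_H+\sum_j\|(-\Delta)^{-1/2}g_j\|_H^{2}$, $B_j(s)=2(\textrm{curl}^{-1}u_s,\textrm{curl}^{-1}g_j)_H$, which on substitution give (B.18) and (B.19). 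All series in $A(s)$ converge, since $\sum_j\|g_j\|_H^{2}<\infty$ is the defining summability (3.1) of an It\^o process with constant diffusion, while the smoothing of $\textrm{curl}^{-1}$ bounds $\sum_j(\textrm{curl}^{-1}g_j,g_j)_H$ by $\mathcal{C}_{-1/2}$ and $\sum_j\|(-\Delta)^{-1/2}g_j\|_H^{2}$ by $\mathsf{C}_{-1}$.

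The arithmetic above is routine; the one point needing genuine care is the verification, once and for all in Proposition B.3, that the hypotheses (B.3)--(B.4) of Theorem B.2 actually hold. For $F_1$ the bound (B.3) is immediate from $|(u,v)_H|\leq\|u\|_V\|v\|_{V^{*}}$ with $K\equiv 2$; for $F_2$ one must allow $K$ to grow, and $K(\|u\|_H)=2\delta\exp(\delta\|u\|_H^{2})$ works since $K\in C[0,\infty)$; and for $F_3,F_4$ one uses that $\textrm{curl}^{-1}$ and $(-\Delta)^{-1}$ gain regularity, so that e.g. $(F_3'[u],v)_H=2(u,\textrm{curl}^{-1}v)_H$ is bounded by $\|u\|_H\,\|\textrm{curl}^{-1}v\|_H\lesssim\|u\|_V\|v\|_{V^{*}}$. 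Condition (B.4), the weak continuity of $u\mapsto F_i'[u]$ along sequences converging strongly in $V$, follows from continuity of the maps $u\mapsto u$, $u\mapsto\exp(\delta\|u\|_H^{2})\,u$, and of the bounded operators $\textrm{curl}^{-1}$, $(-\Delta)^{-1}$. I expect this bookkeeping to be the main---and only mild---obstacle; with it in place, (B.16)--(B.19) drop out of (B.5) by pure substitution.
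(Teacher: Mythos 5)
Your proposal is correct and is essentially the paper's own argument: the paper's proof of this lemma consists precisely of invoking Proposition B.3 (which supplies the $C^{2}$ regularity, the hypotheses (B.3)--(B.4), and the derivatives (B.12)--(B.15)) and then substituting into (B.2) and the stopped It\^o formula (B.5) of Theorem B.2, exactly as you do, including the use of the symmetry of $\textrm{curl}^{-1}$ and $(-\Delta)^{-1}$ to recast the drift and diffusion terms. Your observation that only the stopped form (B.5) is needed, not the integrability condition (B.6), matches the paper's usage as well.
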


\begin{proof}
We substitute (B.12)-(B.15) into (B.2) and obtain (B.16)-(B.19) from (B.5).
\end{proof}

\begin{rem}
For more general functionals,     
\begin{align}
\tilde{F}_1[u]&=f(\tilde{\mathscr{E}}(u))=f(||u||_{H}^{2}), \\
\tilde{F}_3[u]&=f(\mathscr{H}(u))=f\left(  (\textrm{curl}^{-1}u,u)_H   \right),\\
\tilde{F}_4[u]&=f(\mathscr{M}(u))=f\left(||\textrm{curl}^{-1}u||_{L^{2}}^{2}\right),
\end{align}
for $f\in C^{2}(\mathbb{R})$, $\tilde{F}_i\in C^{2}(H; \mathbb{R})$ for $i=1,3,4$ and 
\begin{align*}
\tilde{F}_1'[u]&=2f'(\tilde{\mathscr{E}}(u) )u,\quad \tilde{F}_1''[u]v=4f''(\tilde{\mathscr{E}}(u) )(u,v)_{H}u+2f'(\tilde{\mathscr{E}}(u) )v,\quad v\in H, \\
\tilde{F}_3'[u]&=2f'\left( \mathscr{H}(u) \right)\textrm{curl}^{-1}u,\quad \tilde{F}_3''[u]v=4f''\left( \mathscr{H}(u) \right)(\textrm{curl}^{-1}u,v)_{H}\textrm{curl}^{-1}u+2f'\left( \mathscr{H}(u) \right)\textrm{curl}^{-1}v,\quad v\in H,  \\
\tilde{F}_4'[u]&=2f'(\mathscr{M}(u))(-\Delta)^{-1}u,\quad 
\tilde{F}_4''[u]v=4f''\left(\mathscr{M}(u)\right)\left((-\Delta)^{-1}u,v\right)_{L^{2}}(-\Delta)^{-1}u+2f'\left(\mathscr{M}(u)\right)(-\Delta)^{-1}v,\quad v\in H.
\end{align*}
The It\^o formula (B.5) also holds for (B.20)-(B.22) with the following functions:  
\begin{align*}
A^{1}(s)&=2f'\left(\tilde{\mathscr{E}}(u)\right) \left( \left(u, f_s  \right)_H+\frac{1}{2}\sum_{j=1}^{\infty}\left\|g_j\right\|_{H}^{2} \right)+2f''\left(\tilde{\mathscr{E}}(u)\right)\sum_{j=1}^{\infty} \left|\left( u,  g_j\right)	_{H}\right|^{2}    ,\\
B_j^{1}(s)&= 2 f'\left(\tilde{\mathscr{E}}(u)\right) \left(u, g_j\right)_H,\\
A^{3}(s)&=2 f'\left( \mathscr{H}(u) \right)
\left( \left(u, \textrm{curl}^{-1} f_s  \right)_H +\frac{1}{2}\sum_{j=1}^{\infty}\left( \textrm{curl}^{-1} g_j,  g_j\right)_{H}  \right)
+2f''\left( \mathscr{H}(u) \right)\sum_{j=1}^{\infty}|(u,  \textrm{curl}^{-1} g_j)_H|^{2},\\
B_j^{3}(s)&= 2 f'\left( \mathscr{H}(u) \right) \left(u, \textrm{curl}^{-1} g_j\right)_H,\\
A^{4}(s)&=2f'\left(\mathscr{M}(u)\right) \left( \left(\textrm{curl}^{-1}u_s,\textrm{curl}^{-1} f_s  \right)_{L^{2}}+\frac{1}{2}\sum_{j=1}^{\infty}\left\|(-\Delta)^{-\frac{1}{2}}g_j\right\|_{L^{2}}^{2} \right)+f''\left(\mathscr{M}(u)\right)\sum_{j=1}^{\infty} \left|\left( \textrm{curl}^{-1}u_s,  \textrm{curl}^{-1}\ g_j\right)_{L^{2}}\right|^{2}    ,\\
B_j^{4}(s)&= 2 f'\left(\mathscr{M}(u)\right) \left(\textrm{curl}^{-1}u,\textrm{curl}^{-1}\ g_j\right)_{L^{2}}.
\end{align*}
\end{rem}

\begin{rem}[Casimir invariants]
The functional $F[u]=\mathscr{C}(u)=||f(\textrm{curl}^{-1}u)||_{L^{2}}^{2}$ for $f\in C^{2}(\mathbb{R})$ belongs to $C^{2}(H; \mathbb{R})$ and satisfies the conditions (B.3) and (B.4) with derivatives 
\begin{align*}
\mathscr{C}'[u]=\nabla^{\perp}(-\Delta)^{-1} f'(\textrm{curl}^{-1}u),\quad \mathscr{C}''[u]=\left(\nabla^{\perp}(-\Delta)^{-1}f''(\textrm{curl}^{-1}u)\right)\textrm{curl}^{-1}.
\end{align*}
The It\^o formula (B.5) forms 
\begin{align}
\mathscr{C}(u)(t\wedge \tau_n)=\mathscr{C}(u)(0)+\int_{0}^{t\wedge \tau_n}A(s)ds+\sum_{j=1}^{\infty}\int_{0}^{t\wedge \tau_n}B_{j}(s)d\beta_j(s),
\end{align}
for the processes 
\begin{align*}
A(s)&=(f'(\textrm{curl}^{-1}u),\textrm{curl}^{-1}f_s)_{L^{2}}+\frac{1}{2}\sum_{j=1}^{\infty}\left( (\nabla^{\perp}(-\Delta)^{-1} f''(\textrm{curl}^{-1}u)) \textrm{curl}^{-1}g_j,\textrm{curl}^{-1}g_j \right)_{L^{2}}    ,\\
B_j(s)&= (f'(\textrm{curl}^{-1}u), \textrm{curl}^{-1}g_j)_{L^{2}}.
\end{align*}
\end{rem}

\subsection{Local time and Krylov's estimate}

We consider $\mathbb{R}$-valued stochastic processes $y_t$ satisfying 
\begin{align}
y_t=y_0+\int_{0}^{t}x_sds+\sum_{j=1}^{\infty}\int_{0}^{t}\theta_{s}^{j}d\beta_j(s),\quad t\geq 0,\ a.s.,
\end{align}
for $\mathcal{G}_t$-adapted processes $x_t$, and $\theta^{j}_t$ satisfying 
\begin{align}
\mathbb{E}\int_{0}^{t}\left(|x_s|+\sum_{j=1}^{\infty}|\theta^{j}_s|^{2}\right)ds<\infty\quad \textrm{for any}\ t>0.
\end{align}
For such a process $y_t$, there exists a local time $\Lambda_t(a,\omega)$ satisfying the identity 
\begin{align}
\int_{0}^{t}1_{[\alpha,\beta]}(y_s)\left(\sum_{j=1}^{\infty}|\theta^{j}_s|^{2} \right)ds=2\int_{\alpha}^{\beta}\Lambda_t(a,\omega)da,\quad t\geq 0,\ a.s.,
\end{align}
and the change of the variable formula \cite[A.8]{Kuk12},
\begin{align}
(y_t-a)_+=(y_0-a)_+\sum_{j=1}^{\infty}\int_{0}^{t}1_{[a,\infty)}(y_s)\theta^{j}_s d\beta_j(s)+\int_{0}^{t}1_{[a,\infty)}(y_s)x_s ds+\Lambda_t (a,\omega),\  t\geq 0,\ a\in \mathbb{R},\ \textrm{a.s.}
\end{align}

\begin{thm}
Assume that $y_t$ and $x_t$ are stationary processes. Then,  
\begin{align}
\mathbb{E}\left(1_{[\alpha,\beta]}(y_s)\sum_{j=1}^{\infty}|\theta^{j}_s|^{2} \right)
=-2\mathbb{E}\int_{\alpha}^{\beta}1_{[a,\infty)}(y_s)x_sda,\quad t\geq 0,\ \alpha<\beta.
\end{align}
\end{thm}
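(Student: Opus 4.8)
The plan is to obtain (B.28) from the Tanaka-type change-of-variable formula (B.27) and the occupation-time identity (B.26), using stationarity to cancel the boundary terms. Fix $a\in\mathbb{R}$ and take expectations in (B.27). By the integrability hypothesis (B.25) the process $t\mapsto\sum_{j}\int_0^t 1_{[a,\infty)}(y_s)\theta^j_s\,d\beta_j(s)$ is a square-integrable martingale, because $\sum_j\mathbb{E}\int_0^t 1_{[a,\infty)}(y_s)|\theta^j_s|^2\,ds\le\sum_j\mathbb{E}\int_0^t|\theta^j_s|^2\,ds<\infty$; hence its mean is zero. Since also $\int_0^t 1_{[a,\infty)}(y_s)x_s\,ds\in L^1$ by (B.25) and $y_t\in L^1$ (which follows from $y_0\in L^1$ and, again, (B.25) bounding the drift and diffusion parts of (B.24)), taking expectations yields
\[
\mathbb{E}(y_t-a)_+=\mathbb{E}(y_0-a)_+ + \mathbb{E}\!\int_0^t 1_{[a,\infty)}(y_s)x_s\,ds + \mathbb{E}\Lambda_t(a,\cdot).
\]

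By stationarity of $\{y_t\}$ the left-hand side equals $\mathbb{E}(y_0-a)_+$, so the first two terms cancel, and by stationarity of $\{(y_t,x_t)\}$ the map $s\mapsto\mathbb{E}[1_{[a,\infty)}(y_s)x_s]$ is constant; therefore
\[
\mathbb{E}\Lambda_t(a,\cdot)=-\,\mathbb{E}\!\int_0^t 1_{[a,\infty)}(y_s)x_s\,ds=-\,t\,\mathbb{E}\big(1_{[a,\infty)}(y_0)\,x_0\big).
\]
Next I would integrate this in $a$ over $[\alpha,\beta]$. The local time $\Lambda_t(\cdot,\cdot)$ is non-negative and jointly measurable, and $\mathbb{E}\int_\alpha^\beta\Lambda_t(a,\cdot)\,da=\tfrac12\,\mathbb{E}\int_0^t 1_{[\alpha,\beta]}(y_s)\sum_j|\theta^j_s|^2\,ds<\infty$ by (B.26) and (B.25), so Tonelli--Fubini applies, and (B.26) taken in expectation turns the left-hand side into exactly this finite quantity. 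Combining with the previous display gives
\[
\mathbb{E}\!\int_0^t 1_{[\alpha,\beta]}(y_s)\sum_{j=1}^\infty|\theta^j_s|^2\,ds
=-\,2t\!\int_\alpha^\beta\mathbb{E}\big(1_{[a,\infty)}(y_0)\,x_0\big)\,da
=-\,2t\,\mathbb{E}\!\int_\alpha^\beta 1_{[a,\infty)}(y_s)\,x_s\,da,
\]
the last equality again by stationarity of $\{(y_t,x_t)\}$. Dividing by $t>0$ and using that, in our setting, the integrand on the left also has $s$-independent expectation (the whole triple $(y_t,x_t,(\theta^j_t)_j)$ being stationary), we recover (B.28).

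The main obstacle is not conceptual but a matter of measure-theoretic bookkeeping: one must make sure that the stochastic integral in (B.27) is a genuine martingale rather than a mere local martingale -- this is precisely what (B.25) provides -- and that the interchange of expectation with the $da$-integration of $\Lambda_t$ is legitimate, which rests on the joint measurability and non-negativity of the local time together with the $L^1$-bound extracted from (B.26). A secondary point is the uniform integrability $\sup_t\mathbb{E}|y_t|<\infty$; in the applications of \S6 this is guaranteed by the moment estimates (5.1)--(5.2), so there one may either invoke those bounds or simply add $y_0\in L^1$ to the hypotheses.
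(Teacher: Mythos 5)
Your proposal is correct and follows essentially the same route as the paper: take expectations in the Tanaka-type identity (B.27), use stationarity to cancel $\mathbb{E}(y_t-a)_+$ against $\mathbb{E}(y_0-a)_+$ and kill the martingale term, integrate in $a$, eliminate the local time via the expected occupation-time identity (B.26), and recover (B.28) by exploiting linearity in $t$ (your division by $t$ versus the paper's differentiation is an immaterial difference). The extra bookkeeping you supply (square-integrability of the stochastic integral from (B.25), Tonelli for the $da$-integration of $\Lambda_t$, integrability of $y_t$) only makes explicit what the paper leaves implicit.
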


\begin{proof}
By taking the mean of (B.26),
\begin{align*}
\mathbb{E}\int_{0}^{t}1_{[\alpha,\beta]}(y_s)\left(\sum_{j=1}^{\infty}|\theta^{j}_s|^{2} \right)ds=2\mathbb{E}\int_{\alpha}^{\beta}\Lambda_t(a,\omega)da.
\end{align*}
By taking the mean of (B.27) and integrating it in $[\alpha,\beta]$ for $a$,
\begin{align*}
0=\int_{0}^{t}\mathbb{E}\int_{\alpha}^{\beta}1_{[a,\infty)}(y_s)x_s dsda+\int_{\alpha}^{\beta}\mathbb{E}\Lambda_t (a,\omega)da.
\end{align*}
We eliminate the local time term by substituting this into the first equality. By differentiating the obtained equality with respect to time, we obtain (B.28). 
\end{proof}

For an $\mathbb{R}^{n}$-valued stochastic process $y_t=(y^{k}_t)_{1\leq k\leq n}$ satisfying (B.24) for $x_t=(x^{k}_t)_{1\leq k\leq n}$ and $\theta_{s}^{j}=(\theta_{s}^{jk})_{1\leq k\leq n}$, we set a non-negative symmetric matrix $\sigma=(\sigma^{kl})_{1\leq k,l\leq n}$ by 
\begin{align*}
\sigma^{kl}=\sum_{j=1}^{\infty}\theta_{s}^{jk}\theta_{s}^{jl},\quad 1\leq k,l\leq n.
\end{align*}
The following Krylov's estimate holds \cite[A.8]{Kuk12}.

\begin{thm}[Krylov's estimate]
Assume that $y_t$ and $x_t$ are stationary processes. Then, there exists a constant $C_n$ such that 
\begin{align}
\mathbb{E}\int_{0}^{1}\left(1_{\Gamma}(y_t)(\textrm{det}\ \sigma)^{\frac{1}{n}} \right)dt\leq C_nl_n(\Gamma)^{\frac{1}{n}} \mathbb{E}|x_0|,\quad \Gamma \in \mathcal{B}(\mathbb{R}^{n}),
\end{align}
where $l_n$ denotes the $n$-dimensional Lebesgue measure.
\end{thm}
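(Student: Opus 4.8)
The plan is to carry out, in $\mathbb{R}^{n}$, the scheme behind the one--dimensional identity (B.28), with an Aleksandrov--Bakelman--Pucci (ABP) \emph{barrier} replacing the local time used there. The one analytic input I would isolate is a barrier lemma: there is a dimensional constant $C_{n}$ so that for every bounded open $\Gamma\subset\mathbb{R}^{n}$ there is a convex $u=u_{\Gamma}\in C^{\infty}(\mathbb{R}^{n})$ with $D^{2}u\ge 0$, $D^{2}u$ bounded, $u$ globally Lipschitz, and
\[
\det D^{2}u(x)\ge 1_{\Gamma}(x)\ \text{ a.e.},\qquad \|\nabla u\|_{L^{\infty}}\le C_{n}\,l_{n}(\Gamma)^{1/n}.
\]
For a ball $\Gamma=B_{\rho}(c)$ one can take $u(x)=\tfrac12(|x-c|^{2}-\rho^{2})$ on $\overline{B_{\rho}(c)}$ and $u(x)=\rho(|x-c|-\rho)$ for $|x-c|>\rho$ (a $C^{1,1}$ convex paraboloid cap glued to a cone, with $\det D^{2}u\equiv 1$ on $B_{\rho}(c)$ and $\|\nabla u\|_{L^{\infty}}=\rho\simeq l_{n}(\Gamma)^{1/n}$), then mollify; for general bounded open $\Gamma$ one takes the Aleksandrov solution of the Monge--Amp\`ere equation $\det D^{2}u=1_{\Gamma}$ and reads off the gradient bound from the ABP maximum principle. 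A box computation (spreading the curvature budget $\det D^{2}u\ge 1$ over the $n$ coordinate directions) together with the scaling $u_{\lambda}(x)=\lambda^{2}u(x/\lambda)$ shows the exponent $1/n$ is the correct one.

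\textbf{The stationary identity.} Granting the lemma, fix a bounded open $\Gamma$ and $u=u_{\Gamma}$, and apply the It\^o formula to $u(y_{t})$ (standard for smooth $u$; one may localize along $\tau_{m}=\inf\{t:|y_{t}|>m\}$ as in the proof of Theorem B.7, though here it is unnecessary: $\nabla u$ and $D^{2}u$ are bounded, so by (B.24)--(B.25) the drift integrals are integrable and $\sum_{j}\int_{0}^{t}\nabla u(y_{s})\cdot\theta^{j}_{s}\,d\beta_{j}(s)$ is an $L^{2}$--martingale of zero mean). Taking expectations,
\[
\mathbb{E}\,u(y_{t})-\mathbb{E}\,u(y_{0})=\mathbb{E}\int_{0}^{t}\Big(\nabla u(y_{s})\cdot x_{s}+\tfrac12\operatorname{tr}\big(\sigma_{s}D^{2}u(y_{s})\big)\Big)\,ds .
\]
The left side vanishes: $u(y_{t})-u(y_{0})$ is integrable (Lipschitz bound on $u$ and (B.24)--(B.25)), and since $u(y_{t})$ and $u(y_{0})$ have the same law by stationarity a truncation argument gives $\mathbb{E}[u(y_{t})-u(y_{0})]=0$. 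On the right side, the integrand is a function of the stationary triple $(y_{s},x_{s},\sigma_{s})$, hence has time--independent mean. Therefore
\[
\mathbb{E}\operatorname{tr}\big(\sigma_{0}D^{2}u(y_{0})\big)=-2\,\mathbb{E}\big(\nabla u(y_{0})\cdot x_{0}\big).
\]

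\textbf{Conclusion.} Next I would use the matrix AM--GM inequality: for positive semi--definite symmetric $n\times n$ matrices $A,N$, $\operatorname{tr}(AN)=\operatorname{tr}(A^{1/2}NA^{1/2})\ge n(\det A\,\det N)^{1/n}$, applied to the nonnegative eigenvalues of $A^{1/2}NA^{1/2}$. With $A=\sigma_{0}$, $N=D^{2}u(y_{0})\ge 0$, and $\det D^{2}u\ge 1_{\Gamma}$,
\[
n\,\mathbb{E}\big(1_{\Gamma}(y_{0})(\det\sigma_{0})^{1/n}\big)\le\mathbb{E}\operatorname{tr}\big(\sigma_{0}D^{2}u(y_{0})\big)=-2\,\mathbb{E}\big(\nabla u(y_{0})\cdot x_{0}\big)\le 2\|\nabla u\|_{L^{\infty}}\mathbb{E}|x_{0}|\le 2C_{n}\,l_{n}(\Gamma)^{1/n}\,\mathbb{E}|x_{0}| .
\]
Since $1_{\Gamma}(y_{t})(\det\sigma_{t})^{1/n}$ is stationary, its mean equals $\mathbb{E}\int_{0}^{1}1_{\Gamma}(y_{t})(\det\sigma_{t})^{1/n}\,dt$, which is (B.29) for bounded open $\Gamma$ (relabelling $C_{n}$). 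For arbitrary $\Gamma\in\mathcal{B}(\mathbb{R}^{n})$ with $l_{n}(\Gamma)<\infty$, outer regularity gives bounded open $G\supseteq\Gamma$ with $l_{n}(G)<l_{n}(\Gamma)+\delta$; then $1_{\Gamma}\le 1_{G}$, the estimate for $G$ applies, and letting $\delta\downarrow 0$ finishes it (if $l_{n}(\Gamma)=\infty$ there is nothing to prove).

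\textbf{Main obstacle.} The hard part will be the barrier lemma with the \emph{sharp} exponent $1/n$ in $\|\nabla u\|_{L^{\infty}}\le C_{n}l_{n}(\Gamma)^{1/n}$ -- this is precisely the Aleksandrov estimate, the one genuinely non--elementary ingredient, and for $n=1$ the whole scheme collapses to the local--time computation behind (B.28). A secondary issue is the limited regularity of the Monge--Amp\`ere barrier for irregular $\Gamma$ (convex but not $C^{1,1}$, since $\det D^{2}u$ degenerates outside $\Gamma$); I would get around this by mollifying the barrier -- using concavity of $A\mapsto(\det A)^{1/n}$ on the positive semi--definite cone so that $\det D^{2}u_{\varepsilon}\ge 1_{\Gamma}*\eta_{\varepsilon}$ -- and passing to the limit by monotone convergence on the open set $\Gamma$, which avoids needing any absolute continuity of the law of $y_{0}$.
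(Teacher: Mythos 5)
The paper does not actually prove Theorem B.8: it is quoted from \cite{Kuk12} (ultimately Krylov), so your proposal is being measured against the classical proof that citation encapsulates --- and your architecture is exactly that proof. The probabilistic half is correct: applying It\^o's formula to a smooth convex barrier $u$ with bounded $\nabla u$, $D^2u$, killing $\mathbb{E}[u(y_1)-u(y_0)]$ via equality in law plus the $1$-Lipschitz truncation $\psi_R$ and dominated convergence, using $\operatorname{tr}(\sigma D^2u)\ge n(\det\sigma\,\det D^2u)^{1/n}$, and bounding the drift term by $\|\nabla u\|_{L^\infty}\mathbb{E}\int_0^1|x_s|\,ds=\|\nabla u\|_{L^\infty}\mathbb{E}|x_0|$ all go through under (B.24)--(B.25). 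One small repair: the hypotheses give stationarity of $y_t$ and $x_t$ only, not of the triple $(y_t,x_t,\sigma_t)$, so you should not pass to time-zero means as in your displayed identity and in the final step; keep everything as $\mathbb{E}\int_0^1(\cdot)\,ds$, which is all the statement (B.29) requires and all your cancellation argument produces.

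The genuine gap is the barrier lemma for general $\Gamma$, which you correctly flag as the main obstacle but whose suggested derivation does not work as stated. The ball and box computations are fine, but ``take the Aleksandrov solution of $\det D^2u=1_\Gamma$ and read off the gradient bound from the ABP maximum principle'' fails: ABP/Aleksandrov controls $\sup|u|$ by $C_n\,\mathrm{diam}(\Omega)\,l_n(\Gamma)^{1/n}$ on whatever domain $\Omega$ you solve on, and a convex function with zero boundary data only satisfies $|\nabla u(x)|\lesssim |u(x)|/\mathrm{dist}(x,\partial\Omega)$, so no domain-independent bound $\|\nabla u\|_{L^\infty}\le C_n l_n(\Gamma)^{1/n}$ comes out (try $\Gamma$ equal to two unit balls at mutual distance $R\gg1$: the diameter factor is $R$). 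The fix is to prescribe the gradient image rather than boundary values: take the Brenier--McCann convex potential transporting $1_\Gamma\,dx$ to Lebesgue measure on the ball $B_\rho(0)$ in gradient space with $\omega_n\rho^n=l_n(\Gamma)$, so $\rho\simeq l_n(\Gamma)^{1/n}$, $|\nabla u|\le\rho$ a.e.\ on $\Gamma$ and $|\partial u(A)|\ge |A|$ for Borel $A\subset\Gamma$ by the push-forward property; then replace $u$ by its largest $\rho$-Lipschitz minorant $\inf_y\{u(y)+\rho|x-y|\}$ (still convex, globally $\rho$-Lipschitz, Monge--Amp\`ere measure still $\ge 1_\Gamma\,dx$ because it touches $u$ wherever $\partial u\cap \overline{B_\rho}\neq\emptyset$), and mollify exactly as you propose, with the Fatou passage $1_\Gamma*\eta_\varepsilon\to 1$ on the open set $\Gamma$. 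Alternatively, simply quote Krylov's lemma (this is the statement in Krylov's Controlled Diffusion Processes, Ch.~2). With that replacement, and with the outer-regularity step adjusted so that the approximating open set has finite measure (or by exhausting $\Gamma\cap B_R$), your proof is complete; as written, the one essential analytic input is asserted rather than obtained.
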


\section{Gr\"{o}nwall's inequality}

\begin{prop}[Gr\"{o}nwall's inequality]
Assume that $\varphi\in C[0,\infty)$ satisfies 
\begin{align}
\varphi(t)+m \int_{0}^{t}\varphi(s)ds\leq \varphi(0)+mCt,\quad t\geq 0,
\end{align}
with some constant $m>0$. Then,
\begin{align}
\varphi(t)\leq e^{-mt}\varphi(0)+C,\quad t\geq 0.
\end{align}
\end{prop}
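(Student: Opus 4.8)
The plan is to convert the integral inequality (C.1) into a differential inequality for the antiderivative and then apply an integrating factor. First I would set $\Phi(t)=\int_0^t\varphi(s)\,ds$. Since $\varphi\in C[0,\infty)$, the function $\Phi$ is $C^1$ with $\Phi(0)=0$ and $\Phi'(t)=\varphi(t)$, and (C.1) becomes the differential inequality $\Phi'(t)+m\Phi(t)\le\varphi(0)+mCt$ for all $t\ge0$. Multiplying through by $e^{mt}$ gives $\frac{d}{dt}\!\left(e^{mt}\Phi(t)\right)\le e^{mt}\!\left(\varphi(0)+mCt\right)$, and integrating over $[0,t]$ (using $\Phi(0)=0$) produces an explicit upper bound for $e^{mt}\Phi(t)$ involving only $\int_0^t e^{ms}\,ds=\frac{e^{mt}-1}{m}$ and $\int_0^t s e^{ms}\,ds$, the latter evaluated by a single integration by parts as $\frac{t e^{mt}}{m}-\frac{e^{mt}-1}{m^2}$.

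\textbf{The cancellation.} Carrying out these two elementary integrations and dividing by $m e^{mt}$ yields $m\Phi(t)\le\varphi(0)\left(1-e^{-mt}\right)+mCt-C\left(1-e^{-mt}\right)$. I would then feed this back into (C.1) rewritten as $\varphi(t)\le\varphi(0)+mCt-m\Phi(t)$: the $mCt$ terms cancel exactly, leaving $\varphi(t)\le\varphi(0)e^{-mt}+C\left(1-e^{-mt}\right)$. Since $0\le 1-e^{-mt}\le1$ and $C\ge0$ in all the situations where this proposition is invoked (in particular $\varphi\ge0$ and $C=\frac1\kappa(\mathsf{C}_0+\kappa)e^{\varrho(\mathsf{C}_0+\kappa)}>0$ in the application of Lemma 4.3), the last term is bounded by $C$, which is precisely (C.2).

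\textbf{Expected difficulty.} There is no substantive obstacle here; the argument is completely elementary. The only two points deserving an explicit line are: (i) justifying that $\Phi$ is differentiable so the passage from the integral inequality to the differential inequality is legitimate, which is immediate from the fundamental theorem of calculus applied to the continuous integrand $\varphi$; and (ii) noting that the sharp conclusion is $\varphi(t)\le\varphi(0)e^{-mt}+C(1-e^{-mt})$, which one relaxes to (C.2) using $C\ge0$ (if one wished to allow $C<0$ one would simply retain the sharper form). In the write-up I would display the two elementary integrals, perform the cancellation, and conclude.
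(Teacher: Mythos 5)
Your integrating-factor computation for $\Phi(t)=\int_0^t\varphi(s)\,ds$ is fine as far as it goes, but the decisive step fails: from $\Phi'(t)+m\Phi(t)\le\varphi(0)+mCt$ you obtain an \emph{upper} bound, $m\Phi(t)\le\varphi(0)(1-e^{-mt})+mCt-C(1-e^{-mt})$, and you then substitute it into $\varphi(t)\le\varphi(0)+mCt-m\Phi(t)$. In that last inequality $\Phi$ enters with a negative sign, so an upper bound on $\Phi$ only lowers the right-hand side; to deduce anything about $\varphi$ you would need a \emph{lower} bound on $\Phi$, which (C.1) does not give. The familiar ``plug the integrated bound back in'' trick is legitimate for the classical Gronwall hypothesis $\varphi(t)\le a(t)+m\int_0^t\varphi\,ds$, where the integral appears with a plus sign; here the integral term has the dissipative sign, and the substitution reverses the inequality. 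The paper takes a different route precisely to avoid this: it introduces the comparison function $\phi(t)=e^{-mt}\varphi(0)+C(1-e^{-mt})$, which satisfies (C.1) with equality, and argues that $h=\varphi-\phi$ stays nonpositive.

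The gap is not a repairable slip within your strategy, because the hypothesis as stated (anchored only at $t=0$) genuinely does not force the pointwise bound: take $m=1$, $C=0$, $\varphi(0)=0$, with $\varphi(t)=-\sin t$ on $[0,\pi]$, then increasing linearly to $1$ at $t=\pi+1$ and returning to $0$ by $t=\pi+2$ (and $\equiv 0$ afterwards); one checks $\varphi(t)+\int_0^t\varphi\,ds\le 0=\varphi(0)$ for every $t\ge 0$, while $\varphi(\pi+1)=1>e^{-(\pi+1)}\varphi(0)+C=0$. What is really used in the application (Lemma 4.3) is that the Itô/energy argument yields the inequality started from an arbitrary time $t_1$, i.e. $\varphi(t)+m\int_{t_1}^{t}\varphi\,ds\le\varphi(t_1)+mC(t-t_1)$ for all $t\ge t_1$; with that strengthened input the comparison (or a Dini-derivative) argument gives $\varphi(t)\le e^{-mt}\varphi(0)+C(1-e^{-mt})$, hence (C.2) for $C\ge 0$. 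So your assessment that there is ``no substantive obstacle'' is too optimistic: the obstacle is exactly the sign of the integral term, and the write-up as planned would not survive it.
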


\begin{proof}
The function $\phi(t)=e^{-mt}\varphi(0)+C(1-e^{-mt})$ satisfies
\begin{align*}
\phi(t)+m \int_{0}^{t}\phi(s)ds= \varphi(0)+mCt,\quad t\geq 0.
\end{align*}
We show that $\varphi(t)\leq \phi(t)$ for $t\geq 0$. By (C.1), $h(t)=\varphi(t)-\phi(t)$ satifies 
\begin{align*}
h(t)+m \int_{0}^{t}h(s)ds\leq 0,
\end{align*}
and hence $\int_{0}^{t}h(s)ds\leq 0$. Thus, there exists $t_0>0$ such that $h(t)\leq 0$ for $0\leq t\leq t_0$. By repeating this argument, we conclude that $h(t)\leq 0$ for all $t\geq 0$.
\end{proof}

\bibliographystyle{alpha}
\bibliography{ref}
\end{document}